\documentclass[10pt,oneside]{article}
\usepackage[T1]{fontenc}
\usepackage[english]{babel}

\usepackage{graphicx,accents}

\newcommand{\cev}[1]{\reflectbox{\ensuremath{\vec{\reflectbox{\ensuremath{#1}}}}}}

\usepackage[cal=cm]{mathalfa}

\usepackage[T1]{fontenc}
\usepackage{titlesec}

\usepackage{hhline}

\titleformat
{\chapter} % command
[display] % shape
{\bfseries\Large\itshape} % format
{Story No. \ \thechapter} % label
{0.5ex} % sep
{
    \rule{\textwidth}{1pt}
    \vspace{1ex}
    \centering
} % before-code
[
\vspace{-0.5ex}%
\rule{\textwidth}{0.3pt}
] % after-code

\titleformat{\section}[wrap]
{\normalfont\bfseries}
{\thesection.}{0.5em}{}

\titlespacing{\section}{12pc}{1.5ex plus .1ex minus .2ex}{1pc}

\usepackage{tocloft} 

\usepackage{titlesec}
\titleformat{\subsection}[runin]% runin puts it in the same paragraph
       {\normalfont\bfseries}% formatting commands to apply to the whole heading
       {\thesubsection}% the label and number
       {0.5em}% space between label/number and subsection title
       {}% formatting commands applied just to subsection title
       [.]% punctuation or other commands following subsection title

\makeatletter
\@ifundefined{dddot}{}{}
\usepackage{mathdots}  % or yhmath
\makeatother

\makeatletter
\@ifundefined{ddddot}{}{}
\usepackage{yhmath}
\makeatother

\usepackage{geometry} % page geometry
\usepackage{xcolor} % colors
\usepackage{tikz} % drawing
\usepackage[draft=false]{hyperref} % hyperlinks
\usepackage{mathtools}
\usepackage{amsmath,amsthm,amssymb}
\usepackage[all,cmtip]{xy}
\usepackage{fullpage}

\usepackage[capitalise]{cleveref}  % load after hyperref

\numberwithin{equation}{section} %% Comment out for sequentially-numbered
\numberwithin{figure}{section} %% Comment out for sequentially-numbered

\theoremstyle{plain}

\newtheorem{theorem}{Theorem}[subsection]

\newtheorem{lemma}[theorem]{Lemma}
\newtheorem{proposition}[theorem]{Proposition}
\newtheorem{corollary}[theorem]{Corollary}

\theoremstyle{definition}

\newtheorem{definition}[theorem]{Definition}

\newtheorem{notation}[theorem]{Notation}
\newtheorem{remark}[theorem]{Remark}
\newtheorem{example}[theorem]{Example}

\newtheorem*{thm*}{Theorem}
\newtheorem*{prop*}{Proposition}

\crefname{theorem}{Theorem}{Theorems}
\crefname{lemma}{Lemma}{Lemmas}
\crefname{proposition}{Proposition}{Propositions}
\crefname{corollary}{Corollary}{Corollaries}
\crefname{conjecture}{Conjecture}{Conjectures}
\crefname{convention}{Convention}{Conventions}
\crefname{construction}{Construction}{Constructions}
\crefname{observation}{Observation}{Observations}
\crefname{warning}{Warning}{Warnings}
\crefname{definition}{Definition}{Definitions}
\crefname{assumption}{Assumption}{Assumptions}
\crefname{notation}{Notation}{Notations}
\crefname{remark}{Remark}{Remarks}
\crefname{example}{Example}{Examples}
\crefname{summary}{Summary}{Summaries}

\usepackage{amsmath,amsthm,amssymb,amsfonts,extarrows}
\usepackage{enumerate,amscd,amsxtra,MnSymbol}
\usepackage{mathrsfs}
\usepackage{color}
\usepackage[cmtip,all]{xy}
\usepackage{eucal}
\usepackage{bbold}
\usepackage{upgreek}
\usepackage{paralist}
\usepackage{tikz-cd}
\usepackage{dsfont}
\usepackage{bbm}
\usepackage[bbgreekl]{mathbbol}

\DeclareMathSymbol\bbDelta \mathord{bbold}{"01}
\DeclareMathSymbol\bDelta \mathord{bbold}{"01}

\newcommand{\bD}{{\mathbb D}}

\renewcommand{\P}{{\mathbb P}}

\newcommand{\mA}{{\mathcal A}}
\newcommand{\mB}{{\mathcal B}}
\newcommand{\mC}{{\mathcal C}}
\newcommand{\mD}{{\mathcal D}}
\newcommand{\mE}{{\mathcal E}}
\newcommand{\mF}{{\mathcal F}}

\newcommand{\mJ}{{\mathcal J}}

\newcommand{\mM}{{\mathcal M}}
\newcommand{\mN}{{\mathcal N}}
\newcommand{\mO}{{\mathcal O}}
\newcommand{\mP}{{\mathcal P}}

\newcommand{\mS}{{\mathcal S}}

\newcommand{\mU}{{\mathcal U}}
\newcommand{\mV}{{\mathcal V}}
\newcommand{\mW}{{\mathcal W}}

\newcommand{\A}{A}
\newcommand{\B}{B}
\newcommand{\C}{C}

\newcommand{\E}{{E}}
\newcommand{\F}{{F}}
\newcommand{\G}{{G}}

\newcommand{\J}{{J}}
\newcommand{\K}{{K}}
\renewcommand{\L}{{\mathrm L}}

\newcommand{\N}{{\mathrm N}}
\renewcommand{\P}{{P}}
\newcommand{\Q}{{Q}}
\newcommand{\R}{{\mathrm R}}
\newcommand{\rS}{{S}}
\newcommand{\T}{{T}}

\newcommand{\X}{X}
\newcommand{\Y}{Y}
\newcommand{\Z}{Z}

\newcommand{\bj}{{j}}

\newcommand{\m}{{m}}
\newcommand{\bk}{{k}}

\newcommand{\n}{{n}}

\newcommand{\op}{\mathrm{op}}

\newcommand{\colim}{\mathrm{colim}}

\newcommand{\rev}{{\mathrm{rev}}}

\newcommand{\Env}{{\mathrm{Env}}}  
\newcommand{\ot}{\otimes}

\newcommand{\co}{\mathrm{co}}

\newcommand{\univ}{\mathrm{univ}}
\newcommand{\strict}{\mathrm{strict}}

\renewcommand{\S}{\mathcal{S}}

\newcommand{\id}{\mathrm{id}}
\newcommand{\Cat}{\mathrm{Cat}}

\newcommand{\Set}{\mathrm{Set}}

\newcommand{\Fun}{\mathrm{Fun}}

\newcommand{\cocart}{{\mathrm{cocart}}}
\newcommand{\coCart}{{\mathrm{coCart}}}
\newcommand{\cart}{{\mathrm{cart}}}
\newcommand{\Cart}{{\mathrm{Cart}}}
\newcommand{\bicart}{{\mathrm{bicart}}}

\newcommand{\lax}{{\mathrm{lax}}}
\newcommand{\oplax}{{\mathrm{oplax}}}

\newcommand{\tu}{{\mathbb 1}}
\newcommand{\coop}{\mathrm{coop}}

\newcommand{\ev}{{\mathrm{ev}}}

\newcommand{\Map}{{\mathrm{Map}}}

\newcommand{\Mor}{{\mathrm{Mor}}}

\newcommand{\PrL}{\mathrm{Pr^L}}
\newcommand{\PrR}{\mathrm{Pr^R}}

\newcommand{\cop}{\mathrm{cop}}

\newcommand{\LMor}{\mathrm{LMor}}

\newcommand{\cube}{{\,\vline\negmedspace\square}}

\newcommand{\scat}{\mathcal{C}\mathit{at}}

\newcommand{\fcat}{\mathfrak{Cat}}

\begin{document}

\title{\textsc{Fibrations in Oriented Category Theory}}

\author{David Gepner and Hadrian Heine}

\maketitle

\begin{abstract}
We study fibrations of higher categories from the perspective of oriented category theory, a framework which accounts for lax phenomena in higher category theory via systematic enrichment in the Gray tensor product.
We give several equivalent characterizations of fibrations
of $(\infty,\infty)$-categories and oriented categories,
and show that categories of fibrations naturally
organize to form oriented categories.
We study the interaction between fibrations and oriented pullbacks
and construct higher-categorical versions of free fibrations and universal fibrations.
The latter give rise to Grothendieck constructions for fibrations of $(\infty,\infty)$-categories and oriented categories.
\end{abstract}

\tableofcontents

\vspace{5mm}

\section{Introduction}

\subsection{Fibrations}
The notion of fibration plays an important role in geometry and topology.
The concept itself goes back at least to the study of covering spaces and the universal cover in the mid-to-late 1800s, notably in Riemann's work on complex functions and Poincar\'e's development of algebraic topology.
By the early-to-mid 1900s, the theory of covering spaces had been subsumed into a general theory of fiber bundles
and characterizations of fibrations in terms of lifting properties were obtained by Hurewicz, later generalized by Serre to a notion better suited to homotopy theory, and finally adapted by Grothendieck to the abstract categorical setting.

In this paper, as well as its predecessors and sequels, we will adapt the viewpoint that $\infty$-category theory\footnote{Henceforth we shall refer to $(\infty,n)$-categories simply as $n$-categories, for all $0\leq n\leq\infty$. In particular, the category of $0$-categories is equivalent to the category of $\infty$-groupoids.} is fundamentally geometric in nature and might usefully be though of an oriented, or directed, generalization of topology, especially algebraic topology and homotopy theory.
From this perspective, it should not surprising that fibrations play a fundamental and important role in $\infty$-category theory.\footnote{Note that in $\infty$-groupoid theory, any map is equivalent to a fibration, but this fails to be the case in the presence of noninvertible morphisms.}
Specifically, we regard $\infty$-categories as oriented spaces \cite{oriented}, which play the same role in oriented category theory as ordinary spaces (also known as anima, homotopy types, or $\infty$-groupoids, or sets if one is working in the strict non-homotopical version of the theory) do in homotopical (or ordinary) category theory.

One of the most crucial features of a fibration in algebraic topology is that pulling back along such a map results in a space which is homotopy equivalent to the homotopy pullback.
In oriented algebraic topology, one obtains an analogous statement: the pullback along a fibration results in an oriented space which is lax homotopy equivalent, in a suitable sense, to the oriented pullback.
The orientations, or the directions of the higher dimensional cells which comprise an oriented space, are also inherent in all the usual operations, including the corrected version of the cartesian product, called the Gray tensor product.
Unlike the cartesian product, it is compatible with the intrinsic notion of dimension in higher category theory and hence results in a geometrically meaningful operation, though at the cost of introducing the antisymmetries inherent in the oriented theory.

\subsection{Oriented category theory}
One of the chief subtleties of higher category theory is that the cartesian product fails to be the correct operation for many purposes.
Indeed, as explained in \cite{GepnerHeine2026}, \cite{oriented}, and \cite{gepner2026homotopy}, the cartesian product is not compatible with the natural notion of dimension in higher category theory, and must be replaced by the Gray tensor product in order to remedy this defect.
This difficulty propagates through the entire theory and necessitates the development of a theory of higher categories in which morphism $\infty$-categories compose via the Gray tensor product operation.
Since the Gray tensor comes in two antisymmetric variants, the lax and oplax versions, which are related by various (anti)involutions, it is perhaps useful to think of the resulting structure as a choice of orientation which dictates the precise nature of the composition of the morphism $\infty$-categories.

Specifically, writing $(\infty\Cat,\boxtimes)$ for the category of $\infty$-categories equipped with the oplax Gray tensor monoidal structure, the category of oriented categories
\[
\Cat\boxtimes =\Cat_{(\infty\Cat,\boxtimes)}
\]
is by definition the category of categories enriched in the monoidal category $(\infty\Cat,\boxtimes)$.
There is also a negatively, or anti-, oriented version
\[
\boxtimes\Cat = _{(\infty\Cat,\boxtimes)}\Cat,
\]
as well as the bioriented version
\[
\boxtimes\Cat\boxtimes = {_{(\infty\Cat,\boxtimes)}\Cat_{(\infty\Cat,\boxtimes)}}.
\]
With these natural enrichments, there are oriented, antioriented, and bioriented versions of $\infty\Cat$, denoted $\infty\fcat$, etc.

We refer to the resulting theory as oriented category theory, since it is both descriptive as well as compatible with certain pre-existing notions, such as the oriented pullback and oriented pushout familiar from low-dimensional higher category theory.
In fact, these oriented construction are special cases of the theory of oriented (co)limits, which we develop in this paper, along with the basic theory of fibrations and an oriented version of the Grothendieck construction.

In geometry, it is typically cartesian fibrations which play the most central role.
But even if one is only interested in cartesian fibrations of $\infty$-categories, one is already led to consider cocartesian fibrations, since cartesian fibrations become cocartesian fibrations after applying the morphism object functor, which can be regarded as an oriented version of the path object, a construction which is already ubiquitous in topology.
On the other hand, the conjugation, or ``co'', duality, which reverses the even dimensional cells, converts a cartesian fibration into an anticartesian fibration, and restricting to morphism $\infty$-categories yields an anticocartesian fibration.

Since cocartesian fibrations are stable under basechange, pulling back the universal cocartesian fibration determines a functor 
\[
\infty{\Cat}_{//^{\oplax}\infty\Cat}\to\co\mathcal{C}\mathit{art}\subset\Fun(\bD^1,\infty\Cat)
\]
from the oplax slice to a certain subcategory of $\Fun(\bD^1,\infty\Cat)$ spanned by the cocartesian fibrations and suitable maps of cocartesian fibrations (these maps require preservation of (co)cartesian morphisms in every positive dimension).
In fact, this functor is itself a morphism of cartesian fibrations over $\infty\Cat$.
However, pullback along the universal cocartesian fibration is actually computed as the oriented fiber over the tautological basepoint $\bD^0\to\infty\Cat$ selecting the object $\bD^0\in\infty\Cat$.

From this description, it is natural to expect the functor to refine to an oriented functor of oriented categories. 
We show that this is indeed the case, although establishing this is not completely formal.
Dually, pulling back the universal cocartesian fibration along functor turns out to refine to an antioriented functor of antioriented categories.
For a fixed base $\infty$-category $X$, we denotes these (anti)oriented categories
\[
\infty\fcat^{\cart}_{/X}\qquad\textrm{and}\qquad\infty\fcat^{\cocart}_{/X},
\]
respectively.
Furthermore, there is a bioriented structure on the category of bicartesian finbrations
\[
\infty\fcat^{\bicart}_{/X}
\]
over $X$.
This is also a fundamental construction, as it provides a natural notion of adjunction of $\infty$-categories.

\subsection{Cartesian fibrations and dual fibrations}
The notion of cartesian fibration goes back at least to Grothendieck and his school in their study of fibered categories and descent.
The generalization to homotopical $n$-categories is due to Lurie for $n=1$ and to Nuiten \cite{nuiten2023straightening} for $n>1$.
Although Nuiten does not directly treat the case in which $n=\infty$, one can simply take the limit to obtain the correct definition.

In the $1$-categorical setting, the difference between cartesian and cocartesian fibrations is purely in the direction of the fiber transport, and applying the opposite involution exchanges these notions.
Hence statements about one formally imply analogous results about the other.
However $n\Cat$ supports a total of $2^n$ involutions, resulting in an infinite number variants of the notion of cartesian fibration when $n=\infty$.
A subtle point is that passing to morphism $\infty$-categories of a cartesian fibration results in a cocartesian fibration, and conversely, so applying even or odd dimensional duality results in the non-dimensionally-alternating notions of anticartesian and anticocartesian fibration.
It is also worth isolating the pure degree one opposite of a cartesian fibration, which we refer to as an opcartesian fibration.

$$\begin{tabular}{| c | c | c | c| c |}
\hline
Cartesian & Anticartesian & Cocartesian & Anticocartesian & Opcartesian\\
\hhline{|=|=|=|=|=|}	
$p:X\to S$ & $p^{\co}:X^{\co}\to S^{\co}$ & $p^{\coop}:X^{\coop}\to S^{\coop}$ & $p^{\op}:X^{\op}\to S^{\op}$ & $p^\circ:X^\circ\to S^\circ$\\
\hline
$p:X\to S$ & $\overline{p}:\overline{X}\to S^{\coop}$ & $p^\vee:X^\vee\to S^{\circ}$ & $ \overline{p}^\vee:\overline{X}^\vee\to S^{\cop} $ & $p^\tau:X^\tau\to S^{\op}$\\
\hline
\end{tabular}$$
Here $\tau:\infty\Cat\to\infty\Cat$ is the operation which reverses cells in dimensions one and two.

However, the cartesian fibrations and their dual variants are only one type of fibration of $\infty$-categories.
Already in the case of $1$-categories, there are robust notions of bicartesian fibrations, locally (co)cartesian fibrations, and exponentiable fibrations.
These also define analogous families in the setting of $\infty$-categories, and determine the following commutative diagram of inclusions of (not generally full) subcategories of $\Fun(\bD^1,\infty\Cat)$.

$$
\begin{xy}
\xymatrix{
 &   \{ {\mathrm{Bicartesian} \ \mathrm{fibrations}} \} \ar[ld]\ar[rd] &
\\
\{ \mathrm{Cocartesian} \ \mathrm{fibrations} \}  \ar[rd] \ar[d] & 
& \{ \mathrm{Cartesian} \ \mathrm{fibrations}  \} \ar[ld] \ar[d]\\
\{ {\mathrm{Locally} \ \mathrm{cocartesian} \ \mathrm{fibrations} \} } \ar[rd] & \{ \mathrm{Exponentiable} \ \mathrm{fibrations} \} \ar[d] & {\{\mathrm{Locally} \ \mathrm{cartesian} \ \mathrm{fibrations} \} }  \ar[ld] \\
& \{ \mathrm{Functors}\} &
}
\end{xy} $$

There are a number of ways to characterize cartesian fibrations $p:Y\to X$.
The first and most historically motivated would be to require the existence of cartesian lifts for all $n$-morphisms $x:\bD^n\to X$, where $n$ is an arbitrary positive integer.
In the 1-categorical case, as opposed to the higher category case, it is enough to consider $n=1$, but this simplification obscures much of the subtlety of the theory of fibrations of $\infty$-categories.

Let $1 \leq \n \leq \infty$ and $\phi: \mC \to \mD$ a functor of $\infty$-categories.
A 1-morphism $f: X \to Y$ in $\mC$ is $\phi$-cocartesian if for every $Z\in \mC$ the induced functor
$$ \Mor_\mC(Y,Z) \to \Mor_\mD(\phi(Y), \phi(Z))\times_{\Mor_\mD(\phi(X), \phi(Z)} \Mor_\mC(X,Z) $$
is an equivalence.
Inductively, an $n$-morphism $\alpha:\bD^n\to\mC$ is $\phi$-cocartesian if
for every pair of morphisms $(X \to \alpha(0), \alpha(1) \to Y)$ in $\mC$ the composite $n-1$-morphism
\[
\bD^{n-1}\to\Mor_\mC(\alpha(0),\alpha(1)) \to \Mor_\mC(X,Y)
\]
is $\phi_{X,Y}$-cocartesian.
With this in place, we can define a cartesian fibration as a functor
$p:Y\to X$ such that
for each positive integer $n$, $n$-cell $x:\bD^n\to X$, and lift $t(y):\bD^{n-1}\to Y$ of the target $n-1$-cell $\bD^{n-1}\to\bD^n$, there exists a $p$-cartesian lift $y:\bD^n\to Y$ of $x$ with target $t(y)$.

There are also more inductive formulations of this notion which rely on checking that the induced functors on the morphism $\infty$-categories which appear in the functor $p:Y\to X$ are {\em cocartesian} fibrations.
This approach is especially useful if $X$ and $Y$ are $n$-categories for some $n<\infty$, since passing to morphism $\infty$-categories reduces categorical dimension and any functor of $0$-categories is a (co)cartesian fibration.

Observe the alternation between cartesian and cocartesian which occurs by passing to morphism $\infty$-categories.
It is for this reason that it is sometimes simpler to work with the conjugate or opposite of a cartesian fibration, which results in an anticartesian or anticocartesian fibration, respectively.

For other purposes, it is useful to know that one can lift along more complicated geometric shapes than just the disks.
Specifically, we formulate lifting for (anti)oriented simplices, which has the advantage of being a forming a dense subcategory of $\infty\Cat$.
Namely, a functor $p:Y\to X$ is a cartesian fibration if, for all positive integers $n$, every commutative square
\begin{equation}
\begin{xy}
\xymatrix{
\bDelta^0 \star \bDelta^{n-2} \ar[d]_{\{0\}\star \bDelta^{n-2}} \ar[r]
& Y^{\op} \ar[d]^\phi
\\ 
\bDelta^1 \star \bDelta^{n-2} \ar[r] & X^{\op}
}
\end{xy}\end{equation}
admits a filler by a $p$-cocartesian oriented $n$-simplex, and every commutative square
\begin{equation}
\begin{xy}
\xymatrix{(\bDelta^{n-2})^\co \bar{\star} (\bDelta^0)^\co \ar[d]_{(\bDelta^{n-2})^\co \bar{\star} \{0\}} \ar[r]
& Y^{\op} \ar[d]^\phi
\\ 
(\bDelta^{n-2})^\co \bar{\star} (\bDelta^1)^\co \ar[r] & X^{\op}
}
\end{xy}\end{equation}
admits a filler by a $p$-cocartesian antioriented $n$-simplex.
The $(-)^\op$ which appears in these conditions does not convert the cartesian fibration $p$ into a cocartesian fibration, but rather into an anticocartesian fibration, and the result is easier to state for anticocartesian fibrations since it does not require further dualization, though one could easily apply whichever involutions are necessary to the (anti)orientals and to obtain analogous statements.

We offer another yet another characterization of cartesian fibrations, as follows, making full use the oplax functor categories in order to reduce to having to test only on the source inclusion $\{0\}\subset\bD^1$.
For a functor of $\infty$-categories $\phi: Y\to X$, one can isolate
a subcategory $$ \Fun^{\lax, \cocart}(\bD^1,Y) \subset \Fun^{\lax}(\bD^1,Y)$$ whose $n$-morphisms for $n \geq 0$
correspond to functors $\bD^1 \boxtimes \bD^n \to Y$
such that the composition $$ \bD^{n+1} \to \bD^1 \boxtimes \bD^n \to Y $$
with the functor assigning the unique non-invertible $n+1$-morphism of $\bD^1 \boxtimes \bD^n$
is a $\phi$-cocartesian $n+1$-morphism.
Then $\phi:Y\to X$ is a cartesian fibration if the induced functor
$$
\Fun^{\oplax, \cart}(\bD^1,Y) \to \Fun^\oplax(\bD^1,X) \times_{\Fun^\oplax(\{0\},X)} Y
$$
is an equivalence.

\subsection{Variants}
There are a number of other types of fibrations which are closely related to the notion of cartesian fibration.
To motivate some of these notions, we show that a functor of $\infty$-categories $p:Y\to X$ is a cartesian fibration if any only if its basechange $Y\times_{X}\bDelta^n\to\bDelta^n$ along any map of the form $\bDelta^n\to X$ is a cartesian fibration.
The same holds for various other dense subcategories of $\infty\Cat$; that is, we could also test on cubes or objects of $\Theta$.
Density, however, is crucial here, so it does not apply to the disks $\bD^n$.
Rather, a slight generalization of the notion of cartesian fibration is that of locally cartesian fibration.
While they have an intrinsic definition, we show they are precisely those functors $p:Y\to X$ whose restriction to each cell $\bD^n\to X$ of $X$ is a cartesian fibration $Y\times_X\bD^n\to\bD^n$.

Another basic type of fibration which arises in practice are the bicartesian fibrations, which are functors of $\infty$-categories $p:Y\to X$ which are simultaneously cartesian and cocartesian fibrations.
As might be expected, bicartesian fibrations are related to adjunctions in the $\infty$-category of $\infty$-categories.
Since we will need a bit more technology to assert anything nontrivial about bicartesian fibrations, we treat them instead in the forthcoming paper \cite{OrientedGrothendieck2026}.

In the setting of enriched category theory, there is also a notion of fibration.
If $\mV$ is a monoidal category, we say that an $\mV$-enriched functor $\phi: \mC \to \mD$ is a cartesian fibration if for every $Y \in \mC$ and morphism $\alpha: Z \to \phi(Y)$ in $\mD$, there is a $\phi$-cartesian morphism $X \to Y$ in $\mC$ lying over $\alpha$.
Here the notion of $\phi$-cartesian is the expected generalization from that of the $1$-categorical context.

This formal notion of enriched fibration can be used to define the notion of cartesian fibration of oriented categories, and dually a notion of cocartesian fibration of oriented categories.
However, these are not simply fibrations of categories enriched in $\infty\Cat$ under the Gray tensor monoidal structure; rather, it makes explicit reference to the notion of (co)cartesian fibration of $\infty$-categories.
Namely, we say that an oriented functor $\phi: \mC \to \mD$ is a cocartesian fibration if it is a $(\infty\Cat, \boxtimes^\rev)$-enriched cartesian fibration and, for every $X,Y \in \mC$
the induced functor
$$\Mor_\mC(X,Y) \to \Mor_\mD(\phi(X),\phi(Y)) $$ is a cartesian fibration
and for every $X \in \mC$ and morphism $Y \to Z $ in $\mC$ the induced commutative square
$$\begin{xy}
\xymatrix{
\Mor_\mC(X,Y) \ar[d]^{} \ar[r]
& \Mor_\mC(X,Z) \ar[d]
\\ 
\Mor_\mD(\phi(X),\phi(Y)) \ar[r] & \Mor_\mD(\phi(X),\phi(Z))}
\end{xy}$$
is a map of cartesian fibrations.

An example of a fibration of oriented categories is the tautological cocartesian fibration
\[
\infty\fcat_{\ast//}\to\infty\fcat.
\]
This might also be referred to as the universal cocartesian fibration, especially as its restriction along the inclusion $\infty\scat\to$$\,\infty\fcat$ is the universal cocartesian fibration $\infty\fcat_{\ast//}\to\infty\fcat$ of $\infty$-categories.
More generally, given any object $S$ of an $\infty$-category or oriented category $\mC$, we can construct a slice fibration
\[
\mC_{//S}\to\mC
\]
given by forgetting the structure maps to $S$.
This is a cartesian fibration in which the cartesian lifts of morphisms in $\mC$ with specified lift of the target cells are essentially obtained by composition with the structure maps to $S$.

\subsection{Bifibrations and free fibrations}
A bifibration is a functor of $\infty$-categories $p:X\to S\times T$ such that the projection $X\to S$ is cartesian and $X\to T$ is cocartesian.
The canonical example of a bifibration is the arrow bifibration
\[
p:\Fun^{\oplax}(\bD^1,X)\to\Fun^{\oplax}(\partial\bD^1,X)\simeq X\times X,
\]
which is the source fibration in first factor and the target fibration in the second factor.
We show that the source functor is cartesian via precomposition while the target functor might be expected to be cocartesian via postcomposition.
This is evidently the case $1$-categorically, but it is significantly more technical to establish this higher categorically.

Bifibrations are often called two-sided fibrations in the literature, though we warn the reader that various authors reserve the term bifibations for a two-sided fibration with groupoidal fibers.
Since this is not a reasonable condition to impose when studying fibrations of higher categories, we use will use the terms bifibrations and two-sided fibrations interchangeably.
Applying even or odd dimensional duality to a bifibration results in an antibifibration and conversely, as evidenced in the following table:

$$\begin{tabular}{| c | c | c | c| c |}
\hline
Bifibration & Antibifibration & Bifibration & Antibifibration\\
\hhline{|=|=|=|=|}	
$p:X\to S\times T$ & $p^{\co}:X^{\co}\to S^{\co}\times T^{\co}$ & $p^{\coop}:X^{\coop}\to T^{\coop}\times S^{\coop}$ & $p^{\op}:X^{\op}\to T^{\op}\times S^{\op}$ \\
\hline
$p:X\to S\times T$ & $\overline{p}:\overline{X}\to S^{\coop}\times T^{\coop}$ & $p^\vee:X^\vee\to T^{\circ}\times S^{\circ}$ & $ \overline{p}^\vee:\overline{X}^\vee\to T^{\cop}\times S^{\cop} $ \\
\hline
\end{tabular}$$

The oriented pullback construction, applied to the cospans consisting of identity functors $X\to X\leftarrow X$ and the unique functors $X\to\bD^0\leftarrow X$, results in arrow bifibration
\[
X\underset{X}{\vec{\times}}X\simeq\Fun^{\oplax}(\bD^1,X)\to X\times X\simeq X\underset{\bD^0}{\vec{\times}}X.
\]
More generally, we show that oriented pullbacks are bifibrations in general; that is, the oriented pullback of the span $X\to Z\leftarrow Y$ results in a bifibration
\[
X\underset{Z}{\vec{\times}}Y\to X\times Y.
\]

Given an arbitrary functor of $\infty$-categories $p:Y\to X$, we may form the free cartesian or cocartesian fibration, also called the enveloping fibration.
By definition, the enveloping (co)cartesian fibration construction forms a left adjoint to the forgetful functor from the $\infty$-category of (co)cartesian fibrations over $S$ to the $\infty$-category of $\infty$-categories over $S$.
We show that these left adjoints, applied to a functor $X\to S$, are given by the formulae
\[
X\underset{S}{\vec{\times}}S\qquad\textrm{and}\qquad S\underset{S}{\vec{\times}}X,
\]
respectively.
In particular, they determine bifibrations
\[
X\underset{S}{\vec{\times}}S\to X\times S\qquad\textrm{and}\qquad S\underset{S}{\vec{\times}}X\to S\times X.
\]

\subsection{Universal fibrations}
In a follow-up paper \cite{OrientedGrothendieck2026} we will show a third type of equivalent formulation of cartesian fibration.
These formulations are much more technically involved as they require a delicate analysis of the Grothendieck construction in the higher categorical context.
We show that a functor of $\infty$-categories
$p:Y\to X$ is a cartesian fibration if and only if $p^{\coop}$ is the lax colimit of functor $f:X^{\coop}\to\infty\scat$, or equivalently, if and only if
$p^{\coop}:Y^{\coop}\to X^{\coop}$ is the oriented left fiber of a functor $X^{\coop}\to\infty\scat$ at the terminal object inclusion $*\to\infty\scat$.

Pulling back the universal cocartesian fibration results in a functor
\[
\int_S:\Fun(S,\infty\scat)\to\infty\scat_{/S}.
\]
usually referred to as the Grothendieck construction.
The primary significance of the Grothendieck construction, which we study in the higher and oriented categorical context in a follow-up paper, is that its image is precisely the subcategory
\[
\infty\scat^{\cocart}_{/S}\subset\infty\scat_{/S}
\]
spanned by the cocartesian fibrations and (higher) morphisms of cocartesian fibrations.

Consequently, we may unstraighten cartesian fibrations $p:X\to S$ to presheaves $f:S^{\op}\to\infty\Cat$, providing a means of overcoming one of the major technical challenges involved in higher category theory.
This is tremendously useful in practice in homotopy-coherent mathematics, fibrations can usually be specified even when there is only naturally a contractible space of choices for something, whereas a functor requires specific choices for everything which are compatable with the infinity hierarchy of higher compositions.

There is a refinement of the universal cocartesian fibration of $\infty$-categories to a universal cocartesian fibration of oriented categories.
It is defined as the target projection
\[
\infty\fcat_{\ast//}\to\infty\fcat
\]
and can be shown to be a cocartesian fibration of oriented categories which restriction along the standard inclusion $\infty\scat\to$ $\infty\fcat$ recovers the target fibration $\infty\scat_{\ast//}\to\infty\scat$.

\subsection{Main results}
The theory of fibrations in the setting of $\infty$-categories is considerable more complex than in the setting of $1$-categories.
Thus it is should not be surprising that there are a significant number of equivalent characterizations of cartesian fibrations in terms of lifting properties.

\begin{theorem}[{\cref{T1}}]
For a morphism of $\infty$-categories $p:Y\to X$, the following conditions are equivalent:
\begin{enumerate}[\normalfont(1)]\setlength{\itemsep}{-2pt}
\item[\em{(1)}]
For each positive integer $n$, $n$-cell $x:\bD^n\to X$, and lift $t(y):\bD^{n-1}\to Y$ of the target $n-1$-cell $\bD^{n-1}\to\bD^n$, there exists a $p$-cartesian lift $y:\bD^n\to Y$ of $x$ with target $t(y)$.
\item[\em{(2)}]
For all positive integers $n$, every commutative square
\begin{equation}
\begin{xy}
\xymatrix{
\bDelta^0 \star \bDelta^{n-2} \ar[d]_{\{0\}\star \bDelta^{n-2}} \ar[r]
& Y^{\op} \ar[d]^\phi
\\ 
\bDelta^1 \star \bDelta^{n-2} \ar[r] & X^{\op}
}
\end{xy}\end{equation}
admits a filler by a $p$-cocartesian oriented $n$-simplex, and every commutative square
\begin{equation}\label{filler1}
\begin{xy}
\xymatrix{(\bDelta^{n-2})^\co \bar{\star} (\bDelta^0)^\co \ar[d]_{(\bDelta^{n-2})^\co \bar{\star} \{0\}} \ar[r]
& Y^{\op} \ar[d]^\phi
\\ 
(\bDelta^{n-2})^\co \bar{\star} (\bDelta^1)^\co \ar[r] & X^{\op}
}
\end{xy}\end{equation}
admits a filler by a $p$-cocartesian antioriented $n$-simplex.
\item[\em{(3)}]
The functor $p$ admits cartesian lifts of 1-dimensional arrows with specified target and, for every ordered pair of objects $s, t \in Y$, $p_{s,t}:\Mor_Y(s,t)\to\Mor_X(p(s),p(t))$ is a cocartesian fibration, such that for any morphisms $s'\to s$ and $t\to t'$ in $X$, the induced map $p_{s,t}\to p_{s',t'}$ is a morphism of cocartesian fibrations.
\end{enumerate}
\end{theorem}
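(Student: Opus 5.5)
We will prove the cycle of implications $(1)\Leftrightarrow(3)$ and $(1)\Leftrightarrow(2)$, treating (1) as the hub. The basic tool is the inductive definition of $p$-(co)cartesian $n$-morphisms recalled above. By that definition, an $n$-morphism $\alpha$ of $Y$ is $p$-cartesian exactly when its image $(n-1)$-morphism in each $\Mor_Y(s,t)$ is cartesian (respectively cocartesian, with the alternation) for the induced functor on morphism $\infty$-categories, after whiskering with arbitrary $s\to\alpha(0)$ and $\alpha(1)\to t$.

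\textbf{Step 1: $(1)\Leftrightarrow(3)$.} The case $n=1$ of (1) is literally the first clause of (3). For $n\geq 2$, an $n$-cell $x:\bD^n\to X$ with $x(0)=p(s)$ and $x(1)=p(t)$ is the same as an $(n-1)$-cell of $\Mor_X(p(s),p(t))$. A lift of its target corresponds to a lift of the target in $\Mor_Y(s,t)$. Passing to $\Mor$ reverses the variance (source/target swap at the level of the induced functor), so the existence of lifts in (1) for all $n\geq2$ becomes the existence of cocartesian lifts for $p_{s,t}$ in all dimensions. By induction on dimension this says that $p_{s,t}$ is a cocartesian fibration, in the sense of (1) applied to $p_{s,t}^{\op}$-type duals.

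The remaining clause of (3), that whiskering $p_{s,t}\to p_{s',t'}$ preserves cocartesian morphisms, must be matched with the universal quantification over pairs $(X\to\alpha(0),\alpha(1)\to Y)$ in the definition of $p$-cartesian $n$-morphism. Conversely, given (3), a cocartesian lift in $\Mor_Y(s,t)$ is $p$-cartesian as an $n$-morphism precisely because whiskering preserves cocartesianness. Two further points are needed here. First, a cocartesian lift for $p_{s,t}$ that is preserved by all whiskerings is cocartesian for every $p_{s',t'}$. Second, local cocartesianness must be upgraded to global cocartesianness in the target of the whiskering; this is a composition argument.

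\textbf{Step 2: $(1)\Leftrightarrow(2)$.} Using the descriptions of oriented and antioriented simplices as joins, we have
\[
\bDelta^n\simeq \bDelta^1\star\bDelta^{n-2}\simeq \bDelta^0\star(\bDelta^0\star\bDelta^{n-2}),
\]
and the analogous description with $\bar{\star}$ and co holds. Morphism $\infty$-categories of a join out of the initial vertex can be computed as a slice/cone, namely $\Mor_{\bDelta^0\star K}(0,-)$. Lifting the square in (2) is therefore equivalent to lifting, in $\Mor_{Y^{\op}}$, an $(n-1)$-simplex of the same (anti)oriented type with specified initial face. This sets up an induction on $n$ parallel to Step 1, and it reduces (2) to condition (3) for the opposite functor $p^{\op}$, which is anticocartesian. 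In that reduction, the oriented squares control the cocartesian lifts in the morphism categories, and the antioriented squares control the lifts after the co-duality alternation.

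For the base of the induction, one direction comes from restricting fillers along the disk inclusion $\bD^n\to\bDelta^n$ given by the top cell. The other direction comes from filling simplices cell by cell: first lift the $1$-skeleton by cartesian lifts of arrows, then fill higher cells using the lifts of (1), using that $p$-cartesian cells compose and that any filler whose top cell is cartesian is unique up to a contractible choice.

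\textbf{Main obstacle.} I expect the hardest point to be the bookkeeping of dualities: $\Mor$ of an oriented simplex is an antioriented-type object, and cartesian becomes cocartesian. The essential lemma is that a $p$-cartesian $n$-cell remains $p$-cartesian after whiskering and pasting along simplices. I would first prove stability of $p$-cartesian morphisms under composition and whiskering, by induction on $n$ using the definition, and then use it in both steps.
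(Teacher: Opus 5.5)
\paragraph{Step 1 is fine; Step 2 has a genuine gap.}
Your Step 1 is essentially \cref{cocarto} after dualizing, and it is simpler than you expect. The definition of a cocartesian $n$-morphism already quantifies over all whiskerings, so no local-to-global upgrade is needed.

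The gap is in Step 2: the reduction you propose does not exist. You pass from a simplex-filling problem for $\phi=p^{\op}:\mC\to\mD$ to an $(n-1)$-simplex-filling problem in morphism $\infty$-categories. But the morphism $\infty$-categories of oriented simplices are oriented cubes, not simplices. Already $\Mor_{\bDelta^3}(0,3)$ has the four objects $03$, $13\circ 01$, $23\circ 02$ and $23\circ 12\circ 01$. So taking $\Mor$ produces cube-shaped lifting problems, to which condition (2) in lower dimension does not apply, and the induction does not close.

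The cell-by-cell base argument fails for a related reason. Cocartesianness of an oriented $n$-simplex is defined recursively through oplax slices: the associated $(n-1)$-simplex in $\mC_{//^\oplax Y}$ must be cocartesian for $\mC_{//^\oplax Y}\to\mD_{//^\oplax Y}$. Without knowing how $\phi$ behaves on slices, you cannot certify that a simplex assembled from cartesian cells is cocartesian. Your claim that fillers with cartesian top cell are unique is also unproved. For (2)$\Rightarrow$(1) you would further need two facts you do not establish:
\begin{itemize}
\item top cells of cocartesian simplices are cocartesian;
\item every disk-lifting problem can be rewritten as a simplex-lifting problem with the correct face.
\end{itemize}

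\paragraph{The missing idea: peel off a vertex with a slice.}
Remove the last vertex by a slice instead of the first vertex by $\Mor$. For $n\geq 2$ we have $\bDelta^1\star\bDelta^{n-2}\simeq(\bDelta^1\star\bDelta^{n-3})\star\bDelta^0$. The adjunction $(-)\star\bDelta^0\dashv(-)_{//^\oplax}$ then turns the $n$-dimensional filler condition for $\phi$ into the identical $(n-1)$-dimensional condition for $\mC_{//^\oplax X}\to\mD_{//^\oplax\phi(X)}$, compatibly with the transition functors. Dually, for antioriented simplices you remove the first vertex using $\mC_{Y//^\oplax}\to\mD_{\phi(Y)//^\oplax}$. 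By induction on dimension, (2) becomes equivalent to the following: all these slice functors are $(k-1)$-anticocartesian fibrations, and the transition functors are maps of such.

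That a $k$-anticocartesian fibration has this slice property is the substantial input missing from your plan. It is \cref{slicecocar}, deduced from \cref{laxfib}, which says fibrations are stable under oriented pullback. Its base case uses cocartesian lifts of edges together with cartesian $2$-cells filling antioriented squares.

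The converse goes by passing to the fibers of the slices over objects, which are the morphism $\infty$-categories, and then applying \cref{cocarto}. This is where your Step 1 enters.
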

A cartesian fibration of $\infty$-categories is any functor of $\infty$-categories $p:Y\to X$ which satisfies one of the equivalent conditions above.
We typically regard condition (1) as the actual definition, since it is probably the most checkable in a generic situation.
However, especially when working with $n$-categories, it is often the case that condition (3) is the easiest to work with, at least if one has good control over the morphism $\infty$-categories in $p:Y\to X$.
This is the approach taken by Nuiten in \cite{nuiten2023straightening}, following Street's work in the bicategorical case \cite{street1980fibrations}.
However, Nuiten's inductive approach has the drawback that it does not explicitly identify the cartesian lifts of higher morphisms, as in the standard $1$-categorical definition of cartesian fibration.
In many situations, especially when the $\infty$-categories $X$ and $Y$ are not defined inductively, it is easier to show that $p:Y\to X$ is a cartesian fibration by explicitely constructing the cartesian lifts.

In a sequel paper \cite{OrientedGrothendieck2026}, we will provide a number of additional equivalent characterizations of cartesian fibrations, related to the fact that they are classified by the universal cartesian fibration.
This required detailed knowledge of the Grothendieck construction, in particular determining its essential image.
In particular, this results in higher categorical analogues of Lurie's straightening and unstraightening equivalence, and is not a formal consequence of the 1-categorical theory, but instead involves a delicate analysis of cartesian fibrations over some dense subcategory of $\infty\Cat$ (we work with $\Theta$) and descent arguments.

\begin{theorem}(\cref{orienfib})
For any $\infty$-category $X$ the categories of cartesian and cocartesian fibrations over $X$ refine to antioriented categories
\[
\infty\fcat^{\cart}_{/X}\qquad\textrm{and}\qquad\infty\fcat^{\cocart}_{/X},
\]
respectively,
and the categories of anticartesian and anticocartesian fibrations over $X$ refine to oriented categories
\[
\infty\fcat^{\overline{\cart}}_{/X}\qquad\textrm{and}\qquad\infty\fcat^{\overline{\cocart}}_{/X},
\]
respectively.

% Furthermore, the category of bicartesian fibrations over $X$ refines to a bioriented category
% \[
% \infty\fcat^{\bicart}_{/X}.
% \]
\end{theorem}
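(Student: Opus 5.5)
The plan is to realize these categories of fibrations as full sub-objects of an (anti)oriented slice that already exists, and to identify them as the image of a pullback functor out of an oriented slice over the universal fibration, as the introduction suggests.

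\textbf{Ambient structure.} First observe that $\infty\fcat_{/X}$ carries an oriented (and, dually, an antioriented) enrichment. The morphism $\infty$-category between $Y\to X$ and $Z\to X$ is the fiber over the given structure map of
\[
\Fun^{\oplax}(Y,Z)\to\Fun^{\oplax}(Y,X),
\]
using the internal homs for the Gray tensor product. Composition is induced by the closed structure of $(\infty\Cat,\boxtimes)$. For the antioriented version one uses $\Fun^{\lax}$ instead, that is, the closed structure for $\boxtimes^{\rev}$.

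\textbf{Candidate morphism objects.} For cartesian fibrations $Y\to X$ and $Z\to X$, define the morphism object as the sub-$\infty$-category
\[
\Fun^{\cart}_{/X}(Y,Z)\subset\Fun^{\lax}_{/X}(Y,Z),
\]
whose $n$-morphisms $\bD^n\boxtimes Y\to Z$ (or $Y\boxtimes\bD^n\to Z$, depending on the orientation) are those sending $\bD^0\boxtimes(\text{cartesian cells})$ to $p$-cartesian cells. This matches the description of $\Fun^{\oplax,\cart}(\bD^1,Y)$ given earlier.

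\textbf{Closure under composition.} The key point is that these sub-objects are closed under the enriched composition, so that they span a sub-enriched category. Closure under the Gray composition means the following: if $F:\bD^n\boxtimes Y\to Z$ and $G:\bD^m\boxtimes Z\to W$ both preserve cartesian cells in the above sense, then so does the composite
\[
\bD^m\boxtimes\bD^n\boxtimes Y\to W.
\]
To prove this, use characterization (3) of \cref{T1} inductively on morphism $\infty$-categories. Alternatively, use the simplex-lifting characterization (2), checking on the dense generators $\bDelta^k$: cartesian cells in $\bD^m\boxtimes\bD^n\boxtimes Y$ relative to the projection to $X$ are generated by cells of the form (identity in the $\bD$-factors) $\boxtimes$ (cartesian cell of $Y$). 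Their images are cartesian because cartesian morphisms compose and are stable under whiskering by cells lying over identities.

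\textbf{Other variants.} The cocartesian case follows by applying the involution $(-)^{\coop}$, which exchanges cartesian and cocartesian fibrations and reverses the Gray tensor product in the appropriate way. The anticartesian and anticocartesian cases follow by applying $(-)^{\co}$ or $(-)^{\op}$; these swap $\boxtimes$ with $\boxtimes^{\rev}$ and hence turn antioriented into oriented. One also has to check which orientation is forced: on underlying $\infty$-categories the construction should recover the usual category of cartesian fibrations. I would also verify that the lax side is the one compatible with cartesian transport, by testing on the slice fibration $\mC_{//S}\to\mC$.

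\textbf{Main obstacle.} I expect the hardest step to be the closure under Gray composition. Higher cells in $\bD^m\boxtimes\bD^n$ are not products of lower cells, so one must show that the non-invertible top cells produced by the Gray interchange remain cartesian. I would first try to reduce this to the statement that, for a cartesian fibration $p$, the map
\[
\Fun^{\oplax,\cart}(\bD^1,Y)\to\Fun^{\oplax}(\bD^1,X)\times_X Y
\]
is an equivalence, stated earlier in the introduction. That equivalence lets cartesian cells be transported uniquely, so that composites are determined by, and hence agree with, cartesian lifts.
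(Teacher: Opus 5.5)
There is a genuine gap, and it sits exactly at the step you flag as the main obstacle.

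\textbf{The morphism objects are not the right ones.} As stated, your condition only asks that cells $\bD^0\boxtimes c$ with $c$ cartesian go to cartesian cells. That is, it only asks that the restrictions of $\bD^n\boxtimes Y\to Z$ to the vertices of $\bD^n$ be maps of fibrations. This defines the sub-$\infty$-category of $\Fun^\oplax_{/X}(Y,Z)$ that is full on higher cells among maps of fibrations. For that choice:
\begin{itemize}
\item closure under Gray composition is trivial, since a functor out of $A\boxtimes B$ lands in a full subcategory as soon as it does so on objects;
\item it works equally well for either orientation;
\item $Y^K_\lax$ would no longer be the cotensor needed later (\cref{restcoca}, \cref{leftadjointtheorem}).
\end{itemize}
So it cannot be the content of the theorem, and it is inconsistent with the obstacle you then describe. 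The paper's morphism object (\cref{repren}) consists of the $n$-cells whose adjoint $Y\to\Fun^\lax(\bD^n,Z)$ is a map of fibrations over $X\to\Fun^\lax(\bD^n,X)$. By the cellwise description of (co)cartesian cells in lax functor categories, this means: for every cell $e$ of $\bD^n$ of \emph{any} dimension and every (co)cartesian cell $c$, the top cell of $e\boxtimes c$ is sent to a (co)cartesian cell. Over a fixed base, this forces the mixed cells to go to equivalences. Your claim that the relevant cells of $\bD^m\boxtimes\bD^n\boxtimes Y$ are generated by cells $\mathrm{id}\boxtimes c$ is unjustified, and it only sees the vertex part.

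\textbf{The key lemma controlling the mixed Gray cells is missing.} That lemma says $\Fun^\lax(K,-)$ sends cartesian and cocartesian fibrations to fibrations of the same kind, with (co)cartesian cells detected cell by cell on $K$ (\cref{indufibr}, \cref{indulax}, resting on stability under oriented pullback, \cref{laxfib}). Granting it, closure is short (\cref{comprest}). By \cref{simplo} it suffices to test the top cell $\bD^{n+m}\to\bD^m\boxtimes\bD^n$. The relevant composite
$Y\to\Fun^\lax(\bD^n,Z)\to\Fun^\lax(\bD^n,\Fun^\lax(\bD^m,W))\simeq\Fun^\lax(\bD^m\boxtimes\bD^n,W)\to\Fun^\lax(\bD^{n+m},W)$
is then a composite of maps of fibrations, the last one precisely by the cellwise characterization. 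That these cells form a sub-$\infty$-category at all follows because $K\mapsto\{\text{maps of fibrations } Y\to\Fun^\lax(K,Z)\}$ takes colimits to limits and embeds into the complete Segal $\Theta$-space of $\Fun^\oplax$, so \cref{comple} applies. None of your suggested routes supplies this: not characterization (3) of \cref{T1}, not simplex lifting, and not the equivalence for $\Fun^{\oplax,\cart}(\bD^1,Y)$.

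\textbf{The same lemma fixes the orientation.} In the paper's conventions $\L\Mor_{\infty\fcat}(Y,Z)=\Fun^\oplax(Y,Z)$, the right adjoint to $(-)\boxtimes Y$. Its $n$-cells $\bD^n\boxtimes Y\to Z$ are functors $Y\to\Fun^\lax(\bD^n,Z)$, which is why cartesian and cocartesian fibrations give antioriented structures. The anti-variants give oriented ones because $\Fun^\oplax(K,-)$ preserves them (\cref{induoplax}) but does not preserve cartesian fibrations in general. Your ambient paragraph has the two enrichments swapped. Placing $\Fun^{\cart}_{/X}(Y,Z)$ inside $\Fun^\lax_{/X}(Y,Z)$ also conflicts with your own description of its $n$-cells as $\bD^n\boxtimes Y\to Z$.
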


\begin{theorem}[{\cref{leftadjointtheorem}}]
For any $\infty$-category $X$, the inclusion of the subcategory
\[
\infty\fcat^{\cart}_{/X}\subset\infty\fcat_{/X}
\]
admits a left adjoint (compatibly with the oriented structure).
We refer to the value of the left adjoint
on a functor $f : X \to Y$ as the free, or enveloping, cartesian fibration on $f$, and it is computed as the oriented pullback $$ X \vec{\times}_Y Y \to X $$ of $f$ along the identity of $Y.$
\end{theorem}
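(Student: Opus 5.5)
The approach is to construct the enveloping cartesian fibration explicitly and verify the universal property directly, using the lifting characterizations from \cref{T1}. I write the functor as $p : Y \to X$, so the candidate free cartesian fibration is
\[
E \;:=\; Y \underset{X}{\vec{\times}} X \;\simeq\; Y \times_{X} \Fun^{\oplax}(\bD^1,X) \longrightarrow X ,
\]
that is, the projection to the second factor of the oriented pullback of $p$ along $\id_X$. I expect the main obstacle to be the enriched part of the universal property in the third step.

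\textbf{Step 1: $E \to X$ is a cartesian fibration.} The oriented pullback is a pullback of the arrow bifibration $\Fun^{\oplax}(\bD^1,X) \to X \times X$ along $p \times \id$. The earlier result on oriented pullbacks identifies $E \to Y \times X$ as a bifibration. I will use its factor that projects to $X$, choosing the orientation convention for $\vec{\times}$ so that this factor is the cartesian one. That factor is the evaluation functor on $\Fun^{\oplax}(\bD^1,X)$ acting by precomposition, which the introduction identifies as cartesian. Cartesian fibrations are stable under base change, so $E \to X$ is cartesian.

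The cartesian lifts are explicit. An $n$-cell of $X$ ending at $x$, together with an object $(y,\ p(y)\to x)$ of $E$ over $x$, lifts by composing the arrow with that cell. In particular, every morphism of $E$ whose $Y$-component is an identity is cartesian.

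\textbf{Step 2: the unit.} The unit is the section $\eta : Y \to E$, $y \mapsto (y,\id_{p(y)})$, given by the constant-arrow functor $X \to \Fun^{\oplax}(\bD^1,X)$. Every object $(y,\alpha : p(y) \to x)$ of $E$ is then the target of a cartesian-type arrow from $\eta(y)$ lying over $\alpha$. So $E$ is generated by $\eta(Y)$ under cartesian transport.

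\textbf{Step 3: the universal property.} For a cartesian fibration $q : Z \to X$, I will show that restriction along $\eta$ gives an equivalence
\[
\Fun_X^{\cart}(E,Z) \;\longrightarrow\; \Fun_X(Y,Z).
\]
The inverse is built from the characterization of cartesian fibrations by the equivalence
\[
\Fun^{\oplax,\cart}(\bD^1,Z) \;\simeq\; \Fun^{\oplax}(\bD^1,X) \times_{X} Z .
\]
Given $g : Y \to Z$ over $X$, form
\[
E \simeq Y \times_X \Fun^{\oplax}(\bD^1,X) \xrightarrow{\;g\;} Z \times_X \Fun^{\oplax}(\bD^1,X) \simeq \Fun^{\oplax,\cart}(\bD^1,Z) \xrightarrow{\;\mathrm{ev}\;} Z ,
\]
where $\mathrm{ev}$ evaluates at the endpoint not fixed by the base-change identification.
\begin{itemize}
\item This composite preserves cartesian cells, by the explicit description of lifts in Step 1.
\item Restricting it along $\eta$ returns $g$, because the cartesian lift of an identity is an identity.
\item Conversely, a cartesian functor $E \to Z$ is determined by its restriction to $\eta(Y)$, since every cell of $E$ is obtained from $\eta(Y)$ by cartesian transport. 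Making this precise amounts to comparing the two maps into $\Fun^{\oplax,\cart}(\bD^1,Z)$ and applying the equivalence above.
\end{itemize}

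\textbf{Step 4: compatibility with the oriented structure.} To get an adjunction of antioriented categories rather than just of underlying categories, the equivalence of Step 3 must hold on morphism $\infty$-categories, naturally in $Z$. I plan to reduce this to the $\infty$-categorical statement by testing against an arbitrary $K \in \infty\Cat$. The idea is to replace $Z$ by the cotensor of $q$ with $K$, formed with the appropriately oriented Gray tensor. Two facts are needed:
\begin{itemize}
\item This cotensor is again a cartesian fibration over $X$.
\item Its $X$-functors correspond to $K$-cells in $\infty\fcat^{\cart}_{/X}$, by the construction in \cref{orienfib}.
\end{itemize}
This step is where the care goes: one has to keep the lax/oplax conventions consistent, so that the cotensor produces exactly the antioriented mapping objects. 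The fallback, if the cotensor is awkward to control, is to prove naturality of the Step 3 equivalence in $\bD^n \boxtimes (-)$ directly using condition (2) of \cref{T1}.
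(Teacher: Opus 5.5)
Your overall architecture is the paper's. The paper builds an underlying adjunction from the equivalence $\Fun^{\oplax,\cocart}(\bD^1,-)\simeq\cdots$ of \cref{precocart2} plus an evaluation map (\cref{envo}, \cref{envelo}, \cref{huip}). It then upgrades this to the antioriented statement via \cref{adj}~(2), because the inclusion preserves the left cotensors $Z^K_\lax$ (\cref{restcoca}, \cref{univeqr}); your Step~4 is exactly this.

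However, Steps~1--3 as written have the orientation backwards, and there is no convention left to choose. By \cref{adesc} and \cref{targetFib}, $Y\vec{\times}_X X\simeq Y\times_{\Fun^\oplax(\{0\},X)}\Fun^\oplax(\bD^1,X)$. Its projection to $X$ is evaluation at the target, which is the \emph{cocartesian} leg of the bifibration: it acts by postcomposition. Your explicit data (objects $(y,\alpha\colon p(y)\to x)$, unit $y\mapsto(y,\id_{p(y)})$, every object receiving an arrow from $\eta(y)$ over $\alpha$) describe precisely the paper's free cocartesian fibration $\Env(Y)$. That is in general not cartesian: for $Y=\ast\to\bD^1$ picking $1$, your $E$ is a single object over $1$, so $0\to 1$ has no cartesian lift. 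Likewise, for cartesian $q$ the equivalence $\Fun^{\oplax,\cart}(\bD^1,Z)\simeq\Fun^\oplax(\bD^1,X)\times_X Z$ is a pullback along the \emph{target}. So it cannot receive $g\times\id$ from your $E$, which is pulled back along the source.

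The fix is to use $X\vec{\times}_X Y$ instead:
objects $(x,\alpha\colon x\to p(y),y)$, projection to the source, cartesian lifts by precomposition, unit $y\mapsto(p(y),\id_{p(y)},y)$, and inverse $\ev_0\circ(\id\times g)$.
Alternatively, do what the paper does. Prove the cocartesian statement for $\Env(Y)=Y\vec{\times}_X X$, which is what your Steps~1--3 literally treat, and transport it along $(-)^{\coop}$ as in \cref{envelo2}. (The formula in the version of the statement you were given is garbled in the same way; the body of the paper states the result for $\infty\fcat^{\cocart}_{/S}$.)

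Once the orientation is fixed, two steps you treat as evident carry the real content.

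First, showing that $\ev_0\circ(\id\times g)$ is a map of cartesian fibrations requires evaluation $\Fun^{\oplax,\cart}(\bD^1,Z)\to Z$ to preserve cartesian cells in all dimensions, not only the 1-dimensional lifts of Step~1. This is the claim in \cref{envo}~(1) that $\rho$ is a map of fibrations, which the paper approaches by reducing to $n$-categories and inducting on $n$.

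Second, ``a cartesian functor is determined by its restriction to $\eta(Y)$'' is not an argument in this setting. In the paper's cocartesian notation, let $\xi_\mB\colon\Env(\mB)\simeq\Fun^{\oplax,\cocart}(\bD^1,\mB)\xrightarrow{\ev_1}\mB$ be your evaluation map. One uses the identity $\xi_{\Env(\mC)}\circ\Env(\iota)\simeq\id$ of \cref{envo}~(3), together with $\xi_\mB\circ\Env(H)\simeq H\circ\xi_{\Env(\mC)}$ for any map $H\colon\Env(\mC)\to\mB$ of cocartesian fibrations. The latter is where $H$ preserving cocartesian cells enters. This yields only a bijection on equivalence classes. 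To get an equivalence of functor $\infty$-categories, the paper replaces the target by $\mD\times_{\Fun(\mA,\mD)}\Fun(\mA,\mB)$, which is again a fibration by \cref{cocartexp}, and applies the Yoneda lemma. That is the cartesian-product analogue of your Step~4.
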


\begin{theorem}[{\cref{precocart}}]
    A functor of $\infty$-categories $p:Y\to X$ is a cartesian fibration if and only if the induced functor
    $$
    \Fun^{\oplax, \cart}(\bD^1,Y) \to \Fun^\oplax(\bD^1,X) \times_{\Fun^\oplax(\{0\},X)} Y
    $$
    is an equivalence.
\end{theorem}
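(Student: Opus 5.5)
The plan is to reduce the statement to characterization (1) of \cref{T1}, by induction on dimension, using the description of the $n$-morphisms of $\Fun^{\oplax}(\bD^1,Y)$ as functors $\bD^1\boxtimes\bD^n\to Y$. Write
$$\theta:\Fun^{\oplax,\cart}(\bD^1,Y)\to\Fun^\oplax(\bD^1,X)\times_{\Fun^\oplax(\{0\},X)} Y$$
for the functor in question. Here the source is the subcategory whose $n$-morphisms $\bD^1\boxtimes\bD^n\to Y$ send the unique non-invertible $(n+1)$-cell $\bD^{n+1}\to\bD^1\boxtimes\bD^n$ to a $p$-cartesian $(n+1)$-morphism. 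In degree $0$ this means $1$-morphisms of $Y$ that are $p$-cartesian.

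The first step is to show that $\theta$ is fully faithful for an arbitrary functor $p$, with no fibration hypothesis. Since $\bD^1\boxtimes\bD^n$ is built from $\bD^1\boxtimes\partial\bD^n$ by attaching the top cell $\bD^{n+1}$ together with the cell $\bD^n\times\{1\}$, I would check fullness on $n$-cells. A lift of a $k$-morphism of the target, with both endpoints in $\Fun^{\oplax,\cart}$, amounts to extending a map on a boundary by a cell whose target face is fixed. The defining universal property of a $p$-cartesian morphism (the pullback condition on $\Mor$, iterated inductively through the definition of $\phi$-cartesian $n$-morphisms) says exactly that the space of such extensions is contractible. Concretely, the pullback square defining cartesianness of the $(n+1)$-cell lets one transfer along the Gray cylinder. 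I would organize this as a statement about mapping $\infty$-categories, $\Mor$ of $\Fun^{\oplax,\cart}(\bD^1,Y)$ between $f,g$, and identify it with the corresponding pullback of $\Mor$'s in the target using the universal property of $g$ as a cartesian arrow.

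The second step is essential surjectivity on objects and on higher morphisms, and this is where the two directions separate.

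\emph{Cartesian fibration implies equivalence.} An object of the target is a $1$-morphism $x:\bD^1\to X$ together with a lift of one endpoint. The variance conventions of $\Fun^\oplax$ make $\{0\}$ correspond to the target of the cartesian arrow, so condition (1) of \cref{T1} with $n=1$ supplies the cartesian lift. For higher morphisms, fully faithfulness reduces surjectivity to lifting the top cell $\bD^{n+1}$ of $\bD^1\boxtimes\bD^n$ with its target prescribed, which is again provided by (1).

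\emph{Equivalence implies cartesian fibration.} Given an $n$-cell $x:\bD^n\to X$ and a lift of its target, I would view $\bD^n$ as a quotient of $\bD^1\boxtimes\bD^{n-1}$ that collapses everything except the top cell. This turns the data into an $(n-1)$-morphism of the target of $\theta$, where the source $\bD^{n-1}$-part is constant at the lifted target. Lifting it along $\theta$ yields a map $\bD^1\boxtimes\bD^{n-1}\to Y$ whose top cell is $p$-cartesian. One then has to check that it factors through the quotient $\bD^n$. This follows because the collapsed cells lie over identities and can be chosen to be identities, using the fully faithfulness established in the first step.

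I expect the main obstacle to be the first step: identifying $\Mor$ in $\Fun^{\oplax}(\bD^1,-)$ with an oriented/Gray-type pullback, so that cartesianness of the top cell genuinely yields contractible lifting spaces in every dimension. I would approach it via the Gray tensor--$\Fun^\oplax$ adjunction, writing $\Mor_{\Fun^\oplax(\bD^1,Y)}(f,g)$ as a pullback involving $\Mor_Y(f(0),g(1))$. The cartesianness of $g$ then appears precisely as the equivalence $\Mor_Y(f(0),g(0))\simeq\Mor_X(\dots)\times\Mor_Y(\dots)$ from the definition.
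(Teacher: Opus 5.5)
\textbf{Gap in Step 1.} Your Step 1 is false: for an arbitrary $p$ the functor $\theta$ is not fully faithful. If it were, $\theta$ would be an equivalence as soon as it is essentially surjective on objects, i.e.\ as soon as $p$ has cartesian lifts of $1$-morphisms. The theorem would then say that every $1$-cartesian fibration is cartesian, which is false. Concretely, take the boundary inclusion $p:\partial\bD^2\to\bD^2$. Both parallel $1$-cells and all identities are $p$-cartesian, so $\theta$ is essentially surjective. But the morphism of $\Fun^\oplax(\bD^1,\bD^2)$ between the two parallel arrows given by the $2$-cell has no preimage, since $\partial\bD^2$ has no $2$-cells.

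The error is the claim that the universal property makes the space of extensions over $\bD^1\boxtimes\bD^k$ contractible. Such an extension must contain a \emph{new} $p$-cartesian top $(k+1)$-cell over a given one, with prescribed target. The universal property of the cartesian cells you already have controls factorizations through them. That is what makes lifts unique, and what lets you recover the free-end $k$-cell once a top cell exists. It cannot produce the top cell itself; that existence is exactly the disk-lifting condition in dimension $k+1$. Your Step 2 invokes this condition separately for higher morphisms, which is inconsistent with Step 1: a fully faithful functor needs no surjectivity on higher morphisms.

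\textbf{What the paper does instead.} For every $p$, the functor $\theta$ is only an \emph{inclusion} (\cref{laxcocar}). The identification you single out as the main obstacle is the paper's key lemma (\cref{recur}, dualized), but what it gives is a recursion, not full faithfulness. Because $g$ is cartesian, the functor induced by $\theta$ on $\Mor(f,g)$ is a base change of the functor of the same type built from the hom functor $p_{f(0),g(1)}$; for $f,g$ identities it is that functor. So on morphisms, $\theta$ is faithful by induction, while its fullness there is equivalent to a lifting property of the hom functors. The paper runs two inductions on this:
\begin{enumerate}
\item $\theta$ is always an inclusion, after reducing to $n$-categories via $\iota_n$ and filtered colimits.
\item Combined with the inductive description of fibrations through hom functors (\cref{cocarto}), $\theta$ is $n$-full if and only if $p$ is an $(n+1)$-cartesian fibration (\cref{cocfull}).
\end{enumerate}
Since equivalences are exactly full inclusions (\cref{eqil}), both directions follow at once. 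If you downgrade Step 1 to ``inclusion'', your forward direction becomes the fullness half of this argument plus \cref{eqil}.

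\textbf{Your converse.} The quotient $\bD^1\boxtimes\bD^{n-1}\to\bD^n$ is a genuinely different route and can be made to work, but not as written. It needs three things:
\begin{enumerate}
\item The quotient must collapse only $\bD^1\boxtimes\partial\bD^{n-1}$ onto $\partial\bD^{n-1}$, sending the evaluated end to the target cell carrying $t(y)$. If that end were collapsed, $t(y)$ would not be part of the data.
\item You need the pushout formula $\bD^n\simeq\bD^1\boxtimes\bD^{n-1}\coprod_{\bD^1\boxtimes\partial\bD^{n-1}}\partial\bD^{n-1}$, which the paper does not prove.
\item The factorization must come from the equivalence hypothesis, not from Step 1. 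The boundary data lies in the image of $Y$ under the diagonal, which factors through $\theta$ via $Y\to\Fun^{\oplax,\cart}(\bD^1,Y)$, so the lift can be taken degenerate there.
\end{enumerate}

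Finally, the $\{0\}$ in the statement is not a variance convention. In the paper $\{0\}$ is the source, and passing from \cref{precocart} to this statement via $(-)^{\op}$ swaps the endpoints of $\bD^1$, so the lift should be prescribed at $\{1\}$.
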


\begin{theorem}[{\cref{targetfi}}]
    Oriented pullbacks are bifibrations.
    More precisely, the projections induced from the oriented pullback of the diagram $X\to Z\leftarrow Y$ result in a bifibration
\[
X\underset{Z}{\vec{\times}}Y\to X\times Y.
\]
\end{theorem}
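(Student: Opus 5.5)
The plan is to reduce to the universal case via basechange, and then verify the two halves of the bifibration condition separately. The oriented pullback $X\vec{\times}_Z Y$ is by definition the pullback
\[
X\underset{Z}{\vec{\times}}Y\simeq (X\times Y)\times_{Z\times Z}\Fun^{\oplax}(\bD^1,Z),
\]
taken along $f\times g: X\times Y\to Z\times Z$ and the endpoint evaluation $\Fun^{\oplax}(\bD^1,Z)\to\Fun^{\oplax}(\partial\bD^1,Z)\simeq Z\times Z$. Cartesian fibrations are stable under basechange, and so are cocartesian fibrations and morphisms of them; I will check this first, directly from characterization (1) of \cref{T1}, since cartesian lifts in a pullback are computed componentwise. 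Granting this, the bifibration property of $X\vec{\times}_Z Y\to X\times Y$ reduces to the bifibration property of the arrow functor $\Fun^{\oplax}(\bD^1,Z)\to Z\times Z$.

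There is one subtlety in this reduction. Being a bifibration over $X\times Y$ requires that the composite to $X$ be cartesian. Over $Z\times Z$ we instead have that the composite to the first factor $Z$ is cartesian. So I will pull back in two stages. First pull back along $f\times \id_Z$, and show that $X\vec{\times}_Z Z\to X$ is cartesian and $X\vec{\times}_Z Z\to Z$ is cocartesian, with these two structures compatible. Then pull back along $\id_X\times g$. A bifibration over $S\times T$, pulled back along $\id_S\times (T'\to T)$, remains a bifibration: the cocartesian half is plain basechange. For the cartesian half, the cartesian lifts over $S$ must be shown to live in fibers over $T$, i.e. to project to equivalences in $T$. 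I will therefore build this fiberwise condition into the definition of bifibration that I verify.

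For the universal case, I will show that the source projection $s:\Fun^{\oplax}(\bD^1,Z)\to Z$ is a cartesian fibration using criterion (3) of \cref{T1}. Cartesian lifts of a $1$-morphism $z'\to z=s(\alpha)$ are given by precomposition: the square with sides $z'\to z\xrightarrow{\alpha} w$ and identity on $w$. The mapping $\infty$-categories of $\Fun^{\oplax}(\bD^1,Z)$ are computed as oriented pullbacks of mapping $\infty$-categories of $Z$. Concretely, $\Mor((a\to b),(c\to d))\simeq \Mor(a,c)\times_{\Mor(a,d)}\Fun^{\oplax}(\bD^1,\Mor(a,d))\times_{\Mor(a,d)}\Mor(b,d)$, with appropriate orientation. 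The map to $\Mor_Z(a,c)$ is again of arrow type but with the dual variance. This sets up an induction on dimension in which cartesian and cocartesian alternate, exactly as in (3). Since $\infty$-categories are not finite-dimensional, I will phrase the induction instead as a statement about all $n$ simultaneously, verifying lifts of $n$-cells as in (1). Alternatively, I will use \cref{precocart}: the relevant functor $\Fun^{\oplax,\cart}(\bD^1,\Fun^{\oplax}(\bD^1,Z))\to\Fun^{\oplax}(\bD^1,Z)\times_{Z}\Fun^{\oplax}(\bD^1,Z)$ identifies, via the Gray tensor adjunction $\Fun^{\oplax}(\bD^1,\Fun^{\oplax}(\bD^1,Z))\simeq\Fun^{\oplax}(\bD^1\boxtimes\bD^1,Z)$, with restriction along the inclusion of the horn $\bD^1\vee\bD^1\to\bD^1\boxtimes\bD^1$ into the lax square. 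The cartesian condition singles out squares whose $2$-cell is invertible and whose target edge is an identity, so that this restriction is an equivalence.

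The target projection $t$ should be cocartesian via postcomposition. I expect this half to be the main obstacle, as the introduction already warns. The Gray tensor is not symmetric, so one cannot simply dualize the source argument: $\op$ swaps source and target but turns $\oplax$ into $\lax$, and $\Fun^{\lax}$ differs from $\Fun^{\oplax}$. My first attempt will be to apply the involution $\co$ or $\coop$, chosen so that $\Fun^{\oplax}(\bD^1,Z)^{\coop}\simeq\Fun^{\oplax}(\bD^1,Z^{\coop})$ with endpoints swapped, using the table of dualities. That would convert the cartesian source statement into an (anti)cocartesian target statement. If the resulting variance is only anticocartesian, I will instead redo the \cref{precocart}-style computation directly for the horn on the other side of $\bD^1\boxtimes\bD^1$. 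Finally, compatibility, namely that cartesian lifts over the source are $t$-vertical and that $s$-cartesian morphisms are preserved by postcomposition, is immediate from the explicit form of the lifts.
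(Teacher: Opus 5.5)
Your base-change reduction, the $1$-dimensional lifts by precomposition, and the duality for the target half are fine. Your worry about the target half is unfounded: $(-)^{\coop}$ preserves $\Fun^{\oplax}$, swaps the endpoints of $\bD^1$, and exchanges cartesian with cocartesian fibrations, and this is exactly how the paper dualizes.

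The genuine gap is in the higher-dimensional step. To conclude that $s\colon\Fun^{\oplax}(\bD^1,Z)\to Z$ is cartesian via the inductive criterion (\cref{cocarto}, i.e. (3) of the introduction), you need two things:
\begin{enumerate}
\item each hom functor $\Mor(\alpha,\beta)\to\Mor_Z(a,c)$ is a cocartesian fibration;
\item for all morphisms $\alpha'\to\alpha$ and $\beta\to\beta'$, the whiskering functor $\Mor(\alpha,\beta)\to\Mor(\alpha',\beta')$ preserves (co)cartesian cells of every dimension.
\end{enumerate}
Your plan only addresses the first. The second is built into the definition of a (co)cartesian $n$-cell, so without it you cannot even certify your lifts of $2$-cells.

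By \cref{homs2}, the whiskering functor is a map
\[
\Mor_Z(b,d)\underset{\Mor_Z(a,d)}{\vec{\times}}\Mor_Z(a,c)\longrightarrow \Mor_Z(b',d')\underset{\Mor_Z(a',d')}{\vec{\times}}\Mor_Z(a',c')
\]
induced by a map of cospans that is only oriented, not strictly commuting. For instance, for precomposition with $(p,q,\tau)\colon\alpha'\to\alpha$, the two composites $\Mor_Z(b,d)\to\Mor_Z(a',d)$ are $v\mapsto v\alpha p$ and $v\mapsto vq\alpha'$, related only by $v\tau$. This is not a base-change map. So neither stability under base change nor the explicit form of the lifts, which you only have in dimension one, tells you anything about it. Your closing claim that compatibility is ``immediate'' therefore covers only dimension one. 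Note also that the hom functors are general oriented pullbacks of hom $\infty$-categories, not of arrow type.

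The missing idea is to strengthen the induction to include functoriality. Prove simultaneously, for arbitrary cospans:
\begin{enumerate}
\item $\mA\vec{\times}_{\mC}\mB\to\mA\times\mB$ is a bifibration;
\item every pair of oriented squares from $\mA\to\mC\leftarrow\mB$ to $\mA'\to\mC'\leftarrow\mB'$ over $\mC\to\mC'$ induces a map of bifibrations.
\end{enumerate}
Homs of oriented pullbacks are oriented pullbacks of homs (\cref{homs}), and whiskering is induced by oriented squares of these. Hence (1) and (2) for the hom $\infty$-categories in dimension $n-1$ yield (1) and (2) in dimension $n$. This is precisely what the paper does, in antioriented form, after reducing to $n$-categories via $\iota_n$ and filtered colimits. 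Once you argue for general oriented pullbacks, the reduction to the universal case becomes superfluous.

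Finally, your alternative via \cref{precocart} is circular. Identifying $\Fun^{\oplax,\cart}(\bD^1,\Fun^{\oplax}(\bD^1,Z))$ inside $\Fun^{\oplax}(\bD^1\boxtimes\bD^1,Z)$ with squares whose $2$-cell is invertible presupposes knowing which cells of $\Fun^{\oplax}(\bD^1,Z)$ are $s$-cartesian in all dimensions. That is exactly what has to be proved.
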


Evidently, cartesian fibrations are stable under pullback.
It turns out that they are also stable under oriented pullback, although this statement is not entirely formal.
\begin{theorem}[{\cref{laxfib}}]
For every pair of maps of cocartesiam fibrations
\begin{equation*}
\xymatrix{
\mA \ar[r]^F \ar[d]^\rho & \mC \ar[d]^\phi \\
\mE \ar[r] & \mD,
}\qquad
\xymatrix{
\mB \ar[r]^G \ar[d]^\kappa & \mC \ar[d]^\phi \\
\mF \ar[r] & \mD,
}
\end{equation*}
with common target, the induced functor 
\begin{equation*}
\mA {\bar{\vec{\times}}}_{\mC} \mB \to \mE {\bar{\vec{\times}}}_{\mD} \mF
\end{equation*}
is a cocartesian fibration.
A 1-morphism in $\mA {\bar{\vec{\times}}}_{\mC} \mB $ is cocartesian over $\mE {\bar{\vec{\times}}}_{\mD} \mF $ if and only if its image in $\mA$ is cocartesian over $\mE$, its image in $\mB$ is cocartesian over $\mF$ and its image in $\Fun^\lax(\bD^1,\mC)$
corresponds to an antioriented square in $\mC$ whose non-invertible 2-morphism is cartesian over $\mD.$

Moreover, for every (anti)oriented commutative squares of $n+1$-cocartesian fibrations 

\qquad\qquad\qquad\begin{tikzcd}[row sep=scriptsize, column sep=scriptsize]
& \mA \arrow{dl}{\alpha} \arrow{rr}{F} \arrow[dd] & & \mC \arrow{dl}{\gamma} \arrow[dd] \\ \mA' \ar[double]{rrru}{} \arrow[rr, crossing over] \arrow[dd] & & \mC' \\
& \mE \arrow[dl] \arrow[rr] & & \mD \arrow[dl] \\
\mE' \ar[double]{rrru}{} \arrow[rr] & & \mD' \arrow[from=uu, crossing over]\end{tikzcd}\qquad\qquad
\begin{tikzcd}[row sep=scriptsize, column sep=scriptsize]
& \mB \arrow{dl}{\beta} \arrow{rr}{G} \arrow[dd] & & \mC \ar[double]{llld}{} \arrow{dl}{\gamma} \arrow[dd] \\ \mB' \arrow[rr, crossing over] \arrow[dd] & & \mC' \\
& \mF \arrow[dl] \arrow[rr] & & \mD \ar[double]{llld}{} \arrow[dl] \\
\mF' \arrow[rr] & & \mD' \arrow[from=uu, crossing over]
\end{tikzcd}\\
the following induced commutative square is a map of $n$-cocartesian fibrations:
\begin{equation*}
\xymatrix{
\mA {\bar{\vec{\times}}}_{\mC} \mB \ar[r] \ar[d] & \mA' {\bar{\vec{\times}}}_{\mC'} \mB' \ar[d] \\
\mE {\bar{\vec{\times}}}_{\mD} \mF \ar[r] & \mE' {\bar{\vec{\times}}}_{\mD'} \mF'.
}\end{equation*}
\end{theorem}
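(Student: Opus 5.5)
We prove the statement in two stages: first for 1-morphisms, then for higher morphisms by induction on dimension. The starting point is the description of the antioriented pullback as an ordinary pullback,
\[
\mA \,\bar{\vec{\times}}_{\mC}\, \mB \simeq \mA \times_{\mC} \Fun^{\lax}(\bD^1,\mC) \times_{\mC} \mB ,
\]
and similarly for $\mE \,\bar{\vec{\times}}_{\mD}\, \mF$. Under this description the functor in question is the map of iterated pullbacks induced by $\rho$, $\kappa$ and $\Fun^{\lax}(\bD^1,\phi)$.

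Since cocartesian fibrations are stable under pullback, we factor the map as a composite of base changes. The key input is a relative version of the arrow bifibration statement (the oriented pullbacks are bifibrations result, \cref{targetfi}). Namely, we show that
\[
\Fun^{\lax}(\bD^1,\mC) \to \Fun^{\lax}(\bD^1,\mD)\times_{\mD\times\mD}(\mC\times\mC)
\]
is a cocartesian fibration when $\phi$ is a cocartesian fibration. Its cocartesian 1-morphisms are exactly the antioriented squares whose 2-cell is $\phi$-cartesian. To see this, use the characterization of cocartesian fibrations dual to (3) of \cref{T1}: check lifts of 1-morphisms and check the morphism $\infty$-categories. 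A 1-morphism in the base is a pair of arrows $a\to a'$, $b\to b'$ in $\mC$ together with a square in $\mD$. A lift with specified source $f:a\to b$ is built by choosing a $\phi$-cartesian lift of the 2-cell of the square in $\mD$ with target the composite $a\to a'\to b'$ (cartesian on 2-cells, reflecting the alternation), and verifying cocartesianness by computing mapping objects in $\Fun^{\lax}(\bD^1,\mC)$ via oriented pullbacks of mapping objects. Once this is in place, the induced functor factors as
\[
\mA\,\bar{\vec{\times}}_{\mC}\,\mB \;\to\; \mE\times_{\mD}\Fun^{\lax}(\bD^1,\mC)\times_{\mD}\mF \;\to\; \mE\,\bar{\vec{\times}}_{\mD}\,\mF .
\]
The first map is a base change of $\rho\times\kappa$ twisted through $F$, $G$, and we use that $F$, $G$ preserve cocartesian morphisms. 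The second map is a base change of the relative arrow fibration. The description of cocartesian 1-morphisms then follows by combining the descriptions in each factor.

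For the higher statement about maps of $n$-cocartesian fibrations, we induct on $n$ using the inductive characterization of (3) of \cref{T1}. Mapping $\infty$-categories in an oriented pullback are again oriented pullbacks of mapping $\infty$-categories, with orientation alternating. Thus the squares induced on $\Mor$ are oriented pullbacks of $(n)$-cocartesian-fibration maps of one lower level, dualized. Passing to mapping objects turns cocartesian fibrations into cartesian ones, so one applies the induction hypothesis to the $\op$/$\co$-dual statement.

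The main obstacle is the relative arrow fibration step. We must show that the proposed lifts are cocartesian in all dimensions, not merely in dimension 1. This requires identifying $\Mor_{\Fun^{\lax}(\bD^1,\mC)}$ with a lax pullback of mapping objects and checking compatibility of cartesian 2-cells with whiskering. We would attempt this via the criterion of \cref{precocart} (in dual form), reducing to the equivalence
\[
\Fun^{\oplax,\cart}(\bD^1,-) \simeq \Fun^{\oplax}(\bD^1,-)\times_{\Fun^{\oplax}(\{0\},-)}(-)
\]
applied after exchanging the order of $\Fun^{\lax}(\bD^1,-)$ and $\Fun^{\oplax}(\bD^1,-)$ via the Gray tensor symmetry $\bD^1\boxtimes\bD^1$.
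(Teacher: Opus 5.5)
Your inductive second stage is essentially the paper's. The first stage has a genuine gap, because its key lemma is false.

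\textbf{Why the first stage fails.} The functor $\Fun^{\lax}(\bD^1,\mC)\to\Fun^{\lax}(\bD^1,\mD)\times_{\mD\times\mD}(\mC\times\mC)$ is not a cocartesian fibration in general. For $\mD=*$ it is just $(\ev_0,\ev_1)\colon\Fun^{\lax}(\bD^1,\mC)\to\mC\times\mC$, which is only an antibifibration in the sense of \cref{targetfi}. Take $\mC=\bD^1=\{0\to 1\}$. The fiber over $(1,0)$ is $\Mor_{\bD^1}(1,0)=\emptyset$, so the base morphism $(0\to 1,\id_0)\colon(0,0)\to(1,0)$ has no lift at all starting at $\id_0$.

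Your lift construction shows exactly where this breaks. Lifting the 2-cell along the cartesian fibration $\Mor_\mC(a,b')\to\Mor_\mD(\phi(a),\phi(b'))$ at the known endpoint $v\circ f$ produces some $k\colon a\to b'$. A morphism of $\Fun^{\lax}(\bD^1,\mC)$ out of $f$ then needs $k\simeq g\circ u$ with $g\colon a'\to b'$ over $\sigma'$. Such a factorization exists only if $u$ is $\phi$-cocartesian, but in your relative setting $u$ is an arbitrary morphism prescribed by the base.

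The factorization fails on the other side as well. The first map $\mA\,\bar{\vec{\times}}_{\mC}\,\mB\to\mE\times_\mD\Fun^{\lax}(\bD^1,\mC)\times_\mD\mF$ is a base change of the gap maps $\mA\to\mE\times_\mD\mC$ and $\mB\to\mF\times_\mD\mC$. For $\mE=\mF=\mD=*$ these are just $F$ and $G$, i.e.\ arbitrary functors. The second map is really a base change of $\Fun^{\lax}(\bD^1,\phi)$, not of your relative functor.

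\textbf{How to repair it.} The paper's argument leaves the horizontal edges free and shows that $\Fun^{\lax}(\bD^1,\phi)\colon\Fun^{\lax}(\bD^1,\mC)\to\Fun^{\lax}(\bD^1,\mD)$ is a 1-cocartesian fibration.
\begin{enumerate}
\item Given $f\colon X\to Y$ and an antioriented square in $\mD$ out of $\phi(f)$, take $\phi$-cocartesian lifts $t\colon X\to X'$ and $r\colon Y\to Y'$ of the horizontal edges, and a $\phi$-cartesian lift $\tau$ of the 2-cell. This uses that $\phi$ is 2-cocartesian, and cocartesianness of $t$ is what supplies the factorization through $X'$.
\item Cocartesianness of the lift is checked via \cref{homs2}, reducing fiberwise by \cref{targetfi}. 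There it becomes a pasting of the pullback square coming from $t$ being $\phi$-cocartesian with the one coming from the whiskered 2-cell $\beta\tau$ being $\phi$-cartesian.
\item $(\ev_0,\ev_1)$ sends these lifts to pairs of $\phi$-cocartesian morphisms, and $F,G$ preserve cocartesian morphisms. Hence $\mA\,\bar{\vec{\times}}_{\mC}\,\mB\to\mE\,\bar{\vec{\times}}_{\mD}\,\mF$ is the limit of the cospan $\mA\times\mB\to\mC\times\mC\leftarrow\Fun^{\lax}(\bD^1,\mC)$ over $\mE\times\mF\to\mD\times\mD\leftarrow\Fun^{\lax}(\bD^1,\mD)$. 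This is a cospan of 1-cocartesian fibrations connected by maps of such.
\item It is therefore 1-cocartesian with componentwise cocartesian morphisms, which gives the stated description.
\end{enumerate}
Only this 1-dimensional check is needed. The all-dimensions verification via \cref{precocart} that you single out as the main obstacle is unnecessary.

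Your second stage matches the paper's induction via \cref{homs2} and \cref{cocarto}, provided you prove both parts of the statement simultaneously. The compatibility of pre- and postcomposition with cocartesian morphisms required by \cref{cocarto} is exactly the dual of the second part one level down.
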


\begin{theorem}[{\cref{T7}}]
    The Grothendieck construction for oriented cocartesian fibrations is the unique map 
$$\int: {\boxtimes\widehat{\Cat}}_{//^\oplax\infty\fcat} \to {\boxtimes\co\widehat{\Cart}}$$
of cartesian fibrations over $\boxtimes\widehat{\Cat}$ sending $\infty\fcat$ to the universal oriented cocartesian fibration $$ \infty\fcat_{*//^\oplax} \to \infty\fcat.$$
The map $\int: {\boxtimes\widehat{\Cat}}_{//^\oplax\infty\fcat} \to {\boxtimes\co\widehat{\Cart}}$ of cartesian fibrations over
$\boxtimes\widehat{\Cat}$
restricts to a map 
\begin{equation*}
\int: {\boxtimes\Cat}_{//^\oplax\infty\fcat} \to {\boxtimes\coCart}
\end{equation*}
of cartesian fibrations over $\boxtimes\Cat$ which restricts on the fiber over an antoriented category $ \mC $
the functor $$ \boxtimes\Fun(\mC, \infty\fcat) \to {\boxtimes\coCart}_{/\mC}.$$

\end{theorem}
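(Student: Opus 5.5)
The plan is to get the Grothendieck construction from a representability statement. The source ${\boxtimes\widehat{\Cat}}_{//^\oplax\infty\fcat}$ is the oplax slice over the huge antioriented category $\infty\fcat$. Its fiber over an antioriented category $\mC$ is therefore $\boxtimes\Fun(\mC,\infty\fcat)$, and a morphism $\mC'\to\mC$ acts on fibers by precomposition. Precomposition is the cartesian transport of the target-of-slice projection.

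Since a slice fibration $\mC_{//S}\to\mC$ is cartesian with lifts given by composition with the structure map, the source is the cartesian fibration over $\boxtimes\widehat{\Cat}$ classified by the representable presheaf $\Map(-,\infty\fcat)$. The enhanced oplax version follows by the same argument. In this description $\infty\fcat$, with its identity, is the universal object.

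The first ingredient is that ${\boxtimes\co\widehat{\Cart}}\to\boxtimes\widehat{\Cat}$, the target projection restricted to oriented cocartesian fibrations, is itself a cartesian fibration. Its cartesian lifts are pullbacks: cocartesian fibrations of oriented categories are stable under base change, and maps of cocartesian fibrations, meaning preservation of cocartesian morphisms in all dimensions, are preserved by base change. I would verify the enriched cartesian-lift condition hom-wise using the characterization (3) of \cref{T1}, transported to the enriched setting.

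With both sides cartesian fibrations over $\boxtimes\widehat{\Cat}$, I apply a Yoneda statement for cartesian fibrations. Maps of cartesian fibrations out of the representable fibration ${\boxtimes\widehat{\Cat}}_{//^\oplax\infty\fcat}$ into any cartesian fibration $E$ are equivalent to objects of the fiber $E_{\infty\fcat}$, via evaluation at $\id_{\infty\fcat}$. This gives existence and uniqueness of $\int$ once we know that $\infty\fcat_{*//^\oplax}\to\infty\fcat$ is an oriented cocartesian fibration. That fact is the universal oriented cocartesian fibration stated earlier; I would deduce it from the slice-fibration discussion combined with \cref{laxfib} and \cref{targetfi}, which handle the hom-wise cartesian fibrations. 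Unwinding the Yoneda map shows that $\int$ sends $F:\mC\to\infty\fcat$ to the pullback $\mC\times_{\infty\fcat}\infty\fcat_{*//^\oplax}$.

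For the restriction, small $\mC$ with small-valued $F$ gives a pullback with small total space, so $\int$ restricts to ${\boxtimes\Cat}_{//^\oplax\infty\fcat}\to{\boxtimes\coCart}$. Maps of cartesian fibrations induce functors on fibers, which yields $\boxtimes\Fun(\mC,\infty\fcat)\to{\boxtimes\coCart}_{/\mC}$.

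The main obstacle is that this functor must be an \emph{antioriented} functor, i.e.\ compatible with the Gray-enriched hom objects and not just the underlying categories. The Yoneda argument must therefore be performed in the $(\infty\Cat,\boxtimes)$-enriched world. For this I would use \cref{orienfib}, which gives the enriched structure on fibrations, and check that pullback along oplax transformations $\bD^1\boxtimes\mC\to\infty\fcat$ is computed by oriented pullback. That check reduces, via \cref{laxfib}, to the lax naturality of oriented fibers.
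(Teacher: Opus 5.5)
Your proposal is correct and follows the argument the paper relies on; the paper gives no separate proof of \cref{T7}, and your three ingredients are the ones it has in place. First, the oplax slice ${\boxtimes\widehat{\Cat}}_{//^\oplax\infty\fcat}$ is the free cartesian fibration on the object $\infty\fcat$ by \cref{envelo2}. Second, the target projection on cocartesian fibrations is a cartesian fibration whose transports are pullbacks, as in \cref{orientfibru}. Third, the theorem just before \cref{T7}, that $\infty\fcat_{*//}\to\infty\fcat$ is a cocartesian fibration, supplies the object of the fiber.

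The enriched compatibility you call the main obstacle is not part of the statement. The statement only asks for the underlying functor $\boxtimes\Fun(\mC,\infty\fcat)\to{\boxtimes\coCart}_{/\mC}$ on fibers, and any map of cartesian fibrations induces this automatically.
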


\subsection{Relation to other work}
The theory of fibered $1$-categories emerged from the work of Grothendieck and his school on descent in algebraic geometry, notably  \cite{grothendieck1959technique} and \cite{grothendieck2002rev}.
This was later generalized by Lurie in \cite{lurie.HTT} to the homotopical $1$-categorical setting, who refers to (co)fibered categories as (co)cartesian fibrations.
There is also work of Gepner--Haugseng--Nikolaus on free (co)cartesian fibrations \cite{articles} and applications to the theory of lax (co)limits.

There has also been substantial development of this subject in the setting of bicategories, including Gray's very early paper \cite{gray1966fibred} and Street's study \cite{street1980fibrations}.
In addition to Lurie's work \cite{lurie2009infinity}, fibrations of  homotopical bicategories have also been treated by Gagna--Harpaz--Lanari \cite{gagna2020fibrations}, Abell\'an--Stern \cite{abellan20262}, Abell\'an--Gagna--Haugseng \cite{abellán2024straighteninglaxtransformationsadjunctions}, Abell\'an--Haugseng--Martini \cite{abellan2026free}, and others.

Higher categorically, there is work of Nuiten on cartesian fibrations of $n$-categories \cite{nuiten2023straightening} as well as work of Moser--Rasekh--Rovelli on lax (co)limits and the Grothendieck construction \cite{moser2023inftyncategoricalstraighteningunstraighteningconstruction}.
Rasekh also studies fibrations in the setting of Segal spaces in \cite{rasekh2021cartesian} and \cite{rasekh2022cartesian}, and Abell\'an has work on local fibrations of bicategories \cite{abellan2026local}.
Finally, for $n=\infty$, the basic theory of cartesian fibrations has also been considered by Loubaton \cite{loubaton2024categorical}.

A detailed study of exponentiable fibrations appears in the Ayala--Francis paper \cite{MR4074276}.
Even more generally, cocartesian fibrations over the simplex category provide a way of discussing lax normal functors in the context of $\infty$-categories.
This is related to recent work of Blom \cite{blom2024straightening} and Heine \cite{heine2026local}.

\subsection{Notation and terminology}

We fix a hierarchy of set-theoretic universes whose objects we call small, large, very large, etc.
We call a space (equivalently, $\infty$-groupoid) $X$ small, large, etc. if for any choice of basepoint and natural number $n$ its homotopy sets $\pi_n X$ are small, large, etc.
We call an $\infty$-category small, large, etc. if its maximal subspace and all its mapping spaces are small, large, etc.

We refer to (not necessarily univalent) weak $(\infty,\n)$-categories for $0 \leq \n \leq \infty$ simply as $\n$-categories, and we refer to (not necessarily univalent) weak $(\n,\n)$-categories as $(\n,\n)$-categories.
In particular, we refer to (not necessarily univalent) $(\infty,1)$-categories as 1-categories, or simply categories.
We will sometimes want to work strictly, which can be viewed as a basechange along the colimit-preserving symmetric monoidal functor $\mS\to\Set$.
In this case, we will refer to strict $(\n,\n)$-categories simply as strict $\n$-categories.\footnote{Note that an $(n,n)$-category need not be a strict $(n,n)$-category if $n>2$.
For instance, the fundamental $\infty$-groupoid of the $2$-sphere $S^2\simeq \mathrm{B}^2\Omega^2 S^2$ is an $(\infty,0)$-category which is not strict, nor are its $n$-truncations for any $n>2$.}

\begin{notation}
We will make use of the following notation and terminology when discussing categories, in the sense of categories enriched in the monoidal category of $\infty$-groupoids under the cartesian product.
\begin{enumerate}[\normalfont(1)]\setlength{\itemsep}{-2pt}
\item We write $\mS$ for the category of spaces, by which we mean small $\infty$-groupoids, homotopy types, or anima, and $\Set$ for the category of small sets.
\item We write $\infty\Cat$ for the large category of small $\infty$-categories.

\item We write $\Delta$ for (a skeleton of) the category of finite, non-empty, partially ordered sets and order preserving maps, whose objects we denote by $[\n] = \{0 < ... < \n\}$ for $\n \geq 0$.\footnote{This should not be confused with the category $\bDelta$ of oriented simplices.}
\item We write $\Map_{\mC}(A,B)$ for the space of maps (equivalently, $1$-morphisms) from $A$ to $B$ in $\mC$, for any category $\mC$ containing an ordered pair of objects $(A,B)\in\mC$.

\item We write $\ast$ for the final object and
$\emptyset$ for the initial object in any category.

\item We call a map of spaces $X \to Y$ an embedding if its fibers are empty or contractible, or likewise if the induced map $X \to X \times_Y X$ is an equivalence, or likewise if it is of the form $X \to X \coprod X'$ for spaces $X,X'.$ 

\item We call a morphism $X \to Y$ in any category $\mC$ a monomorphism if for every $Z \in \mC$ the induced map of spaces $\Map_\mC(Z,X) \to \Map_\mC(Z,Y) $ is an embedding. So embeddings of spaces are precisely monomorphisms in $\mS.$ 

\item We call a fully faithful functor $\mC \to \mD$ an embedding generalizing the notation for spaces.

\item We call a functor an inclusion if it is a monomorphism in $\Cat$. A functor $\mC \to \mD$ is an inclusion if and only if it induces an embedding on maximal subspaces and on all mapping spaces. 

\item For a diagram $X\to Z\leftarrow Y$ in a category $\mC$ we write $X\underset{Z}{\prod} Y$ or $X\underset{Z}{\times} Y$ for the pullback, and given a diagram $X\leftarrow W\to Y$ in a category $\mC$, we write $X\underset{W}{\coprod} Y$ or $X\underset{Z}{+} Y$ for the pushout.
\item If $\mC$ and $\mD$ are categories and $\mC\to\mD$ is a left adjoint functor with right adjoint $\mD\to\mC$, we often write $\mC\rightleftarrows\mD$ for this adjunction, where the left adjoint is understood to be the functor going from left to right.

\item 
We write $\mC_*$ or $\mC_{\ast/}$ for the category of pointed objects in a category $\mC$, i.e. the full subcategory of $\Fun([1],\mC)$ of arrows in $\mC$ whose source is a final object.

\item We write $\PrL$ and $\PrR$ for the subcategories of $\widehat{\Cat}$ spanned by the presentable categories and the left and right adjoint functors, respectively.
There is a canonical equivalence $\PrL \simeq (\PrR)^\op$
sending left to right adjoints.
\item We write $\otimes$ for the symmetric monoidal structure on $\PrL$ and $\PrR$.
More precisely, by \cite{lurie.higheralgebra}, $\PrL$ carries a closed symmetric monoidal structure such that the subcategory inclusion $\PrL \subset \widehat{\Cat}$ is a lax symmetric monoidal with respect to the the cartesian structure on $\widehat{\Cat}$.
\item We write $\infty\scat$ for the large $\infty$-category of small $\infty$-categories.\footnote{The morphism $\infty$-categories are formed via enrichment in the cartesian monoidal structure.}
\item We write $\Fun(\mD,\mC)$ for the $\infty$-category of functors from an $\infty$-category $\mD$ to an $\infty$-category $\mC$, the value at $\mC$ of the right adjoint to the functor $(-)\times\mC:\infty\Cat\to\infty\Cat$ for the cartesian product.

\item We write $\bD^1$ for the walking arrow, the category with two objects and a unique non-identity arrow.
\item We write $\partial\bD^1$ and $S^0$ for the maximal subspace in $\bD^1$, the set with two elements.

\item We write $\iota_{n}\mC$ for the $n$-category arising from an $\infty$-category $\mC$ by discarding all noninvertible morphisms above dimension $n.$
\item We write $\tau_{n}\mC$ for the $n$-category arising from an $\infty$-category $\mC$ by inverting all morphisms above dimension $n.$
\end{enumerate}
\end{notation} 

\subsection*{Acknowledgements}
We thank Tim Campion, Rune Haugseng, Felix Loubaton, Naruki Masuda, Thomas Nikolaus, Markus Spitzweck, and Germ\'an Stefanich for interesting conversations related to the subject of this paper.
We thank the MPIM for their hospitality while much of this work was carried out.

\vspace{.25cm}

\section{\mbox{Higher categories}}

\subsection{Enriched categories}

We first recall the notion of homotopy coherent enrichment, as defined and studied in, for instance, \cite{MR3345192}, \cite{HEINE2023108941}, \cite{heine2024higher}, \cite{heine2025equivalence}, \cite{HINICH2020107129}.

For every presentably monoidal category $\mV$ there is a presentable 2-category $${\mV\mathrm{-}\Cat}$$ of (not necessarily univalent) $\mV$-enriched categories and $\mV$-enriched functors and a forgetful functor $$ \iota:{\mV\mathrm{-}\Cat} \to \Cat$$ to the presentable 2-category $\Cat$ of (not necessarily univalent) categories,
which is an equivalence for $\mV= \mS$ the category of homotopy types \cite[Corollary 3.23.]{heine2024bienriched}. 

Let $${\mV\mathrm{-}\Cat}^\univ \subset {\mV\mathrm{-}\Cat}$$ be the reflexive full subcategory of univalent $\mV$-enriched categories.

\begin{notation}Let $\mV$ be a presentably monoidal category.
A $\mV$-enriched category $\mC$ has an underlying category $\iota(\mC)$, for every objects $X,Y \in \iota(\mC)$ a morphism object 
$$\Mor_\mC(X, Y) \in \mV$$
and for every objects $X,Y,Z \in \iota(\mC)$ a composition morphism in $\mV:$
$$\Mor_\mC(Y, Z) \ot \Mor_\mC(X, Y) \to \Mor_\mC(X, Z).$$

We write $\X \in \mC$ for $\X \in \iota(\mC)$ and usually notationally identify $\mC$ with $\iota(\mC).$

\end{notation}

The following is \cite[Example 2.134.]{heine2024bienriched}:

\begin{example}\label{linenr}
Let $\mV$ be a presentably monoidal category. Every presentably left $\mV$-tensored category is a $\mV$-enriched category.
Every $\mV$-linear functor between presentably left $\mV$-tensored categories is a $\mV$-enriched functor.
In particular, $\mV$, which is presentably left tensored over itself, is a $\mV$-enriched category.

\end{example}

\begin{notation}Let $\mV$ be a presentably monoidal category.
Let $B\tu_\mV \subset \mV$ be the full $\mV$-enriched subcategory spanned by the tensor unit. Then $\Mor_{B\tu_\mV}(\tu_\mV, \tu_\mV) \simeq \tu_\mV.$
    
\end{notation}

For every enriched category there is an opposite one:

\begin{notation}
There is an involution $$(-)^\circ: {\mV\mathrm{-}\Cat} \simeq {\mV^\rev\mathrm{-}\Cat}$$ forming the opposite enriched category.
For every $\mC \in {\mV\mathrm{-}\Cat}$ and $X,Y \in \mC$
there are canonical equivalences
$ \iota(\mC^\circ) \simeq \iota(\mC)^\op$
and $$ \Mor_{\mC^\circ}(X,Y) \simeq \Mor_{\mC}(Y,X).$$
\end{notation}

The following is \cite[Proposition 3.72.]{heine2024bienriched}:

\begin{proposition}

Let $\mV,\mW$ be presentably monoidal categories and 
$\phi:\mV \to \mW$ a lax monoidal functor.

\begin{enumerate}[\normalfont(1)]\setlength{\itemsep}{-2pt}
\item There is an induced functor
$\phi_!: \mV \mathrm{-}\Cat \to \mW \mathrm{-}\Cat$
that transfers the enrichment, which descends to a functor
$$\mV \mathrm{-}\Cat^\univ \to \mW \mathrm{-}\Cat^\univ.$$

\item For every $\mV$-enriched category $\mC$ and $X,Y \in \iota(\mC)$
there is a canonical equivalence
$$ \Mor_{\phi_!(\mC)}(X,Y) \simeq \phi(\Mor_\mC(X,Y)).$$

\item If $\phi$ is monoidal and admits a right adjoint $\gamma$,
there is an induced adjunction
$\phi_!: \mV \mathrm{-}\Cat \to \mW \mathrm{-}\Cat: \phi^*:=\gamma_!$
that descends to an adjunction 
$\mV \mathrm{-}\Cat^\univ \to \mW \mathrm{-}\Cat^\univ.$

\end{enumerate}

\end{proposition}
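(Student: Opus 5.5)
One natural route is to realize $\mV$-enriched categories as algebras: a $\mV$-enriched category with space of objects $S$ is an associative algebra in the monoidal category of $S \times S$-indexed families $\Fun(S\times S,\mV)$ under the matrix-multiplication convolution product. Equivalently, in the operadic model of \cite{heine2024bienriched} and \cite{HINICH2020107129}, it is an algebra over the $S$-colored associative operad with values in $\mV$. The whole proposition then reduces to the functoriality of these algebra categories in the monoidal category $\mV$.

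For (1), I will first note that a lax monoidal functor $\phi$ induces, for each space $S$, a lax monoidal functor
\[
\phi_S:\Fun(S\times S,\mV)\to\Fun(S\times S,\mW)
\]
given by postcomposition. Its lax structure comes from the maps $\coprod_{y}\phi(A(x,y))\ot\phi(B(y,z))\to\phi\big(\coprod_y A(x,y)\ot B(y,z)\big)$, using the lax structure of $\phi$ together with the canonical comparison for colimits. Lax monoidal functors carry algebras to algebras, so this gives $\phi_!$ fiberwise over $S$. These constructions are natural in $S$ with respect to pullback of families, so they assemble into a functor $\mV\mathrm{-}\Cat\to\mW\mathrm{-}\Cat$ over spaces.

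Statement (2) is then immediate, since the morphism objects of $\phi_!(\mC)$ are obtained by applying $\phi$ pointwise to those of $\mC$.

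For univalence, I will use that $\mV\mathrm{-}\Cat^\univ$ is obtained as the localization at the fully faithful and essentially surjective functors. The functor $\phi_!$ preserves full faithfulness by (2), and it preserves essential surjectivity because it does not change the objects. So $\phi_!$ preserves these local equivalences, and composing with the localization functor makes $\phi_!$ descend to $\mV\mathrm{-}\Cat^\univ\to\mW\mathrm{-}\Cat^\univ$.

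For (3), if $\phi$ is monoidal with right adjoint $\gamma$, then $\gamma$ inherits a canonical lax monoidal structure by doctrinal adjunction. This lets me apply (1) to $\gamma$ to define $\phi^*=\gamma_!$. The adjunction $\phi\dashv\gamma$ is then an adjunction in the 2-category of lax monoidal functors, with unit and counit monoidal transformations. The assignment $\psi\mapsto\psi_!$ is 2-functorial, so it produces unit and counit natural transformations $\id\to\gamma_!\phi_!$ and $\phi_!\gamma_!\to\id$ satisfying the triangle identities. This gives the adjunction at the level of enriched categories. It descends to univalent categories because both functors preserve the local equivalences, as in (1), so they pass to the localizations together with the unit and counit.

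I expect the main obstacle to be making the construction homotopy-coherently functorial: in (1), naturality in the object space $S$; and in (3), 2-functoriality of $\psi\mapsto\psi_!$ in the lax monoidal functor $\psi$. I would handle both at once by working in the operadic framework. There, $\mV\mathrm{-}\Cat$ is a full subcategory of algebras over a generalized $\infty$-operad, and postcomposition with a map of operads $\mV^\ot\to\mW^\ot$ is manifestly functorial in both $\mV$ and $S$.
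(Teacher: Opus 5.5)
Your argument is correct, but the paper contains no proof to follow. The proposition is quoted as \cite[Proposition 3.72]{heine2024bienriched}, so it is established inside the (bi)enriched framework of that reference. That framework is also where this paper's $\mV\mathrm{-}\Cat$, the functor $\iota$ and $\mV\mathrm{-}\Cat^\univ$ come from.

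You work instead in the Gepner--Haugseng/Hinich model of categorical algebras \cite{MR3345192,HINICH2020107129}, and there your argument goes through:
\begin{enumerate}
\item $\phi_!$ is postcomposition with the map of operads $\mV^\ot\to\mW^\ot$, so (2) is tautological.
\item Descent to univalent objects follows from the theorem that complete enriched categories are the localization at fully faithful and essentially surjective functors.
\item Part (3) follows from doctrinal adjunction, together with the fact that $\psi\mapsto\psi_!$ respects monoidal transformations. Adjunctions are detected in the homotopy $2$-category, so this weak $2$-functoriality is enough.
\end{enumerate}
Your route gives an explicit, self-contained construction with transparent formulas. The cost is that, to get the statement for this paper's $\mV\mathrm{-}\Cat$, you must import two further inputs that the citation hides: a comparison between the paper's model and categorical algebras, natural in lax monoidal functors; and the localization theorem characterizing $\mV\mathrm{-}\Cat^\univ$.

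One step needs more care. $\phi_!$ preserves essential surjectivity not merely because the space of objects is unchanged. The relevant point is that the unit $\tu_\mW\to\phi(\tu_\mV)$ of the lax structure makes $\Map_\mV(\tu_\mV,-)\to\Map_\mW(\tu_\mW,\phi(-))$ a monoidal transformation. This yields an identity-on-objects functor $\iota(\mC)\to\iota(\phi_!(\mC))$ that carries equivalences to equivalences, and that is what essential surjectivity requires.

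Part (3) can also be simplified: you only need that $\phi_!$ preserves the local equivalences. Its right adjoint $\gamma_!$ then automatically preserves univalent objects. The descended adjunction is therefore $L_\mW\circ\phi_!\dashv\gamma_!$ restricted to univalent categories, where $L_\mW$ is univalent completion. You then do not have to push the unit and counit through the localizations.
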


The following is \cite[Proposition 2.1.5.]{GepnerHeine2026}:

\begin{proposition}\label{monochar}
Let $\mV$ be a presentably monoidal category.
A $\mV$-enriched functor $\phi: \mC \to \mD$ is a monomorphism in $\mV\mathrm{-}\Cat$ if and only it it induces an embedding $\iota(\mC) \to \iota(\mD)$ on underlying spaces and for every $A,B \in \mC$ the induced morphism 
$\Mor_\mC(A, B) \to \Mor_\mD(\phi(A), \phi(B))$ in $\mV$ is a monomorphism.
    
\end{proposition}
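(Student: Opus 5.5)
The plan is to reduce the monomorphism condition to the statement that the diagonal $\delta:\mC \to \mC \times_\mD \mC$ is an equivalence in $\mV\mathrm{-}\Cat$. This holds in any category with pullbacks: $\phi$ is a monomorphism iff $\Map(\mathcal{T},\mC) \to \Map(\mathcal{T},\mD)$ is an embedding for all $\mathcal{T}$, iff $\delta$ is an equivalence. The proof then needs two facts: a description of pullbacks in $\mV\mathrm{-}\Cat$, and a description of equivalences there.

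For pullbacks, I would use the model of (not necessarily univalent) $\mV$-enriched categories as pairs $(X,\mC)$, where $X$ is a space of objects and $\mC$ is an associative algebra in the monoidal category $\Fun(X \times X,\mV)$ of $\mV$-graphs on $X$ (with the convolution product). In this model the functor $\mV\mathrm{-}\Cat \to \mS$, $\mC \mapsto \iota(\mC)$, is a cartesian fibration, with cartesian transport given by restriction of graphs along $X' \times X' \to X \times X$. It also preserves limits, since it admits a left adjoint (the free/discrete enrichment on a space) and a right adjoint (the chaotic enrichment with terminal morphism objects).

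Consequently a pullback $\mC \times_\mD \mC'$ has space of objects $\iota(\mC) \times_{\iota(\mD)} \iota(\mC')$. Its graph is the pullback, in the fibre over this space, of the restricted algebras. Limits of algebras are computed underlying, and limits in $\Fun(W\times W,\mV)$ are pointwise. Hence
$$\Mor_{\mC\times_\mD\mC'}((A,A'),(B,B')) \simeq \Mor_\mC(A,B)\times_{\Mor_\mD(\phi A,\phi B)} \Mor_{\mC'}(A',B').$$

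For equivalences, since we are in the non-univalent setting, a $\mV$-enriched functor is an equivalence iff it induces an equivalence on spaces of objects and an equivalence on all morphism objects. This is conservativity of the forgetful functor from algebras together with the cartesian-fibration description above.

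Combining the two facts, $\delta$ is an equivalence iff $\iota(\mC) \to \iota(\mC)\times_{\iota(\mD)}\iota(\mC)$ is an equivalence and, for all $A,B$, the diagonal $\Mor_\mC(A,B) \to \Mor_\mC(A,B)\times_{\Mor_\mD(\phi A,\phi B)}\Mor_\mC(A,B)$ is an equivalence in $\mV$. The first condition is exactly that $\iota(\mC)\to\iota(\mD)$ is an embedding. The second is exactly that each $\Mor_\mC(A,B)\to\Mor_\mD(\phi A,\phi B)$ is a monomorphism in $\mV$, via the same diagonal characterization applied in $\mV$.

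The main obstacle is making the pullback computation rigorous within the chosen foundations of \cite{heine2024bienriched}, in particular verifying that limits in $\mV\mathrm{-}\Cat$ are computed fibrewise as above. I would first try to deduce this from the presentation of $\mV\mathrm{-}\Cat$ as a full subcategory of a category of $\mV$-enriched $\infty$-operads/Segal objects closed under limits.
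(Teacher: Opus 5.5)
The paper does not prove this statement; it imports it from \cite[Proposition 2.1.5.]{GepnerHeine2026}, so there is no in-text argument to compare against. Your proof is correct on its own terms and is the natural argument for the non-univalent setting. Monomorphy is equivalent to the diagonal $\mC\to\mC\times_\mD\mC$ being an equivalence. Once $\mV\mathrm{-}\Cat\to\mS$, $\mC\mapsto\iota(\mC)$, is a cartesian fibration whose fibres are categories of algebras in $\mV$-graphs, both the pullback and the equivalence test reduce to a statement in $\mS$ plus pointwise statements in $\mV$, exactly as you describe. Reading ``underlying spaces'' as the space of objects, rather than the core of a univalent completion, is also right; under the other reading the statement fails for non-univalent categories.

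One hypothesis you use implicitly should be stated. The fibrewise formula for pullbacks in the total category of a cartesian fibration needs not only pullbacks in the fibres but also that the cartesian transports preserve them. Here the transports are restriction of graphs along $Y\times Y\to X\times X$. These preserve limits because limits of algebras are created on underlying graphs and limits of graphs are pointwise.

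Otherwise, the only substantive input is the one you flag: the paper's $\mV\mathrm{-}\Cat$ from \cite{heine2024bienriched} must be equivalent over $\mS$ to this category of categorical algebras, i.e.\ the non-univalent Gepner--Haugseng model. Granting that, there is no gap.

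If you would rather not compute pullbacks in $\mV\mathrm{-}\Cat$ at all, the paper's toolkit offers another route. The $\Theta$-nerve of \cref{denseinherited} is fully faithful and limit-preserving, so it detects monomorphisms. By the Segal condition you then only need to test against $B\tu_\mV$ and the suspensions $S(A)$. By \cref{susp}(5), $\Map(S(A),\mC)\to\Map(S(A),\mD)$ lies over $\iota(\mC)^{\times 2}\to\iota(\mD)^{\times 2}$ with fibres $\Map_\mV(A,\Mor_\mC(X,Y))\to\Map_\mV(A,\Mor_\mD(\phi X,\phi Y))$. A map of spaces over an embedding is an embedding if and only if it is one fibrewise.

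That route replaces your explicit pullback formula with density of $\Theta(\mV)$, which is heavier machinery and comes later in the logical order. Your argument is the more elementary one.
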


The following is \cite[Example 3.1.13.]{heine2024higher}:

\begin{lemma}\label{enrsub}

Let $\mV$ be a presentably monoidal category and $\mC$ a $\mV$-enriched category. For every $X,Y \in \mC$ let
$\mE_{X,Y} \to \Mor_\mC(X,Y)$ be a monomorphism in $\mV$.
The following are equivalent:
\begin{enumerate}[\normalfont(1)]\setlength{\itemsep}{-2pt}
\item For every $X \in \mC$ the unit $\tu_\mV \to \Mor_\mC(X,X)$ in $\mV$ factors through $\mE_{X,X} \to \Mor_\mC(X,X)$
and for every $X,Y,Z \in \mC$ the composition morphism $\Mor_\mC(Y,Z) \ot \Mor_\mC(X,Y) \to \Mor_\mC(X,Z) $ in $\mV$ 
restricts to a morphism $\mE_{Y,Z} \ot \mE_{X,Y} \to \mE_{X,Z} $.

\item There is a monomorphism of $\mV$-enriched categories $\theta: \mB \to \mC$
such that for every $X,Y \in \mC$ the induced monomorphism 
$\Mor_\mB(X,Y) \to \Mor_\mC(\theta(X),\theta(Y))$ in $\mV$ identifies with 
$\mE_{\theta(X),\theta(Y)} \to \Mor_\mC(\theta(X),\theta(Y))$.

\end{enumerate}

\end{lemma}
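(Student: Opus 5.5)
Both directions rest on one observation: a $\mV$-enriched category is determined by its space of objects, its morphism objects, and the unit and composition maps subject to homotopy-coherent associativity and unitality. A family of monomorphisms $\mE_{X,Y} \to \Mor_\mC(X,Y)$ is therefore a sub-structure exactly when these maps restrict. Because we only deal with monomorphisms, the higher coherences come for free.

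For (2) $\Rightarrow$ (1), take the monomorphism of enriched categories $\theta: \mB \to \mC$. By \cref{monochar} it induces an embedding on underlying spaces, so we may identify the objects of $\mB$ with some of the objects of $\mC$. Being an enriched functor, $\theta$ is compatible with units and composition. Concretely, the unit $\tu_\mV \to \Mor_\mC(\theta X,\theta X)$ factors as $\tu_\mV \to \Mor_\mB(X,X) \simeq \mE_{\theta X,\theta X} \to \Mor_\mC(\theta X,\theta X)$. Likewise, composition in $\mC$, precomposed with $\mE \ot \mE$, factors through composition in $\mB$ followed by $\mE_{\theta X,\theta Z}$. This gives (1) for every pair $X,Y$ in the image of $\theta$. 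One point needs care: (2) as stated only constrains $\mE$ on the image of $\theta$. I would read (2) as asserting that $\theta$ is essentially surjective onto the relevant objects, i.e.\ $\mB$ has the same objects. Alternatively, (1) at objects outside the image can be obtained by the same construction with $\mB$ enlarged, which is the intended reading.

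For (1) $\Rightarrow$ (2), I would use a model of $\mV$-enriched categories with a fixed space of objects $S=\iota(\mC)$. Such categories are algebras in the monoidal category of $S\times S$-indexed families in $\mV$, with the convolution (matrix-multiplication) tensor product, as in the works cited in \cite{heine2024bienriched}, \cite{HINICH2020107129}. Under this description $\mC$ is an algebra $A$, and $\mE=\{\mE_{X,Y}\}$ is a subobject $E \hookrightarrow A$. The crucial point is that monomorphisms are stable under the convolution tensor: $\ot$ preserves colimits in each variable, and in the indexed setting the tensor is a coproduct of tensor products in $\mV$. Condition (1) says precisely that the unit and multiplication of $A$ restrict along $E \to A$ at the level of the homotopy category.

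The main obstacle is promoting this restriction to a full coherent algebra structure on $E$. I would handle it with the general fact that, for a monomorphism $E \to A$, the space of $E_1$-algebra structures on $E$ together with a lift of $E\to A$ to an algebra map is either empty or contractible. It is nonempty exactly when every operation $A^{\ot n} \to A$, restricted along $E^{\ot n}$, factors through $E$. Since all these operations are composites of unit and binary multiplication, the $n=0$ and $n=2$ cases in (1) suffice. Mapping spaces into a monomorphism being embeddings makes the fiber of the forgetful functor $\mathrm{Alg}(\mV_S)_{/A} \to (\mV_S)_{/A}$ over $E$ contractible once these factorizations exist. This can be argued via the operadic description of algebras: each required lift lives in a space $\Map(E^{\ot n},E) \subset \Map(E^{\ot n},A)$ of the kind "is it in the image," so all coherences are properties rather than data. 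The resulting algebra is a $\mV$-enriched category $\mB$ with a functor $\theta:\mB\to\mC$. This $\theta$ is the identity on objects, so it is an embedding on underlying spaces, and it is given by $\mE_{X,Y} \to \Mor_\mC(X,Y)$ on morphism objects. By \cref{monochar}, $\theta$ is a monomorphism, which proves (2).
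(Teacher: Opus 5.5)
The paper does not prove this lemma; it is quoted from \cite[Example 3.1.13]{heine2024higher}. Your argument therefore has to stand on its own, and in outline it does. Your steps for (1)$\Rightarrow$(2) are:
\begin{enumerate}
\item fix the object space $S=\iota(\mC)$;
\item view $\mC$ as an algebra $A$ in $\Fun(S\times S,\mV)$ under convolution;
\item observe that, for a monomorphism $E\hookrightarrow A$, the space of algebra structures on $E$ making $E\to A$ an algebra map is empty or contractible, because every structure map lives in a union of components $\Map(E^{\ot n},E)\subset\Map(E^{\ot n},A)$;
\item note that this space is nonempty once the unit and binary multiplication restrict.
\end{enumerate}
This is a correct proof. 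Its cost is the identification of the paper's $\mV\mathrm{-}\Cat$ over a fixed object space, with identity-on-objects functors, with algebras and algebra maps in quivers; you should cite that comparison explicitly.

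A variant closer to the paper's own toolkit uses \cref{denseinherited}, applied to a small dense monoidal subcategory of $\mV$. Inside the $\Theta$-nerve of $\mC$, take the sub-presheaf of those $S(V_1)\vee\cdots\vee S(V_n)\to\mC$ whose components $V_i\to\Mor_\mC(x_{i-1},x_i)$ factor through $\mE$. Condition (1) gives closure under the maps of $\Theta$, and the Segal and morphism-presheaf conditions are inherited. Coherence is then automatic, because being a sub-presheaf is a property.

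Three corrections are needed.

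\textbf{The ``crucial'' step is false.} It is not true that monomorphisms are stable under the convolution tensor product, and the step should be removed.
\begin{itemize}
\item Preserving colimits in each variable says nothing about monomorphisms.
\item The claim already fails in $\mV$ itself: in $\mathrm{Ab}$, the monomorphism $2\colon\mathbb{Z}\to\mathbb{Z}$ becomes $0$ after applying $-\ot\mathbb{Z}/2$.
\item For non-discrete $S$, the convolution is a colimit over $S$, not a coproduct.
\end{itemize}
Fortunately nothing depends on this step. The only monomorphism your argument needs is $E\to A$ itself. That alone makes $\Map(E^{\ot n},E)\to\Map(E^{\ot n},A)$ an inclusion of components, whether or not $E^{\ot n}\to A^{\ot n}$ is a monomorphism.

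\textbf{The reading of (2).} In (2)$\Rightarrow$(1), delete the sentence about enlarging $\mB$. Nothing can be deduced about $\mE$ at objects outside the image of $\theta$: with $\mB=\emptyset$, (2) holds for every family. So the statement must be read with $\theta$ bijective on objects, as the notation $\Mor_\mB(X,Y)$ for $X,Y\in\mC$ indicates. With that reading your argument for this direction is complete.

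\textbf{An unstated assumption in (1)$\Rightarrow$(2).} You silently use that $\mE$ is a subfunctor of $\Mor_\mC(-,-)$ on $S\times S$, i.e.\ invariant under transport along paths in the possibly non-discrete space $S$. This assumption is needed for two things.
\begin{itemize}
\item It is needed to form $E\hookrightarrow A$ at all.
\item It converts the diagonal unit condition in (1) into a factorization of the convolution unit $\colim_{\Map_S(x,y)}\tu\to\Mor_\mC(x,y)$ through $\mE_{x,y}$.
\end{itemize}
Similarly, the composite $\colim_{y\in S}\mE_{y,z}\ot\mE_{x,y}\to\Mor_\mC(x,z)$ factors through $\mE_{x,z}$ because a map out of a colimit factors through a monomorphism if and only if each component does.
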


There is a close relationship between enriched categories
and tensored and cotensored categories:

\begin{definition}Let $\mV$ be a presentably monoidal category, $\mC$ a $\mV$-enriched category and $X \in \mC, V \in \mV.$	 
\begin{enumerate}[\normalfont(1)]\setlength{\itemsep}{-2pt}
\item The tensor of $V$ and $X$ in $\mC$ is the object $V \ot X \in \mC $ such that there is a morphism
$V \to \Mor_\mC(X, V \ot X) $ in $\mV$ that induces for every $Y \in \mC$ an equivalence
$$ \Mor_\mC(V \ot X,Y) \to \Mor_\mV(V, \Mor_\mC(X,Y)). $$ 
\item The cotensor of $V$ and $X$ in $\mC$ is the object ${^V X} \in \mC $ that is the tensor of $V $ and $X$ in the opposite $\mV^\rev$-enriched category $\mC^\circ.$
\end{enumerate}	
\end{definition}

Since the category of enriched categories forms a 2-category, there is a natural intrinsic notion of adjunction between enriched categories:

\begin{definition}Let $\mV$ be a presentably monoidal category.
A $\mV$-enriched functor $\mC \to \mD$ admits a left (right) adjoint if
it admits a left (right) adjoint in the 2-category $\mV \mathrm{-}\Cat.$

\end{definition}

\begin{notation}Let $\mV$ be a presentably monoidal category and $\mM, \mN$ be $\mV$-enriched categories.

Let $$\mV\mathrm{-}\Fun(\mM,\mN)$$ be the category of $\mV$-enriched functors
$\mM \to \mN.$

Let $$\mV\mathrm{-}\Fun^\L(\mM,\mN) \subset {\mV\mathrm{-}\Fun(\mM,\mN)}$$ be the full subcategory of $\mV$-enriched functors
$\mM \to \mN$ that admit a $\mV$-enriched right adjoint.
    
\end{notation}

The following is \cite[Remark 2.75.]{heine2024bienriched}:

\begin{proposition}
Let $\mV$ be a presentably monoidal category.
\begin{enumerate}[\normalfont(1)]\setlength{\itemsep}{-2pt}
\item A $\mV$-enriched functor $\phi: \mC \to \mD$ admits a $\mV$-enriched right adjoint if and only if for every $\Y \in \mD$ the $\mV^\rev$-enriched functor
$\Mor_\mD(\phi(-),\X): \mC^\circ \to \mV$ is representable.
\item A $\mV$-enriched functor $\phi: \mC \to \mD$ admits a $\mV$-enriched left adjoint if and only if the opposite $\mV^\rev$-enriched functor $\phi^\circ: \mC^\circ \to \mD^\circ$ admits a $\mV^\rev$-enriched right adjoint. By (1) this holds if and only if for every $\Y \in \mD$ the $\mV$-enriched functor
$\Mor_\mD(\Y,\phi(-)): \mC \to \mV$ is representable.
\end{enumerate}
\end{proposition}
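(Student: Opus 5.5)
The plan is to deduce (1) from the enriched Yoneda lemma, and then to obtain (2) from (1) by passing to opposite enriched categories.

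For the forward direction of (1), suppose $\phi$ has a $\mV$-enriched right adjoint $\gamma:\mD\to\mC$ in the 2-category $\mV\mathrm{-}\Cat$, with counit $\epsilon:\phi\gamma\to\id$. Fix $Y\in\mD$. Composing $\phi_{X,\gamma Y}$ with postcomposition by $\epsilon_Y$ gives a $\mV^\rev$-enriched natural transformation
$$\Mor_\mC(-,\gamma Y)\to \Mor_\mD(\phi(-),\phi\gamma Y)\to \Mor_\mD(\phi(-),Y).$$
Its inverse is given dually by $\gamma$ followed by precomposition with the unit. The triangle identities show the two composites are identities. Hence $\Mor_\mD(\phi(-),Y)$ is represented by $\gamma Y$.

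For the converse, consider the enriched presheaf category $\mP(\mC)=\mV^\rev\mathrm{-}\Fun(\mC^\circ,\mV)$ and the enriched Yoneda embedding $y:\mC\to\mP(\mC)$, which is fully faithful. The assignment $Y\mapsto\Mor_\mD(\phi(-),Y)$ is a $\mV$-enriched functor $h:\mD\to\mP(\mC)$; it is the adjunct of $\Mor_\mD(\phi(-),-)$, itself obtained from the enriched hom of $\mD$ by restriction along $\phi^\circ$. By hypothesis $h$ lands objectwise in the essential image of $y$. For each $Y$ I choose a representing object $\gamma Y$; since $\mC$ need not be univalent, this is a choice of a map of underlying spaces $\iota(\mD)\to\iota(\mC)$ lifting $\iota(h)$ up to equivalence. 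Because $y$ is fully faithful, I then lift $h$ to a $\mV$-enriched functor $\gamma:\mD\to\mC$ with $y\gamma\simeq h$. Concretely, $\gamma$ is the pullback along this map of underlying spaces of the full subcategory of $\mP(\mC)$ on the image of $y$. Its morphism objects are
$$\Mor_{\mP(\mC)}(hY,hY')\simeq\Mor_\mC(\gamma Y,\gamma Y'),$$
so the hom-maps of $h$ give those of $\gamma$.

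The counit $\epsilon_Y:\phi\gamma Y\to Y$ is the image of the identity under the equivalence $\Mor_\mC(\gamma Y,\gamma Y)\simeq\Mor_\mD(\phi\gamma Y,Y)$. The unit $X\to\gamma\phi X$ corresponds to $\id_{\phi X}$. Naturality of both, and the triangle identities, are checked after applying the fully faithful functor $y$ (respectively the Yoneda functor for $\mD$), where they reduce to the defining equivalence $\Mor_\mC(-,\gamma Y)\simeq\Mor_\mD(\phi(-),Y)$.

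I expect the main obstacle to be the coherent assembly of $\gamma$ as an enriched functor, together with the unit and counit as 2-cells in $\mV\mathrm{-}\Cat$, rather than as objectwise data. This is exactly why I route the construction through the fully faithful functor $y$, instead of building $\gamma$ by hand, and take care with non-univalence via the choice on underlying spaces.

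For (2), note that $(-)^\circ:\mV\mathrm{-}\Cat\simeq\mV^\rev\mathrm{-}\Cat$ is an equivalence that is covariant on 1-morphisms and reverses 2-morphisms. It therefore exchanges left and right adjunctions, so $\phi$ has a left adjoint if and only if $\phi^\circ$ has a $\mV^\rev$-enriched right adjoint. Applying (1) to $\phi^\circ$, and using $\Mor_{\mD^\circ}(\phi(-),Y)\simeq\Mor_\mD(Y,\phi(-))$, gives the stated criterion.
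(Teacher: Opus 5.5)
The paper gives no proof of this proposition; it is quoted from \cite[Remark 2.75]{heine2024bienriched}, so there is no internal argument to compare against. Your Yoneda strategy is the standard one. The forward direction of (1), which holds for arbitrary $\mC$, is fine, and so is the reduction of (2) to (1) via $(-)^\circ$.

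The genuine gap is in the converse, at the step where you handle non-univalent $\mC$. Choosing a representing object $\gamma Y$ for each $Y$ only gives a function $\pi_0\iota(\mD)\to\pi_0\iota(\mC)$. It does not give a map of spaces $g:\iota(\mD)\to\iota(\mC)$ with $\iota(y)\circ g\simeq\iota(h)$. Producing such a $g$ is a lifting problem against $\iota(y):\iota(\mC)\to\iota(\mP(\mC))$. Lifts need not exist when $\iota(y)$ is not a monomorphism of spaces, and that happens exactly when $\mC$ is not univalent.

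This cannot be repaired, because the statement is false in that generality. Take $\mV=\mS$ and a nontrivial discrete group $G$. Let $\mC$ be the non-univalent category with a single object $*$ and endomorphisms $G$, let $\mD$ be its univalent completion (space of objects $BG$), and let $\phi$ be the completion functor. Each $\Mor_\mD(\phi(-),Y)$ is a $G$-torsor, hence equivalent to $\Mor_\mC(-,*)$, so the representability hypothesis holds. However, a right adjoint $\gamma$ would have invertible counit because $\mD$ is a groupoid. Since $\mD$ and hence $\mS\mathrm{-}\Fun(\mD,\mD)$ are univalent, this forces $\phi\gamma\simeq\id_\mD$. Hence $\iota(\phi)\circ\iota(\gamma)\simeq\id_{BG}$ factors through the point $\iota(\mC)$, which is impossible.

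The fix is to assume $\mC$ univalent. This is the setting in which the paper formulates enriched presheaves and the enriched Yoneda lemma in any case. Then $y$ is a fully faithful functor between univalent categories, so it induces a fully faithful functor of cores, and $\iota(y)$ is an inclusion of path components. Consequently
\[
\iota(\mD)\times_{\iota(\mP(\mC))}\iota(\mC)\to\iota(\mD)
\]
is a monomorphism that is surjective on $\pi_0$ by the representability hypothesis, hence an equivalence. This provides the lift $g$ canonically, with no choices. The rest of your argument then goes through: restricting the full subcategory on the image of $y$ along $g$, and extracting unit and counit by Yoneda. The same univalence hypothesis on $\mC$ is needed in (2), since there you apply (1) to $\phi^\circ:\mC^\circ\to\mD^\circ$.
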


The following is \cite[Lemma 2.77.]{heine2024bienriched}:

\begin{proposition}\label{adj}
Let $\mV$ be a presentably monoidal category.
\begin{enumerate}[\normalfont(1)]\setlength{\itemsep}{-2pt}
\item A $\mV$-enriched functor $\mC \to \mD$ admits a right adjoint if and only if it preserves tensors and the underlying functor admits a right adjoint.
\item A $\mV$-enriched functor $\mC \to \mD$ admits a left adjoint if and only if it preserves cotensors and the underlying functor admits a left adjoint.
\end{enumerate}

\end{proposition}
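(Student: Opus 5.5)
We plan to deduce both parts from the representability criterion in the preceding proposition (\cite[Remark 2.75.]{heine2024bienriched}), together with the definition of tensors. Part (2) should then follow formally from part (1) by passing to opposites.

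\emph{Necessity in (1).} Suppose $\phi:\mC\to\mD$ has a $\mV$-enriched right adjoint $\gamma$. The functor $\iota$ is a $2$-functor on $\mV\mathrm{-}\Cat$, so it sends the enriched adjunction to an adjunction $\iota(\phi)\dashv\iota(\gamma)$; hence the underlying functor has a right adjoint. For tensors, let $V\otimes X$ be a tensor in $\mC$ and $Y\in\mD$. We would chain the natural equivalences
\[
\Mor_\mD(\phi(V\otimes X),Y)\simeq \Mor_\mC(V\otimes X,\gamma Y)\simeq \Mor_\mV(V,\Mor_\mC(X,\gamma Y))\simeq \Mor_\mV(V,\Mor_\mD(\phi X,Y)).
\]
We then check that this composite is the map induced by the canonical morphism $V\to\Mor_\mC(X,V\otimes X)\to\Mor_\mD(\phi X,\phi(V\otimes X))$. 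Granting that, $\phi(V\otimes X)$ is a tensor of $V$ and $\phi X$.

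\emph{Sufficiency in (1).} Assume $\phi$ preserves tensors and $\iota(\phi)$ has a right adjoint $G$ with counit $\epsilon:\phi G\to\id$. Fix $Y\in\mD$. By the preceding proposition it suffices to show that $\Mor_\mD(\phi(-),Y)$ is represented by $GY$. The enriched functoriality of $\phi$, composed with $\epsilon_Y$, gives a morphism in $\mV$, natural in $X\in\mC$:
\[
\Mor_\mC(X,GY)\to\Mor_\mD(\phi X,\phi GY)\to\Mor_\mD(\phi X,Y).
\]
We show it is an equivalence by the Yoneda lemma in $\mV$, testing against $\Map_\mV(V,-)$ for every $V\in\mV$. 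Taking underlying spaces in the tensor universal property gives $\Map_\mC(V\otimes X,Z)\simeq\Map_\mV(V,\Mor_\mC(X,Z))$. Using this, the induced map becomes
\[
\Map_\mC(V\otimes X,GY)\simeq\Map_\mD(\phi(V\otimes X),Y)\simeq\Map_\mD(V\otimes\phi X,Y),
\]
where the first equivalence is the underlying adjunction and the second is preservation of tensors. The right-hand side is in turn $\Map_\mV(V,\Mor_\mD(\phi X,Y))$, again by the tensor universal property.

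\emph{Part (2).} We apply (1) to $\phi^\circ:\mC^\circ\to\mD^\circ$, which is $\mV^\rev$-enriched. By definition, cotensors in $\mC$ are tensors in $\mC^\circ$. A left adjoint of $\iota(\phi)$ is a right adjoint of $\iota(\phi)^\op=\iota(\phi^\circ)$. Finally, the preceding proposition identifies left adjoints of $\phi$ with right adjoints of $\phi^\circ$.

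\emph{Main obstacle.} The substantive step is verifying, in the sufficiency argument, that the chain of equivalences of mapping spaces is actually the map induced by the morphism built from $\epsilon_Y$, naturally in $V$. Without this, the Yoneda argument only shows the two objects are abstractly equivalent, not that this particular map is an equivalence. I would handle it by unwinding the tensor universal property as precomposition with the unit $V\to\Mor_\mC(X,V\otimes X)$, and then using associativity of enriched composition to commute this with $\phi$ and with postcomposition by $\epsilon_Y$. The same compatibility check appears, in easier form, in the necessity direction.
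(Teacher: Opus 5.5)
Your overall route is the standard one. You reduce (1) to the representability criterion, compare $\Mor_\mC(-,GY)$ with $\Mor_\mD(\phi(-),Y)$ via the counit, detect equivalences in $\mV$ by mapping in from objects $V$, and deduce (2) by applying (1) to $\phi^\circ$. The paper itself gives no proof and only cites the bienriched literature. Your necessity direction and your dualization are fine.

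The gap is in the sufficiency direction of (1). The identification $\Map_\mV(V,\Mor_\mC(X,GY))\simeq\Map_\mC(V\otimes X,GY)$ presupposes that the tensor $V\otimes X$ exists in $\mC$ for every $V$, or at least for a family of $V$ jointly detecting equivalences in $\mV$. Nothing in the hypotheses provides this: ``$\phi$ preserves tensors'' only says that tensors which happen to exist in $\mC$ are sent to tensors in $\mD$. If $\mC$ has few tensors, your Yoneda test collapses to $V=\tu$, which only returns the underlying adjunction you started from.

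This cannot be patched: without the hypothesis, the ``if'' direction is false. Take $\mV=\mathrm{D}(\mathbb{Z})$. Let $\mC=B\mathbb{Z}$ and $\mD=BR$ be the one-object enriched categories with endomorphism objects $\mathbb{Z}$ and $R=\mathbb{Z}\oplus\Sigma^{-5}\mathbb{Z}$, the trivial square-zero extension. Let $\phi$ be induced by the unit $\mathbb{Z}\to R$.
\begin{itemize}
\item Since $\Omega^\infty R\simeq\mathbb{Z}$, the underlying functor $\iota(\phi)$ is an equivalence, so it has a right adjoint.
\item A tensor of $V$ with the object of $B\mathbb{Z}$ is a map $u\colon V\to\mathbb{Z}$ whose adjoint $\mathbb{Z}\to\mathrm{RHom}(V,\mathbb{Z})$ is an equivalence. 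The corresponding comparison map in $BR$ is obtained from this one by applying $R\otimes(-)$ and the equivalence $R\otimes\mathrm{RHom}(V,\mathbb{Z})\simeq\mathrm{RHom}(V,R)$, valid because $R$ is perfect. So $\phi$ preserves every tensor that exists.
\item An enriched right adjoint $G$ would give $\mathbb{Z}=\Mor_\mC(\ast,G\ast)\simeq\Mor_\mD(\phi(\ast),\ast)=R$, which is false.
\end{itemize}

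So you must add the hypothesis that $\mC$ admits tensors in (1), and say where you use it. Correspondingly, (2) needs $\mC$ to admit cotensors, since these are the tensors of $\mC^\circ$ required when you apply (1) to $\phi^\circ$. Under that hypothesis your argument goes through, including the naturality check you single out. This is also the situation in which the paper uses the result, e.g.\ in \cref{leftadjointtheorem}, where $\infty\fcat^{\cocart}_{/S}$ has the left cotensors $Y^K_\lax$.
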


Next we introduce the enriched category of enriched presheaves \cite{heine2025equivalence}.

\begin{notation}

Let $\mV, \mW$ be presentably monoidal categories and $\mM$ a $\mV$-enriched category and $\mO$ a $\mW$-enriched category.
Let $\langle \mM, \mN \rangle $ be the $\mV \ot \mW$-enriched category that is the transfer of enrichment of the $\mV \times \mW$-enriched category $\mM \times \mN$ along the universal
monoidal functor $\mV \times \mW \to \mV \ot \mW$ preserving small colimits componentwise.
    
\end{notation}

The next theorem follows from \cite[Proposition 4.11, Theorem 4.86]{heine2024bienriched}:

\begin{theorem}\label{psinho} Let $\mV, \mW$ be presentably monoidal categories and $\mN$ a univalent $(\mV, \mW)$-bienriched category. 
\begin{enumerate}[\normalfont(1)]\setlength{\itemsep}{-2pt}
\item Let $\mM$ be a small univalent $\mV$-enriched category. The
category ${\mV\mathrm{-}\Fun}(\mM, \mN)$ refines to an univalent $\mW$-enriched category characterized by an 
equivalence
$$ \mW\mathrm{-}\Fun(\mO,{\mV\mathrm{-}\Fun}(\mM, \mN)) \to {\mV \ot \mW\mathrm{-}\Fun}(\langle\mM,\mO\rangle,\mN)$$
natural in any univalent $\mW$-enriched category $\mO$.
\item Let $\mO$ be a small univalent $\mW$-enriched category. 
The category ${\mW\mathrm{-}\Fun}(\mO, \mN)$ refines to an univalent $\mV$-enriched category characterized by an equivalence
$$ \mV\mathrm{-}\Fun(\mM,{\mW\mathrm{-}\Fun}(\mO, \mN)) \to {\mV \ot \mW\mathrm{-}\Fun}(\langle\mM,\mO\rangle,\mN) $$
natural in any univalent $\mV$-enriched category $\mM$.

\item If $\mN$ is a presentably $\mV, \mW$-bitensored category, then ${\mV\mathrm{-}\Fun}(\mM, {\mN}) $ is a presentably right $\mW$-tensored category and ${\mW\mathrm{-}\Fun}(\mO, {\mN}) $ is a presentably left $\mV$-tensored category.

\end{enumerate}

\end{theorem}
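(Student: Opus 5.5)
The plan is to construct the $\mW$-enriched category in (1) by a representability argument in the presentable category of univalent $\mW$-enriched categories. Part (2) will follow from (1) by symmetry, and part (3) from a pointwise description of tensors. For (1), fix the small univalent $\mV$-enriched category $\mM$ and consider the functor
$$ \Phi: ({\mW\mathrm{-}\Cat}^\univ)^\op \to \mS, \qquad \mO \mapsto \iota\bigl({\mV \ot \mW\mathrm{-}\Fun}(\langle\mM,\mO\rangle,\mN)\bigr)^{\simeq}. $$
Since ${\mW\mathrm{-}\Cat}^\univ$ is a reflexive full subcategory of the presentable category ${\mW\mathrm{-}\Cat}$, it is presentable. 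It therefore suffices to show that $\Phi$ sends small colimits to limits; a representing object is then a univalent $\mW$-enriched category $\mP$. Upgrading the resulting equivalence of spaces to an equivalence of categories is done in the usual way, by testing against cotensors with $\bD^1$, or equivalently by using that ${\mW\mathrm{-}\Cat}^\univ$ is a presentable $2$-category and that the construction is natural in $\mO$.

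The key point, and the step I expect to be the main obstacle, is that $\mO \mapsto \langle \mM, \mO\rangle$ preserves small colimits, first as a functor ${\mW\mathrm{-}\Cat} \to {\mV\ot\mW\mathrm{-}\Cat}$ and then after univalent completion.
\begin{itemize}
\item On the non-univalent level, I would check this using the description of enriched categories as algebras over the free enriched category monad on graphs (or via the bienriched $\infty$-operadic model of \cite{heine2024bienriched}). There, $\langle\mM,-\rangle$ is computed by tensoring morphism objects with those of $\mM$ through $\mV\times\mW\to\mV\ot\mW$, which preserves colimits in each variable separately.
\item On the univalent level, I need that localizing with respect to univalence commutes with $\langle\mM,-\rangle$. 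This amounts to showing that $\langle \mM, -\rangle$ sends the generating univalence equivalences (the walking-equivalence maps) to local equivalences. I would use that $\mN$ is univalent, so that mapping into $\mN$ inverts these maps.
\end{itemize}

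Having obtained $\mP$, I identify its underlying category with ${\mV\mathrm{-}\Fun}(\mM,\mN)$.
\begin{itemize}
\item Take $\mO$ to be the free $\mW$-enriched category on a space or category $K$, i.e. the transfer of $K$ along $\mS \to \mW$. In that case $\langle\mM,\mO\rangle$ is the transfer of $\mM\times K$ along $\mV \to \mV\ot\mW$.
\item Maps out of it into the bienriched $\mN$ are therefore $\mV$-enriched functors $\mM\times K\to\mN$, i.e. functors $K \to {\mV\mathrm{-}\Fun}(\mM,\mN)$.
\item This exhibits $\iota(\mP)\simeq {\mV\mathrm{-}\Fun}(\mM,\mN)$, so $\mP$ is the desired refinement, and it is univalent by construction.
\end{itemize}

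Part (2) is obtained by applying (1) to the reversed data. The swap $\mV\ot\mW \simeq (\mW^\rev\ot\mV^\rev)^\rev$ and the opposite-category involution $(-)^\circ$ exchange the roles of the left and right enrichments of $\mN$, and $\langle\mM,\mO\rangle^\circ \simeq \langle \mO^\circ,\mM^\circ\rangle$ up to this identification.

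For (3), assume $\mN$ is presentably bitensored. For $F \in {\mV\mathrm{-}\Fun}(\mM,\mN)$ and $w\in\mW$, define $F\ot w$ pointwise by $m\mapsto F(m)\ot w$; this is $\mV$-enriched because the left $\mV$- and right $\mW$-actions on $\mN$ commute.
\begin{itemize}
\item Verify the tensor universal property by computing $\Mor(F\ot w, G)$ as an end of $\Mor_\mN(F(m)\ot w, G(m))\simeq \Mor_\mW(w,\Mor_\mN(F(m),G(m)))$, using that the morphism objects of $\mP$ are such ends by construction.
\item Presentability of ${\mV\mathrm{-}\Fun}(\mM,\mN)$ follows because it is an accessible localization of enriched presheaves: $\mM$ is small and $\mN$ is presentable, and colimits and the tensor are computed pointwise and commute with the colimit-preserving actions.
\end{itemize}
Thus the functor category is presentably right $\mW$-tensored, and symmetrically the other one is presentably left $\mV$-tensored.
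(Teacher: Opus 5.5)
The step you flag as the main obstacle is where the argument breaks down. It is not a technical lemma: it is a restatement of the theorem.

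\textbf{The colimit claim is not justified.} Your justification is that $\langle\mM,-\rangle$ tensors morphism objects along $\mV\times\mW\to\mV\ot\mW$, which is cocontinuous in each variable. That only shows $\langle\mM,-\rangle$ preserves colimits of underlying graphs. Colimits of enriched categories are not computed on morphism objects: already in $\mS\mathrm{-}\Cat$ the pushout $\bD^1\coprod_{\{1\}}\bD^1$ acquires a new composite. In the monadic picture, cocontinuity on graphs passes to algebras only if $\langle\mM,-\rangle$ commutes with the free enriched category monad, and it does not. Take $\mV=\mW=\mS$ and $\mM=\mO=\bD^1$, where $\bD^1$ is free on one edge. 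Then $\langle\mM,\mO\rangle=\bD^1\times\bD^1$ is not free on any graph, since its two composites $(0,0)\to(1,1)$ agree.

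\textbf{The claim is equivalent to part (1).} For $\mV=\mW=\mS$, cocontinuity of $\mM\times(-)$ is the cartesian closedness of $\Cat$, which is a genuine theorem. In general, combine the Yoneda lemma with the adjoint functor theorem. The assertion that $\mO\mapsto\Map(\langle\mM,\mO\rangle,\mN)$ sends colimits in $\mW\mathrm{-}\Cat^\univ$ to limits for every univalent $\mN$ is then \emph{equivalent} to the univalent completion of $\langle\mM,-\rangle$ admitting a right adjoint. The value of that right adjoint at $\mN$ is exactly a univalent $\mW$-enriched category with the (space-level) universal property of part (1). So your representability argument reformulates (1) rather than reducing it to something easier. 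What is missing is an actual construction of the enriched functor category, or an independent proof of this cocontinuity. The paper does not prove the statement either; it imports it from \cite[Proposition 4.11, Theorem 4.86]{heine2024bienriched}.

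Granting cocontinuity, the rest of your outline is reasonable, and deducing (2) from (1) via $(-)^\circ$ and the symmetry of $\ot$ is fine. Three further points need care.
\begin{enumerate}
\item The transfer $K_\mW$ of a category $K$ along $\mS\to\mW$ need not be univalent. Its underlying category has mapping spaces $\Map_\mW(\tu_\mW,\Mor_K(a,b)\ot\tu_\mW)$, which in general differ from $\Mor_K(a,b)$. So identifying the underlying category of your representing object requires passing through univalent completion, which again uses compatibility of $\langle\mM,-\rangle$ with completion.
\item In (3), the morphism objects of your $\mP$ are not ends ``by construction'', since $\mP$ is only characterized by a universal property. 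The end formula has to be derived from that property, and the presentability of ${\mV\mathrm{-}\Fun}(\mM,\mN)$ is asserted rather than proved.
\item For presentable $\mN$, the functor you call $\Phi$ takes values in large spaces. The representability theorem then has to be applied in a larger universe, where presentability of the relevant category of enriched categories needs separate justification.
\end{enumerate}
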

    
\begin{definition}

Let $\mV$ be a presentably monoidal category and $\mC$ a small univalent $\mV$-enriched category.
The presentably left $\mV$-tensored category of $\mV$-enriched presheaves on $\mC$ is
$$ \mP_\mV(\mC):= \mV\mathrm{-}\Fun(\mC^\op,\mV). $$
\end{definition}

The next theorem, which follows from \cite[Theorem 3.41, Theorem 4.70]{heine2024bienriched}, is an enriched version of the universal property of the category of presheaves as the free cocompletion under small colimits:
%\cite[Theorem 5.1.5.6]{lurie.HTT}:

\begin{theorem}
\label{Yonedaext}
Let $\mV$ be a presentably monoidal category, $\mC$ a small univalent $\mV$-enriched category and $\mD$ a presentably left $\mV$-tensored category.
\begin{enumerate}[\normalfont(1)]\setlength{\itemsep}{-2pt}
\item There is a $\mV$-enriched embedding $\iota_\mC : \mC \to \mP_\mV(\mC)$ that sends $X$ to $\L\Mor_\mC(-,X)$ and induces
for every presentably left $\mV$-tensored category $\mD$ an equivalence
$$ {\mV\mathrm{-}\Fun^\L}(\mP_\mV(\mC),\mD)\to \mV\mathrm{-}\Fun(\mC,\mD).$$
\item Let $F: \mC \to \mD$ be a $\mV$-enriched functor and
$\bar{F}: \mP_\mV(\mC) \to \mD $ the unique $\mV$-enriched left adjoint extension of $F$.
For every $\mV$-enriched functor $G: \mP_\mV(\mC) \to \mD $
the induced morphism $$\Map_{{\mV\mathrm{-}\Fun^\L}(\mP_\mV(\mC),\mD)}(\bar{F},G) \to \Map_{\mV\mathrm{-}\Fun(\mC,\mD)}(F,G \circ \iota_\mC) $$ is an equivalence.
\item Let $F: \mC \to \mD$ be a $\mV$-enriched functor and
$\bar{F}: \mP_\mV(\mC) \to \mD $ the unique $\mV$-enriched left adjoint extension of $F$.
The $\mV$-enriched right adjoint $\mD \to \mP_\mV(\mC)$ of $\bar{F}$ sends $Y$ to $\Mor_\mD(-,Y) \circ F. $
\end{enumerate}
\end{theorem}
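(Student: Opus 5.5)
The statement is the enriched Yoneda extension theorem, \cref{Yonedaext}: the embedding $\iota_\mC:\mC\to\mP_\mV(\mC)$ exhibits $\mP_\mV(\mC)$ as the free presentably left $\mV$-tensored category on $\mC$. The paper cites it from \cite[Theorem 3.41, Theorem 4.70]{heine2024bienriched}. If I were to reprove it, I would reduce it to the unenriched universal property of presheaves together with the tensor/adjoint criteria recalled above.

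\textbf{Step 1: the embedding and its Yoneda formula.}
\begin{itemize}
\item By \cref{psinho}(1), $\mV$-enriched functors $\mC\to\mP_\mV(\mC)=\mV\mathrm{-}\Fun(\mC^\op,\mV)$ correspond to $\mV\ot\mV$-enriched functors $\langle\mC,\mC^\op\rangle\to\mV$. I would apply this to the bimodule $\Mor_\mC(-,-)$, viewing $\mV$ as bitensored over itself (\cref{linenr}); this produces $\iota_\mC$.
\item I would then prove the enriched Yoneda lemma $\Mor_{\mP_\mV(\mC)}(\iota_\mC X,F)\simeq F(X)$. The morphism object in a functor category is an end, and the representable end collapses by the unit axioms.
\item Full faithfulness of $\iota_\mC$ follows immediately. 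The same formula gives (3): the right adjoint of $\bar F$ is $Y\mapsto\Mor_\mD(-,Y)\circ F$, since $\Mor_\mD(\bar F\iota_\mC X,Y)=\Mor_\mD(FX,Y)$.
\end{itemize}

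\textbf{Step 2: existence of $\bar F$ for (1).}
\begin{itemize}
\item Given $F:\mC\to\mD$, define $R:\mD\to\mP_\mV(\mC)$ by $R(Y)=\Mor_\mD(F(-),Y)$.
\item $R$ preserves cotensors, because $\Mor_\mD(F(-),{^V Y})\simeq\Mor_\mV(V,\Mor_\mD(F(-),Y))$.
\item On underlying categories, $R$ preserves limits and is accessible, so the adjoint functor theorem gives an underlying left adjoint.
\item \cref{adj}(2) then upgrades this to an enriched left adjoint $\bar F$. By Step 1, $\bar F\iota_\mC\simeq F$.
\end{itemize}

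\textbf{Step 3: uniqueness, i.e.\ that restriction is an equivalence.} This is the main obstacle. The plan is:
\begin{itemize}
\item Show that every presheaf $G$ is the canonical weighted colimit $\mathrm{colim}^{G}\iota_\mC$. Equivalently, it is a colimit of tensors $V\ot\iota_\mC X$, via the bar construction on the monadic adjunction between $\mP_\mV(\mC)$ and families $\prod_{X}\mV$ indexed by the objects of $\mC$.
\item Enriched left adjoints preserve tensors (\cref{adj}(1)) and underlying colimits, hence preserve these weighted colimits.
\item Consequently, for any left adjoint $L$, the counit map $\overline{L\iota_\mC}\to L$ is an equivalence on tensors of representables, and therefore everywhere.
\item To show the comparison of mapping spaces in (2) is an equivalence, I would compute both sides as ends over $\mC$ and compare via the adjunction of Step 2, using \cref{psinho}. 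This reduces (2) to the Yoneda formula of Step 1.
\end{itemize}

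The delicate part is making the density statement homotopy coherent. The cleanest approach is to identify $\mP_\mV(\mC)$ with modules over the enriched category, viewed as a monad on $\prod_X\mV$. One can then invoke Lurie's Barr–Beck and free-module results, which is essentially the route of \cite{heine2024bienriched}.
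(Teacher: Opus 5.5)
The paper gives no argument of its own here. The theorem is imported from \cite[Theorem 3.41, Theorem 4.70]{heine2024bienriched}, so your sketch is necessarily a different route. It is the classical Kelly-style proof transplanted to the homotopy-coherent setting, and its outline is sound. Its virtue is that two of its parts use only tools already recalled in the paper, together with the unenriched adjoint functor theorem: existence of $\bar F$ (adjoint functor theorem for $Y\mapsto\Mor_\mD(F(-),Y)$ plus \cref{adj}(2)) and part (3) (Yoneda). What the citation buys, and what your sketch treats as black boxes, are the genuinely coherent inputs:
\begin{itemize}
\item the hom bimodule of $\mC$ as an enriched functor;
\item the enriched Yoneda lemma natural in both variables, needed also for the large category $\mP_\mV(\mC)$;
\item end formulas for morphism objects of enriched functor categories, together with Fubini.
\end{itemize}

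Two things in Step 3 should be tightened.
\begin{enumerate}
\item The map $\overline{L\iota_\mC}\to L$ that you call the counit only exists once (2) is established, so (2) has to come first.
\item The bar-construction statement (every presheaf is an iterated colimit of objects $V\ot\iota_\mC X$) is not equivalent to density. It only controls functors that preserve tensors and colimits, so it cannot give (2), where $G$ is arbitrary. Via $\bar F\dashv R$, (2) reduces to the assertion that restriction $\Map(\id,H)\to\Map(\iota_\mC,H\iota_\mC)$ is an equivalence for $H=RG$. That is, (2) is really density in the Kan-extension form $\id_{\mP_\mV(\mC)}\simeq\mathrm{Lan}_{\iota_\mC}\iota_\mC$. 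The left-hand side is an end over $\mP_\mV(\mC)$, not over $\mC$. Collapsing it to an end over $\mC$ is exactly where the covariant Yoneda lemma for $\mP_\mV(\mC)$ enters, so this is the step your last bullet must actually spell out.
\end{enumerate}

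For (1) alone you can avoid density and Barr--Beck entirely.
\begin{itemize}
\item Your argument for (3) shows that the right adjoint of any enriched left adjoint $L$ is $Y\mapsto\Mor_\mD(L\iota_\mC(-),Y)$.
\item The functor $\mV\mathrm{-}\Fun(\mC,\mD)^{\op}\to\mV\mathrm{-}\Fun(\mD,\mP_\mV(\mC))$, $F\mapsto\Mor_\mD(F(-),-)$, is fully faithful by Yoneda in $\mD$.
\item By Step 2 and the previous point, its essential image is exactly the right adjoints.
\item Composing with the passage from left adjoints to (the opposite of) right adjoints shows that restriction along $\iota_\mC$ is an equivalence.
\end{itemize}
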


The following is the enriched Yoneda-lemma proven in 
\cite[Corollary 4.44]{heine2024bienriched}:

\begin{lemma}
Let $\mV$ be a presentably monoidal category and $\mC$ a small univalent $\mV$-enriched $\infty$-category. For every object $X \in \mC$ and $F \in \mP_\mV(\mC)$ the induced morphism $$\Mor_{\mP_\mV(\mC)}(\Mor_\mC(-,X),F) \to F(X) $$ is an equivalence.

\end{lemma}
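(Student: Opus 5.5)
The plan is to deduce the statement formally from the universal property of enriched presheaves in \cref{Yonedaext}, applied to the Yoneda embedding itself. First, $\mP_\mV(\mC)=\mV\mathrm{-}\Fun(\mC^\op,\mV)$ is a presentably left $\mV$-tensored category. This follows from \cref{psinho}(3) with $\mN=\mV$, which is presentably $(\mV,\mV)$-bitensored. So \cref{Yonedaext} applies with $\mD=\mP_\mV(\mC)$ and $F=\iota_\mC:\mC\to\mP_\mV(\mC)$.

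Next I identify the unique $\mV$-enriched left adjoint extension $\bar F$ of $\iota_\mC$. The identity $\id_{\mP_\mV(\mC)}$ is a $\mV$-enriched left adjoint (its right adjoint is itself) and restricts along $\iota_\mC$ to $\iota_\mC$. By the equivalence in \cref{Yonedaext}(1), it follows that $\bar F\simeq \id$. Then \cref{Yonedaext}(3) says the right adjoint of $\bar F$ sends $G\in\mP_\mV(\mC)$ to $\Mor_{\mP_\mV(\mC)}(-,G)\circ\iota_\mC$. Since the right adjoint of the identity is the identity, we obtain an equivalence of $\mV$-enriched presheaves
$$G\simeq \Mor_{\mP_\mV(\mC)}(\iota_\mC(-),G)=\Mor_{\mP_\mV(\mC)}(\Mor_\mC(-,-),G).$$
Evaluating at $X$ gives $F(X)\simeq \Mor_{\mP_\mV(\mC)}(\Mor_\mC(-,X),F)$ for $G=F$.

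The main obstacle is to check that this abstract equivalence is inverse to the canonical morphism in the statement. That morphism is evaluation at $X$ followed by precomposition with the unit $\tu_\mV\to\Mor_\mC(X,X)$. To handle this, I would identify the counit of the adjunction $\id\dashv\id$, transported through the formula of \cref{Yonedaext}(3), with this evaluation map. Concretely, the comparison $G\to\Mor_{\mP_\mV(\mC)}(\iota_\mC(-),G)$ arises as the unit of the adjunction. Its component at $X$ is adjoint to the composite $\Mor_\mC(-,X)\otimes G(X)\to G$ given by the action of $G$. The canonical map in the statement is adjoint to the same data, up to the unit $\tu_\mV\to\Mor_\mC(X,X)$, so the unit is an equivalence.

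If this bookkeeping proves delicate, a fallback is to use two-out-of-three together with the naturality in $F$. Both maps are natural in $F$ and compatible with colimits and tensors in $F$, since evaluation at $X$ preserves these and $\Mor_\mC(-,X)$ is small-projective. It would then suffice to check the claim on the generators $F=\Mor_\mC(-,Y)$. There it reduces to full faithfulness of $\iota_\mC$ from \cref{Yonedaext}(1).
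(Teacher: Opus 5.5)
There is a genuine gap, exactly at the step you flag as the main obstacle. Your first three steps produce only an \emph{abstract} natural equivalence $G\simeq\Mor_{\mP_\mV(\mC)}(\iota_\mC(-),G)$. \cref{Yonedaext}(3) identifies the right adjoint of $\bar F$ with $Y\mapsto\Mor_\mD(-,Y)\circ F$ only up to an unspecified equivalence. It says nothing about how that equivalence interacts with evaluation at $X$ and the unit $\tu_\mV\to\Mor_\mC(X,X)$.

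The lemma, however, is about that specific evaluation map. Abstractly equivalent functors can be related by a non-invertible transformation, so nothing is proved until the map itself is controlled. Your proposed identification does not supply this. The unit of $\id\dashv\id$ is the identity. The claims that its transported component ``is adjoint to the action map'' and that the evaluation map ``is adjoint to the same data'' presuppose an explicit description of the equivalence in (3), and a construction of the action map as a morphism of $\mP_\mV(\mC)$. None of the stated results provides either.

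The fallback only relocates the problem. It rests on small-projectivity of $\Mor_\mC(-,X)$ and on generation of $\mP_\mV(\mC)$ under colimits and tensors by representables. Neither is stated, and both are usually derived from the Yoneda lemma itself.

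Note also that the paper does not prove this lemma; it cites \cite[Corollary 4.44]{heine2024bienriched}. In that source it precedes the universal property \cite[Theorem 4.70]{heine2024bienriched} from which you start. Moreover, full faithfulness of $\iota_\mC$ in \cref{Yonedaext}(1) is literally the case $F=\Mor_\mC(-,Y)$ of the statement. So your argument can at best be a deduction from black-boxed results, not an independent proof.

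Within that black box the gap can be closed without any density argument:
\begin{enumerate}
\item Evaluating your natural equivalence at $X$ gives an equivalence of $\mV$-enriched functors $\Mor_{\mP_\mV(\mC)}(\iota_\mC X,-)\simeq\ev_X$.
\item Colimits and tensors in $\mP_\mV(\mC)$ are computed objectwise, so $\ev_X:\mP_\mV(\mC)\to\mV$ is a $\mV$-enriched left adjoint by \cref{adj}(1). Hence so is $\Mor_{\mP_\mV(\mC)}(\iota_\mC X,-)$.
\item By \cref{Yonedaext}(1) with $\mD=\mV$, restriction along $\iota_\mC$ is an equivalence ${\mV\mathrm{-}\Fun^\L}(\mP_\mV(\mC),\mV)\simeq\mV\mathrm{-}\Fun(\mC,\mV)$, in particular conservative. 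So the canonical transformation $\Mor_{\mP_\mV(\mC)}(\iota_\mC X,-)\to\ev_X$ is an equivalence as soon as it is one on representables.
\item At $\iota_\mC(Y)$ this transformation is the map $\Mor_{\mP_\mV(\mC)}(\iota_\mC X,\iota_\mC Y)\to\Mor_\mC(X,Y)$. By unitality it is a retraction of the map induced by $\iota_\mC$ on morphism objects, which is an equivalence by full faithfulness.
\end{enumerate}
The last step is the two-out-of-three argument you had in mind. Together with the conservativity in the third step, it replaces the unjustified appeal to generators and small-projectivity.
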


We will use the following terminology to describe compatible enrichments in two monoidal categories:

\begin{definition}\label{bienr}
Let $\mV$ and $\mW$ be presentably monoidal categories.	 
\begin{enumerate}[\normalfont(1)]\setlength{\itemsep}{-2pt}
\item A left $\mV$-enriched category is a $\mV$-enriched category.
\item A left $\mV$-enriched functor is a $\mV$-enriched functor
\item A right $\mV$-enriched category is a $\mV^\rev$-enriched category.
\item A right $\mV$-enriched functor is a $\mV^\rev$-enriched functor.
\item A $\mV,\mW$-bienriched category is a $\mV \ot \mW^\rev$-enriched category.
\item A $\mV,\mW$-enriched functor is a $\mV \ot \mW^\rev$-enriched functor.
\end{enumerate}
\end{definition}

We often refer to a $(\mV,\mV)$-bienriched category simply as $\mV$-bienriched category.

\begin{notation}
Let $\mV, \mW$ be presentably monoidal categories.
\begin{enumerate}[\normalfont(1)]\setlength{\itemsep}{-2pt}
\item Let $${_\mV \Cat}:= {\mV}\mathrm{-}\Cat$$
be the 2-category of left $\mV$-enriched categories and left $\mV$-enriched functors.
\item Let $${\Cat_\mW}:= {\mW^\rev}\mathrm{-}\Cat$$
be the 2-category of right $\mW$-enriched categories and right $\mW$-enriched functors.
\item Let $${_\mV \Cat_\mW}:= {\mV \ot \mW^\rev}\mathrm{-}\Cat$$
be the 2-category of $\mV,\mW$-bienriched categories and $\mV,\mW$-enriched functors.
\end{enumerate}
\end{notation}

\begin{notation}

We will write $\L\Mor, \R\Mor, \mathrm{B}\Mor$ for the morphism objects in a left, right and bienriched category, respectively.    
\end{notation}

\begin{notation}
Let $\mV, \mW$ be presentably monoidal categories and $\mM, \mN$ be $(\mV,\mW)$-bienriched categories.

Let $$_\mV\Fun_\mW(\mM,\mN):= {\mV \ot \mW^\rev \mathrm{-}\Fun(\mM,\mN)}$$ be the category of $\mV,\mW$-enriched functors $\mM \to \mN.$

\end{notation}

The following is \cite[Example 2.134.]{heine2024bienriched}:

\begin{example}
Let $\mV,\mW$ be presentably monoidal categories. Every presentably $\mV,\mW$-bitensored category is a $\mV,\mW$-enriched category.
Every presentably right $\mW$-tensored category is a right $\mW$-enriched category.
This follows also from \cref{linenr} identifying
presentably $\mV,\mW$-bitensored categories with presentably left $\mV \ot \mW^\rev$-tensored categories, and identifying
presentably right $\mW$-tensored categories with presentably left $\mW^\rev$-tensored categories.
In particular, $\mV$, which is presentably bitensored over itself, is a $\mV,\mV$-bienriched category.

Similarly, every $\mV,\mW$-linear functor between presentably $\mV,\mW$-bitensored categories is a $\mV,\mW$-enriched functor.

\end{example}

\begin{definition}Let $\mV$ and $\mW$ be presentably monoidal categories, let $\mC$ be a  
$\mV,\mW$-enriched category, and let $X \in \mC, V \in \mV, W \in \mW.$	 
\begin{enumerate}[\normalfont(1)]\setlength{\itemsep}{-2pt}
\item The left (co)tensor of $V$ and $X$ in $\mC$ is the (co)tensor of
$V \ot \tu_\mW \in \mV \ot \mW^\rev$ and $X$ in $\mC.$

\item The right (co)tensor of $W$ and $X$ in $\mC$ is the (co)tensor of
$\tu_\mV \ot W \in \mV \ot \mW^\rev$ and $X$ in $\mC.$

\end{enumerate}

\end{definition}

In the following we consider enriched slice categories.
To define bienriched slice categories we use the following lemma, which is \cite[Lemma 2.1.23.]{oriented}:

\begin{lemma}\label{tensorunit}
Let $\mV, \mW$ be presentably monoidal categories whose tensor unit is final.
The tensor unit of $\mV \ot \mW$ is final.
	
\end{lemma}

% \begin{proof}

% By \cite[Proposition 7.15]{FreeAlgebras} there are 
% small regular cardinals $\kappa, \tau$ such that
% the monoidal structures on $\mV,\mW$ restrict to the full subcategories $\mV^\kappa \subset \mV , \mW^\tau \subset \mW$ of $\kappa, \tau$-compact objects, respectively.
% The latter monoidal embeddings induce monoidal equivalences
% $\Ind_\kappa(\mV^\kappa) \simeq \mV, \Ind_\kappa(\mW^\kappa) \simeq \mW$.
% The monoidal localizations $\mP(\mV^\kappa) \rightleftarrows \Ind_\kappa(\mV^\kappa) \simeq \mV,$
% $\mP(\mW^\tau) \rightleftarrows \Ind_\tau(\mW^\kappa) \simeq \mW $
% induce a monoidal localization 
% $\mP(\mV^\kappa \times \mW^\tau) \simeq \mP(\mV^\kappa) \ot \mP(\mW^\tau) \rightleftarrows \mV \ot \mW . $
% By assumption the tensor unit of $ \mV^\kappa \times \mW^\tau $ is final.
% Since the Yoneda embedding is monoidal and preserves the final object, the tensor unit of $\mP(\mV^\kappa \times \mW^\tau) $ is final and so local for any localization.
% So the tensor unit of $\mP(\mV^\kappa \times \mW^\tau) $ belongs to $\mV \ot \mW.$
% Since there is a monoidal localization functor $\mP(\mV^\kappa \times \mW^\tau) \to \mV \ot \mW$,
% the tensor unit of $\mV \ot \mW$ is equivalent to the tensor unit of $\mP(\mV^\kappa \times \mW^\tau) $, which is final in $\mP(\mV^\kappa \times \mW^\tau) $ and so in $\mV \ot \mW.$ 
% \end{proof}

\begin{corollary}\label{finality}
	
Let $\mV, \mW$ be presentably monoidal categories whose tensor unit is final.
Then ${_\mV \Cat_\mW}$ admits a final object $*$.
	
\end{corollary}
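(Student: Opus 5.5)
We would build the final object as the enriched category with a single object and the final object of $\mV \ot \mW^\rev$ as its endomorphism object. Note first that $\mW^\rev$ has the same underlying category and tensor unit as $\mW$, so its tensor unit is again final. By \cref{tensorunit}, applied to $\mV$ and $\mW^\rev$, the tensor unit $\tu$ of $\mV \ot \mW^\rev$ is final. Since ${_\mV \Cat_\mW} = (\mV \ot \mW^\rev)\mathrm{-}\Cat$ by definition, it suffices to prove the following: if $\mU$ is a presentably monoidal category whose tensor unit is final, then $\mU\mathrm{-}\Cat$ has a final object.

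Our candidate is $* := B\tu_\mU$, the full $\mU$-enriched subcategory of $\mU$ spanned by the tensor unit. It has one object, and its endomorphism object is $\tu_\mU$, which is final in $\mU$. To show it is final, fix a $\mU$-enriched category $\mC$; we must show that $\Map_{\mU\mathrm{-}\Cat}(\mC, *)$ is contractible. We would argue as follows.

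\begin{enumerate}[\normalfont(1)]\setlength{\itemsep}{-2pt}
\item The underlying space of $*$ is a point, so on objects there is exactly one choice.
\item Every morphism object of $*$ is final in $\mU$. Hence each structure map $\Mor_\mC(X,Y) \to \tu$ is unique.
\item All coherence data for compatibility with composition and units are maps, or homotopies between maps, into tensor powers $\tu^{\ot k} \simeq \tu$. These are again final, so the spaces of such data are contractible.
\end{enumerate}

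The main obstacle is making this count of data rigorous, since enriched functors are homotopy-coherent objects and are not literally specified by finitely many maps. We would first try to show that $*$ is the right adjoint image of the terminal object under a suitable adjunction. Concretely, consider the lax monoidal functor $\mS \to \mU$ given by $X \mapsto \tu$, which is constant at the final object. This functor is lax monoidal because $\tu$ is final: the structure maps $\tu \ot \tu \to \tu$ are forced.

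A cleaner route goes through the structure of $\mU\mathrm{-}\Cat$ as a presentable category. Enriched categories are presented as algebras over the $\mS$-indexed family of ``path'' operads with values in $\mU$, that is, as Segal-type objects over $\Delta^\op$-type diagrams with object set $\iota(\mC)$. Under this presentation, the final object is the constant final algebra over the one-point object space. This constant final algebra is a valid algebra exactly when the unit is final, because the unit map $\tu \to \tu$ and the multiplication $\tu \ot \tu \simeq \tu \to \tu$ are the unique maps to the final object.

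Mapping into it is then computed fiberwise over object spaces. The map on object spaces goes to a point, and the map on algebra data goes into final objects, so both are contractible. We would cite the description of ${\mU\mathrm{-}\Cat}$ as a localization of such algebra categories from \cite{heine2024bienriched}. In that description, limits of enriched categories are computed on object spaces and morphism objects, which yields the claim.
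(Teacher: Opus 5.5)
Your proposal is correct and is essentially the paper's argument: $\mW^{\rev}$ has a final unit, so \cref{tensorunit} makes the unit of $\mV \ot \mW^{\rev}$ final, and then the one-object category $B\tu$ is final in $(\mV \ot \mW^{\rev})\mathrm{-}\Cat = {_\mV \Cat_\mW}$; this identification of $*$ with $B\tu$ is exactly what the paper needs for the next corollary, ${\mV\mathrm{-}\Fun_\mW}(*,\mC) \simeq \mC$. One small correction to your last paragraph: the one-object category on the final object of $\mU$ is always a valid enriched category and always final (mere existence of $*$ is automatic by presentability), so finality of the unit is not what makes it a valid algebra; it is precisely what identifies it with $B\tu_\mU$.
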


\begin{corollary}Let $\mV, \mW$ be presentably monoidal categories whose tensor unit is final.
For every $(\mV,\mW)$-bienriched category $\mC$
the induced functor ${\mV\mathrm{-}\Fun_\mW}(*,\mC) \to \mC$ is an equivalence.
    
\end{corollary}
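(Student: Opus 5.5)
The plan is to compute the $\mV,\mW$-enriched functor category out of the final object using its universal property (\cref{psinho}). It is cleanest to view $*$ as a one-object bienriched category. Writing $\mU := \mV \ot \mW^\rev$, the object $*$ of \cref{finality} has one object, and its endomorphism object is the final object of $\mU$. By \cref{tensorunit} (applied to $\mV$ and $\mW^\rev$, whose units are final) that final object is the tensor unit $\tu_\mU$. Hence $*$ is the enriched category $B\tu_\mU$ with endomorphism object $\tu_\mU$, i.e. the free $\mU$-enriched category on a single object.

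The key step is to show that $B\tu_\mU$ corepresents the underlying space, and then the underlying category, of $\mU$-enriched categories. A $\mU$-enriched functor $B\tu_\mU \to \mC$ consists of an object $X\in\mC$ together with a morphism of monoids $\tu_\mU \to \Mor_\mC(X,X)$ in $\mU$. Since $\tu_\mU$ is the initial associative algebra, this second datum is contractible. So $\Map_{\mU\mathrm{-}\Cat}(*,\mC)\simeq \iota(\mC)^{\simeq}$.

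To upgrade this to categories, I would use the 2-categorical structure, equivalently the cotensoring of ${\mU\mathrm{-}\Cat}$ over $\Cat$. Given a category $K$, a functor $K \to \mU\mathrm{-}\Fun(*,\mC)$ corresponds to a $\mU$-enriched functor $\langle K, *\rangle \to \mC$, viewing $K$ as $\mS$-enriched via the transfer along $\mS\to\mU$. One then checks $\langle K,*\rangle \simeq \tu_!K$, the free $\mU$-enriched category on $K$. This holds because $\mS\otimes\mU\simeq\mU$ and the morphism objects of $K\times *$ are $\Map_K(a,b)\otimes\tu_\mU$. Functors out of $\tu_!K$ correspond to functors $K\to\iota(\mC)$ by the adjunction $\tu_!\dashv\iota$ (part (3) of the transfer proposition applied to the monoidal functor $\mS\to\mU$ with right adjoint $\Map(\tu,-)$). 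By Yoneda the comparison map $\mU\mathrm{-}\Fun(*,\mC)\to\mC$, given by evaluation at the unique object, is an equivalence.

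The main obstacle is the identification of the endomorphism object of the final bienriched category with $\tu_\mU$. This is exactly where \cref{tensorunit} is needed, and without finality of the unit the statement fails. A secondary point is that \cref{psinho} is stated for univalent categories. If $\mC$ is not univalent, I would instead argue directly with the 2-category ${\mU\mathrm{-}\Cat}$, using that $\Fun(K,-)$ of the mapping category is computed by tensoring with $K$. That tensor is preserved by the left adjoint $\tu_!$, so the same argument goes through.
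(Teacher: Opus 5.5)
Your proposal is correct and is essentially the argument the paper leaves implicit; the corollary is stated without proof right after \cref{tensorunit} and \cref{finality}. Since the unit of $\mV \ot \mW^\rev$ is final, the final bienriched category $*$ is the one-object category $B\tu$, which corepresents the underlying category, and your identification $\langle K,*\rangle \simeq \tu_!K$ with the transfer adjunction $\tu_! \dashv \iota$ (plus the 2-categorical variant for non-univalent $\mC$) just spells out that last standard step.
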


For the next notation we that the 2-category $\mV\mathrm{-}\Cat$ admits cotensors.

\begin{notation}
Let $\mV$ be a presentably monoidal category whose tensor unit is final, $\mC$ a $\mV$-enriched category and $X $ an object of $\mC.$
Let $$ \mC_{\X/}:= \{X\} \times_{\mC^{\{0\}}} \mC^{\bD^1} $$ be the pullback of the $\mV$-enriched functor $\mC^{\bD^1} \to \mC^{\{0\}}$ evaluating at the source along the $\mV$-enriched functor $* \to \mC$ classifying $X$. 
By definition there is a $\mV$-enriched functor $\mC_{\X/}\to \mC$.
    
\end{notation}

\begin{remark}\label{init}
Let $\mV$ be a presentably monoidal category whose tensor unit is final, $\mC$ a $\mV$-enriched category and $\X \to \Y, \X \to \Z$ morphisms in $\mC.$
The induced morphism $$\tu \to \Mor_{\mC_{\X/}}(\X,\X) \to \Mor_{\mC_{\X/}}(\X,\Z)$$ is an equivalence and the resulting commutative square
$$\begin{xy}
\xymatrix{
\Mor_{\mC_{\X/}}(\Y,\Z) \ar[d]^{} \ar[r]
& \Mor_{\mC}(\Y,\Z) \ar[d] \ar[d]^{}
\\ 
\tu \simeq \Mor_{\mC_{\X/}}(\X,\Z) \ar[r] & \Mor_{\mC}(\X,\Z)
}
\end{xy}$$
is a pullback square.
    
\end{remark}

The following is \cite[Lemma 2.2.3.]{oriented}:

\begin{lemma}

Let $\mV$ be a presentably monoidal category whose tensor unit is final, $\mC$ a $\mV$-enriched category and $X $ an object of $\mC.$
If $\mC$ admits small weakly contractible conical colimits, then $\mC_{\X/}$ admits small colimits.

\end{lemma}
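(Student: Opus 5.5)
The argument reduces colimits in $\mC_{X/}$ to weakly contractible conical colimits in $\mC$, plus the initial object $\mathrm{id}_X$.

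\textbf{Step 1: the initial object.} Remark \ref{init} shows that $\Mor_{\mC_{X/}}(X,Z)\simeq\tu$ for every object $X\to Z$. Since the tensor unit of $\mV$ is final, $\mathrm{id}_X$ is an initial object of $\mC_{X/}$ in the enriched sense. In particular the empty conical colimit exists, and every conical colimit over a diagram $K$ can be rewritten as a colimit over the cone $K^{\triangleleft}$, with the cone point sent to $\mathrm{id}_X$. This cone is weakly contractible.

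\textbf{Step 2: weakly contractible colimits are created by $\mC_{X/}\to\mC$.} Let $F:K\to\mC_{X/}$ be a diagram with $K$ weakly contractible, and let $\bar F$ be its image in $\mC$.
\begin{itemize}
\item The maps $X\to F(k)$ assemble to a cocone from the constant diagram at $X$ to $\bar F$. Composing with the colimit cocone of $\bar F$ in $\mC$ gives a map $X\to\colim\bar F$, using that the colimit of a constant diagram over a weakly contractible $K$ is $X$.
\item To see that this object is a colimit in $\mC_{X/}$, use the pullback square of Remark \ref{init}. For any $Z\in\mC_{X/}$,
\[
\Mor_{\mC_{X/}}(\colim\bar F,Z)\simeq \Mor_{\mC}(\colim\bar F,Z)\times_{\Mor_\mC(X,Z)}\tu .
\]
The first factor equals $\lim_k\Mor_\mC(\bar F(k),Z)$, and the limit of the constant diagram at $\Mor_\mC(X,Z)$ over weakly contractible $K$ is $\Mor_\mC(X,Z)$.
\item Limits commute with pullbacks in $\mV$, so the right-hand side becomes $\lim_k\big(\Mor_\mC(\bar F(k),Z)\times_{\Mor_\mC(X,Z)}\tu\big)\simeq\lim_k\Mor_{\mC_{X/}}(F(k),Z)$, again by Remark \ref{init}.
\end{itemize}
This is exactly the enriched (conical) universal property.

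\textbf{Step 3: arbitrary small conical colimits.} Combining Step 1 and Step 2 gives all small conical colimits: extend along $K\subset K^{\triangleleft}$ using the initial object and apply Step 2.

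\textbf{Step 4: tensors.} If "small colimits" is meant to include tensors with objects of $\mV$ (weighted colimits), they must be built separately. For $V\in\mV$ and $(X\to Y)$, form the pushout in $\mC$ of $X\leftarrow V\ot X\to V\ot Y$, where $V\ot X\to X$ is induced by $V\to\tu$. This is a weakly contractible conical colimit, so it exists in $\mC$. If $\mC$ lacks tensors, I would instead argue via the cotensor description $\mC_{X/}=\{X\}\times_{\mC}\mC^{\bD^1}$.

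\textbf{Main obstacle.} I expect the hard step to be justifying the identification in Step 2 at the enriched level. Specifically, one must check that the $\mV$-valued limit of the constant diagram at $\Mor_\mC(X,Z)$ over a weakly contractible $K$ is $\Mor_\mC(X,Z)$ itself. This is a cotensor by the space $|K|\simeq *$, and I would deduce it from the fact that $\mV$ is presentable, so that $\mS$ acts on it.
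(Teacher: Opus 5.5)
Your argument is correct and is the route the paper prepares with Remark \ref{init}; the paper does not prove the lemma itself but cites \cite[Lemma 2.2.3]{oriented}. You use that $\id_X$ is initial and that the pullback square makes $\mC_{X/}\to\mC$ create weakly contractible conical colimits. Your ``main obstacle'' is only the fact that $K\to *$ is final and initial for weakly contractible $K$, which holds in any $\infty$-category and needs no presentability of $\mV$.

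Drop Step 4: the lemma is about conical colimits. Your fallback for non-tensored $\mC$ could not work anyway. When $X$ is initial in $\mC$ one has $\mC_{X/}\simeq\mC$, so it would have to produce tensors in $\mC$ from conical colimits alone.
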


\begin{remark}
Let $\mV$ be a presentably monoidal category whose tensor unit is final, $\mC, \mD$ be $\mV$-enriched categories, $F: \mC \to \mD$ a $\mV$-enriched functor 
and $Y \to X $ a morphism in $\mC.$
There is an induced $\mV$-enriched functor 
$\mC_{\X/} \to \mD_{F(\Y)/}$ that fits into a commutative square 
$$\begin{xy}
\xymatrix{
\mC_{\X/} \ar[d]^{} \ar[r]
&  \mD_{F(\Y)/} \ar[d] \ar[d]^{}
\\ 
\mC
\ar[r]^F & \mD
}
\end{xy}$$	
of $\mV$-enriched categories.
    
\end{remark}

The following is \cite[Lemma 2.2.5.]{oriented}:

\begin{lemma}\label{bien}
Let $\mV$ be a presentably monoidal category whose tensor unit is final, $\mC$ a $\mV$-enriched category and $X \to Y$ a morphism in $\mC.$
If $\mC$ admits conical pushouts, the $\mV$-enriched functor $\mC_{\Y/} \to \mC_{\X/}$ admits a $\mV$-enriched left adjoint.
	
\end{lemma}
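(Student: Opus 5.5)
We must prove \cref{bien}: for a morphism $f: X \to Y$ in $\mC$ with conical pushouts, the restriction functor $f^*: \mC_{\Y/} \to \mC_{\X/}$ admits a $\mV$-enriched left adjoint. The natural candidate is pushout along $f$, that is, $(X \to Z) \mapsto (Y \to Y \coprod_X Z)$. We will verify it via the representability criterion for enriched adjoints recalled above.

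By part (2) of that criterion (the $\mV$-enriched left adjoint version), it suffices to show that for each object $(X \to Z)$ of $\mC_{\X/}$ the $\mV$-enriched functor
\[
\Mor_{\mC_{\X/}}(Z, f^*(-)) : \mC_{\Y/} \to \mV
\]
is corepresentable. The first step is to compute morphism objects in the slices using \cref{init}. For $(Y \to W) \in \mC_{\Y/}$, \cref{init} gives a pullback
\[
\Mor_{\mC_{\X/}}(Z,W) \simeq \tu \times_{\Mor_\mC(X,W)} \Mor_\mC(Z,W),
\]
where $\tu \to \Mor_\mC(X,W)$ picks out the composite $X \to Y \to W$. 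For the candidate $P := Y \coprod_X Z$, with its canonical map from $Y$, the same remark gives
\[
\Mor_{\mC_{\Y/}}(P,W) \simeq \tu \times_{\Mor_\mC(Y,W)} \Mor_\mC(P,W).
\]
Since the pushout is conical, $\Mor_\mC(P,W) \simeq \Mor_\mC(Y,W) \times_{\Mor_\mC(X,W)} \Mor_\mC(Z,W)$ in $\mV$. Pasting pullbacks then identifies $\Mor_{\mC_{\Y/}}(P,W)$ with $\tu \times_{\Mor_\mC(X,W)} \Mor_\mC(Z,W) \simeq \Mor_{\mC_{\X/}}(Z, f^*W)$.

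The second step is to upgrade this objectwise equivalence to an equivalence of $\mV$-enriched functors in $W$, namely one induced by a unit morphism $Z \to f^*P$ in $\mC_{\X/}$. Here the unit is the canonical pushout leg $Z \to P$, which lies under $X$ because the pushout square commutes. The comparison map is then composition with this unit,
\[
\Mor_{\mC_{\Y/}}(P,-) \to \Mor_{\mC_{\X/}}(f^*P, f^*-) \to \Mor_{\mC_{\X/}}(Z, f^*-),
\]
which is automatically $\mV$-enriched natural. It remains to check that its components are the equivalences computed in the first step. This reduces to compatibility of the pullback squares of \cref{init} with composition, together with the defining property of conical pushouts.

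The main obstacle is this identification: the enriched structure of $\mC_{\X/}$ is defined via the cotensor $\mC^{\bD^1}$ and a pullback, so one must check that the equivalence of \cref{init} is natural in $W$ and compatible with restriction along $f$. I would handle this by working in $\mV$-enriched presheaves, where representables and conical limits are computed pointwise and the Yoneda lemma applies, so the pasting is a formal computation there. Finally, the criterion together with the corepresentability yields the enriched left adjoint $Z \mapsto Y \coprod_X Z$.
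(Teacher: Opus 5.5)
Your proposal is correct and matches the argument behind this lemma, which the paper does not reprove but imports from its companion paper: the left adjoint is pushout along $X \to Y$, and $Y \coprod_X Z$ corepresents $\Mor_{\mC_{X/}}(Z, f^*(-))$ by \cref{init}, the conical pushout formula, and pasting of pullbacks. Your concern about naturality in $W$ of the identification in \cref{init} is unnecessary. Your transformation is already $\mV$-natural by construction, as precomposition with the pushout leg $Z \to Y \coprod_X Z$, so it only remains to check, for each fixed $W$, that its component is the map of pullbacks; this is a pointwise check using that $f^*$ commutes with the forgetful functors to $\mC$.
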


The following is \cite[Corollary 2.2.2.]{GepnerHeine2026}:

\begin{corollary}\label{susp}

Let $\mV$ be a presentably monoidal category and $\A \in \mV$.
There is a $\mV$-enriched category $S(\A) $, which we call the suspension of $A$, satisfying the following properties:
\begin{enumerate}[\normalfont(1)]\setlength{\itemsep}{-2pt}
\item The space of objects of $S(\A) $ is the set $\{0,1\}.$

\item For every $0 \leq \ell \leq 1$ the unit $\tu \to \Mor_{S(\A)}(\ell,\ell)$ is an equivalence.

\item The morphism object $\Mor_{S(\A)}(1,0)$ is initial.

\item There is an equivalence $ \Mor_{S(\A)}(0,1) \simeq \A$. 

\item For every $\mV$-enriched category $\mC $ and objects $\X,\Y$ of $\mC$
the induced map is an equivalence $$ \Map_{\mV\mathrm{-}\Cat_{B(\tu) \coprod B(\tu)/}}(S(\A), (\mC; \X,\Y)) \to \Map_\mV(\A, \Mor_\mC(\X,\Y)).$$
\end{enumerate}
    
\end{corollary}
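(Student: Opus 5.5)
The statement is \cref{susp}, and I plan to construct $S(\A)$ directly and then reduce everything to the universal property (5). I would take $S(\A)$ to be the $\mV$-enriched category obtained from $B\tu_\mV \coprod B\tu_\mV$ by freely adjoining a morphism object $\A$ from the first object to the second. Concretely, I would work in the model of $\mV$-enriched categories as $\mV$-enriched quivers (graphs) with a monad whose algebras are enriched categories. For a fixed set of objects $\{0,1\}$, enriched categories are algebras over the free-category monad on $\mV$-graphs with vertex set $\{0,1\}$. I would take the $\mV$-graph $G$ with $G(0,1)=\A$ and all other morphism objects initial, and set $S(\A)$ to be the free enriched category on $G$.

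First I compute the morphism objects. The free category on a graph has
\[
\Mor(x,y)=\coprod_{n\geq 0}\ \coprod_{x=x_0,\dots,x_n=y} G(x_{n-1},x_n)\ot\cdots\ot G(x_0,x_1).
\]
In $G$ the only nontrivial edge object goes from $0$ to $1$, and $\emptyset\ot V\simeq\emptyset$ because the tensor product preserves colimits in each variable. Hence the only nonempty chains have length $0$ (the unit) or length $1$ (equal to $\A$). This gives (1) through (4) at once: the endomorphism objects are $\tu$, $\Mor(1,0)$ is initial, and $\Mor(0,1)\simeq\A$. The space of objects is the discrete set $\{0,1\}$, since the free construction does not alter objects.

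Next I establish (5). The category $\mV\mathrm{-}\Cat_{B(\tu)\coprod B(\tu)/}$ of enriched categories with two marked objects maps to enriched categories with object set $\{0,1\}$, via pullback of enrichment along $\{0,1\}\to\iota\mC$. Under this, $(\mC;X,Y)$ goes to the full restriction, which is a $\mV$-category on the set $\{0,1\}$. Maps out of the free object $S(\A)$ in the fixed-object-set category then correspond to graph maps $G\to U(\mC|_{\{X,Y\}})$. Such a graph map is exactly a map $\A\to\Mor_\mC(X,Y)$, because the remaining components have initial source. What remains is to show that mapping spaces in the pointed slice agree with mapping spaces in the fixed-object-set category, for source objects whose object-space is the set $\{0,1\}$. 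This is the standard fibration of enriched categories over spaces of objects (the object-space functor is a cartesian fibration with fibers given by categories over a fixed set).

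I expect the main obstacle to be this identification of mapping spaces in the slice under $B\tu\coprod B\tu$ with those in the fiber over the object set, especially in the non-univalent homotopy-coherent setting. I would handle it using the description of $\mV\mathrm{-}\Cat$ through $\mV$-enriched $\infty$-operads, or $\Delta^\op$-graphs, over the space of objects, as in the cited Heine work. Alternatively, I would present $S(\A)$ as a pushout of $B\tu\coprod B\tu$ along the free walking-$\A$-morphism, and read off (5) from the universal property of pushouts together with the representability of the functor $\mC\mapsto\Mor_\mC(X,Y)$.
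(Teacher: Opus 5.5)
The paper does not prove this statement; it quotes it from \cite[Corollary 2.2.2]{GepnerHeine2026}, so there is no in-paper argument to compare against. Your first route is a correct, self-contained proof, and it is the standard one:
\begin{itemize}
\item Realize $S(\A)$ as the free $\mV$-category on the $\mV$-graph with object set $\{0,1\}$, where $G(0,1)=\A$ and all other edge objects are initial.
\item Read off (1)--(4) from the chain formula for free categorical algebras. Every chain other than the length-$0$ chains and the single edge $0\to 1$ contains an initial tensor factor. It is therefore initial, because $\otimes$ preserves colimits separately in each variable, including the empty colimit.
\item Get (5) from two facts. The object-space projection is a cartesian fibration whose fibers are the categories of $\mV$-categories with a fixed object space. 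On the fiber over $\{0,1\}$, use the free/forgetful adjunction; a graph map $G\to(X,Y)^*\mC$ is just a map $\A\to\Mor_\mC(X,Y)$, since the other components have initial source.
\end{itemize}

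A few points deserve to be made explicit.
\begin{itemize}
\item You work in the model of categorical algebras ($\Delta^{\op}_X$-algebras in $\mV$, as in Gepner--Haugseng). You therefore need the comparison with the $\mV\mathrm{-}\Cat$ used here, for instance via \cite{heine2025equivalence}.
\item You need $\Map(B\tu\coprod B\tu,\mC)\simeq\iota_0(\mC)\times\iota_0(\mC)$, so that the slice mapping space really is the fiber over $(X,Y)$. This follows from the same fibration argument, because $B\tu$ is the unit algebra and hence initial in the fiber over a point.
\item The map in (5) is the one the adjunction produces, because $\A=G(0,1)\to\Mor_{S(\A)}(0,1)$ is the unit, which you have shown is an equivalence.
\end{itemize}

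Your proposed alternative via a pushout is not an independent argument. The ``free walking $\A$-morphism'' is $S(\A)$ itself, and representability of $\mC\mapsto\Mor_\mC(X,Y)$ on bipointed categories is exactly statement (5). You should drop it.

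A benefit of your route is that it also gives the corollary stated immediately after this one in the paper. The free functor is a left adjoint, so it sends the pushout of graphs to the wedge. The same chain computation on the resulting linear graph then gives $\Mor_{S(\A_1)\vee\cdots\vee S(\A_n)}(k,m)\simeq\bigotimes_{k\leq\ell<m}\A_{\ell+1}$.
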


\begin{definition}
Let $\mV$ be a presentably monoidal category, $\mC, \mD$ be $\mV$-enriched categories and $X,Y \in \mC, Y,Z \in \mD$
objects. The bipointed wedge $(\mC; X,Y) \vee (\mD; Y,Z) $
is the pushout $ (\mC \coprod_{\{Y\}} \mD; X,Z).$
    
\end{definition}

\begin{corollary}

Let $\mV$ be a presentably monoidal category, $n \geq 1$ and $\A_1, ..., \A_\n \in \mV$.
\begin{enumerate}[\normalfont(1)]\setlength{\itemsep}{-2pt}
\item The space of objects of the $\mV$-enriched category $S(A_1) \vee ...\vee S(A_n)$ is the set $\{0,...,\n\}.$
\item For every $0 \leq \ell \leq \n$ the unit $\tu \to \Mor_{S(A_1) \vee ...\vee S(A_n)}(\ell,\ell)$ is an equivalence.
\item For every $0 \leq \bk < \ell \leq \n$ the morphism object $\Mor_{S(A_1) \vee ...\vee S(A_n)}(\ell,\bk)$ is initial.
\item For every $0 \leq \ell < \n$ there is an equivalence $ \Mor_{S(A_1) \vee ...\vee S(A_n)}(\ell,\ell+1) \simeq \A_{\ell+1}$. 
\item For every $0 \leq \bk < \m \leq \n$ the following induced morphism is an equivalence $$\bigotimes_{\bk \leq \ell < \m}  \A_{\ell+1} \simeq \bigotimes_{\bk \leq \ell < \m} \Mor_{S(A_1) \vee ...\vee S(A_n)}(\ell,\ell+1) \to \Mor_{S(A_1) \vee ...\vee S(A_n)}(\bk,\m).$$
\end{enumerate}
\end{corollary}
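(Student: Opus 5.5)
Throughout I write $W := S(A_1) \vee \cdots \vee S(A_n)$. I would argue by induction on $n$, building $W$ as an iterated pushout. Concretely,
\[
W = \bigl(S(A_1)\vee\cdots\vee S(A_{n-1})\bigr)\coprod_{B\tu}S(A_n),
\]
glued along the object $n-1$ of the first factor and the object $0$ of the second. The case $n=1$ is exactly \cref{susp}.

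For the inductive step, the main tool is the universal property of the wedge, combined with the universal property of each suspension from \cref{susp}(5). For any $\mV$-enriched category $\mC$ with chosen objects $X_0,\dots,X_n$, this gives an equivalence between the space of $\mV$-enriched functors $W \to \mC$ sending $\ell \mapsto X_\ell$ and the product $\prod_{\ell}\Map_\mV(A_{\ell+1},\Mor_\mC(X_\ell,X_{\ell+1}))$. From this I compare $W$ with an explicitly constructed enriched category $\mE$ whose data are:
\begin{itemize}
\item object set $\{0,\dots,n\}$;
\item $\Mor_\mE(k,m)=\bigotimes_{k\le\ell<m}A_{\ell+1}$ for $k\le m$, where the empty tensor product is $\tu$;
\item $\Mor_\mE(k,m)=\emptyset$ for $k>m$;
\item composition given by the canonical associativity equivalences of the tensor product, and compositions involving the initial object handled by the fact that $\emptyset\ot V\simeq\emptyset$, since the tensor product preserves colimits in each variable.
\end{itemize}
Because the monoidal structure is presentable, $\mE$ is the enriched category freely generated by a linear quiver. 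This can be made precise by viewing a $\mV$-enriched category with object space $\{0,\dots,n\}$ as an algebra in $\mV$-valued matrices, as in the Hinich/Heine model, and noting that the free algebra on the quiver $Q$ with $Q(\ell,\ell+1)=A_{\ell+1}$ and all other entries initial is $\coprod_{j}Q^{\ot j}$. Because $Q$ is strictly upper-triangular with only adjacent entries nonzero, the entry $(k,m)$ of this coproduct has only the single summand $j=m-k$, which is exactly $\bigotimes_{k\le\ell<m}A_{\ell+1}$. This gives (1)–(5) at once for $\mE$.

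It then suffices to show that $\mE$ and $W$ corepresent the same functor, which is the \textbf{main obstacle}. The free-algebra description identifies maps out of $\mE$ into $\mC$ with prescribed objects with maps of quivers $Q\to\Mor_\mC(X_\bullet,X_\bullet)$, which is the same product of mapping spaces described above. To turn this into an equivalence $W\simeq\mE$, I need this description to hold in the (non-univalent) category $\mV\mathrm{-}\Cat$ with the object space fixed. This requires checking that the pushout defining the wedge is computed with object space $\{0,\dots,n-1\}\coprod_{\{n-1\}}\{0,1\}$, i.e.\ that forming object spaces commutes with these pushouts along $B\tu$. If working directly with free algebras is too heavy, I would instead use the inductive formula: given the hom-objects of $W'=S(A_1)\vee\cdots\vee S(A_{n-1})$, compute the hom-objects of the gluing along a single object as $\Mor_W(k,m)\simeq\Mor_{S(A_n)}(0,m')\ot\Mor_{W'}(k,n-1)$. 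This formula holds because no morphisms run backwards in either factor, so no further composites arise.
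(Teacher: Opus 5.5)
Your argument is correct and fills in what the paper leaves implicit: the paper records this corollary without proof, as a consequence of \cref{susp} and the definition of the wedge as an iterated pushout along $B\tu$. You show that $S(A_1)\vee\cdots\vee S(A_n)$ and the free $\mV$-enriched category on the linear quiver $Q$ corepresent the same functor $\mC\mapsto\prod_{\ell}\Map_\mV(A_{\ell+1},\Mor_\mC(X_\ell,X_{\ell+1}))$ over $\iota_0(\mC)^{\times(n+1)}$, and you read off the hom-objects from $\coprod_j Q^{\ot j}$; this is a sound way to make the paper's claim precise.

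Your flagged ``main obstacle'' is not actually needed, because the Yoneda comparison already gives $W\simeq\mE$ including the object set. (The claim is true anyway: in the non-univalent setting object spaces commute with these pushouts, since taking objects has the chaotic category as a right adjoint.) By contrast, your fallback formula, justified by ``no morphisms run backwards'', is only heuristic unless it is backed by the same free-algebra computation.
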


\begin{notation}\label{Thetasgen}Let $\mV$ be a small monoidal category. Let $\Theta(\mV) \subset \mV\mathrm{-}\Cat$ be the full subcategory spanned by the wedges of suspensions of objects of $\mV$ and $B\tu_\mV$.

\end{notation}

\begin{definition}

Let $\mV$ be a small monoidal category.
A presheaf on $\Theta(\mV) $ satisfies the Segal condition 
if for every $n \geq 2$ and $X_1,...,X_n \in \mV$
the following canonical map is an equivalence:
$$ F(S(X_1) \vee\cdots\vee S(X_n)) \to F(S(X_1)) \times_{F(B\tu_\mV)}\times\cdots \times_{F(B\tu_\mV)} F(S(X_n)).$$
    
\end{definition}

\begin{definition}Let $\mV$ be a small monoidal category, $F$ a presheaf on $\Theta(\mV)$ and $A,B \in F(B\tu_\mV). $ 
The presheaf of morphisms $A$ to $B$ in $F$ is the following presheaf on $\mV$, where $S: \mV \to \mV\mathrm{-}\Cat_{B\tu_\mV \coprod B\tu_\mV/}$ is the suspension:
$$\Mor_F(A,B):= (F \circ S) \times_{F(B\tu_\mV) \times F(B\tu_\mV)} \{(A,B)\}.$$
    
\end{definition}

The following is \cite[Theorem 2.4.8.]{oriented}:

\begin{theorem}\label{denseinherited} Let $\mW$ be a presentably monoidal category and $\mV$ a small dense full monoidal subcategory of $\mW$.
The full subcategory $\Theta(\mV)$ of $\mV\mathrm{-}\Cat$ spanned by the wedges of suspensions of objects of $\mV$ and $B\tu_\mV$ is dense in $\mW\mathrm{-}\Cat.$
A presheaf on $\Theta(\mV) $ belongs to the essential image of the $\Theta(\mV)$-nerve
if and only if it satisfies the Segal condition and 
all morphism presheaves belong to the essential image of the $\mW$-nerve.

\end{theorem}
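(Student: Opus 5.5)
The plan is to reduce to the case $\mV = \mW$ (where everything is essentially tautological) and then restrict along the inclusion $\Theta(\mV) \subset \Theta(\mW)$. Concretely, I would argue in three steps.

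\textbf{Step 1: the case $\mV=\mW$.} Here I would use the description of $\mW$-enriched categories as Segal objects. By \cref{susp}(5), $S(\A)$ corepresents $\Map_\mW(\A,\Mor_\mC(\X,\Y))$ among bipointed $\mW$-enriched categories. Hence for $\mC \in \mW\mathrm{-}\Cat$ the nerve $N_\mW(\mC) = \Map(-,\mC)|_{\Theta(\mW)}$ records:
\begin{itemize}
\item the space of objects $\iota(\mC) \simeq \Map(B\tu_\mW,\mC)$;
\item the presheaves $\Map_\mW(-,\Mor_\mC(\X,\Y))$, evaluated through $S$;
\item the composition, via the comparison maps $S(\A_1\otimes \A_2) \to S(\A_1)\vee S(\A_2)$ dual to the wedge corollary (5).
\end{itemize}
I would take as input that the $\Theta(\mW)$-nerve is fully faithful with image the Segal presheaves whose morphism presheaves are representable. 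This is the standard identification of enriched categories with categorical algebras, in the form proven in the cited work of Heine on bienriched categories. If only a $\Delta$-indexed (categorical algebra) model is available, one gets the statement from it by left Kan extending along wedges.

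\textbf{Step 2: density of $\Theta(\mV)$.} I would show that each object of $\Theta(\mW)$ is a colimit, computed in $\mW\mathrm{-}\Cat$, of objects of $\Theta(\mV)$ in a way detected by $\Theta(\mV)$-nerves.
\begin{itemize}
\item By \cref{susp}(5), $S$ is a left adjoint $\mW \to \mW\mathrm{-}\Cat_{B\tu\coprod B\tu/}$, since $\Mor_\mC(\X,\Y)$ is its right adjoint.
\item Because $\mV$ is dense in $\mW$, each $W$ is the canonical colimit of $\mV_{/W}$. Therefore $S(W)$ is the corresponding colimit of the $S(V)$ in bipointed categories, which becomes a colimit in $\mW\mathrm{-}\Cat$ after gluing in $B\tu \coprod B\tu$.
\item Wedges are pushouts along $B\tu$, so they commute with these colimits in each variable.
\end{itemize}
From this, $\Map(\theta,\mC)$ for $\theta\in\Theta(\mW)$ is the limit of its values on $\Theta(\mV)$. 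The $\Theta(\mW)$-nerve is therefore the right Kan extension of the $\Theta(\mV)$-nerve. Combined with Step 1, this makes the $\Theta(\mV)$-nerve fully faithful, which is density.

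\textbf{Step 3: the essential image.} Necessity is clear:
\begin{itemize}
\item the Segal condition for wedges of $\mV$-suspensions holds by the wedge corollary;
\item the morphism presheaves are $\Map_\mW(-,\Mor_\mC(\X,\Y))|_{\mV}$, which lie in the image of the $\mW$-nerve.
\end{itemize}
For sufficiency, take $F$ satisfying both conditions and let $\bar F$ be its right Kan extension to $\Theta(\mW)$. I would check that $\bar F$ satisfies the $\Theta(\mW)$-Segal condition and has representable morphism presheaves; Step 1 then gives $\mC$ with $N_\mW\mC\simeq\bar F$, and restricting yields $F$.
\begin{itemize}
\item \emph{Morphism presheaves.} $\bar F(S(W))$ over $(\X,\Y)$ is the limit over $\mV_{/W}$ of $\Mor_F(\X,\Y)(V)$. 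Since $\Mor_F(\X,\Y)$ is the restriction of a representable $\Map_\mW(-,M)$, this limit equals $\Map_\mW(W,M)$ by density of $\mV$. So the morphism presheaves of $\bar F$ are represented by objects $M_{\X,\Y}$.
\item \emph{Segal condition.} The Segal condition for $\bar F$ on wedges $S(W_1)\vee\cdots\vee S(W_n)$ follows by commuting the limits over the $\mV_{/W_i}$ with the iterated fiber products over $F(B\tu)$.
\end{itemize}

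The main obstacle is Step 2: showing that the colimit presentation of $S(W)\vee S(W')$ by $\Theta(\mV)$-objects is actually preserved by $\Map(-,\mC)$. The subtle point is composition. The composition data of $\mC$ on $\mW$-objects has to be recovered from the wedge values on $\mV$-objects. This needs that $\mV$ is a monoidal subcategory, so that $V\otimes V'\in\mV$, and that $\otimes$ on $\mW$ preserves colimits in each variable, so that $\mV_{/W}\times\mV_{/W'}\to\mV_{/W\otimes W'}$ is cofinal enough. I would handle this by working one variable at a time via the pushout description of wedges.
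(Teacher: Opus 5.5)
The paper gives no proof of this statement; it imports it as \cite[Theorem 2.4.8.]{oriented}. So I am judging your argument on its own, and there is a genuine gap. It sits exactly where you place the ``main obstacle''.

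\textbf{Suspensions are fine.} For $V'\in\mV$ one has $\colim_{V\in\mV_{/W}}\Map_\mW(V',V)\simeq\Map_\mW(V',W)$ tautologically. So the $\Theta(\mV)$-nerve does carry your glued presentation of $S(W)$ to a colimit.

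\textbf{Wedges fail.} Take $\theta=S(W_1)\vee S(W_2)$ and evaluate at $S(V)$ with endpoints $(0,2)$. Write $\nu\colon\mW\to\mP(\mV)$ for the restricted Yoneda functor. The colimit of representables gives $\colim_{\mV_{/W_1}\times\mV_{/W_2}}\nu(V_1\otimes V_2)=\nu W_1\otimes_{\mathrm{Day}}\nu W_2$, while the nerve of $\theta$ gives $\nu(W_1\otimes W_2)$. So your argument needs the lax monoidal structure map of $\nu$ to be an equivalence, which is exactly your ``cofinal enough'' claim.

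This fails even under your stated hypotheses. Let $\mW$ be abelian groups with $\otimes$, and $\mV$ the finitely generated free groups, which form a small, dense, monoidal subcategory. The slices $\mV_{/A}$ have finite coproducts, hence are sifted. Therefore $(\nu A\otimes_{\mathrm{Day}}\nu B)(\mathbb{Z})\simeq\Omega^\infty(A\otimes^{\mathbb{L}}B)$. For $A=B=\mathbb{Z}/2$ this has $\pi_1=\mathrm{Tor}_1(A,B)\neq 0$, whereas $\nu(A\otimes B)(\mathbb{Z})$ is discrete.

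Consequences for your argument:
\begin{itemize}
\item A colimit presentation of $\theta$ by objects of $\Theta(\mV)$ does not make $\theta$ the canonical colimit over $\Theta(\mV)_{/\theta}$. Your identification of the $\Theta(\mW)$-nerve as a right Kan extension is therefore unproved.
\item The Segal part of Step 3 hides the same problem. $\bar F(\theta)$ is a limit over all of $\Theta(\mV)_{/\theta}$. This includes maps $S(V)\to\theta$ across the wedge point, i.e.\ maps $V\to W_1\otimes W_2$, and $\prod_i\mV_{/W_i}$ does not control these.
\item For $F$ a nerve, the Segal condition for $\bar F$ at $\theta$ amounts to density at $\theta$. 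So Step 3 presupposes Step 2.
\end{itemize}

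\textbf{How to fix it.} Work over $\mP(\mV)$ with Day convolution, where the Yoneda embedding is strong monoidal and colimit presentations of wedges really are preserved.
\begin{itemize}
\item First identify $\mP(\mV)$-enriched categories with Segal presheaves on $\Theta(\mV)$.
\item Density of $\mV$ makes $\nu$ fully faithful with a monoidal left adjoint (the colimit extension of $\mV\subset\mW$). Hence $\nu_!$ exhibits $\mW\mathrm{-}\Cat$ as the full reflective subcategory of $\mP(\mV)\mathrm{-}\Cat$ whose morphism objects lie in the image of $\nu$.
\item This yields density of $\Theta(\mV)$ in $\mW\mathrm{-}\Cat$. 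It also produces exactly the morphism-presheaf condition in the description of the image.
\item The discrepancy between $\nu W_1\otimes_{\mathrm{Day}}\nu W_2$ and $\nu(W_1\otimes W_2)$ is then absorbed by the reflection, instead of having to vanish.
\end{itemize}

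Separately, your Step 1 is not tautological. It is the theorem itself for the large $\Theta(\mW)$, and ``left Kan extending along wedges'' from a categorical-algebra model is not an argument. Whichever base case you use needs an actual proof or a precise reference.
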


\subsection{$\infty$-Categories}
Our main application of theory of enriched categories will be to $\infty$-categories and oriented categories.
We recall these basic notions in the following two subsections.

\begin{definition}
For every $\n \geq 0$ we inductively define the presentable cartesian closed category $\n\Cat$ of small (not necessarily univalent) $\n$-categories by setting
$$(\n+1)\Cat:= {{\n\Cat}\mathrm{-}\Cat} $$
starting with $$ 0\Cat :=\mS.$$
\end{definition}

\begin{notation}
For every $\n \geq 0$ we inductively define colocalizations
$$\n\Cat \rightleftarrows (\n+1)\Cat: \iota_\n,$$
where both adjoints  preserve finite products and filtered colimits.
Let
$$0\Cat= \mS \rightleftarrows 1\Cat = {\mS\mathrm{-}\Cat} : \iota_0 $$ be the canonical colocalization whose right adjoint assigns the space of objects. Let $$(\n+1)\Cat= {{\n\Cat}\mathrm{-}\Cat} \rightleftarrows (\n+2)\Cat= {{(\n+1)\Cat}\mathrm{-}\Cat}:\iota_{\n+1}:= (\iota_\n)_! $$
be the induced adjunction.	
	
\end{notation}

\begin{definition}The presentable category $\infty\Cat$ of small (non-univalent) $\infty$-categories is the limit
$$\infty\Cat:= \lim(\cdots\xrightarrow{\iota_{\n}} \n \Cat \xrightarrow{\iota_{\n-1}}\cdots \xrightarrow{\iota_0} 0 \Cat) $$
of presentable categories and right adjoint functors.

\end{definition}

The next proposition follows from the fact that the functor ${(-)\mathrm{-}\Cat}$ preserves small limits and is \cite[Proposition 2.4.4.]{GepnerHeine2026}:

\begin{proposition}\label{fix}
	
There is a canonical equivalence
$$ \infty\Cat \simeq {\infty\Cat}\mathrm{-}\Cat. $$
	
\end{proposition}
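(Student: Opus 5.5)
The plan is to use that the construction $\mV \mapsto \mV\mathrm{-}\Cat$ preserves small limits of presentably monoidal categories along monoidal right adjoints, and to apply this to the tower defining $\infty\Cat$. By definition $\infty\Cat \simeq \lim_\n \n\Cat$, with transition functors $\iota_\n$. Each $\iota_\n$ preserves finite products and is therefore a monoidal functor for the cartesian structures. So $\infty\Cat$, with its cartesian structure, is the limit of the tower $(\n\Cat,\times)$ in presentably monoidal categories and monoidal right adjoints.

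First I check that the tower is compatible with the inductive definition. By construction $\iota_{\n+1} = (\iota_\n)_!$, so applying $(-)\mathrm{-}\Cat$ to the tower $\cdots \to \n\Cat \xrightarrow{\iota_{\n-1}} \cdots \to 0\Cat$ gives the tower $\cdots \to (\n+1)\Cat \xrightarrow{\iota_\n} \cdots \to 1\Cat$. This is the original tower shifted by one. Since $\iota_\n$ is monoidal with a monoidal left adjoint, the earlier proposition on transfer of enrichment ensures that $(\iota_\n)_!$ is well defined and is a right adjoint.

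Next I invoke that $(-)\mathrm{-}\Cat$ preserves these limits. This gives
$$\infty\Cat\mathrm{-}\Cat \simeq \big(\lim_\n \n\Cat\big)\mathrm{-}\Cat \simeq \lim_\n \big(\n\Cat\mathrm{-}\Cat\big) = \lim_\n (\n+1)\Cat \simeq \infty\Cat.$$
The last step holds because the limit of a sequential tower is unchanged by dropping its first term.

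The main obstacle is justifying the limit preservation. My approach is to present $\mV\mathrm{-}\Cat$ via the $\Theta(\mV)$-nerve of \cref{denseinherited}: as presheaves on $\Theta(\mV)$ satisfying the Segal condition, whose morphism presheaves lie in the image of the nerve of $\mV$. Equivalently, I describe it as Segal objects with a space of objects and morphism objects in $\mV$. Both descriptions are limit conditions, natural in $\mV$. A compatible family of enriched categories over the tower then has a common space of objects, because $\iota_0$ of each level is the same space, together with compatible morphism objects, which assemble to morphism objects in the limit $\infty\Cat$. The composition and coherence data are compatible with these limits because the limit of monoidal categories is computed underlying. Fully faithfulness follows from the mapping spaces of $\mV\mathrm{-}\Cat$ being computed as limits over objects of mapping data in $\mV$.
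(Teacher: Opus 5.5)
Your proposal is correct and is the same argument as the paper's, which obtains the equivalence directly from the fact that $(-)\mathrm{-}\Cat$ preserves small limits (citing \cite[Proposition 2.4.4.]{GepnerHeine2026}), applied to the defining tower via $(\n+1)\Cat = \n\Cat\mathrm{-}\Cat$ and $\iota_{\n+1} = (\iota_\n)_!$, followed by reindexing the sequential limit. Your sketch of why limit preservation holds goes beyond what the paper writes: the description as Segal objects with a fixed space of objects and morphism objects in $\mV$ is the cleanest way to make it precise, whereas the $\Theta(\mV)$-nerve version also requires choosing small dense monoidal subcategories of each $\n\Cat$ that are compatible with the inclusions $\n\Cat \subset (\n+1)\Cat$.
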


\begin{notation}
Let $\partial\bD^1$ denote the two element set $\{0,1\}$.
\end{notation}

\begin{notation}
Let $\Mor: \infty\Cat_{\partial\bD^1/} \to \infty\Cat$
be the canonical functor $$\infty\Cat_{\partial\bD^1/} \simeq \mS_{\partial\bD^1/}\times_\mS {\infty\Cat}\mathrm{-}\Cat \to \infty\Cat $$
sending $(\mC,\X,\Y)$ to $\Mor_\mC(\X,\Y).$
\end{notation}

\begin{remark}\label{homfil}
The functor $\Mor: \infty\Cat_{\partial\bD^1/} \to \infty\Cat$
preserves small filtered colimits and limits.

\end{remark}

\begin{definition}

A functor $X \to Y$ of $\infty$-categories is an inclusion -- or subcategory inclusion-- if is a monomorphism in the category $\infty\Cat,$ i.e. for every $\infty$-category $Z$ the induced map $$\Map_{\infty\Cat}(Z,X) \to \Map_{\infty\Cat}(Z,Y)$$ is an embedding.
In this case we also say that $X$ is a subcategory of $Y$.
 
\end{definition}

\cref{monochar} implies the following:

\begin{corollary}

A functor $\phi: X \to Y$ of $\infty$-categories is an inclusion if and only if it induces an embedding $\iota_0(X) \to \iota_0(Y)$ on underlying spaces and for every $A,B \in X$ the induced functor
$$\Mor_X(A, B) \to \Mor_Y(\phi(A), \phi(B))$$ is an inclusion.

\end{corollary}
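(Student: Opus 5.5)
This is a direct consequence of \cref{monochar}, applied with $\mV=\infty\Cat$ (cartesian monoidal) via the identification $\infty\Cat \simeq {\infty\Cat}\mathrm{-}\Cat$ of \cref{fix}. First I identify the data. Under this equivalence, a functor of $\infty$-categories $\phi: X \to Y$ corresponds to an $\infty\Cat$-enriched functor. Its underlying map of spaces $\iota(X) \to \iota(Y)$ is the map $\iota_0(X) \to \iota_0(Y)$ on spaces of objects. Its morphism objects are the $\infty$-categories $\Mor_X(A,B)$, and the induced maps on them are the functors $\Mor_X(A,B)\to \Mor_Y(\phi(A),\phi(B))$.

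Next I check that the notion of monomorphism is preserved. A monomorphism in $\infty\Cat$ is the same thing as a monomorphism in ${\infty\Cat}\mathrm{-}\Cat$, because an equivalence of categories preserves mapping spaces. Note that ${\infty\Cat}\mathrm{-}\Cat$ is a 2-category in the paper's setup, but monomorphism is defined via mapping spaces of the underlying category, so this is harmless.

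Then \cref{monochar} says $\phi$ is a monomorphism if and only if $\iota_0(X)\to\iota_0(Y)$ is an embedding and each $\Mor_X(A,B)\to\Mor_Y(\phi(A),\phi(B))$ is a monomorphism in $\infty\Cat$. By definition, a monomorphism in $\infty\Cat$ is exactly an inclusion, so this is the claimed criterion.

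The only point needing care is the compatibility of \cref{fix} with the identification of underlying spaces and morphism objects. This holds because the equivalence is built from the limit of the equivalences $(\n+1)\Cat = \n\Cat\mathrm{-}\Cat$, under which $\iota_0$ corresponds to the space of objects and $\Mor$ corresponds to the morphism objects, as recorded in the definition of the functor $\Mor$. I expect no real obstacle here.
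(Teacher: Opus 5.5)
Your proposal is correct and matches the paper's route: the paper obtains this corollary directly from \cref{monochar} with $\mV=\infty\Cat$, via the identification $\infty\Cat\simeq{\infty\Cat}\mathrm{-}\Cat$ of \cref{fix}, where monomorphisms in $\infty\Cat$ are by definition the inclusions. Your remark that $\iota_0$ and $\Mor$ are compatible with that identification is a point the paper leaves implicit, and you have justified it correctly.
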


The following is \cite[Proposition 2.5.9.]{GepnerHeine2026}:

\begin{proposition}\label{subchar} Let $X$ be an $\infty$-category and $\mE$ a collection of cells of $X.$
The following are equivalent:

\begin{enumerate}[\normalfont(1)]\setlength{\itemsep}{-2pt}

\item There is a subcategory inclusion $Y \to X$ such that for every
$n \geq 0$ an $n$-morphism of $X$ belongs to $Y$ if and only if 
it belongs to $\mE.$

\item The collection $\mE$ contains all identity 1-morphism between objects of $X$ in $\mE$, for every objects $A, B $ of $ X$ in $\mE$ there is a subcategory inclusion $Y_{A,B} \to \Mor_X(A,B)$
such that an $n$-morphism of $\Mor_X(A,B)$ belongs to $Y_{A,B}$ if and only if the corresponding $n+1$-morphism of $X$ belongs to $\mE,$
and for every morphisms $A' \to A, B \to B' $ of $X$ in $\mE$ the functor $\Mor_X(A,B) \to \Mor_X(A',B')$ sends $Y_{A,B} $ to $Y_{A',B'} $.

\end{enumerate}

\end{proposition}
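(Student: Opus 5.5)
We take $\mV=\infty\Cat$ with the cartesian product, so that $\infty\Cat\simeq{\infty\Cat}\mathrm{-}\Cat$ by \cref{fix}, and reduce both directions to the enriched subcategory criterion \cref{enrsub} together with the characterization of inclusions of $\infty$-categories (the corollary following \cref{monochar}). Throughout, we use that for objects $A,B$ of $X$ and $n\geq 0$, the $n$-morphisms of $\Mor_X(A,B)$ correspond to the $(n+1)$-morphisms of $X$ between $A$ and $B$.

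For (1)$\Rightarrow$(2), we start from an inclusion $Y\to X$. By the corollary to \cref{monochar} it is an embedding on objects, and each functor $\Mor_Y(A,B)\to\Mor_X(A,B)$ is an inclusion; we take the latter as $Y_{A,B}$.
\begin{itemize}
\item The $n$-morphisms of $Y_{A,B}$ are exactly the $(n+1)$-morphisms of $Y$ from $A$ to $B$, hence exactly the corresponding cells in $\mE$.
\item Identity $1$-morphisms on objects of $\mE$ lie in $Y$, because $Y$ is a category containing those objects.
\item For $A'\to A$ and $B\to B'$ in $\mE$, the functor $\Mor_X(A,B)\to\Mor_X(A',B')$ is pre- and post-composition. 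It restricts to $\Mor_Y(A,B)\to\Mor_Y(A',B')$ because $Y\to X$ is an enriched functor.
\end{itemize}

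For (2)$\Rightarrow$(1), let $Y_0\subset\iota_0X$ be the subspace of objects lying in $\mE$; this is an embedding of spaces. We first restrict to the full subcategory of $X$ on $Y_0$, which is an inclusion. We then apply \cref{enrsub} to the monomorphisms $E_{A,B}:=Y_{A,B}\to\Mor_X(A,B)$ for $A,B\in Y_0$. Two conditions must be checked.
\begin{itemize}
\item The unit $\ast\to\Mor_X(A,A)$ picks out the identity $1$-morphism, which lies in $\mE$ by hypothesis, so it factors through $Y_{A,A}$.
\item The composition functor $c:\Mor_X(B,C)\times\Mor_X(A,B)\to\Mor_X(A,C)$ must restrict to $Y_{B,C}\times Y_{A,B}\to Y_{A,C}$.
\end{itemize}
The resulting monomorphism $Y\to X$ has, by construction, exactly the cells of $\mE$ as its cells.

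The main obstacle is the composition condition. Hypothesis (2) only controls whiskering by a single $1$-morphism, whereas $c$ is a functor of two variables, so we must show it lands in a subcategory. We first use that a functor $Z\to W$ factors through an inclusion $W'\subset W$ if and only if every cell of $Z$ is sent into $W'$. This follows by induction from the corollary to \cref{monochar}, passing to the limit over $\n\Cat$ and using that $\Mor$ preserves limits (\cref{homfil}).

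So let $(\alpha,\beta)$ be an $n$-cell of $Y_{B,C}\times Y_{A,B}$, with $\alpha:f\Rightarrow f'$ and $\beta:g\Rightarrow g'$, where $f,f'$ are objects of $Y_{B,C}$ and $g,g'$ are objects of $Y_{A,B}$. In the product, $(\alpha,\beta)$ is the composite along $0$-dimensional boundaries of $(\alpha,\mathrm{id}^n_g)$ and $(\mathrm{id}^n_{f'},\beta)$, where $\mathrm{id}^n$ denotes the degenerate $n$-cell. Since $c$ is a functor, $c(\alpha,\beta)$ is the composite in $\Mor_X(A,C)$ of the whiskerings $\alpha\circ g$ and $f'\circ\beta$. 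These whiskerings are images of $\alpha$ and $\beta$ under the functors $\Mor_X(B,C)\to\Mor_X(A,C)$ and $\Mor_X(A,B)\to\Mor_X(A,C)$ induced by $g$ and $f'$. Since $g$ and $f'$ lie in $\mE$, hypothesis (2) places both whiskerings in $Y_{A,C}$. Finally, $Y_{A,C}$ is a subcategory, so it is closed under composition, and hence $c(\alpha,\beta)\in Y_{A,C}$.
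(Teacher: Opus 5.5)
The paper does not prove this proposition; it imports it from \cite[Proposition 2.5.9]{GepnerHeine2026}, so there is no in-text argument to compare yours with. Your argument is correct and is the natural one. Via \cref{fix} you treat $X$ as an $(\infty\Cat,\times)$-enriched category, restrict to the full subcategory on the objects in $\mE$, and apply \cref{enrsub}. The only real work is showing that the two-variable composition $\Mor_X(B,C)\times\Mor_X(A,B)\to\Mor_X(A,C)$ restricts. You reduce this to the one-variable whiskering hypothesis by splitting a cell $(\alpha,\beta)$ of the product into $(\alpha,\id_g)$ and $(\id_{f'},\beta)$, and that is exactly the key point. It is the cartesian counterpart of the paper's \cref{simplo}: for the Gray tensor product, cells of $\bD^n\boxtimes\bD^m$ are no longer pairs, and one tests instead on the top cell $\bD^{n+m}\to\bD^n\boxtimes\bD^m$.

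Three points should be tightened.

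\emph{Cells of $\mE$ with endpoints outside $\mE$.} Your claim that the resulting $Y$ has by construction exactly the cells of $\mE$ silently uses that every cell of $\mE$ has its $0$-dimensional source and target in $\mE$. This is not part of (2) as written. For $X=\bD^1$ and $\mE$ consisting only of the non-identity arrow, (2) holds vacuously while (1) fails. You should state this explicitly as the implicit convention on $\mE$.

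\emph{The cellwise factorization criterion.} This is better justified directly than via the fact that $\Mor$ preserves limits. Suppose every cell of $Z$ is sent into $W'$. Then the base change $Z\times_W W'\to Z$ is an inclusion through which every $\bD^m\to Z$ lifts. Because boundary lifts along a monomorphism are unique, it is full, hence an equivalence by \cref{eqil}. Alternatively, write $Z\simeq\colim_{\theta\to Z}\theta$ with each $\theta\in\Theta$ an iterated pushout of disks, as in the proof of \cref{simplo}.

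\emph{The composite argument.} The statement that $c$ preserves the composite should be phrased as a factorization $(\alpha,\beta)=\Phi\circ\mu$. Here $\mu\colon\bD^n\to\bD^n\vee\bD^n$ is the $0$-composition map, and $\Phi$ restricts to $(\alpha,\id_g)$ and $(\id_{f'},\beta)$ on the two wedge summands. This factorization holds by the unit laws in $\Theta$. Then $c\circ\Phi$ lands in $Y_{A,C}$ because $\bD^n\vee\bD^n$ is a pushout of disks and $Y_{A,C}\to\Mor_X(A,C)$ is a monomorphism. For $n=0$ a single whiskering suffices. Each one-sided whiskering is an instance of (2), taking the other morphism to be an identity, which lies in $\mE$ by hypothesis.
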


\begin{definition}\label{ruik} Let $\n \geq 0.$
We inductively define involutions $$(-)^\op_\n, (-)^\co_\n: \n\Cat \to \n\Cat$$ by setting
$(-)^\co_0, (-)^\op_0:  0\Cat \to 0\Cat$ are the identities, $$(-)^\op_{\n+1}: {(\n+1)}\Cat \xrightarrow{(-)^\circ} {(\n+1)}\Cat \xrightarrow{((-)^\co_\n)_!}{(\n+1)}\Cat, $$ $$(-)^\co_{\n+1}:=((-)^\op_\n)_!: {(\n+1)}\Cat \xrightarrow{ }{(\n+1)}\Cat.$$
There are commutative squares:
$$\begin{xy}
\xymatrix{
{(\n+1)}\Cat \ar[d]^{\iota_\n}  \ar[r]^{(-)_{\n+1}^\op} & {(\n+1)}\Cat  \ar[d]^{\iota_\n}
\\ 
{\n}\Cat \ar[r]^{(-)_{\n}^\op} & {\n}\Cat
}
\end{xy}
\qquad
\begin{xy}
\xymatrix{
{(\n+1)}\Cat \ar[d]^{\iota_\n}  \ar[r]^{(-)_{\n+1}^\co} & {(\n+1)}\Cat  \ar[d]^{\iota_\n}
\\ 
{\n}\Cat \ar[r]^{(-)_{\n}^\co} & {\n}\Cat
}
\end{xy}
$$
and so induced involutions on the limit $$(-)^\op, (-)^\co: \infty\Cat \to \infty\Cat. $$

\end{definition}

\begin{remark}\label{oppo} By \cref{ruik} there are commutative squares, where $\sigma$ permutes the distinguished objects:

$$\begin{xy}
\xymatrix{
\infty\Cat_{\partial\bD^1/} \ar[d]^{\Mor}  \ar[r]^{(-)^\co} & \infty\Cat_{\partial\bD^1/}  \ar[d]^{\Mor}
\\ 
\infty\Cat \ar[r]^{(-)^\op} & \infty\Cat
}
\end{xy}\qquad
\begin{xy}
\xymatrix{
\infty\Cat_{\partial\bD^1/} \ar[d]^{\Mor}  \ar[r]^{\sigma \circ (-)^\op} & \infty\Cat_{\partial\bD^1/}  \ar[d]^{\Mor}
\\ 
\infty\Cat \ar[r]^{(-)^\co} & \infty\Cat.
}
\end{xy}
$$

\end{remark}

The following follows from \cref{susp}:

\begin{proposition}\label{suspi}

The functor $\Mor: \infty\Cat_{\partial\bD^1/} \to \infty\Cat$ admits a left adjoint
$S: \infty\Cat \to \infty\Cat_{\partial\bD^1/}$ such that for every $\infty$-category $\mC$ the $\infty$-category $S(\mC),$ called categorical suspension of $\mC$, has two objects 0,1, and morphism $\infty$-categories:
$$\Mor_{S(\mC)}(1,0)\simeq \emptyset, \ \Mor_{S(\mC)}(0,1)\simeq \mC, \ \Mor_{S(\mC)}(0,0)\simeq \Mor_{S(\mC)}(1,1) \simeq *.$$
    
\end{proposition}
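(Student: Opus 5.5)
The statement to prove is \cref{suspi}. I will deduce it directly from \cref{susp}, applied with $\mV=\infty\Cat$ under the cartesian product, using \cref{fix} to identify $\infty\Cat\simeq{\infty\Cat}\mathrm{-}\Cat$. Under this identification the functor $\Mor:\infty\Cat_{\partial\bD^1/}\to\infty\Cat$ is the composite
\[
\infty\Cat_{\partial\bD^1/}\simeq \mS_{\partial\bD^1/}\times_\mS {\infty\Cat}\mathrm{-}\Cat\to\infty\Cat,
\]
which sends a bipointed enriched category $(\mC;X,Y)$ to $\Mor_\mC(X,Y)$. A bipointed $\infty$-category is a map $\partial\bD^1=\{0\}\coprod\{1\}\to\mC$. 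On the enriched side this is a functor $B\tu\coprod B\tu\to\mC$, since $B\tu_{\infty\Cat}$ is the final enriched category $*$, which corresponds to $\bD^0$ (the unit is final, \cref{finality}). So $\infty\Cat_{\partial\bD^1/}$ identifies with ${\infty\Cat}\mathrm{-}\Cat_{B\tu\coprod B\tu/}$.

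For each $\A\in\infty\Cat$, \cref{susp}(5) gives a bipointed enriched category $S(\A)$ together with an equivalence
\[
\Map_{\infty\Cat_{\partial\bD^1/}}(S(\A),(\mC;X,Y))\simeq\Map_{\infty\Cat}(\A,\Mor_\mC(X,Y)),
\]
natural in $(\mC;X,Y)$. This equivalence is induced by the unit map $\A\to\Mor_{S(\A)}(0,1)$. It says that $\Mor$ corepresents, at each $\A$, the functor $\Map(\A,\Mor(-))$. By the pointwise criterion for adjoints, $\Mor$ therefore admits a left adjoint $S$ whose values are the enriched suspensions. Since the equivalence is natural in the target for each fixed $\A$, these values assemble into a functor $S$.

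The description of the morphism $\infty$-categories is then read off from \cref{susp}(1)–(4). The object space is $\{0,1\}$. The endomorphism objects are the unit $\tu=*$. The object $\Mor_{S(\mC)}(1,0)$ is initial in $\infty\Cat$, so it is $\emptyset$. Finally, $\Mor_{S(\mC)}(0,1)\simeq\mC$.

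The only point needing care is checking that the identification of $\infty\Cat_{\partial\bD^1/}$ with enriched categories under $B\tu\coprod B\tu$ is compatible with $\Mor$. This should follow from the definition of $\Mor$ via \cref{fix} together with the fact that $\iota$ sends $B\tu$ to the point.
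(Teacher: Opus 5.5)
Your proposal is correct and follows exactly the paper's route: the paper gives no argument beyond saying the proposition follows from \cref{susp}, and you have spelled out that deduction by specializing to $\mV=\infty\Cat$ with the cartesian product via \cref{fix}, identifying $\infty\Cat_{\partial\bD^1/}$ with enriched categories under $B\tu\coprod B\tu\simeq\partial\bD^1$, invoking the pointwise corepresentability criterion with \cref{susp}(5), and reading off the morphism objects from (1)--(4). The compatibility you flag at the end is immediate, because $B\tu$ is the final enriched category and the paper defines $\Mor$ through the same identification $\infty\Cat\simeq{\infty\Cat}\mathrm{-}\Cat$.
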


\begin{notation}
For every $0 \leq \n \leq \m$ the left adjoint embeddings $\n\Cat \leftrightarrows \m\Cat$ preserve small limits and thus induce a left adjoint embedding $\n\Cat \leftrightarrows \infty\Cat: \iota_\n$ that preserves small limits
and so admits a left adjoint $\tau_\n: \infty\Cat \to \n\Cat$ by presentability.

\end{notation}

\begin{remark}
The $\iota_n$ collectively filter every $\infty$-category $X$ as a colimit $X\simeq\colim_{n}\iota_n(X)$.
\end{remark}

The next is \cite[Lemma 2.4.18.]{GepnerHeine2026}:

\begin{lemma}\label{carclo}
The presentable category $\infty\Cat$ is cartesian closed.
\end{lemma}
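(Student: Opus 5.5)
We prove that $\infty\Cat$ is cartesian closed by exhibiting, for every $\infty$-category $\mC$, a right adjoint to $(-)\times\mC:\infty\Cat\to\infty\Cat$. Since $\infty\Cat$ is presentable, the adjoint functor theorem reduces this to showing that $(-)\times\mC$ preserves small colimits. We build this up through the finite stages $\n\Cat$ and then pass to the limit.

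\textbf{Induction on $\n$.} We first show by induction on $\n$ that each $\n\Cat$ is cartesian closed.
\begin{enumerate}[\normalfont(1)]
\item For $\n=0$, the category $\mS$ is cartesian closed.
\item Suppose $\mV=\n\Cat$ is presentably cartesian closed. We must show that $\mV\mathrm{-}\Cat$ is cartesian closed.
\item By \cref{denseinherited}, the category $\Theta(\mV)$ is dense in $\mV\mathrm{-}\Cat$. It therefore suffices to check that $(-)\times\mC$ preserves the colimit decomposition of each object as a colimit of objects of $\Theta(\mV)$, i.e.\ that it commutes with colimits in the first variable.
\item Concretely, we identify $\mV\mathrm{-}\Cat$ with the full subcategory of presheaves on $\Theta(\mV)$ satisfying the Segal condition and the morphism-presheaf condition. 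This subcategory is a localization of $\mP(\Theta(\mV))$, which is cartesian closed.
\item It then suffices to show that this localization is compatible with products: for $F$ local and $G\to G'$ a generating local equivalence, the map $G\times F\to G'\times F$ is again a local equivalence.
\item Products of Segal presheaves are computed levelwise, and the morphism presheaves of a product are products of morphism presheaves. Hence the check reduces to the compatibility of $\mV$'s own localization $\mP(\mV_0)\to\mV$ with products, which holds by the inductive hypothesis that $\mV$ is cartesian closed.
\end{enumerate}

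\textbf{Passage to the limit.} Write $\infty\Cat$ as the limit of the $\n\Cat$ along the functors $\iota_\n$. These preserve finite products, and their left adjoints also preserve finite products. A product $X\times\mC$ in $\infty\Cat$ is computed levelwise, with $\iota_\n(X\times\mC)\simeq\iota_\n X\times\iota_\n\mC$.

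Using the identification $\infty\Cat\simeq{\infty\Cat}\mathrm{-}\Cat$ of \cref{fix}, we can alternatively repeat the enriched argument once at the $\infty$ level. In that argument the inductive hypothesis becomes a self-referential statement. We resolve it by filtering through the $\iota_\n$: every $X$ is the filtered colimit $\colim_\n\iota_n(X)$, and filtered colimits commute with finite products in the presentable categories involved.

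\textbf{Main obstacle.} The hardest step is showing that $(-)\times\mC$ preserves all colimits in $\infty\Cat$, rather than only the levelwise ones. The limit is taken along right adjoints, so colimits in $\infty\Cat$ are not computed levelwise. My first attempt would be to express $\infty\Cat$ instead as the colimit in $\PrL$ along the inclusions $\n\Cat\hookrightarrow(\n+1)\Cat$, which are cartesian left adjoints. Each stage is cartesian closed with product-preserving transition functors, so cartesian closedness passes to the colimit. The required compatibility comes from the fact that the transition functors preserve finite products, which follows from their construction as $(\iota_\n)_!$.
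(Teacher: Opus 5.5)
Your overall plan is sensible: prove each $n\Cat$ cartesian closed by induction on enrichment, then pass to $\infty\Cat$. The paper gives no argument here and simply cites \cite[Lemma 2.4.18]{GepnerHeine2026}.

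\textbf{The limit step is fine once spelled out.} You can make it rigorous along the lines of your last paragraph.
\begin{enumerate}
\item Compute the sequential colimit in $\PrL$ along the finite-product-preserving inclusions in $\mathrm{CAlg}(\PrL)$; this is legitimate because the forgetful functor preserves sifted colimits.
\item This gives $\infty\Cat$ a colimit-compatible symmetric monoidal structure $\otimes$ with terminal unit.
\item $\otimes$ agrees with $\times$: $X\otimes Y\simeq\colim_n(\iota_nX\times\iota_nY)\simeq\colim_n\iota_n(X\times Y)\simeq X\times Y$, using $X\simeq\colim_n\iota_nX$.
\end{enumerate}
A small correction: the transition left adjoints are induced by the inclusions $n\Cat\subset(n+1)\Cat$. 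By contrast, $(\iota_n)_!$ is the right adjoint $\iota_{n+1}$.

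\textbf{The genuine gap is the inductive step.} Step (3) only restates the goal. The reduction in (5) is correct, but (6) does not verify it. Products of local presheaves are local, with product morphism presheaves, for every reflective localization. That says nothing about whether $L$ preserves products of \emph{non-local} presheaves, which is what (5) requires.

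Take a Segal generator $s$ and a representable $h_{S(Y)}$. The nerve preserves products and presheaf products preserve colimits. So ``$s\times h_{S(Y)}$ is a local equivalence'' unwinds to the claim that the canonical functor
\[
(S(X_1)\times S(Y))\amalg_{S(Y)}(S(X_2)\times S(Y))\longrightarrow (S(X_1)\vee S(X_2))\times S(Y)
\]
is an equivalence in $\mV\mathrm{-}\Cat$. This is a real computation of morphism objects in a pushout of enriched categories. It does not follow from cartesian closedness of $\mV$. Already for $\mV=\mS$ and $X_1=X_2=Y=*$, it is the classical fact that $(h_{[1]}\amalg_{h_{[0]}}h_{[1]})\times h_{[1]}\to h_{[2]}\times h_{[1]}$ is a Segal equivalence, whose proof uses the shuffle decomposition of $[2]\times[1]$.

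The generators enforcing the morphism-presheaf condition have the same problem. These are suspensions of the generating local equivalences $t$ of $\mP(\mV_0)\to\mV$, and you need to identify the localization of their products with representables. There the input ``$t\times h_V$ is a $\mV$-local equivalence'' is the right one, but you only assert the identification.

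To close the gap, do one of the following:
\begin{itemize}
\item carry out these computations for products of wedges of suspensions; or
\item invoke the known, nontrivial theorem that $\mV\mathrm{-}\Cat$ is presentably cartesian closed whenever $\mV$ is, available in the theory of enriched $\infty$-categories, e.g.\ \cite{MR3345192}.
\end{itemize}
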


\begin{notation}

For any $\infty$-category $\mC$ let 
$\Fun(\mC,-): \infty\Cat \to \infty\Cat$ be the right adjoint of the functor $(-) \times \mC:\infty\Cat \to \infty\Cat.$
    
\end{notation}

\begin{remark}
Since $\infty\Cat$ is cartesian closed, it refines to an $\infty$-category $\infty\scat$ such that for every two objects $X$ and $Y$,
\[
\Mor_{\infty\scat}(X,Y)=\Fun(X,Y).
\]
\end{remark}

\begin{definition}
    We write $\infty\scat$ for the $\infty$-category of small $\infty$-categories.
\end{definition}

\begin{definition}Let $\n \geq 0$.
The $\n$-disk is the $n$-fold iterated suspension $\bD^\n:= S^{\n}(*)$ of the terminal $\infty$-category $\ast$.
\end{definition}

\begin{definition}Let $\n \geq 0$.
The boundary of the $\n$-disk is the $n$-fold iterated suspension $\partial\bD^\n:= S^{\n}(\emptyset)$ of the initial $\infty$-category $\emptyset$.
	
\end{definition}

\begin{remark}Let $\n \geq 0.$
The functor $\emptyset \subset *$ induces an inclusion $\partial\bD^\n \subset \bD^\n$.
	
\end{remark}

\begin{example}
Then $\partial\bD^0=\ast, \partial\bD^1=S(\emptyset)=*\coprod*$ is the set with two elements.
Viewed as a subject of $\bD^1$, these elements acquire a natural ordering.
\end{example}

\begin{definition}
The bipointed wedge of an ordered pair $(\mC,\mD)$ of bipointed $\infty$-categories  $\infty\Cat_{/\partial\bD^1}$ and $\mC'\in\infty\Cat_{/\partial\bD^1}$ is the bipointed $\infty$-category obtained by taking the pushout along the common basepoint.
\end{definition}

\begin{definition}\label{Theta}
	
Let $\Theta' \subset \infty\Cat_{\partial\bD^1/}$ be the full subcategory generated by $\bD^0$ under suspensions and bipointed wedges and $\Theta \subset \infty\Cat$ the essential image of $\Theta'$ under the forgetful functor.	
	
\end{definition}

\begin{remark}

An $\infty$-category belongs to $\Theta$ if and only if it is of the form $$ \bD^{i_0} \coprod_{\bD^{j_1}} \bD^{i_1} \coprod_{\bD^{j_2}} \cdots \coprod_{\bD^{j_n}} \bD^{i_n} $$
for any sequence of natural numbers $n, i_0,\ldots, i_n, j_1,\ldots, j_n$ and monomorphisms 
$ \bD^{j_\ell} \rightarrowtail \bD^{i_\ell},\bD^{j_\ell} \rightarrowtail \bD^{i_{\ell-1}}$, $ 1 \leq \ell \leq n$.

\end{remark}

The next is \cite[Theorem 2.5.8.]{GepnerHeine2026}:

\begin{theorem}\label{theta}

The restricted Yoneda embedding $\N:
\infty\Cat \to \Fun(\Theta^\op,\mS)$ is fully faithful and admit left adjoints that preserves finite products.
A $\Theta$-space is in the essential image of the restricted Yoneda embedding if and only if it satisfies the Segal condition, i.e. it is local with respect to the map 
$$ \N(\bD^{i_0}) \coprod_{\N(\bD^{j_1})} \N(\bD^{i_1})\coprod_{\N(\bD^{j_2})} \cdots \N(\coprod_{\bD^{j_n}}) \N(\bD^{i_n}) \to \N(\bD^{i_0} \coprod_{\bD^{j_1}} \bD^{i_1} \coprod_{\bD^{j_2}} \cdots \coprod_{\bD^{j_n}} \bD^{i_n}) $$
for any sequence of natural numbers $n, i_0,\ldots, i_n, j_1,\ldots, j_n$ and monomorphisms 
$ \bD^{j_\ell} \rightarrowtail \bD^{i_\ell},\bD^{j_\ell} \rightarrowtail \bD^{i_{\ell-1}}$, $ 1 \leq \ell \leq n$. 
For every $ 0 \leq n \leq \infty$ the full subcategory $n\Cat \subset \infty\Cat$ is generated under small colimits by the disks of dimension smaller $n+1.$

\end{theorem}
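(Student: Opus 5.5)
The plan is to prove the statement by induction on $\n$ for the finite levels $\n\Cat$, and then pass to the limit $\infty\Cat=\lim_\n \n\Cat$. For each $\n$, let $\Theta_\n\subset\Theta$ be the full subcategory generated from $\bD^0$ under at most $\n$-fold suspensions and bipointed wedges. These objects are $\n$-categories, and for $\theta\in\Theta_\n$ and any $\infty$-category $X$ the colocalization gives $\Map(\theta,X)\simeq\Map(\theta,\iota_\n X)$. The finite-level claim to prove first is: $\Theta_\n$ is dense in $\n\Cat$, and a presheaf on $\Theta_\n$ lies in the image of the nerve if and only if it satisfies the Segal condition.

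For the base case $\n=0$ we have $\Theta_0=\{\bD^0\}$ and $0\Cat=\mS$, so there is nothing to prove. For the inductive step I would apply \cref{denseinherited} with $\mW=\n\Cat$ (cartesian monoidal). Since $\Theta_\n$ is not closed under products, I take $\mV\subset\n\Cat$ to be the closure of $\Theta_\n$ under finite products. This $\mV$ is small, monoidal, and dense because it contains the dense $\Theta_\n$. \cref{denseinherited} then shows that $\Theta(\mV)$ is dense in $(\n+1)\Cat=\n\Cat\mathrm{-}\Cat$, with image cut out by the Segal condition together with the condition that the morphism presheaves are nerves.

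The remaining work at this level is to cut $\Theta(\mV)$ down to $\Theta_{\n+1}$. Every $V\in\mV$ is the canonical colimit of the $\Theta_\n$-objects mapping to it. The suspension $S$ of \cref{suspi} is a left adjoint into $\infty\Cat_{\partial\bD^1/}$, so $S(V)$ is the corresponding colimit of suspensions of $\Theta_\n$-objects. Wedges of such are then colimits of objects of $\Theta_{\n+1}$. A dense subcategory all of whose objects are canonical colimits of objects of a smaller full subcategory $A$ forces $A$ to be dense as well, so $\Theta_{\n+1}$ is dense. Translating the Segal condition on $\Theta(\mV)$ along these colimit presentations should give exactly the Segal condition displayed in the statement, restricted to $\Theta_{\n+1}$. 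I expect this translation, that is, checking that the Segal condition on $\Theta_{\n+1}$ together with the inductive hypothesis on morphism presheaves recovers the full characterization from \cref{denseinherited}, to be the main obstacle.

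To pass to $\n=\infty$, I use that a map $X\to Y$ in $\infty\Cat$ is a compatible family of maps $\iota_\n X\to\iota_\n Y$. Since $\Theta=\bigcup_\n\Theta_\n$ and $\N(X)|_{\Theta_\n}=\N_\n(\iota_\n X)$, the nerve $\N$ is the limit of the fully faithful functors $\N_\n$ and hence is fully faithful. Conversely, a Segal $\Theta$-space restricts to compatible Segal $\Theta_\n$-spaces, hence to a compatible system of $\n$-categories, which is an object of the limit. This identifies the image as stated, and the left adjoint exists by presentability.

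For finite products, I would show that the Segal-local equivalences are stable under products with representables $\N(\theta)$. The strategy is to reduce to the generating Segal maps and use the cartesian closedness of $\infty\Cat$ (\cref{carclo}). Concretely, for a Segal $F$ the exponential $F^{\N(\theta)}$ computes the nerve of $\Fun(\theta,-)$ and is again Segal, which makes the localization an exponential ideal. Finally, every object of $\Theta_\n$ is an iterated pushout of disks $\bD^i$ with $i\le\n$, as in the remark following \cref{Theta}. Combined with density of $\Theta_\n$, this shows that $\n\Cat$ is generated under small colimits by disks of dimension at most $\n$.
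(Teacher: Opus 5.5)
The paper does not prove this statement; it quotes it from earlier work, so I am judging your plan on its own terms. Most of it is sound:
\begin{itemize}
\item Using \cref{denseinherited} with $\mV$ the finite-product closure of $\Theta_n$ in $n\Cat$ is fine, since density does pass to larger full subcategories.
\item The passage to $n=\infty$ via $\Map(\theta,X)\simeq\Map(\theta,\iota_n X)$ and $\Fun(\Theta^{\op},\mS)\simeq\lim_n\Fun(\Theta_n^{\op},\mS)$ works.
\item The exponential-ideal argument for finite products via \cref{carclo} works.
\item The disk-generation claim works.
\end{itemize}
The genuine gap is the step you flag yourself, cutting $\Theta(\mV)$ down to $\Theta_{n+1}$. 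The density argument you give for it is not valid as written. Your lemma needs each $X\in\Theta(\mV)$ to be the \emph{canonical} colimit over $(\Theta_{n+1})_{/X}$. What you actually produce is $S(V)\simeq\colim_{\Theta_{n/V}}S(\theta)$, and products of such diagrams for wedges, which are colimits indexed by $\Theta_{n/V}$. Being some colimit of $A$-objects does not make $A$ dense. For example, in abelian groups $\{\mathbb{Z},\mathbb{Z}^2\}$ is dense and $\mathbb{Z}^2=\mathbb{Z}\oplus\mathbb{Z}$, yet $\{\mathbb{Z}\}$ is not dense. The identification of the essential image with Segal $\Theta_{n+1}$-spaces is likewise asserted rather than proved, and it rests on the same missing point.

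What you need is a finality lemma. For $W=S(V_1)\vee\cdots\vee S(V_k)$ with $V_m\in\mV$, let $w\colon\prod_m\Theta_{n/V_m}\to(\Theta_{n+1})_{/W}$ send $(\theta_m\to V_m)_m$ to $S(\theta_1)\vee\cdots\vee S(\theta_k)\to W$. Take a map from a wedge $\theta'=S(\theta'_1)\vee\cdots\vee S(\theta'_r)$ to $W$.
\begin{itemize}
\item It is monotone on objects.
\item Each summand either collapses to an object, since $\Mor_W(a,a)\simeq\ast$, or crosses a block $(a_j,b_j]$ of levels via a map $\theta'_j\to V_{a_j+1}\times\cdots\times V_{b_j}$.
\item Distinct summands cross disjoint blocks.
\end{itemize}
Hence for $x=(\theta'\to W)$ the comma category $x_{/w}$ is a product over $m$ of one of two kinds of factor. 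Either it is $(\Theta_{n/V_m})_{(\theta'_j\to V_m)/}$, which has an initial object. Or it is $\Theta_{n/V_m}$, which is weakly contractible: by density of $\Theta_n$ its classifying space is $\tau_0(V_m)$, a finite product of contractible spaces. So $w$ is final. Since $S$ and wedges preserve weakly contractible colimits, this makes $W$ the canonical $\Theta_{n+1}$-colimit, and your lemma then gives density of $\Theta_{n+1}$.

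The same finality gives the essential image. Let $G$ be a Segal presheaf on $\Theta_{n+1}$ and $j\colon\Theta_{n+1}\subset\Theta(\mV)$. Then $j_*G(W)\simeq\lim G(S\theta_1\vee\cdots\vee S\theta_k)$, with the limit over $\prod_m\Theta_{n/V_m}^{\op}$, so $j_*G$ is Segal. Its morphism presheaves are right Kan extensions along $\Theta_n\subset\mV$ of Segal $\Theta_n$-spaces. (The disk form of the Segal condition on $G$ passes to these, because $S$ carries disk decompositions of $\theta$ to those of $S\theta$.) By your inductive hypothesis and density of $\Theta_n$, these morphism presheaves are $\mV$-nerves. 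So $j_*G$ is a nerve by \cref{denseinherited}, and hence so is $G\simeq j^*j_*G$. With this lemma supplied, your induction goes through.
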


By \cite[Definition 2.6.25.]{GepnerHeine2026} for every $n \geq 0$ there is an oriented $n$-cube $\cube^n$
whose 1-truncation is $(\bD^1)^{\times n}.$

\vspace{1mm}

\begin{notation}\label{sqGray}

Let $\cube \subset \infty\Cat$ be the full subcategory of oriented cubes.
\end{notation}

The following is \cite[Theorem 2.8.12.]{GepnerHeine2026}:

\begin{theorem}\label{cubedense}

The full subcategory $\cube \subset \infty\Cat$ is dense.
    
\end{theorem}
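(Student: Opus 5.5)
The plan is to deduce density of $\cube$ from the density of $\Theta$ (\cref{theta}) by a retract argument. Two formal facts do most of the work. First, if $A \subset B \subset \infty\Cat$ are full subcategories and $A$ is dense, then $B$ is dense. Indeed, for $X,Y \in \infty\Cat$ the map $\Map(X,Y) \to \Map(\N_B X, \N_B Y)$ composed with restriction to $A$ gives the equivalence $\Map(X,Y)\simeq \Map(\N_A X,\N_A Y)$. One then checks that restriction $\Map(\N_B X,\N_B Y) \to \Map(\N_A X,\N_A Y)$ is an equivalence, using that $\N_B X$ is right Kan extended from $A$, since $\Map(b,X) \simeq \Map(\N_A b,\N_A X)$. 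Second, density is invariant under idempotent completion. If $\overline{\cube}\subset \infty\Cat$ denotes the full subcategory of retracts of oriented cubes, then restriction $\Fun(\overline{\cube}^\op,\mS) \to \Fun(\cube^\op,\mS)$ is an equivalence compatible with the two nerves. Hence $\N_{\cube}$ is fully faithful if and only if $\N_{\overline{\cube}}$ is. Combining these, it suffices to show $\Theta \subset \overline{\cube}$, i.e.\ that every object of $\Theta$ is a retract in $\infty\Cat$ of some oriented cube $\cube^n$.

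To construct these retractions, I would work strictly. Both $\cube^n$ and every $\theta\in\Theta$ are strict $\omega$-categories, and they are free on loop-free Steiner augmented directed complexes. I would rely on the input from \cite{GepnerHeine2026} that these strict objects, viewed in $\infty\Cat$, have the expected mapping spaces, so that strict functors composing to the identity give a retraction in $\infty\Cat$. For a disk $\bD^n$ one has an evident collapse $\cube^n \to \bD^n$: the top $n$-cell goes to the top cell, and each lower face goes to the appropriate iterated source or target of the disk or to an identity. There is also a section $\bD^n \to \cube^n$ sending the generating $n$-cell to the top cell of the cube, with source and target the composite boundary pastings. On Steiner complexes both maps are chain maps preserving the positivity cones and augmentation, and the composite is the identity. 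For a general $\theta = \bD^{i_0}\coprod_{\bD^{j_1}}\cdots\coprod_{\bD^{j_k}}\bD^{i_k}$, I would embed $\theta$ into a cube $\cube^m$ of dimension $m=\max(i_0,\dots,i_k)+k$. The idea is to use the extra directions to lay the disks side by side along their shared faces, realizing $\theta$ as a sub-pasting diagram of a face of $\cube^m$. Alternatively, induct on $k$, using that $\cube^{a}\boxtimes\cube^{b}\simeq\cube^{a+b}$ and that a wedge of retracts is a retract of the corresponding Gray-type gluing. I would then write down a retraction $\cube^m \to \theta$ on Steiner bases by collapsing every basis element not in the image onto its boundary composite.

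I expect the main obstacle to be this last step: checking that the proposed collapse $\cube^m\to\theta$ is a genuine map of augmented directed complexes. Concretely, it must respect the boundary operator and send positive chains to positive chains, since only then is it a strict $\omega$-functor that restricts to the identity on $\theta$. If the direct combinatorial construction becomes unwieldy, my fallback is to avoid general $\theta$ altogether. I would show only that the disks are retracts of cubes, and separately that the Segal pushouts defining objects of $\Theta$ are sent by $\N_{\overline{\cube}}$ to pushouts of presheaves. That would follow if the gluing inclusions $\bD^{j}\rightarrowtail\bD^{i}$ are themselves retracts of cube face inclusions.
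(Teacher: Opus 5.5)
Your two formal reductions are sound. Density passes to larger full subcategories and is unaffected by idempotent completion, so by \cref{theta} it suffices to show that every object of $\Theta$ is a retract of some $\cube^m$. (The paper does not reprove the theorem; it imports it from \cite{GepnerHeine2026}.) The gap is that this retract claim is the entire content of the theorem, and beyond the disks you do not establish it. The sketch you give is also wrong in its details.

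\begin{itemize}
\item \emph{The dimension count fails.} Your formula $m=\max_\ell i_\ell+k$ already fails for $\bD^2\vee\bD^2$, with generating $2$-cells $\alpha,\alpha'$. A section $\sigma$ must send $\alpha,\alpha'$ to non-identity $2$-cells, since otherwise $\rho\sigma$ would send them to identities. But $\Mor_{\cube^m}(u,v)$ is a point when $u\le v$ differ in at most one coordinate. So $\sigma(0)\le\sigma(1)\le\sigma(2)$ must differ in at least four coordinates, and $\bD^2\vee\bD^2$ is not a retract of $\cube^3$. It is a retract of $\bD^2\boxtimes\bD^2$, hence of $\cube^4$.
\item \emph{The collapse is not a map.} ``Collapse every basis element not in the image onto its boundary composite'' does not define a map of augmented directed complexes. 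A basis element can go to a degenerate cell only if its source and target already have the same image. In general, cells outside the image must go to non-identity cells chosen so that positivity holds. For example, in any retraction $\bD^2\boxtimes\bD^2\to\bD^2\vee\bD^2$ of the evident inclusion, one checks that some mixed $2$-cell $x\boxtimes y$, with $x,y$ generating $1$-cells, must go to a non-identity $2$-cell.
\item \emph{The inductive variant is incomplete.} It handles only wedges, but $\Theta$ also requires suspensions, i.e.\ gluing along $\bD^j$ with $j\geq 1$. Even the wedge step is the non-formal claim that the evident inclusion $\theta_1\vee\theta_2\to\theta_1\boxtimes\theta_2$ admits a retraction. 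The cartesian analogue already fails: there is no retraction of $\bD^2\vee\bD^2\to\bD^2\times\bD^2$. So the lax cells of the Gray product must be used essentially.
\end{itemize}

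The fallback does not work either. $\N_{\overline{\cube}}$ does not send Segal pushouts to pushouts of presheaves, and neither does the $\Theta$-nerve. For $[2]=\bD^1\vee\bD^1$, the arrow $0\to 2$, viewed as a map $\cube^1\to[2]$, factors through neither copy of $\bD^1$. Hence $\N(\bD^1)\coprod_{\N(\bD^0)}\N(\bD^1)\to\N([2])$ is not surjective at $\cube^1$. What your argument actually needs is that each $\N_{\overline{\cube}}X$ be \emph{local} with respect to the Segal maps $\N(\bD^{i_0})\coprod_{\N(\bD^{j_1})}\cdots\coprod_{\N(\bD^{j_k})}\N(\bD^{i_k})\to\N(\theta)$. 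Once the disks lie in $\overline{\cube}$, this locality is equivalent to the theorem you are trying to prove. Retract statements about the gluing inclusions $\bD^j\rightarrowtail\bD^i$ do not yield it. So the proof stands or falls with an explicit construction of retractions $\cube^m\to\theta$ for all $\theta\in\Theta$, for instance on Steiner complexes by induction over wedges and suspensions. You correctly flag this as the main obstacle, but you do not supply it.
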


By \cite[Definition 2.6.32.]{GepnerHeine2026} for every $n \geq 0$ there is an oriented $n$-simplex $\bDelta^n$
whose 1-truncation is the totally ordered set $[n]= \{0 < ...< n \}.$

\begin{notation}

Let $\bDelta \subset\infty\Cat$ be the full subcategory spanned by the oriented simplices.
Let $\bDelta^+ \subset\infty\Cat$ be the full subcategory spanned by the oriented simplices and the initial object.
  
\end{notation}

The next is \cite[Theorem 2.7.8.]{GepnerHeine2026}:

\begin{theorem}\label{orientdense}

The full subcategory $\bDelta\subset\infty\Cat$ is dense.
    
\end{theorem}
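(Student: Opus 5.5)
The statement is \cref{orientdense}: the full subcategory $\bDelta\subset\infty\Cat$ of oriented simplices is dense. The plan is to reduce this to the density of a subcategory already known to be dense, namely $\Theta$ (\cref{theta}) or the oriented cubes (\cref{cubedense}).

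The reduction uses the standard transitivity criterion for dense subcategories. If $A\subset B\subset\mathcal{X}$ are full subcategories, $B$ is dense in $\mathcal{X}$, and every object of $B$ is a canonical colimit of objects of $A$ (that is, $A$ is dense "at" each object of $B$), then $A$ is dense in $\mathcal{X}$.

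We cannot take $B=\Theta$ directly, since $\Theta$ does not contain the simplices. Instead set $B=\Theta\cup\bDelta$; it contains the dense subcategory $\Theta$ and is therefore dense. The task then reduces to showing that every $\theta\in\Theta$ is the colimit of the canonical diagram $\bDelta_{/\theta}\to\infty\Cat$.

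First step: handle the disks. For each $n$, $\bD^n$ is a retract of, or a localization of, $\bDelta^n$. Concretely, $\bD^n$ arises from the oriented simplex $\bDelta^n$ by collapsing the faces not containing the top cell. This can be written as an iterated pushout of simplices along lower simplices. From it we get an explicit formula exhibiting $\bD^n$ as a colimit of oriented simplices.

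Second step: promote this to a canonical colimit. Equivalently, show that for every $\infty$-category $X$ the map
\[
\Map(\bD^n,X)\to\lim_{\bDelta^k\to\bD^n}\Map(\bDelta^k,X)
\]
is an equivalence. The argument is inductive on $n$, using suspension. One would like oriented simplices to interact well with the categorical suspension $S$ of \cref{suspi}. The relevant fact is that $\Mor_{\bDelta^{n}}(0,n)$ is the oriented cube $\cube^{n-1}$, via the recursive join/Gray-cube description of orientals. This allows the statement about $\bDelta$ to be transferred to a statement about morphism $\infty$-categories, which are governed by cubes; there we apply \cref{cubedense}.

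Third step: treat general $\theta$. Since these objects are glued from disks along the Segal pushouts in \cref{theta}, and density is stable under such gluing once each piece is canonically a colimit of simplices and the pushouts are preserved, this follows. The preservation should hold because simplices map into a pushout of disks along boundary-compatible faces, so the slice $\bDelta_{/\theta}$ decomposes correspondingly; this is a cofinality argument.

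The main obstacle is the second step: showing the canonical diagram of simplices over a disk really has colimit the disk, as opposed to merely some colimit presentation. I would try to prove the stronger statement that the nerve $\infty\Cat\to\Fun(\bDelta^\op,\mS)$ is fully faithful by comparing with the $\Theta$-nerve. Concretely, I would construct a functor $\Theta\to\Fun(\bDelta^\op,\mS)$ sending $\theta$ to its simplicial nerve and verify that it lands in objects whose $\bDelta$-nerve recovers their $\Theta$-nerve. This verification should use the Segal and complicial (thinness) conditions, checked one dimension at a time through $\Mor$ and the equivalence $\infty\Cat\simeq\infty\Cat\mathrm{-}\Cat$ of \cref{fix}.
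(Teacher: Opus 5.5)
The paper does not prove this statement; it quotes it from \cite[Theorem 2.7.8]{GepnerHeine2026}. There is therefore no in-paper argument to compare with, and your proposal has to stand on its own.

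Your reduction is sound. A full subcategory containing a dense one is dense. Moreover, if $A\subset B\subset\infty\Cat$ with $B$ dense and each $b\in B$ the canonical colimit of $A_{/b}$, then $A$ is dense. So it suffices to show that every $\theta\in\Theta$ is the canonical colimit of the oriented simplices over it.

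Step 2, which you single out as the main obstacle, is in fact not one. For any full $A$, write $\nu_A$ for the restricted Yoneda functor. The objects $x$ for which $\Map(x,Y)\to\Map_{\mP(A)}(\nu_A x,\nu_A Y)$ is an equivalence for all $Y$ are closed under retracts. Indeed, the comparison map for a retract $x$ of some $a\in A$ is a retract of the comparison map for $a$, which is an equivalence by Yoneda. So the retraction $\bD^n\to\bDelta^n\to\bD^n$ you mention (top cell, then collapse) already handles the disks. Neither an explicit colimit formula from collapsing faces nor an induction through $\Mor_{\bDelta^n}(0,n)\simeq\cube^{n-1}$ is needed.

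The genuine gap is Step 3. Write $\nu$ for the $\bDelta$-nerve. Canonical $\bDelta$-colimits are not automatically closed under pushouts. Given that the disks are canonical, $\theta=\bD^{i_0}\amalg_{\bD^{j_1}}\cdots\amalg_{\bD^{j_n}}\bD^{i_n}$ is canonical if and only if, for every $\infty$-category $X$, the presheaf $\nu(X)$ is local with respect to the comparison map $\nu(\bD^{i_0})\amalg_{\nu(\bD^{j_1})}\cdots\amalg_{\nu(\bD^{j_n})}\nu(\bD^{i_n})\to\nu(\theta)$.

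Your justification is that oriented simplices over $\theta$ factor through the pieces, so that $\bDelta_{/\theta}$ decomposes and cofinality applies. This is false already for $\theta=\bD^1\vee\bD^1$. Two maps show it:
\begin{itemize}
\item the map $\bDelta^1\to\theta$ selecting the composite $0\to 2$;
\item the functor $\bDelta^2\to\theta$ that is bijective on objects and collapses the $2$-cell.
\end{itemize}
Neither factors through either copy of $\bD^1$. So $\nu$ does not preserve the Segal pushouts. What you actually need is a Segal-type locality property of the oriental nerve of an arbitrary $\infty$-category. That is the real content of the theorem. Your last paragraph, which proposes to check it via Segal and complicial conditions, restates it rather than proving it.

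One way to close the gap is to push the retract idea further. $\bD^1\vee\bD^1$ is itself a retract of $\bDelta^2$: include it via the edges $0\to1$ and $1\to2$, then collapse the $2$-cell. If every $\theta\in\Theta$ is a retract of an oriental, then every $\theta$ is a canonical colimit by the retract principle. Density of $\bDelta$ then follows from \cref{theta} with no gluing argument at all. Otherwise, you must prove the locality statement directly.
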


In the following we recall the Gray tensor product.
By \cite[Definition 2.6.23.]{GepnerHeine2026} there is a canonical monoidal structure on $\cube$ whose tensor unit is
$\bD^0$ and such that $\cube^n \boxtimes \cube^m= \cube^{n+m}$ for every $n,m \geq 0.$

We have the following result of Campion \cite{campion2022cubesdenseinftyinftycategories}, which is also proven in \cite[Corollary 3.4.2.]{GepnerHeine2026}:

\begin{corollary}\label{locmon2}
Let $\cube \subset \infty\Cat$ denote the full subcategory of oriented cubes, equipped with the Gray monoidal structure.
The convolution monoidal structure descends along 
the localization $$\mP(\cube) \rightleftarrows \infty\Cat.$$

\end{corollary}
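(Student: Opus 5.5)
We must prove Corollary \ref{locmon2}: the Day convolution monoidal structure on $\mP(\cube)$ induced by the Gray monoidal structure on $\cube$ descends along the localization $\mP(\cube)\rightleftarrows\infty\Cat$. By \cref{cubedense}, $\cube\subset\infty\Cat$ is dense, so the restricted Yoneda embedding $\infty\Cat\to\mP(\cube)$ is fully faithful. Hence $\infty\Cat$ is the accessible localization of $\mP(\cube)$ at the class $W$ of maps $L$ inverts. The standard criterion (e.g. Lurie, Higher Algebra 2.2.1.9 and 4.1.7.4) says the localization is compatible with the convolution structure exactly when $W$ is stable under convolution with representables: for every $f\in W$ and every $\cube^k$, both $f\otimes \cube^k$ and $\cube^k\otimes f$ lie in $W$. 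Convolution preserves colimits in each variable and $\mP(\cube)$ is generated under colimits by representables, so this suffices.

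Next we reduce $W$ to a set of generators. First we identify generating local equivalences $S$: the maps $\colim_{\cube/X}\,y(\cube^n)\to y(X)$ are too large, so instead we use that $\infty\Cat$ is the Segal-type localization of $\Theta$-spaces (\cref{theta}). We transport that presentation to cubes. Concretely, we take $S$ to be the maps $y(\partial\text{-decompositions})$ that express each cube $\cube^n$ as glued from its faces and the maps saying a functor out of $\cube^n$ is determined by lower cells plus the top cell. A cleaner choice is to let $S$ be the set of maps $\mathrm{colim}_{(\cube\downarrow C)} y \to y(C)$, where $C$ ranges over a set of generators for which $\cube$-density is witnessed, for example $C\in\Theta$. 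Since $W$ is the strongly saturated class generated by $S$, and since $-\otimes y(\cube^k)$ preserves colimits and hence preserves strong saturation, it suffices to check that $s\otimes\cube^k\in W$ for $s\in S$.

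The check itself: for $s$ of the form $\colim_i y(\cube^{n_i})\to y(C)$ with $L(s)$ an equivalence, we get $s\otimes y(\cube^k)=\colim_i y(\cube^{n_i+k})\to y(C)\otimes y(\cube^k)$. This lies in $W$ iff applying $L$ gives an equivalence. That requires an external definition of $C\boxtimes\cube^k$ on $\infty\Cat$ that preserves colimits in $C$ and agrees on cubes, which is circular. The main obstacle is therefore to verify that the specific gluing colimits defining $\infty\Cat$, namely Segal/wedge decompositions and suspension relations, are carried by $-\boxtimes\cube^1$ to colimits again. The cases $k>1$ follow by iterating $k=1$, using associativity $\cube^n\boxtimes\cube^m=\cube^{n+m}$.

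To handle $k=1$ we would describe $y(C)\otimes y(\cube^1)$ combinatorially. Using that the relevant colimit of cubes computing a globe $\bD^n$ or a wedge is a pushout along face inclusions, tensoring with $\cube^1$ produces a pushout along the face inclusions $\cube^{m}\times\{\epsilon\}\cup\cdots$ of $\cube^{m+1}$. We then prove by induction on dimension, via suspension and the description of $\Mor$ (\cref{suspi}, \cref{homfil}), that these pushouts are sent by $L$ to colimits in $\infty\Cat$. This inductive cell-by-cell verification, essentially Campion's argument, is the step we expect to be hardest.
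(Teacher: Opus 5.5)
Your reduction is standard and correct as far as it goes. By \cref{cubedense} the nerve $\N\colon\infty\Cat\to\mP(\cube)$ is fully faithful with left adjoint $L$. By Lurie's criterion it then suffices that the class $W$ of $L$-equivalences satisfies $W\otimes y(\cube^1)\subseteq W$ and $y(\cube^1)\otimes W\subseteq W$; both are needed because the Gray product is not symmetric, and you only carry the first through.

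But the proposal stops exactly where the content of the statement begins, so there is a genuine gap. First, you never fix a usable generating set $S$. The abstract existence of $S$ suffices for your saturation argument but not for the verification. Your ``cleaner choice'' is vacuous: $\cube\downarrow C$ is the category of elements of $\N(C)$, so $\colim_{\cube\downarrow C}y\to\N(C)$ is already an equivalence in $\mP(\cube)$ by the co-Yoneda lemma, and such maps generate only equivalences. Your other choice is never made precise. Nothing available characterizes the essential image of $\N$ in $\mP(\cube)$ by cubical Segal-type conditions: \cref{theta} concerns $\Theta$, and transporting it to cubes is itself a theorem, not a formality.

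Second, the actual verification is deferred, and your sketch of it misplaces the difficulty. Since $L$ is a left adjoint, it sends every pushout in $\mP(\cube)$ to a pushout in $\infty\Cat$ automatically. What must be shown is this: for a diagram $D\colon I\to\cube$ with $\colim_I yD\to\N(C)$ in $W$, the comparison map $\colim_I (D_i\boxtimes\cube^1)\to\colim_{(\cube^n\to C)}(\cube^n\boxtimes\cube^1)$ in $\infty\Cat$ is an equivalence. That requires concrete control over colimits of cubes in $\infty\Cat$, which you do not supply.

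Moreover, globes are not built from cubes by gluing along face inclusions but by collapsing cells. For instance, $\bD^2$ is the quotient of $\cube^2$ collapsing two opposite edges, a pushout along degeneracies $\cube^1\to\cube^0$. Compatibility of precisely such collapses with $-\boxtimes\cube^1$ is the non-formal heart of the matter.

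The paper does not reprove this corollary. It imports it from Campion and from Gepner--Heine, where it is the existence theorem for the Gray tensor product on $\infty\Cat$. So, as you noticed, any proof must avoid using $\boxtimes$ on non-cubes.

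A cleaner way to state the missing step avoids generators altogether, by adjunction. The inclusion $W\otimes y(\cube^1)\subseteq W$ holds iff, for every $\infty$-category $X$, the presheaf $\cube^n\mapsto\Map_{\infty\Cat}(\cube^n\boxtimes\cube^1,X)$ lies in the image of $\N$. That is, it holds iff the (op)lax arrow $\infty$-category of $X$ exists with the expected cubical nerve; the same applies symmetrically to $\cube^1\boxtimes-$. Establishing this is the real work your proposal leaves open.
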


\begin{notation}
	
Since the Gray tensor product defines a presentably monoidal structure on $\infty\Cat$, it is closed: for every $\infty$-category $\mC$ the functor $$ \mC \boxtimes (-): \infty\Cat \to \infty\Cat$$ admits a right adjoint $\Fun^\lax(\mC,-)$, and the functor $$ (-) \boxtimes \mC : \infty\Cat \to \infty\Cat$$ admits a right adjoint $\Fun^\oplax(\mC,-)$.
\end{notation}

\begin{definition}Let $\F,\G:\mC \to \mD $ be functors of $\infty$-categories.
\begin{enumerate}[\normalfont(1)]\setlength{\itemsep}{-2pt}
\item A lax natural transformation $\F \to \G$ is a morphism in $\Fun^\lax(\mC,\mD)$.

\item An oplax natural transformation $\F \to \G$ is a morphism in $\Fun^\oplax(\mC,\mD)$.
\end{enumerate}
\end{definition}

\begin{remark}
	
Let $\mC,\mD$ be $\infty$-categories. The canonical functors $\mC \to *, \mD \to *$
give rise to functors $\mC \boxtimes \mD \to \mC \boxtimes * \simeq \mC, \mC \boxtimes \mD \to * \boxtimes \mD \simeq \mD$ and so to a functor $\mC \boxtimes \mD \to \mC \times \mD.$
By adjointess the latter functor induces functors
$ \Fun(\mC,\mD)\to \Fun^\lax(\mC,\mD)$ and $\Fun(\mC,\mD)\to \Fun^\oplax(\mC,\mD).$
\end{remark}

\begin{remark}\label{lao}\label{grayspace}
If $\mC$ is an $\n$-category and $\mD$ an $\m$-category for $\n,\m \geq 0$,
then $\mC \boxtimes \mD$ is an $\n+\m$-category.
This holds since $\n\Cat$ is closed under small colimits in $\infty\Cat$ and $\n\Cat$ is generated under small colimits by the oriented $\ell$-cubes for $1 \leq \ell \leq \n$.
For $n=m=0$ one finds that the Gray-monoidal structure restricts to the 
full subcategory of spaces $\mS \subset \infty\Cat$.
The restricted Gray-monoidal structure on $\mS$ is the cartesian structure since $\mS$ is generated in $\infty\Cat$ under small colimits by the final category, the tensor unit for the Gray tensor product and the cartesian product.
Moreover the left adjoint $\tau_0: \infty\Cat \to \mS$ of the monoidal embedding $\mS \subset \infty\Cat$ is monoidal since
$\infty\Cat$ is generated by the oriented cubes under small colimits and the image of any oriented cube under $\tau_0$ is contractible.

\end{remark}

The following is \cite[Proposition 3.5.9.]{GepnerHeine2026}: 
\begin{proposition}\label{dua}
	
There are canonical monoidal involutions
$$(-)^\op, (-)^\co: (\infty\Cat, \boxtimes) \simeq (\infty\Cat, \boxtimes)^\rev $$
refining the involutions of \cref{ruik} and restricting to 
monoidal involutions
$$(-)^\op, (-)^\op: (\infty\Cat^{\mathrm{strict}}, \boxtimes) \simeq (\infty\Cat^{\mathrm{strict}}, \boxtimes)^\rev.$$
\end{proposition}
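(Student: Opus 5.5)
We have to prove Proposition dua: $(-)^\op$ and $(-)^\co$ refine to monoidal equivalences $(\infty\Cat,\boxtimes)\simeq(\infty\Cat,\boxtimes)^\rev$, and these restrict to strict $\infty$-categories.

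The plan is to build the monoidal refinement on the dense subcategory of oriented cubes and then extend along the localization of \cref{locmon2}. The key steps, in order, are as follows.

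First, since $(-)^\op$ and $(-)^\co$ are involutions of $\infty\Cat$, they preserve the full subcategory $\cube$: the image of $\cube^n$ is again an $\infty$-category whose $1$-truncation is a cube. The combinatorial input is that $(\cube^n)^\op\simeq \cube^n\simeq (\cube^n)^\co$. Each involution reverses the cells in a fixed set of dimensions, and the oriented cube is isomorphic to itself after reversing the order of its coordinates together with reflecting each coordinate $\bD^1$. I would verify this on the explicit model of $\cube^n$ from \cite[Definition 2.6.25.]{GepnerHeine2026}, as the Gray tensor power of $\bD^1$ via the strict Gray tensor product (for instance as an ordered product of $\bD^1$ with its standard orientation). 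Concretely, the orientation of a $k$-cell in a product of intervals is governed by a sign $(-1)^{\text{position}}$, and reversing both coordinate order and the $k$-cell direction for the appropriate $k$ preserves these signs. This is the step I expect to be the main obstacle: one must check that the sign conventions match in every dimension for both $\op$ (which reverses odd cells) and $\co$ (which reverses even cells), and do it coherently rather than cube by cube.

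Second, upgrade this to a monoidal equivalence $\cube\simeq\cube^\rev$. Since the Gray monoidal structure on $\cube$ is generated by $\bD^1$ with $\cube^n\boxtimes\cube^m=\cube^{n+m}$, and $\cube$ is (up to strictness) the free monoidal category on the interval object with its face and degeneracy maps, it suffices to exhibit $\bD^1$ with its structure maps sent to itself with source and target swapped (for $\op$) or fixed (for $\co$). The reversal of the tensor order then comes from the coordinate reversal in the first step. Working strictly, inside strict $\omega$-categories with the strict Gray tensor, makes this a $1$-categorical check.

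Third, apply Day convolution. A monoidal equivalence $\cube\simeq\cube^\rev$ induces a monoidal equivalence $\mP(\cube)\simeq\mP(\cube)^\rev$. This equivalence preserves the Segal/completeness localization of \cref{locmon2}, because it restricts to the involution on $\infty\Cat\subset\mP(\cube)$ via the nerve, and that restriction is the involution of \cref{ruik}. Hence it descends to a monoidal equivalence $(\infty\Cat,\boxtimes)\simeq(\infty\Cat,\boxtimes)^\rev$ whose underlying functor is the given involution. It squares to the identity monoidally because this already holds on $\cube$, where the relevant space of monoidal structures on the identity is contractible by freeness.

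Finally, for strict $\infty$-categories, the same argument runs in $\infty\Cat^{\strict}$ with the strict Gray product. Alternatively, observe that $\infty\Cat^{\strict}\subset\infty\Cat$ is preserved by both involutions and is a monoidal subcategory, so the restriction is automatic.
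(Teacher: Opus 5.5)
Your architecture is the natural one: build an anti-monoidal self-equivalence of $(\cube,\boxtimes)$ refining $(-)^{\op}$, resp.\ $(-)^{\co}$, extend it to $\mP(\cube)$ by Day convolution, and descend along the localization of \cref{locmon2}. Step 3 goes through once its input exists.

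\textbf{The gap is in Step 2, where all of the content lies.} Your reduction to $\bD^1$ rests on the claim that $\cube$ is the free monoidal category on an interval with faces and degeneracies, and this is false. $\cube\subset\infty\Cat$ is the \emph{full} subcategory on the oriented cubes; that is what \cref{cubedense} and \cref{locmon2} are about. So its morphisms are all functors between cubes. Examples are the connections $\max,\min\colon\cube^2\to\cube^1$, and the functors $\bD^1\to\cube^2$ selecting one of the two composite $1$-morphisms from $(0,0)$ to $(1,1)$. None of these is monoidally generated by faces and degeneracies. A monoidal structure on $(-)^{\op}|_{\cube}$ consists of isomorphisms $(\cube^n\boxtimes\cube^m)^{\op}\cong(\cube^m)^{\op}\boxtimes(\cube^n)^{\op}$, natural with respect to all such maps, together with associativity and unit coherences. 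Specifying what happens on $\bD^1$ neither constructs nor determines these. Likewise, the cube-by-cube isomorphisms of your Step 1 only give the easy objectwise statement. Your claim that the involution squares to the identity monoidally ``by freeness'' has the same defect.

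\textbf{What is missing} is the anti-monoidality of the \emph{strict} Gray tensor product, natural in all strict $\omega$-categories, which you then restrict to $\cube$. Steiner complexes suffice, since the cubes are of this form. In Steiner's augmented directed complexes this is a short sign check:
\begin{itemize}
\item The dual reversing the cells of dimension $k\in J$ replaces $\partial_k$ by $\epsilon_k\partial_k$, with $\epsilon_k=-1$ exactly for $k\in J$.
\item The plain swap $x\otimes y\mapsto y\otimes x$, with no Koszul sign so that bases are preserved, is a chain isomorphism $D_J(K\otimes L)\cong D_J(L)\otimes D_J(K)$ exactly when $\epsilon_{p+q}=(-1)^q\epsilon_p$ and $\epsilon_{p+q}(-1)^p=\epsilon_q$.
\item These conditions hold precisely for $J$ the odd degrees ($\op$) or the even degrees ($\co$).
\item Because the structure maps are permutations, all coherences and the involutivity hold strictly.
\end{itemize}

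You must also show that on strict $\infty$-categories the involutions of \cref{ruik} agree with these strict duals. Those involutions are defined inductively via $(-)^\circ$ and transfer of enrichment, so this is an induction on dimension, cf.\ \cref{oppo}. Your Step 3 asserts this without argument, but it is exactly what guarantees that the Day-convolution extension preserves the local objects and recovers the given involution.

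Finally, drop your alternative in the last step. It presupposes that $\infty\Cat^{\strict}$ is closed under $\boxtimes$ in $\infty\Cat$. That is not automatic: $\boxtimes$ is built from colimits, and colimits in $\infty\Cat$ of strict $\infty$-categories need not be strict. Running the argument with the strict Gray product is the safe route.
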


\begin{corollary}\label{grayhoms}
Let $\mC,\mD \in \infty\Cat$.
There are canonical equivalences $$ \Fun^\oplax(\mC,\mD)^\op \simeq \Fun^\lax(\mC^\op,\mD^\op), $$$$\Fun^\oplax(\mC,\mD)^\co \simeq \Fun^\lax(\mC^\co,\mD^\co).$$	
\end{corollary}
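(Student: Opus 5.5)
The statement to prove is \cref{grayhoms}, so the plan is to derive both equivalences formally from \cref{dua}, by a Yoneda argument on mapping spaces. I will treat the $(-)^\op$ case; the $(-)^\co$ case is word for word the same with $(-)^\co$ in place of $(-)^\op$.

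The first step is to record what it means for $(-)^\op$ to be a monoidal equivalence $(\infty\Cat,\boxtimes)\simeq(\infty\Cat,\boxtimes)^\rev$. It gives natural equivalences
\[
(\mathcal{K}\boxtimes\mC)^\op\simeq \mC^\op\boxtimes\mathcal{K}^\op
\]
for all $\infty$-categories $\mathcal{K},\mC$. In words, $(-)^\op$ swaps the order of the factors.

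Now fix a test $\infty$-category $\mathcal{K}$ and compute, using that $(-)^\op$ is an involution and hence an autoequivalence of $\infty\Cat$:
\[
\Map(\mathcal{K},\Fun^\oplax(\mC,\mD)^\op)\simeq \Map(\mathcal{K}^\op,\Fun^\oplax(\mC,\mD))\simeq \Map(\mathcal{K}^\op\boxtimes\mC,\mD).
\]
The second equivalence is the adjunction defining $\Fun^\oplax(\mC,-)$ as right adjoint to $(-)\boxtimes\mC$. Applying $(-)^\op$ once more, this space is
\[
\Map((\mathcal{K}^\op\boxtimes\mC)^\op,\mD^\op)\simeq\Map(\mC^\op\boxtimes\mathcal{K},\mD^\op)\simeq\Map(\mathcal{K},\Fun^\lax(\mC^\op,\mD^\op)).
\]
The first equivalence uses the monoidality from \cref{dua}. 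The second uses that $\Fun^\lax(\mC^\op,-)$ is right adjoint to $\mC^\op\boxtimes(-)$.

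Each step above is natural in $\mathcal{K}$, so the Yoneda lemma in $\infty\Cat$ gives the claimed equivalence $\Fun^\oplax(\mC,\mD)^\op\simeq\Fun^\lax(\mC^\op,\mD^\op)$.

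The only point that needs any care is naturality. The equivalence $(\mathcal{K}^\op\boxtimes\mC)^\op\simeq\mC^\op\boxtimes\mathcal{K}$ must be natural in $\mathcal{K}$, which holds because the monoidal structure on the involution supplies a natural transformation of bifunctors, not just objectwise equivalences. Beyond this, the argument is entirely formal and needs no input other than \cref{dua}.
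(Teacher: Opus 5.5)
Your argument is correct and is essentially the intended one: the paper records this as an immediate corollary of \cref{dua} without writing out a proof. The derivation is exactly your Yoneda computation, which transports the adjunctions $(-)\boxtimes\mC \dashv \Fun^\oplax(\mC,-)$ and $\mC^\op\boxtimes(-) \dashv \Fun^\lax(\mC^\op,-)$ across the order-reversing monoidal involution, and likewise for $(-)^\co$, which by \cref{dua} also reverses the Gray tensor product.
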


Next we define join and slice.

\begin{notation} For every small category $\mC$ that admits an initial object let $ \mP_{\mathrm{red}}(\mC) \subset \mP(\mC) $  
be the full subcategory of presheaves on $\mC$
that are reduced, i.e. send the initial object to the final space.
\end{notation}

\begin{notation}

Let $\bDelta^+\subset\infty\Cat$ denote the full subcategory consisting of the oriented simplices and the initial object. 

\end{notation}

By \cite[Definition 2.6.30.]{GepnerHeine2026} the category $\bDelta^+$ carries a monoidal structure, the join,
whose tensor unit is the empty $\infty$-category and such that $$\bDelta^n \star \bDelta^m = \bDelta^{n+m+1}$$ for every $n,m \geq -1.$
The following is \cite[Corollary 3.5.8.]{GepnerHeine2026}:

\begin{theorem}\label{locmon4}

The join monoidal structure descends along the localization $\mP_{\mathrm{red}}(\bDelta^+) \rightleftarrows \infty\Cat. $

\end{theorem}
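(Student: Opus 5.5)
The plan is to use the standard criterion for a monoidal localization (Lurie, \emph{Higher Algebra}, Proposition 2.2.1.9). Let $\mathrm{L}: \mP_{\mathrm{red}}(\bDelta^+) \to \infty\Cat$ be the localization coming from density of oriented simplices (\cref{orientdense}). The convolution structure $\circledast$ on $\mP_{\mathrm{red}}(\bDelta^+)$ descends along $\mathrm{L}$ as soon as $F \circledast (-)$ and $(-)\circledast F$ send $\mathrm{L}$-equivalences to $\mathrm{L}$-equivalences for every $F$. Here $\circledast$ is the reduced Day convolution extending $\star$ on $\bDelta^+$, with unit the representable of $\emptyset$. Since $\circledast$ preserves colimits in each variable in the appropriate (reduced) sense, and reduced presheaves are generated by representables, it suffices to check this for $F = \bDelta^k$ representable with $k \geq -1$. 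The case $k=-1$ is trivial.

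Rather than identify generating local equivalences by hand, I would construct a candidate functor on $\infty\Cat$ and compare. Using the Gray tensor product, which is already known to descend by \cref{locmon2}, define for $\infty$-categories $X, Y$
$$X \star Y := X \coprod_{X \boxtimes Y \boxtimes \{0\}} \bigl(X \boxtimes \bD^1 \boxtimes Y\bigr) \coprod_{X \boxtimes Y \boxtimes \{1\}} Y,$$
with the Gray factors ordered to match the orientation convention of $\bDelta$. Because $\boxtimes$ preserves colimits in each variable, $X \star (-)$ preserves weakly contractible colimits. More precisely, $X \star (-): \infty\Cat \to \infty\Cat_{X/}$ preserves all small colimits, and this is exactly the kind of colimit preservation enjoyed by $\bDelta^k \circledast (-)$ on reduced presheaves.

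Next I would show that the composite $\mP_{\mathrm{red}}(\bDelta^+) \xrightarrow{\bDelta^k \circledast -} \mP_{\mathrm{red}}(\bDelta^+) \xrightarrow{\mathrm{L}} \infty\Cat_{\bDelta^k/}$ agrees with $\bDelta^k \star \mathrm{L}(-)$. Both functors preserve colimits as functors into $\infty\Cat_{\bDelta^k/}$, so it is enough that they agree naturally on representables $\bDelta^m$. This reduces to a natural identification $\bDelta^k \star \bDelta^m \simeq \bDelta^{k+m+1}$ for the Gray-formula join, compatible with the face and degeneracy maps of $\bDelta^+$. Granting this, any $\mathrm{L}$-equivalence $f$ has $\mathrm{L}(\bDelta^k \circledast f) \simeq \bDelta^k \star \mathrm{L}(f)$, which is an equivalence. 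The other variable is symmetric, or follows by applying the involutions of \cref{dua}, which reverse the join.

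The main obstacle is the identification $\bDelta^k \star \bDelta^m \simeq \bDelta^{k+m+1}$ on the nose in $\infty\Cat$, functorially in $\bDelta^+$. I would first prove it for strict $\omega$-categories using Steiner's augmented directed complexes. There the oriented simplices, the oriented cubes, and their Gray products and pushouts along inclusions are all computed by the corresponding operations on Steiner complexes, and the identification is a chain-level computation (the Street/Steiner join formula). I would then transfer it to $\infty\Cat$, using that the relevant pushouts are along monomorphisms of Steiner-type objects and are preserved by the inclusion of strict into weak $\infty$-categories. This is the step where I expect the most technical work, in keeping the naturality in both variables and checking that these pushouts agree.
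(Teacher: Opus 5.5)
This statement is not proved in the paper; it is imported as \cite[Corollary 3.5.8]{GepnerHeine2026}, so I judge your argument on its own terms.

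\textbf{What works.} Your reduction is sound.
\begin{itemize}
\item Day convolution does restrict to reduced presheaves, since its value at $\emptyset$ is $F(\emptyset)\times G(\emptyset)$.
\item Every reduced presheaf is a weakly contractible colimit of representables, because its category of elements has the initial object $(\emptyset,*)$.
\item The Gray-cylinder join $\bDelta^k\star(-)\colon\infty\Cat\to\infty\Cat_{\bDelta^k/}$ preserves colimits.
\item Two colimit-preserving functors out of $\mP_{\mathrm{red}}(\bDelta^+)\simeq\mP(\bDelta)$ can be compared on $\bDelta$.
\end{itemize}

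\textbf{The gap.} The reduction moves the entire content of the theorem into one claim: the Gray-cylinder formula computes $\bDelta^{k+m+1}$ in $\infty\Cat$, naturally in $m$. That is a substantial comparison theorem in its own right, and your sketch of it does not work as written.
\begin{enumerate}
\item[(i)] The pushouts in your formula are not along monomorphisms on both sides. One leg is the collapse $X\boxtimes\{0\}\boxtimes Y\to X$.
\item[(ii)] The principle you invoke is false. You claim the inclusion of strict $\omega$-categories into $\infty\Cat$ preserves pushouts along monomorphisms of Steiner-type objects. Counterexample: collapse $\partial\bD^2\subset\bD^2$ to $\bD^0$. Strictly, you get a one-object $2$-category whose $2$-endomorphisms of the identity form the discrete monoid $\mathbb{N}$, by Eckmann--Hilton. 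In $\infty\Cat$, Eckmann--Hilton holds only up to coherent homotopy, and the space of such $2$-cells is the free $E_2$-space on one generator, which is not discrete.
\item[(iii)] Before any pushout, you must identify $\bDelta^k\boxtimes\bD^1\boxtimes\bDelta^m$ with the strict Gray product computed by Steiner complexes. Here $\boxtimes$ is defined on $\infty\Cat$ by colimit extension from cubes (\cref{locmon2}). This is another strict-versus-weak comparison, and nothing you cite covers it.
\end{enumerate}

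\textbf{What would close it.} You need an argument specific to these particular collapses. For instance, you could write each collapse as an iterated pushout along free cell attachments, i.e. inclusions generated by part of a loop-free basis with loop-free result, for which strict and weak colimits provably agree. You would also need the corresponding statement for Gray products of simplices. Alternatively, drop the Gray formula and verify directly that $\bDelta^k\star(-)$ and $(-)\star\bDelta^k$ send a generating set of local equivalences for $\mP_{\mathrm{red}}(\bDelta^+)\rightleftarrows\infty\Cat$ to local equivalences.
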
 

We also define the antijoin:

\begin{definition}
Let $X, Y \in \infty\Cat.$ The antijoin of $X, Y$ is
$$X \bar{\star} Y := (X^{\co} \star Y^{\co})^{\co} .$$

\end{definition}

\begin{definition}Let $X$ be a small $\infty$-category.
\begin{enumerate}[\normalfont(1)]\setlength{\itemsep}{-2pt}
\item The oplax slice, or oplax over $\infty$-category, functor is the right adjoint $$ \infty\Cat_{X/ } \to \infty\Cat, \qquad (F:X \to Y) \mapsto Y_{//^\oplax F}$$ of the functor $ (-) \star X: \infty\Cat \to \infty\Cat_{X/ }.$

\item The lax coslice, or lax under $\infty$-category, functor is the right adjoint $$ \infty\Cat_{X/ } \to \infty\Cat,\qquad (F:X \to Y) \mapsto Y_{F//^\lax }$$ of the functor $ X \star (-): \infty\Cat \to \infty\Cat_{\X / }.$

\item The lax slice, or lax over $\infty$-category, functor is the right adjoint $$ \infty\Cat_{\X / } \to \infty\Cat,\qquad (F:X \to Y) \mapsto Y_{//^\lax F} := (Y^\co_{//^\oplax F^\co})^\co $$
of the functor $ (-) \bar{\star} X: \infty\Cat \to \infty\Cat_{X/ }.$

\item The oplax coslice, or oplax under $\infty$-category, functor is the right adjoint $$ \infty\Cat_{X/ } \to \infty\Cat,\qquad (F:X \to Y) \mapsto Y_{F//^\oplax}:=(Y^\co_{F^\co//^\lax })^\co $$
of the functor $ X \bar{\star} (-): \infty\Cat \to \infty\Cat_{\X / }.$

\end{enumerate}

\end{definition}

\begin{notation}Let $F: X \to Y $ be a functor.
If unspecified, $Y_{//F}$ will refer to the oplax slice, and $Y_{F//}$ will refer to the oplax coslice.
\end{notation}

The following is \cite[Lemma 3.7.13.]{GepnerHeine2026}:

\begin{lemma}Let $F: X \to Y $ be a functor.
There is a canonical equivalence of $\infty$-categories
$$ (Y_{//^\oplax F})^\op \simeq (Y^\op)_{F^\op//^\lax }.$$
\end{lemma}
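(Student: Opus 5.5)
The plan is to derive the equivalence from the universal properties defining the two slices, using the Yoneda lemma in $\infty\Cat$. By definition, $Y_{//^\oplax F}$ is the value at $(F\colon X\to Y)$ of the right adjoint of $(-)\star X\colon \infty\Cat \to \infty\Cat_{X/}$. Likewise, $(Y^\op)_{F^\op//^\lax}$ is the value at $(F^\op\colon X^\op \to Y^\op)$ of the right adjoint of $X^\op\star(-)\colon\infty\Cat\to\infty\Cat_{X^\op/}$. Since $(-)^\op$ is an involution of $\infty\Cat$ (\cref{ruik}), it induces an equivalence $\infty\Cat_{X/}\simeq\infty\Cat_{X^\op/}$. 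It therefore suffices to produce a natural equivalence
$$(A\star X)^\op \simeq X^\op\star A^\op$$
under $X^\op$, natural in $A\in\infty\Cat$, compatible with the canonical inclusions $X\to A\star X$ and $X^\op\to X^\op\star A^\op$.

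Granting this, the chain of natural equivalences for every $A\in\infty\Cat$ is
$$\Map(A,(Y_{//^\oplax F})^\op)\simeq \Map(A^\op, Y_{//^\oplax F})\simeq \Map_{\infty\Cat_{X/}}(A^\op\star X, Y)\simeq \Map_{\infty\Cat_{X^\op/}}((A^\op\star X)^\op, Y^\op)$$
$$\simeq \Map_{\infty\Cat_{X^\op/}}(X^\op\star A, Y^\op)\simeq \Map(A,(Y^\op)_{F^\op//^\lax}).$$
The Yoneda lemma then gives the claimed equivalence.

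The key input is that $(-)^\op$ is an anti-monoidal involution for the join, i.e.\ an equivalence $(\infty\Cat,\star)\simeq(\infty\Cat,\star)^\rev$, exactly analogous to \cref{dua} for the Gray tensor product. I would prove this by reducing to $\bDelta^+$ via \cref{locmon4}, in three steps.

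First, show that $(-)^\op$ preserves the full subcategory $\bDelta^+$, with $(\bDelta^n)^\op\simeq\bDelta^n$ via the order-reversing automorphism of $[n]$. This is the classical fact that the odd-dimensional dual of an oriental is again an oriental with vertices reversed; recall from \cref{ruik} that $(-)^\op$ reverses exactly the odd-dimensional cells.

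Second, check that under this identification $(\bDelta^n\star\bDelta^m)^\op\simeq\bDelta^{n+m+1}\simeq(\bDelta^m)^\op\star(\bDelta^n)^\op$. On vertices this is just the reversal of $[n]\star[m]$ to $[m]^\rev\star[n]^\rev$. One then has to verify that the resulting identifications assemble into a monoidal equivalence $(\bDelta^+,\star)\simeq(\bDelta^+,\star)^\rev$.

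Third, extend this equivalence by Day convolution to $\mP_{\mathrm{red}}(\bDelta^+)$. Since $(-)^\op$ preserves the localization onto $\infty\Cat$, which is determined by density of $\bDelta$ (\cref{orientdense}), it descends to $\infty\Cat$ by \cref{locmon4}. Taking the unit $\emptyset$ into account then yields the natural equivalence under $X^\op$.

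I expect the main obstacle to be the coherence in the second step. It is not enough to know $(-)^\op$ on objects; one needs an honest anti-monoidal structure on $\bDelta^+$ that is compatible with the description of the join from \cite{GepnerHeine2026}. If that description is given combinatorially, for instance via Street's orientals or via the strict join of strict $\infty$-categories, I would first establish the duality at the strict level, where it is a direct check on generating cells and their orientations. I would then transport it along the fully faithful inclusion of strict objects, in the same way \cref{dua} is obtained.
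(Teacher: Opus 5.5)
Your argument is correct. The paper gives no proof of its own here; it quotes the lemma from \cite[Lemma 3.7.13]{GepnerHeine2026}. Your Yoneda chain shows that the lemma is just the adjoint form of a single non-formal input. That input is that the odd dual $(-)^\op$ is anti-monoidal for the join, $(A\star B)^\op\simeq B^\op\star A^\op$, naturally in both variables and compatibly with the unit $\emptyset$, so that the identification is one under $X^\op$. It is the join counterpart of \cref{dua}. Granting this, your chain of equivalences is a complete proof.

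One caution about your second step. Morphisms of the full subcategory $\bDelta^+\subset\infty\Cat$ are not just order-preserving maps of vertex sets. For instance, a functor $\bDelta^1\to\bDelta^2$ can select the composite of the edges $01$ and $12$, which is distinct from the edge $02$. So the reversal of $[n]\star[m]$ on vertices says nothing about naturality or coherence. You genuinely need the strict-level duality for the join of strict $\infty$-categories, as in Ara--Maltsiniotis or at the level of Steiner complexes. That duality then has to be transported to $\bDelta^+$ and descended via \cref{locmon4}, as you propose, in the same way \cref{dua} is obtained. Your fallback plan is exactly the right way to supply this.
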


\subsection{Oriented categories}

In the following we apply the theory of bienriched $\infty$-categories to the Gray monoidal structure on $\infty\Cat$.
See \cite{heine2024bienriched} for a detailed discussion of bienriched category theory.

\begin{definition}
\begin{enumerate}[\normalfont(1)]\setlength{\itemsep}{-2pt}
\item An oriented category is a category right enriched in $(\infty\Cat,\boxtimes).$
An oriented functor is a functor right enriched in $(\infty\Cat,\boxtimes).$

\item An antioriented category is a category left enriched in $(\infty\Cat,\boxtimes).$
An antioriented functor is a functor left enriched in $(\infty\Cat,\boxtimes).$

\item A bioriented category is a category bienriched in  $((\infty\Cat,\boxtimes), (\infty\Cat,\boxtimes))$.
A bioriented functor is a functor bienriched in  $((\infty\Cat,\boxtimes), (\infty\Cat,\boxtimes))$.

\end{enumerate}

\end{definition}

\begin{notation}
\begin{enumerate}[\normalfont(1)]\setlength{\itemsep}{-2pt}
\item Let $\mC$ be an oriented category and $\X,\Y \in \mC.$	
By the structure of a right $(\infty\Cat,\boxtimes)$-enriched category there is a right morphism $\infty$-category $\R\Mor_\mC(\X,\Y)$.

\item Let $\mC$ be an antioriented category and $\X,\Y \in \mC.$	
By the structure of a left $(\infty\Cat,\boxtimes)$-enriched category there
is a left morphism $\infty$-category $\L\Mor_\mC(\X,\Y)$.

\item Let $\mC$ be a bioriented category and $\X,\Y \in \mC.$	
By the structure of a $(\infty\Cat,\boxtimes)$-bienriched category there
is a morphism $\infty$-category $\Mor_\mC(\X,\Y) \in \infty\Cat \otimes \infty\Cat.$

\end{enumerate}
	
\end{notation}

\begin{example}
The Gray monoidal structure on $\infty\Cat$ is closed and so endows $\infty\Cat$ with a bienrichment in $(\infty\Cat,\boxtimes).$ This way we see
$\infty\Cat$ as a large bioriented category, which we denote by $ \infty\fcat.$
Every full subcategory of $\infty\Cat$ inherits the structure of a bioriented category.

\end{example}	

\begin{definition}
We refer to morphims in $\boxtimes\Cat$ as {\em antioriented functors}, to morphims in $\Cat\boxtimes$ as {\em oriented functors}, and to morphims in $\boxtimes\Cat\boxtimes$ as {\em bioriented functors}.
\end{definition}

\begin{notation}

Let $$\Cat\boxtimes,\qquad \boxtimes\Cat, \qquad \boxtimes\Cat\boxtimes$$
denote the categories of oriented categories, antioriented categories, and bioriented categories, respectively.
\end{notation}

\begin{remark}
An oriented, antioriented, or bioriented category is {\em presentable} if the respective left, right, or bienriched category is presentable in the sense of enriched $\infty$-category theory, which is likewise a presentable category endowed with a closed left action, closed right action, or closed biaction of the enriching monoidal category or categories, respectively.
\end{remark}

\begin{notation}\emph{}
\begin{enumerate}[\normalfont(1)]\setlength{\itemsep}{-2pt}
\item Let $\mC,\mD \in \boxtimes\Cat$. Let ${\boxtimes\Fun}(\mC,\mD)$ be the category of antioriented functors $\mC \to \mD.$	
		
\item Let $\mC,\mD \in \Cat\boxtimes$. Let ${\Fun\boxtimes}(\mC,\mD)$ be the category of oriented functors $\mC \to \mD.$	
		
\item Let $\mC,\mD \in \boxtimes\Cat\boxtimes $. Let ${\boxtimes\Fun\boxtimes}(\mC,\mD)$ be the category of bioriented functors $\mC \to \mD.$		

\end{enumerate}
\end{notation}

\begin{remark}
We refer to adjunctions of oriented, antioriented, and bioriented categories as oriented, antioriented, or bioriented adjunctions, or simply adjunctions if the orientation is clear from context.
\end{remark}

\begin{remark}\label{adj2}
\begin{enumerate}[\normalfont(1)]\setlength{\itemsep}{-2pt}
\item An antioriented (oriented) functor $\mC \to \mD$ admits a right adjoint if and only if it preserves left (right) tensors and the underlying functor admits a right adjoint.

\item A bioriented functor $\mC \to \mD$ admits a right adjoint if and only if it preserves left and right tensors and the underlying functor admits a right adjoint.

\item An antioriented (oriented) functor $\mC \to \mD$ admits a left adjoint if and only if it preserves left (right) cotensors and the underlying functor admits a left adjoint.
	
\item A bioriented functor $\mC \to \mD$ admits a left adjoint if and only if it preserves left and right cotensors and the underlying functor admits a left adjoint.

\end{enumerate}

\end{remark}

Next we define the appropriate notions of opposite oriented category.

\begin{definition}
    
Let
\begin{align*}
&(-)^\circ: {\Cat\boxtimes} \simeq {\boxtimes\Cat}\\
&(-)^\circ: {\boxtimes\Cat} \simeq {\Cat\boxtimes}\\
&(-)^\circ:  {\boxtimes\Cat\boxtimes} \simeq {\boxtimes\Cat\boxtimes}
\end{align*}
be the opposite enriched category involutions.	
	
\end{definition}

\begin{definition}
We define the following involutions, which reverse the even dimensional cells.
Note that we are also forced to reverse the orientation.
The equivalences of \cref{dua} give rise to the equivalences
\begin{align*}
&(-)^\co:= (-)^\op_!: {\boxtimes\Cat} \simeq {\Cat\boxtimes}\\
&(-)^\co:= (-)^\op_!: {\Cat\boxtimes} \simeq {\boxtimes\Cat}\\
&(-)^\co:= ((-)^\op, (-)^\op)_!: {\boxtimes\Cat\boxtimes} \simeq {\boxtimes\Cat\boxtimes}.
\end{align*}
\end{definition}

\begin{definition}
We define the following involutions, which reverse the odd dimensional cells.
Note that we are also forced to keep the orientation.
\begin{align*}
&(-)^\op:= (-)^\circ\circ (-)^\co_!: {\boxtimes\Cat} \simeq {\boxtimes\Cat}\\
&(-)^\op:= (-)^\circ\circ (-)^\co_!: {\Cat\boxtimes} \simeq {\Cat\boxtimes}\\
&(-)^\op :=(-)^\circ \circ ((-)^\co, (-)^\co)_!: {\boxtimes\Cat\boxtimes} \simeq {\boxtimes\Cat\boxtimes}
\end{align*}
\end{definition}

\begin{definition}
Combining the latter two types of involutions gives rise to the following sort of involution, which reverses all cells:
\begin{align*}
&{(-)^{\co\op}}:= {(-)^{\co}} \circ {(-)^{\op}} \simeq {(-)^{\op}} \circ{(-)^{\co}}:{\boxtimes\Cat} \simeq {\Cat\boxtimes}\\
&{(-)^{\co\op}}:= {(-)^{\co}} \circ {(-)^{\op}} \simeq {(-)^{\op}} \circ {(-)^{\co}}:{\Cat\boxtimes} \simeq {\boxtimes\Cat}\\
&{(-)^{\co\op}}:= {(-)^{\co}} \circ {(-)^{\op}} \simeq {(-)^{\op}} \circ {(-)^{\co}}: {\boxtimes\Cat\boxtimes} \simeq {\boxtimes\Cat\boxtimes},
\end{align*}
where all equivalences are involutions.
\end{definition}

Next we define oriented pushouts and oriented pullbacks in oriented categories, which were also studied in \cite[3.8.]{oriented}, \cite[3.3.]{heine2025categorification}, \cite[4.3.]{heine2026stable}.

\begin{definition}
Let $\mC$ be an oriented category.
\begin{enumerate}[\normalfont(1)]\setlength{\itemsep}{-2pt}
\item 
The oriented pullback of the diagram $X \to Z\leftarrow Y$ in $\mC$ is a diagram
\[
\xymatrix{& W\ar[rd]\ar[ld] &\\
X\ar[rd] & \Longrightarrow & Y\ar[ld]\\
& Z &}
\]
in $\mC$ such that for all objects $T$ of $\mC$ the induced functor
\[
\R\Mor_{\mC}(T,W)\to \R\Mor_{\mC}(T,X) \underset{\R\Mor_{\mC}(T,Z)}{\times}
\Fun^\oplax(\bD^1,\R\Mor_{\mC}(T,Z)) \underset{\R\Mor_{\mC}(T,Z)}{\times} \R\Mor_{\mC}(T,Y)
\]
is an equivalence in $\infty\Cat$.
In this case, we will write $X\underset{Z}{\vec{\times}} Y$ or $ Y\underset{Z}{{\cev\times}} X$ for $W.$

\item 
The oriented pushout of the oriented diagram $X \leftarrow Z\to Y$ in $\mC$ is the oriented pullback of the corresponding diagram in the oriented category $\mC^\op,$ which we denote by $ X\underset{Z}{{\vec{+}}} Y $ or $Y\underset{Z}{\cev{+}} X$.

\end{enumerate}

\end{definition}

\begin{remark}
An oriented pullback square in an oriented category $\mC$ is a diagram $\cube^2\to\mC$ which satisfies the universal property above.
\end{remark}

\begin{definition}
Let $\mC$ be an antioriented category.
\begin{enumerate}[\normalfont(1)]\setlength{\itemsep}{-2pt}
\item 
The antioriented pullback of the diagram $X \to Z\leftarrow Y$ in $\mC$ is the oriented pushout of the corresponding diagram in the oriented category $\mC^\circ$, which we denote by $X\underset{Z}{\bar{\vec{\times}}} Y$ or $ Y\underset{Z}{{\bar{\cev\times}}} X$.

\item 
The antioriented pushout of the diagram $X \leftarrow Z\to Y$ in $\mC$
is the oriented pullback of the corresponding diagram in the oriented category $\mC^\circ$, which we denote by $ X\underset{Z}{{\bar{\vec{+}}}} Y $ or $Y\underset{Z}{\bar{\cev{+}}} X.$

\end{enumerate}

\end{definition}

\begin{definition}Let $\mC$ be a bioriented category.
\begin{enumerate}[\normalfont(1)]\setlength{\itemsep}{-2pt}

\item The (anti)oriented pullback of a diagram $A \to C\leftarrow B$ in $\mC$ is the (anti)oriented pullback in the underlying (anti)oriented category of $\mC$.

\item The (anti)oriented pushout of a diagram $A \leftarrow C \to B$ in $\mC$ is the (anti)oriented pushout in the underlying (anti)oriented category of $\mC$.

\end{enumerate}

\end{definition}

\begin{remark}
Note that the oriented pullback is not symmetric since it makes use of a specified ordering of the sources of the maps $A\to C$ and $B\to C$.
\end{remark}

The following is \cite[Lemma 3.8.6.]{oriented}:

\begin{lemma}\label{0desc}\label{adesc}
Let $\mC$ be an oriented category.
\begin{enumerate}[\normalfont(1)]\setlength{\itemsep}{-2pt}
\item Let $A \leftarrow C \to \B$ be morphisms in $\mC$. If $\mC$ admits pushouts and right tensors with $\bD^1$, 
there is a canonical equivalence $$\A\,\underset{C}{\vec{+}}\, \B \simeq \A\!\!\underset{C \otimes \{0\}}{+} \!\!(\C \ot \bD^1) \!\!\underset{C \otimes \{1\}}{+}\!\!\B.$$	

\item Let $A \to C\leftarrow B$ be morphisms in $\mC$. If $\mC$ admits pullbacks and right cotensors with $\bD^1$, 
there is a canonical equivalence $${\A \,\underset{\C}{\vec{\times}}}\, \B \simeq \A \!\!\underset{\C^{\{0\}}}{\times} \!\!{\C^{\bD^1}}\!\!\underset{{\C^{\{1\}}}}{\times}  \!\!\B.$$	
\end{enumerate}

\end{lemma}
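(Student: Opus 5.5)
Both parts of \cref{0desc} are Yoneda-style identifications of corepresenting objects. I would prove (2) directly and deduce (1) by applying (2) in the oriented category $\mC^\op$. This works because oriented pushouts in $\mC$ are by definition oriented pullbacks in $\mC^\op$. Under $(-)^\op$, pushouts become pullbacks and right tensors with $\bD^1$ become right cotensors with $\bD^1$. One bookkeeping point must be checked: the involution should carry $\C\ot\bD^1$ to $\C^{\bD^1}$ in $\mC^\op$ with the endpoints $\{0\},\{1\}$ in the correct order. This follows from the definition of $(-)^\op$ on $\Cat\boxtimes$ as $(-)^\circ\circ(-)^\co_!$, since the morphism categories are transformed by $(-)^\co$ and $(\bD^1)^\co\simeq\bD^1$.

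For (2), set $W:=\A\times_{\C^{\{0\}}}\C^{\bD^1}\times_{\C^{\{1\}}}\B$, where $\C^{\{i\}}\simeq\C$. The cotensor supplies a morphism $\bD^1\to\R\Mor_\mC(\C^{\bD^1},\C)$ whose endpoints are the two evaluation maps. Composing with the projection $W\to\C^{\bD^1}$ gives the 2-cell in the square whose corners are $W$, $\A$, $\B$ and $\C$, so we obtain a candidate oriented pullback square. To verify the universal property, fix $T\in\mC$ and apply $\R\Mor_\mC(T,-)$. Conical pullbacks in $\mC$ become pullbacks in $\infty\Cat$, so
\[
\R\Mor_\mC(T,W)\simeq \R\Mor_\mC(T,\A)\times_{\R\Mor_\mC(T,\C)}\R\Mor_\mC(T,\C^{\bD^1})\times_{\R\Mor_\mC(T,\C)}\R\Mor_\mC(T,\B).
\]
The defining property of the right cotensor identifies $\R\Mor_\mC(T,\C^{\bD^1})$ with the internal hom in $(\infty\Cat,\boxtimes)$ out of $\bD^1$ into $\R\Mor_\mC(T,\C)$. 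For a right enrichment, the relevant internal hom is the right adjoint of $(-)\boxtimes\bD^1$, which is $\Fun^\oplax(\bD^1,\R\Mor_\mC(T,\C))$. The two restriction maps to $\R\Mor_\mC(T,\C)$ correspond to evaluation at $0$ and $1$. Substituting this identification gives exactly the functor in the definition of the oriented pullback, so that functor is an equivalence.

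The main obstacle is the coherence and handedness bookkeeping. First, one must confirm that in a right $(\infty\Cat,\boxtimes)$-enriched category the cotensor is governed by $\Fun^\oplax$ rather than $\Fun^\lax$. This amounts to unwinding the definition of cotensor as a tensor in $\mC^\circ$, which is $\boxtimes^\rev$-enriched, so that it pairs with $(-)\boxtimes\bD^1$. Second, one must check that the equivalences above are natural in $T$, so that they really identify the map induced by the constructed square rather than an abstractly equivalent one. I would handle this by constructing everything as enriched functors of $T$, that is as $\infty\Cat$-valued representable presheaves, and invoking the enriched Yoneda lemma. Canonicity of the resulting equivalence then follows because any two oriented pullbacks are uniquely equivalent.
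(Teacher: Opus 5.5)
Your proposal is correct. The paper gives no proof of its own here and simply quotes \cite[Lemma 3.8.6]{oriented}; your argument is the standard representability proof of the statement: apply $\R\Mor_\mC(T,-)$, use that conical pullbacks go to pullbacks and that the right cotensor identifies $\R\Mor_\mC(T,\C^{\bD^1})$ with $\Fun^\oplax(\bD^1,\R\Mor_\mC(T,\C))$ compatibly with the evaluations, then deduce (1) in $\mC^\op$ using $(\bD^1)^\co\simeq\bD^1$ with endpoints preserved. One small slip in your last paragraph: it is $\mC$ that is $(\infty\Cat,\boxtimes)^\rev$-enriched, whereas $\mC^\circ$ is $(\infty\Cat,\boxtimes)$-enriched, and that is exactly why the cotensor pairs with $(-)\boxtimes\bD^1$ and hence gives $\Fun^\oplax$, as you correctly concluded.
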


Dually, we obtain the following:

\begin{corollary}\label{bdesc}
Let $\mC$ be an antioriented category.
\begin{enumerate}[\normalfont(1)]\setlength{\itemsep}{-2pt}
\item 
Let $A \leftarrow C \to \B$ be morphisms in $\mC$. If $\mC$ admits pushouts and left tensors with $\bD^1$, 
there is a canonical equivalence $$\A\,\underset{C}{\bar{\vec{+}}}\, \B \simeq \A \underset{\{0\}\otimes\C}{+} ( \bD^1\ot \C) \underset{\{1\}\otimes\C}{+} \B.$$	

\item Let $A \to C\leftarrow B$ be morphisms in $\mC$. If $\mC$ admits pullbacks and left cotensors with $\bD^1$, 
there is a canonical equivalence $$\A\underset{C}{\bar{\vec{\times}}} \B \simeq \A \underset{^{\{0\}}\C}{\times}{^{\bD^1}\C} \underset{^{\{1\}}\C}{\times} \B.$$	
\end{enumerate}

\end{corollary}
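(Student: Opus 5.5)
This follows from \cref{adesc} by applying the involution $(-)^\circ$ that exchanges oriented and antioriented categories.

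Let $\mC$ be an antioriented category. Then $\mC^\circ$ is an oriented category. By definition, the antioriented pullback of $A \to C \leftarrow B$ in $\mC$ is the oriented pushout of the corresponding diagram $A \leftarrow C \to B$ in $\mC^\circ$. Likewise, the antioriented pushout in $\mC$ is the oriented pullback of $A \to C \leftarrow B$ in $\mC^\circ$.

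The steps are as follows.

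First, we translate the hypotheses. Opposite enrichment satisfies $\Mor_{\mC^\circ}(X,Y)\simeq\Mor_\mC(Y,X)$, and the cotensor in $\mC$ is by definition the tensor in $\mC^\circ$. Hence left cotensors (respectively, left tensors) with $\bD^1$ in the left enriched $\mC$ correspond to right tensors (respectively, right cotensors) with $\bD^1$ in the right enriched $\mC^\circ$. Ordinary pullbacks in $\mC$ correspond to pushouts in $\mC^\circ$, and vice versa.

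Second, for (2) we apply \cref{adesc}(1) in $\mC^\circ$. This gives
\[
A \underset{C\otimes\{0\}}{+} (C\otimes\bD^1) \underset{C\otimes\{1\}}{+} B ,
\]
computed in $\mC^\circ$. Reading it back in $\mC$ yields
\[
A \underset{{}^{\{0\}}C}{\times} {}^{\bD^1}C \underset{{}^{\{1\}}C}{\times} B .
\]
Dually, (1) follows from \cref{adesc}(2) applied in $\mC^\circ$.

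The only point needing care is bookkeeping: checking that the endpoints $\{0\},\{1\}$ are not swapped and that the order of $A$ and $B$ is preserved under $(-)^\circ$. This holds because $(-)^\circ$ reverses only the $1$-morphisms of $\mC$ and leaves the enriching $\bD^1$ untouched; it merely reverses the monoidal structure to $\boxtimes^\rev$. Consequently the source and target inclusions $\{0\},\{1\}\subset\bD^1$ are unchanged.
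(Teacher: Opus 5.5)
Your proposal is correct and is the paper's own argument: the paper derives this corollary from \cref{adesc} by duality alone, with no separate proof. That duality is exactly your passage to the oriented category $\mC^\circ$. There the antioriented pullback and pushout of $\mC$ are, by definition, the oriented pushout and pullback, pullbacks become pushouts, and left cotensors (resp.\ tensors) with $\bD^1$ become right tensors (resp.\ cotensors). Because the antioriented constructions are defined through $\mC^\circ$ itself, your bookkeeping check is automatic: neither the endpoints of $\bD^1$ nor the order of $A$ and $B$ gets swapped.
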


We have the following pasting law, which is \cite[Lemma 3.8.11.]{oriented}:

\begin{lemma}\label{pasting}

Consider the following diagram in any oriented category $\mC$, where the left hand square is a commutative square:
\[
\begin{tikzcd}
\Q \ar{d} \ar{r} & \P \ar{r}{} \ar{d}[swap]{} & \B  \ar{d}{} \\
\E \ar{r} & \A \ar[double]{ur}{}  \ar{r}[swap]{} & \C
\end{tikzcd}
\]
If the right hand square is an oriented pullback square, the left hand square is a pullback square if and only if the outer square is an oriented pullback square.

\end{lemma}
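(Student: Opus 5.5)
The plan is to reduce both directions to the universal property of oriented pullbacks, tested against an arbitrary object $T$ of $\mC$. For a fixed $T$, the functor $\R\Mor_\mC(T,-)$ sends the right hand square to a diagram in $\infty\Cat$. Write $R(-)$ for $\R\Mor_\mC(T,-)$. By definition of oriented pullback square, the induced functor
\[
R(\P) \to R(\A) \underset{R(\C)}{\times} \Fun^\oplax(\bD^1,R(\C)) \underset{R(\C)}{\times} R(\B)
\]
is an equivalence. The left hand square is a pullback square in $\mC$ if and only if, for all $T$, the square obtained by applying $R$ is a pullback in $\infty\Cat$. This holds because conical limits in an enriched category are detected by the morphism objects, and a pullback of morphism $\infty$-categories yields a pullback on underlying mapping spaces as well.

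Next, for fixed $T$, I would write the comparison map for the outer square as a composite. The map
\[
R(\Q) \to R(\E) \underset{R(\C)}{\times} \Fun^\oplax(\bD^1,R(\C)) \underset{R(\C)}{\times} R(\B)
\]
factors as follows. First comes $R(\Q) \to R(\E)\times_{R(\A)} R(\P)$. This is followed by the base change along $R(\E)\to R(\A)$ of the comparison map for the right hand square, which is an equivalence by hypothesis. The target of this composite is then identified by pasting ordinary pullbacks in $\infty\Cat$:
\[
R(\E)\underset{R(\A)}{\times}\Big(R(\A) \underset{R(\C)}{\times} \Fun^\oplax(\bD^1,R(\C)) \underset{R(\C)}{\times} R(\B)\Big) \simeq R(\E) \underset{R(\C)}{\times} \Fun^\oplax(\bD^1,R(\C)) \underset{R(\C)}{\times} R(\B).
\]
Here the map $R(\E)\to R(\C)$ is the composite through $R(\A)$. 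Consequently, the outer comparison map is an equivalence if and only if $R(\Q) \to R(\E)\times_{R(\A)} R(\P)$ is an equivalence. Since this holds for every $T$, the outer square is an oriented pullback exactly when the left square is a pullback.

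The main obstacle is the coherence step: checking that the 2-cell of the outer square is indeed the whiskering of the given 2-cell by $\E\to\A$, and that the comparison maps are compatible with this. Concretely, one must verify that the functor induced by the outer square agrees with the composite of the two maps described above, naturally in $T$. I would handle this by encoding the diagram as a functor from the pasting of a commutative square with an oriented square $\cube^2$ into $\mC$. Then I would use the fact that the comparison map is obtained by precomposition along $\R\Mor$ and the adjunction defining $\Fun^\oplax(\bD^1,-)$, so that the whiskering corresponds to base change along $R(\E)\to R(\A)$. The remaining verifications are formal pasting of ordinary pullbacks in $\infty\Cat$.
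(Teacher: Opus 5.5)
Your proof is correct. The paper gives no argument of its own here; it imports the statement from \cite[Lemma 3.8.11]{oriented}. Your reduction to $\R\Mor_\mC(T,-)$, followed by the pasting law for ordinary pullbacks in $\infty\Cat$, is the natural proof.

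Two points in your commentary should be fixed, though neither affects the factorization you display.

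\emph{Whiskering.} The 2-cell of the outer square is not a whiskering by $E\to A$. The given 2-cell is a morphism in $\R\Mor_\mC(P,C)$, so it can only be precomposed with $Q\to P$. The outer 2-cell is the given one whiskered by $Q\to P$, with its source identified with $Q\to E\to A\to C$ via the commutativity of the left square. By associativity of composition in $\mC$, it is this whiskering that makes the $\Fun^\oplax(\bD^1,\R\Mor_\mC(T,C))$-component of the outer comparison map equal to the composite through $\R\Mor_\mC(T,P)$. The base change along $\R\Mor_\mC(T,E)\to\R\Mor_\mC(T,A)$ is only the rearrangement of fiber products.

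\emph{Meaning of pullback.} Your first `if and only if' is the definition of a pullback square in an oriented category, namely a conical pullback detected by all $\R\Mor_\mC(T,-)$. Your remark about underlying mapping spaces only shows that such a pullback is also one in the underlying category, so it does not justify that equivalence. The enriched reading is genuinely needed for the implication from the left square to the outer square.
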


The following is \cite[Proposition 3.8.12.]{oriented}:

\begin{proposition}\label{homs} 
Let $\F: \mA \to \mC,\G: \mB \to \mC$ be functors and $\A,\A'\in \mA, \B,\B' \in \mB$ and $\sigma: \F(\A)\to \G(\B), \sigma': \F(\A')\to \G(\B')$ morphisms. There is a canonical equivalence $$\Mor_{{\mA \,\underset{\mC}{\vec{\times}}}\, \mB}((\A,\B, \sigma),(\A',\B', \sigma')) \simeq {\Mor_\mB(\B,\B') \,\underset{\Mor_\mC(\F(\A),\G(\B'))}{\vec{\times}}}\, \Mor_{\mA}(\A,\A').$$

\end{proposition}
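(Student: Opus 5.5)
We prove the equivalence by reducing both sides to statements about morphism $\infty$-categories and using the Yoneda lemma in the oriented category $\mC$. Write $P = \mA \vec{\times}_\mC \mB$ for the oriented pullback of $F:\mA\to\mC \leftarrow \mB: G$. Objects of $P$ are triples $(A,B,\sigma)$ with $\sigma: F(A)\to G(B)$. It suffices to produce, naturally in these data, a functor
\[
\Mor_P((A,B,\sigma),(A',B',\sigma')) \to \Mor_\mB(B,B') \vec{\times}_{\Mor_\mC(F(A),G(B'))} \Mor_\mA(A,A')
\]
and to check that it is an equivalence after applying $\Map_{\infty\Cat}(K,-)$ for every $\infty$-category $K$. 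By \cref{theta} or \cref{cubedense} we could even restrict to $K$ ranging over a dense subcategory such as the oriented cubes.

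\textbf{Step 1: identify maps from suspensions.} By \cref{suspi}, a map $K \to \Mor_P(x,x')$ is the same as a bipointed functor $S(K) \to P$ sending $0\mapsto x$ and $1\mapsto x'$. The universal property of the oriented pullback, read in the oriented category $\infty\fcat$ with $T$ ranging over suspensions, identifies a functor $S(K)\to P$ with the following data:
\begin{itemize}
\item functors $a: S(K)\to\mA$ and $b: S(K)\to\mB$;
\item an oplax transformation $S(K) \to \Fun^\oplax(\bD^1,\mC)$ from $Fa$ to $Gb$,
\end{itemize}
that is, a functor $S(K)\boxtimes\bD^1 \to \mC$.

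\textbf{Step 2: describe $S(K)\boxtimes \bD^1$ as a gluing.} The key geometric input is that $S(K)\boxtimes\bD^1$, relative to its boundary $\partial\bD^1\times\bD^1$, decomposes as follows:
\begin{itemize}
\item a suspension $S(K)\times\{0\}$ along the source;
\item a suspension $S(K)\times\{1\}$ along the target;
\item the two edges $\sigma$ and $\sigma'$;
\item a single filling "square", which after passing to morphism objects from $(0,0)$ to $(1,1)$ becomes $\Mor(F(A),G(B'))$ receiving a lax transformation.
\end{itemize}
Concretely, we will show that the morphism $\infty$-category of $S(K)\boxtimes\bD^1$ from $(0,0)$ to $(1,1)$ is the oriented pushout
\[
K \vec{+}_{K} K \simeq K \sqcup_{K\times\{0\}} (K\boxtimes \bD^1)\sqcup_{K\times\{1\}} K,
\]
with the two copies of $K$ mapping to the two composite paths.

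We would prove this by expressing $S(-)\boxtimes\bD^1$ as a colimit-preserving functor of $K$ (the Gray tensor preserves colimits in each variable). It then suffices to verify the claim for $K$ an oriented cube, where $S(\cube^n)\boxtimes\bD^1$ is an explicit Gray cube computed in \cite{GepnerHeine2026}.

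\textbf{Step 3: conclude by Yoneda.} Given Step 2, a bipointed functor $S(K)\boxtimes\bD^1\to\mC$ with prescribed boundary is the same as the following data:
\begin{itemize}
\item a functor $K \to \Mor_\mA(A,A')$, pushed forward via postcomposition with $\sigma'$;
\item a functor $K\to\Mor_\mB(B,B')$, pushed forward via precomposition with $\sigma$;
\item a functor $K\boxtimes\bD^1 \to \Mor_\mC(F(A),G(B'))$ between the two resulting functors.
\end{itemize}
By the adjunction defining $\Fun^\oplax(\bD^1,-)$, this is exactly a map from $K$ into the oriented pullback $\Mor_\mB(B,B')\vec{\times}_{\Mor_\mC(FA,GB')}\Mor_\mA(A,A')$ described in \cref{0desc}(2) for $\infty\fcat$. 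Naturality in $K$ then gives the equivalence.

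One must keep track of orientation conventions. The factor $\Mor_\mB$ appears first because postcomposition with $G$ and the direction of the 2-cell in $K\boxtimes\bD^1$ (rather than $\bD^1\boxtimes K$) force this ordering. We would check it on $K=\bD^0$, where both sides reduce to the familiar 1-categorical description of commuting squares with a 2-cell.

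\textbf{Main obstacle.} Step 2, the computation of the hom of $S(K)\boxtimes\bD^1$, is the main obstacle. If a direct cube computation is unwieldy, we would instead use the known formula for homs of Gray products with $\bD^1$ (a suspension-type decomposition of $\bD^1\boxtimes S(K)$ as a pushout of $S(K\boxtimes\bD^1)$ with two triangles) and apply \cref{pasting} to assemble the oriented pullback.
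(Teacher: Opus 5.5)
The paper does not prove this statement; it quotes it from \cite[Proposition 3.8.12]{oriented}. Your argument therefore has to stand on its own, and it does not yet. Steps 1 and 3 are correct, but they are purely formal. By \cref{suspi} and \cref{0desc}, a map $K\to\Mor_P(x,x')$ is a pair $a,b$ together with a functor $S(K)\boxtimes\bD^1\to\mC$ restricting to $\sigma,\sigma'$ on $\partial\bD^1\boxtimes\bD^1$. Equivalently, since $\Mor$ preserves limits by \cref{homfil}, the proposition is equivalent to its special case \cref{homs2} for $\Fun^\oplax(\bD^1,\mC)$. All of the content therefore sits in Step 2, which you assert rather than prove.

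Concretely, three things are missing.

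\textbf{The right form of Step 2.} As formulated, Step 2 only identifies the hom from $(0,0)$ to $(1,1)$. Your $K\vec{+}_K K$, being a gluing along identities, is just the cylinder $K\boxtimes\bD^1$. A hom computation does not tell you what functors out of $S(K)\boxtimes\bD^1$ are. What Step 3 actually uses is a pushout presentation, natural in $K$,
\[
S(K)\boxtimes\bD^1\simeq(\bD^1\vee S(K))\sqcup_{S(K)}S(K\boxtimes\bD^1)\sqcup_{S(K)}(S(K)\vee\bD^1).
\]
Here the gluing is along the two diagonal maps out of $S(K)$ and the two end inclusions $S(K\boxtimes\{0\})$ and $S(K\boxtimes\{1\})$, matched in a specific way. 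This is a genuine theorem about Gray tensor products of suspensions, and nothing you cite supplies it.

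\textbf{Your route to it.} This fails as stated. Reducing to $K$ in a colimit-generating family is harmless, since both sides commute with colimits in $K$ once viewed under $\partial\bD^1\boxtimes\bD^1$. But for $n\geq 1$, $S(\cube^n)$ is not an oriented cube and $S(\cube^n)\boxtimes\bD^1$ is not a Gray cube, so there is nothing to read off. You would have to actually compute $S(\theta)\boxtimes\bD^1$ for generators, for instance $\bD^{n+1}\boxtimes\bD^1$ for disks, and that is the hard part.

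\textbf{Orientation.} The test at $K=\bD^0$ only fixes the direction of the $2$-cell, i.e.\ that $\Mor_\mB(B,B')$ comes first. It cannot distinguish $K\boxtimes\bD^1$ from $\bD^1\boxtimes K$ as the top hom. That is exactly the question of whether you land in $\Fun^\oplax(\bD^1,-)$, giving the claimed oriented pullback, or in $\Fun^\lax(\bD^1,-)$, giving the antioriented one. So the orientation must also come out of the missing computation.

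As it stands, you have a correct reduction of the proposition to a suspension formula for the Gray cylinder, not a proof of it.
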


The following is \cite[Corollary 3.8.13.]{oriented}:

\begin{corollary}\label{homso}

Let $\F: \mA \to \mC,\G: \mB \to \mC$ be functors and $\A,\A'\in \mA, \B,\B' \in \mB$ and $\sigma: \F(\A)\to \G(\B), \sigma': \F(\A')\to \G(\B')$ morphisms. There is a canonical equivalence $$\Mor_{\mA\underset{\mC}{\bar{\vec{\times}}} \mB}((\A,\B, \sigma),(\A',\B', \sigma')) \simeq \Mor_\mA(\A,\A')\underset{\Mor_\mC(\F(\A),\G(\B'))}{\bar{\vec{\times}}} \Mor_\mB(\B,\B').$$

\end{corollary}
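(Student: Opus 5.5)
This follows formally from \cref{homs} by applying the involution that reverses all cells, $(-)^{\co\op}$, or more economically the composite that interchanges oriented and antioriented pullbacks. By definition the antioriented pullback in an antioriented category $\mC$ is the oriented pushout in the oriented category $\mC^\circ$, that is, the oriented pullback in $(\mC^\circ)^\op$. The plan is to identify which involution of $\infty\fcat$ sends the antioriented pullback $\mA\underset{\mC}{\bar{\vec{\times}}}\mB$ to an oriented pullback of dualized functors. By \cref{bdesc}(2), $\mA\underset{\mC}{\bar{\vec{\times}}}\mB\simeq \mA\times_{\mC}{}^{\bD^1}\mC\times_{\mC}\mB$, where ${}^{\bD^1}\mC=\Fun^\lax(\bD^1,\mC)$. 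By \cref{grayhoms}, $\Fun^\lax(\bD^1,\mC)\simeq \Fun^\oplax((\bD^1)^\co,\mC^\co)^\co=\Fun^\oplax(\bD^1,\mC^\co)^\co$, since $(\bD^1)^\co\simeq\bD^1$. Hence
\[
(\mA\underset{\mC}{\bar{\vec{\times}}}\mB)^\co\simeq \mA^\co\underset{\mC^\co}{\vec{\times}}\mB^\co ,
\]
with the same ordering of the factors, because $\co$ fixes $0,1\in\bD^1$.

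Next apply \cref{homs} to $\F^\co,\G^\co$ and the same objects and morphisms $\sigma,\sigma'$; $\co$ preserves $1$-morphisms and their directions. This gives
\[
\Mor_{(\mA\bar{\vec\times}_\mC\mB)^\co}((\A,\B,\sigma),(\A',\B',\sigma'))\simeq \Mor_{\mB^\co}(\B,\B')\underset{\Mor_{\mC^\co}(\F\A,\G\B')}{\vec{\times}}\Mor_{\mA^\co}(\A,\A').
\]
By \cref{oppo}, $\Mor_{X^\co}(x,y)\simeq\Mor_X(x,y)^\op$. So the left side is $\Mor_{\mA\bar{\vec\times}_\mC\mB}(\dots)^\op$, and the right side is the oriented pullback of the $(-)^\op$'s of the relevant diagram of $\infty$-categories.

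Finally, $(-)^\op$ on $\infty$-categories converts oriented pullbacks into antioriented pullbacks with the factors swapped. Indeed, by \cref{grayhoms}, $\Fun^\oplax(\bD^1,X^\op)\simeq\Fun^\lax((\bD^1)^\op,X)^\op$, and $(\bD^1)^\op$ reverses the arrow. Substituting into \cref{0desc}(2) and \cref{bdesc}(2) gives $(P^\op\underset{R^\op}{\vec\times}Q^\op)^\op\simeq Q\underset{R}{\bar{\vec\times}}P$. Applying this with $P=\Mor_\mB(\B,\B')$ and $Q=\Mor_\mA(\A,\A')$ yields exactly the claimed $\Mor_\mA(\A,\A')\underset{\Mor_\mC(\F\A,\G\B')}{\bar{\vec{\times}}}\Mor_\mB(\B,\B')$.

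The main obstacle is bookkeeping rather than substance. We must check that the maps in the resulting cospan are the ones intended, namely postcomposition with $\sigma'$ and precomposition with $\sigma$, and that the endpoints $0,1$ land on the correct factors after each involution. Every equivalence used is natural, so this reduces to tracking the behaviour of $\co$ and $\op$ on $\bD^1$ and on the source and target of $\Mor$ via \cref{oppo}.
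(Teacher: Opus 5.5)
Your duality argument is correct. The paper gives no separate proof here: it simply cites \cite{oriented}, where the statement is recorded as a corollary of the oriented version \cref{homs}. Your route is exactly how such a corollary is obtained: transport along $(-)^\co$, identify $\Mor_{X^\co}(x,y)\simeq\Mor_X(x,y)^\op$ via \cref{oppo}, and then use that $(-)^\op$ turns oriented pullbacks into antioriented ones with the factors swapped.

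One small correction to your opening framing: the involution doing the work is $(-)^\co$, not $(-)^{\co\op}$. Since $(-)^{\co\op}$ is monoidal for $\boxtimes$, it preserves the lax/oplax type, so it sends antioriented pullbacks to antioriented pullbacks rather than to oriented ones.
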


\begin{corollary}\label{homs2}\label{homso2} Let $\mB$ be an $\infty$-category and $\sigma: \A \to \B, \sigma': \A' \to \B'$ morphisms in $\mB$. There are canonical equivalences $$\Mor_{\Fun^\lax(\bD^1,\mB)}(\sigma, \sigma') \simeq \Mor_\mA(\A,\A')\underset{\Mor_\mC(\A,\B')}{\bar{\vec{\times}}} \Mor_\mB(\B,\B'), $$
$$\Mor_{\Fun^\oplax(\bD^1,\mB)}(\sigma, \sigma') \simeq {\Mor_\mB(\B,\B') \,\underset{\Mor_\mC(\A,\B')}{\vec{\times}}}\, \Mor_{\mA}(\A,\A'). $$\end{corollary}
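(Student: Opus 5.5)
The plan is to identify the two functor $\infty$-categories $\Fun^\oplax(\bD^1,\mB)$ and $\Fun^\lax(\bD^1,\mB)$ with an oriented, respectively antioriented, pullback of the cospan $\mB \xrightarrow{\id} \mB \xleftarrow{\id} \mB$ in $\infty\fcat$. Then \cref{homs} and \cref{homso} apply directly with $\mA=\mB=\mC$ and $\F=\G=\id$. In the displayed formulas the objects $\A,\B,\A',\B'$ all lie in $\mB$, and $\mA,\mC$ should be read as $\mB$.

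For the oplax case I regard $\infty\fcat$ as an oriented category through its right enrichment. The first step is to identify the right morphism $\infty$-categories and right cotensors. By closedness we have $\Map(X,\Fun^\lax(T,Z))\simeq\Map(T\boxtimes X,Z)$, so I take $\R\Mor(T,Z)\simeq\Fun^\lax(T,Z)$. I then verify that $\Fun^\oplax(\bD^1,Z)$ is the right cotensor of $Z$ with $\bD^1$. Both $\Fun^\oplax(\bD^1,\Fun^\lax(T,Z))$ and $\Fun^\lax(T,\Fun^\oplax(\bD^1,Z))$ corepresent $X\mapsto \Map(T\boxtimes X\boxtimes \bD^1, Z)$, using associativity of $\boxtimes$. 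This gives
$$\R\Mor(T,\Fun^\oplax(\bD^1,Z))\simeq \Fun^\oplax(\bD^1,\R\Mor(T,Z)),$$
naturally in $T$.

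Since $\infty\fcat$ has pullbacks, \cref{0desc}(2) gives
$$\mB\underset{\mB}{\vec\times}\mB\simeq \mB\times_{\mB}\Fun^\oplax(\bD^1,\mB)\times_{\mB}\mB\simeq\Fun^\oplax(\bD^1,\mB).$$
An object of the left-hand side is a triple $(\A,\B,\sigma)$ with $\sigma:\A\to\B$, and it corresponds to the arrow $\sigma$ on the right. Applying \cref{homs} with $\F=\G=\id_\mB$ then yields
$$\Mor_{\Fun^\oplax(\bD^1,\mB)}(\sigma,\sigma')\simeq \Mor_\mB(\B,\B')\underset{\Mor_\mB(\A,\B')}{\vec\times}\Mor_\mB(\A,\A').$$

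For the lax case I work in the antioriented structure of $\infty\fcat$. Here $\L\Mor(T,Z)\simeq\Fun^\oplax(T,Z)$, and the mirror-image computation shows that $\Fun^\lax(\bD^1,Z)$ is the left cotensor ${}^{\bD^1}Z$. Then \cref{bdesc}(2) identifies $\mB\underset{\mB}{\bar{\vec\times}}\mB$ with $\Fun^\lax(\bD^1,\mB)$, and \cref{homso} gives the first formula. Alternatively, the lax case can be deduced from the oplax one by applying $(-)^\co$ and using \cref{grayhoms} together with \cref{oppo}.

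I expect the main obstacle to be bookkeeping of orientations rather than any substantive mathematics. One must check that the right (left) cotensor in $\infty\fcat$ is the oplax (lax) functor category and not the other one. One must also check that the identification in \cref{0desc} matches the convention for objects $(\A,\B,\sigma)$ used in \cref{homs}, so that the factors appear in the stated order.
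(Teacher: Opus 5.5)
Your proposal is correct and is exactly the argument the paper intends: the corollary is \cref{homs} and \cref{homso} specialized to $F=G=\id_{\mB}$, after identifying $\Fun^\oplax(\bD^1,\mB)\simeq \mB\underset{\mB}{\vec{\times}}\mB$ and $\Fun^\lax(\bD^1,\mB)\simeq \mB\underset{\mB}{\bar{\vec{\times}}}\mB$ via \cref{0desc}(2) and \cref{bdesc}(2). Your orientation bookkeeping is right: in $\infty\fcat$ the right cotensor is the oplax functor category and the left cotensor is the lax one. Your reading of $\mA$ and $\mC$ in the displayed formulas as $\mB$ is also correct.
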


\begin{corollary}\label{dimensio}

Let $ n \geq 0$ and $\mA,\mB,\mC$ be $n$-categories.
Let $\F: \mA \to \mC,\G: \mB \to \mC$ be functors.
The oriented pullback $\mA\underset{\mC}{\bar{\vec{\times}}} \mB$
is an $n$-category.
    
\end{corollary}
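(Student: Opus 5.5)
The plan is to argue by induction on $n$, using \cref{homso} to identify the morphism $\infty$-categories of the antioriented pullback. To get started, I first check that the underlying space of objects is as expected. By \cref{bdesc} (or directly from the universal property tested against $\bD^0$), the space of objects of $\mA\underset{\mC}{\bar{\vec{\times}}}\mB$ is the space of triples $(A,B,\sigma)$ with $\sigma:\F(A)\to\G(B)$. Concretely, it is
\[
\iota_0(\mA)\times_{\iota_0(\mC)}\iota_0(\Fun(\bD^1,\mC))\times_{\iota_0(\mC)}\iota_0(\mB),
\]
which is a space.

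For the base case $n=0$, $\mA,\mB,\mC$ are spaces. In that case $\Fun^{\lax}(\bD^1,\mC)\simeq\Fun(\bD^1,\mC)\simeq\mC$, since all morphisms of $\mC$ are invertible. The antioriented pullback therefore reduces to the ordinary pullback of spaces $\mA\times_{\mC}\mB$ (equivalently, the path-space description via \cref{bdesc}), and this is a space. I will justify the identification of the relevant cotensor ${}^{\bD^1}\mC$ with the free-path object using \cref{grayspace}, which says that the Gray tensor product restricts to the cartesian product on $\mS$ and that $\tau_0$ is monoidal.

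For the inductive step, assume the claim for $n-1$ and let $\mA,\mB,\mC$ be $n$-categories. An $\infty$-category is an $n$-category exactly when all of its morphism $\infty$-categories are $(n-1)$-categories; this follows from the inductive definition $n\Cat=(n-1)\Cat\mathrm{-}\Cat$ and \cref{fix}. By \cref{homso}, each morphism $\infty$-category of the antioriented pullback is
\[
\Mor_\mA(A,A')\underset{\Mor_\mC(\F(A),\G(B'))}{\bar{\vec{\times}}}\Mor_\mB(B,B').
\]
This is again an antioriented pullback, now of $(n-1)$-categories, so the inductive hypothesis shows it is an $(n-1)$-category. Hence the antioriented pullback is an $n$-category.

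The main obstacle is making the induction legitimate. \cref{homso} expresses the morphism objects as antioriented pullbacks taken in $\infty\Cat$, and I need the induction hypothesis to apply to exactly those. I also need to be careful that the characterization of $n\Cat\subset\infty\Cat$ by morphism objects is compatible with the limit definition of $\infty\Cat$. Since the statement is formulated uniformly for all $n$, I will run the induction on $n$ over all triples of functors at once. For $n=\infty$ nothing needs proving.
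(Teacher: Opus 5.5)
Your proposal is correct and follows essentially the same route as the paper. The paper also argues by induction on $n$. Its base case reduces, via the cotensor description of the (anti)oriented pullback, to showing that $\Fun^{\oplax}(\bD^1,\mC)\simeq\mC$ when $\mC$ is a space, because oriented cubes have contractible classifying space; this is the same fact you invoke through the monoidality of $\tau_0$ in \cref{grayspace}. The inductive step in the paper is given directly by \cref{homso}, exactly as in your argument.
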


\begin{proof}

We proceed by induction on $n \geq 0.$
For $n=0$ the statement follows from \cref{adesc}:
the oriented pullback $\mA\underset{\mC}{\bar{\vec{\times}}} \mB$ identifies with the pullback $\mA \times_\mC \Fun^\oplax(\bD^1,\mC) \times_\mC \mB$.
Hence it suffices to see that $\Fun^\oplax(\bD^1,\mC)$ is a space if
$\mC$ is a space. But if $\mC$ is a space, the diagonal functor
$\mC \to \Fun^\oplax(\bD^1,\mC)$ is an equivalence since the classifying space of any oriented cube is contractible.

We prove the induction step. Let $ n \geq 0.$
If the statement holds for $n$, it also holds for $n+1$ by \cref{homso}.
\end{proof}

% \begin{theorem}\label{thm:oplaxinclusion}
% Let $\mA$ and $ \mC$ be $\infty$-categories.
% The canonical functors $$\Fun(\mA,\mC) \to \Fun^\lax(\mA,\mC)$$
% and
% $$ \Fun(\mA,\mC) \to \Fun^\oplax(\mA,\mC)$$ are inclusions.
% \end{theorem}

The following is \cite[Theorem 4.2.7.]{oriented}.

\begin{theorem}\label{interchange}
\begin{enumerate}[\normalfont(1)]\setlength{\itemsep}{-2pt}
\item The induced functor $$ \infty\Cat\simeq\, _{\infty\Cat}\Cat \to {\boxtimes\Cat} $$ of 2-categories is fully faithful. The essential image precisely consists of the antioriented categories $\mC$ such that the antioriented interchange law holds strictly:
for every $X,Y,Z \in \mC$ and functors $\bD^n \to \L\Mor_\mC(Y,Z)$ and $\bD^m \to \L\Mor_\mC(X,Y)$
the functor $$\bD^n \boxtimes \bD^m \to \L\Mor_\mC(Y,Z) \boxtimes \L\Mor_\mC(X,Y) \to \L\Mor_\mC(X,Z)$$ factors through the functor
$\bD^n \boxtimes \bD^m \to \bD^n \times \bD^m.$ 
\item The induced functor $$\infty\Cat\simeq\Cat_{\infty\Cat} \to {\Cat\boxtimes} $$ of 2-categoriesis fully faithful. The essential image precisely consists of the oriented categories $\mC$ such that the oriented interchange law holds strictly:
for every $X,Y,Z \in \mC$ and functors $\bD^n \to \R\Mor_\mC(Y,Z)$ and $\bD^m \to \R\Mor_\mC(X,Y)$
the functor $$\bD^n \boxtimes \bD^m \to \R\Mor_\mC(Y,Z) \boxtimes \R\Mor_\mC(X,Y) \to \R\Mor_\mC(X,Z)$$ factors through the functor
$\bD^n \boxtimes \bD^m \to \bD^n \times \bD^m.$ 

%\item The induced functor $$_{\infty\Cat}\Cat_{\infty\Cat} \to {\boxtimes \Cat\boxtimes} $$ of 2-categories is fully faithful.
\end{enumerate}   
\end{theorem}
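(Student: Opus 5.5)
The statement is \cref{interchange}, and part (2) follows from part (1) by the opposite-enrichment involution $(-)^\circ$. So I will concentrate on (1).

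The functor in question is induced by the lax monoidal functor $\mathrm{id}:(\infty\Cat,\times)\to(\infty\Cat,\boxtimes)$. This functor is lax monoidal because the comparison maps $A\boxtimes B\to A\times B$ are natural and unital. Transferring enrichment along it gives $\phi_!:{}_{\infty\Cat}\Cat\to\boxtimes\Cat$. On morphism objects $\phi_!$ is the identity, and the new composition is the old composition precomposed with $\L\Mor(Y,Z)\boxtimes\L\Mor(X,Y)\to\L\Mor(Y,Z)\times\L\Mor(X,Y)$.

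The plan for full faithfulness is to compare $\phi_!$ with the left adjoint it ought to have. Via the dense subcategory $\Theta(\mV)$ of \cref{denseinherited}, with $\mV$ the oriented cubes (dense by \cref{cubedense}), I would express both $\infty\Cat$-categories and antioriented categories as Segal presheaves on wedges of suspensions.
\begin{itemize}
\item The cartesian $\Theta$ is built from $\cube$ with the cartesian product of cubes.
\item The Gray $\Theta$ is built from $\cube$ with $\cube^n\boxtimes\cube^m=\cube^{n+m}$.
\end{itemize}
The comparison $\cube^n\boxtimes\cube^m\to\cube^n\times\cube^m$ yields a functor $\Theta_\boxtimes(\cube)\to\Theta_\times(\cube)$ that is the identity on objects of the form $S(A_1)\vee\dots\vee S(A_k)$ with a single $A_i$. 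Restriction along it should be $\phi_!$ at the level of nerves.

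Full faithfulness then reduces to showing that the left Kan extension of representables from $\Theta_\boxtimes$ to cartesian Segal presheaves sends $S(A)\vee S(B)$ (Gray) to the localization of $S(A)\vee S(B)$ (cartesian). This holds because on morphism objects the composite from $0$ to $2$ is $A\boxtimes B$ versus $A\times B$, and $A\times B$ is a localization of $A\boxtimes B$ in $\infty\Cat$ (both are colimits of cubes, and the comparison is a strict epimorphism in the sense of $\tau$-type localizations). Concretely, I would show that the counit $\phi^L\phi_!\mC\to\mC$ is an equivalence on morphism objects by checking it on generators $S(A)$, where $\phi_!S(A)=S(A)$ with no composites to alter.

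For the essential image, one direction is immediate. If $\mC=\phi_!\mC'$, every Gray composite of cells factors through the cartesian product by construction, so the strict interchange condition holds.

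Conversely, suppose the interchange law holds. I want to show that the composition $\L\Mor(Y,Z)\boxtimes\L\Mor(X,Y)\to\L\Mor(X,Z)$ factors, essentially uniquely, through $\L\Mor(Y,Z)\times\L\Mor(X,Y)$, and that the factored maps form a cartesian enrichment. The key input is that the map $A\boxtimes B\to A\times B$ is an epimorphism in $\infty\Cat$. More precisely, it is the localization of $A\boxtimes B$ at the cells $\bD^n\boxtimes\bD^m\to\bD^n\times\bD^m$, since $A\boxtimes B$ and $A\times B$ are colimits over disks/$\Theta$ of $\bD^n\boxtimes\bD^m$ and $\bD^n\times\bD^m$ respectively, with both functors preserving colimits separately in each variable. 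Therefore a map out of $A\boxtimes B$ factors through $A\times B$ if and only if it does so on each $\bD^n\boxtimes\bD^m$, which is exactly the interchange hypothesis. Because the map is an epimorphism, the factorization is unique, so the associativity and unit coherences descend automatically. This gives $\mC'$ with $\phi_!\mC'\simeq\mC$.

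The main obstacle is proving this epimorphism/localization statement with enough coherence: one needs $A\boxtimes B\to A\times B$ to be a localization in the sense that $\Map(A\times B,-)\to\Map(A\boxtimes B,-)$ is an embedding, natural in $A$ and $B$. I would first verify it for cubes, where $\cube^n\times\cube^m$ is the 1-truncation-compatible quotient of $\cube^{n+m}$ collapsing the interchange cells, and then extend by density (\cref{cubedense}), using that embeddings are closed under limits.
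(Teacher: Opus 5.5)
The paper does not prove this statement; it quotes it from \cite{oriented}. I am therefore judging your argument on its own. Your reduction of (2) to (1) via $(-)^\circ$ is fine. So is your observation that, once each $\bD^n\boxtimes\bD^m\to\bD^n\times\bD^m$ is an epimorphism, a map out of $A\boxtimes B$ factors through $A\times B$ if and only if the interchange condition holds.

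\emph{The genuine gap is full faithfulness.} Checking the counit $\phi^L\phi_!\mC\to\mC$ on the generators $S(A)$ proves nothing. First, $\phi_!$ is a right adjoint and does not preserve wedges: $\phi_!(S_\times(A)\vee S_\times(B))$ has $\Mor(0,2)=B\times A$, while the wedge $S_\boxtimes(A)\vee S_\boxtimes(B)$ formed in $\boxtimes\Cat$ has $B\boxtimes A$. So $\phi^L\phi_!$ does not commute with the colimits presenting a general $\mC$. Second, on a single $S(A)$ no composition occurs at all, so the check never touches the real issue. Likewise, the fact that $\phi^L$ sends the Gray wedge to the cartesian wedge is automatic (it preserves colimits) and is not the relevant condition.

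The content sits at the cartesian wedge itself. There $\phi_!(S_\times(A)\vee S_\times(B))$ is the pushout of $S_\boxtimes(A)\vee S_\boxtimes(B)\leftarrow S_\boxtimes(B\boxtimes A)\rightarrow S_\boxtimes(B\times A)$. Hence on $\Mor(0,2)$ the counit is the fold map $(B\times A)\sqcup_{B\boxtimes A}(B\times A)\to B\times A$. This is an equivalence exactly when $\pi_{B,A}\colon B\boxtimes A\to B\times A$ is an epimorphism. So full faithfulness needs, and in fact forces, the same epimorphism input as the essential image, organized so that it controls all the coherence data.

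That organization is also what is missing behind ``the factorization is unique, so the coherences descend automatically.'' This sentence carries the actual content of the theorem, so it needs an argument. One way to make it precise is through an operadic model of enriched categories:
\begin{enumerate}
\item The lax monoidal identity gives a map of generalized $\infty$-operads $(\infty\Cat,\times)^{\otimes}\to(\infty\Cat,\boxtimes)^{\otimes}$. It is an equivalence on objects, and on multimorphism spaces it is $\Map(A_1\times\cdots\times A_k,B)\to\Map(A_1\boxtimes\cdots\boxtimes A_k,B)$.
\item These maps are monomorphisms once every $\pi_{A,B}$ is an epimorphism. Indeed, the $k$-ary comparison is a composite of binary ones with identities, and $\boxtimes$ and $\times$ preserve epimorphisms in each variable.
\item Hence the map is a subcategory inclusion. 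Consider the operads encoding enriched categories with a fixed space of objects, and maps between them. Their algebras valued in $(\infty\Cat,\times)$ form a union of components of those valued in $(\infty\Cat,\boxtimes)$, cut out by your binary factorization condition. Units, inert maps and unary components are automatic.
\end{enumerate}
This yields full faithfulness and the essential image together.

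Finally, your proof of the epimorphism lemma does not work as sketched. Cubes are not closed under $\times$, so the cartesian $\Theta$ built from $\cube$ is not defined. Also, $\cube^n\times\cube^m$ is not a $1$-category once $n\geq 2$ or $m\geq 2$, so it is not a truncation-type quotient of $\cube^{n+m}$. A more tractable route: by adjunction, ``$\pi_{A,B}$ is an epimorphism for all $A$'' is equivalent to ``$\Fun(B,W)\to\Fun^{\oplax}(B,W)$ is a subcategory inclusion for all $W$.'' The class of such $B$ is closed under colimits, so it suffices to treat disks. You could attack these by induction on dimension using \cref{targetfibr} and \cref{homs}.
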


% \begin{theorem}\label{presheaves}
% Let $\mD$ be an $\infty$-category.
% The canonical functors 
% $$\Fun(\mD,\infty\scat) \to {\boxtimes\Fun}(\mD,\infty\fcat) $$
% and 
% $$\Fun(\mD,\infty\scat) \to {\Fun\boxtimes}(\mD,\infty\fcat)$$
% induce fully faithful functors on underlying 1-categories.
% \end{theorem}

\newpage

\section{\mbox{Fibrations of $\infty$-categories}}

\subsection{Enriched fibrations}

\begin{definition}

Let $\mV$ be a monoidal category
and $\phi: \mC \to \mD$ a $\mV$-enriched functor.
A morphism $X \to Y$ in $\mC$ is $\phi$-cartesian if for every $Y\in \mC$ the following commutative square in $\mV$ is a pullback square:
$$\begin{xy}
\xymatrix{
\Mor_\mC(Z,X) \ar[d]^{} \ar[r]
& \Mor_\mC(Z,Y) \ar[d]
\\ 
\Mor_\mD(\phi(Z),\phi(X)) \ar[r] & \Mor_\mD(\phi(Z),\phi(Y)).}
\end{xy}$$
   
\end{definition}

\begin{definition}\label{enrfibr}

Let $\mV$ be a monoidal category.

\begin{enumerate}[\normalfont(1)]\setlength{\itemsep}{-2pt}
\item A $\mV$-enriched functor $\phi: \mC \to \mD$ is a cartesian fibration if for every $Y \in \mC$ and morphism $\alpha: Z \to \phi(Y)$ in $\mD$ there is a $\phi$-cartesian morphism $X \to Y$ in $\mC$ lying over $\alpha.$

\item A $\mV$-enriched functor $\phi: \mC \to \mD$ is a cocartesian fibration if the opposite $\mV$-enriched functor $\phi^\circ: \mC^\circ \to \mD^\circ $ is a cartesian fibration.

\end{enumerate}

\end{definition}

\begin{proposition}\label{enrtargetfibr}
Let $\mV$ be a presentably monoidal category and $\mC$ a $\mV$-enriched category.
\begin{enumerate}[\normalfont(1)]\setlength{\itemsep}{-2pt}
\item Evaluation at the target $\mC^{\bD^1} \to \mC$ is a cocartesian fibration of $\mV$-enriched categories whose cocartesian morphisms are inverted by evaluation at the source.

\item If $\mC$ admits pullbacks, evaluation at the target $\mC^{\bD^1} \to \mC$ is a cartesian fibration of $\mV$-enriched categories.
\end{enumerate}
    
\end{proposition}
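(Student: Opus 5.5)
Both parts come down to writing the relevant morphism objects of $\mC^{\bD^1}$ as limits in $\mV$ and checking that the cartesian-morphism squares of \cref{enrfibr} are pullbacks. Here $\mC^{\bD^1}$ is the cotensor of $\mC$ with $\bD^1$ in the 2-category $\mV\mathrm{-}\Cat$. Its objects are morphisms $f: A \to B$ of $\mC$, and for objects $f: A\to B$, $g: A'\to B'$ we have
\[
\Mor_{\mC^{\bD^1}}(f,g) \simeq \Mor_\mC(A,A') \underset{\Mor_\mC(A,B')}{\times} \Mor_\mC(B,B').
\]
The first step is to establish this formula. I would get it from the universal property of cotensors, by mapping suspensions $S(V)$ of \cref{susp} into $\mC^{\bD^1}$; equivalently, $\mV$-functors $\bD^1 \to \Fun(S(V),\mC)$ correspond to squares whose diagonal is composed through $\Mor_\mC(A,B')$. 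Under this identification, evaluation at the target is the projection to $\Mor_\mC(B,B')$ and evaluation at the source is the projection to $\Mor_\mC(A,A')$.

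For part (1), I need cocartesian lifts. Take an object $f: A \to B$ and a morphism $\beta: B \to B''$ in $\mC$. The candidate lift is the square $(\id_A,\beta): f \to \beta f$. The claim is that for every object $g: A' \to B'$ the square
\[
\Mor_{\mC^{\bD^1}}(\beta f, g) \to \Mor_{\mC^{\bD^1}}(f,g), \qquad \Mor_\mC(B'',B') \to \Mor_\mC(B,B')
\]
is a pullback in $\mV$. Substituting the formula, the left column becomes $\Mor_\mC(A,A')\times_{\Mor_\mC(A,B')}\Mor_\mC(B'',B') \to \Mor_\mC(A,A')\times_{\Mor_\mC(A,B')}\Mor_\mC(B,B')$. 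The map $\Mor_\mC(B'',B') \to \Mor_\mC(A,B')$ factors through $\Mor_\mC(B,B')$ via precomposition with $\beta$. So pulling the right-hand fiber product back along $\Mor_\mC(B'',B')\to\Mor_\mC(B,B')$ returns exactly the left-hand one, by pasting pullbacks. This shows $(\id_A,\beta)$ is cocartesian, and since its source component is $\id_A$, evaluation at the source inverts it. To get the statement for every cocartesian morphism, I use that cocartesian lifts are unique up to equivalence; any cocartesian morphism is then equivalent to one of this form, so its source component is an equivalence.

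For part (2), I need cartesian lifts. Take $g: A' \to B'$ and $\beta: B \to B'$. Form the pullback $A := B \times_{B'} A'$ in $\mC$ and let $f: A \to B$ be the projection. Here "pullback" means the conical pullback in the enriched sense, so that $\Mor_\mC(T,A) \simeq \Mor_\mC(T,B)\times_{\Mor_\mC(T,B')}\Mor_\mC(T,A')$ for all $T$; I take this as part of what it means for $\mC$ to admit pullbacks. The lift is the square $(p_2, \beta): f \to g$. For a test object $h: C \to D$ I must show
\[
\Mor(h,f) \simeq \Mor(h,g)\underset{\Mor(D,B')}{\times}\Mor(D,B).
\]
I would unfold both sides into iterated fiber products of $\mV$-morphism objects. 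The left side is $\Mor(C,A)\times_{\Mor(C,B)}\Mor(D,B)$. Substituting the formula for $\Mor(C,A)$ gives $\Mor(C,A')\times_{\Mor(C,B')}\Mor(D,B)$, and a routine limit manipulation identifies this with the right side.

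I expect the main obstacle to be the first step: making the description of $\Mor_{\mC^{\bD^1}}$ rigorous, together with its naturality and the precise identification of the evaluation maps as the projections. Once that is in place, both parts reduce to formal pasting of pullback squares in $\mV$.
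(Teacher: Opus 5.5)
Your proposal is correct and is essentially the paper's proof: the paper also takes the square $(\id,\beta)$ from $\alpha$ to $\beta\circ\alpha$ as the cocartesian lift and the pullback projection as the cartesian lift. It checks the defining squares by rewriting $\Mor_{\mC^{\bD^1}}$ as the fiber product $\Mor_\mC(A,A')\times_{\Mor_\mC(A,B')}\Mor_\mC(B,B')$, using that description without justification. Your appeal to uniqueness of cocartesian lifts, showing that every cocartesian morphism (not only the chosen lifts) is inverted by evaluation at the source, makes explicit a point the paper leaves implicit.
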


\begin{proof}

(1): Let $\alpha: X \to Y, \beta: Y \to Z$ be morphisms in $\mC.$
The latter determine a morphism $\sigma$ in $\mC^{\bD^1}$
from the morphism $\alpha: X \to Y$ to the morphism $\beta \circ \alpha X \to Y \to Z$
that is sent to the identity by evaluation at the source and is sent to $\beta$ by evaluation at the target.
We prove that $\sigma$ is a cocartesian lift of $\beta.$
We need to see that for every morphism $\gamma: A \to B$ in $\mC$
the following commutative square is a pullback square:
$$\begin{xy}
\xymatrix{
\Mor_{\mC^{\bD^1}}(\beta \circ \alpha,\gamma) \ar[d]^{} \ar[r]
& \Mor_{\mC^{\bD^1}}(\alpha,\gamma) \ar[d] 
\\ 
\Mor_{\mC}(Z,B) \ar[r] & \Mor_{\mC}(Y,B).}
\end{xy}$$
The latter identifies with the pullback square:
$$\begin{xy}
\xymatrix{
\Mor_{\mC}(X,A) \times_{\Mor_{\mC}(X,B)} \Mor_{\mC}(Z,B) \ar[d]^{} \ar[r]
& \Mor_{\mC}(X,A) \times_{\Mor_{\mC}(X,B)} \Mor_{\mC}(Y,B) \ar[d] 
\\ 
\Mor_{\mC}(Z,B) \ar[r] & \Mor_{\mC}(Y,B).}
\end{xy}$$

(2): Let $\alpha: X \to Y, \beta: Z \to Y $ be morphisms in $\mC.$
The pullback square for $\alpha, \beta$ determines a morphism $\kappa$ in $\mC^{\bD^1}$
from the projection $\alpha': X \times_Y Z \to Z$ to $ \alpha$
that is sent to $\beta$ by evaluation at the target.
We prove that $\kappa$ is a cartesian lift of $\beta.$
We need to see that for every morphism $\gamma: A \to B$ in $\mC$
the following commutative square is a pullback square:
$$\begin{xy}
\xymatrix{
\Mor_{\mC^{\bD^1}}(\gamma, \alpha') \ar[d]^{} \ar[r]
& \Mor_{\mC^{\bD^1}}(\gamma, \alpha) \ar[d] 
\\ 
\Mor_{\mC}(B,Z) \ar[r] & \Mor_{\mC}(B,Y).}
\end{xy}$$
The latter identifies with the pullback square:
$$\begin{xy}
\xymatrix{
\Mor_{\mC}(A,X \times_Y Z) \times_{\Mor_{\mC}(A,Z)} \Mor_{\mC}(B,Z) \ar[d]^{} \ar[r]
& \Mor_{\mC}(A,X) \times_{\Mor_{\mC}(A,Y)} \Mor_{\mC}(B,Y) \ar[d] 
\\ 
\Mor_{\mC}(B,Z) \ar[r] & \Mor_{\mC}(B,Y).}
\end{xy}$$
\end{proof}

\begin{proposition}\label{enrcharo}
Let $\mV$ be a presentably monoidal category, $\phi: \mC \to \mD$
a $\mV$-enriched functor and $f: X \to Y$ a morphism in $\mC.$
The following are equivalent:

\begin{enumerate}[\normalfont(1)]\setlength{\itemsep}{-2pt}
\item The morphism $f$ is $\phi$-cocartesian.

\item For every morphism $g: A \to B$ in $\mC$ the following induced morphism in $\mV$ is an equivalence:
$$\Mor_{\mC^{\bD^1}}(f,g) \to \Mor_\mC(X,A) \times_{\Mor_\mD(\phi(X),\phi(A))} \Mor_{\mD^{\bD^1}}(\phi(f),\phi(g)).$$ 

\end{enumerate}
    
\end{proposition}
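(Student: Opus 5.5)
The plan is to rewrite both sides of the comparison map in (2) as limits of morphism objects in $\mC$ and $\mD$, and then reduce to the defining pullback square of a $\phi$-cocartesian morphism. As in the proof of \cref{enrtargetfibr}, the morphism objects of the cotensor $\mC^{\bD^1}$ are ends. For $f: X \to Y$ and $g: A \to B$ this gives
$$\Mor_{\mC^{\bD^1}}(f,g) \simeq \Mor_\mC(X,A) \times_{\Mor_\mC(X,B)} \Mor_\mC(Y,B),$$
and the same formula holds in $\mD$ for $\phi(f),\phi(g)$. This identification is natural in $\phi$, because $\phi$ induces $\mC^{\bD^1}\to\mD^{\bD^1}$ compatibly with evaluation at the source and target.

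The first step substitutes these formulas into the target of the map in (2):
$$\Mor_\mC(X,A) \times_{\Mor_\mD(\phi X,\phi A)} \bigl(\Mor_\mD(\phi X,\phi A)\times_{\Mor_\mD(\phi X,\phi B)} \Mor_\mD(\phi Y,\phi B)\bigr) \simeq \Mor_\mC(X,A)\times_{\Mor_\mD(\phi X,\phi B)}\Mor_\mD(\phi Y,\phi B).$$
The second step notes that the source is $\Mor_\mC(X,A)\times_{\Mor_\mC(X,B)}\Mor_\mC(Y,B)$. Under these identifications the map in (2) is the base change along $\Mor_\mC(X,A)\to\Mor_\mC(X,B)$ of the canonical map
$$\theta_B:\Mor_\mC(Y,B)\to \Mor_\mC(X,B)\times_{\Mor_\mD(\phi X,\phi B)}\Mor_\mD(\phi Y,\phi B).$$
This follows from the pasting law for pullback squares: the target above equals $\Mor_\mC(X,A)\times_{\Mor_\mC(X,B)}\bigl(\Mor_\mC(X,B)\times_{\Mor_\mD(\phi X,\phi B)}\Mor_\mD(\phi Y,\phi B)\bigr)$.

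By definition, after dualizing via $(-)^\circ$, $f$ is $\phi$-cocartesian exactly when $\theta_B$ is an equivalence for every $B$. This gives (1)$\Rightarrow$(2), since base changes of equivalences are equivalences. For (2)$\Rightarrow$(1), specialize to $g=\id_B$, that is $A=B$. Then $\Mor_\mC(X,A)\to\Mor_\mC(X,B)$ is the identity, so the base change is $\theta_B$ itself, and (2) forces $\theta_B$ to be an equivalence.

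The main obstacle is the bookkeeping in the first step. One has to verify that the identification of $\Mor_{\mC^{\bD^1}}$ as a pullback is compatible with $\phi$ and with the projection to $\Mor_\mC(X,A)$, so that the comparison map really is the base change of $\theta_B$ and not merely abstractly equivalent to it. I would handle this by defining both maps from the universal property of the cotensor, using \cref{enrtargetfibr}'s description, and checking that they agree after projecting to each factor. Also, the definition as printed quantifies over "$Y$"; I take it to mean all objects $Z$ and apply it dually.
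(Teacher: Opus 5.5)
Your proposal is correct and follows the paper's proof. The paper likewise uses the pullback formula for morphism objects of $\mC^{\bD^1}$ to identify the comparison map in (2) with the base change along $\Mor_\mC(X,A)\to\Mor_\mC(X,B)$ of the map $\Mor_\mC(Y,B)\to\Mor_\mC(X,B)\times_{\Mor_\mD(\phi(X),\phi(B))}\Mor_\mD(\phi(Y),\phi(B))$; your specialization to $g=\id_B$ for (2)$\Rightarrow$(1) spells out a step the paper leaves implicit.
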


\begin{proof}

The morphism of (2) identifies with the following morphism:
$$\Mor_{\mC^{\bD^1}}(f,g) \simeq \Mor_\mC(X,A) \times_{\Mor_\mC(X,B)} \Mor_\mC(Y, B) \to $$$$ \Mor_\mC(X,A) \times_{\Mor_\mD(\phi(X),\phi(A))} \Mor_{\mD^{\bD^1}}(\phi(f),\phi(g)) $$$$ \simeq \Mor_\mC(X,A) \times_{\Mor_\mD(\phi(X),\phi(A))} \Mor_\mD(\phi(X),\phi(A)) \times_{\Mor_\mD(\phi(X),\phi(B))} \Mor_\mD(\phi(Y),\phi(B))$$$$ \simeq \Mor_\mC(X,A) \times_{\Mor_\mD(\phi(X),\phi(B))} \Mor_\mD(\phi(Y),\phi(B)).$$
The latter is the pullback along the morphism 
$ \Mor_\mC(X,A) \to \Mor_\mC(X,B) $ of the induced morphism
$$\Mor_\mC(Y, B) \to \Mor_\mC(X,B) \times_{\Mor_\mD(\phi(X),\phi(B))} \Mor_\mD(\phi(Y),\phi(B)).$$
\end{proof}

\begin{notation}

Let $\mV$ be a presentably monoidal category and $\phi: \mC \to \mD$
a $\mV$-enriched functor.

Let $$\mC^{\bD^1, \cocart} \subset \mC^{\bD^1}$$ be the full $\mV$-enriched subcategory of $\phi$-cocartesian morphisms.
    
\end{notation}

\begin{corollary}

Let $\mV$ be a presentably monoidal category and $\phi: \mC \to \mD$
a $\mV$-enriched functor.

The induced $\mV$-enriched functor $\mC^{\bD^1, \cocart} \to \mC \times_{\mD^{\{0\}}} \mD^{\bD^1}$ is an embedding.
    
\end{corollary}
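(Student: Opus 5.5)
The claim is that the $\mV$-enriched functor $\mC^{\bD^1, \cocart} \to \mC \times_{\mD^{\{0\}}} \mD^{\bD^1}$ is an embedding, i.e.\ fully faithful. I will check this on morphism objects and then say a word about what the claim means for objects.

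On morphism objects, take $\phi$-cocartesian morphisms $f: X \to Y$ and $g: A \to B$ in $\mC$. Since $\mC^{\bD^1,\cocart}$ is a full $\mV$-enriched subcategory of $\mC^{\bD^1}$, its morphism object is $\Mor_{\mC^{\bD^1}}(f,g)$. The target is a pullback of $\mV$-enriched categories. Morphism objects of a pullback of $\mV$-enriched categories are computed as pullbacks of morphism objects, because $\iota$ and $\Mor$ commute with limits. So the morphism object of the target from $(X,\phi(f))$ to $(A,\phi(g))$ is
$$\Mor_\mC(X,A) \times_{\Mor_\mD(\phi(X),\phi(A))} \Mor_{\mD^{\bD^1}}(\phi(f),\phi(g)).$$
The induced map is exactly the morphism in condition (2) of \cref{enrcharo}. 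Since $f$ is $\phi$-cocartesian, that proposition says this map is an equivalence. It is only needed that the source $f$ is cocartesian; nothing is required of $g$.

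Objects are the less formal point. Full faithfulness is all the claim needs, so I only have to interpret "embedding" correctly. If one also wants the map on spaces of objects to be an embedding, I would argue as follows. A cocartesian lift of $\phi(f)$ with source $X$ is unique up to a contractible space of choices. This follows from the universal property: the pullback condition defining cocartesianness identifies the space of such lifts with a space of equivalences. So the fibers of the map on object spaces are empty or contractible.

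The main obstacle is identifying the morphism objects of the enriched pullback as the pullback of morphism objects. I would cite limit preservation of $\Mor$ for $\mV\mathrm{-}\Cat$, as in \cref{homfil}. With that, the claim reduces to \cref{enrcharo}.
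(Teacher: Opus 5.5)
Your argument is correct and matches the paper: the corollary is read off directly from \cref{enrcharo}, because the map on morphism objects from a cocartesian $f$ to any $g$ is exactly the morphism in condition (2) there. Your extra discussion of object spaces is not needed, since the paper uses ``embedding'' to mean fully faithful.
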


\begin{corollary}\label{enrfibchar}

Let $\mV$ be a presentably monoidal category and $\phi: \mC \to \mD$
a $\mV$-enriched functor.

The following are equivalent:

\begin{enumerate}[\normalfont(1)]\setlength{\itemsep}{-2pt}
\item The $\mV$-enriched functor $\phi$ is a cocartesian fibration.

\item The induced $\mV$-enriched functor $\mC^{\bD^1} \to \mC \times_{\mD^{\{0\}}} \mD^{\bD^1}$ admits a fully faithful $\mV$-enriched left adjoint whose colocal objects are the $\phi$-cocartesian morphisms.

\item The induced $\mV$-enriched functor $\mC^{\bD^1, \cocart} \to \mC \times_{\mD^{\{0\}}} \mD^{\bD^1}$ is an equivalence.

\end{enumerate}
    
\end{corollary}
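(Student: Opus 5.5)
The plan is to prove the cycle (1)$\Rightarrow$(2)$\Rightarrow$(3)$\Rightarrow$(1). Write $\theta: \mC^{\bD^1} \to \mC \times_{\mD^{\{0\}}} \mD^{\bD^1}$ for the induced functor and $\theta^{\cocart}$ for its restriction to $\mC^{\bD^1,\cocart}$. The corollary preceding the statement tells us that $\theta^{\cocart}$ is always an embedding. So all the content lies in essential surjectivity of $\theta^{\cocart}$ and in the construction of the adjoint.

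\textbf{(1)$\Rightarrow$(2).} Assume $\phi$ is a cocartesian fibration and fix an object $(X, \alpha: \phi(X) \to W)$ of the target.
\begin{itemize}
\item Choose a $\phi$-cocartesian lift $f: X \to Y$ of $\alpha$. Then $\theta(f) \simeq (X,\alpha)$.
\item By \cref{enrcharo}, for every morphism $g$ of $\mC$ the map
$$\Mor_{\mC^{\bD^1}}(f,g) \to \Mor_{\mC \times_{\mD^{\{0\}}} \mD^{\bD^1}}((X,\alpha),\theta(g))$$
is an equivalence in $\mV$. Here the morphism object of the target pullback is the pullback of morphism objects, namely $\Mor_\mC(X,A) \times_{\Mor_\mD(\phi(X),\phi(A))} \Mor_{\mD^{\bD^1}}(\phi(f),\phi(g))$.
\item Hence $f$ is a $\mV$-enriched reflection of $(X,\alpha)$ along $\theta$. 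Representability in the enriched sense for each object gives a $\mV$-enriched left adjoint $L$, by the representability criterion for enriched adjoints recalled in the preliminaries.
\item The unit $(X,\alpha) \to \theta L(X,\alpha)$ is an equivalence, so $L$ is fully faithful.
\item Its essential image consists of $\phi$-cocartesian morphisms. Conversely, a cocartesian $f$ satisfies $L\theta(f) \simeq f$, since both are cocartesian lifts of the same data; this uses \cref{enrcharo} again. So the colocal objects are exactly the $\phi$-cocartesian morphisms.
\end{itemize}

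\textbf{(2)$\Rightarrow$(3).} Given (2), $L$ is fully faithful with essential image $\mC^{\bD^1,\cocart}$. So $L$ corestricts to an equivalence onto $\mC^{\bD^1,\cocart}$, with inverse $\theta^{\cocart}$.

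\textbf{(3)$\Rightarrow$(1).} Given $X \in \mC$ and $\alpha: \phi(X) \to W$, essential surjectivity of $\theta^{\cocart}$ produces a $\phi$-cocartesian $f$ with $\theta(f) \simeq (X,\alpha)$. Such an $f$ has source $X$ and image $\alpha$, which is exactly a cocartesian lift. Hence $\phi$ is a cocartesian fibration in the sense of \cref{enrfibr}: this is the opposite of the cartesian-lift condition, unwound via the involution $(-)^\circ$.

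\textbf{Main obstacle.} The hard step is in (1)$\Rightarrow$(2): upgrading the pointwise universal property to an honest $\mV$-enriched left adjoint. I would handle it with the enriched representability criterion (adjoint exists iff each $\Mor(y,\theta(-))$ is representable). This needs two checks:
\begin{itemize}
\item that the morphism objects of the pullback $\mC \times_{\mD^{\{0\}}} \mD^{\bD^1}$ are computed as pullbacks in $\mV$;
\item that the equivalence of \cref{enrcharo} is the one induced by composing with $\theta$ and the unit.
\end{itemize}
Both should be routine identifications of morphism objects in cotensors $\mC^{\bD^1}$, as in the proof of \cref{enrtargetfibr}.
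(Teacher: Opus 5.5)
Your proposal is correct and follows essentially the argument the paper intends. The paper states this corollary without proof, as an immediate consequence of two results. The first is \cref{enrcharo}, which identifies the $\phi$-cocartesian morphisms as exactly the objects $f$ of $\mC^{\bD^1}$ at which $\theta$ induces equivalences $\Mor_{\mC^{\bD^1}}(f,g)\simeq \Mor(\theta(f),\theta(g))$, so they are the values of, and the colocal objects for, a left adjoint. The second is the preceding corollary that $\mC^{\bD^1,\cocart} \to \mC \times_{\mD^{\{0\}}} \mD^{\bD^1}$ is always an embedding, which reduces (3) to essential surjectivity, i.e.\ to the existence of cocartesian lifts — which is exactly how you argue.
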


\begin{corollary}\label{enrcocartexp}
Let $\mV, \mW$ be presentably monoidal categories and $\phi: \mC \to \mD$ a $\mV,\mW$-enriched functor whose underlying $\mV$-enriched functor is a cocartesian fibration.
For every $\mV$-enriched category $\mB$ the induced right $\mW$-enriched functor
$${\mV\mathrm{-}\Fun}(\mB, \mC) \to {\mV\mathrm{-}\Fun}(\mB, \mD)$$ is a cocartesian fibration whose cocartesian morphisms are objectwise.
   
\end{corollary}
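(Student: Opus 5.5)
The plan is to reduce to the characterization of \cref{enrfibchar}(3) and transport it through the functor category construction of \cref{psinho}. Concretely, I will show that the $\mV,\mW$-enriched functor
$$\theta:\mC^{\bD^1,\cocart}\to \mC\times_{\mD^{\{0\}}}\mD^{\bD^1}$$
is an equivalence of $\mV,\mW$-enriched categories. I will then apply $\mV\mathrm{-}\Fun(\mB,-)$, which by \cref{psinho} lands in right $\mW$-enriched categories. Finally I will identify the result with the corresponding functor for ${\mV\mathrm{-}\Fun}(\mB,\mC)\to{\mV\mathrm{-}\Fun}(\mB,\mD)$ and invoke \cref{enrfibchar} once more, now for $\mW^\rev$.

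The key steps are as follows.
\begin{enumerate}[\normalfont(1)]\setlength{\itemsep}{-2pt}
\item Show that $\theta$ is an equivalence already as a $\mV,\mW$-enriched functor, not merely on underlying $\mV$-enriched categories. On objects it is essentially surjective because $\phi$ admits cocartesian lifts, and this is a statement about underlying categories. For full faithfulness I use the formula in the proof of \cref{enrcharo}: the $\mV\otimes\mW^\rev$-morphism object map is the pullback along $\Mor_\mC(X,A)\to\Mor_\mC(X,B)$ of
$$\Mor_\mC(Y,B)\to\Mor_\mC(X,B)\times_{\Mor_\mD(\phi X,\phi B)}\Mor_\mD(\phi Y,\phi B).$$
So I must show this map is an equivalence in $\mV\otimes\mW^\rev$, knowing only that its image in $\mV$ is one.
\item Use the universal property of \cref{psinho} to get canonical equivalences of right $\mW$-enriched categories
$${\mV\mathrm{-}\Fun}(\mB,\mC^{\bD^1})\simeq{\mV\mathrm{-}\Fun}(\mB,\mC)^{\bD^1}.$$
This holds because a functor $\langle \mB,\mO\rangle\to\mC^{\bD^1}$ is the same as a functor $\langle\mB,\mO\rangle\otimes\bD^1\to\mC$. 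I also use that ${\mV\mathrm{-}\Fun}(\mB,-)$ preserves pullbacks, being a right adjoint in the relevant sense. Hence applying it to $\theta$ yields an equivalence
$${\mV\mathrm{-}\Fun}(\mB,\mC^{\bD^1,\cocart})\simeq {\mV\mathrm{-}\Fun}(\mB,\mC)\times_{{\mV\mathrm{-}\Fun}(\mB,\mD)}{\mV\mathrm{-}\Fun}(\mB,\mD)^{\bD^1}.$$
\item Identify the left-hand side with the full subcategory of ${\mV\mathrm{-}\Fun}(\mB,\mC)^{\bD^1}$ on natural transformations that are objectwise $\phi$-cocartesian. Since $\mC^{\bD^1,\cocart}\subset\mC^{\bD^1}$ is full, functors into it are just functors into $\mC^{\bD^1}$ landing objectwise in the subcategory.
\item Conclude as follows. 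The composite equivalence shows that every morphism in the target with specified source has an objectwise cocartesian lift. It also shows that for such a lift $f$ the comparison map of \cref{enrcharo}(2), formed in the right $\mW$-enriched category ${\mV\mathrm{-}\Fun}(\mB,\mC)$, is an equivalence. Hence $f$ is cocartesian, by the equivalence (1)$\Leftrightarrow$(2) of \cref{enrcharo} applied to $\mW^\rev$. Conversely, if a morphism is cocartesian, comparing it with an objectwise cocartesian lift of its image shows the two agree, so it is itself objectwise cocartesian.
\end{enumerate}

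The main obstacle is step (1): upgrading the pullback condition from $\mV$ to $\mV\otimes\mW^\rev$. My first attempt is to use right tensors in $\mC$ and $\mD$ when they exist. The morphism object in $\mV\otimes\mW^\rev$ is detected by the $\mV$-valued morphism objects $\Mor(X,B^{W})$, or $\Mor(X\otimes W,B)$, as $W$ ranges over $\mW$. The cocartesian condition for $f$ in $\mV$ then has to be tested against every target $B$, including those cotensored targets. If tensors are unavailable, I would instead embed $\mC$ and $\mD$ into enriched presheaves via \cref{Yonedaext}. There I would check that the $\mV$-pullback condition for all test objects implies the bimodule-valued one, since limits in $\mV\otimes\mW^\rev$ of such presheaves are computed objectwise in $\mW$.
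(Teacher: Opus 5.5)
Your steps (2)--(4) are sound, and they are how the corollary is meant to follow from \cref{enrfibchar}; the paper states it without proof. Two facts carry them. First, ${\mV\mathrm{-}\Fun}(\mB,-)$ preserves pullbacks and cotensors with $\bD^1$, by the universal property of \cref{psinho}. Second, testing against identity morphisms, as in the proof of \cref{enrcharo}, identifies cocartesian transformations with objectwise cocartesian ones.

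However, all of this needs $\theta\colon\mC^{\bD^1,\cocart}\to\mC\times_{\mD^{\{0\}}}\mD^{\bD^1}$ to be an equivalence of $\mV\otimes\mW^\rev$-enriched categories. Step (1), which you rightly flag, tries to deduce this from the hypothesis on the underlying $\mV$-enriched functor. That step cannot be carried out, because the implication is false.

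Here is a counterexample. Take $\mV=\mS$ and $\mW=(\infty\Cat,\times)$. Then a $\mV,\mW$-enriched functor is a functor of $\infty$-categories, and its underlying $\mV$-enriched functor is its image under $\iota_1$. Take $\mB=\ast$; then ${\mV\mathrm{-}\Fun}(\ast,\mC)\simeq\mC$ with its $\infty\Cat$-enrichment, since $\langle\ast,\mO\rangle\simeq\mO$. Let $B\mathbb{N}$ be the one-object category with endomorphism monoid $\mathbb{N}$, and let $\phi\colon S(B\mathbb{N})\to S(\ast)=\bD^1$ be the induced functor.
\begin{itemize}
\item Since $\iota_0(B\mathbb{N})\simeq\ast$, the functor $\iota_1(\phi)$ is the identity of $\bD^1$, hence a cocartesian fibration.
\item The essentially unique $1$-morphism $f\colon 0\to 1$ of $S(B\mathbb{N})$ is not $\phi$-cocartesian. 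Testing against $Z=1$ would require $\ast\simeq\Mor(1,1)\to\Mor(0,1)=B\mathbb{N}$ to be an equivalence.
\end{itemize}
So $\phi$ is not a cocartesian fibration of $\infty\Cat$-enriched categories, and the conclusion fails for $\mB=\ast$.

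This is also why neither of your two repairs closes the gap.
\begin{itemize}
\item The cotensor argument works only if $\mC$ and $\mD$ admit right cotensors $B^W$ and $\phi$ preserves them. In that case the right adjoints of $(-)\otimes W$, for $W$ in a generating set of $\mW$, jointly detect pullbacks in $\mV\otimes\mW^\rev$, and the $\mV$-condition tested against the objects $B^W$ suffices. But this is an extra assumption, and $S(B\mathbb{N})$ does not have such cotensors.
\item The presheaf embedding cannot help either. The hypothesis only gives $\mV$-valued pullback squares for test objects of $\mC$, and passing to $\mP_\mV(\mC)$ adds no information about the $\mW$-direction of the morphism objects.
\end{itemize}

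The fix is to assume directly that $\phi$ is a cocartesian fibration of $\mV\otimes\mW^\rev$-enriched categories. Then your step (1) is just \cref{enrfibchar}(3) over $\mV\otimes\mW^\rev$, and the rest of your argument proves the statement. This stronger hypothesis is what holds where the corollary is used, e.g.\ in \cref{cocartexp}. There an $\infty$-category is made $(\infty\Cat,\infty\Cat)$-bienriched by transferring along the limit-preserving right adjoint of the product functor $\infty\Cat\otimes\infty\Cat\to\infty\Cat$, so $\infty\Cat$-valued pullback squares remain pullback squares.
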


\begin{proposition}\label{enevolt}

Let $\mV$ be a presentably monoidal category and $\phi: \mC \to \mD$
a $\mV$-enriched functor.

\begin{enumerate}[\normalfont(1)]\setlength{\itemsep}{-2pt}
\item Evaluation at the target $\mC \times_{\mD^{\{0\}}} \mD^{\bD^1} \to \mD^{\{1\}}$ is a cocartesian fibration of $\mV$-enriched categories.

\item The $\mV$-enriched functor
$\mC \simeq \mC \times_{\mD} \mD \to \mC \times_{\mD^{\{0\}}} \mD^{\bD^1}$ induced by the diagonal functor is a $\mV$-enriched embedding and induces for every $\mV$-enriched cocartesian fibration $\mE \to \mD$ an equivalence
$$ \Fun^\cocart_\mD(\mC \times_{\mD^{\{0\}}} \mD^{\bD^1},\mE) \to \Fun_\mD(\mC, \mE).$$

\end{enumerate}

\end{proposition}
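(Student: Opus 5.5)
For (1), I will show that evaluation at the target $\mathrm{ev}_1\colon \mC \times_{\mD^{\{0\}}} \mD^{\bD^1} \to \mD^{\{1\}}$ is the pullback of a cocartesian fibration. It is the base change along $\phi\colon \mC \to \mD$ of the functor $\mD^{\bD^1} \to \mD^{\{0\}}\times\mD^{\{1\}}$, composed with the projection. More directly, I will construct cocartesian lifts by hand. Take an object $(X, \alpha\colon \phi(X)\to D)$ and a morphism $\beta\colon D \to D'$ in $\mD$. The candidate lift is the morphism $(\id_X, \sigma)\colon (X,\alpha) \to (X,\beta\circ\alpha)$. Here $\sigma$ is the morphism of $\mD^{\bD^1}$ from $\alpha$ to $\beta\circ\alpha$ produced in the proof of \cref{enrtargetfibr}(1); it is cocartesian for $\mD^{\bD^1}\to\mD$ and is inverted by evaluation at the source.

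To check that this lift is cocartesian, I fix a target $(A,\gamma\colon\phi(A)\to B)$. By the description of morphism objects in pullbacks and in $\mD^{\bD^1}$ (as in the proof of \cref{enrtargetfibr}), the morphism object
$$\Mor((X,\alpha),(A,\gamma))$$
identifies with
$$\Mor_\mC(X,A)\times_{\Mor_\mD(\phi X,\phi A)}\Mor_\mD(\phi X,\phi A)\times_{\Mor_\mD(\phi X,B)}\Mor_\mD(D,B)\simeq \Mor_\mC(X,A)\times_{\Mor_\mD(\phi X,B)}\Mor_\mD(D,B).$$
The analogous formula holds with $D'$ and $\beta\circ\alpha$ in place of $D$ and $\alpha$. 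The square required by the definition of cocartesian morphisms is then the pullback of the trivially cartesian square with rows $\Mor_\mD(D',B)\to\Mor_\mD(D,B)$, which gives (1).

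For (2), I first check that the diagonal $\mC\to\mC\times_{\mD^{\{0\}}}\mD^{\bD^1}$, $X\mapsto(X,\id_{\phi X})$, is an embedding. By the formula above, morphism objects between objects of the form $(X,\id)$ are
$$\Mor_\mC(X,A)\times_{\Mor_\mD(\phi X,\phi A)}\Mor_\mD(\phi X,\phi A)\simeq\Mor_\mC(X,A).$$
The diagonal is also injective on spaces of objects, since it is a section of the projection to $\mC$.

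Next I observe that every object $(X,\alpha)$ is the target of the cocartesian lift of $\alpha$ starting at $(X,\id)$. So the image of $\mC$ "generates" the source under cocartesian transport, and the fibration $\mC\times_{\mD^{\{0\}}}\mD^{\bD^1}\to\mD$ behaves like the free cocartesian fibration on $\phi$. To prove the universal property, I will construct an inverse to the restriction functor. Given $F\colon\mC\to\mE$ over $\mD$, I extend it by sending $(X,\alpha)$ to $\alpha_!F(X)$, the target of a cocartesian lift of $\alpha$ at $F(X)$. To make this coherent rather than pointwise, I will realize the extension as the composite
$$\mC\times_{\mD^{\{0\}}}\mD^{\bD^1}\to\mE\times_{\mD^{\{0\}}}\mD^{\bD^1}\simeq\mE^{\bD^1,\cocart}\xrightarrow{\mathrm{ev}_1}\mE.$$
The middle equivalence is \cref{enrfibchar}(3) for the cocartesian fibration $\mE\to\mD$. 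This composite is a map over $\mD$, and it restricts to $F$ on the diagonal because identities are cocartesian. It preserves cocartesian morphisms, since the cocartesian morphisms of $\mE^{\bD^1,\cocart}$ over the target are those with invertible source component, and $\mathrm{ev}_1$ sends these to cocartesian morphisms of $\mE$ by cancellation of cocartesian morphisms.

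The main obstacle is showing that this extension and restriction are mutually inverse; restricting the extension back to $\mC$ gives $F$, as noted. For the other composite, I need that a cocartesian-preserving functor $G$ is determined by its restriction to the diagonal. Each object $(X,\alpha)$ receives a cocartesian morphism from $(X,\id)$, so $G$ is the extension of $G|_\mC$ objectwise. To promote this to a natural equivalence, I will use the unit of the adjunction of \cref{enrfibchar}(2), which supplies a natural cocartesian morphism from the diagonal image. If that proves cumbersome, I will instead reduce to the case $\mC=\mD$, $\phi=\id$ by base change and invoke the Yoneda-type description of $\mD^{\bD^1}\to\mD$ as the cocartesian fibration freely generated by the identity section.
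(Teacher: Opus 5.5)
Your part (1) and the full faithfulness of the diagonal $j\colon\mC\to P:=\mC\times_{\mD^{\{0\}}}\mD^{\bD^1}$ are correct. So is the extension $\mathrm{Ext}(F)=\ev_1\circ\Psi^{-1}\circ(F\times\id)$, where $\Psi\colon\mE^{\bD^1,\cocart}\simeq\mE\times_{\mD^{\{0\}}}\mD^{\bD^1}$ is the equivalence of \cref{enrfibchar}(3), together with $\mathrm{Ext}(F)\circ j\simeq F$ and the preservation of cocartesian morphisms. This explicit composite replaces the paper's left adjoint to $j^*$, which the paper builds by cocartesian transport of $\kappa\circ R$ along $\ev_0\Rightarrow\ev_1$ in functor categories.

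The gap is the step you flag yourself: showing $G\simeq\mathrm{Ext}(Gj)$ naturally in $G\in\Fun^\cocart_\mD(P,\mE)$. Knowing that each $(X,\alpha)$ receives a cocartesian morphism from $(X,\id)$ is only objectwise, and neither tool you name makes it natural.

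\begin{itemize}
\item The unit of the adjunction in \cref{enrfibchar}(2) is an equivalence, since its left adjoint is fully faithful. So it gives no morphism from the diagonal image into a general object of $P$.
\item If you instead apply that left adjoint, for the fibration $P\to\mD$, to $(jR(Z),\alpha)$, you get a cocartesian arrow out of $jR(Z)$. But identifying its target with $Z$ naturally is exactly what is at issue.
\item Your fallback is circular. The case $\phi=\id_\mD$ is itself an instance of (2), and no free description of $\mD^{\bD^1}\to\mD$ is available before this proposition. It is also unclear how base change along $\phi$, which acts on the source leg, would control maps out of $P$ over the target leg.
\end{itemize}

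The missing ingredient is the adjunction $j\dashv R$, where $R\colon P\to\mC$ is the projection.

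\begin{itemize}
\item It holds because the constant-arrow functor $\mD\to\mD^{\bD^1}$ is left adjoint to evaluation at the source: $\Mor_{\mD^{\bD^1}}(\id_Y,\beta)\simeq\Mor_\mD(Y,A)$ for $\beta\colon A\to B$.
\item Its counit $\epsilon\colon jR\Rightarrow\id_P$ has, at $(X,\alpha)$, exactly the lift $(X,\id)\to(X,\alpha)$ you proved cocartesian in (1).
\item $\ev_1\epsilon$ is the tautological transformation $\ev_0\Rightarrow\ev_1$.
\end{itemize}

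Hence $G\epsilon\colon P\to\mE^{\bD^1}$ factors through $\mE^{\bD^1,\cocart}$ and satisfies $\Psi\circ G\epsilon\simeq Gj\times\id$. Applying $\ev_1$ gives $G\simeq\mathrm{Ext}(Gj)$, naturally in $G$.

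The paper uses the same $\epsilon$ but more economically. Since it already has an adjunction with fully faithful left adjoint, it only needs $j^*$ to be conservative on $\Fun^\cocart_\mD(P,\mE)$. That is a pointwise check: $G(\epsilon_Z)$ is cocartesian, and components of transformations over $\mD$ lie over identities.

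A minor point: your argument that the diagonal is injective on objects "since it is a section" is invalid in the homotopical setting. A section of a map of spaces need not be a monomorphism; for example, the basepoint $*\to BG$ is a section of $BG\to *$. The claim is also unnecessary, since "embedding" here means fully faithful.
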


\begin{proof}

(1): By \cref{enrtargetfibr} evaluation at the target $\mD^{\bD^1} \to \mD^{\{1\}}$ is a cocartesian fibration of $\mV$-enriched categories whose cocartesian morphisms are inverted by evaluation at the source. Thus evaluation at the target $\mC \times_{\mD^{\{0\}}} \mD^{\bD^1} \to \mD^{\{1\}}$ is a cocartesian fibration of $\mV$-enriched categories.

(2): The diagonal functor $\mD \to \mD^{\bD^1} $
induces on morphism objects between $X,Y \in \mD$ the canonical equivalence
$\Mor_\mD(X,Y) \to \Mor_\mD(X,Y) \times_{\Mor_\mD(X,Y)} \Mor_\mD(X,Y) $ and so is an embedding.
The latter admits a $\mV$-enriched right adjoint that sends any morphism
$\alpha: A \to B$ to $A$ since for every $X \in \mD$ the canonical morphism
$ \Mor_{\mD^{\bD^1}}(\id_X,\beta) \to\Mor_\mD(X,A)  $
identifies with the equivalence $\Mor_\mD(X,B) \times_{\Mor_\mD(X,B)} \Mor_\mD(X,A) \to \Mor_\mD(X,A)  $.
Thus the $\mV$-enriched functor
$\bj: \mC \simeq \mC \times_{\mD} \mD \to \mC \times_{\mD^{\{0\}}} \mD^{\bD^1}$ is an embedding and admits a $\mV$-enriched right adjoint $R$ that is the projection to $\mC.$
The latter $\mV$-enriched adjunction whose left adjoint is an embedding, gives rise to an adjunction 
$ {\mV\mathrm{-}\Fun}_\mD(\mC,\mE) \leftrightarrows {\mV\mathrm{-}\Fun}_\mD(\mC \times_{\mD^{\{0\}}} \mD^{\bD^1},\mE) : \bj^* $ whose left adjoint is an embedding
that sends a $\mV$-enriched functor $\kappa: \mC \to \mE$ to
the target of the cocartesian lift $\kappa \circ R \to \kappa' $
of the canonical $\mV$-enriched natural transformation from 
$\mC \times_{\mD^{\{0\}}} \mD^{\bD^1} \to \mD^{\{0\}}$ to
$\mC \times_{\mD^{\{0\}}} \mD^{\bD^1} \to \mD^{\{1\}}$.
Since the projection 
$R: \mC \times_{\mD^{\{0\}}} \mD^{\bD^1} \to \mC$ inverts morphisms cocartesian for evaluation at the target, the adjunction
$ {\mV\mathrm{-}\Fun}_\mD(\mC,\mE) \leftrightarrows {\mV\mathrm{-}\Fun}_\mD(\mC \times_{\mD^{\{0\}}} \mD^{\bD^1},\mE) : \bj^* $ restricts to an adjunction
$ {\mV\mathrm{-}\Fun}_\mD(\mC,\mE) \leftrightarrows {\mV\mathrm{-}\Fun}^\cocart_\mD(\mC \times_{\mD^{\{0\}}} \mD^{\bD^1},\mE) : \bj^*. $
So it suffices to see that the right adjoint is conservative.

For every $Z \in \mC \times_{\mD} \mD \to \mC \times_{\mD^{\{0\}}} $ 
let $\epsilon_Z: \bj(R(Z)) \to Z $ be the counit.
A morphism in $ \mC \times_{\mD^{\{0\}}} \mD^{\bD^1}$ is cocartesian for evaluation at the target if and only if it is inverted by the projection to $\mC$, i.e. by $R$.
By the triangle identities the counit is inverted by $R$ and so cocartesian for evaluation at the target.
Hence any map $ \mC \times_{\mD^{\{0\}}} \mD^{\bD^1} \to \mE$
of cocartesian fibrations over $\mD$ sends the counit
$\epsilon_Z$ for $Z \in \mC \times_{\mD} \mD^{\bD^1} $ to a cocartesian morphism,
and so is an equivalence if its restriction to $\mC$ is an equivalence.
\end{proof}

\begin{proposition}
Let $\mV$ be a presentably monoidal category.
A $\mV$-enriched functor $\phi: \mC \to \mD$ is a cocartesian fibration if and only if the $\mV$-enriched embedding
$\mC \subset \mC \times_{\mD^{\{0\}}} \mD^{\bD^1}$ admits a $\mV$-enriched left adjoint whose local equivalences lie over equivalences of $\mD.$
    
\end{proposition}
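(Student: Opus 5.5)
We prove both implications by comparing with \cref{enrfibchar}, using the embedding $\bj: \mC \to \mC \times_{\mD^{\{0\}}} \mD^{\bD^1}$ of \cref{enevolt} and its right adjoint, the projection $R$ to $\mC$. Throughout, the key point is that a left adjoint $L$ of $\bj$ sends a pair $(X, \alpha: \phi(X) \to Z)$ to an object of $\mC$, and the unit is a morphism $(X,\alpha) \to \bj L(X,\alpha)$ in $\mC \times_{\mD^{\{0\}}} \mD^{\bD^1}$.

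For the forward direction, assume $\phi$ is a cocartesian fibration. Define $L(X,\alpha)$ as the target $Y$ of a $\phi$-cocartesian lift $f: X \to Y$ of $\alpha$. The pair $(f, \id_Z)$ gives a morphism $\eta: (X,\alpha) \to (Y, \id_{\phi(Y)})$, which will serve as the unit. We must check that for every $W \in \mC$ the map
$$\Mor_\mC(Y,W) \to \Mor_{\mC \times_{\mD^{\{0\}}} \mD^{\bD^1}}((X,\alpha), \bj(W))$$
is an equivalence in $\mV$. Compute the target using the description of morphism objects in a pullback and in $\mD^{\bD^1}$, as in the proof of \cref{enrcharo}. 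It becomes $\Mor_\mC(X,W) \times_{\Mor_\mD(\phi X, \phi W)} \Mor_\mD(Z, \phi W)$, because the diagonal component contributes $\Mor_\mD(Z,\phi W)\times_{\Mor_\mD(Z,\phi W)}\Mor_\mD(\phi W,\phi W)$ after composing with $\id$. So the condition is exactly that $f$ is $\phi$-cocartesian. This gives a $\mV$-enriched left adjoint by the representability criterion for adjoints. The local equivalences are the maps inverted by $L$. Since $\eta$ lies over the identity of $Z$ in the target component, and a morphism $(X,\alpha)\to(X',\alpha')$ is a local equivalence iff the induced map of units' targets is an equivalence, each local equivalence has target component $Z\to Z'$ equivalent to $\phi$ of an equivalence. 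This is the sense of "lying over equivalences of $\mD$": the target evaluation to $\mD^{\{1\}}$ sends them to equivalences.

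For the converse, suppose $L$ exists and its local equivalences lie over equivalences of $\mD$. Take $(X,\alpha:\phi X\to Z)$ with unit $\eta:(X,\alpha)\to \bj L(X,\alpha)=(Y,\id_{\phi Y})$. The unit is a local equivalence, so its target component $Z\to\phi(Y)$ is an equivalence. Hence the $\mC$-component $f:X\to Y$ lies over $\alpha$ up to this identification. The adjunction equivalence above, read backwards, is precisely the pullback condition of \cref{enrcharo} for $f$, so $f$ is a $\phi$-cocartesian lift of $\alpha$. Therefore $\phi$ is a cocartesian fibration.

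The main obstacle is the morphism-object computation in the iterated pullback $\mC \times_{\mD^{\{0\}}} \mD^{\bD^1}$, that is, identifying $\Mor((X,\alpha),\bj W)$ naturally with the fiber product above, compatibly with the unit. We will do this by the same manipulation used in \cref{enrcharo} and in \cref{enevolt}(2). A secondary subtlety is formulating "lying over equivalences" correctly in the converse: it should mean that the unit maps to an equivalence under evaluation at the target. We will state it that way, and note that it is automatic for units of an embedding's left adjoint once all local equivalences satisfy it.
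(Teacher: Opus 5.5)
Your forward direction is the paper's argument. The unit is $(f,\id_Z)\colon (X,\alpha)\to(Y,\id_{\phi(Y)})$ for a $\phi$-cocartesian lift $f$, and the adjunction equivalence is exactly the cocartesian pullback condition. Your intermediate formula for the $\mD^{\bD^1}$-factor is garbled; it should read $\Mor_{\mD^{\bD^1}}(\alpha,\id_{\phi(W)})\simeq\Mor_\mD(\phi(X),\phi(W))\times_{\Mor_\mD(\phi(X),\phi(W))}\Mor_\mD(Z,\phi(W))\simeq\Mor_\mD(Z,\phi(W))$, but the end result is right. Your naturality-of-the-unit argument, showing that every local equivalence has invertible target component, is more explicit than the paper, which only notes that the unit lies over an identity.

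The converse is where you genuinely differ. The paper argues structurally. By \cref{enevolt}(1), target evaluation $\mC\times_{\mD^{\{0\}}}\mD^{\bD^1}\to\mD^{\{1\}}$ is a cocartesian fibration. A reflective localization of a cocartesian fibration whose local equivalences lie over equivalences is again one: take a cocartesian lift in the big category and postcompose with the unit. You instead read the cocartesian lift of $\alpha$ off directly as the $\mC$-component of the unit at $(X,\alpha)$. You then get its cocartesianness from the same morphism-object computation as in the forward direction, run backwards.

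The paper's route is shorter and isolates a reusable principle, but it leaves that principle unproved, and its proof is essentially your computation plus one composition. Your route is self-contained and does not need \cref{enevolt}(1). It also shows that the hypothesis is only needed for the units, and that the cocartesian lifts are precisely the unit components.

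In both versions the lift produced lies over $g\circ\alpha$, where $g\colon Z\to\phi(Y)$ is the invertible target component of the unit. Getting a lift of $\alpha$ on the nose therefore requires transporting $Y$ along $g^{-1}$ (univalence) or reading ``lying over'' up to equivalence. The paper makes the same tacit identification, so this is not a defect specific to your argument.
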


\begin{proof}
By \cref{enevolt} the $\mV$-enriched functor $\mC \times_{\mD^{\{0\}}} \mD^{\bD^1} \to \mD^{\{1\}}$ is a cocartesian fibration and so also any $\mV$-enriched localization of it whose local equivalences lie over equivalences of $\mD.$
So it suffices to see the only-if direction.
Assume that $\phi: \mC \to \mD$ is a $\mV$-enriched cocartesian fibration. An object of $\mC \times_{\mD^{\{0\}}} \mD^{\bD^1}$ is an object of $X \in \mC$ and a morphism $\alpha: \phi(X) \to Y$ in $\mD$.
Since $\phi$ is a cocartesian fibration, there is a $\phi$-cocartesian lift $X \to X'$ in $\mC$ of the morphism $\phi(X) \to Y.$
The latter determines a morphism in $ \mC \times_{\mD^{\{0\}}} \mD^{\bD^1} $ from $(X, \alpha)$ to
$(X', \id_Y)$, where the latter is the image of $X'\in \mC$ in  
$\mC \times_{\mD^{\{0\}}} \mD^{\bD^1}$.
This morphism from $(X, \alpha)$ to
$(X', \id_Y)$ is sent by the functor $ \mC \times_{\mD^{\{0\}}} \mD^{\bD^1} \to \mD^{\{1\}}$ to the identity of $Y.$

For every $ Z\in \mC $ the induced morphism
$$ \Mor_\mC(X',Z) \simeq \Mor_{\mC \times_{\mD^{\{0\}}} \mD^{\bD^1}}((X', \id_Y),(Z,\id_{\phi(Z)})) \to \Mor_{\mC \times_{\mD^{\{0\}}} \mD^{\bD^1}}((X, \alpha),(Z,\id_{\phi(Z)})) $$
identifies with the induced morphism
$$ \Mor_\mC(X',Z) \to \Mor_{\mC}(X,Z) \times_{\Mor_{\mD}(\phi(X),\phi(Z))} \Mor_{\mD}(\phi(X),\phi(Z)) \times_{\Mor_{\mD}(\phi(X),\phi(Z))} \Mor_{\mD}(Y,\phi(Z)) $$$$ \simeq 
\Mor_{\mC}(X,Z) \times_{\Mor_{\mD}(\phi(X),\phi(Z))} \Mor_{\mD}(Y,\phi(Z)).$$
The latter is an equivalence since the morphism $X \to X'$ is $\phi$-cocartesian.
This proves that the $\mV$-enriched embedding
$\mC \to \mC \times_{\mD^{\{0\}}} \mD^{\bD^1}$ admits a $\mV$-enriched left adjoint whose local equivalences lie over equivalences of $\mD.$
\end{proof}

\begin{proposition}\label{fibcocart}
Let $\mV$ be a monoidal category, $\phi: \mB \to \mD, \psi: \mC \to \mD$ $\mV$-enriched cocartesian fibrations and 
$\rho: \mB \to \mC$ a map of $\mV$-enriched cocartesian fibrations.
Then $\rho$ is a $\mV$-enriched cocartesian fibration if and only if it induces on the fiber over every object of $\mD$ a cocartesian fibration and for every morphism $ X \to Y$ of $\mD$ the fiber transport $\mB_X \to \mB_Y $ of $\phi$ sends $\rho_X$-cocartesian morphisms to $\rho_Y$-cocartesian morphisms.
    
\end{proposition}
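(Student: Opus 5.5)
The plan is to reduce everything to the pullback characterization of cocartesian morphisms. Before anything else, one lemma: a morphism $f: X\to X'$ of $\mB$ that is $\rho$-cocartesian and whose image $\rho(f)$ is $\psi$-cocartesian is $\phi$-cocartesian. This is the usual pasting of pullback squares in $\mV$. In the square for $\phi$, the map $\Mor_\mB(X',Z)\to \Mor_\mD(\phi X',\phi Z)\times_{\Mor_\mD(\phi X,\phi Z)}\Mor_\mB(X,Z)$ factors through $\Mor_\mC(\rho X',\rho Z)\times_{\Mor_\mC(\rho X,\rho Z)}\Mor_\mB(X,Z)$. The first map is an equivalence by $\rho$-cocartesianness. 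The second map is a base change of the $\psi$-cocartesian square, so it is also an equivalence.

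The converse holds as well. If $f$ is $\phi$-cocartesian, then, since $\rho$ is a map of cocartesian fibrations, $\rho(f)$ is $\psi$-cocartesian. By right cancellation of pullback squares, $f$ is then $\rho$-cocartesian. Finally, a morphism $g$ of a fiber $\mB_d$ is $\rho$-cocartesian if and only if it is $\rho_d$-cocartesian and remains so after applying fiber transports. Here I use that $\Mor_\mB(X,Z)$ decomposes over $\Mor_\mD(d,\phi Z)$ via cocartesian transport, as in \cref{enevolt}.

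\emph{Only if.} Suppose $\rho$ is a cocartesian fibration. A morphism $g: B\to C'$ in $\mC_d$ with $B\in\mB_d$ lifts to a $\rho$-cocartesian $\tilde g$. Since $\psi\rho(\tilde g)=\id_d$ is invertible, $\tilde g$ lies in the fiber $\mB_d$. Pulling back the defining squares along the point $d$ shows that $\tilde g$ is $\rho_d$-cocartesian, so $\rho_d$ is a cocartesian fibration.

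For preservation by transport, take $u: d\to e$ and a $\rho_d$-cocartesian $g: B\to B'$. The transport $u_!g$ is obtained by choosing $\phi$-cocartesian lifts $B\to u_!B$ and $B'\to u_!B'$. Via the factorization $B\to B'\to u_!B'$, the composite $g$ followed by the lift is $\rho$-cocartesian, by the fiberwise criterion together with the lemma. It also equals $u_!g$ precomposed with the $\phi$-cocartesian (hence $\rho$-cocartesian) lift of $B$. Cancellation then gives that $u_!g$ is $\rho$-cocartesian, hence $\rho_e$-cocartesian.

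\emph{If.} Given $B\in\mB$ over $d$ and $h:\rho(B)\to C$ in $\mC$ with $\psi(h)=u: d\to e$, factor $h$ as a $\psi$-cocartesian lift $\rho(B)\to u_!\rho(B)$ followed by a morphism $h'$ in the fiber $\mC_e$. Take the $\phi$-cocartesian lift $B\to u_!B$. Since $\rho$ is a map of cocartesian fibrations, $\rho(u_!B)\simeq u_!\rho(B)$. Then take a $\rho_e$-cocartesian lift $u_!B\to B''$ of $h'$ and compose. The composite is $\rho$-cocartesian provided each piece is: the first by the lemma's converse, the second by the fiberwise criterion.

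The main obstacle is that fiberwise criterion: showing that $\rho_e$-cocartesian morphisms are $\rho$-cocartesian, i.e.\ cocartesian against targets $Z$ in other fibers. For $Z$ over $e'$, I will use $\Mor_\mB(Y,Z)\simeq\coprod$-type decompositions. In the enriched setting these become pullbacks over $\Mor_\mD(e,e')$. For each $v: e\to e'$, the component identifies with $\Mor_{\mB_{e'}}(v_!Y,Z)$. This reduces the required square to the $\rho_{e'}$-square for $v_!g$, which is a pullback exactly by the transport hypothesis. Making this decomposition precise enrichedly is where I expect most of the work to lie. I would try to phrase it as the equivalence $\mB\simeq \mB^{\bD^1,\cocart}$ of \cref{enrfibchar} and compute the morphism objects there.
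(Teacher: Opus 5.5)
Your architecture is the paper's. Both use the pasting lemma relating $\rho$-, $\psi$- and $\phi$-cocartesian morphisms, factor a lift into a $\phi$-cocartesian morphism followed by a $\rho_e$-cocartesian one, and reduce the ``if'' direction to one claim. That claim is that a $\rho_e$-cocartesian $g\colon A\to A'$ whose transports stay fiberwise cocartesian is $\rho$-cocartesian. It is proved by viewing $\Mor_\mB(A',Z)\to\Mor_\mC(\rho A',\rho Z)\times_{\Mor_\mC(\rho A,\rho Z)}\Mor_\mB(A,Z)$ as a map over $\Mor_\mD(e,\phi Z)$ and checking it over each $v$.

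The step you postpone is a genuine gap, and it cannot be closed for general $\mV$. A map of objects of $\mV$ over $\Mor_\mD(e,e')$ that is an equivalence on the fiber over every point $v\colon\tu\to\Mor_\mD(e,e')$ need not be an equivalence. That inference is only valid when $\mV$ detects equivalences on fibers over points, as $\mS$ and $\Set$ do. Rewriting morphism objects via \cref{enrfibchar} does not help: it expresses $\Mor_\mB(v_!Y,Z)$ as a base change of $\Mor_\mB(Y,Z)$, but never recovers $\Mor_\mB(Y,Z)$ from these fibers.

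The paper's proof makes exactly the same move: it checks the square on the fibers over each $\kappa\colon Y\to\phi(Z)$ and concludes. So it has the same hole and is really a proof for $\mV=\mS$. In the $\infty$-categorical setting the paper avoids the problem in \cref{paracocart} by assuming cocartesianness in all dimensions. That makes the relevant maps of morphism $\infty$-categories maps of fibrations over $\Mor_\mD(e,e')$, so that \cref{fiberwiseeq} applies.

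In fact the ``if'' direction is false for $\mV=\Cat$ with the cartesian product. Let $\mD$ have objects $0,1$, trivial endomorphism categories, $\Mor_\mD(1,0)=\emptyset$ and $\Mor_\mD(0,1)=[1]$, i.e.\ $1$-cells $v,v'$ and a $2$-cell $v\Rightarrow v'$. Let $\mC$ have:
\begin{itemize}
\item objects $a,a'$ over $0$ and $z$ over $1$;
\item a single non-identity $1$-cell $k\colon a\to a'$ in the fiber;
\item $\Mor_\mC(a,z)\cong\Mor_\mC(a',z)\cong[1]$, mapping isomorphically onto $\Mor_\mD(0,1)$, with $(-)\circ k$ the identity.
\end{itemize}
Let $\mB$ be the same except that $\Mor_\mB(a',z)=\{v,v'\}$ is discrete, with $(-)\circ k\colon\{v,v'\}\to[1]$ the inclusion. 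Let $\rho\colon\mB\to\mC$ be the evident inclusion.

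The $1$-cells $v,v'$ out of $a$ and $a'$ are $\phi$- and $\psi$-cocartesian, so $\phi,\psi$ are cocartesian fibrations and $\rho$ is a map of such. Moreover $\rho$ is an isomorphism on both fibers, so both hypotheses hold trivially. Yet $k$ is the only lift of $\rho(k)$ with source $a$. Its square at $z$ compares the discrete $\{v,v'\}$ with $[1]\times_{[1]}[1]\simeq[1]$, so $\rho$ is not a cocartesian fibration. Over each object $v,v'$ of $\Mor_\mD(0,1)$, however, that square is a pullback.

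Your ``only if'' direction is fine for every $\mV$, with one repair. Do not invoke the fiberwise criterion to see that the $\rho_d$-cocartesian $g$ is $\rho$-cocartesian, since that presupposes the transport statement being proved. Instead, note that the $\rho$-cocartesian lift of $\rho(g)$ at $B$ lies in $\mB_d$ and is $\rho_d$-cocartesian. It therefore agrees with $g$ by uniqueness of cocartesian lifts.
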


\begin{proof}

The only-if direction follows from the fact that cocartesian fibrations are stable under pulback and cocartesian morphisms are closed under composition.
We prove the other direction. Since $\phi, \psi$ are cocartesian fibrations and $\rho $ is a map of cocartesian fibrations that induces fiberwise cocartesian fibrations, we can find for every $X \in \mB$ and morphism $\rho(X) \to Y$ a lift $X \to Y'$ in $\mB$ that factors as a 
$\rho$-cocartesian morphism followed by a $\rho_Y$-cocartesian morphism.
Since cocartesian morphisms are closed under composition, it suffices to see that any $\rho_Y$-cocartesian morphisms $A \to B$ is $\rho$-cocartesian. For that we have to see that for any $Z \in \mB$
the following commutative square is a pullback square:
$$\begin{xy}
\xymatrix{
\Mor_{\mB}(B, Z) \ar[d]^{} \ar[r]
& \Mor_{\mB}(A, Z) \ar[d] 
\\ 
\Mor_{\mC}(\rho(B),\rho(Z)) \ar[r] & \Mor_{\mC}(\rho(A),\rho(Z)).}
\end{xy}$$
The latter is a commutative square over $\Mor_\mD(Y,\phi(Z))$
that induces on the fiber over every $\kappa: Y \to \phi(Z)$
the commutative square
$$\begin{xy}
\xymatrix{
\Mor_{\mB_{\phi(Z)}}(\kappa_!(B), Z) \ar[d]^{} \ar[r]
& \Mor_{\mB_{\phi(Z)}}(\kappa_!(A), Z) \ar[d] 
\\ 
\Mor_{\mC_{\phi(Z)}}(\kappa_!(\rho(B)), \rho(Z)) \ar[r] & \Mor_{\mC_{\phi(Z)}}(\kappa_!(\rho(A)), \rho(Z)).}
\end{xy}$$
The latter is a pullback square because the morphism $\kappa_!(A) \to \kappa_!(B) $ is $\rho_{\phi(Z)}$-cocartesian by assumption.
\end{proof}

\subsection{Cartesian cells}

\begin{notation}Let $\phi: \mC \to \mD$ be a functor and $X,Y \in \mC.$ 
We write
\begin{align*}
\phi_{X,Y}:\Mor_\mC(X,Y)\to\Mor_\mD(\phi(X),\phi(Y))
\end{align*}
for the induced functor on morphism $\infty$-categories.
\end{notation}

\begin{definition}Let $1 \leq \n \leq \infty$ and $\phi: \mC \to \mD$ a functor of $\infty$-categories.

\begin{enumerate}[\normalfont(1)]\setlength{\itemsep}{-2pt}

\item A 1-morphism $f: X \to Y$ in $\mC$ is $\phi$-cocartesian if for every $Z\in \mC$ the commutative square 
\begin{equation}\label{filler1.1}
\begin{xy}
\xymatrix{
\Mor_\mC(Y,Z) \ar[d] \ar[r]
& \Mor_\mC(X,Z) \ar[d]^\phi
\\ 
\Mor_\mD(\phi(Y), \phi(Z)) \ar[r] & \Mor_\mD(\phi(X), \phi(Z))
}
\end{xy}\end{equation}
is a pullback square.

\item An $n$-morphism $\alpha:\bD^n\to\mC$ is $\phi$-cocartesian if
for every pair of morphisms $(X \to \alpha(0), \alpha(1) \to Y)$ in $\mC$ the composite $n-1$-morphism
\[
\bD^{n-1}\to\Mor_\mC(\alpha(0),\alpha(1)) \to \Mor_\mC(X,Y)
\]
is $\phi_{X,Y}$-cocartesian.
\end{enumerate}
\end{definition}

\begin{definition}Let $\n \geq 1$ and $\phi: \mC \to \mD$ a functor.
A $n$-morphism in $\mC$ is locally $\phi$-cartesian if and only if
it is locally $\phi^{\co\op}$-cocartesian.
    
\end{definition}

\begin{example}Let $\phi: \mC \to \mD$ be a functor and $n \geq 0$.
Every $n$-morphism in $\mC$ that is an equivalence, is $\phi$-cocartesian.
An $n$-morphism in $\mC$ is an equivalence if and only if it is 
cocartesian for the functor $\mC \to \bD^0$.
    
\end{example}

\begin{remark}\label{elemen}
Let $n \geq 1$ and $\phi: \mC \to \mD$ a functor.
By pasting of pullbacks the composite of two composable $\phi$-cocartesian 1-morphisms is $\phi$-cocartesian.
Therefore for any three parallel $n-1$-morphisms $f,g,h$ in $\mC$
the composite of any two $\phi$-cocartesian $n$-morphisms $\alpha : f \to g, \beta: g \to h$ is again $\phi$-cocartesian.

Moreover the pasting law for pullbacks implies that for two functors
$\phi: \mC \to \mD, \kappa: \mD \to \mE$
an $n$-morphism of $\mC$ lying over a $\kappa$-cocartesian $n$-morphism of $\mD$, is $\phi$-cocartesian if and only if it is $\kappa \circ \phi$-cocartesian.
In particular, an $n$-morphism of $\mC$ that is inverted by $\phi$, is $\phi$-cocartesian if and only if it is an equivalence.

Also note that since forming morphism objects preserves pullbacks,
for any functors $\phi:\mC \to \mD$ and $\mB \to \mD$
an $n$-morphism in the pullback $\mB \times_\mD \mC $ is cocartesian for the projection to $\mB$ if it is $\phi$-cocartesian.
    
\end{remark}

\begin{definition} Let $0 \leq n \leq \infty.$
A functor $\phi: \mC \to \mD$ is an $n$-anticocartesian fibration if for every $1 \leq k \leq n$ 
every commutative square 
\begin{equation}\label{filler2}
\begin{xy}
\xymatrix{
\bD^{k-1} \ar[d] \ar[r]
& \mC \ar[d]^\phi
\\ 
\bD^k \ar[r] & \mD
}
\end{xy}\end{equation}
admits a filler by a $\phi$-cocartesian $k$-morphism,
where the left vertical functor is the source inclusion.

An anticocartesian fibration is an $\infty$-anticocartesian fibration.

\end{definition}

\begin{remark}
It follows immediately from the definition and \cref{elemen} that $n$-anticocartesian fibrations are stable under base change and composition.
\end{remark}

\begin{definition}Let $1 \leq n \leq \infty$ and $\sigma$ an involution of $\infty\Cat.$
A functor $\phi: \mC \to \mD$ is an $\n$-$\sigma$-fibration if $\phi^\sigma$ is an $n$-anticocartesian fibration.
A functor $\phi: \mC \to \mD$ is a $\sigma$-fibration if it is an $\infty$-$\sigma$-fibration, i.e. $\phi^\sigma$ is an anticocartesian fibration.

\end{definition}

\begin{definition}Let $1 \leq n \leq \infty. $

\begin{enumerate}[\normalfont(1)]\setlength{\itemsep}{-2pt}

\item A functor $\phi: \mC \to \mD$ is an $n$-cocartesian fibration if $\phi^\co$ is a $n$-anticocartesian fibration.

\item A functor $\phi: \mC \to \mD$ is an $n$-cartesian fibration if $\phi^{\op}$ is an $n$-anticocartesian fibration.

\item A functor $\phi: \mC \to \mD$ is an $n$-anticartesian fibration if $\phi^{\co\op}$ is an $n$-anticocartesian fibration.

For $n=\infty$ we drop the index $n.$

\end{enumerate}
    
\end{definition}

\begin{remark}
Since the $(-)^\co$ involution fixes the orientation of 1-morphisms, it follows that a functor is a 1-(co)cartesian fibration if and only if it is a 1-anti(co)cartesian fibration.

\end{remark}

\cref{enrfibchar} gives the following:

\begin{corollary}\label{fibchar}

A functor $\phi: \mC \to \mD$ is a 1-cocartesian fibration if and only if the induced functor $$\Fun(\bD^1,\mC) \to \mC \times_{\Fun(\{0\},\mD)} \Fun(\bD^1, \mD) $$ admits a fully faithful left adjoint whose colocal objects are the $\phi$-cocartesian morphisms.
    
\end{corollary}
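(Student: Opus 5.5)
The plan is to deduce \cref{fibchar} directly from \cref{enrfibchar}, applied with $\mV = \mS$. This uses the identification ${\mS\mathrm{-}\Cat} \simeq \Cat$ from \cite[Corollary 3.23.]{heine2024bienriched}. Starting from the functor of $\infty$-categories $\phi: \mC \to \mD$, we pass to the underlying functor of categories $\iota_1$: keep objects and replace each morphism $\infty$-category by its maximal subspace.

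\textbf{Step 1: $1$-cocartesian morphisms agree in both senses.} We must check that a $1$-morphism $f$ is $\phi$-cocartesian in the sense of the definition above if and only if it is $\phi$-cocartesian in the enriched sense for the underlying $\mS$-enriched functor. The two conditions are as follows.
\begin{itemize}
\item The first asks that the square \eqref{filler1.1} of morphism $\infty$-categories be a pullback in $\infty\Cat$.
\item The second asks only that the square of maximal subspaces be a pullback in $\mS$.
\end{itemize}
Since $\iota_0$ preserves limits, the first condition implies the second. The converse is not automatic.

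Here I would be careful about which notion \cref{fibchar} intends. The equivalence in its statement lives in $\infty\Cat$, so \cref{enrfibchar} should be applied with $\mV = \infty\Cat$ (cartesian structure), using $\infty\Cat \simeq {\infty\Cat}\mathrm{-}\Cat$ from \cref{fix}. In that setting $\phi$ is an $\infty\Cat$-enriched functor, and enriched $\phi$-cocartesian means exactly that \eqref{filler1.1} is a pullback in $\infty\Cat$. So the two notions of cocartesian $1$-morphism coincide on the nose, with no comparison needed.

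\textbf{Step 2: the two notions of fibration agree.} A $1$-cocartesian fibration requires a cocartesian filler of each square $\bD^0 \to \mC$, $\bD^1 \to \mD$ with source $\bD^0$. This says precisely that for every $X \in \mC$ and every morphism $\phi(X) \to Y$ in $\mD$ there is a $\phi$-cocartesian lift starting at $X$. That is the definition of an enriched cocartesian fibration (\cref{enrfibr}, via $(-)^\circ$). The remaining point is that the $(-)^\co$ in the definition of $1$-cocartesian fibration does not affect $1$-morphisms (see the preceding remark), so $1$-cocartesian equals $1$-anticocartesian.

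\textbf{Step 3: identify the cotensors.} For the cartesian enrichment of $\infty\Cat$, the cotensor $\mC^{\bD^1}$ of the $\infty\Cat$-enriched category $\mC$ with $\bD^1$ is $\Fun(\bD^1,\mC)$, by cartesian closedness (\cref{carclo}). The same holds for $\mD$. Moreover, pullbacks of enriched categories are computed in $\infty\Cat$. Therefore the functor of \cref{enrfibchar}(2) becomes the stated functor $\Fun(\bD^1,\mC) \to \mC \times_{\Fun(\{0\},\mD)} \Fun(\bD^1,\mD)$.

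\textbf{Step 4: enriched versus ordinary adjoints.} \cref{enrfibchar}(2) gives an $\infty\Cat$-enriched fully faithful left adjoint, which in particular is an ordinary left adjoint. Conversely, suppose there is a fully faithful left adjoint of the underlying functor in $\infty\Cat$, i.e.\ in the $2$-category sense. Then the argument of \cref{enrcharo} and \cref{enrfibchar}(3) applies equally well: the colocal objects form the full subcategory of cocartesian arrows, and the restricted functor is an equivalence.

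I expect the main obstacle to be Step 3: making sure the enriched cotensor and its enriched pullback agree with $\Fun(\bD^1,-)$ computed in $\infty\Cat$. I would argue this via the mapping objects described in \cref{enrcharo}, which match the formula for $\Mor_{\Fun(\bD^1,\mC)}$.
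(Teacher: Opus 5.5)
Your proposal is correct and is essentially the paper's argument: the paper obtains this corollary directly from \cref{enrfibchar} applied to $\mV=\infty\Cat$ with the cartesian structure (via $\infty\Cat\simeq{\infty\Cat}\mathrm{-}\Cat$ from \cref{fix}), where the cotensor $\mC^{\bD^1}$ is $\Fun(\bD^1,\mC)$ and the enriched $\phi$-cocartesian morphisms are exactly those of the definition. Your opening suggestion of $\mV=\mS$ should be dropped, as you noticed yourself, and Step 4 is not needed, since an adjunction of $\infty$-categories is by definition an adjunction in the 2-category ${\infty\Cat}\mathrm{-}\Cat$.
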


We have the following inductive definition of anticocartesian fibrations:

\begin{proposition}\label{cocarto}
Let $1 \leq n \leq \infty$ and $\phi: \mC \to \mD$ a functor.
The following are equivalent:

\begin{enumerate}[\normalfont(1)]\setlength{\itemsep}{-2pt}
\item The functor $\phi$ is an $n$-anticocartesian fibration.

\item The functor $\phi$ is a 1-anticocartesian fibration, and for every $\X,\Y \in \mC$ the functor
\[
\phi_{X,Y}: \Mor_\mC(\X,\Y) \to \Mor_\mD(\phi(\X),\phi(\Y))
\]
is a $n-1$-anticocartesian fibration
and for every pair of morphisms $X' \to X$ and $Y \to Y'$ in $\mC$
the induced functor 
$\Mor_\mC(\X,\Y) \to \Mor_\mC(X',Y')$ sends $\phi_{X,Y}$-cocartesian morphisms to $\phi_{X',Y'}$-cocartesian morphisms.
\end{enumerate}

\end{proposition}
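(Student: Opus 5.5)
The plan is to unwind the definitions directly, using the inductive definition of $\phi$-cocartesian $k$-morphisms: a $k$-morphism $\alpha$ with $k \geq 2$ lives in $\Mor_\mC(\alpha(0),\alpha(1))$ as a $(k-1)$-morphism. A $k$-morphism $\bD^k \to \mC$ with source restriction $\bD^{k-1} \to \mC$ is, by \cref{suspi} ($\bD^k = S(\bD^{k-1})$), the same as a pair of objects $X,Y$ together with a $(k-1)$-morphism $\bD^{k-1} \to \Mor_\mC(X,Y)$ with prescribed source $\bD^{k-2}\to \Mor_\mC(X,Y)$. Thus for $k\geq 2$ a lifting square of the form \eqref{filler2} for $\phi$ is the same as a lifting square for $\phi_{X,Y}$ in dimension $k-1$. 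The two notions of cocartesianness then have to be compared.

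\textbf{Key comparison.} For a $(k-1)$-morphism $\beta$ of $\Mor_\mC(X,Y)$, the corresponding $k$-morphism of $\mC$ is $\phi$-cocartesian if and only if, for all $X'\to X$ and $Y\to Y'$, the image of $\beta$ under $\Mor_\mC(X,Y)\to\Mor_\mC(X',Y')$ is $\phi_{X',Y'}$-cocartesian. This is the definition verbatim. Taking identities shows that such $\beta$ is in particular $\phi_{X,Y}$-cocartesian.

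\textbf{(2)$\Rightarrow$(1).} Assume (2). Dimension $1$ is given. For $2\le k\le n$, translate the square into a square for $\phi_{X,Y}$ in dimension $k-1$. Since $\phi_{X,Y}$ is an $(n-1)$-anticocartesian fibration, a $\phi_{X,Y}$-cocartesian filler $\beta$ exists. By the compatibility hypothesis, the images of $\beta$ under all whiskering functors are $\phi_{X',Y'}$-cocartesian, so the corresponding $k$-morphism is $\phi$-cocartesian.

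\textbf{(1)$\Rightarrow$(2).} Assume (1). Then $\phi$ is $1$-anticocartesian, and fillers for $\phi_{X,Y}$ in dimensions $\le n-1$ come from $\phi$-cocartesian fillers in $\mC$, which are in particular $\phi_{X,Y}$-cocartesian. Hence $\phi_{X,Y}$ is $(n-1)$-anticocartesian.

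\textbf{Main obstacle.} The remaining step, and the one I expect to be hardest, is the compatibility: whiskering must send every $\phi_{X,Y}$-cocartesian morphism $\beta$ to a $\phi_{X',Y'}$-cocartesian one, not just the chosen fillers. My approach:
\begin{itemize}
\item Let $\beta: f\to g$ be a $\phi_{X,Y}$-cocartesian $1$-morphism of $\Mor_\mC(X,Y)$, and choose a $\phi$-cocartesian filler $\beta': f\to g'$ lying over $\phi_{X,Y}(\beta)$ with source $f$.
\item $\beta'$ is also $\phi_{X,Y}$-cocartesian, so by the cocartesian universal property (uniqueness of cocartesian lifts, via the pullback in \eqref{filler1.1}) $\beta$ and $\beta'$ differ by an equivalence $g'\simeq g$ over the identity.
\item Hence $\beta$ is itself $\phi$-cocartesian as a $2$-morphism, and whiskered images are therefore cocartesian, since equivalences are cocartesian and composites of cocartesian morphisms are cocartesian by \cref{elemen}.
\end{itemize}
This uses $n\ge 2$. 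For $n=1$, the statement about $\Mor$ is vacuous ($0$-anticocartesian) apart from the compatibility clause. I would check whether that clause is then meant for $1$-morphisms of $\Mor$ only when $n\ge2$, and otherwise handle it through the same uniqueness argument, possibly needing the extra lifting in dimension $2$. If $n=\infty$, the induction is in any case unnecessary, since the argument above is dimension-by-dimension.
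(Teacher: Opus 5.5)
Your proposal is correct and follows the paper's route. The paper also uses suspension to turn the dimension-$k$ lifting problem for $\phi$ into the dimension-$(k-1)$ problem for $\phi_{X,Y}$, and reads off the equivalence from the inductive definition of cocartesian cells; however, it asserts the whiskering-compatibility clause without justification, whereas your uniqueness-of-cocartesian-lifts argument actually shows that every $\phi_{X,Y}$-cocartesian cell, not just a chosen filler, is $\phi$-cocartesian one dimension up. The same argument works verbatim for cocartesian $j$-cells of $\Mor_\mC(X,Y)$ with $2 \le j \le n-1$, since uniqueness of cocartesian lifts holds in every dimension by passing to iterated morphism $\infty$-categories, and the clause is meant for cells of dimension $\le n-1$, so for $n=1$ it is vacuous, exactly as the paper's ``nothing to show'' indicates.
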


\begin{proof}

We procced by induction on $n \geq 1.$ For $n=1$ there is nothing to show. Let $n > 1.$ We assume the statement for $n-1$.

We prove the induction step. For every $1 < k \leq n$ every commutative square 
\begin{equation}\label{filler3}
\begin{xy}
\xymatrix{
\bD^{k-1} \ar[d] \ar[r]
& \mC \ar[d]^\phi
\\ 
\bD^k \ar[r] & \mD
}
\end{xy}\end{equation}
admits a filler by a $\phi$-cocartesian $k$-morphism,
where the left vertical functor is the source inclusion,
if and only if for every $1 < k \leq n$ and $X,Y \in \mC$ every commutative square 
\begin{equation}
\begin{xy}
\xymatrix{
\bD^{k-2} \ar[d] \ar[r]
& \Mor_\mC(X,Y) \ar[d]^{\phi_{X,Y}}
\\ 
\bD^{k-1} \ar[r] & \Mor_\mD(\phi(X),\phi(Y))
}
\end{xy}\end{equation}
admits a filler by a $\phi_{X,Y}$-cocartesian $k-1$-morphism,
where the left vertical functor is the source inclusion,
such that for every morphisms $X' \to X, Y \to Y'$ in $\mC$
the induced functor 
$\Mor_\mC(\X,\Y) \to \Mor_\mC(X',Y')$ sends $\phi_{X,Y}$-cocartesian morphisms to $\phi_{X',Y'}$-cocartesian morphisms.
So by induction (1) is equivalent to (2).
\end{proof}

\begin{definition}Let $n \geq 0$ and $\sigma$ a commutative square
\begin{equation}\label{sqco}
\begin{xy}
\xymatrix{
X \ar[d]^\psi \ar[r]^\kappa
& Y \ar[d]^\phi
\\ 
S \ar[r]^\rho & T
}
\end{xy}\end{equation}

\begin{enumerate}[\normalfont(1)]\setlength{\itemsep}{-2pt}

\item A commutative square \ref{sqco} is a map of $n$-anticocartesian fibrations if $\psi, \phi$ are $n$-anticocartesian fibrations and $\kappa$ sends $\psi$-cocartesian morphisms of dimension smaller or equal $n$ to $\phi$-cocartesian morphisms.

\item A commutative square \ref{sqco}
is a map of $n$-cocartesian fibrations if $\sigma^\co$ is a map of $n$-anticocartesian fibrations.

\item A commutative square \ref{sqco}
is a map of $n$-anticartesian fibrations if $\sigma^\op$ is a map of $n$-anticocartesian fibrations.

\item A commutative square \ref{sqco}
is a map of $n$-cartesian fibrations if $\sigma^\coop$ is a map of $n$-anticocartesian fibrations.

\item A commutative square \ref{sqco}
is a map of (anti)(co)cartesian fibrations if it is a map of
$n$-(anti)(co)cartesian fibrations for every $n \geq 1.$

\item A map of (anti) $n$-(co)cartesian fibrations over $\mD$ is a map of $n$-(anti)(co)cartesian fibrations such that $\rho$ is the identity.

\item A map of (anti)(co)cartesian fibrations over $\mD$ is a map of (anti) (co)cartesian fibrations such that $\rho$ is the identity.
\end{enumerate}

\end{definition}

\begin{notation}\label{notfib1}

Let $$\co\mathcal{C}\mathit{art}, \mathcal{C}\mathit{art}, \overline{\co\mathcal{C}\mathit{art}}, \overline{\mathcal{C}\mathit{art}} \subset \Fun(\bD^1,\infty\scat)$$
be the respective subcategories of cocartesian, cartesian, anticocartesian, anticartesian fibrations and morphisms of such. 

\end{notation}

\begin{notation}\label{notfib2} Let $S$ be an $\infty$-category.
Let $$ \infty\scat^\cocart_{/S}, \infty\scat^\cart_{/S}, \infty\scat^{\overline{\cocart}}_{/S}, \infty\scat^{\overline{\cart}}_{/S} \subset \infty\scat_{/S} $$ be the respective subcategories of cocartesian, cartesian, anticocartesian, anticartesian fibrations and maps of such.

\end{notation}

\begin{remark}\label{seqcol}
It is immediate from the definition that the subcategories of
\cref{notfib1} and \cref{notfib2} admit small filtered colimits preserved by the respective inclusions to
$\Fun(\bD^1,\infty\scat), \infty\scat_{/S}.$
\end{remark}

\subsection{Locality principles for fibrations}

\begin{lemma}Let $0 \leq m \leq \infty.$
A functor $\phi: \mC \to \mD$ is a $m$-anticocartesian fibration 
if and only if for every $n \geq 0$ the functor
$\iota_n(\phi): \iota_n(\mC) \to \iota_n(\mD) $
is a $m$-anticocartesian fibration.
\end{lemma}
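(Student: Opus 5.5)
The plan is to reduce everything to two facts about the truncations $\iota_n$: how they interact with the disks $\bD^k$, and how they interact with the pullback squares that define cocartesian morphisms. Then we compare lifting problems in $\mC\to\mD$ with lifting problems in $\iota_n(\mC)\to\iota_n(\mD)$.

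First, recall that $\iota_n:\infty\Cat\to n\Cat\subset\infty\Cat$ is right adjoint to the inclusion and preserves small limits. Also $\bD^k$ is a $k$-category. So for $k\le n$, functors $\bD^k\to\mC$ are the same as functors $\bD^k\to\iota_n(\mC)$, and a lifting square of the form \eqref{filler2} with $k\le n$ in $\mC\to\mD$ corresponds exactly to one in $\iota_n(\mC)\to\iota_n(\mD)$. Moreover $\Mor_{\iota_n\mC}(X,Y)\simeq\iota_{n-1}\Mor_\mC(X,Y)$, by the inductive definition $\iota_{n+1}=(\iota_n)_!$.

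The key lemma is the following: for $k\le n$, a $k$-morphism $\alpha$ of $\iota_n(\mC)$ is $\iota_n(\phi)$-cocartesian if and only if it is $\phi$-cocartesian as a $k$-morphism of $\mC$. I would prove this by induction on $k$, simultaneously for all $n\ge k$ and all functors.
\begin{itemize}
\item For $k=1$, the defining square \eqref{filler1.1} for $\iota_n(\phi)$ is $\iota_{n-1}$ applied to the square for $\phi$. Since $\iota_{n-1}$ preserves pullbacks, the forward implication is immediate.
\item For the converse at $k=1$, use that a square of $\infty$-categories is a pullback if and only if $\iota_m$ of it is a pullback for every $m$. This holds because the $\iota_m$ jointly detect equivalences ($X\simeq\colim_m\iota_m X$) and each preserves pullbacks. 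So we want the truncated criterion for all $m$, not just $m=n-1$.
\item This is the main obstacle. The resolution I would try is to state the lemma as follows: $\alpha$ is $\phi$-cocartesian iff it is $\iota_n(\phi)$-cocartesian for all $n\ge k$. Then, for the equivalence in the theorem, assume each $\iota_n(\phi)$ is $m$-anticocartesian for all $n$.
\item The inductive step $k>1$ passes to morphism categories: $\alpha$ is $\phi$-cocartesian iff the induced $(k-1)$-morphisms are $\phi_{X,Y}$-cocartesian, and $\iota_n$ commutes with $\Mor$ up to a shift.
\end{itemize}

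Now the theorem. Suppose $\phi$ is $m$-anticocartesian and fix $n$. A lifting problem for $\iota_n(\phi)$ in dimension $k\le\min(m,n)$ is a lifting problem for $\phi$, so it has a $\phi$-cocartesian filler, which is $\iota_n(\phi)$-cocartesian by the forward implication. For $k>n$, every $k$-morphism of $\iota_n(\mD)$ is an identity, so we fill with an identity, which is cocartesian. Conversely, suppose all the $\iota_n(\phi)$ are $m$-anticocartesian. Given a lifting problem in dimension $k$, it lives in $\iota_n$ for every $n\ge k$, and we get a filler there.

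The subtle point is this: the filler is only known to be $\iota_n(\phi)$-cocartesian for one $n$, while we need it for all $n$. I would handle it using uniqueness of cocartesian lifts, which holds because cocartesian lifts with fixed source are unique up to equivalence. Since $\iota_n\mC\to\iota_{n+1}\mC$ is fully faithful on cells of dimension $\le n$, the filler found in $\iota_{n+1}$ restricts to a cocartesian filler in $\iota_n$ and agrees with the one found there. Hence there is a single filler that is $\iota_N(\phi)$-cocartesian for all $N\ge k$, and therefore $\phi$-cocartesian by the lemma.

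Alternatively, one could argue via \cref{cocarto} by induction on $m$. That reduces everything to the $1$-dimensional statement for $\phi$ and for the functors $\phi_{X,Y}$, and there the pullback-detection argument above applies directly.
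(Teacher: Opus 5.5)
Your proof is correct, but it takes a different route from the paper's. The paper handles the converse in one line: $\phi$ is the sequential colimit of $\iota_0(\phi)\to\iota_1(\phi)\to\cdots$ in $\Fun(\bD^1,\infty\scat)$, and the subcategory of $m$-anticocartesian fibrations and maps of such is closed under filtered colimits (\cref{seqcol}). It treats the forward direction as immediate.

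You argue instead at the level of cells, in three steps:
\begin{enumerate}
\item A $k$-cell is $\phi$-cocartesian if and only if it is $\iota_n(\phi)$-cocartesian for all $n\geq k$. This holds because $\iota_{n-1}$ preserves pullbacks and the $\iota_j$ jointly detect equivalences.
\item These conditions get stronger as $n$ grows.
\item Cocartesian fillers are essentially unique. So the nested, nonempty, equivalence-closed spaces of $\iota_n(\phi)$-cocartesian fillers are each a single path component, hence all coincide. This gives one filler that is cocartesian at every level.
\end{enumerate}

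Your version makes explicit two things the paper's colimit argument uses without comment. First, to place the sequence $\iota_n(\phi)$ in that subcategory at all, the transition maps $\iota_n(\phi)\to\iota_{n+1}(\phi)$ must preserve cocartesian cells; this is exactly your uniqueness step. Second, closure under filtered colimits corresponds to your observation that cocartesianness for $\phi$ is detected by the truncations jointly, though not by any single one. The paper's route is shorter because it reuses a general structural property of fibrations.

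Two details are worth writing out:
\begin{itemize}
\item Uniqueness of cocartesian $k$-fillers for $k\geq 2$ reduces to the $1$-dimensional case. Taking identity whiskerings, a $\phi$-cocartesian $k$-cell $\alpha$ is in particular a $\phi_{\alpha(0),\alpha(1)}$-cocartesian $(k-1)$-cell.
\item For $k>n$, the statement that every $k$-cell of $\iota_n(\mD)$ is an identity holds only up to equivalence. It is justified because $\tau_n$ inverts the source inclusion $\bD^{k-1}\to\bD^k$, so that lifting problem has an essentially unique solution, which is invertible and hence cocartesian.
\end{itemize}
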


\begin{proof}
For every $m$-anticocartesian fibration $\phi$ and $n \geq 0$
the functor $\iota_n(\phi)$ is a $m$-anticocartesian fibration.
On the other hand every functor $\phi$ is the sequential colimit 
$ \iota_0(\phi) \to \iota_1(\phi) \to \cdots $ in $\Fun(\bD^1,\infty\scat)$
and we observe that the subcategory of $\Fun(\bD^1,\infty\scat) $ of $m$-anticocartesian fibrations and maps of such admits small filtered colimits that are preserved by the inclusion to $\Fun(\bD^1,\infty\scat) $.
\end{proof}

\begin{proposition}\label{fiberwiseeq}Let $ n  \geq 0.$
A map $\kappa: \mB \to \mC$ of $n$-anticocartesian fibrations over $\mD$ induces an equivalence under $\iota_n$ if and only if for every $Z \in \mD$ the following induced functor is inverted by $\iota_n:$
\begin{equation}\label{eqzt}
\{Z\} \times_\mD \mB \to  \{Z\} \times_\mD \mC. 
\end{equation}
   
\end{proposition}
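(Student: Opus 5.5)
We argue by induction on $n \geq 0$, using \cref{cocarto} to pass from $\kappa$ to its induced functors on morphism $\infty$-categories. The only-if direction is immediate: $\iota_n$ preserves pullbacks, being a right adjoint. Hence $\iota_n$ of the fiber $\{Z\}\times_\mD \mB$ is $\{Z\}\times_{\iota_n\mD}\iota_n\mB$, and similarly for $\mC$. So if $\iota_n(\kappa)$ is an equivalence, so is its base change. For the converse, note that $\iota_n(\kappa)$ is an equivalence if and only if it is essentially surjective on objects and induces, for all $X,Y\in\mB$, functors $\Mor_\mB(X,Y)\to\Mor_\mC(\kappa X,\kappa Y)$ that become equivalences after applying $\iota_{n-1}$. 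For $n=0$ only the first condition is relevant: we must show $\iota_0\mB\to\iota_0\mC$ is an equivalence of spaces. This map lies over $\iota_0\mD$ and is a fiberwise equivalence by hypothesis, hence an equivalence.

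For $n\geq 1$, essential surjectivity and the bijection on objects again follow fiberwise over $\iota_0\mD$, since $\iota_0$ commutes with fibers. For the morphism $\infty$-categories, fix $X,Y\in\mB$ over $A=\phi(X)$ and $B=\phi(Y)$. By \cref{cocarto}, the functors $\Mor_\mB(X,Y)\to\Mor_\mD(A,B)$ and $\Mor_\mC(\kappa X,\kappa Y)\to\Mor_\mD(A,B)$ are $(n-1)$-anticocartesian fibrations. Moreover, $\kappa_{X,Y}$ is a map of such fibrations, because $\kappa$ preserves cocartesian cells of dimension at most $n$. By the induction hypothesis it therefore suffices to show that for every $f\colon A\to B$ in $\mD$, the induced functor on fibers $\{f\}\times\Mor_\mB(X,Y)\to\{f\}\times\Mor_\mC(\kappa X,\kappa Y)$ is inverted by $\iota_{n-1}$.

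To compute these fibers we use the $1$-cocartesian lifts. Let $X\to f_!X$ be a $\phi$-cocartesian lift of $f$. By the pullback square defining cocartesian $1$-morphisms, the fiber of $\Mor_\mB(X,Y)$ over $f$ is equivalent to the fiber of $\Mor_\mB(f_!X,Y)$ over $\id_B$, which is $\Mor_{\mB_B}(f_!X,Y)$. The same holds for $\mC$, and because $\kappa$ preserves cocartesian $1$-morphisms, $\kappa(f_!X)$ is a cocartesian lift of $f$ at $\kappa X$. These identifications are compatible with $\kappa$. So the functor in question is identified with $\Mor_{\mB_B}(f_!X,Y)\to\Mor_{\mC_B}(\kappa f_!X,\kappa Y)$, the functor that $\kappa_B\colon\mB_B\to\mC_B$ induces on morphism $\infty$-categories. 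By hypothesis $\iota_n(\kappa_B)$ is an equivalence, so $\iota_{n-1}$ inverts this functor. This completes the induction.

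The main obstacle is this last identification. We must check that the fiber of the pullback square defining cocartesian $1$-morphisms over $f$ agrees with the fiber over the identity, naturally in $\kappa$, and that $\iota_{n-1}$ commutes with these fibers. Both points hold because $\iota_{n-1}$ preserves limits and the identification is obtained by taking fibers of a pullback square over $\Mor_\mD(B,B)\to\Mor_\mD(A,B)$ at $\id_B\mapsto f$. The case $n=\infty$ is not needed, since $n\geq 0$ is finite in the statement.
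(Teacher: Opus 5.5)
Your proof is correct and follows the paper's argument essentially step for step. You induct on $n$, handle the object spaces fiberwise over $\iota_0(\mD)$, and reduce full faithfulness to the fibers of $\kappa_{X,Y}$ over $\Mor_\mD(A,B)$ via the induction hypothesis; these fibers are then identified with $\Mor_{\mB_B}(f_!X,Y)\to\Mor_{\mC_B}(\kappa f_!X,\kappa Y)$ using cocartesian lifts. The only difference is that you also spell out the trivial only-if direction, which the paper leaves implicit.
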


\begin{proof}
We proceed by induction on $n \geq 0$.
We start with $n=0.$
If for every $Z \in \mD$ the functor \ref{eqzt}
induces an equivalence under $\iota_0,$
the map $\iota_0(\mB) \to \iota_0(\mC)$
over $\iota_0(\mD)$ is fiberwise an equivalence,
and so an equivalence.

Assume next that the statement holds for $n$ and
let $\kappa: \mB \to \mC $ be a map of $n+1$-anticocartesian fibrations over $\mD$ such that for every $Z \in \mD$ 
the induced functor \begin{equation}\label{eqzzi} \iota_{n+1}(\{Z\} \times_\mD \mB) \to \iota_{n+1}(\{Z\} \times_\mD \mC)\end{equation} is an equivalence.

Then \ref{eqzt} induces an equivalence under $\iota_0$
and by the case $n=0$ the functor $\iota_0(\kappa)$ is an equivalence.
So to see that $\iota_{n+1}(\kappa)$ is an equivalence,
it suffices to see that $\iota_{n+1}(\kappa)$ is fully faithful.
The functor $\iota_{n+1}(\kappa)$ induces on morphism $\infty$-categories between $X,Y \in \mB$ lying over $W,Z \in \mD$, respectively, the functor
$$\iota_n(\Mor_\mB(X,Y)) \to \iota_n(\Mor_\mC(\kappa(X),\kappa(Y))).$$
By induction hypothesis it is enough to see that the map
$\Mor_\mB(X,Y)) \to \Mor_\mC(\kappa(X),\kappa(Y))$
of anticocartesian fibrations over $\Mor_\mD(W,Z) $  
induces on the fiber over every $\alpha: W \to Z $
a functor inverted by $\iota_n.$
Since $\kappa$ is a map of 1-anticocartesian fibrations over $\mD$,
the functor $\Mor_\mB(X,Y)) \to \Mor_\mC(\kappa(X),\kappa(Y))$ induces on the fiber over $\alpha$ the functor
$$ \Mor_{\{Z\} \times_\mD \mB}(\alpha_!(X),Y) \to \Mor_{\{Z\} \times_\mD \mC}(\kappa(\alpha_!(X)),\kappa(Y)),$$
where $\alpha_!$ denotes the target of a cocartesian lift.
The latter functor induces under $\iota_n$ the functor
$$ \Mor_{\iota_{n+1}(\{Z\} \times_\mD \mB)}(\alpha_!(X),Y) \to \Mor_{\iota_{n+1}(\{Z\} \times_\mD \mC)}(\kappa(\alpha_!(X)),\kappa(Y))$$ induced by the functor 
\ref{eqzzi}, which is an equivalence by assumption.
\end{proof}

\begin{corollary}\label{equivcocar}

A map $\kappa: \mB \to \mC$ of anticocartesian fibrations over $\mD$ is an equivalence if and only for every $Z \in \mD$ the following induced functor is an equivalence:
$$ \{Z\} \times_\mD \mB \to  \{Z\} \times_\mD \mC. $$
    
\end{corollary}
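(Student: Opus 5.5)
The plan is to deduce this from \cref{fiberwiseeq} by letting $n$ go to infinity. A map $\kappa:\mB\to\mC$ of anticocartesian fibrations over $\mD$ is, by definition, a map of $n$-anticocartesian fibrations over $\mD$ for every $n\geq 0$.

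The only-if direction is immediate. If $\kappa$ is an equivalence over $\mD$, then its pullback along $\{Z\}\to\mD$ is again an equivalence, since base change preserves equivalences.

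For the converse, suppose every fiber functor $\{Z\}\times_\mD\mB\to\{Z\}\times_\mD\mC$ is an equivalence. Then each of these fiber functors is inverted by $\iota_n$ for every $n$. Applying \cref{fiberwiseeq} for each $n\geq 0$, we conclude that $\iota_n(\kappa):\iota_n(\mB)\to\iota_n(\mC)$ is an equivalence for all $n$.

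It remains to pass from "$\iota_n(\kappa)$ is an equivalence for every $n$" to "$\kappa$ is an equivalence". There are two ways to do this, and either suffices.
\begin{enumerate}[\normalfont(1)]\setlength{\itemsep}{-2pt}
\item By the remark following \cref{theta}, every $\infty$-category $X$ is the sequential colimit $\colim_n \iota_n(X)$, naturally in $X$. So $\kappa$ is the colimit of the equivalences $\iota_n(\kappa)$ and is therefore an equivalence.
\item By definition, $\infty\Cat$ is the limit of the $n\Cat$ along the $\iota_n$. A morphism in a limit of categories is an equivalence if and only if all its images are equivalences, and the images of $\kappa$ are exactly the $\iota_n(\kappa)$.
\end{enumerate}

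The main point needing care is that \cref{fiberwiseeq} really applies for every finite $n$. This holds because an anticocartesian fibration, and a map of such, restricts to the corresponding $n$-level notions for each finite $n$. Beyond this check the argument is formal.
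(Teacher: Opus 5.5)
Your proposal is correct and is essentially the paper's argument: the paper gives no separate proof, stating the result as an immediate corollary of \cref{fiberwiseeq} applied for every finite $n$, together with the fact that a functor is an equivalence if and only if every $\iota_n$ of it is (the paper uses this fact explicitly in the proof of \cref{eqil}). One minor slip: the remark $X\simeq\colim_n\iota_n(X)$ appears just before \cref{carclo}, not after \cref{theta}.
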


\begin{proposition} Let $ n  \geq m \geq 0$ and $\kappa: \mB \to \mC$ 
a functor over $\mD$ that induces under $\iota_n$ a map of anticocartesian fibrations over $\iota_n(\mD)$.
The following are equivalent:

\begin{enumerate}[\normalfont(1)]\setlength{\itemsep}{-2pt}
\item[(1)] The functor $\kappa$ induces a $m$-anticocartesian fibration under $\iota_n$.

\item[(2)] For every $Z \in \mD$ the induced functor 
$
\{Z\} \times_\mD \mB \to  \{Z\} \times_\mD \mC
$
induces a $m$-anticocartesian fibration under $\iota_n$ and for every morphism $\alpha:W \to Z$ in $\mD$ the induced functor
$\alpha_!:\{W\} \times_\mD \mB \to \{Z\} \times_\mD \mB$
induces a map of $m$-anticocartesian fibrations under $\iota_n$.
\end{enumerate}

Similarly, given a commutative diagram of the form 
\begin{equation}\label{sqco0}
\begin{xy}
\xymatrix{
\mB \ar[d]^\kappa \ar[r]^\rho
& \mB' \ar[d]^{\kappa'}
\\ 
\mC \ar[d]^{\lambda} \ar[r]^{\rho'} & \mC' \ar[d]^{\lambda'}
\\ 
\mD \ar[r]^\tau & \mD',
}
\end{xy}\end{equation}
such that $\kappa,\kappa'$ induce under $\iota_n$ $m$-anticocartesian fibrations and maps of anticocartesian fibrations over $\iota_n(\mD)$ and $\iota_n(\mD'),$ respectively, and $\rho,\rho'$
induce under $\iota_n$ maps of $m$-anticocartesian fibrations
over $\iota_n(\tau).$
Then the following are equivalent:

\begin{enumerate}[\normalfont(1)]\setlength{\itemsep}{-2pt}
\item[{(1')}] The top square induces under $\iota_n$ a map of $m$-anticocartesian fibrations.

\item[(2')] For every $Z \in \mD$ the induced commutative square
\begin{equation}\label{sqco1}
\begin{xy}
\xymatrix{
\{Z\} \times_\mD \mB \ar[d] \ar[r]
& \{\tau(Z)\} \times_{\mD'} \mB' \ar[d]
\\ 
\{Z\} \times_\mD \mC \ar[r] & \{\tau(Z)\} \times_{\mD'} \mC',
}
\end{xy}\end{equation}
induces under $\iota_n$ a map of $m$-anticocartesian fibrations.
\end{enumerate}
\end{proposition}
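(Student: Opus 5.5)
We argue by induction on $m \geq 0$, in the same way as \cref{fiberwiseeq}, using the inductive description \cref{cocarto} of $m$-anticocartesian fibrations. Since $\iota_n$ preserves pullbacks and fibers, and $\iota_n(\{Z\}\times_\mD \mB) \simeq \{Z\}\times_{\iota_n\mD}\iota_n\mB$, we may replace $\kappa$ by $\iota_n(\kappa)$ from the start. So we may assume $\kappa$ is a map of anticocartesian fibrations over $\mD$, and we drop $\iota_n$ from the statement.

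\textbf{Direction (1)$\Rightarrow$(2).} Fibers of an $m$-anticocartesian fibration are $m$-anticocartesian by base change (\cref{elemen}). Now let $\alpha: W \to Z$ and consider the transport $\alpha_!$. For $m=1$, cocartesian edges of $\kappa$ in a fiber are cocartesian for $\kappa$, because the fiber is a pullback. Composing with the $\lambda\kappa$-cocartesian lift of $\alpha$ and using that $\kappa$ sends cocartesian edges to cocartesian edges, $\alpha_!$ preserves them. For higher cells we apply the same argument to morphism categories.

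\textbf{Direction (2)$\Rightarrow$(1).} This is the main step, and we model it on \cref{fibcocart}.

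First, the $1$-dimensional part. Given $X \in \mB$ and a morphism $\kappa(X) \to Y$ in $\mC$ over $\beta: W \to Z$ in $\mD$, factor it as a $\lambda$-cocartesian lift $\kappa(X) \to \beta_!\kappa(X) \simeq \kappa(\beta_! X)$ followed by a fiber morphism. Lift the first factor by the $\lambda\kappa$-cocartesian edge $X \to \beta_! X$; this is $\kappa$-cocartesian by \cref{elemen}. Lift the second factor by a $\kappa_Z$-cocartesian edge. To see that the latter is $\kappa$-cocartesian, we reproduce the pullback-square argument of \cref{fibcocart}: the relevant square of mapping categories lies over $\Mor_\mD(Z,\lambda\kappa(T))$, so it suffices to check it fiberwise. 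Over each $\gamma$, the square becomes the corresponding square for $\gamma_!$ applied to the edge, which is a pullback because $\gamma_!$ preserves $\kappa$-cocartesian edges by hypothesis. Since the base is an $\infty$-category, fiberwise checking requires the squares to be maps of cocartesian fibrations over $\Mor_\mD$, which holds because $\kappa$ is a map of anticocartesian fibrations. We then conclude by \cref{equivcocar}, applied to the comparison map into the pullback.

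Second, the higher cells. We apply \cref{cocarto}: we must show that each $\kappa_{X,Y}: \Mor_\mB(X,Y) \to \Mor_\mC(\kappa X, \kappa Y)$ is an $(m-1)$-anticocartesian fibration, compatibly with composition. These are functors over $\Mor_\mD(W,Z)$ between anticocartesian fibrations, so the induction hypothesis applies. Their fibers over $\gamma$ are $\Mor_{\mB_Z}(\gamma_! X, Y) \to \Mor_{\mC_Z}(\kappa\gamma_! X, \kappa Y)$, and these are $(m-1)$-anticocartesian because $\kappa_Z$ is $m$-anticocartesian. The fiber transports along morphisms of $\Mor_\mD(W,Z)$ are induced by $2$-cell transport. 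This is the main obstacle: we must show that the transport preserves cocartesian cells. We would reduce it to the hypothesis on $\alpha_!$ together with the fact that whiskering in $\mB_Z$ preserves cocartesian cells. The compatibility with pre- and postcomposition in \cref{cocarto}(2) is handled the same way.

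\textbf{Primed statement.} The proof of (1')$\Leftrightarrow$(2') is the same induction. Preservation of cocartesian $1$-cells is checked by decomposing each cocartesian cell as a $\lambda$-transport part followed by a fiber part, as above. The transport part is preserved because $\rho$ is a map over $\tau$; the fiber part is preserved by \eqref{sqco1}. Higher cells again reduce to morphism categories.
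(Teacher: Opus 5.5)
Your overall route is the paper's: the $1$-dimensional step (factor a lift as a $\lambda\kappa$-cocartesian edge followed by a fiberwise cocartesian edge, then check the relevant square of morphism $\infty$-categories fiberwise over $\Mor_\mD$ using \cref{fiberwiseeq} and the hypothesis on transports), followed by passing to $\kappa_{X,Y}$ over $\Mor_\mD(W,Z)$ via \cref{cocarto}. Truncating first and inducting on $m$ rather than on $n$ is harmless. However, the higher-cell step is not actually carried out, and the ingredient you propose for the obstacle you flag is misplaced.

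For a $2$-cell $\theta:\gamma\Rightarrow\delta$ of $\Mor_\mD(W,Z)$, no $1$-morphism of $\mD$ is involved, so the hypothesis on the transports $\alpha_!$ plays no role there. Lift $\theta$ cocartesianly starting at $X\to\gamma_!X$. Its target is an edge $X\to\gamma_!X$ over $\delta$, which factors as $X\to\delta_!X\xrightarrow{u}\gamma_!X$ with $u$ in $\mB_Z$. Since postcomposition preserves cocartesian cells, the fiber transport of $\kappa_{X,Y}$ along $\theta$ is $u^*:\Mor_{\mB_Z}(\gamma_!X,Y)\to\Mor_{\mB_Z}(\delta_!X,Y)$. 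On the $\mC$-side it is $\kappa(u)^*$, because $\kappa$ preserves cocartesian cells. Hence it is a map of $(m-1)$-anticocartesian fibrations by \cref{cocarto} applied to the fiber functor $\kappa_Z:\mB_Z\to\mC_Z$ alone.

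In the higher-cell part, the hypothesis on transports is needed instead for the compatibility clause of \cref{cocarto}(2). That clause cannot be ``handled the same way.'' Take $X'\to X$ and $Y\to Y'$ over $a:W'\to W$ and $b:Z\to Z'$. The functor $\Mor_\mB(X,Y)\to\Mor_\mB(X',Y')$ lies over $\Mor_\mD(W,Z)\to\Mor_\mD(W',Z')$, so checking that it preserves cocartesian cells requires the \emph{primed} statement at level $m-1$ to reduce to fibers. Over $\gamma$ the fiber map is $\bar f\mapsto v\circ b_!(\bar f)\circ w$, with $v:b_!Y\to Y'$ and $w:(b\gamma a)_!X'\to b_!\gamma_!X$ in $\mB_{Z'}$. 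It preserves cocartesian cells because $b_!$ is a map of $m$-anticocartesian fibrations and $\kappa_{Z'}$ is $m$-anticocartesian. So the two statements must be proved by a simultaneous induction, in which the unprimed case at level $m$ invokes the primed case at level $m-1$, as in the paper.

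A smaller slip: in (1)$\Rightarrow$(2), a $\kappa_W$-cocartesian edge is $\kappa$-cocartesian not because the fiber is a pullback (that gives the converse). Rather, the $\kappa$-cocartesian lift with the same source lies in $\mB_W$ and therefore agrees with it. Preservation by $\alpha_!$ then uses right cancellation of cocartesian edges.
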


\begin{proof}
By induction on $n\geq 0$ and inspection, (1) implies (2) and (1') implies (2').
We prove by induction on $n \geq 0$ that (2) implies (1) and (2') implies (1').
For $n=0$ there is nothing to show.
Let $n \geq 0.$ We assume that (2) implies (1) for $n.$

Let $\kappa$ be a functor that induces under $\iota_{n+1}$ a map of anticocartesian fibrations over $ \iota_{n+1}(\mD)$ and that satisfies (2) for $n+1$ and $m \leq n+1.$
We prove first that (2) for $m=1$ implies (1) for $m=1.$

For every $X \in \mB$ every morphism $\kappa(X) \to Y$ in $\mC$
admits a lift $ X \to Z \to T,$ where the morphism $X \to Z$ is cocartesian over $\iota_{n+1}(\mD)$ and the morphism $Z \to T$ lies in $\{Y\} \times_{\iota_{n+1}(\mD)} \iota_{n+1}(\mB)$
and is cocartesian with respect to the functor 
$$\{Y\} \times_{\iota_{n+1}(\mD)} \iota_{n+1}(\mB) \to \{Y\} \times_{\iota_{n+1}(\mD)} \iota_{n+1}(\mC).$$
By \cref{elemen} the morphism $X \to Z$ is cocartesian over $\iota_{n+1}(\mC)$ and since cocartesian morphisms are closed under composition,
it suffices to see that every morphism $A \to B$ 
cocartesian for the functor
$\{Y\} \times_{\iota_{n+1}(\mD)} \iota_{n+1}(\mB) \to \{Y\} \times_{\iota_{n+1}(\mD)} \iota_{n+1}(\mC)$
is sent by the projection $ \{Y\} \times_{\iota_{n+1}(\mD)} \iota_{n+1}(\mB)\to \iota_{n+1}(\mB)$ to a morphism cocartesian 
for the functor
$ \iota_{n+1}(\mB) \to \iota_{n+1}(\mC).$

By assumption for every $U \in \mB $ lying over $Z \in \mD$ the induced functor
$$ \Mor_{\iota_{n+1}(\mB)}(B,U) \to \Mor_{\iota_{n+1}(\mB)}(A,U) \times_{\Mor_{\iota_{n+1}(\mC)}(\kappa(B),\kappa(U))} \Mor_{\iota_{n+1}(\mC)}(\kappa(A),\kappa(U)) $$
is a map of anticocartesian fibrations over $\Mor_{\iota_{n+1}(\mD)}(Y,Z)$ and so by \cref{fiberwiseeq} an equivalence if
it induces an equivalence on the fiber over any $\alpha \in \Mor_{\iota_{n+1}(\mC)}(Y,Z).$
The latter functor induces on the fiber over any 
$\alpha \in \Mor_{\iota_{n+1}(\mC)}(Y,Z)$
the functor 
$$ \Mor_{\{Z \}\underset{\iota_{n+1}(\mD)}{\times}\iota_{n+1}(\mB)}(\alpha_!(B), U) \to $$$$ \Mor_{\{ Z \}\underset{\iota_{n+1}(\mD)}{\times}\iota_{n+1}(\mC)}(\kappa(\alpha_!((B)), \kappa(U)) \underset{\Mor_{\{ Z \}\underset{\iota_{n+1}(\mD)}{\times}\iota_{n+1}(\mC)}(\kappa(\alpha_!(A)), \kappa(U))}{\times} \Mor_{\{ Z \}\underset{\iota_{n+1}(\mD)}{\times}\iota_{n+1}(\mB)}(\alpha_!(A), U).$$
The latter is an equivalence because by assumption the morphism $ \alpha_!(A) \to \alpha_!(B) $ is cocartesian for the functor
$\{Z\} \times_{\iota_{n+1}(\mD)} \iota_{n+1}(\mB) \to \{Z\} \times_{\iota_{n+1}(\mD)} \iota_{n+1}(\mC)$.
This proves that (2) implies (1) for $m=1$.
Moreover the description of cocartesian 1-morphisms implies that (2') implies (1') for $m=1.$
    
We prove next that (2) implies (1) for $1 < m \leq n+1.$ 
We prove first that $\iota_{n+1}(\kappa)$ induces on morphism
$\infty$-categories a $m-1$-anticocartesian fibration.
For every $X,Y \in \mB$ lying over $W,Z \in \mD$, respectively,
the functor $$\iota_{n+1}(\kappa)_{X,Y}: \Mor_{\iota_{n+1}(\mB)}(X,Y) \to \Mor_{\iota_{n+1}(\mC)}(\kappa(X),\kappa(Y)) $$
is the image under $\iota_n$ of the functor
$\kappa_{X,Y}: \Mor_\mB(X,Y) \to \Mor_\mC(\kappa(X),\kappa(Y))$
over $\Mor_\mD(W, Z)$, which by \cref{cocarto} is a map of anticocartesian fibrations over $\Mor_\mD(W, Z).$
Hence by induction hypothesis we see that $\iota_{n+1}(\kappa)_{X,Y}$ is a $m-1$-anticocartesian fibration it suffices to observe that for every $\alpha \in \Mor_\mD(W, Z)$ the induced functor 
$$\{\alpha\} \times_{\Mor_\mD(W, Z)} \Mor_\mB(X,Y) \to \{\alpha\} \times_{\Mor_\mD(W, Z)} \Mor_\mC(\kappa(X),\kappa(Y))$$
yields under $\iota_n$ the functor  
$ \iota_n(\Mor_{\{Z\} \times_\mD \mB}(\alpha_!(X),Y)) \to \iota_n(\Mor_{\{Z\} \times_\mD \mC}(\kappa(\alpha_!(X)),\kappa(Y)))$, which is a $m$-anticocartesian fibration by assumption,
and for every morphism $\alpha \to \beta$ in $\Mor_\mD(W, Z)$
the induced functor
$$\{\beta\} \times_{\Mor_\mD(W, Z)} \Mor_\mB(X,Y) \to \{\alpha\} \times_{\Mor_\mD(W, Z)} \Mor_\mB(X,Y)$$
yields under $\iota_n$ the map of anticocartesian fibrations
$$ \iota_n(\Mor_{\{Z\} \times_\mD \mB}(\alpha_!(X),Y) \to \iota_n(\Mor_{\{Z\} \times_\mD \mB}(\beta_!(X),Y) $$ induced by the canonical morphism
$\beta_!(X) \to \alpha_!(X).$

We finish the proof of $(2)\Rightarrow (1)$ by showing that for every pair of morphisms $X' \to X, Y \to Y'$ in $\mB$
the functor $\Mor_{\iota_{n+1}(\mB)}(X,Y) \to \Mor_{\iota_{n+1}(\mB)}(X',Y')$ sends $\iota_{n+1}(\kappa)_{X,Y}$-cocartesian morphisms to
$\iota_{n+1}(\kappa)_{X',Y'}$-cocartesian morphisms.
To see this we apply the induction hypothesis of the implication (2') to (1') to the following commutative diagram:
\begin{equation}\label{sqcoo}
\begin{xy}
\xymatrix{
\Mor_{\iota_{n+1}(\mB)}(X,Y) \ar[d]^{\iota_{n+1}(\kappa)_{X,Y}} \ar[r]
& \Mor_{\iota_{n+1}(\mB)}(X',Y') \ar[d]^{\iota_{n+1}(\kappa)_{X',Y'}}
\\ 
\Mor_{\iota_{n+1}(\mC)}(\kappa(X),\kappa(Y)) \ar[d] \ar[r] & \Mor_{\iota_{n+1}(\mC)}(\kappa(X'),\kappa(Y')) \ar[d]
\\ 
\Mor_{\iota_{n+1}(\mD)}(\lambda(\kappa(X)),\lambda(\kappa(Y))) \ar[r] & \Mor_{\iota_{n+1}(\mD)}(\lambda(\kappa(X')),\lambda(\kappa(Y')).
}
\end{xy}\end{equation}
This square induces the following commutative square:
\begin{equation*}
\begin{xy}
\xymatrix{
\iota_n(\Mor_{\{Z\} \times_\mD \mB}(\alpha_!(X),Y))
\ar[d] \ar[r]
& \iota_n(\Mor_{\{Z\} \times_\mD \mB}(\alpha_!(X'),Y')) \ar[d]
\\ 
\iota_n(\Mor_{\{Z\} \times_\mD \mC}(\kappa(\alpha_!(X)),\kappa(Y))) \ar[r] & \iota_n(\Mor_{\{Z\} \times_\mD \mC}(\kappa(\alpha_!(X')),\kappa(Y'))).
}
\end{xy}\end{equation*}
Therefore (1) follows from the description of the functor $\{\alpha\} \times_{\Mor_\mD(W, Z)} \iota_{n+1}(\kappa)_{X,Y}$ and the assumption that the functor $
\iota_{n+1}(\{Z\} \times_\mD \mB) \to  \iota_{n+1}(\{Z\} \times_\mD \mC)$ is a $m$-anticocartesian fibration and that the latter commutative square is a map of $m-1$-anticocartesian fibrations.

We now prove that $(2')\Rightarrow (1')$.
We have already proven this for $m=1.$
We assume $(2')\Rightarrow (1')$ for $n$
and assume that $(2')$ holds for $n+1$ and $m \leq n+1.$
By the induction hypothesis it remains to see that for every $X,Y \in \mB$ lying over $W,Z \in \mD$, respectively, and $\alpha \in \Mor_\mD(W,Z) $ the induced commutative square
\begin{equation*}
\begin{xy}
\xymatrix{
\{\alpha\} \times_{\Mor_\mD(W,Z)} \Mor_\mB(X,Y) \ar[d] \ar[r]
& \{\alpha\} \times_{\Mor_{\mD'}(\tau(W),\tau(Z))} \Mor_{\mB'}(\rho(X),\rho(Y)) \ar[d]
\\ 
\{\alpha\} \times_{\Mor_\mD(W,Z)} \Mor_\mC(\kappa(X),\kappa(Y)) \ar[r] & \{\alpha\} \times_{\Mor_{\mD'}(\tau(W),\tau(Z))} \Mor_{\mC'}(\rho'(\kappa(X)),\rho'(\kappa(Y))),
}
\end{xy}\end{equation*}
induces under $\iota_n$ a map of $m-1$-anticocartesian fibrations.
By what we have proven, the latter commutative square induces under $\iota_n$ the following one:

\begin{equation*}
\begin{xy}
\xymatrix{
\Mor_{\{Z\} \times_\mD \mB}(\alpha_!(X),Y) \ar[d] \ar[r]
& \Mor_{\{\tau(Z)\} \times_{\mD'} \mB'}(\rho(\alpha_!(X)),\rho(Y)) \ar[d]
\\ 
\Mor_{\{Z\} \times_\mD \mC}(\kappa(\alpha_!(X)),\kappa(Y)) \ar[r] & \Mor_{\{\tau(Z)\} \times_{\mD'} \mC'}(\rho'(\kappa(\alpha_!(X))),\rho'(\kappa(Y))).
}
\end{xy}\end{equation*}
By assumption the latter induces under $\iota_n$ a map of $m-1$-anticocartesian fibrations.
\end{proof}

\begin{corollary}Let $m \geq 0.$
A map $\kappa: \mB \to \mC$ of anticocartesian fibrations over $\mD$ is a $m$-anticocartesian fibration if and only if for every $Z \in \mD$ the induced functor 
$
\{Z\} \times_\mD \mB \to  \{Z\} \times_\mD \mC
$
is a $m$-anticocartesian fibration and for every morphism $Z \to W$ in $\mD$ the induced functor
$\{W\} \times_\mD \mB \to \{Z\} \times_\mD \mB$
is a map of $m$-anticocartesian fibrations.
   
\end{corollary}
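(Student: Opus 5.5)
This follows by specializing the preceding proposition to $n=\infty$. Equivalently, we run its inductive argument with $\iota_n$ replaced by the identity.

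The statement is: for $\kappa:\mB\to\mC$ a map of anticocartesian fibrations over $\mD$, $\kappa$ is an $m$-anticocartesian fibration if and only if every fiber functor $\{Z\}\times_\mD\mB\to\{Z\}\times_\mD\mC$ is one, and every transport functor is a map of $m$-anticocartesian fibrations.

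\textbf{Reduction to truncations.} By the lemma opening the section on locality principles, $\kappa$ is an $m$-anticocartesian fibration if and only if $\iota_n(\kappa)$ is one for every $n\geq 0$. Since $\iota_n$ is a right adjoint, it preserves pullbacks, so $\iota_n$ of the fiber $\{Z\}\times_\mD\mB$ is the fiber $\{Z\}\times_{\iota_n\mD}\iota_n\mB$. Moreover, $\iota_n$ sends a map of anticocartesian fibrations over $\mD$ to a map of anticocartesian fibrations over $\iota_n(\mD)$. Hence $\kappa$ satisfies the hypothesis of the preceding proposition for every $n\geq m$.

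\textbf{Only if.} This direction is inspection, as recorded for (1)$\Rightarrow$(2) there. Fibers of an $m$-anticocartesian fibration are again $m$-anticocartesian fibrations by base change (\cref{elemen}). The transport $\alpha_!$ is induced by cocartesian lifts. These are $\kappa$-compatible, so by the pasting clause of \cref{elemen} it preserves $\kappa$-cocartesian cells of dimension at most $m$.

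\textbf{If.} Assume (2) for $\kappa$. Applying $\iota_n$ to each fiber functor and to each transport gives, by the lemma on truncations in one direction, condition (2) of the preceding proposition for every $n\geq m$. That proposition then shows $\iota_n(\kappa)$ is an $m$-anticocartesian fibration for all such $n$. For $n<m$ we still get an $n$-truncated fibration by the same statement with $m$ replaced by $n$, which suffices because the truncation lemma only uses the filtered colimit $\kappa\simeq\colim_n\iota_n(\kappa)$. We conclude by that lemma, or equivalently by \cref{seqcol}.

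\textbf{Main obstacle.} The delicate point is the bookkeeping with the hypothesis $n\geq m$ in the proposition. For $m=\infty$ no finite $n$ satisfies it. In that case I would instead apply the proposition to each $\iota_n(\kappa)$ with $m$ replaced by $n$: since $\iota_n(\mB)$ is an $n$-category, being an $n$-anticocartesian fibration is the same as being an $\infty$-anticocartesian fibration for all cells present. I would then pass to the colimit over $n$.
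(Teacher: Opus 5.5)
Your proposal is correct and follows the paper's intended derivation. The corollary is stated without separate proof as the untruncated case of the preceding proposition, and that is exactly what you obtain by applying the proposition to $\iota_n(\kappa)$ for $n\geq m$ and then passing to $\kappa\simeq\colim_n\iota_n(\kappa)$ via the truncation lemma. Your ``main obstacle'' for $m=\infty$ is not actually an obstacle: by definition, an ($\infty$-)anticocartesian fibration, or a map of such, is one that is $m$-anticocartesian for every finite $m$, so that case follows immediately from the finite ones.
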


\begin{corollary}\label{paracocart}
A map $\kappa: \mB \to \mC$ of cocartesian fibrations over $\mD$ is a cocartesian fibration if and only if for every $Z \in \mD$ the induced functor 
$
\{Z\} \times_\mD \mB \to  \{Z\} \times_\mD \mC
$
is a cocartesian fibration and for every morphism $Z \to W$ in $\mD$ the induced functor
$\{W\} \times_\mD \mB \to \{Z\} \times_\mD \mB$
is a map of cocartesian fibrations.
   
\end{corollary}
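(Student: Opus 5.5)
The plan is to reduce \cref{paracocart} to the preceding corollary for anticocartesian fibrations by conjugating with $(-)^\co$. By definition, $\kappa$ is a cocartesian fibration exactly when $\kappa^\co$ is an anticocartesian fibration. Likewise, $\kappa$ is a map of cocartesian fibrations over $\mD$ exactly when the square $\kappa^\co$ over $\mD^\co$ is a map of anticocartesian fibrations over $\mD^\co$. So the statement should follow once we know how $(-)^\co$ interacts with fibers and with the fiber transports.

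The first step is to record that $(-)^\co:\infty\Cat\to\infty\Cat$ is an involution, in particular an equivalence, so it preserves pullbacks. It also fixes the objects of an $\infty$-category, hence fixes $\bD^0$. Consequently, for every $Z\in\mD$ there is a canonical equivalence $(\{Z\}\times_\mD\mB)^\co\simeq\{Z\}\times_{\mD^\co}\mB^\co$, naturally in the functor $\kappa$. The fiber functor $\{Z\}\times_\mD\mB\to\{Z\}\times_\mD\mC$ is therefore a cocartesian fibration if and only if $\{Z\}\times_{\mD^\co}\mB^\co\to\{Z\}\times_{\mD^\co}\mC^\co$ is anticocartesian. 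This is taken with $m=\infty$ in the preceding corollary.

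Next I handle the transport condition. The $1$-morphisms of $\mD^\co$ are those of $\mD$ with the same direction, since $(-)^\co$ reverses only even-dimensional cells. Moreover, a $1$-morphism is $\phi$-cocartesian if and only if it is $\phi^\co$-cocartesian; this is the remark that $1$-cocartesian and $1$-anticocartesian fibrations agree. Hence the fiber transport functors $\{W\}\times_\mD\mB\to\{Z\}\times_\mD\mB$, defined via cocartesian lifts of $1$-morphisms, are carried by $(-)^\co$ to the corresponding transport functors for $\mB^\co\to\mD^\co$. Here the direction of the transport depends on the paper's convention and is the same on both sides. Under the equivalence of the first step, the transport square being a map of cocartesian fibrations corresponds to its $\co$-conjugate being a map of anticocartesian fibrations. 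This is again just the definition of a map of cocartesian fibrations.

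Finally, I apply the preceding corollary (the $m=\infty$ case, obtained as the conjunction over all finite $m$) to $\kappa^\co$ over $\mD^\co$, and translate both sides back using the identifications above. The main point needing care is the naturality of the fiber identification with respect to transport. I expect this to follow directly because transport is characterized by a universal property built from pullbacks and morphism objects, and both are preserved by $(-)^\co$, which acts on morphism objects by $(-)^\op$ via \cref{oppo}. The only subtlety is to check that $(-)^\co$ preserves cocartesianness of $1$-morphisms rather than exchanging it with cartesianness. This holds because the defining square \ref{filler1.1} is sent to a pullback square of the same shape, with the same variance, under \cref{oppo}.
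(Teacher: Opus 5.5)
Your proposal is correct and is essentially the paper's own (unwritten) argument: \cref{paracocart} is the $(-)^\co$-dual of the preceding corollary on $m$-anticocartesian fibrations, taken for all finite $m$ at once. The fibers and fiber transports match because $(-)^\co$ is an automorphism of $\infty\Cat$ that fixes objects, $\bD^0$, $\bD^1$ and cocartesian $1$-morphisms. You were also right to treat the direction of the transport functor in the statement as a convention, since it is reversed relative to the preceding proposition but is read the same way on both sides.
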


We have the following local definition of anticocartesian fibrations:

\begin{proposition}\label{pua} Let $0 \leq n \leq \infty$ and $\phi: \mC \to \mD$ a functor.
The following are equivalent:

\begin{enumerate}[\normalfont(1)]\setlength{\itemsep}{-2pt}
\item The functor $\iota_n(\phi)$ is an anticocartesian fibration.

\item For every $\theta \in \Theta \cap n\Cat$ the pullback of $\iota_n(\phi)$ along any functor $\theta \to \iota_n(\mD)$ is an anticocartesian fibration.

\end{enumerate}

\end{proposition}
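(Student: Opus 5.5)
(1)$\Rightarrow$(2) is formal. Anticocartesian fibrations are stable under base change (by the remark after the definition, using \cref{elemen}). The functor $\theta\to\iota_n(\mD)$ factors through $\iota_n(\mD)$, so the pullback of $\iota_n(\phi)$ along it is again an anticocartesian fibration.

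For (2)$\Rightarrow$(1) I would argue by induction on $n$, using the inductive characterization \cref{cocarto}; the case $n=\infty$ comes last. For $n=0$ every functor of spaces is an anticocartesian fibration, since every morphism is an equivalence and hence cocartesian. Now let $n\geq 1$ and assume (2). Replacing $\phi$ by $\iota_n(\phi)$, I may assume $\mC,\mD$ are $n$-categories. Three steps remain.

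\emph{Step 1: cocartesian lifts of 1-morphisms.} Given $X\in\mC$ and $\alpha:\phi(X)\to Y$ in $\mD$, pull back along $\alpha:\bD^1\to\mD$; note $\bD^1\in\Theta\cap n\Cat$. By (2) there is a cocartesian lift $f: X\to X'$ of $\alpha$ in $\mC_\alpha:=\bD^1\times_\mD\mC$. It remains to see that $f$ is $\phi$-cocartesian, i.e. that for every $Z\in\mC$ the square \eqref{filler1.1} is a pullback. I would test this on $\Theta$-cells of $\Mor_\mD(\phi(X),\phi(Z))$, using that a map over a base is an equivalence iff its pullbacks along all $\bD^k$ (or all of $\Theta$) are equivalences. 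A $k$-cell $\bD^k\to\Mor_\mD(Y,\phi(Z))$ together with $\alpha$ assembles into a functor from a wedge $\bD^1\vee S(\bD^k)\in\Theta\cap n\Cat$ to $\mD$. By (2) the pullback of $\phi$ along this wedge is an anticocartesian fibration. The lift $f$ remains cocartesian there, because the restriction to $\bD^1$ is a pullback and we may choose the cocartesian lift in the larger pullback. Comparing morphism objects in this pullback with those in $\mC$ via \cref{elemen} and the Segal condition of \cref{theta} then gives the pullback square fiberwise. This is the step I expect to be the main obstacle: it turns the ``local'' lift into a global one by a density argument over the base $\Mor_\mD(\phi(X),\phi(Z))$, and it needs care, since cocartesianness over a cell need not obviously imply it over $\mD$.

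\emph{Step 2: morphism categories.} For $X,Y\in\mC$, I would show that $\phi_{X,Y}$ satisfies (2) for $n-1$. A functor $\theta'\to\Mor_\mD(\phi X,\phi Y)$ with $\theta'\in\Theta\cap(n-1)\Cat$ corresponds to $S(\theta')\to\mD$, and $S(\theta')\in\Theta\cap n\Cat$. The pullback of $\phi$ along it is anticocartesian by (2). By \cref{cocarto} its morphism functor between $X$ and $Y$, which is exactly the pullback of $\phi_{X,Y}$ along $\theta'$, is an anticocartesian fibration. The induction hypothesis then shows that $\phi_{X,Y}$ is anticocartesian.

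\emph{Step 3: compatibility, and the case $n=\infty$.} For morphisms $X'\to X$ and $Y\to Y'$, I need whiskering to preserve $\phi_{X,Y}$-cocartesian cells. Whiskering a $k$-cell by the two morphisms happens inside the pullback along a wedge $\bD^1\vee S(\bD^{k})\vee\bD^1\in\Theta$. There the pulled-back fibration is anticocartesian by (2), so \cref{cocarto} gives preservation. One must also know that cocartesianness of the cell in the pullback agrees with cocartesianness in $\mC$, which follows as in Step 1. \cref{cocarto} now yields (1) for finite $n$. For $n=\infty$, I would apply the finite case to $\iota_m(\phi)$ for each $m$, since $\Theta\cap m\Cat\subset\Theta$ and $\iota_m$ commutes with the pullbacks. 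Then I would conclude with the lemma that a functor is anticocartesian iff all $\iota_m(\phi)$ are.
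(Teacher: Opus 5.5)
Your proof is correct. Steps 2 and 3 are essentially the first half of the paper's argument. You pull back along $S(\theta)$, resp.\ $\bD^1\vee S(\theta)\vee\bD^1$, and move cocartesian cells between the pullback and $\Mor_\mC$: one direction is \cref{elemen}, the other is uniqueness of cocartesian lifts once $\phi_{X',Y'}$ is known to be anticocartesian.

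The genuine difference is Step 1. The paper proves the hom-level statements first. This makes the comparison functor $\Mor_\mC(X',Z)\to\Mor_\mD(Y,\phi(Z))\times_{\Mor_\mD(\phi(X),\phi(Z))}\Mor_\mC(X,Z)$ a map of anticocartesian fibrations over $\Mor_\mD(Y,\phi(Z))$. Then \cref{fiberwiseeq} reduces the claim to fibers over objects $\beta\colon Y\to\phi(Z)$, which are identified via local lifts. The remaining map $(\beta\circ\alpha)_!X\to\beta_!\alpha_!X$ is shown to be an equivalence by proving that locally cocartesian morphisms compose, using the pullback along $\bD^1\vee\bD^1$.

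You instead base-change the comparison functor along every disk $c\colon\bD^k\to\Mor_\mD(Y,\phi(Z))$. You recognise the result as precomposition with $f$ in $P=(\bD^1\vee S(\bD^k))\times_\mD\mC$. Since $\Mor(1,2)\to\Mor(0,2)$ is an equivalence in the wedge, cocartesianness of $f$ in $P$ (from uniqueness of lifts) is exactly the required equivalence.

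Your route makes Step 1 independent of Steps 2--3 and of \cref{fiberwiseeq}. The price is that you use (2) for all shapes $\bD^1\vee\bD^{k+1}$ with $k\le n-1$, rather than only $\bD^1$ and $\bD^1\vee\bD^1$. The paper is more economical in test shapes but needs the full hom-level fibration structure first. Your explicit reduction of $n=\infty$ to finite $n$ via the $\iota_m$-lemma is also correct; the paper leaves it implicit.

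Two small points. First, the base to test over is $\Mor_\mD(Y,\phi(Z))$, not $\Mor_\mD(\phi(X),\phi(Z))$ as you write twice; your wedge construction already uses the right one. Second, justify detection of equivalences by base change along disks via Yoneda, not by writing the base as a colimit of cells, because base change in $\infty\Cat$ does not preserve colimits. The fiber of $\Map(\bD^k,A)\to\Map(\bD^k,S)$ over $g$ is the space of sections of $\bD^k\times_S A\to\bD^k$. The functors $\Map(\bD^k,-)$ are jointly conservative by \cref{theta}.
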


\begin{proof}
Condition (1) implies (2) since cocartesian fibrations are stable under pullback.
We prove by induction on $n \geq 0$ that (2) implies (1).
For $n=0$ there is nothing to show.
We prove the induction step. Let $n \geq 0$ and assume that (2) implies (1) for the case of $n$ and any functor $\phi.$
Let $\phi: \mC \to \mD$ be a functor satisfying (2) for $n+1$.

We prove first that for any $X,Y \in \mC$ the functor
$\iota_{n+1}(\phi)_{X,Y}: \Mor_{\iota_{n+1}(\mC)}(X,Y) \to \Mor_{\iota_{n+1}(\mD)}(\phi(X), \phi(Y)) $
is a $n$-anticocartesian fibration and for every morphisms $X' \to X, Y \to Y'$ in $\mC$ the induced functor
\[
\Mor_{\iota_{n+1}(\mC)}(X,Y) \to \Mor_{\iota_{n+1}(\mC)}(X',Y')
\]
sends $\iota_{n+1}(\phi)_{X,Y}$-cocartesian morphims to $\iota_{n+1}(\phi)_{X',Y'}$-cocartesian morphisms.
By induction hypothesis the first is equivalent to say that
for every $X,Y \in \mC$ and functor $\sigma: \theta \to \Mor_{\iota_{n+1}(\mD)}(\phi(X), \phi(Y))$ for
$\theta \in \Theta \cap n\Cat$ the pullback $ \theta \times_{\Mor_{\iota_{n+1}(\mD)}(\phi(X), \phi(Y))} \Mor_{\iota_{n+1}(\mC)}(X,Y) \to \theta $ along $\sigma$ is an anticocartesian fibration.
By assumption the pullback $\nu: S(\theta) \times_{\iota_{n+1}(\mD)} \iota_{n+1}(\mC) \to S(\theta) $ along the functor
$S(\theta) \to \iota_{n+1}(\mD)$ corresponding to $\sigma $ is an anticocartesian fibration.
The projection $\nu $ induces on morphism $\infty$-categories between $X,Y$ the pullback $ \nu_{X,Y}: \theta \times_{\Mor_{\iota_{n+1}(\mD)}(\phi(X), \phi(Y))} \Mor_{\iota_{n+1}(\mC)}(X,Y) \to \theta $ of $\sigma$, which is therefore an anticocartesian fibration.
This implies by induction hypothesis that the functor
$\iota_{n+1}(\phi)_{X,Y}$ is an anticocartesian fibration. So the projection
$$ \theta \times_{\Mor_{\iota_{n+1}(\mD)}(\phi(X), \phi(Y))} \Mor_{\iota_{n+1}(\mC)}(X,Y) \to \Mor_{\iota_{n+1}(\mC)}(X,Y)$$ sends $\nu_{X,Y}$-cocartesian morphisms to $\iota_{n+1}(\phi)_{X,Y}$-cocartesian morphisms.
Let $X' \to X, Y \to Y' $ be morphisms in $\mC$ classified by functors
$\bD^1 \to \mC $.
The functor $S(\theta) \to \iota_{n+1}(\mD) $ and the two functors $\bD^1 \to \mC \to \mD$ give rise to a functor $\bD^1 \vee S(\theta) \vee \bD^1 \to  \iota_{n+1}(\mD) $ and the pullback $ \bD^1 \vee S(\theta) \vee \bD^1 \times_{ \iota_{n+1}(\mD)} \iota_{n+1}(\mC) \to \bD^1 \vee S(\theta) \vee \bD^1 $ is an anticocartesian fibration by assumption. The morphisms $X' \to X, Y \to Y'$ in $\mC$
determine morphisms of the pullback $ \bD^1 \vee S(\theta) \vee \bD^1 \times_{ \iota_{n+1}(\mD)} \iota_{n+1}(\mC) $ lying over the two copies of $\bD^1$.
Hence the induced functor
$\Mor_{\iota_{n+1}(\mC)}(X,Y) \to \Mor_{\iota_{n+1}(\mC)}(X',Y')$ sends $\iota_{n+1}(\phi)_{X,Y}$-cocartesian morphims to $\iota_{n+1}(\phi)_{X',Y'}$-cocartesian morphisms if the induced functor
$$\Mor_{\bD^1 \vee S(\theta) \vee \bD^1 \times_{ \iota_{n+1}(\mD)} \iota_{n+1}(\mC)}(X,Y) \to \Mor_{\bD^1 \vee S(\theta) \vee \bD^1 \times_{ \iota_{n+1}(\mD)} \iota_{n+1}(\mC)}(X',Y')$$ sends $\nu_{X,Y}$-cocartesian morphims to $\nu_{X',Y'}$-cocartesian morphisms.
The latter holds because $\nu$ is an anticocartesian fibration.

We complete the proof by showing that $\iota_{n+1}(\phi)$ is a 1-cocartesian fibration.
Condition (2) implies that the pullback of $\iota_{n+1}(\phi)$ along any functor $\bD^1 \to \iota_{n+1}(\mD) $ is an anticocartesian fibration.
Hence for every $X \in \mC $ every morphism $\phi(X) \to T$
admits a lift $f: X \to Y $ cocartesian for the projection
$\bD^1 \times_{\iota_{n+1}(\mD)} \iota_{n+1}(\mC) \to \bD^1 $.
By the description of morphism $\infty$-categories in pullbacks a morphism $f: X \to Y $ in $\mC$ is cocartesian for the latter projection
if and only if for every $Z \in \mC $ lying over $\phi(Y)$ the following induced functor is an equivalence:
$$ \Mor_{\{\phi(Y)\}\times_{\iota_{n+1}(\mD)} \iota_{n+1}(\mC)}(Y,Z) \to \{\phi(f)\} \times_{\Mor_{\iota_{n+1}(\mD)}(\phi(X),\phi(Y))} \Mor_{\iota_{n+1}(\mC)}(X,Z).$$

We would like to see that $f$ is $\phi$-cocartesian, i.e. that for every $Z \in \mC$ the induced functor
\begin{equation}\label{thefunctor}
    \Mor_{\iota_{n+1}(\mC)}(Y,Z) \to \Mor_{\iota_{n+1}(\mD)}(\phi(Y),\phi(Z)) \times_{\Mor_{\iota_{n+1}(\mD)}(\phi(X),\phi(Z))} \Mor_{\iota_{n+1}(\mC)}(X,Z)
\end{equation}
    is an equivalence.
By the first part of the proof 
the functor \ref{thefunctor} is a map of anticocartesian fibrations over 
$\Mor_{\iota_{n+1}(\mD)}(\phi(Y),\phi(Z))$.
By \cref{fiberwiseeq} this guarantees that \ref{thefunctor} is an equivalence if it induces an equivalence on the fiber over any $\alpha \in \Mor_{\iota_{n+1}(\mD)}(\phi(Y),\phi(Z))$.
Since the pullback of $\phi$ along any functor $\bD^1 \to \mD$ is an anticocartesian fibration, \ref{thefunctor} induces on the fiber over $\alpha \in \Mor_{\iota_{n+1}(\mD)}(\phi(Y),\phi(Z))$ the functor
$$\Mor_{\{\phi(Z)\}\times_{\iota_{n+1}(\mD)} \iota_{n+1}(\mC)}(\alpha_!(Y),Z) \to \Mor_{\{\phi(Z)\}\times_{\iota_{n+1}(\mD)} \iota_{n+1}(\mC)}((\alpha \circ f)_!(X),Z)$$
induced by the canonical morphism $(\alpha \circ f)_!(X) \to \alpha_!(f_!(X)) \simeq \alpha_!(Y),$
where the lifts are taken with respect to the respective anticocartesian fibrations
$\bD^1 \times_{\iota_{n+1}(\mD)} \iota_{n+1}(\mC) \to \bD^1 $.

To prove that the previous morphism is an equivalence we need to see that for every pair of morphisms $f: X \to Y, g: Y \to Z $ in $\mC$ cocartesian for the respective projection $\bD^1 \times_{\iota_{n+1}(\mD)} \iota_{n+1}(\mC) \to \bD^1$ the 
composite $g \circ f$ is also cocartesian for the respective projection $\bD^1 \times_{\iota_{n+1}(\mD)} \iota_{n+1}(\mC) \to \bD^1$.
For that we observe that the functor
$\bD^1 \vee \bD^1 \to \iota_{n+1}(\mC)$ classifying $f,g $ factors through the pullback 
$ (\bD^1 \vee \bD^1) \times_{\iota_{n+1}(\mD)} \iota_{n+1}(\mC) $
and the composition $\bD^1 \to \bD^1 \vee \bD^1 \to (\bD^1 \vee \bD^1) \times_{\iota_{n+1}(\mD)} \iota_{n+1}(\mC) $ classifying $g \circ f$ is 
cocartesian for the projection $\rho: (\bD^1 \vee \bD^1) \times_{\iota_{n+1}(\mD)} \iota_{n+1}(\mC) \to \bD^1 \vee \bD^1$, which is a cocartesian fibration by assumption. 
The latter holds since both restrictions $\bD^1 \to \bD^1 \vee \bD^1 \to (\bD^1 \vee \bD^1) \times_{\iota_{n+1}(\mD)} \iota_{n+1}(\mC) $ classifying $f,g,$ respectively, are by assumption cocartesian with respect to the respective projection $\bD^1 \times_{\iota_{n+1}(\mD)} \iota_{n+1}(\mC) \to \bD^1$ and so also $\rho$-cocartesian by the existence of $\rho$-cocartesian lifts.
Thus also the composite $g \circ f: X \to Z$ is $\rho$-cocartesian.
\end{proof}

\begin{corollary}\label{thetalocal} Let $\phi: \mC \to \mD$ be a functor.
The following are equivalent:

\begin{enumerate}[\normalfont(1)]\setlength{\itemsep}{-2pt}
\item The functor $\phi$ is a cocartesian fibration.

\item For every $\theta \in \Theta$ the pullback of $\phi$ along any functor $\theta \to \mD$ is a cocartesian fibration.

\item The pullback of $\phi$ along any functor $\bbDelta^n \to \mD$
for $n \geq 0 $ is a cocartesian fibration.

\item The pullback of $\phi$ along any functor $\cube^n \to \mD$
for $n \geq 0 $ is a cocartesian fibration.

\end{enumerate}

\end{corollary}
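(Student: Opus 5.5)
The implications (1)$\Rightarrow$(2), (1)$\Rightarrow$(3) and (1)$\Rightarrow$(4) are immediate, since cocartesian fibrations are stable under base change (this follows from \cref{elemen}, applied after the $(-)^\co$ involution). For the converses, I will pass through the anticocartesian formulation. By definition, $\phi$ is a cocartesian fibration if and only if $\phi^\co$ is an anticocartesian fibration. Moreover, $(-)^\co$ is an involution of $\infty\Cat$ that preserves pullbacks and sends $\Theta$ to itself. Hence condition (2) for $\phi$ is equivalent to condition (2) for $\phi^\co$ with ``anticocartesian'' in place of ``cocartesian'', and similarly for oriented simplices and cubes, with $\bDelta$ and $\cube$ replaced by their images under $(-)^\co$. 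So it suffices to prove the anticocartesian versions.

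For (2)$\Rightarrow$(1), I apply \cref{pua} with $n=\infty$, where $\iota_\infty$ is the identity. If one prefers to avoid the case $n=\infty$, one can instead apply \cref{pua} for each finite $n$: a functor $\theta\to\iota_n(\mD)$ with $\theta\in\Theta\cap n\Cat$ composes to a functor $\theta\to\mD$, and the pullback along it is $\iota_n$ of the pullback along $\theta\to\mD$. This uses that $\iota_n$ preserves limits and that $\iota_n(\theta)=\theta$, together with the fact that $\iota_n$ preserves anticocartesian fibrations. Hence each $\iota_n(\phi)$ is an anticocartesian fibration, and then $\phi$ is one by the lemma that $\phi$ is an $m$-anticocartesian fibration if and only if every $\iota_n(\phi)$ is.

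For (3)$\Rightarrow$(2) and (4)$\Rightarrow$(2), I reduce to showing that the pullback along a functor $\theta\to\mD$ is an anticocartesian fibration whenever all pullbacks along maps from oriented simplices (respectively oriented cubes) are. I intend to use density (\cref{orientdense}, \cref{cubedense}). Each $\theta\in\Theta$ is a colimit of oriented simplices, so I aim to run the proof of \cref{pua} with $\Theta$ replaced by $\bDelta$ or $\cube$. That proof only uses pullbacks along $S(\theta)$, along $\bD^1\vee S(\theta)\vee\bD^1$, and along $\bD^1\vee\bD^1$.

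The cleaner route is to show by induction on dimension that every disk $\bD^k$, every suspension $S(\theta)$, and every such wedge maps into some oriented simplex (respectively cube) $K$. Then the pullback along $\theta\to\mD$ is the further pullback along $\theta\to K$ of the pullback along some map $K\to\mD$. Here two facts are needed:
\begin{itemize}
\item $\theta\to\mD$ extends along $\theta\to K$; this fails in general.
\item Pullbacks of anticocartesian fibrations are anticocartesian.
\end{itemize}

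The main obstacle is therefore the reduction from $\Theta$ to simplices or cubes, since arbitrary maps $\theta\to\mD$ need not extend over a simplex. I would first try the following instead. Every object of $\Theta$ is a retract of an oriented simplex (respectively cube). For instance, $\bD^n$ is a retract of $\bDelta^n$ via the collapse $\bDelta^n\to\bD^n$, and wedges of disks are retracts of the corresponding joins. Given such a retract $\theta\xrightarrow{i}K\xrightarrow{r}\theta$, the pullback along $f:\theta\to\mD$ is the pullback along $i$ of the pullback along $f\circ r:K\to\mD$. That pullback is an anticocartesian fibration by (3) (respectively (4)), and stability under base change then gives (2). The remaining work is to verify these retraction statements from the explicit descriptions of $\bDelta^n$ and $\cube^n$ in \cite{GepnerHeine2026}.
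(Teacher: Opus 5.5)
Your treatment of (1)$\Rightarrow$(2),(3),(4) by base change is fine. So is (2)$\Rightarrow$(1) via \cref{pua}, including the passage through $(-)^\co$ and the reduction of $n=\infty$ to the finite $\iota_n$; this is exactly how the paper uses \cref{pua}. The retraction trick is also logically sound. If $r\circ i=\id_\theta$ with $i\colon\theta\to K$, then the pullback of $\phi$ along $f$ is the pullback along $i$ of the pullback along $f\circ r$. So (3), respectively (4), would give (2) for every $\theta$ that is a retract of some $\bDelta^N$, respectively some $\cube^N$.

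The gap is that this retraction lemma is not proved, and all of (3)$\Rightarrow$(1) and (4)$\Rightarrow$(1) now rests on it. You justify it only for the disks $\bD^n$, plus an imprecise claim about wedges of disks. General objects of $\Theta$ are iterated suspensions of wedges glued along lower-dimensional disks, for example $S(\bD^1\vee\bD^1)$, $S(\bD^2\vee\bD^1)$ or $\bD^1\vee S(\bD^1\vee\bD^1)\vee\bD^1$. For cubes you give no argument at all. There is no formal closure property to fall back on either: $S(\bDelta^a)$ is not an oriented simplex, so ``retract of an oriented simplex'' is not evidently stable under suspension or wedge, and the retractions have to be built by hand.

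Nor is the lemma supplied by the inputs the paper combines with \cref{pua}, namely the density theorems \cref{orientdense} and \cref{cubedense}. Density of one subcategory does not place the objects of another dense subcategory in its retract closure. For instance, $\Theta$ is dense, yet $\bDelta^2$ is not a retract of any object of $\Theta$. A retraction forces $i$ to be injective on objects, and in a wedge of suspensions $i(02)$ then factors through $i(1)$. Applying $r$ would give $02\simeq 12\circ 01$ in $\bDelta^2$, which is false.

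To finish your route you must actually prove that every object of $\Theta$ is a retract of an oriented simplex and of an oriented cube. It would suffice to do this for the recursively generated family of shapes through which the proof of \cref{pua} pulls back. As written, (3)$\Rightarrow$(1) and (4)$\Rightarrow$(1) are not established.
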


\begin{proof}
The statement follows from \cref{pua} and \cref{orientdense} and  \cref{cubedense}.
\end{proof}

\begin{proposition}\label{cocartexp} Let $1 \leq n \leq \infty$ and $ \phi: \mC \to \mD$ a $n$-anticocartesian fibration and $\mB$ an $\infty$-category.
The induced functor 
$$ \Fun(\mB, \mC) \to \Fun(\mB,\mD) $$
is a $n$-anticocartesian fibration whose cocartesian $k$-morphisms for
$ 1 \leq k \leq n$ are object-wise.

\end{proposition}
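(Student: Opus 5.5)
We prove this by induction on $n$, using the inductive characterization \cref{cocarto} together with the enriched statement \cref{enrcocartexp}. The key observation is that $\Fun(\mB,-)$ is computed in $\infty\Cat \simeq \infty\Cat\mathrm{-}\Cat$ (\cref{fix}). Its morphism $\infty$-categories are ends: for $F,G: \mB \to \mC$,
$$\Mor_{\Fun(\mB,\mC)}(F,G) \simeq \int_{b \in \mB}\Mor_\mC(F(b),G(b)),$$
an enriched limit of morphism $\infty$-categories of $\mC$. The objectwise statements are then transported through this limit.

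The case $n=1$ is the $\infty\Cat$-enriched, cartesian case of \cref{enrcocartexp}. Since $\phi$ is a 1-anticocartesian fibration, it is in particular a cocartesian fibration of $\infty\Cat$-enriched categories. \cref{enrcocartexp} then gives that $\Fun(\mB,\mC) \to \Fun(\mB,\mD)$ is a cocartesian fibration of enriched categories with objectwise cocartesian morphisms. Here we use that the 1-dimensional notion of $\phi$-cocartesian morphism coincides with the enriched one, because both are defined by the pullback condition \eqref{filler1.1}.

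For the induction step, assume the statement for $n-1$ and let $\phi$ be an $n$-anticocartesian fibration. By \cref{cocarto} we must show two things.

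First, for functors $F,G$, the induced functor on ends
$$\int_b \Mor_\mC(F(b),G(b)) \to \int_b \Mor_\mD(\phi F(b),\phi G(b))$$
is an $(n-1)$-anticocartesian fibration with objectwise cocartesian morphisms. The end is a limit in $\infty\Cat$ of a diagram built from the functors $\phi_{F(b),G(b)}$ and the transition functors $\Mor_\mC(F(b),G(b)) \to \Mor_\mC(F(b),G(b'))$ and $\Mor_\mC(F(b'),G(b')) \to \Mor_\mC(F(b),G(b'))$. By the induction hypothesis applied to these, and by \cref{cocarto}, each $\phi_{X,Y}$ is an $(n-1)$-anticocartesian fibration, and pre- and postcomposition are maps of such.

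So we need a lemma: a limit in $\Fun(\bD^1,\infty\Cat)$ of $(n-1)$-anticocartesian fibrations and maps of such, with transition functors that are themselves maps of fibrations, is an $(n-1)$-anticocartesian fibration whose cocartesian cells are detected componentwise. For pullbacks and products this follows by the same pasting-of-pullbacks argument as in \cref{elemen}, together with induction on dimension via \cref{cocarto}. Writing the end as an iterated pullback of products (a cotensor with the twisted arrow category) then reduces the general case to this one.

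Second, we need that pre- and postcomposition with natural transformations preserve these cocartesian cells. This follows because the cells are objectwise, and the objectwise composition maps preserve cocartesian cells by \cref{cocarto} applied to $\phi$.

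The main obstacle is the limit-stability lemma. The delicate point is that the diagram of the end involves transition maps going both ways over the base, so one must check that cocartesian lifts chosen objectwise are compatible. I would handle this by using that cocartesian lifts with specified source are unique (they are initial objects in a fiber), so the objectwise lifts assemble canonically. Finally, the objectwise description of cocartesian $k$-morphisms follows by unwinding the induction: a $k$-morphism of $\Fun(\mB,\mC)$ is cocartesian if and only if its $(k-1)$-cell in the end is, which in turn holds if and only if each component is.
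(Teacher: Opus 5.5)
Your proposal follows essentially the same route as the paper. The paper also inducts on $n$ via \cref{cocarto}, takes the base case from \cref{enrcocartexp}, and writes the morphism $\infty$-categories of $\Fun(\mB,\mC)$ as a limit, using the cosimplicial end formula of \cite[Corollary 4.49.]{heine2024bienriched}. It then appeals to closure of $(n-1)$-anticocartesian fibrations under small limits, which the paper simply asserts and you sketch via products, pullbacks and uniqueness of cocartesian lifts.

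The one point to state precisely is the shape of the end. It is weighted by the morphism $\infty$-categories of $\mB$, so its terms are the cotensors $\Fun(\Mor_\mB(Z_{m-1},Z_m)\times\cdots\times\Mor_\mB(Z_0,Z_1),\Mor_\mC(F(Z_0),G(Z_m)))$, not bare $\Mor_\mC(F(b),G(b'))$ indexed by a twisted arrow category. These terms are exactly where the induction hypothesis for $\Fun(K,-)$, with $K$ a product of morphism $\infty$-categories of $\mB$, is actually used; \cref{cocarto} alone only gives that the $\phi_{X,Y}$ are $(n-1)$-anticocartesian fibrations.
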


\begin{proof}We prove the statement by induction on $ n \geq 1.$
The case $n=1$ is \cref{enrcocartexp}.
We assume the statement holds for $n-1$.
By \cref{enrcocartexp} the induced functor 
$ \Fun(\mB, \mC) \to \Fun(\mB,\mD) $
is a 1-anticocartesian fibration.
By \cref{cocarto} it suffices to see that
for every $\X,\Y \in \Fun(\mB, \mC)$ the induced functor $\Mor_{\Fun(\mB, \mC)}(\X,\Y) \to \Mor_{\Fun(\mB, \mD)}(\phi(\X),\phi(\Y))$ is a $n-1$-anticocartesian fibration
whose cocartesian $k$-morphisms for $ 1 \leq k \leq n-1$ are objectwise.
Since $\phi$ is a $n$-anticocartesian fibration, this will imply that for every morphism $X \to X', Y' \to Y$ in $\Fun(\mB, \mC)$ the induced functor 
$\Mor_{\Fun(\mB, \mC)}(X,Y) \to \Mor_{\Fun(\mB, \mC)}(X',Y')$ sends cocartesian morphisms to cocartesian morphisms.
So we will conclude via \cref{cocarto}.

By \cite[Corollary 4.49.]{heine2024bienriched} the induced functor $$\Mor_{\Fun(\mB, \mC)}(\X,\Y) \to \Mor_{\Fun(\mB, \mD)}(\phi(\X),\phi(\Y))$$
identifies with the canonical functor
$$ \lim_{[n]\in \Delta} \prod_{(Z_0,...,Z_n) \in \iota_0(\mB)^{\times n+1}} \Fun(\Mor_\mB(Z_{n-1},Z_n)\times ... \times \Mor_\mB(Z_1,\Z_2) \times \Mor_\mB(Z_0,Z_1),\Mor_\mC(X(Z_0), Y(Z_n))) \to $$
$$ \lim_{[n]\in \Delta} \!\!\!\!\!\prod_{(Z_0,...,Z_n) \in \iota_0(\mB)^{\times n+1}} \!\!\!\!\!\Fun(\Mor_\mB(Z_{n-1},Z_n)\times ... \times \Mor_\mB(Z_1,Z_2) \times \Mor_\mB(Z_0,Z_1),\Mor_\mD(\phi(X(Z_0)), \phi(Y(Z_n)))).$$
The functor $\phi$ induces on morphism $\infty$-categories $n-1$-anticocartesian fibrations. We conclude via the induction hypothesis
since the subcategory of $n-1$-anticocartesian fibrations is closed under small limits.
\end{proof}

\subsection{Locally cocartesian fibrations}

\begin{definition}Let $1 \leq \n \leq \infty$ and $\phi: \mC \to \mD$ a functor.

\begin{enumerate}[\normalfont(1)]\setlength{\itemsep}{-2pt}

\item A 1-morphism $f: X \to Y$ in $\mC$ is locally $\phi$-cocartesian if for every $Z\in \mC$ lying over $\phi(Y) $ the induced functor
$$ \Mor_{\mC_{\phi(Y)}}(Y,Z) \to \{ \phi(f) \}\times_{\Mor_\mD(\phi(X), \phi(Z)} \Mor_\mC(X,Z) $$
is an equivalence.

\item An $n$-morphism $\alpha:\bD^n\to\mC$ is locally $\phi$-cocartesian if
for every pair of morphisms $(X \to \alpha(0), \alpha(1) \to Y)$ in $\mC$ lying over equivalences in $\mD$ the composite $n-1$-morphism
\[
\bD^{n-1}\to\Mor_\mC(\alpha(0),\alpha(1)) \to \Mor_\mC(X,Y)
\]
is locally $\phi_{X,Y}$-cocartesian.

\end{enumerate}

\end{definition}

\begin{definition}Let $\n \geq 1$ and $\phi: \mC \to \mD$ a functor.
A $n$-morphism in $\mC$ is locally $\phi$-cartesian if and only if
it is locally $\phi^{\co\op}$-cocartesian.
    
\end{definition}

\begin{remark}Let $\mA$ be an $\infty$-category and $\phi: \mM \to S(\mA)$ be a functor whose fibers over 0,1 we denote by $\mC, \mD,$ respectively.
A morphism $f: X \to Y$ of $\mM$, where $X \in \mC, Y \in \mD$, lying over $A \in \mA$, is $\phi$-cocartesian if and only if for every $Z\in \mD$ the commutative square 
\begin{equation}\label{filler4}
\begin{xy}
\xymatrix{
\Mor_\mM(Y,Z) \ar[d] \ar[r]
& \Mor_\mM(X,Z) \ar[d]^\phi
\\ 
\ast \ar[r]^{A} & \mA
}
\end{xy}\end{equation}
is a pullback square.
The latter condition is equivalent to say that the morphism $f: X \top Y$ of $\mM$ is locally $\phi$-cocartesian.
So every locally $\phi$-cocartesian is $\phi$-cocartesian in this example.

\end{remark}

\begin{proposition}\label{loccharto}
Let $1 \leq \n \leq \infty$ and $\phi: \mC \to \mD$ a functor.
An $n$-morphism $\alpha:\bD^n\to\mC$ is locally $\phi$-cocartesian if and only if it is $\phi'$-cocartesian, where $\phi': \bD^n \times_\mD \mC \to \bD^n$ is the pullback of $\phi$ along $\alpha.$ 

\end{proposition}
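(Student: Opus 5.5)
The plan is to induct on $n$, unwinding both conditions through the description of morphism $\infty$-categories in a pullback. For any objects $A,B$ of $\mP:=\bD^n\times_\mD\mC$ lying over $a,b\in\bD^n$ we have
$$\Mor_\mP(A,B)\simeq \Mor_{\bD^n}(a,b)\times_{\Mor_\mD(\phi(A),\phi(B))}\Mor_\mC(A,B).$$
Recall that $\bD^n=S(\bD^{n-1})$ has objects $0,1$, with $\Mor(0,0)\simeq\Mor(1,1)\simeq *$, $\Mor(1,0)\simeq\emptyset$ and $\Mor(0,1)\simeq\bD^{n-1}$. The morphism $\alpha$ itself lifts tautologically to $\mP$, with $\alpha(0)$ lying over $0$ and $\alpha(1)$ over $1$.

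\emph{Case $n=1$.} Write $f:X\to Y$ for $\alpha$. Being $\phi'$-cocartesian means that for every $Z\in\mP$ the square comparing $\Mor_\mP(Y,Z)\to\Mor_\mP(X,Z)$ with $\Mor_{\bD^1}(1,z)\to\Mor_{\bD^1}(0,z)$ is a pullback.
\begin{enumerate}[\normalfont(1)]\setlength{\itemsep}{-2pt}
\item If $Z$ lies over $0$, then $\Mor_{\bD^1}(1,0)=\emptyset$, so both left-hand corners are empty and the square is automatically a pullback.
\item If $Z$ lies over $1$, both bottom corners are contractible, so the condition says that $\Mor_\mP(Y,Z)\to\Mor_\mP(X,Z)$ is an equivalence.
\end{enumerate}
By the displayed formula, in case (2) we have $\Mor_\mP(Y,Z)\simeq\{\id\}\times_{\Mor_\mD(\phi Y,\phi Y)}\Mor_\mC(Y,Z)=\Mor_{\mC_{\phi(Y)}}(Y,Z)$ and $\Mor_\mP(X,Z)\simeq\{\phi(f)\}\times_{\Mor_\mD(\phi X,\phi Z)}\Mor_\mC(X,Z)$. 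Objects of $\mP$ over $1$ are exactly objects $Z\in\mC$ with an identification $\phi(Z)\simeq\phi(Y)$, so the condition is literally the definition of a locally $\phi$-cocartesian $1$-morphism.

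\emph{Inductive step, $n\geq2$.} Being $\phi'$-cocartesian requires, for every pair of morphisms $X'\to\alpha(0)$ and $\alpha(1)\to Y'$ in $\mP$, that the composite $(n-1)$-morphism in $\Mor_\mP(X',Y')$ be $\phi'_{X',Y'}$-cocartesian. Since $\Mor_{\bD^n}(1,0)=\emptyset$, such a pair forces $X'$ over $0$ and $Y'$ over $1$. Both morphisms then lie over identities of $\bD^n$, so their images in $\mD$ are equivalences, transported along the identifications $\phi(X')\simeq\phi(\alpha(0))$ and $\phi(Y')\simeq\phi(\alpha(1))$. Conversely, given morphisms $X'\to\alpha(0)$ and $\alpha(1)\to Y'$ of $\mC$ lying over equivalences $e,e'$ in $\mD$, we use $e,e'$ to regard $X'$ as an object of $\mP$ over $0$ and $Y'$ as one over $1$. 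This yields a bijection between the two indexing collections of pairs. Under it,
$$\Mor_\mP(X',Y')\simeq\bD^{n-1}\times_{\Mor_\mD(\phi X',\phi Y')}\Mor_\mC(X',Y'),$$
where $\bD^{n-1}\to\Mor_\mD(\phi X',\phi Y')$ is the image under $\phi$ of the composite $(n-1)$-morphism, and $\phi'_{X',Y'}$ is the projection to $\bD^{n-1}$. That is, $\phi'_{X',Y'}$ is exactly the pullback of $\phi_{X',Y'}$ along that composite $(n-1)$-morphism. The induction hypothesis, applied to $\phi_{X',Y'}$, says that the composite $(n-1)$-morphism is locally $\phi_{X',Y'}$-cocartesian iff it is cocartesian for this pullback. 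Quantifying over all pairs gives the claim for $\alpha$.

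The main obstacle is the bookkeeping in the inductive step: matching morphisms in $\mC$ lying over \emph{equivalences} of $\mD$ with morphisms in $\mP$ lying over \emph{identities} of $\bD^n$. I would handle this by noting that both $\phi$-cocartesianness and local cocartesianness are invariant under precomposing or postcomposing with equivalences in $\mC$, and under changing identifications by equivalences in $\mD$. It therefore suffices to check that every object of $\mC$ equipped with an equivalence in $\mD$ to $\phi(\alpha(i))$ is an object of $\mP$ over $i$, and this holds by the definition of the pullback.
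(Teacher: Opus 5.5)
Your proposal is correct and follows essentially the same route as the paper. Both argue by induction on $n$. The base case observes that only objects over $1$ contribute, so the square reduces to the defining functor $\Mor_{\mC_{\phi(Y)}}(Y,Z)\to\{\phi(f)\}\times_{\Mor_\mD(\phi X,\phi Z)}\Mor_\mC(X,Z)$. The inductive step identifies $\phi'_{X',Y'}$ with the pullback of $\phi_{X',Y'}$ along the composite $(n-1)$-morphism and applies the induction hypothesis. You are in fact more explicit than the paper in two places: the trivially satisfied case of objects over $0$, and the matching of morphisms over identities of $\bD^n$ with morphisms over equivalences of $\mD$.
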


\begin{proof}

We proceed by induction on $n \geq 1.$
We prove the induction start.
An $1$-morphism $f:X \to Y$ is $\phi'$-cocartesian, where $\phi': \bD^1 \times_\mD \mC \to \bD^1$ if and only if for every $Z \in \mC \times_\mD \{ \phi(Y) \} $ the commutative square 
\begin{equation}\label{filler5}
\begin{xy}
\xymatrix{
\Mor_{(\bD^1 \times_\mD \mC)}(Y,Z) \ar[d] \ar[r]
& \Mor_{(\bD^1 \times_\mD \mC)}(X,Z) \ar[d]^\phi
\\ 
\ast \ar[r]^{A} & \ast
}
\end{xy}\end{equation}
is a pullback square.
The top functor in this square identfies with the canonical functor
$$ \Mor_{\mC_{\phi(Y)}}(Y,Z) \to \{ \phi(f) \}\times_{\Mor_\mD(\phi(X), \phi(Z)} \Mor_\mC(X,Z).$$

We prove the induction step. Let $n > 0$. We assume the statement holds for $n-1$.

An $n$-morphism $\alpha:\bD^n\to\mC$ is $\phi'$-cocartesian, where $\phi': \bD^n \times_\mD \mC \to \bD^n$ is the pullback of $\phi$ along $\alpha$ if and only if the corresponding $n-1$-morphism $\alpha':\bD^{n-1}\to \Mor_\mC(\alpha(0),\alpha(1))$ is cocartesian for the pullback
$$\bD^{n-1} \times_{\Mor_\mD(\phi(\alpha(0)),\phi(\alpha(1)))} \Mor_\mC(\alpha(0),\alpha(1)) \to \bD^{n-1}$$
and for every morphism
$X \to \alpha(0)$ lying over $\phi(\alpha(0)$ and $\alpha(1) \to Y$ lying over $\phi(\alpha(1)$ the induced functor
$$\bD^{n-1} \times_{\Mor_\mD(\phi(\alpha(0)),\phi(\alpha(1)))} \Mor_\mC(\alpha(0),\alpha(1)) \to \bD^{n-1} \times_{\Mor_\mD(\phi(\alpha(0)),\phi(\alpha(1)))} \Mor_\mC(X,Y)$$
sends this $n-1$-morphism cocartesian over $\bD^{n-1}$ to an $n-1$-morphism cocartesian over $\bD^{n-1}$. 

So by induction hypothesis, an $n$-morphism $\alpha:\bD^n\to\mC$ is $\phi'$-cocartesian if and only if for every morphism
$X \to \alpha(0)$ and $\alpha(1) \to Y$ lying over equivalences the induced functor
$$\bD^{n-1} \to \Mor_\mC(\alpha(0),\alpha(1)) \to \Mor_\mC(X,Y) $$
is locally $\phi_{X,Y}$-cocartesian.  
\end{proof}

\begin{definition} Let $1 \leq n \leq \infty.$
A functor $\phi: \mC \to \mD$ is a locally $n$-anticocartesian fibration if for every $1 \leq k \leq n$ 
every commutative square 
$$\begin{xy}
\xymatrix{
\bD^{k-1} \ar[d] \ar[r]
& \mC \ar[d]^\phi
\\ 
\bD^k \ar[r] & \mD
}
\end{xy}$$
admits a filler by a locally $\phi$-cocartesian morphism,
where the left vertical functor is the left inclusion.

\end{definition}

\begin{definition}Let $1 \leq n \leq \infty. $
.
\begin{enumerate}[\normalfont(1)]\setlength{\itemsep}{-2pt}

\item A functor $\phi: \mC \to \mD$ is a locally $n$-cocartesian fibration if $\phi^{\co}$ is a locally $n$-anticocartesian fibration.

\item A functor $\phi: \mC \to \mD$ is a locally $n$-cartesian fibration if $\phi^\op$ is a locally $n$-anticocartesian fibration.

\item A functor $\phi: \mC \to \mD$ is a locally $n$-anticartesian fibration if $\phi^{\co\op}$ is a locally $n$-anticocartesian fibration.

\item For $n = \infty$ we skip $n.$

\end{enumerate}
    
\end{definition}

We have the following inductive definition of locally anticocartesian fibrations:

\begin{proposition}\label{cocarto1}
Let $1 \leq n \leq \infty$ and $\phi: \mC \to \mD$ a functor.
The following are equivalent:

\begin{enumerate}[\normalfont(1)]\setlength{\itemsep}{-2pt}
\item The functor $\phi$ is a locally $n$-anticocartesian fibration.

\item The functor $\phi$ is a locally 1-anticocartesian fibration, and for every $\X,\Y \in \mC$ the functor
\[
\phi_{X,Y}: \Mor_\mC(\X,\Y) \to \Mor_\mD(\phi(\X),\phi(\Y))
\]
is a locally $n-1$-anticocartesian fibration
and for every pair of morphisms $X' \to X$ and $Y \to Y'$ in $\mC$
lying over equivalences in $\mD$ the induced functor 
$\Mor_\mC(\X,\Y) \to \Mor_\mC(X',Y')$ sends locally $\phi_{X,Y}$-cocartesian morphisms to locally $\phi_{X',Y'}$-cocartesian morphisms.
\end{enumerate}

\end{proposition}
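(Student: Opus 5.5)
We follow the pattern of the proof of \cref{cocarto}, arguing by induction on $n \geq 1$; for $n=1$ there is nothing to show. For $n>1$ we assume the equivalence for $n-1$ and every functor. The key point is to unwind the definition of a locally $k$-anticocartesian fibration for $1 < k \leq n$: we want to show that lifting problems of dimension $k$ against $\phi$ correspond exactly to lifting problems of dimension $k-1$ against the functors $\phi_{X,Y}$ on morphism $\infty$-categories.

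First, a commutative square with left vertical map the source inclusion $\bD^{k-1} \to \bD^k$, mapping to $\phi$, is the same as the following data: objects $X,Y \in \mC$, together with a commutative square with source inclusion $\bD^{k-2} \to \bD^{k-1}$ mapping to $\phi_{X,Y}$. Here $X,Y$ are the images of the two objects of $\bD^{k-1}$, which agree with those of $\bD^k$ because $k \geq 2$. Fillers correspond in the same way. Next, by definition a $k$-morphism $\alpha$ is locally $\phi$-cocartesian exactly when, for every pair $X' \to \alpha(0) = X$ and $Y = \alpha(1) \to Y'$ lying over equivalences in $\mD$, the image of the corresponding $(k-1)$-morphism under $\Mor_\mC(X,Y) \to \Mor_\mC(X',Y')$ is locally $\phi_{X',Y'}$-cocartesian. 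Taking both morphisms to be identities shows that this condition contains the requirement that the $(k-1)$-morphism itself be locally $\phi_{X,Y}$-cocartesian.

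For (2)$\Rightarrow$(1), condition (2) and the induction hypothesis give, for each $1 < k \leq n$, a locally $\phi_{X,Y}$-cocartesian filler of the induced square. The transport hypothesis in (2) then says that this filler is carried to a locally $\phi_{X',Y'}$-cocartesian morphism for all $X' \to X$, $Y \to Y'$ over equivalences, so the filler is a locally $\phi$-cocartesian $k$-morphism. Together with the locally 1-anticocartesian hypothesis, this gives (1).

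For (1)$\Rightarrow$(2), the locally $1$-anticocartesian condition is immediate. For each $X,Y$, fillers of squares against $\phi_{X,Y}$ are obtained from locally $\phi$-cocartesian fillers and are locally $\phi_{X,Y}$-cocartesian by the remark above. Via the induction hypothesis (applied in its form (1)), this shows that $\phi_{X,Y}$ is a locally $(n-1)$-anticocartesian fibration.

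The main obstacle is the transport statement. We must show that an arbitrary locally $\phi_{X,Y}$-cocartesian $(k-1)$-morphism $\beta$ is carried to a locally $\phi_{X',Y'}$-cocartesian one, not only those $\beta$ arising as fillers. I would handle this by the usual uniqueness argument. Take a locally $\phi$-cocartesian filler $\beta'$ with the same source and image as $\beta$. Both are locally $\phi_{X,Y}$-cocartesian lifts in the fiber of $\bD^{k-1} \times \Mor_\mD \Mor_\mC(X,Y) \to \bD^{k-1}$; by \cref{loccharto} they are cocartesian there. Cocartesian lifts with a given source are unique up to equivalence, so $\beta \simeq \beta'$ and the transport property follows. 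Checking that this uniqueness holds for higher cells, for instance by using \cref{elemen} inside the pullback to $\bD^{k-1}$, is the step I expect to need the most care.
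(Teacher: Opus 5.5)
Your proposal is correct and follows the paper's argument, which is just the proof of \cref{cocarto} carried over to the local setting. As there, suspension turns $k$-dimensional lifting problems against $\phi$ into $(k-1)$-dimensional ones against $\phi_{X,Y}$, and the transport condition is read off from the definition of locally cocartesian cells. Your uniqueness argument, via \cref{loccharto} and \cref{elemen} in the pullback to $\bD^{k-1}$, handles arbitrary locally $\phi_{X,Y}$-cocartesian cells rather than only the chosen fillers, which makes explicit a step the paper leaves implicit.
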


\begin{proof}
The same proof as \cref{cocarto}.
\end{proof}

\cref{loccharto} gives the following:

\begin{corollary}Let $1 \leq n \leq \infty.$
A functor $X \to Y$ is a locally $n$-anticocartesian fibration if and only if its pullback along every functor $\bD^\bk \to Y$ for $\bk \leq n$ is an anticocartesian fibration.	
    
\end{corollary}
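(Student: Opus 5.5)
The statement is that $\phi: X \to Y$ is a locally $n$-anticocartesian fibration if and only if, for every $k \leq n$, its pullback along every functor $\sigma:\bD^k \to Y$ is an anticocartesian fibration. The plan is to derive it from \cref{loccharto}, which identifies locally $\phi$-cocartesian $k$-morphisms $\alpha:\bD^k\to X$ with $\phi'$-cocartesian ones for $\phi':\bD^k\times_Y X\to\bD^k$. Since the lifting problem in the definition of a locally $n$-anticocartesian fibration concerns a square $\bD^{k-1}\to X$ over $\bD^k\to Y$, it factors through the pullback $\bD^k\times_Y X$. So such fillers are literally fillers of the corresponding square for $\phi_\sigma:\bD^k\times_Y X\to\bD^k$ along the identity cell of $\bD^k$.

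\emph{Forward direction.} Assume the pullback $\phi_\sigma$ along each $\sigma:\bD^k\to Y$ with $k\le n$ is anticocartesian. Take a square with bottom edge $\sigma$. The top corner $\bD^{k-1}\to X$ lifts to $\bD^{k-1}\to\bD^k\times_Y X$ over the source inclusion. Because $\phi_\sigma$ is anticocartesian, we get a $\phi_\sigma$-cocartesian filler $\alpha:\bD^k\to\bD^k\times_Y X$ lying over $\id_{\bD^k}$. Pulling $\phi_\sigma$ back along $\id$ changes nothing, so by \cref{loccharto} (applied to $\phi_\sigma$, whose pullback along $\alpha$'s image is $\phi_\sigma$ itself) $\alpha$ is locally cocartesian. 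It remains to transport this to $X$. The image of $\alpha$ in $X$ has pullback $\bD^k\times_Y X\to\bD^k$ along its image $\sigma$, which is again $\phi_\sigma$. Hence \cref{loccharto} shows the image is locally $\phi$-cocartesian.

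\emph{Converse.} Given $\phi$ locally $n$-anticocartesian and $\sigma:\bD^k\to Y$ with $k\le n$, I must show $\phi_\sigma$ is anticocartesian. Every cell of $\bD^k$ is either an identity or a subcell $\bD^j\hookrightarrow\bD^k$ with $j\le k$. Lifts over identities can be taken to be identities, which are cocartesian. For a nondegenerate cell $\tau:\bD^j\to\bD^k$, a locally $\phi$-cocartesian filler over $\sigma\tau$ exists and factors through $\bD^k\times_Y X$. By \cref{loccharto} this filler is cocartesian for the pullback along $\sigma\tau$, that is, for the restriction of $\phi_\sigma$ to $\bD^j$.

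The main obstacle is upgrading ``cocartesian over the sub-disk $\bD^j$'' to ``$\phi_\sigma$-cocartesian over all of $\bD^k$''. I would handle this using the rigidity of disks. Since $\bD^k$ has only two objects in each hom-level and all hom-categories are themselves disks or empty, I would argue by induction on $k$ via the recursive definition of cocartesian cells and \cref{cocarto1}. Over the two objects of $\bD^k$ there are no non-identity morphisms, so the relevant pre- and post-composition maps lie over equivalences. Consequently locally cocartesian and cocartesian agree, exactly as in the remark on functors over $S(\mA)$, applied to $\bD^k=S(\bD^{k-1})$. Iterating this identification on hom-categories yields that $\phi_\sigma$ satisfies the conditions of \cref{cocarto}, and hence is anticocartesian.
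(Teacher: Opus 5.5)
Your proposal is correct and follows the paper's route: the paper derives the corollary in one line from \cref{loccharto}, and you additionally spell out the step it leaves implicit in the ``only if'' direction, namely that over a disk $\bD^k=S(\bD^{k-1})$ locally cocartesian cells are already cocartesian. That step is sound once the phrase ``no non-identity morphisms'' is corrected to: $\bD^k$ has no morphisms $1\to 0$ and only identities on $0$ and $1$, so every whiskering of a cell running from the fiber over $0$ to the fiber over $1$ lies over identities; the remark on functors to $S(\mA)$ together with induction on $k$ then finishes the argument.
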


\begin{corollary}
A functor $X \to Y$ is a locally cocartesian fibration if and only if its pullback along every functor $\bD^n \to Y$ for $n \geq 0$ is a cocartesian fibration.	
    
\end{corollary}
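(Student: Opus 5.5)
The statement is the case $n=\infty$ of the preceding corollary, transported along the involution $(-)^\co$. By definition, $\phi: X \to Y$ is a locally cocartesian fibration if and only if $\phi^\co: X^\co \to Y^\co$ is a locally $\infty$-anticocartesian fibration. Applying the preceding corollary with $n=\infty$ to $\phi^\co$, this holds if and only if, for every $k \geq 0$ and every functor $\sigma: \bD^k \to Y^\co$, the pullback $\bD^k \times_{Y^\co} X^\co \to \bD^k$ is an anticocartesian fibration.

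The plan is then to rewrite this condition in terms of $\phi$ itself, in two steps.
\begin{enumerate}[\normalfont(1)]\setlength{\itemsep}{-2pt}
\item \emph{Disks are fixed by $(-)^\co$.} I will show that $(\bD^k)^\co \simeq \bD^k$ for all $k \geq 0$. By \cref{oppo} there are equivalences $S(\mC)^\co \simeq S(\mC^\op)$ and $S(\mC)^\op \simeq S(\mC^\co)$, the latter up to swapping the two distinguished objects. Since $\bD^0 = *$ is fixed by both involutions, induction on $k$ gives $(\bD^k)^\co \simeq \bD^k$ and $(\bD^k)^\op \simeq \bD^k$ as $\infty$-categories. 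Forgetting the basepoints makes the swap harmless.
\item \emph{Base change commutes with $(-)^\co$.} Since $(-)^\co$ is an equivalence of $\infty\Cat$, it preserves pullbacks. Step (1) then gives a bijection, up to equivalence, between functors $\bD^k \to Y^\co$ and functors $\bD^k \to Y$, via $\sigma \mapsto \sigma^\co$. Moreover, the pullback of $\phi^\co$ along $\sigma$ is equivalent, over $\bD^k \simeq (\bD^k)^\co$, to $\psi^\co$, where $\psi: \bD^k \times_Y X \to \bD^k$ is the pullback of $\phi$ along $\sigma^\co$.
\end{enumerate}

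By definition, $\psi$ is a cocartesian fibration if and only if $\psi^\co$ is an anticocartesian fibration. Combining this with the two steps, the condition from the preceding corollary is exactly that every pullback of $\phi$ along a functor $\bD^k \to Y$ is a cocartesian fibration, which is the claim.

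The only point needing care is step (1): one must keep track of the swapped endpoints of the suspension under $(-)^\op$, and check that the identification of pullbacks is compatible with the structure maps to $\bD^k$. I expect this to be routine bookkeeping. If the inductive description of the disks proves awkward, I would instead use that $\bD^k$ is the free $k$-cell, which has no non-trivial orientation data to reverse.
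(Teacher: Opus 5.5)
Your proof is correct and is essentially the paper's argument. The paper gets this corollary straight from \cref{loccharto}, in the same way as the preceding locally $n$-anticocartesian version. Your reduction to that version via $(-)^\co$ simply makes explicit the dualization the paper leaves implicit: it uses that $(-)^\co$ preserves pullbacks and that the disks are $\co$-self-dual, which you get from \cref{oppo} by passing to left adjoints $S$.
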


\begin{corollary}Let $1 \leq n \leq \infty.$
Every locally cocartesian fibration $X \to \bD^n$ is a cocartesian fibration.	
    
\end{corollary}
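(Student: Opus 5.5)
The plan is to combine the two preceding corollaries. Let $\phi: X \to \bD^n$ be a locally cocartesian fibration, i.e. by the definition $\phi^\co$ is a locally anticocartesian fibration. By the corollary just proven (derived from \cref{loccharto}), it suffices to show that $\phi^\co$ is an anticocartesian fibration. We know its pullback along every functor $\bD^k \to (\bD^n)^\co$ is an anticocartesian fibration, and $(\bD^n)^\co$ is again a disk, since $\bD^n$ is an iterated suspension of a point and the involutions preserve disks. So the task reduces to the following claim: for a functor $\psi: \mC \to \bD^n$ whose pullbacks along all cells of $\bD^n$ are anticocartesian fibrations, $\psi$ itself is an anticocartesian fibration.

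The simplest route is to observe that $\bD^n$ has a cell of top dimension, namely $\id: \bD^n \to \bD^n$. The pullback of $\psi$ along the identity is $\psi$ itself, so the hypothesis applied to this cell gives the conclusion directly. Concretely, the corollary derived from \cref{loccharto} applies with $Y = \bD^n$ and $k = n \leq n$, and the functor $\bD^n \to \bD^n$ is the identity.

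For the case $n = \infty$ one must be careful, since $\bD^\infty$ is not meant here: $n$ is a natural number, so $\bD^n$ is an honest disk. To be safe I will also handle the $\co$-twist, which is the only point needing checking. A functor is a locally cocartesian fibration if and only if $\phi^\co$ is locally anticocartesian. By the corollary, $\phi^\co$ is anticocartesian if and only if all pullbacks of $\phi^\co$ along disks $\bD^k \to (\bD^n)^\co \simeq \bD^n$ are anticocartesian. Since $(\bD^n)^\co \simeq \bD^n$, its identity is such a disk map. Hence $\phi^\co$ is anticocartesian, which means exactly that $\phi$ is cocartesian.

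The only potential obstacle is verifying $(\bD^n)^\co \simeq \bD^n$. This follows inductively from \cref{oppo} and the fact that suspension commutes with the involutions up to swapping endpoints: $S(\mC)^\co \simeq S(\mC^\op)$ and $S(\mC)^\op \simeq S(\mC^\co)$ with the points swapped. Starting from $\bD^0 = *$, which is fixed by both involutions, this gives the claim.
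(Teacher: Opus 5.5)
Your argument is correct and matches the paper, which states this as an immediate consequence of the preceding corollary via exactly your observation: pulling back along the top cell $\id\colon \bD^n \to \bD^n$ returns the functor itself. The detour through $\phi^\co$ and $(\bD^n)^\co \simeq \bD^n$ is harmless but unnecessary, since the corollary just before the statement is already phrased for locally cocartesian fibrations; reading $n$ as finite is consistent with the paper, which never defines $\bD^\infty$.
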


\section{\mbox{Bifibrations, free fibrations, and the universal fibration}}

\subsection{Oriented pullbacks are bifibrations}

\begin{definition}

Let $ n \geq 0 $ and $\mA,\mB,\mC$ be $\infty$-categories.

\begin{enumerate}[\normalfont(1)]\setlength{\itemsep}{-2pt}
\item An $n$-bifibration 
is a functor $\mC \to \mA \times \mB $ that is a map of $n$-cartesian fibrations over $\mA$ and $n$-cocartesian fibrations over $\mB.$

\item A $n$-antibifibration 
is a functor $\mC \to \mA \times \mB $ that is a map of $n$-anticartesian fibrations over $\mA$ and $n$-anticocartesian fibrations over $\mB.$

\item A bifibration 
is a functor $\mC \to \mA \times \mB $ that is a $n$-bifibration for every $n \geq 0.$

\item An antibifibration 
is a functor $\mC \to \mA \times \mB $ that is a $n$-antibifibration for every $n \geq 0.$
\end{enumerate}

\end{definition}

\begin{remark}
Let $X$ be an $\infty$-category.
We will show in \cref{targetfibr} that the source and target projections
\[
\Fun^{\oplax}(\bD^1,X)\to X\times X\qquad\text{and}\qquad \Fun^{\lax}(\bD^1,X)\to X\times X
\]
form a bifibration and antibifibration, respectively.
\end{remark}

\begin{definition}
Let $ n \geq 0 $ and $\mA,\mB,\mC, \mA',\mB',\mC'$ be $\infty$-categories.
A map of $n$-(anti)bifibrations is a commutative square
\begin{equation}\label{sqfib}
\xymatrix{
\mC \ar[r] \ar[d] & \mC' \ar[d] \\
\mA \times \mB \ar[r] & \mA' \times \mB'
}
\end{equation}
whose vertical functors are $n$-(anti)bifibrations,
that induces a map of (anti) $n$-cartesian fibrations after projection to the first factor and induces a map of (anti) $n$-cocartesian fibrations after projection to the second factor.
A map of (anti)bifibrations is a commutative square
(\ref{sqfib}) that is a map of $n$-(anti)bifibrations for every $n \geq 0.$

\end{definition}

\begin{notation}

Let $\mC$ be an antioriented category  and let
$$
\begin{tikzcd}
\mA \ar{r}{} \ar{d}[swap]{} & \mA' \ar[double]{dl}{} \ar{d}{} \\
C  \ar{r}[swap]{} & C',
\end{tikzcd}
\qquad
\begin{tikzcd}
B \ar{r}{} \ar{d}[swap]{} & B'  \ar{d}{} \\
C \ar[double]{ur}{}  \ar{r}[swap]{} & C'
\end{tikzcd}
$$
be antioriented squares in $\mC$. 
The outer antioriented square 
$$
\begin{tikzcd}
\mA {\bar{\vec{\times}}}_{\mC} \mB \ar{r}{} \ar{d}[swap]{} & \mB \ar{d}{}\ar{r}{} & \mB' \ar{d}{} \\
\mA \ar[double]{ur}{} \ar{d}{} \ar{r}[swap]{} & \mC \ar[double]{ur}{} \ar{d}{}\ar{r}{} & \mC' \ar{d}{} \\
\mA' \ar[double]{ur}{} \ar{r}{} & \mC' \ar{r}{} & \mC'
\end{tikzcd}
$$
in $\mC$ gives a morphism in $\mC:$
$$  \mA {\bar{\vec{\times}}}_{\mC} \mB \to \mA' {\bar{\vec{\times}}}_{\mC'} \mB'.$$

\end{notation}

\begin{theorem}\label{targetfi}
Let $F: \mA \to \mC$ and  $G: \mB \to \mC$ be functors.

\begin{enumerate}[\normalfont(1)]\setlength{\itemsep}{-2pt}
\item The functor $ \mA {\bar{\vec{\times}}}_{\mC} \mB \to \mA \times \mB $
is an antibifibration.

\item Let
$$
\begin{tikzcd}
\mA \ar{r}{\alpha} \ar{d}[swap]{} & \mA' \ar[double]{dl}{} \ar{d}{} \\
\mC  \ar{r}[swap]{\gamma} & \mC',
\end{tikzcd}
\qquad
\begin{tikzcd}
\mB \ar{r}{\beta} \ar{d}[swap]{} & \mB'  \ar{d}{} \\
\mC \ar[double]{ur}{} \ar{r}[swap]{\gamma} & \mC'
\end{tikzcd}
$$
be antioriented squares of $\infty$-categories.
The induced commutative square
\begin{equation}\label{sqfibb}
\begin{tikzcd}
\mA {\bar{\vec{\times}}}_{\mC} \mB \ar{r}{} \ar{d}[swap]{} & \mA' {\bar{\vec{\times}}}_{\mC'} \mB' \ar{d}{} \\
\mA \times \mB  \ar{r}[swap]{} & \mA' \times \mB'
\end{tikzcd}
\end{equation}
is a map of antibifibrations.
\end{enumerate}
\end{theorem}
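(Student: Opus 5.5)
Our plan is to argue by induction on the dimension $n$, showing that the statement holds for $n$-antibifibrations and maps of such for every $n\geq 1$. The induction step will reduce to morphism $\infty$-categories via \cref{cocarto} (applied to $\phi^{\op}$ and $\phi^{\co\op}$ as appropriate), together with the description of morphism $\infty$-categories of antioriented pullbacks in \cref{homso}. This reduction works because \cref{homso} exhibits the morphism $\infty$-category of $\mA\bar{\vec{\times}}_\mC\mB$ again as an antioriented pullback $\Mor_\mA(A,A')\bar{\vec{\times}}_{\Mor_\mC(F(A),G(B'))}\Mor_\mB(B,B')$. This presentation is natural in the variables via squares of exactly the shape appearing in part (2). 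So parts (1) and (2) must be proven simultaneously: the functoriality clause of \cref{cocarto} is precisely an instance of (2) one dimension lower.

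\textbf{Dimension one.} We use \cref{bdesc} to write $\mA\bar{\vec{\times}}_\mC\mB\simeq \mA\times_{\mC}\Fun^\lax(\bD^1,\mC)\times_{\mC}\mB$.

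For the projection to $\mB$, take an object $(A,B,\sigma:F(A)\to G(B))$ and a morphism $\beta:B\to B'$. The candidate anticocartesian lift is $(\id_A,\beta)$, with target $(A,B',G(\beta)\circ\sigma)$ and with the 2-cell the identity. We check cocartesianness using \cref{homso}. Mapping out of $(A,B',G(\beta)\sigma)$ into $(A'',B'',\sigma'')$ gives
\[
\Mor_\mA(A,A'')\bar{\vec{\times}}_{\Mor_\mC(F(A),G(B''))}\Mor_\mB(B',B''),
\]
where the right leg is $G(-)\circ G(\beta)\sigma$. Precomposing with the lift yields the same antioriented pullback with $\Mor_\mB(B,B'')$ in place of $\Mor_\mB(B',B'')$, whose right leg is $G(-)\circ\sigma$, which factors through precomposition with $\beta$. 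The required pullback square then follows by pasting (\cref{pasting}), since the antioriented pullback only depends on the right leg through $\Mor_\mC(F(A),G(B''))$. The cartesian lifts over $\mA$ are dual: they use $(\alpha,\id_B)$ and precomposition with $F(\alpha)$.

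We must also show that a lift over $\mB$ is sent to an equivalence in $\mA$, i.e.\ that the projection to $\mA\times\mB$ is a map of fibrations over each factor. This is clear from the form of the lifts, and the same goes for the statement that the square (\ref{sqfibb}) preserves these lifts. The latter holds because $\alpha,\beta,\gamma$ with the 2-cells transport $(\id,\beta)$-type morphisms to morphisms of the same type, up to the pasted 2-cell.

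\textbf{Higher dimensions.} By \cref{homso}, $\phi_{X,Y}$ for $X=(A,B,\sigma)$ and $Y=(A',B',\sigma')$ is the functor
\[
\Mor_\mA(A,A')\bar{\vec{\times}}_{\Mor_\mC}\Mor_\mB(B,B')\to\Mor_\mA(A,A')\times\Mor_\mB(B,B').
\]
By induction (1) this is an $(n-1)$-antibifibration. The functoriality in morphisms $X''\to X$ and $Y\to Y'$ is induced by pre- and postcomposition. These produce antioriented squares of morphism $\infty$-categories: postcomposition with $\sigma'$-type data supplies the 2-cell over $\Mor_\mC$. Induction (2) then gives that these are maps of $(n-1)$-antibifibrations, which completes (1) by \cref{cocarto}. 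For (2) in dimension $n$, one similarly identifies the induced functors on morphism $\infty$-categories as maps induced by antioriented squares and applies the induction hypothesis.

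\textbf{Main obstacle.} The hardest step is the bookkeeping of orientations. Passing to morphism $\infty$-categories swaps cartesian and cocartesian (this is why we use the anti-variants, which are stable under this passage) and converts the given antioriented squares into antioriented squares with the correct direction of 2-cell. We also need to verify that the naturality of the equivalence in \cref{homso} is compatible with composition in the antioriented category of $\infty$-categories. We would first check the case $\mB=\mC$, $G=\id$ (free fibrations) explicitly to fix conventions.
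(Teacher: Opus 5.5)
Your proposal is correct and follows the paper's proof in all essentials. It uses the same explicit $1$-dimensional lifts (an equivalence in one factor and an invertible $2$-cell), the same passage to morphism $\infty$-categories via \cref{homso} (the paper cites \cref{homs2}), and the same simultaneous induction of (1) and (2), in which the functoriality maps are recognized as induced by antioriented squares of morphism $\infty$-categories. The only difference is the induction variable: you induct on the fibration level directly via \cref{cocarto} for arbitrary $\infty$-categories, whereas the paper first reduces to $n$-categories (using $\iota_n$, filtered colimits and \cref{dimensio}) and inducts on categorical dimension. Both work, and yours avoids the colimit reduction.
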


\begin{proof}

We start with proving that the functor 
$ \mA {\bar{\vec{\times}}}_{\mC} \mB \to \mA \times \mB $
is a map of 1-bifibrations, i.e. 1-cartesian fibrations over $\mA$ and 1-cocartesian fibrations over $\mB$.
We prove the second statement, the first statement is dual
by applying $(-)^{\coop}.$

Let $ f: \mA \to \mB, t: \mB \to \mB'$ be morphisms of $\mC$.
The commutative square
\[
\xymatrix{
\mA\ar[r]^\id\ar[d]^f & \mA \ar[d]^{t \circ f}\\
\mB\ar[r]^t & \mB'
}
\]
in $\mC$ determines a morphism of $\Fun^\lax(\bD^1,\mC)$ from $f$ to $t \circ f.$
We prove that this morphism is $\ev_1$-cocartesian.
This means that for every morphism $h: X \to Y $ in $\mC$
the following commutative square is a pullback square:
\[
\xymatrix{
\Mor_{\Fun^\lax(\bD^1,\mC)}(t\circ f,h) \ar[r] \ar[d] & \Mor_\mC(B',\Y) \ar[d] \\
\Mor_{\Fun^\lax(\bD^1,\mC)}(f,h) \ar[r] &\Mor_\mC(B,\Y).
}
\]

By \cref{homs2} the latter commutative square identifies with the following pullback square, where the horizontal functors are the projections:
\[
\xymatrix{
\Mor_\mC(A, X) \times_{\Mor_\mC(A, Y)} \Fun^\lax(\bD^1,\Mor_\mC(A,Y)) \times_{\Mor_\mC(A, Y)} \Mor_\mC(B',Y) \ar[r] \ar[d] & \Mor_\mC(B',\Y)\ar[d] \\
\Mor_\mC(A, X) \times_{\Mor_\mC(A, Y)} \Fun^\lax(\bD^1,\Mor_\mC(A,Y)) \times_{\Mor_\mC(A, Y)} \Mor_\mC(B,Y) \ar[r] &\Mor_\mC(B,\Y).
}
\]
Therefore for any functors $\mA \to \mC, \mB \to \mC$ the pullback
$ \mA {\bar{\vec{\times}}}_{\mC} \mB \to \mA \times \mB $
is a 1-bifibration whose cocartesian 1-morphisms
are the triples 
$ (A \to A', B \to B', \sigma),$ where $A \to A'$ is an equivalence and $\sigma$ is a commutative square in $\mC:$
$$
\begin{tikzcd}
F(A) \ar{r}{} \ar{d}[swap]{} & G(B) \ar{d}{} \\
F(A')  \ar{r}[swap]{} & G(B').
\end{tikzcd}
$$

The top horizontal functor in (\ref{sqfibb})
sends an object $(A,B,F(A) \to G(B))$ to $(\alpha(A),\beta(B),F'(\alpha(A)) \to \gamma(F(A)) \to \gamma(G(B)) \to G'(\beta(B)))$, and sends a morphism 
$ (A \to A', B \to B', \sigma),$ where $\sigma$ is a right square
\begin{equation}\label{orsqu}
\begin{tikzcd}
F(A) \ar{r}{} \ar{d}[swap]{} & G(B) \ar[double]{dl}{} \ar{d}{} \\
F(A')  \ar{r}[swap]{} & G(B')
\end{tikzcd}
\end{equation}
to the morphism $ (\alpha(A) \to \alpha(A'), \beta(B) \to \beta(B'), \sigma'),$ where $\sigma'$ is the right square
$$
\begin{tikzcd}
F'(\alpha(A)) \ar{r}{} \ar{d}[swap]{} & \gamma(F(A)) \ar{r}{} \ar{d}[swap]{} & \gamma(G(B)) \ar[double]{dl}{} \ar{d}{} \ar{r}{} & G'(\beta(B)) \ar{d}{} \\
F'(\alpha(A')) \ar{r}[swap]{} & \gamma(F(A')) \ar{r}[swap]{} & \gamma(G(B')) \ar{r}[swap]{} & G'(\beta(B')).
\end{tikzcd}
$$
This proves that the top horizontal functor in \ref{sqfibb} 
is a map of 1-bifibrations.

Next we prove the general case. For that we first reduce to the case that $\mA, \mB, \mC$ are $n$-categories for some $\n \geq 0.$
The functor $ \mA {\bar{\vec{\times}}}_{\mC} \mB \to \mA \times \mB$ is the sequential colimit
$$ {\iota_n(\mA) {\bar{\vec{\times}}}}_{\iota_n(\mC)} \iota_n(\mB) \to \iota_n(\mA) \times \iota_n(\mB) $$
since the functor $\Fun^\lax(\bD^1,-): \infty\Cat \to \infty\Cat$ preserves small filtered colimits by compact generation of the oriented cubes of \cref{cubedense}.
Since bifibrations are stable under filtered colimits, we can reduce to the case that $\mA,\mB,\mC $ are $n$-categories for some $\n \geq 0$.
In this case by \cref{dimensio} the $\infty$-category $\Fun^\lax(\bD^1,\mC) $ is an $n$-category so that the functor
$ \mA {\bar{\vec{\times}}}_{\mC} \mB \to \mA \times \mB$ is between $n$-categories.
Simimarly, the commutative \ref{sqfibb} is the sequential colimit of the sequence of commutative squares

$$
\begin{tikzcd}
{\iota_n(\mA) {{\bar{\vec{\times}}}}_{\iota_n(\mC)} \iota_n(\mB)} \ar{r}{} \ar{d}[swap]{} & {\iota_n(\mA') {{\bar{\vec{\times}}}}_{\iota_n(\mC')} \iota_n(\mB')} \ar{d}{} \\
\iota_n(\mA) \times \iota_n(\mB)  \ar{r}[swap]{} & \iota_n(\mA') \times \iota_n(\mB').
\end{tikzcd}
$$

So we can assume that $\mA,\mB,\mC,\mA',\mB',\mC'$ are $n$-categories for some $\n \geq 0$ and prove the 
statement by induction on $n \geq 0$.
The case $n=0$ is trivial because $\mA,\mB,\mC,\mA',\mB',\mC'$ are spaces. 
We assume the statement holds for $n-1$ and we prove the statement for $n.$

Let $(A, B, F(A) \to G(B)), (A', B', F(A') \to G(B')) \in \mA {\bar{\vec{\times}}}_{\mC} \mB.$ 
By \ref{homs2} the induced functor $$\Mor_{\mA {\bar{\vec{\times}}}_{\mC} \mB}((A, B, F(A) \to G(B)), (A', B', F(A') \to G(B'))) \to \Mor_\mA(A,A') \times \Mor_\mB(B,B')$$
identifies with the functor 
$$ {\Mor_\mA(A,A') {{\bar{\vec{\times}}}}_{\Mor_\mC(F(A),G(B'))}} \Mor_\mB(B,B') \to \Mor_\mA(A,A') \times \Mor_\mB(B,B').$$
The latter is an antibifibration by induction hypothesis since 
the morphism $\infty$-categories of $\mA,\mB,\mC$ are $n-1$-categories.

Moreover the commutative square \ref{sqfibb} induces on morphism $\infty$-categories the canonical commutative square

\begin{equation*}
\!\!\begin{tikzcd}
\Mor_\mA(A,A') \!\!\!\!\!\underset{\Mor_\mC(F(A),G(B'))}{{\bar{\vec{\times}}} } \!\!\!\!\!\Mor_\mB(B,B') \ar{r}{} \ar{d}[swap]{} &  \Mor_{\mA'}(\alpha(A),\alpha(A')) \!\!\!\!\!\underset{\Mor_{\mC'}(F'(\alpha(A)),G'(\beta(B'))}{{\bar{\vec{\times}}}}  \!\!\!\!\!\Mor_{\mB'}(\beta(B),\beta(B')) \ar{d}{} \\
\Mor_{\mA}(A,A') \times \Mor_{\mB}(B,B')  \ar{r}[swap]{} & \Mor_{\mA'}(\alpha(A),\alpha(A')) \times \Mor_{\mB'}(\beta(B),\beta(B')). 
\end{tikzcd}
\end{equation*}
The latter is a map of antibifibrations by induction hypothesis.
This proves (2) in view of the first part of the proof, so it remains to complete the proof of (1).

Let $(A' \to A, B' \to B, \sigma), (X \to X', Y \to Y', \rho) $ be morphisms in the $n$-category $\mA {\bar{\vec{\times}}}_{\mC} \mB, $
where $\sigma, \rho$ are antioriented squares
\begin{equation*}
\begin{tikzcd}
F(A') \ar{r}{} \ar{d}[swap]{} & G(B') \ar[double]{dl}{} \ar{d}{} \\
F(A)  \ar{r}[swap]{} & G(B),
\end{tikzcd}
\qquad
\begin{tikzcd}
F(X) \ar{r}{} \ar{d}[swap]{} & G(Y) \ar[double]{dl}{} \ar{d}{} \\
F(X')  \ar{r}[swap]{} & G(Y').
\end{tikzcd}
\end{equation*}
We must show that the induced functor $$ \Mor_{\mA {\bar{\vec{\times}}}_{\mC} \mB}((A, B, F(A) \to G(B)), (X, Y, F(X) \to G(Y))) \to $$$$ \Mor_{\mA {\bar{\vec{\times}}}_{\mC} \mB}((A', B', F(A') \to G(B')), (X', Y', F(X') \to G(Y'))) $$ preserves cocartesian morphisms.
By \cref{homs2} the latter functor identifies with the functor
$$ {\Mor_\mA(A,X) {{\bar{\vec{\times}}}}_{\Mor_\mC(F(A), G(Y))}} \Mor_\mB(B,Y)
\to {\Mor_\mA(A',X') {{\bar{\vec{\times}}}}_{\Mor_\mC(F(A'), G(Y'))}} \Mor_\mB(B',Y') $$
induced by the antioriented squares
\begin{equation*}
\begin{tikzcd}
\Mor_\mA(A,X) \ar{r}{} \ar{d}[swap]{} & \Mor_\mA(A',X'){} \ar{d}{} \\
\Mor_\mC(F(A), F(X))  \ar{r}[swap]{} \ar{d}{} & \Mor_\mC(F(A'), F(X')) \ar{d}{}  \ar[double]{dl}
\\
\Mor_\mC(F(A), G(Y))  \ar{r}[swap]{} & \Mor_\mC(F(A'), G(Y')),
\end{tikzcd}
\begin{tikzcd}
\Mor_\mB(B,Y) \ar{r}{} \ar{d}[swap]{} & \Mor_\mB(B',Y') {} \ar{d}{} \\
\Mor_\mC(G(B), G(Y)) \ar{d}{} \ar{r}[swap]{} & \Mor_\mC(G(B'), G(Y')) \ar{d}{}
\\
\Mor_\mC(F(A), G(Y))  \ar[double]{ur} \ar{r}[swap]{} & \Mor_\mC(F(A'), G(Y')).
\end{tikzcd}
\end{equation*}
The latter functor preserves cocartesian morphisms
by induction hypothesis. 
\end{proof}

\begin{corollary}\label{targetFib}Let $F: \mA \to \mC$ and  $G: \mB \to \mC$ be functors.

\begin{enumerate}[\normalfont(1)]\setlength{\itemsep}{-2pt}
\item The functor $ \mA {\vec{\times}}_\mC \mB \to \mA \times \mB $
is a bifibration.

\item Let
$$
\begin{tikzcd}
\mA \ar{r}{\alpha} \ar{d}[swap]{} & \mA' \ar[double]{dl}{} \ar{d}{} \\
\mC  \ar{r}[swap]{\gamma} & \mC',
\end{tikzcd}
\qquad
\begin{tikzcd}
\mB \ar{r}{\beta} \ar{d}[swap]{} & \mB'  \ar{d}{} \\
\mC \ar[double]{ur}{} \ar{r}[swap]{\gamma} & \mC'
\end{tikzcd}
$$
be oriented squares of $\infty$-categories.
The induced commutative square
\begin{equation}\label{sqfibb2}
\begin{tikzcd}
\mA {\vec{\times}}_\mC \mB \ar{r}{} \ar{d}[swap]{} & \mA' {\vec{\times}}_{\mC'} \mB' \ar{d}{} \\
\mA \times \mB \ar{r}[swap]{} & \mA' \times \mB'
\end{tikzcd}
\end{equation}
is a map of bifibrations.
\end{enumerate}
\end{corollary}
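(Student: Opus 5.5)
We will deduce \cref{targetFib} from \cref{targetfi} by applying a suitable duality involution to reverse the orientation, rather than repeating the induction. The relevant involution is $(-)^{\co}$, which reverses even-dimensional cells. It is monoidal for the reversed Gray tensor product by \cref{dua}, and by \cref{grayhoms} it exchanges $\Fun^\oplax(\bD^1,-)$ and $\Fun^\lax(\bD^1,-)$, since $(\bD^1)^\co\simeq\bD^1$.

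\textbf{Step 1: compare the two oriented pullbacks.} Using the descriptions of \cref{adesc} and \cref{bdesc} as iterated pullbacks through $\Fun^\oplax(\bD^1,\mC)$ and $\Fun^\lax(\bD^1,\mC)$, and the fact that $(-)^\co$ preserves pullbacks (it is an equivalence of $\infty\Cat$), we will identify
\[
(\mA\vec{\times}_\mC\mB)^\co\simeq \mA^\co\,{\bar{\vec{\times}}}_{\mC^\co}\,\mB^\co
\]
over $\mA^\co\times\mB^\co$. Here the source and target projections are preserved, so the order of the factors is unchanged. Naturality in the data shows that the oriented squares in (2) become antioriented squares under $(-)^\co$, and that the square (\ref{sqfibb2}) is carried to the square (\ref{sqfibb}) for the $\co$-dual data.

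\textbf{Step 2: translate the notions of fibration.} By definition, $\phi$ is an $n$-cartesian fibration iff $\phi^{\op}$ is $n$-anticocartesian. Likewise $\phi$ is an $n$-anticartesian fibration iff $\phi^{\co\op}$ is $n$-anticocartesian. Hence $\phi$ is $n$-cartesian iff $\phi^\co$ is $n$-anticartesian, since $(\phi^\co)^{\co\op}=\phi^{\op}$. Similarly, $\phi$ is $n$-cocartesian iff $\phi^\co$ is $n$-anticocartesian. The same translation applies to maps of such fibrations, because those notions are defined by applying the same involutions to the square. Consequently a square is a map of bifibrations iff its image under $(-)^\co$ is a map of antibifibrations.

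\textbf{Step 3: conclude.} Applying \cref{targetfi} to the $\co$-dual data and transporting back along Step 1 and Step 2 yields both (1) and (2).

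The main obstacle is Step 1. We must check carefully that $(-)^\co$ carries $\Fun^\oplax(\bD^1,\mC)$ to $\Fun^\lax(\bD^1,\mC^\co)$ compatibly with the evaluations at $0$ and $1$, and that it does not swap them. \cref{grayhoms} gives the equivalence, and the compatibility with evaluations follows because $(-)^\co$ fixes $\{0\},\{1\}\subset\bD^1$. We must also track that the direction of the 2-cells in the squares of (2) matches the convention used in \cref{targetfi}.

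If this bookkeeping fails, the fallback is to rerun the proof of \cref{targetfi} verbatim with $\Fun^\oplax$ in place of $\Fun^\lax$, using the second formula of \cref{homs2} and \cref{homs} in place of \cref{homso}.
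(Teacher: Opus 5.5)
Your proposal is correct and follows essentially the paper's route. The paper gives no separate proof and obtains this corollary as the dual of \cref{targetfi}, and $(-)^\co$ is a valid choice of duality: it fixes the endpoints of $\bD^1$, so it gives $(\mA\vec{\times}_\mC\mB)^\co\simeq\mA^\co\,{\bar{\vec{\times}}}_{\mC^\co}\,\mB^\co$ over $\mA^\co\times\mB^\co$, and it exchanges cartesian/cocartesian with anticartesian/anticocartesian, both for fibrations and for maps of them. (The duality $(-)^\op$ would also work, at the cost of swapping the two factors, since it interchanges the endpoints of $\bD^1$.)
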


\begin{corollary}\label{targetfibr}
Let $\mA, \mC$ be $\infty$-categories.
The functor $$\Fun^\oplax(S(\mA),\mC) \simeq \mC {\vec{\times}}_{\Fun^\oplax(\mA,\mC)} \mC \to \mC \times \mC $$
is a bifibration.
\end{corollary}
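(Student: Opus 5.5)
The plan is to deduce \cref{targetfibr} from \cref{targetFib}. The only real input is an identification of $\Fun^\oplax(S(\mA),\mC)$ with an oriented pullback of the two diagonal-type maps $\mC \to \Fun^\oplax(\mA,\mC)$. Once that identification is available, \cref{targetFib}(1) applies directly to the cospan $\mC \to \Fun^\oplax(\mA,\mC) \leftarrow \mC$. Both legs are the functor induced by precomposition along $\mA \to *$, that is, the constant-diagram functor $\mC \simeq \Fun^\oplax(*,\mC) \to \Fun^\oplax(\mA,\mC)$.

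To establish the equivalence $\Fun^\oplax(S(\mA),\mC) \simeq \mC \vec{\times}_{\Fun^\oplax(\mA,\mC)} \mC$, I will compare universal properties, testing against an arbitrary $\infty$-category $T$ by mapping $T$ into both sides. For the left side,
\[\Map(T,\Fun^\oplax(S(\mA),\mC)) \simeq \Map(T \boxtimes S(\mA),\mC).\]
The key geometric input is a pushout presentation of the suspension. I expect $S(\mA)$ to be the oriented pushout $* \vec{+}_{\mA} *$ in $\infty\fcat$; by \cref{0desc} this means
\[S(\mA) \simeq * \underset{\mA\boxtimes\{0\}}{+} (\mA \boxtimes \bD^1) \underset{\mA \boxtimes\{1\}}{+} *.\]
I would verify this pushout by mapping out of it into an $\infty$-category $\mD$. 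A map out of the right-hand side is a pair of objects $x,y$ of $\mD$ together with a map $\mA \boxtimes \bD^1 \to \mD$ constant at $x$ and $y$ on the two ends. By adjunction this is a functor $\mA \to \Mor_\mD(x,y)$, using that the fiber of $\Fun^\lax(\bD^1,\mD)$ over $(x,y)$ is $\Mor_\mD(x,y)$ (\cref{homs2} with source and target fixed). By \cref{suspi}, that is exactly a map $S(\mA) \to \mD$.

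Having the pushout presentation, I will use that $T \boxtimes (-)$ preserves colimits, together with the associativity of $\boxtimes$, to write
\[T \boxtimes S(\mA) \simeq T \underset{T \boxtimes \mA \boxtimes \{0\}}{+} (T \boxtimes \mA \boxtimes \bD^1) \underset{T \boxtimes \mA \boxtimes \{1\}}{+} T.\]
Mapping this into $\mC$ and adjoining turns it into pairs of maps $T \to \mC$ together with a map $T \to \Fun^\oplax(\bD^1,\Fun^\oplax(\mA,\mC))$ compatible at the endpoints. By \cref{0desc}(2) this is precisely $\Map(T, \mC \vec{\times}_{\Fun^\oplax(\mA,\mC)} \mC)$. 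All identifications are natural in $T$, so Yoneda gives the equivalence, and it lies over $\mC\times\mC$ via evaluation at $0$ and $1$.

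The main obstacle is the orientation bookkeeping: making sure that it is $\vec{\times}$ rather than the antioriented $\bar{\vec{\times}}$ that appears, and that the factor order $\mC \times \mC$ (source, target) matches the statement. In particular, which side $\bD^1$ sits on in the Gray product, relative to $T$ and $\mA$, decides between $\Fun^\oplax$ and $\Fun^\lax$ on the $\bD^1$ factor. I would check this first on $\mA = *$, where the claimed equivalence should reduce to $\Fun^\oplax(\bD^1,\mC) \simeq \mC \vec{\times}_\mC \mC$, which is the case stated in the introduction. If the check shows the orientation is reversed, I would fix it using \cref{grayhoms} and instead invoke \cref{targetfi} with the involution $(-)^{\co\op}$.
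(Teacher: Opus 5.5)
Your overall route is the paper's. The corollary is \cref{targetFib}(1) applied to the cospan $\mC\to\Fun^\oplax(\mA,\mC)\leftarrow\mC$ of constant-diagram functors, and the paper takes the displayed identification for granted. The gap is in your derivation of that identification, at exactly the orientation step you flagged, and the test you propose cannot detect it.

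In the first step, a functor $\mA\boxtimes\bD^1\to\mD$ adjoins to $\mA\to\Fun^\oplax(\bD^1,\mD)$, not to $\mA\to\Fun^\lax(\bD^1,\mD)$. With that correction your presentation of $S(\mA)$ as $\mA\boxtimes\bD^1$ with its two ends collapsed is right, since by \cref{homs} the fiber of $\Fun^\oplax(\bD^1,\mD)$ over $(x,y)$ is $\Mor_\mD(x,y)$.

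The second step then fails. You have $\Map(T\boxtimes\mA\boxtimes\bD^1,\mC)\simeq\Map(T,\Fun^\oplax(\mA,\Fun^\oplax(\bD^1,\mC)))$, whereas $\Map(T,\Fun^\oplax(\bD^1,\Fun^\oplax(\mA,\mC)))\simeq\Map(T\boxtimes\bD^1\boxtimes\mA,\mC)$. So your computation yields $\mC\times_{\Fun^\oplax(\mA,\mC)}\Fun^\oplax(\mA\boxtimes\bD^1,\mC)\times_{\Fun^\oplax(\mA,\mC)}\mC$, not the oriented pullback. For the oriented pullback you need $S(\mA)$ as a quotient of $\bD^1\boxtimes\mA$, with $\bD^1$ on the left.

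These two quotients genuinely differ. Apply $(-)^\co$, which reverses $\boxtimes$ (\cref{dua}), fixes $\bD^1$, and satisfies $S(U)^\co\simeq S(U^\op)$ (\cref{oppo}). It turns $*+_{U\boxtimes\{0\}}(U\boxtimes\bD^1)+_{U\boxtimes\{1\}}*\simeq S(U)$ into $*+_{\{0\}\boxtimes V}(\bD^1\boxtimes V)+_{\{1\}\boxtimes V}*\simeq S(V^{\co\op})$. For the same reason, the fiber of $\Fun^\lax(\bD^1,\mD)$ over $(x,y)$ is $\Mor_\mD(x,y)^{\co\op}$, not $\Mor_\mD(x,y)$ as you claimed.

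The discrepancy lies entirely in the relative position of $\mA$ and $\bD^1$, so your check at $\mA=*$ passes vacuously. The first case that can fail is $\mA=\bD^1$ with $\mC$ a $2$-category. Your fallback via \cref{targetfi} is also beside the point. In either presentation of $T\boxtimes S(\mA)$, $T$ sits to the left of $\bD^1$, so the middle factor is always an oplax arrow category and the pullback is oriented. The only real question is which dual of $\mA$ appears.

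The repair is to put the twist on $\mA$. Start from $S(\mA)\simeq *+_{\{0\}\boxtimes\mA^{\co\op}}(\bD^1\boxtimes\mA^{\co\op})+_{\{1\}\boxtimes\mA^{\co\op}}*$. Your argument with $T\boxtimes(-)$, adjunction and Yoneda then goes through verbatim. It gives $\Fun^\oplax(S(\mA),\mC)\simeq\mC\vec{\times}_{\Fun^\oplax(\mA^{\co\op},\mC)}\mC$ over $\mC\times\mC$, with evaluation at $0$ and $1$ as (source, target). \cref{targetFib}(1), applied to this cospan, then gives the bifibration.

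The twist cannot simply be dropped. By the computation above, objects of $\mC\vec{\times}_{\Fun^\oplax(\mA,\mC)}\mC$ over $(x,y)$ are functors $\mA^{\co\op}\to\Mor_\mC(x,y)$. By \cref{suspi}, objects of $\Fun^\oplax(S(\mA),\mC)$ over $(x,y)$ are functors $\mA\to\Mor_\mC(x,y)$. Take $\mA$ the walking span $\{b\leftarrow a\to c\}$ and $\mC=S(\mA)$: the fibers over $(0,1)$ have $9$ and $11$ equivalence classes of objects, respectively. So the untwisted identification you set out to prove is not what the conventions of \cref{homs} produce. This does not affect the bifibration claim, because \cref{targetFib}(1) holds for every cospan.
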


\begin{corollary}\label{targetfi2}
The functor $\Fun^{\oplax}(\bD^1,\mC)\to\mC\times\mC$ is a bifibration whose fiber over $(A,B)$ is $\Mor_\mC(A,B)$.
\end{corollary}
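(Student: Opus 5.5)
The plan is to deduce the statement from \cref{targetFib} (equivalently \cref{targetfibr}) by specializing the oriented pullback to the cospan of identities. By \cref{adesc} (2), applied in the oriented category $\infty\fcat$, which admits pullbacks and right cotensors with $\bD^1$, there is a canonical equivalence
$$ \mC \,\underset{\mC}{\vec{\times}}\, \mC \simeq \mC \underset{\mC^{\{0\}}}{\times} \Fun^\oplax(\bD^1,\mC) \underset{\mC^{\{1\}}}{\times} \mC \simeq \Fun^\oplax(\bD^1,\mC). $$
Under this equivalence the two projections to $\mC$ become evaluation at the source and at the target. Equivalently, since $\bD^1 = S(\ast)$, this is the case $\mA=\ast$ of \cref{targetfibr}: there $\Fun^\oplax(\ast,\mC)\simeq\mC$ and the oriented pullback is taken along the identities. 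In either form, \cref{targetFib} (1) applied to $F=G=\id_\mC$ shows that $(\ev_0,\ev_1):\Fun^{\oplax}(\bD^1,\mC)\to\mC\times\mC$ is a bifibration. The one point to check is that the projection maps of the oriented pullback agree with $(\ev_0,\ev_1)$ under this identification; this holds by construction of the equivalence in \cref{adesc}, since both sides are defined through the cotensor $\mC^{\bD^1}$ and its source and target restrictions.

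For the fiber, I would compute the pullback of $\Fun^\oplax(\bD^1,\mC)\to\mC\times\mC$ along $(A,B):\ast\to\mC\times\mC$. Writing $\mC$ as an $\infty\Cat$-enriched category, a map $\bD^1\boxtimes K\to\mC$ restricting to the constant functors at $A$ and $B$ on $\{0\}\boxtimes K$ and $\{1\}\boxtimes K$ is the same as a map out of the quotient of $\bD^1\boxtimes K$ that collapses these two ends. That quotient is the suspension $S(K)$, via the pushout description $\ast\coprod_{\{0\}\boxtimes K}(\bD^1\boxtimes K)\coprod_{\{1\}\boxtimes K}\ast\simeq S(K)$. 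By \cref{suspi}, maps $S(K)\to\mC$ sending $0,1$ to $A,B$ correspond to functors $K\to\Mor_\mC(A,B)$. Hence the fiber represents $\Mor_\mC(A,B)$.

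A cleaner route avoids the quotient computation. \cref{homs}, applied with $\mA=\mB=\ast$ (with $\mA$ here the first factor of the pullback, not the category in \cref{targetfibr}) and both legs $A$, $B$ into $\mC$, gives that $\ast\vec{\times}_\mC\ast$ has objects the morphisms $A\to B$. By the pasting law \cref{pasting}, the fiber of $\mC\vec{\times}_\mC\mC\to\mC\times\mC$ over $(A,B)$ is the oriented pullback $\{A\}\vec{\times}_\mC\{B\}$. Using the defining universal property of oriented pullbacks, maps from $T$ into this fiber are functors $T\to\Fun^\oplax(\bD^1,\mC)$ with constant endpoints, so by the adjunction they again match maps $S(T)\to\mC$ bipointed at $(A,B)$, that is, maps $T\to\Mor_\mC(A,B)$.

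The main obstacle is the identification $\ast\coprod_{K}(\bD^1\boxtimes K)\coprod_K\ast\simeq S(K)$. I expect this is either recorded in \cite{GepnerHeine2026} or can be verified on oriented cubes $K=\cube^n$ by density (\cref{cubedense}), using that both sides preserve colimits in $K$.
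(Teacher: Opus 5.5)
Your first paragraph matches how the paper gets the statement. The paper records it without a separate proof as the case $F=G=\id_\mC$ of \cref{targetFib} (equivalently $\mA=\ast$ in \cref{targetfibr}), using $\mC\vec{\times}_\mC\mC\simeq\Fun^\oplax(\bD^1,\mC)$ from \cref{adesc}. The fiber over $(A,B)$ is then $\{A\}\vec{\times}_\mC\{B\}$, and the paper treats its identification with $\Mor_\mC(A,B)$ as standard.

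The problem is your computation of that fiber: you put the cylinder on the wrong side. In the paper $\Fun^\oplax(\bD^1,-)$ is right adjoint to $(-)\boxtimes\bD^1$, so a functor $K\to\Fun^\oplax(\bD^1,\mC)$ is a functor $K\boxtimes\bD^1\to\mC$. Functors $\bD^1\boxtimes K\to\mC$ instead correspond to functors into $\Fun^\lax(\bD^1,\mC)$ (compare the description of $\Fun^{\lax,\cocart}(\bD^1,\mC)$). This is not a harmless relabeling:
\begin{itemize}
\item By \cref{dua}, $(\bD^1\boxtimes K)^\op\simeq K^\op\boxtimes(\bD^1)^\op$, where $(\bD^1)^\op$ swaps the endpoints, and $\Mor_{X^\op}(x,y)\simeq\Mor_X(y,x)^\co$.
\item Hence if collapsing the ends of $K\boxtimes\bD^1$ gives $S(K)$ naturally in $K$ (which is what the statement amounts to), then collapsing the ends of $\bD^1\boxtimes K$ gives $S(K^{\co\op})$.
\item So your claimed equivalence $\ast\coprod_{\{0\}\boxtimes K}(\bD^1\boxtimes K)\coprod_{\{1\}\boxtimes K}\ast\simeq S(K)$ fails in general.
\item As written, your argument computes the fiber of $\Fun^\lax(\bD^1,\mC)$, namely $\Mor_\mC(A,B)^{\co\op}$. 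This differs from $\Mor_\mC(A,B)$ already when $\Mor_\mC(A,B)=\bD^1\vee\bD^2$, whose $\co\op$-dual is $\bD^2\vee\bD^1$.
\end{itemize}

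Even with the side corrected, the identification $\ast\coprod_{K\boxtimes\{0\}}(K\boxtimes\bD^1)\coprod_{K\boxtimes\{1\}}\ast\simeq S(K)$ is not an auxiliary lemma you can postpone. As functors into bipointed $\infty$-categories, the left side is left adjoint to $(\mC;A,B)\mapsto\{A\}\times_\mC\Fun^\oplax(\bD^1,\mC)\times_\mC\{B\}$, and $S$ is left adjoint to $\Mor$ by \cref{suspi}. So this identification is equivalent to the fiber claim itself. Checking it on cubes by density would first require a comparison map natural in all functors between cubes, which is the actual content and which you do not construct.

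Your ``cleaner route'' does not avoid this. By \cref{adesc} the oriented pullback $\{A\}\vec{\times}_\mC\{B\}$ is the fiber by definition, and you then fall back on the same quotient. You have two ways to close the gap:
\begin{itemize}
\item Cite the identification of the collapsed Gray cylinder $K\boxtimes\bD^1$ with $S(K)$ from the earlier work.
\item Stay inside the oriented-pullback calculus. By \cref{homs}, the morphism $\infty$-category of $\{A\}\vec{\times}_\mC\{B\}$ between $f,g\colon A\to B$ is $\{f\}\vec{\times}_{\Mor_\mC(A,B)}\{g\}$. This is the same construction one categorical level down, with the same orientation. So once you have a comparison functor, an induction over the truncations $\iota_n$, as in the proof of \cref{targetfi}, completes the argument.
\end{itemize}
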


\subsection{Fibrations are stable under oriented pullback}

\cref{targetfi} and \cref{targetfi2} imply the following:

\begin{corollary}\label{slicecocart}
Let $\mC$ be an $\infty$-category and $X \in \mC.$

\begin{enumerate}[\normalfont(1)]\setlength{\itemsep}{-2pt}
\item The functor $ \mC_{//^\oplax X}\to \mC$ is a cartesian fibration. 

\item The functor $ \mC_{X //^\oplax}\to \mC$ is a cocartesian fibration.

\item The functor $ \mC_{//^\lax X}\to \mC$ is an anticartesian fibration. 

\item The functor $ \mC_{X //^\lax}\to \mC$ is an anticocartesian fibration.

\end{enumerate}
\end{corollary}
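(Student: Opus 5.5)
We obtain all four statements from \cref{targetFib} and \cref{targetfi} by identifying each slice with an (anti)oriented pullback and then projecting. For (1), we realize the oplax slice as the oriented pullback $\mC_{//^\oplax X}\simeq \mC\,\vec{\times}_\mC\,\{X\}$ of the cospan $\mC\xrightarrow{\id}\mC\xleftarrow{X}\bD^0$. This identification follows from the defining adjunction of the slice and the description of oriented pullbacks in \cref{adesc}, since both sides corepresent the same data: a functor $T\to\mC$ together with an oplax transformation to the constant functor at $X$. We will verify it by comparing $\Map(T,-)$ for $T$ ranging over the dense subcategory $\bDelta$; equivalently we use $\Fun^\oplax(\bD^1,\mC)\times_{\mC}\{X\}$ together with the join/Gray comparison from the earlier papers. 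By \cref{targetFib}(1), the functor $\mC\vec{\times}_\mC\{X\}\to\mC\times\bD^0\simeq\mC$ is a bifibration. In particular its projection to the first factor is a cartesian fibration, which proves (1).

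Statement (2) follows in the same way from $\mC_{X//^\oplax}\simeq\{X\}\vec{\times}_\mC\mC$. Here the projection to the second factor is a cocartesian fibration, again by \cref{targetFib}(1). Alternatively, (2) can be deduced from (1) by applying the relevant duality: the oplax coslice is defined as $(\mC^\co_{X^\co//^\lax})^\co$, and the oplax/lax slice duality (\cite[Lemma 3.7.13]{GepnerHeine2026}) turns this into an oplax slice of $\mC^{\co\op}$. This route requires tracking which involution converts cartesian fibrations into cocartesian ones, using the definitions of $n$-cartesian fibrations via $\phi^\op$ and $\phi^\co$.

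Statements (3) and (4) come from the antioriented version. The lax slice is $\mC_{//^\lax X}=(\mC^\co_{//^\oplax X^\co})^\co$, and applying $(-)^\co$ sends oriented pullbacks to antioriented pullbacks. Hence $\mC_{//^\lax X}\simeq\mC\bar{\vec{\times}}_\mC\{X\}$ and $\mC_{X//^\lax}\simeq\{X\}\bar{\vec{\times}}_\mC\mC$. \cref{targetfi}(1) then shows that the projections $\mC\bar{\vec{\times}}_\mC\{X\}\to\mC$ and $\{X\}\bar{\vec{\times}}_\mC\mC\to\mC$ are an anticartesian fibration and an anticocartesian fibration respectively, since the other factor is $\bD^0$.

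The main obstacle is the identification of the (op)lax slices with the (anti)oriented pullbacks, and specifically checking that the orientation conventions match: that the join-defined oplax slice corresponds to $\Fun^\oplax(\bD^1,\mC)$ rather than to $\Fun^\lax$. The fibration statements themselves are immediate once this is in place. I would establish the identification by reducing, via density of oriented simplices, to the equivalence $\bDelta^n\star\bDelta^0\simeq\bDelta^{n}\boxtimes\bD^1\amalg_{\bDelta^n\boxtimes\{1\}}\bD^0$, which is a standard cone comparison, and I would accept it from \cite{GepnerHeine2026}.
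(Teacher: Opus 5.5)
Your proposal is correct and is essentially the paper's argument: the paper derives the corollary directly from \cref{targetfi} and \cref{targetfi2}. It views each (op)lax (co)slice as the restriction of the (anti)oriented arrow bifibration to $\{X\}$ in one factor, which is the same as your use of \cref{targetFib} and \cref{targetfi} with one leg equal to $\bD^0$. The paper also takes for granted the identification of the join-defined slices with these (anti)oriented pullbacks, including the $\Fun^\oplax$ versus $\Fun^\lax$ orientation bookkeeping you flag as the main obstacle.
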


\begin{corollary}
Let $\phi: \mC \to \mD$ be a functor. A morphism $f: X \to Y$ in $\mC$ is
$\phi$-cartesian if and only if the following induced functor is an equivalence:
$$ \mC_{//^\oplax X}\to \mC_{//^\oplax Y} \times_{\mD_{// ^\oplax \phi(X)}} \mD_{//^\oplax \phi(Y)}. $$
\end{corollary}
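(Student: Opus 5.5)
Since the statement concerns a single 1-morphism $f$ and the oplax slices, the plan is to pass between $\phi$-cartesianness (the pullback condition on morphism $\infty$-categories $\Mor_\mC(Z,X)\to \Mor_\mC(Z,Y)\times_{\Mor_\mD(\phi Z,\phi Y)}\Mor_\mD(\phi Z,\phi X)$) and a fibrewise statement about the slice fibrations of \cref{slicecocart}. The key input is \cref{slicecocart}(1): the projections $\mC_{//^\oplax X}\to\mC$, $\mC_{//^\oplax Y}\to\mC$, $\mD_{//^\oplax \phi(X)}\to\mD$, $\mD_{//^\oplax\phi(Y)}\to\mD$ are cartesian fibrations. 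I would first pull back the $\mD$-slices along $\phi$, so that
$$P:=\mC_{//^\oplax Y}\times_{\mC\times_\mD\mD_{//^\oplax\phi(Y)}}(\mC\times_\mD\mD_{//^\oplax\phi(X)})$$
is exhibited as a fibre product of cartesian fibrations over $\mC$, which is again a cartesian fibration over $\mC$.

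Next I would check that the comparison functor $\mC_{//^\oplax X}\to P$, induced by postcomposition with $f$ and with $\phi(f)$ (via \cref{targetFib}(2) applied to the evident oriented squares, which give functoriality of the oriented pullbacks $\mC\vec\times_\mC\{X\}$ and similar), is a map of cartesian fibrations over $\mC$. Indeed, cartesian morphisms in the slices are given by precomposition, as in the proof of \cref{targetfi}, and postcomposition with $f$ commutes with precomposition. Since everything lives over $\mC$, the dual form of \cref{equivcocar} (obtained by applying $(-)^{\coop}$) reduces the question of whether this map is an equivalence to the fibres over each $Z\in\mC$.

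On fibres, \cref{targetfi2} and \cref{homs} identify the fibre of $\mC_{//^\oplax X}\to\mC$ over $Z$ with $\Mor_\mC(Z,X)$; similarly, the fibre of $P$ is $\Mor_\mC(Z,Y)\times_{\Mor_\mD(\phi Z,\phi Y)}\Mor_\mD(\phi Z,\phi X)$, with the fibre map being exactly the square in the definition of $\phi$-cartesian. Hence the map is an equivalence if and only if all these fibre maps are equivalences, that is, if and only if $f$ is $\phi$-cartesian.

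The main obstacle is the middle step. I would need to verify carefully that postcomposition with $f$ defines a map of cartesian fibrations in all dimensions, not merely on 1-morphisms, and that the dual of \cref{equivcocar} applies. I would handle this as in the proof of \cref{targetfi}, by reducing through filtered colimits to $n$-categories and inducting on morphism $\infty$-categories, using \cref{homs2} to identify the induced functors on hom-categories with comparison maps of the same shape one dimension lower.
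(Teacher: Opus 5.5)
Your proposal is correct and takes essentially the paper's route: exhibit the comparison functor as a map of cartesian fibrations over $\mC$ using \cref{slicecocart} and closure of $\mathcal{C}\mathit{art}$ under limits, reduce to fibres via \cref{fiberwiseeq} and \cref{equivcocar}, and identify the fibre map over $Z$ with the square defining $\phi$-cartesianness. The step you flag as the main obstacle needs no separate induction: \cref{targetFib}(2) already says that the maps induced by postcomposition with $f$ or $\phi(f)$, and by $\phi$, are maps of bifibrations, hence of cartesian fibrations in all dimensions, and limit-closure then makes the map into the pullback one as well.
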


\begin{proof}

Since the subcategory $\mathcal{C}\mathit{art} \subset \Fun(\bD^1,\infty\scat)$ is closed under small limits, by \cref{slicecocart} the functor $$ \mC_{//^\oplax X}\to \mC_{// ^\oplax Y} \times_{\mD_{// ^\oplax \phi(Y)}} \mD_{//  ^\oplax \phi(X)} $$ over
$ \mC \simeq \mC \times_\mD \mD$ is a map of cartesian fibrations over
$\mD$. Therefore this functor is an equivalence by \ref{fiberwiseeq} if it is fiberwise an equivalence. It induces on the fiber over any $Z \in \mC$
the functor $$ \Mor_\mC(Z,X)\to \Mor_\mC(Z,Y) \times_{\Mor_\mC(\phi(Z),\phi(Y))} \Mor_\mC(\phi(Z),\phi(X)) $$
\end{proof}

\begin{theorem}\label{laxfib} Let $n \geq 1. $

\begin{enumerate}[\normalfont(1)]\setlength{\itemsep}{-2pt}
\item For every maps of $n+1$-cocartesiam fibrations
\begin{equation}\label{degen}
\xymatrix{
\mA \ar[r]^F \ar[d]^\rho & \mC \ar[d]^\phi \\
\mE \ar[r] & \mD,
}
\qquad
\xymatrix{
\mB \ar[r]^G \ar[d]^\kappa & \mC \ar[d]^\phi \\
\mF \ar[r] & \mD,
}
\end{equation} the induced functor 
\begin{equation}\label{sqgj}
\mA {\bar{\vec{\times}}}_{\mC} \mB \to \mE {\bar{\vec{\times}}}_{\mD} \mF \end{equation}
is an $n$-cocartesian fibration.
A 1-morphism in $\mA {\bar{\vec{\times}}}_{\mC} \mB $ is cocartesian over $\mE {\bar{\vec{\times}}}_{\mD} \mF $ if and only if its image in $\mA$ is cocartesian over $\mE$, its image in $\mB$ is cocartesian over $\mF$ and its image in $\Fun^\lax(\bD^1,\mC)$
corresponds to an antioriented square in $\mC$ whose non-invertible 2-morphism is cartesian over $\mD.$

\item For every (anti)oriented commutative squares of $n+1$-cocartesian fibrations 
%corresponding to functors $\cube^2 \to \co\Cart: $

\qquad\qquad
\begin{tikzcd}[row sep=scriptsize, column sep=scriptsize]
& \mA \arrow{dl}{\alpha} \arrow{rr}{F} \arrow[dd] & & \mC \arrow{dl}{\gamma} \arrow[dd] \\ \mA' \ar[double]{rrru}{} \arrow[rr, crossing over] \arrow[dd] & & \mC' \\
& \mE \arrow[dl] \arrow[rr] & & \mD \arrow[dl] \\
\mE' \ar[double]{rrru}{} \arrow[rr] & & \mD' \arrow[from=uu, crossing over]
\end{tikzcd}
\qquad\qquad\qquad
\begin{tikzcd}[row sep=scriptsize, column sep=scriptsize]
& \mB \arrow{dl}{\beta} \arrow{rr}{G} \arrow[dd] & & \mC \ar[double]{llld}{} \arrow{dl}{\gamma} \arrow[dd] \\ \mB' \arrow[rr, crossing over] \arrow[dd] & & \mC' \\
& \mF \arrow[dl] \arrow[rr] & & \mD \ar[double]{llld}{} \arrow[dl] \\
\mF' \arrow[rr] & & \mD' \arrow[from=uu, crossing over]
\end{tikzcd}

the following induced commutative square is a map of $n$-cocartesian fibrations:
\begin{equation}\label{sqty}
\xymatrix{
\mA {\bar{\vec{\times}}}_{\mC} \mB \ar[r] \ar[d] & \mA' {\bar{\vec{\times}}}_{\mC'} \mB' \ar[d] \\
\mE {\bar{\vec{\times}}}_{\mD} \mF \ar[r] & \mE' {\bar{\vec{\times}}}_{\mD'} \mF'.
}
\end{equation}
\end{enumerate}

\end{theorem}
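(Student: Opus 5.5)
We proceed by induction on $n\geq 1$, following the pattern of the proof of \cref{targetfi}, and use \cref{cocarto} to split the statement into a 1-dimensional part and a statement about morphism $\infty$-categories.

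\textbf{The 1-dimensional part.} By \cref{bdesc} we have $\mA {\bar{\vec{\times}}}_{\mC} \mB\simeq \mA\times_{\mC}\Fun^\lax(\bD^1,\mC)\times_\mC\mB$, and similarly for $\mE {\bar{\vec{\times}}}_{\mD} \mF$. So we must produce cocartesian lifts. Take an object $(A,B,\sigma:F(A)\to G(B))$ and a morphism $(e,f,\tau)$ in $\mE {\bar{\vec{\times}}}_{\mD} \mF$ out of its image, where $e:\rho(A)\to E'$, $f:\kappa(B)\to F'$, and $\tau$ is an antioriented square in $\mD$. The lift is built in three steps.
\begin{enumerate}[\normalfont(1)]\setlength{\itemsep}{-2pt}
\item Lift $e$ to a $\rho$-cocartesian $a:A\to A'$, and lift $f$ to a $\kappa$-cocartesian $b:B\to B'$.
\item Since $F$ and $G$ preserve cocartesian morphisms, $F(a)$ and $G(b)$ are $\phi$-cocartesian. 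The cocartesian property of $F(a)$ produces a unique filler $F(A')\to G(B')$ over the required bottom edge of $\tau$.
\item The 2-cell of $\tau$ is lifted using that $\phi_{F(A),G(B')}$ is a 1-cocartesian fibration. This is where the hypothesis $n+1$ (rather than $n$) is used. The 2-cell lift should be $\phi$-cartesian in the appropriate dual sense, matching the characterization in the statement.
\end{enumerate}
To check that the resulting morphism is cocartesian, we compute mapping $\infty$-categories via \cref{homso}. Both sides become antioriented pullbacks of morphism $\infty$-categories, and the required pullback square splits into three squares: one for $\mA$ over $\mE$, one for $\mB$ over $\mF$, and one for $\Fun^\lax(\bD^1,-)$ of the morphism categories of $\mC$ over $\mD$. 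Each of these is a pullback by the cocartesianness of the respective components. The converse characterization of cocartesian 1-morphisms then follows from uniqueness of cocartesian lifts up to equivalence.

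\textbf{Higher dimensions.} First reduce to $\mA,\dots,\mF$ being $m$-categories. This works exactly as in \cref{targetfi}: $\Fun^\lax(\bD^1,-)$ commutes with filtered colimits, and cocartesian fibrations and maps of them are stable under filtered colimits by \cref{seqcol}. On morphism $\infty$-categories, \cref{homso} identifies the functor (\ref{sqgj}) with
$$\Mor_\mA(A,X)\,{\bar{\vec{\times}}}_{\Mor_\mC(F(A),G(Y))}\,\Mor_\mB(B,Y)\to \Mor_\mE(\rho A,\rho X)\,{\bar{\vec{\times}}}_{\Mor_\mD(\phi F A,\phi G Y)}\,\Mor_\mF(\kappa B,\kappa Y).$$
By \cref{cocarto}, the induced maps on morphism categories are maps of $n$-fibrations, but of the dual variance: they are cartesian/anticocartesian according to the $\co$-twist. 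So the induction must be run simultaneously for all four variances. We state the theorem for an involution-closed family, and the induction hypothesis for $n-1$ then applies after applying $(-)^\op$ or $(-)^\co$ together with \cref{grayhoms}. This shows that the morphism functors are $(n-1)$-fibrations of the right type. The functoriality condition in \cref{cocarto}, namely that pre- and postcomposition preserve cocartesian cells, is an instance of part (2) at level $n-1$, applied to the antioriented squares of morphism categories written down in the proof of \cref{targetfi}. Part (2) at level $n$ then follows by combining the explicit description of cocartesian 1-morphisms, which are visibly preserved because the squares are maps of $(n+1)$-cocartesian fibrations, with part (2) at level $n-1$ on morphism categories.

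\textbf{Main obstacle.} The hardest point is bookkeeping the dualities. Passing to morphism categories flips cartesian and cocartesian. It also sends antioriented pullbacks to antioriented pullbacks (\cref{homso}), but with the order of the factors and the orientation of the 2-cells in the squares changed. The induction hypothesis must therefore be formulated for the correct dual variant, with the $(n+1)$ versus $n$ shift propagating correctly. If a uniform formulation is hard to reach directly, I would first try to prove all four dual versions of (1) and (2) together by a single induction.
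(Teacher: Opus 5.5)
Your overall architecture is the paper's: induction on $n$, an explicit lift in the 1-dimensional case with cocartesian edges and a cartesian 2-cell, hom-categories computed via \cref{homs2}, and, in the inductive step, hom-functors identified as antioriented pullbacks along maps of $n$-cartesian fibrations, with the functoriality clause of \cref{cocarto} supplied by the dual of part (2). The paper reduces the base case to $\rho=\kappa=\phi$ with degenerate squares and then treats (\ref{sqgj}) as a map of pullbacks of cocartesian fibrations; your componentwise argument amounts to the same. However, your 1-dimensional construction fails as written, in two ways.
\begin{itemize}
\item \emph{Step (2).} No filler is determined yet. Cocartesianness of $F(a)$ only matches morphisms $F(A')\to G(B')$ over the edge $u$ of $\tau$ under the sought $\sigma'$ with morphisms $F(A)\to G(B')$ over $u\circ\phi F(a)$. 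The only morphism available, $G(b)\circ\sigma$, lies over $\phi G(b)\circ\phi(\sigma)$. That composite is related to $u\circ\phi F(a)$ only through the non-invertible 2-cell of $\tau$.
\item \emph{Step (3).} The variance is wrong. Since $\phi$ is 2-cocartesian, $\phi_{F(A),G(B')}$ is a 1-\emph{cartesian} fibration (apply \cref{cocarto} to $\phi^{\co}$ and use \cref{oppo}). A cartesian lift is exactly what is needed, because the 2-cell $\sigma'\circ F(a)\Rightarrow G(b)\circ\sigma$ of an antioriented square has known target and unknown source.
\end{itemize}
So the order must be reversed. First take a $\phi_{F(A),G(B')}$-cartesian lift $\theta\colon h\Rightarrow G(b)\circ\sigma$ of the 2-cell of $\tau$. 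Its source $h$ lies over $u\circ\phi F(a)$. Only then does cocartesianness of $F(a)$ give a unique factorization $h\simeq\sigma'\circ F(a)$ with $\sigma'$ over $u$.

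The verification is also only asserted, exactly where the content lies. The $\mA$- and $\mB$-components are fine. But the claim that the component in $\Fun^{\lax}(\bD^1,\mC)$ is cocartesian over $\Fun^{\lax}(\bD^1,\mD)$ does not follow from ``cocartesianness of the respective components''. In the paper, the relevant square is first rewritten via \cref{homs2} as a square of antioriented pullbacks of hom-categories. Then \cref{targetfi} is used to check the pullback property fiberwise over pairs $\alpha\colon F(A')\to U$, $\beta\colon G(B')\to V$, for a test morphism $U\to V$. The fiberwise comparison factors into two squares: one is a pullback because $F(a)$ is $\phi$-cocartesian, the other because the whiskered 2-cell $\beta\theta$ is again $\phi$-cartesian. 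This is the only place the cartesian 2-cell is used, and it is what gives the `if' direction of the characterization of cocartesian 1-morphisms.

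Two smaller points:
\begin{itemize}
\item The reduction to $m$-categories is unnecessary here, since the induction runs on the fibration level $n$, not on categorical dimension.
\item No four-variance induction is needed; the required dual is $(-)^{\coop}$ alone. By \cref{grayhoms} it sends $\Fun^{\lax}(\bD^1,\mC)$ to $\Fun^{\lax}(\bD^1,\mC^{\coop})$ with endpoints swapped. Hence it sends antioriented pullbacks to antioriented pullbacks (factors swapped) and cocartesian fibrations to cartesian ones. By contrast, $(-)^{\op}$ or $(-)^{\co}$ alone produce oriented pullbacks, to which your induction hypothesis does not apply as stated.
\end{itemize}
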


\begin{proof}

We prove the statements by induction on $n \geq 1.$
We start with proving (1) for $n=1.$
To prove (1) for $n=1$ it suffices to assume $\rho=\kappa=\phi$
and that the commutative squares \ref{degen} are degenerate.

This will imply that both commutative squares
\[
\begin{tikzcd}
\Fun^\lax(\bD^1,\mC) \ar{d} \ar{r}[swap]{} & \mC \times \mC \ar{d} \\
\Fun^\lax(\bD^1,\mD) \ar{r}[swap]{} & \mD \times \mD
\end{tikzcd}
\begin{tikzcd}
\mA \times \mB \ar{d} \ar{r}[swap]{} & \mC \times \mC \ar{d} \\
\mE \times \mF \ar{r}[swap]{} & \mD \times \mD
\end{tikzcd}
\]
are maps of 1-cocartesian fibrations. So also the functor \ref{sqgj} will be a 1-cocartesian fibration.

Let $f:X \to Y$ be a morphism in $\mC$ and consider the following antioriented square in $\mD:$
\[
\begin{tikzcd}
\phi(X) \ar{d}{\phi(f)} \ar{r}[swap]{} & A \ar[double]{dl}{} \ar{d}{} \\
\phi(Y) \ar{r}[swap]{} & B.
\end{tikzcd}
\]

Since $\phi$ is a 2-cocartesian fibration, the latter antioriented square lifts to an antioriented square 
\[
\begin{tikzcd}
X \ar{d}{f} \ar{r}[swap]{t} & X' \ar[double]{dl}{\tau} \ar{d}{g} \\
Y \ar{r}[swap]{r} & Y',
\end{tikzcd}
\]
where the morphisms $t: X \to X', r: Y \to Y'$ are $\phi$-cocartesian and
the 2-morphism $\tau$ is $\phi$-cartesian.

We show that for every morphism $ h: U \to V$ in $\mC$ the following commutative square is a pullback square
\[
\xymatrix{
\Mor_{\Fun^\lax(\bD^1,\mC)}(g,h) \ar[r] \ar[d] & \Mor_{\Fun^\lax(\bD^1,\mD)}(\phi(g),\phi(h)) \ar[d] \\
\Mor_{\Fun^\lax(\bD^1,\mC)}(f,h) \ar[r] & \Mor_{\Fun^\lax(\bD^1,\mD)}(\phi(f),\phi(h)).
}
\]
By \cref{homs2} the latter identifies with the following commutative square
\[
\xymatrix{
\Mor_\mC(X',U) \underset{\Mor_\mC(X', V)}{{\overset{\bar{\to}}{{\times}}}} \Mor_\mC(Y', V) \ar[r] \ar[d] & \Mor_\mD(\phi(X'), \phi(U)) {\bar{\vec{\times}}}_{\Mor_\mD(\phi(X'), \phi(V))} \Mor_\mD(\phi(Y'),\phi(V)) \ar[d] \\
\Mor_\mC(X, U) {\bar{\vec{\times}}}_{\Mor_\mC(X, V)} \Mor_\mC(Y,V) \ar[r] & \Mor_\mD(\phi(X), \phi(U)) {\bar{\vec{\times}}}_{\Mor_\mD(\phi(X), \phi(V))} \Mor_\mD(\phi(Y),\phi(V)).
}
\]
By \cref{targetfi} it suffices to see that the latter commutative square
induces a pullback square on the fiber over $\alpha \in \Mor_\mC(X',U), \beta \in \Mor_\mC(Y', V)$ and its images under the projections of the pullback.
This induced commutative square is the outer square in the following commutative diagram:
\[
\xymatrix{
\Mor_{\Mor_\mC(X', V)}(h \circ \alpha, \beta \circ g) \ar[r] \ar[d] & \Mor_{\Mor_\mD(\phi(X'),\phi(V))}(\phi(h) \circ \phi(\alpha), \phi(\beta) \circ \phi(g)) \ar[d] \\
\Mor_{\Mor_\mC(X, V)}(h \circ\alpha \circ t, \beta \circ g \circ t) \ar[r]\ar[d] & \Mor_{\Mor_\mD(\phi(X), \phi(V))}(\phi(h) \circ \phi(\alpha) \circ \phi(t), \phi(\beta) \circ\phi(g) \circ \phi(t))\ar[d]
\\
\Mor_{\Mor_\mC(X, V)}(h \circ \alpha \circ t, \beta \circ r \circ f) \ar[r] & \Mor_{\Mor_\mD(\phi(X), \phi(V))}(\phi(h) \circ \phi(\alpha)\circ \phi(t), \phi(\beta) \circ \phi(r) \circ \phi(f)).
}
\]
The upper commutative square is a pullback square since the morphism $t: X \to X'$ is $\phi$-cocartesian so that the commutative square
\[
\xymatrix{
\Mor_\mC(X', V) \ar[r] \ar[d] & \Mor_\mD(\phi(X'),\phi(V)) \ar[d] \\
\Mor_\mC(X, V)\ar[r] & \Mor_\mD(\phi(X), \phi(V))
}
\]
is a pullback square.

The $\phi$-cartesian 2-morphism $ \tau: g \circ t \to r \circ f$ is 
sent by the functor $ \beta \circ (-):  \Mor_\mC(X', Y') \to \Mor_\mC(X', V)$ to a $\phi$-cartesian 2-morphism 
$\beta \tau: \beta \circ g \circ t \to \beta \circ r \circ f$.
Therefore the lower commutative square is a pullback square, too.
This proves (1) for $n=1$.

Next we prove (2) for $n=1.$
The top horizontal functor in \ref{sqty} 
sends an object $(A,B,F(A) \to G(B))$ to $$(\alpha(A),\beta(B),F'(\alpha(A)) \to \gamma(F(A)) \to \gamma(G(B)) \to G'(\beta(B))),$$
and sends a morphism 
$ (A \to A', B \to B', \sigma),$ where $\sigma$ is an antioriented square
\begin{equation}\label{orsquo}
\begin{tikzcd}
F(A) \ar{r}{} \ar{d}[swap]{} & G(B) \ar[double]{dl}{} \ar{d}{} \\
F(A')  \ar{r}[swap]{} & G(B')
\end{tikzcd}
\end{equation}
to the 1-morphism $ (\alpha(A) \to \alpha(A'), \beta(B) \to \beta(B'), \sigma'),$ where $\sigma'$ is an antioriented square
\begin{equation}\label{sqwis}
\begin{tikzcd}
F'(\alpha(A)) \ar{r}{} \ar{d}[swap]{} & \gamma(F(A)) \ar{r}{} \ar{d}[swap]{} & \gamma(G(B)) \ar[double]{dl}{} \ar{d}{} \ar{r}{} & G'(\beta(B)) \ar{d}{} \\
F'(\alpha(A')) \ar{r}[swap]{} & \gamma(F(A')) \ar{r}[swap]{} & \gamma(G(B')) \ar{r}[swap]{} & G'(\beta(B')).
\end{tikzcd}
\end{equation}
This proves that the top horizontal functor in \ref{sqty} 
is a map of 1-cocartesian fibrations:
Let
$ (A \to A', B \to B', \sigma)$ a morphism of $\mA {\bar{\vec{\times}}}_{\mC} \mB$
cocartesian over 
$\mE {\bar{\vec{\times}}}_{\mD} \mF.$ By the description of cocartesian 1-morphisms this means that the morphism $A \to A'$ of $\mA$
is cocartesian over $\mE,$ the morphism $B \to B'$ of $\mB$
is cocartesian over $\mF$ and the non-invertible 2-cell of the antioriented square $\sigma$ is cartesian over $\mD.$
Therefore the image $\alpha(A) \to \alpha(A')$ is cocartesian over $\mE'$, the image $\beta(B) \to \beta(B')$ is cocartesian over $\mF'$
and the non-invertible 2-cell of the antioriented square $\gamma(\sigma)$ in $\mC'$ is cartesian over $\mD'.$
So by stability under whiskering the non-invertible 2-cell of the antioriented square \ref{sqwis} in $\mC'$ is cartesian over $\mD'.$

We prove the induction step. Let $n \geq 1.$ We assume that (1) and (2) hold for $n$ and prove that (1) and (2) hold for $n+1.$

We first consider (1). Let the assumptions of (1) for $n+1$ be satisfied.
By what we have proven, the functor \ref{sqgj} is a 1-cocartesian fibration.
We prove next that for every $$ (A, B, F(A) \to G(B)), (A', B', F(A') \to G(B')) \in \mA {\bar{\vec{\times}}}_{\mC} \mB$$ 
the induced functor $$\Mor_{\mA {\bar{\vec{\times}}}_{\mC} \mB}((A, B, F(A) \to G(B)), (A', B', F(A') \to G(B'))) \to $$$$ \Mor_{\mE {\bar{\vec{\times}}}_\mD \mF}((\rho(A), \kappa(B), \phi(F(A)) \to \phi(G(B))), (\rho(A'), \kappa(B'), \phi(F(A')) \to \phi(G(B'))))$$
is an $n$-cartesian fibration.

By \ref{homs2} the latter functor identifies with the functor 
\begin{equation}\label{oplk}
\Mor_\mA(A,A') \underset{\Mor_\mC(F(A),G(B'))}{{\bar{\vec{\times}}}} \Mor_\mB(B,B') \to \Mor_\mE(\rho(A),\rho(A')) \underset{{\Mor_\mD(\gamma(F(A)),\gamma(G(B')))}}{{\bar{\vec{\times}}}}\Mor_\mF(\kappa(B),\kappa(B')).\end{equation}
This functor is induced by the maps of $n+1$-cartesian fibrations

\begin{equation*}
\xymatrix{
\Mor_\mA(A,A') \ar[r] \ar[d] & \Mor_\mC(F(A),G(B')) \ar[d] \\
\Mor_\mE(\rho(A),\rho(A')) \ar[r] & \Mor_\mD(\gamma(F(A)),\gamma(G(B'))),
}
\end{equation*}
and
\begin{equation*}
\xymatrix{
\Mor_\mB(B,B') \ar[r] \ar[d] &\Mor_\mC(F(A),G(B')) \ar[d] \\
\Mor_\mF(\kappa(B),\kappa(B')) \ar[r] & \Mor_\mD(\gamma(F(A)),\gamma(G(B'))).
}
\end{equation*}

Hence the functor \ref{oplk} is an $n$-cartesian fibration because we assume (the dual of) (1) for $n.$

\vspace{1mm}

Let $$(A' \to A, B' \to B, \sigma) : (A, B, F(A) \to G(B)) \to (A', B', F(A') \to G(B'))$$
and $$(X \to X', Y \to Y', \rho)  : (X, Y, F(X) \to G(Y)) \to (X', Y', F(X') \to G(Y')), $$
where $\sigma, \rho$ are antioriented squares
\begin{equation*}
\begin{tikzcd}
F(A') \ar{r}{} \ar{d}[swap]{} & G(B') \ar[double]{dl}{} \ar{d}{} \\
F(A)  \ar{r}[swap]{} & G(B),
\end{tikzcd}
\begin{tikzcd}
F(X) \ar{r}{} \ar{d}[swap]{} & G(Y) \ar[double]{dl}{} \ar{d}{} \\
F(X')  \ar{r}[swap]{} & G(Y').
\end{tikzcd}
\end{equation*}
To complete the proof of (1) it remains to see that the induced functor $$ \Mor_{\mA {\bar{\vec{\times}}}_{\mC} \mB}((A, B, F(A) \to G(B)), (X, Y, F(X) \to G(Y))) \to $$$$ \Mor_{\mA {\bar{\vec{\times}}}_{\mC} \mB}((A', B', F(A') \to G(B')), (X', Y', F(X') \to G(Y'))) $$ is a map of $n$-cartesian fibrations.

By \cref{homs2} the latter functor identifies with the functor
\begin{equation}\label{erhm}
\Mor_\mA(A,X) {\bar{\vec{\times}}}_{\Mor_\mC(F(A), G(Y))} \Mor_\mB(B,Y)
\to \Mor_\mA(A',X') {\bar{\vec{\times}}}_{\Mor_\mC(F(A'), G(Y'))} \Mor_\mB(B',Y') \end{equation}
induced by the antioriented squares
\begin{equation*}
\begin{tikzcd}
\Mor_\mA(A,X) \ar{r}{} \ar{d}[swap]{} & \Mor_\mA(A',X'){} \ar{d}{} \\
\Mor_\mC(F(A), F(X))  \ar{r}[swap]{} \ar{d}{} & \Mor_\mC(F(A'), F(X')) \ar{d}{}  \ar[double]{dl}
\\
\Mor_\mC(F(A), G(Y))  \ar{r}[swap]{} & \Mor_\mC(F(A'), G(Y')),
\end{tikzcd}
\begin{tikzcd}
\Mor_\mB(B,Y) \ar{r}{} \ar{d}[swap]{} & \Mor_\mB(B',Y') {} \ar{d}{} \\
\Mor_\mC(G(B), G(Y)) \ar{d}{} \ar{r}[swap]{} & \Mor_\mC(G(B'), G(Y')) \ar{d}{}
\\
\Mor_\mC(F(A), G(Y))  \ar[double]{ur} \ar{r}[swap]{} & \Mor_\mC(F(A'), G(Y')).
\end{tikzcd}
\end{equation*}
The functor \ref{erhm} is a map of $n$-cartesian fibrations since we assume (the dual of) (2) for $n$.
This proves (1).

We complete the proof by showing (2) for $n+1.$
We have already proven (2) for $n=1.$
So the commutative square \ref{sqty}
is in particular a map of 1-cocartesian fibrations.
So it remains to see that the commutative square \ref{sqty}
induces on morphism $\infty$-categories a map of $n$-cartesian fibrations.
The commutative square \ref{sqty} induces on morphism $\infty$-categories between $(A, B, F(A) \to G(B)), (A', B', F(A') \to G(B')) \in \mA {\bar{\vec{\times}}}_{\mC} \mB$ the canonical functor
\begin{equation}\label{jhg}
\Mor_\mA(A,A')\!\!\!\!\!\underset{\Mor_\mC(F(A),G(B'))}{{\bar{\vec{\times}}}} \Mor_\mB(B,B') \to \Mor_{\mA'}(\alpha(A),\alpha(A'))\!\!\!\!\!\underset{\Mor_{\mC'}(F'(\alpha(A)),G'(\beta(B'))}{{\bar{\vec{\times}}}} \!\!\!\!\!\Mor_{\mB'}(\beta(B),\beta(B')) 
\end{equation}
over the functor 
$$
\Mor_\mE(\rho(A),\rho(A'))\!\!\!\!\!\underset{\Mor_\mD(\phi F(A),\phi G(B'))}{{\bar{\vec{\times}}}} \!\!\!\!\!\Mor_\mF(\kappa(B),\kappa(B')) \to $$$$ \Mor_{\mE'}(\rho'\alpha(A),\rho'\alpha(A'))\!\!\!\!\!\underset{\Mor_{\mD'}(\phi' F'(\alpha(A)),\phi' G'(\beta(B'))}{{\bar{\vec{\times}}}} \!\!\!\!\!\Mor_{\mF'}(\kappa'\beta(B),\kappa'\beta(B')). 
$$ 
The functor \ref{jhg} is a map of $n$-cartesian fibrations since we assume (the dual of) (2) for $n.$
\end{proof}

\begin{corollary}\label{slicecocar}

Let $ n \geq 1$.

\begin{enumerate}[\normalfont(1)]\setlength{\itemsep}{-2pt}
\item For every $n+1$-cocartesian ($n+1$-cartesian fibration) $\phi: \mC \to \mD$
and $X \in \mC$ the induced functors $$ \mC_{//^\lax X} \to \mD_{//^\lax \phi(X) }, \ \mC_{X //^\lax} \to \mD_{\phi(X) //^\lax} $$
are $n$-cocartesian fibrations ($n$-cartesian fibrations).

\item For every $n+1$-anticocartesian fibration ($n+1$-anticartesian fibration) $\phi: \mC \to \mD$
and $X \in \mC$ the induced functors $$ \mC_{//^\oplax X} \to \mD_{//^\oplax \phi(X) }, \ \mC_{X //^\oplax} \to \mD_{\phi(X) //^\oplax} $$
are $n$-anticocartesian fibrations ($n$-anticartesian fibrations).

\end{enumerate}
    
\end{corollary}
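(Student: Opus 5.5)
Here the plan is to realize each slice as an (anti)oriented pullback and then apply \cref{laxfib} (1) with the fibrations on the third leg chosen degenerate. Since $\mC_{//^\lax X}=(\mC^\co_{//^\oplax X^\co})^\co$, we need a description of the slices as (anti)oriented pullbacks against $\bD^0$. The first step is to identify, via the join/slice adjunction and \cref{bdesc} (together with \cref{adesc}), the lax slice $\mC_{//^\lax X}$ with the antioriented pullback $\mC\,{\bar{\vec{\times}}}_{\mC}\, \bD^0$ of $\id:\mC\to\mC$ and $X:\bD^0\to\mC$. Dually, the lax coslice $\mC_{X//^\lax}$ should be $\bD^0\,{\bar{\vec{\times}}}_{\mC}\,\mC$. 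I expect one of the two slice variants here to require the antioriented and the other the oriented pullback, so I will check this carefully on morphism $\infty$-categories using \cref{homso} and the description of the slice by the antijoin. The identification should be natural in $\phi$, so that $\mC_{//^\lax X}\to\mD_{//^\lax\phi(X)}$ becomes the induced functor $\mC\,{\bar{\vec{\times}}}_{\mC}\,\bD^0\to\mD\,{\bar{\vec{\times}}}_{\mD}\,\bD^0$ of the statement of \cref{laxfib}.

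Next, I apply \cref{laxfib} (1) with $\mA=\mC$, $\mE=\mD$ and $\rho=\phi$, which is an $n+1$-cocartesian fibration by hypothesis, together with the degenerate square $\mB=\mF=\bD^0$ and $\kappa=\id$. The identity of $\bD^0$ is a cocartesian fibration, and $X$, $\phi(X)$ form a map of fibrations because every cell of $\bD^0$ is an equivalence. Both squares are then maps of $n+1$-cocartesian fibrations, so the induced functor is an $n$-cocartesian fibration. The coslice case is identical with the roles of the two legs swapped.

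The cartesian variants of (1) follow by applying $(-)^{\coop}$. This converts a cartesian fibration into a cocartesian one, the lax slice into the lax coslice, and antioriented pullbacks into antioriented pullbacks with the legs reversed. Part (2) follows from part (1) by applying $(-)^\co$: this exchanges cocartesian with anticocartesian fibrations and lax with oplax slices, using $\mC_{//^\lax X}=(\mC^\co_{//^\oplax X^\co})^\co$ and its coslice analogue.

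The main obstacle is the first step. Getting the correct identification of the lax (co)slice with the appropriate oriented or antioriented pullback against a point requires matching the antijoin $(-)\bar\star X$ universal property with the cotensor $^{\bD^1}\mC$ from \cref{bdesc}. It also requires pinning down which involution aligns the conventions. If the identification lands on the oriented rather than the antioriented pullback, I will instead use the dual of \cref{laxfib}, obtained by applying $(-)^\co$ to it.
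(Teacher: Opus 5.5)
Your proposal is correct and is essentially how the paper gets the corollary: it is stated without proof as an immediate consequence of \cref{laxfib}(1). You apply it to the antioriented pullbacks $\mC\,\bar{\vec{\times}}_{\mC}\,\{X\}\simeq \mC_{//^\lax X}$ and $\{X\}\,\bar{\vec{\times}}_{\mC}\,\mC\simeq\mC_{X//^\lax}$ with the point leg degenerate, and obtain the remaining cases via $(-)^{\coop}$ and $(-)^{\co}$ exactly as you describe. On your identification step: both the lax slice and the lax coslice are antioriented pullbacks, built from $\Fun^\lax(\bD^1,\mC)$, as follows from $\mC_{//^\lax X}=(\mC^\co_{//^\oplax X})^\co$ and \cref{grayhoms}. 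So your fallback is never needed, and it would not have helped anyway, since the $(-)^\co$-dual of \cref{laxfib} only yields anticocartesian conclusions for oriented pullbacks.
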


\begin{theorem}\label{indufibr}

Let $\mB$ be an $\infty$-category and $\phi: \mC \to \mD$ an anticocartesian fibration. The induced functor
$$\psi: \Fun^\oplax(\mB, \mC) \to \Fun^\oplax(\mB, \mD) $$
is an anticocartesian fibration.
An $\n$-morphism of $\Fun^\oplax(\mB, \mC)$ is $\psi$-cocartesian if and only if for every $m$-morphism in $\mB$ for $m \geq 0$ the associated
$n +m$-morphism is $\phi$-cocartesian. 
    
\end{theorem}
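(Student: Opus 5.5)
The proof will reduce to the case of oriented cubes, using density and the fact that $\Fun^\oplax(-,\mC)$ turns colimits into limits. It generalizes \cref{cocartexp}, the cartesian version, replacing the cartesian product by the Gray tensor product.

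\textbf{Step 1: reduction to cubes.} Since $\Fun^\oplax(-,\mC)$ sends colimits in the first variable to limits, and $\cube\subset\infty\Cat$ is dense (\cref{cubedense}), we write $\mB\simeq\colim \cube^{k}$. Then
\[
\psi\simeq\lim_{\cube^k\to\mB}\bigl(\Fun^\oplax(\cube^k,\mC)\to\Fun^\oplax(\cube^k,\mD)\bigr).
\]
Anticocartesian fibrations and maps between them are closed under small limits in $\Fun(\bD^1,\infty\scat)$, with cocartesian cells detected componentwise. This closure is already used in the proof of \cref{cocartexp}, and we would verify it from \cref{cocarto} by induction on dimension. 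So it suffices to treat $\mB=\cube^k$. We must also show that the transition maps $\Fun^\oplax(\cube^k,\mC)\to\Fun^\oplax(\cube^{k'},\mC)$, induced by cube maps, are maps of anticocartesian fibrations. This will follow once cocartesian cells are identified by the pointwise criterion of the statement, because that criterion is visibly stable under restriction.

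\textbf{Step 2: the case $\mB=\bD^1$, then induction on $k$.} Since $\cube^{k}=\cube^{k-1}\boxtimes\bD^1$, we have
\[
\Fun^\oplax(\cube^k,\mC)\simeq\Fun^\oplax\bigl(\bD^1,\Fun^\oplax(\cube^{k-1},\mC)\bigr).
\]
So it suffices to prove that $\Fun^\oplax(\bD^1,-)$ preserves anticocartesian fibrations, with the stated cocartesian cells. Write $\Fun^\oplax(\bD^1,\mC)\simeq\mC\vec{\times}_\mC\mC$, the oriented pullback of identities. The relevant result is then \cref{laxfib}, applied to the identity squares of $\phi$ with $\mA=\mB=\mC$ and $\mE=\mF=\mD$, after applying the involution exchanging $\vec\times$ with $\bar{\vec\times}$ and cocartesian with anticocartesian (via \cref{grayhoms}). \cref{laxfib} gives, for each $n$, that an $(n+1)$-fibration yields an $n$-fibration. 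Since $\phi$ is an $\infty$-fibration, we get all $n$.

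The description of cocartesian 1-cells in \cref{laxfib}(1) is exactly the claimed one: the endpoints are cocartesian and the 2-cell filling the square is (co)cartesian. These are the images of the 1- and 2-cells of $\bD^1\boxtimes\bD^1$, i.e.\ the $1+m$-morphisms for $m=0,1$. For higher cells we would iterate via \cref{homs2}: the morphism $\infty$-categories of $\Fun^\oplax(\bD^1,\mC)$ are again oriented pullbacks, so the criterion propagates dimensionwise by induction.

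\textbf{Main obstacle.} The main difficulty is the bookkeeping of the characterization of $\psi$-cocartesian $n$-cells for general $\mB$. Specifically, we must show that an $n$-cell $\bD^n\to\Fun^\oplax(\mB,\mC)$, i.e.\ a map $\mB\boxtimes\bD^n\to\mC$, is cocartesian iff each $(n+m)$-cell obtained by composing with $\bD^{n+m}\to\bD^m\boxtimes\bD^n$ is $\phi$-cocartesian. We would first prove it for $\mB=\bD^m$ by induction on $m$, using $\bD^m=S(\bD^{m-1})$ and \cref{targetfibr}, which expresses $\Fun^\oplax(S(\mA),\mC)$ as an oriented pullback. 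We would then pass to cubes and general $\mB$ by the limit description of Step 1, noting that cells of $\mB$ are detected on cubes. The variance conventions (oplax versus lax, anti versus plain) also need care; there we would consistently transport through $(-)^\co$ and \cref{grayhoms}.
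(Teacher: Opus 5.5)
Your argument for the fibration property is the paper's. You reduce to oriented cubes via \cref{cubedense} and closure of $\coCart$ under limits, take the case $\bD^1$ from the dual of \cref{laxfib}, and induct via $\Fun^\oplax(\cube^{k},\mC)\simeq\Fun^\oplax(\bD^1,\Fun^\oplax(\cube^{k-1},\mC))$. That part is fine.

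The gap is in the description of cocartesian cells: the ``so it suffices'' in your Step 2 is not true. Combining the $\bD^1$ criterion with the inductive criterion for $\cube^{k-1}$ only tests an $n$-cell $\bD^n\boxtimes\cube^k\to\mC$ against certain cells of $\cube^k$. These are the cells built from an $i$-cell of $\bD^1$ ($i\le 1$) and a cell of $\cube^{k-1}$ via the top cell of the Gray product; the paper calls them atomic. A general cell of $\cube^k$ is not of this form, e.g.\ the composite edge $(0,0)\to(1,1)$ of $\cube^2$. You need the criterion for all cells, and not only because the statement asks for it. Your Step 1 needs it too: restriction along an arbitrary map of cubes, such as $\cube^1\to\cube^2$ picking that composite edge, sends atomic cells to non-atomic ones. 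So the criterion is ``visibly stable under restriction'' only in its all-cells form.

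The paper closes this with an idea your proposal lacks. Every $m$-cell of $\cube^\ell$ is a composite of atomic cells of dimension $\le m$. Cocartesian cells are closed under composition (\cref{elemen}, \cref{cocartex}), so testing atomic cells is equivalent to testing all cells. Finally, the atomic cells of $\cube^{\ell+1}$ are exactly those produced from atomic cells of $\cube^\ell$ and cells of $\bD^1$, which is what makes the induction close.

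Your fallback in the last paragraph does not supply this. You propose proving the criterion for disks via $\bD^m=S(\bD^{m-1})$, then passing ``to cubes and general $\mB$ by the limit description of Step 1.'' That runs in the wrong direction: the Step 1 limit expresses a general $\mB$ through cubes, not cubes through disks. It is also circular, since that limit argument presupposes that restrictions between cubes are maps of fibrations, which is the cube-level criterion you are trying to derive.

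A disk-based route could be made to work by decomposing along $\Theta$ instead (\cref{theta}), but it would still need closure of cocartesian cells under composition. Maps in $\Theta$ such as $\bD^1\to\bD^1\vee\bD^1$, picking the composite, send cells lying in single summands to composite cells. It would also need an additional combinatorial lemma you have not supplied: one matching the oriented-pullback description of $\Fun^\oplax(S(\mA),\mC)$ with the associated cells defined via $\bD^{n+m}\to\bD^n\boxtimes\bD^m$.
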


\begin{proof}

By cubical density every $\infty$-category $\mB$ is the colimit
$\colim_{\cube^\ell \to \mB} \cube^\ell$ so that 
$\psi: \Fun^\oplax(\mB, \mC) \to \Fun^\oplax(\mB, \mD) $
is the limit 
$$ \lim_{\cube^\ell \to \mB} \Fun^\oplax(\cube^\ell, \mC) \to \lim_{\cube^\ell \to \mB} \Fun^\oplax(\cube^\ell, \mD) $$
in $\Fun(\bD^1,\infty\Cat)$. Since the subcategory $\coCart \subset \Fun(\bD^1,\infty\Cat) $ is closed under small limits,
by the description of $\psi$-cocartesian morphisms we can reduce to the case that $\mB$ is an oriented $\ell$-cube for $\ell \geq 0.$

We prove the statement by induction on $\ell \geq 0.$

For $\ell=0$ there is nothing to show.
For $\ell=1$ the statement follows from the dual of \cref{laxfib}.
Let $\ell \geq 1$ and we assume the statement holds for $\ell.$
Since $\cube^\ell $ is an $\ell$-category, the statement for $\ell $ is equivalent to the following one: The induced functor
$$\psi': \Fun^\oplax(\cube^{\ell}, \mC) \to \Fun^\oplax(\cube^{\ell}, \mD) $$
is an anticocartesian fibration.
Moreover an $\n$-morphism of $\Fun^\oplax(\cube^{\ell}, \mC)$ is $\psi'$-cocartesian if and only if for every $m$-morphism in $\cube^{\ell}$ for $m \leq \ell$ the associated $n +m$-morphism is $\phi$-cocartesian. 
We simplify this statement.
We say that a $m$-morphism in $\cube^{\ell}$ for $m \leq \ell$ 
is atomic if it factors as $\bD^m \to \cube^m \to \cube^\ell$,
where the first functor takes the unique $m$-cell and the second functor is a tensor product of functors of the form $\bD^i \to \bD^1$
for $i \leq 1.$
Every $m$-morphism in $\cube^{\ell}$ for $m \leq \ell$ is a composite of atomic morphisms of dimension smaller or equal $m$.
Since cocartesian morphisms are closed under composition, the statement for $\ell$ is equivalent to the following one:
The induced functor
$$\psi': \Fun^\oplax(\cube^{\ell}, \mC) \to \Fun^\oplax(\cube^{\ell}, \mD) $$
is an anticocartesian fibration.
Moreover an $\n$-morphism of $\Fun^\oplax(\cube^{\ell}, \mC)$ is $\psi'$-cocartesian if and only if for every atomic $m$-morphism in $\cube^{\ell}$ for $m \leq \ell$ the associated $n +m$-morphism is $\phi$-cocartesian. 

We observe that for every atomic $m$-morphism in $\cube^{\ell}$ for $m \leq \ell$ and $i \leq 1$ the induced functor
$\bD^{i+m} \to \bD^i \boxtimes \bD^m \to \bD^1 \boxtimes \cube^\ell = \cube^{\ell+1} $ is again atomic, where the first functor takes the unique $i+m$-morphism.
Conversely, by definition of atomic morphism, every atomic $m+1$-morphism of $\cube^{\ell+1}$ arises this way from some atomic $m$-morphism in $\cube^{\ell}$.

The statement for $\ell=1$ and the induction hypothesis imply that the induced functor
$$\psi: \Fun^\oplax(\cube^{\ell+1}, \mC) \simeq \Fun^\oplax(\cube^1, \Fun^\oplax(\cube^{\ell}, \mC)) \to \Fun^\oplax(\cube^1,\Fun^\oplax(\cube^{\ell}, \mD)) \simeq \Fun^\oplax(\cube^{\ell+1}, \mD) $$
is an anticocartesian fibration.
Moreover an $\n$-morphism of $$\Fun^\oplax(\cube^{\ell+1}, \mC) \simeq \Fun^\oplax(\cube^1, \Fun^\oplax(\cube^{\ell}, \mC)) $$ is $\psi$-cocartesian if and only if for every $i$ morphism of $\bD^1$ for $ i \leq 1 $ the associated $n + i$-morphism is $\psi' $-cocartesian.
So by induction hypothesis an $\n$-morphism of $\Fun^\oplax(\cube^{\ell+1}, \mC) \simeq \Fun^\oplax(\cube^1, \Fun^\oplax(\cube^{\ell}, \mC)) $ is $\psi$-cocartesian if and only if for every $i$-morphism of $\bD^1$ for $ i \leq 1 $ and atomic $m$-morphism in $\cube^{\ell}$ for $m \leq \ell$ the associated $(n +i) + m $-morphism is $\phi$-cocartesian. 
By the remark above about atomic morphisms,
the latter condition is equivalent to say that an $\n$-morphism of $\Fun^\oplax(\cube^{\ell+1}, \mC) \simeq \Fun^\oplax(\cube^1, \Fun^\oplax(\cube^{\ell}, \mC)) $ is $\psi$-cocartesian if and only if for every atomic $m$ morphism of $\cube^{\ell+1}$ for $m \leq \ell+1$ the associated $n + m $-morphism is $\phi$-cocartesian. 
\end{proof}

\begin{corollary}\label{induoplax}
Let $\mB$ be an $\infty$-category and $\phi: \mC \to \mD$ an anticartesian fibration. The induced functor
$$\psi: \Fun^\oplax(\mB, \mC) \to \Fun^\oplax(\mB, \mD) $$
is an anticartesian fibration.

\end{corollary}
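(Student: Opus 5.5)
The plan is to deduce \cref{induoplax} from \cref{indufibr} by a duality argument, rather than redoing the cubical induction. An anticartesian fibration $\phi$ is by definition one for which $\phi^{\co\op}$ is an anticocartesian fibration. So I want to express $\Fun^\oplax(\mB,\mC)^{\co\op}$ as an oplax functor category, apply \cref{indufibr} there, and transport the result back.

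The first step is the identification. By \cref{grayhoms} there are canonical equivalences
$\Fun^\oplax(\mB,\mC)^\op\simeq\Fun^\lax(\mB^\op,\mC^\op)$ and $\Fun^\lax(\mB',\mC')^\co\simeq\Fun^\oplax(\mB'^\co,\mC'^\co)$. The second is the $\co$-dual of the $\co$ equivalence in \cref{grayhoms}, applied with $\co$ an involution. Composing them gives a natural equivalence
$$\Fun^\oplax(\mB,\mC)^{\co\op}\simeq\Fun^\oplax(\mB^{\co\op},\mC^{\co\op}).$$
Since it comes from the monoidal involutions of \cref{dua}, it is natural in $\mC$. Hence it identifies $\psi^{\co\op}$ with the functor $\Fun^\oplax(\mB^{\co\op},\mC^{\co\op})\to\Fun^\oplax(\mB^{\co\op},\mD^{\co\op})$ induced by $\phi^{\co\op}$.

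The second step applies \cref{indufibr} to the anticocartesian fibration $\phi^{\co\op}$ and the $\infty$-category $\mB^{\co\op}$. This shows that $\psi^{\co\op}$ is an anticocartesian fibration, so $\psi$ is an anticartesian fibration. The description of cocartesian cells in \cref{indufibr} also carries over: an $n$-morphism of $\Fun^\oplax(\mB,\mC)$ is $\psi$-anticartesian exactly when all the associated $(n+m)$-morphisms are $\phi$-anticartesian. The corollary does not ask for this, but it comes for free.

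The step needing care is checking that $(-)^{\co\op}$ really sends $\Fun^\oplax$ to $\Fun^\oplax$ and not to $\Fun^\lax$. The $\op$ step reverses the Gray tensor ($\boxtimes\mapsto\boxtimes^{\rev}$, so oplax becomes lax), and the $\co$ step reverses it again. I would verify this directly by adjunction. Since $(-)^{\co\op}$ is monoidal $(\infty\Cat,\boxtimes)\simeq(\infty\Cat,\boxtimes)$, it satisfies $(K\boxtimes\mB)^{\co\op}\simeq K^{\co\op}\boxtimes\mB^{\co\op}$. Therefore $\Map(K,\Fun^\oplax(\mB,\mC)^{\co\op})\simeq\Map(K^{\co\op}\boxtimes\mB,\mC)\simeq\Map(K\boxtimes\mB^{\co\op},\mC^{\co\op})$, naturally in $K$ and $\mC$. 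If the variance came out the other way, I would instead use the lax version of \cref{indufibr}, which follows from the same cubical-density proof.
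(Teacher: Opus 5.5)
Your proposal is correct and is essentially the argument the paper intends: the corollary is stated without proof right after \cref{indufibr}, and it follows by applying the involution $(-)^{\co\op}$. This involution is monoidal for $\boxtimes$, being a composite of two $\boxtimes$-reversing involutions, so it identifies $\psi^{\co\op}$ with the functor $\Fun^\oplax(\mB^{\co\op},\mC^{\co\op})\to\Fun^\oplax(\mB^{\co\op},\mD^{\co\op})$ induced by the anticocartesian fibration $\phi^{\co\op}$, to which \cref{indufibr} applies directly.
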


\begin{corollary}\label{indulax}

Let $\mB$ be an $\infty$-category.

\begin{enumerate}[\normalfont(1)]\setlength{\itemsep}{-2pt}
   
\item Let $\mC \to \mD$ be a cocartesian fibration. The induced functor
$$\Fun^\lax(\mB, \mC) \to \Fun^\lax(\mB, \mD) $$
is a cocartesian fibration.
  
\item Let $\mC \to \mD$ be a cartesian fibration. The induced functor
$$\Fun^\lax(\mB, \mC) \to \Fun^\lax(\mB, \mD) $$
is a cartesian fibration.

\end{enumerate}
  
\end{corollary}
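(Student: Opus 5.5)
We derive \cref{indulax} from \cref{indufibr} and \cref{induoplax} by duality; no new fibration argument is needed. The tools are the equivalences of \cref{grayhoms},
\[
\Fun^\oplax(\mC,\mD)^\op \simeq \Fun^\lax(\mC^\op,\mD^\op), \qquad \Fun^\oplax(\mC,\mD)^\co \simeq \Fun^\lax(\mC^\co,\mD^\co),
\]
which are natural in $\mD$. We also use the definitions: $\phi$ is a cocartesian fibration iff $\phi^\co$ is anticocartesian, and $\phi$ is a cartesian fibration iff $\phi^\op$ is anticocartesian.

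For (1), let $\phi:\mC\to\mD$ be a cocartesian fibration. Then $\phi^\co:\mC^\co\to\mD^\co$ is an anticocartesian fibration. Applying \cref{indufibr} with source $\mB^\co$, the functor
\[
\Fun^\oplax(\mB^\co,\mC^\co)\to\Fun^\oplax(\mB^\co,\mD^\co)
\]
is an anticocartesian fibration. By \cref{grayhoms}, applied with $\mB^\co$ in place of $\mC$ and using $(\mB^\co)^\co\simeq\mB$, this functor is identified, naturally in the target, with
\[
\Fun^\lax(\mB,\mC)^\co\to\Fun^\lax(\mB,\mD)^\co,
\]
which is the functor $\Fun^\lax(\mB,\phi)^\co$. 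Hence $\Fun^\lax(\mB,\phi)^\co$ is anticocartesian, i.e. $\Fun^\lax(\mB,\phi)$ is a cocartesian fibration.

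For (2), let $\phi$ be a cartesian fibration. Then $\phi^\op$ is anticocartesian, and \cref{indufibr} with source $\mB^\op$ shows that $\Fun^\oplax(\mB^\op,\mC^\op)\to\Fun^\oplax(\mB^\op,\mD^\op)$ is anticocartesian. By the first equivalence of \cref{grayhoms}, this functor is $\Fun^\lax(\mB,\phi)^\op$. So $\Fun^\lax(\mB,\phi)^\op$ is anticocartesian, which means $\Fun^\lax(\mB,\phi)$ is cartesian. One could equally obtain (2) from \cref{induoplax} via $(-)^\co$, since $\phi^\co$ is anticartesian when $\phi$ is cartesian.

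The one point needing care is that the equivalences of \cref{grayhoms} are natural in the target variable. Only then does the induced map on oplax functor categories correspond to the induced map on lax functor categories, as opposed to merely having equivalent source and target. This naturality holds because the equivalences come from the monoidal involutions of \cref{dua}, which reverse the Gray tensor product, through the adjunctions defining $\Fun^\lax$ and $\Fun^\oplax$.
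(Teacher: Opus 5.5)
Your argument is correct and is essentially the paper's own route: the corollary is stated without proof right after \cref{indufibr} and \cref{induoplax}, so the intended argument is to transport \cref{indufibr} along $(-)^\co$ (respectively $(-)^\op$), using the equivalences of \cref{grayhoms} and the definitions of (co)cartesian fibrations via $\phi^\co$ and $\phi^\op$. You were right to flag that these equivalences must be natural in the target, which they are since they come from the monoidal involutions of \cref{dua}; that naturality is what identifies the induced functors and not just their sources and targets.
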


\subsection{Higher dimensional cartesian factorization}

We now consider factorizations into (co)cartesian and fiberwise morphisms.

\begin{notation}Let $X$ be an $\infty$-category and $\mE$ a collection of positive dimensional cells.
For every $A,B \in X$ let $\mE_{A,B} $ be the collection of positive dimensional cells of $\Mor_X(A,B)$ such that for every $n \geq 0$ an $n$-morphism of 
$\Mor_X(A,B)$ belongs to $\mE_{A,B} $ if and only if the corresponding $n+1$-morphism of $X$ belongs to $\mE.$
\end{notation}

\begin{definition}
Let $X$ be an $\infty$-category and $\mE$ a collection of positive dimensional cells. For every $n > 1$ we define by induction that $\mE$ is closed under the composition below dimension $n$:

\begin{enumerate}[\normalfont(1)]\setlength{\itemsep}{-2pt}
\item The collection $\mE$ is closed under the composition below dimension $2$ if $\mE$ contains all identity 1-morphisms, and 
the composite of every composable 1-morphisms in $\mE$ belongs to $\mE$.

\item The collection $\mE$ is closed under the composition below dimension $n+1$ if it is closed under the composition below dimension $2$, for every $A,B \in X$ the collection
$\mE_{A,B} $ of $\Mor_X(A,B) $ is closed under the composition below dimension $n $ and for every morphisms $f: A' \to A, g: B \to B' $ in $X$ the induced functor $\Mor_X(A,B) \to \Mor_X(A',B') $ sends $\mE_{A,B}$ to $\mE_{A',B'}$.

\item The collection $\mE$ is closed under the composition if it is closed under the composition below dimension $n $ for every $n > 1.$

\end{enumerate}

\end{definition}

\begin{example}\label{cocartex}

Let $\phi: X \to Y$ be a functor.
The collection of $\phi$-cocartesian cells is closed under the composition.
    
\end{example}

\begin{remark}\label{premark}
Let $X$ be an $\infty$-category and $\mE$ a collection of positive dimensional cells. Let $\phi: Y \to X$ be a functor and $\phi^{-1}(\mE)$ the collection of cells sent by $\phi$ to cells of $\mE.$ The collection $\phi^{-1}(\mE)$ is closed under the composition if $\mE$ is closed under the composition.
    
\end{remark}

\begin{proposition}\label{subcarz}
Let $1 \leq n \leq \infty. $
Let $X$ be an $n$-category and $\mE$ a collection of positive dimensional cells that is closed under the composition below dimension $n+1$.
There is a subcategory inclusion $Y \to X$ such that for every
$k \geq 0$ a $k$-morphism of $X$ belongs to $Y$ if and only if 
it belongs to $\mE.$
    
\end{proposition}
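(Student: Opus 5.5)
We argue by induction on $n$, reducing in every case to \cref{subchar}, which builds a subcategory inclusion from a compatible family of subcategory inclusions of the morphism $\infty$-categories.

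For $n=\infty$ there is one preliminary step. The condition "closed under composition below dimension $n+1$" is then closure under composition in every dimension. For each $k$ we restrict $\mE$ to the $k$-category $\iota_k(X)$. The resulting collection is closed under composition below dimension $k+1$, since the inductive conditions only concern cells of dimension at most $k$. By the finite case this gives subcategories $Y_k \subset \iota_k(X)$. These are compatible: $\iota_{k}(Y_{k+1}) \simeq Y_k$ as subcategories of $\iota_k(X)$, because an inclusion of $\infty$-categories is determined by which cells it contains. Since every $\infty$-category is the sequential colimit of its $\iota_k$, we set $Y=\colim_k Y_k \to \colim_k \iota_k(X) = X$. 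This is a monomorphism, because filtered colimits in $\infty\Cat$ commute with the finite limits that detect monomorphisms (or we can check cellwise using \cref{monochar} together with \cref{homfil}). Its cells are exactly those in $\mE$. So it suffices to treat $1\le n<\infty$.

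For finite $n$ we induct on $n$ and verify condition (2) of \cref{subchar}. First, $\mE$ contains all identity 1-morphisms because it is closed under composition below dimension $2$; this is the first requirement of \cref{subchar}(2), and the statement is read for objects of $X$ (all objects are allowed, i.e.\ $0$-cells are not restricted). Next, for objects $A,B$ the category $\Mor_X(A,B)$ is an $(n-1)$-category. By definition the collection $\mE_{A,B}$ is closed under composition below dimension $n$. If $n-1\ge 1$, the induction hypothesis gives a subcategory inclusion $Y_{A,B}\subset \Mor_X(A,B)$ whose $k$-cells are exactly those in $\mE_{A,B}$, that is, the $k$-cells whose associated $(k+1)$-cells of $X$ lie in $\mE$. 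The base case $n=1$, where $\Mor_X(A,B)$ is a space, needs separate care. There $\mE_{A,B}$ is a set of points, and $Y_{A,B}$ is the corresponding union of path components. This requires $\mE$ to be invariant under equivalence of $1$-morphisms, which holds automatically since cells are taken up to the homotopy type of $\Mor$. For $n=1$ the subcategory of an ordinary $(\infty,1)$-category is also the classical fact. Finally, closure below dimension $n+1$ includes the whiskering condition that $\Mor_X(A,B)\to\Mor_X(A',B')$ sends $\mE_{A,B}$ to $\mE_{A',B'}$ for $f,g\in\mE$; this is precisely the last hypothesis of \cref{subchar}(2). \Cref{subchar} then produces $Y\to X$.

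The main obstacle is the bookkeeping showing that $Y_{A,B}$ contains the identity $k$-cells needed for $Y_{A,B}$ to be a subcategory, and that the cells of $Y$ are exactly those of $\mE$ in every dimension. Both follow from the recursive form of the definitions. The $n=\infty$ colimit step also needs the compatibility of the $Y_k$ to be checked carefully.
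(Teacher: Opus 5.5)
Your proposal is correct and follows the paper's proof. You first reduce the case $n=\infty$ to finite $n$ by setting $Y=\colim_k Y_k$ along the filtration $X\simeq\colim_k \iota_k(X)$, and then induct on $n$ by feeding the subcategories $Y_{A,B}\subset \Mor_X(A,B)$ from the induction hypothesis, together with the whiskering condition, into \cref{subchar}. Your extra justifications are consistent with the paper's argument: that the colimit map is a monomorphism because filtered colimits commute with pullbacks in $\infty\Cat$, and the explicit treatment of the base case $n=1$, which the paper simply calls clear.
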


\begin{proof}

We first reduce to the case $n < \infty.$
Let $X$ be an $\infty$-category. If we have proven the statement for $m$-categories for every $m \geq 1,$ then for every $m \geq 1$ there is a subcategory $Y^m \to \iota_m(X)$ whose $k$-cells are precisely the $k$-cells of $\mE.$
Thus the functor $Y^m \to X $ factors through $Y^{m+1} \to X.$
So we obtain a subcategory $Y:= \colim_{m \geq 1} Y^m \to \colim_{m \geq 1} \iota_m(X) \simeq X $ whose $k$-cells are precisely the $k$-cells of $\mE.$

So we can assume that $n < \infty.$
We prove the statement by induction on $n \geq 1$.
For $n=1$ the statement is clear.

Let $n > 1$. We assume the statement holds for $n-1$.
Let $X$ be an $n$-category and $\mE$ a collection of positive dimensional cells that is closed under the composition below dimension $n+1$.
Then for every $A,B \in X$ the collection
$\mE_{A,B} $ of the $n-1$-category $ \Mor_X(A,B) $ is closed under the composition below dimension $n$ and so by induction hypothesis there is a subcategory 
$Y_{A,B} \to \Mor_X(A,B)$ whose $k$-cells are precisely the $k$-cells of 
$\mE_{A,B} $. So by \cref{subchar} there is a subcategory $Y \to X$
whose $k$-cells are precisely the $k$-cells of $\mE.$
\end{proof}

\begin{proposition}\label{inheri} Let $X$ be an $\infty$-category and $\mE$ a collection of positive dimensional cells.
For $\ell \geq 0$ let $\mE^\ell$ be the collection of cells of $\Fun^\lax(\cube^\ell, X)$ whose $n$-cells for $n > 0$ are the functors $\bD^n \to \Fun^\lax(\cube^\ell, X)$ corresponding to functors $\cube^\ell \boxtimes \bD^n \to X$ such that the composition 
$\bD^{\ell +n} \to \cube^\ell \boxtimes \bD^n \to X$ with the unique non-invertible $\ell+n$-cell of $\cube^\ell \boxtimes \bD^n $, belongs to $\mE.$

If $\mE$ is closed under the composition, then 
$\mE^\ell $ is closed under the composition.

\end{proposition}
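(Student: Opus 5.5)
The plan is to induct on $\ell$, with the case $\ell=1$ doing the real work. For $\ell=0$ we have $\Fun^\lax(\cube^0,X)\simeq X$, and $\mE^0$ coincides with $\mE$, so there is nothing to prove. For the inductive step we use $\cube^{\ell+1}\simeq \cube^1\boxtimes\cube^\ell$, which gives
\[
\Fun^\lax(\cube^{\ell+1},X)\simeq \Fun^\lax(\cube^1,\Fun^\lax(\cube^\ell,X)).
\]
Under this equivalence $\mE^{\ell+1}$ is identified with $(\mE^\ell)^1$. The reason is that the unique non-invertible $(\ell+1+n)$-cell of $\cube^{\ell+1}\boxtimes\bD^n$ is the composite of the top cell of $\cube^1\boxtimes\bD^n$ with the top cell of $\cube^\ell\boxtimes\bD^{n+1}$; this is the same bookkeeping with atomic cells used in the proof of \cref{indufibr}. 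Given this identification, the case $\ell=1$ applied to the collection $\mE^\ell$, which is closed under composition by the inductive hypothesis, yields the statement for $\ell+1$.

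For $\ell=1$, we check the three clauses of closure under composition, inducting on the dimension bound. First, identities. An identity $1$-morphism of $\Fun^\lax(\bD^1,X)$ at $f\colon A\to B$ corresponds to the degenerate lax square, whose $2$-cell is the identity of $f$. This $2$-cell lies in $\mE$ because closure under composition forces $\mE_{A,B}$ to contain identity $1$-morphisms.

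Second, composites of $1$-morphisms. For $\mE^1$, a $1$-morphism $(a,b,\sigma)$ from $f$ to $g$ is determined by its $2$-cell $\sigma\colon b f\Rightarrow g a$. The $2$-cell of a composite $(a',b',\sigma')\circ(a,b,\sigma)$ is the vertical composite $(\sigma' a)\circ(b'\sigma)$. Each whiskering lies in $\mE$, since $\mE$ is stable under the functors $\Mor_X(A,B)\to\Mor_X(A',B')$ induced by pre- and postcomposition. The vertical composite then lies in $\mE$ because $\mE_{A,B'}$ is closed under composition of $1$-morphisms.

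Third, the hom clause, which we handle by \cref{homs2}. This identifies $\Mor_{\Fun^\lax(\bD^1,X)}(f,g)$ with
\[
\Mor_X(A,A')\underset{\Mor_X(A,B')}{\bar{\vec{\times}}}\Mor_X(B,B'),
\]
and we must show that $(\mE^1)_{f,g}$ corresponds, under this identification, to the analogous collection for the antioriented pullback built from $\mE_{A,B'}$. The cells of the antioriented pullback that carry non-invertible data in the $\Fun^\lax(\bD^1,-)$ factor are exactly those whose top cell is the top cell of $\bD^1\boxtimes\bD^n$ in $X$. We then reduce to a statement proved simultaneously by induction on the dimension bound: for composition-closed collections on the three corners, the induced collection on $\Fun^\lax(\bD^1,X)$, and more generally on an antioriented pullback, is closed under composition. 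The same induction must cover whiskering by $1$-morphisms of $\Fun^\lax(\bD^1,X)$. A $1$-morphism $f'\to f$ acts on the hom $\infty$-categories through pasting with the lax square, and by the description above this pasting is whiskering by $a,b$ together with vertical composition with $\sigma$. Both operations preserve $\mE$.

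The main obstacle is the hom clause: making precise that the recursive structure of $\mE^1$ under \cref{homs2} is again of the same form. This requires the generalized statement for antioriented pullbacks $\mA\bar{\vec\times}_\mC\mB$ with collections on $\mA,\mB,\mC$ preserved by the legs. It should be proved by simultaneous induction, exactly in the style of the proofs of \cref{targetfi} and \cref{laxfib}. If the general statement becomes unwieldy, a fallback is to reduce to $n$-categories via $\iota_n$, as in \cref{subcarz}, and induct on $n$.
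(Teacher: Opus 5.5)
Your reduction to $\ell=1$ via $\mE^{\ell+1}=(\mE^\ell)^1$, and your treatment of identities and composites of $1$-morphisms, agree with the paper. The gap is the hom clause. You flag it as the main obstacle and then defer it to an unproved generalization: three composition-closed collections on an antioriented pullback, handled by a simultaneous induction that you do not carry out. That generalization is unnecessary. The missing idea is \cref{premark}. Under the identification of \cref{homs2}, the collection $(\mE^1)_{f,g}$ imposes no condition on the $\Mor_X(A,A')$ and $\Mor_X(B,B')$ components. An $n$-cell lies in it exactly when its component in $\Fun^\lax(\bD^1,\Mor_X(A,B'))$ lies in $(\mE_{A,B'})^1$, because the top cell of that component is the top cell of the corresponding $\bD^1\boxtimes\bD^{n+1}\to X$. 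So $(\mE^1)_{f,g}$ is just the preimage of $(\mE_{A,B'})^1$ along a single projection functor, and preimages of composition-closed collections are composition-closed. Phrase the induction on the dimension bound as a statement about all pairs $(X,\mE)$ at once. The hom clause then follows immediately from the inductive hypothesis applied to $(\Mor_X(A,B'),\mE_{A,B'})$, which is closed one dimension lower. Your three-corner statement also follows this way, as an intersection of preimages, so it never needs its own induction.

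The whiskering clause is also under-argued. Saying that pasting with the lax square is whiskering plus vertical composition with $\sigma$ describes the effect on $1$-cells of the hom. The clause, however, must hold for cells of every dimension of $\Mor_{\Fun^\lax(\bD^1,X)}(f,g)$. The paper handles this by going one level down. For objects $t,s$ of the hom, it identifies the hom between them again via \cref{homs2}, now with middle factor $\Fun^\lax(\bD^1,-)$ of a hom $\infty$-category of $\Mor_X(A,B')$. It then observes that whiskering by $1$-morphisms $\alpha,\beta$ of $\Fun^\lax(\bD^1,X)$ acts on this factor as $\Fun^\lax(\bD^1,-)$ applied to $\alpha_1\circ(-)\circ\beta_0$, followed by composition with fixed $2$-cells built from $t_0$, $s_1$ and the squares of $\alpha,\beta$. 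This preserves distinguished cells because $\alpha_1\circ(-)\circ\beta_0$ carries $\mE_{A,B'}$ into $\mE_{U,V'}$, and $\mE_{U,V'}$ is itself closed under composition. The description relies on composition in $X$ being cartesian. As a result, the square's $2$-cell interchanges strictly with the varying higher cells and contributes no new non-invertible data, which your sketch leaves implicit.
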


\begin{proof}

We proceed by induction on $\ell \geq 0.$ 
For $\ell=0 $ there is nothing to show. Let $\ell \geq 0. $ We assume the statement holds for $\ell.$
We have $\mE^{\ell+1}= (\mE^\ell)^1$ because for every $n \geq 0$ the composition
$$ \bD^{\ell+n+1} \to \cube^\ell \boxtimes \bD^{n+1} \to \cube^\ell \boxtimes \bD^1 \boxtimes \bD^n $$ is the unique non-invertible cell.
Consequently, we can reduce to the case $\ell=1,$ which we prove in the following.
We set $ \mE':= \mE^1. $ 

The collection $\mE'$ is closed under the composition if and only if it is closed under the composition below dimension $n$ for every $n > 1.$
So it suffices to prove by induction on $n > 1$ that for every $\infty$-category $X$ and collection $\mE$ of positive dimensional cells, the collection $\mE'$ is closed under the composition below dimension $n$ if the collection $\mE$ is closed under the composition below dimension $n$.

We start with $n=2$. We assume that $\mE$ is closed under the composition below dimension $2.$ We have to verify that $\mE'$ contains all identity 1-morphisms of $\Fun^\lax(\bD^1, X) $ and that the composite of composable 1-morphisms in $\mE'$ belongs to $\mE'.$
By definition of $\mE'$ we find that every identity 1-morphism of $\Fun^\lax(\bD^1, X) $ belongs to $\mE'$. 
We prove that for every 1-morphisms $\alpha: f \to g, \beta: g \to h $ in $\mE'$ the composite $\beta \circ \alpha: f \to h$ in $\Fun^\lax(\bD^1, X)$ belongs to $\mE'$.
So we have to see that the 2-morphism in the outer lax commutative square
\[
\begin{tikzcd}
A \ar{d}{f} \ar{r} & U \ar[double]{dl}{}  \ar{r}{} \ar{d}[swap]{g} & T \ar[double]{dl}{} \ar{d}{h} \\
B \ar{r} & V  \ar{r}[swap]{} & Z
\end{tikzcd}
\]
belongs to $\mE$.
This follows immediately from the conditions in the definition of closedness under composition.

Let $n > 2.$ We assume the statement holds for $n-1.$ We prove the statement for $n.$
We have to verify that for every $f,f' \in \Fun^\lax(\bD^1, X)$ the collection $\mE'_{f,f'} $ is closed under the composition
below dimension $n-1$ and for every $ f' \to g', g \to f \in \Fun^\lax(\bD^1, X)$ the induced functor  
$$ \Mor_{\Fun^\lax(\bD^1, X)}(f,f') \to \Mor_{\Fun^\lax(\bD^1, X)}(g,g') $$ sends $\mE'_{f,f'} $ to $\mE'_{g,g'} $.

Let $f: A \to B ,f': A' \to B'$. The collection $\mE'_{f,f'} $ is the preimage of $(\mE_{A,B'})'$ under the functor 
$$ \Mor_{\Fun^\lax(\bD^1, X)}(f,f') \simeq \Mor_X(A,A') \times \Mor_X(B,B') \times_{\Mor_X(A,B') \times \Mor_X(A,B')} \Fun^\lax(\bD^1, \Mor_X(A,B')) \to $$$$ \Fun^\lax(\bD^1, \Mor_X(A,B')).$$

Hence by \cref{premark} the collection $\mE'_{f,f'} $ is closed under the composition 
below dimension $n-1$ 
if $ (\mE_{A,B'})' $ is closed under the composition of 
below dimension $n-1$.
Since $\mE$ is closed under the composition below dimension $n$, also $\mE_{A,B'} $ is closed under the composition below dimension $n-1$.
So by induction hypothesis, $ (\mE_{A,B'})' $ is closed under the composition below dimension $n-1$.

It remains to see that for every morphisms $\alpha: f' \to g', \beta: g \to f $ in $ \Fun^\lax(\bD^1, X)$ the induced functor   
$$ \Mor_{\Fun^\lax(\bD^1, X)}(f,f') \to \Mor_{\Fun^\lax(\bD^1, X)}(g,g') $$ sends $\mE'_{f,f'} $ to $\mE'_{g,g'} $.

This functor identifies with the induced functor
$$ \Mor_X(A,A') \times \Mor_X(B,B') \times_{\Mor_X(A,B') \times \Mor_X(A,B')} \Fun^\lax(\bD^1, \Mor_X(A,B')) \to $$$$ \Mor_X(U,U') \times \Mor_X(V,V') \times_{\Mor_X(U,V') \times \Mor_X(U,V')} \Fun^\lax(\bD^1, \Mor_X(U,V')).$$

This functor sends $ (p: A \to A', q:B \to B', \sigma: f' p \to q f) $ to $$ (\alpha_0 p \beta_0 : U \to U', \alpha_1 q \beta_1 :V \to V', g' \alpha_0 p \beta_0 \to \alpha_1 f' p \beta_0 \xrightarrow{\alpha_1 \sigma \beta_0} \alpha_1 q f \beta_0 \to \alpha_1 q \beta_1 g)$$
and so sends $\mE'_{f,f'} $ to $\mE'_{g,g'} $.

Hence it suffices to see that the induced functor   
$$ \Mor_{\Fun^\lax(\bD^1, X)}(f,f') \to \Mor_{\Fun^\lax(\bD^1, X)}(g,g') $$ induces a functor on morphism $\infty$-categories that preserves positive dimensional distinguished cells.
For every 1-morphisms 
$t, s: f \to f'$ in $\Fun^\lax(\bD^1, X) $ the induced functor
$$ \Mor_{\Mor_{\Fun^\lax(\bD^1, X)}(f,f')}(t,s) \to \Mor_{\Mor_{\Fun^\lax(\bD^1, X)}(g,g')}(\alpha t \beta , \alpha s \beta) $$
identifies with the induced functor

$$ \Mor_{\Mor_X(A,A')}(t_0,s_0) \times \Mor_{\Mor_X(B,B')}(t_1,s_1) \times_{\Mor_{\Mor_X(A,B')}(f' t_0,s_1 f)^{\times 2}} \Fun^\lax(\bD^1, \Mor_{\Mor_X(A,B')}(f' t_0,s_1 f)) $$
$$ \xrightarrow{\alpha_1 \circ (-) \circ \beta_0} \Mor_{\Mor_X(U,U')}(\alpha_0 t_0 \beta_0,\alpha_0 s_0 \beta_0) \times \Mor_{\Mor_X(V,V')}(\alpha_1 t_1 \beta_1,\alpha_1 s_1 \beta_1) \times_{\Mor_{\Mor_X(U,V')}(g' \alpha_0 t_0 \beta_0,\alpha_1 s_1 \beta_1 g)^{\times 2}} $$$$ \Fun^\lax(\bD^1, \Mor_{\Mor_X(U,V')}(g' \alpha_0 t_0 \beta_0,\alpha_1 s_1 \beta_1 g)). $$
This functor covers the functor
$$ \Fun^\lax(\bD^1, \Mor_{\Mor_X(A,B')}(f' t_0,s_1 f)) \to  \Fun^\lax(\bD^1, \Mor_{\Mor_X(U,V')}(g' \alpha_0 t_0 \beta_0,\alpha_1 s_1 \beta_1 g))$$
induced by the functor 
$$ \Mor_{\Mor_X(A,B')}(f' t_0,s_1 f) \xrightarrow{\alpha_1 \circ (-) \circ \beta_0} \Mor_{\Mor_X(U,V')}(\alpha_1 f' t_0 \beta_0, \alpha_1 s_1 f \beta_0) \to \Mor_{\Mor_X(U,V')}(g' \alpha_0 t_0 \beta_0,\alpha_1 s_1 \beta_1 g),$$
where the last functor is induced by the morphisms
$$ g' \alpha_0 t_0 \beta_0 \to \alpha_1 f' t_0 \beta_0, \alpha_1 s_1 f \beta_0 \to \alpha_1 s_1 \beta_1 g $$ in $\Mor_X(U,V').$
This functor preserves positive dimensional distinguished cells because the functor
$\alpha_1 \circ (-) \circ \beta_0: \Mor_X(A,B') \to \Mor_X(U,V')$ sends $\mE_{A,B'}$ to $\mE_{U,V'}$ and $\mE_{U,V'}$ is closed under the composition.
\end{proof}

\cref{cocartex}, \cref{inheri} and \cref{subcarz} imply the following:

\begin{proposition}

Let $\phi: \mC\to \mD$ be a functor.
There is a subcategory $$ \Fun^{\lax, \cocart}(\bD^1,\mC) \subset \Fun^{\lax}(\bD^1,\mC)$$ whose $n$-morphisms for $n \geq 0$
correspond to functors $\bD^1 \boxtimes \bD^n \to \mC $
such that the composition $$ \bD^{n+1} \to \bD^1 \boxtimes \bD^n \to \mC $$
with the functor assigning the unique non-invertible $n+1$-morphism of $\bD^1 \boxtimes \bD^n$
is a $\phi$-cocartesian $n+1$-morphism.

\end{proposition}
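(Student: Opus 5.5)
The statement is a direct assembly of three earlier results, as the sentence preceding it indicates, so the plan is short.

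Take $\mE$ to be the collection of $\phi$-cocartesian positive dimensional cells of $\mC$. By \cref{cocartex} this collection is closed under the composition.

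Now apply \cref{inheri} with $\ell=1$, which uses $\cube^1=\bD^1$. The induced collection $\mE^1$ of cells of $\Fun^\lax(\bD^1,\mC)$ is again closed under the composition. By definition, $\mE^1$ consists of exactly those $n$-cells $\bD^n\to\Fun^\lax(\bD^1,\mC)$, with $n>0$, whose adjoint $\bD^1\boxtimes\bD^n\to\mC$ sends the unique non-invertible $(n+1)$-cell of $\bD^1\boxtimes\bD^n$ to a $\phi$-cocartesian cell. Finally, \cref{subcarz} with $n=\infty$ produces a subcategory inclusion $\Fun^{\lax,\cocart}(\bD^1,\mC)\subset\Fun^\lax(\bD^1,\mC)$ whose positive dimensional cells are precisely those of $\mE^1$.

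The one point needing care is dimension $0$. The statement asks the objects to be the functors $\bD^1\boxtimes\bD^0\simeq\bD^1\to\mC$ whose $1$-cell is $\phi$-cocartesian. However, \cref{subcarz} as stated only controls positive dimensional cells, and \cref{subchar} requires specifying a collection of objects. I would therefore pass to the full subcategory on $\phi$-cocartesian morphisms, so that on objects we keep exactly the arrows in $\mE$. This is legitimate because \cref{subchar}(2) allows any collection of objects, provided the identity $1$-morphisms of the chosen objects lie in the collection and the functoriality conditions hold. Those conditions involve only positive dimensional cells, which were already verified in \cref{inheri}.

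A second small check concerns the identity $1$-morphisms of $\Fun^\lax(\bD^1,\mC)$ on such objects. Their associated $2$-cell is an identity, hence an equivalence, hence $\phi$-cocartesian, so they do lie in $\mE^1$.

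The only real work is hidden in \cref{inheri}, which is taken as given. The remaining obstacle is this bookkeeping of the object-level condition. I expect it to be routine: rerun the argument of \cref{subcarz} with \cref{subchar}(2), using the object collection $\mE$ in dimension $0$.
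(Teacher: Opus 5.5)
Your proposal is correct and is essentially the paper's own argument, which consists only of combining \cref{cocartex}, \cref{inheri} (with $\ell=1$) and \cref{subcarz}. Your extra step of restricting to the full subcategory on the $\phi$-cocartesian arrows is bookkeeping the paper leaves implicit, and it matches how the subcategory is used later: in \cref{laxcocar}, for instance, its objects are exactly the $\phi$-cocartesian morphisms.
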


\begin{proposition}\label{recur}
Let $\phi: \mC \to \mD$ be a functor and $f: X \to Y, g: A \to B$ morphisms such that $f$ is $\phi$-cocartesian.
The induced functor $$ \kappa_{\phi}: \Fun^{\lax}(\bD^1,\mC) \to \mC \times_{\Fun^{\lax}(\{0\},\mD)} \Fun^{\lax}(\bD^1,\mD)$$
induces on morphism $\infty$-categories a functor
$$\Mor_{\Fun^{\lax}(\bD^1,\mC)}(f,g) \to 
\Mor_\mC(X,A) \times_{\Mor_\mD(\phi(X),\phi(A))} \Mor_{\Fun^\lax(\bD^1,\mD)}(\phi(f), \phi(g))$$
that is the pullback along the functors
$$ \Mor_\mC(X,A) \to \Mor_\mC(X,B), \
\Mor_\mD(\phi(Y),\phi(B)) \to \Mor_\mD(\phi(X),\phi(B)) $$ of the induced functor
$$ \kappa_{\phi_{X,B}}: \Fun^{\lax}(\bD^1,\Mor_\mC(X,B)) \to \Mor_\mC(X,B ) \times_{\Fun^\lax(\{0\},\Mor_\mD(\phi(X),\phi(B)))} \Fun^\lax(\bD^1,\Mor_\mD(\phi(X),\phi(B))). $$

\end{proposition}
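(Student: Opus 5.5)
We compute both sides of the induced functor on morphism $\infty$-categories using \cref{homs2} and then rewrite the target. By \cref{homs2} we have
$$\Mor_{\Fun^\lax(\bD^1,\mC)}(f,g)\simeq \Mor_\mC(X,A)\underset{\Mor_\mC(X,B)}{\bar{\vec{\times}}}\Mor_\mC(Y,B),$$
and by \cref{adesc} (in its antioriented form \cref{bdesc}) this is the iterated pullback
$$\Mor_\mC(X,A)\times_{\Mor_\mC(X,B)}\Fun^\lax(\bD^1,\Mor_\mC(X,B))\times_{\Mor_\mC(X,B)}\Mor_\mC(Y,B).$$
The analogous description holds for $\Mor_{\Fun^\lax(\bD^1,\mD)}(\phi(f),\phi(g))$. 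The functor induced by $\kappa_\phi$ is the evident map of these iterated pullbacks induced by $\phi$.

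The key step uses that $f$ is $\phi$-cocartesian. By definition, the square with corners $\Mor_\mC(Y,B)$, $\Mor_\mC(X,B)$, $\Mor_\mD(\phi(Y),\phi(B))$, $\Mor_\mD(\phi(X),\phi(B))$ is a pullback. Hence we may replace the factor $\Mor_\mC(Y,B)$ by
$$\Mor_\mC(X,B)\times_{\Mor_\mD(\phi(X),\phi(B))}\Mor_\mD(\phi(Y),\phi(B)).$$
Under this replacement the source becomes
$$\Mor_\mC(X,A)\times_{\Mor_\mC(X,B)}\Fun^\lax(\bD^1,\Mor_\mC(X,B))\times_{\Mor_\mC(X,B)}\Mor_\mC(X,B)\times_{\Mor_\mD(\phi X,\phi B)}\Mor_\mD(\phi Y,\phi B).$$
Here the evaluation at the target, $\Fun^\lax(\bD^1,\Mor_\mC(X,B))\to\Mor_\mC(X,B)$, is composed with $\phi_{X,B}$.

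We expand the target in the same way:
$$\Mor_\mC(X,A)\times_{\Mor_\mD(\phi X,\phi A)}\Mor_\mD(\phi X,\phi A)\times_{\Mor_\mD(\phi X,\phi B)}\Fun^\lax(\bD^1,\Mor_\mD(\phi X,\phi B))\times_{\Mor_\mD(\phi X,\phi B)}\Mor_\mD(\phi Y,\phi B).$$
This simplifies to
$$\Mor_\mC(X,A)\times_{\Mor_\mD(\phi X,\phi B)}\Fun^\lax(\bD^1,\Mor_\mD(\phi X,\phi B))\times_{\Mor_\mD(\phi X,\phi B)}\Mor_\mD(\phi Y,\phi B).$$
In this form, $\Mor_\mC(X,A)$ maps to the source of the lax arrow via $\Mor_\mC(X,A)\to\Mor_\mC(X,B)\to\Mor_\mD(\phi X,\phi B)$.

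Rebracketing, the source and the target become the pullbacks, along $\Mor_\mC(X,A)\to\Mor_\mC(X,B)$ and $\Mor_\mD(\phi Y,\phi B)\to\Mor_\mD(\phi X,\phi B)$, of
$$\Fun^\lax(\bD^1,\Mor_\mC(X,B))\quad\text{and}\quad \Mor_\mC(X,B)\times_{\Fun^\lax(\{0\},\Mor_\mD(\phi X,\phi B))}\Fun^\lax(\bD^1,\Mor_\mD(\phi X,\phi B))$$
respectively. The map between them is $\kappa_{\phi_{X,B}}$, which gives the claimed pullback description.

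The main obstacle is bookkeeping: we must check that the identifications of \cref{homs2} are natural in $\phi$. Concretely, the map $\kappa_\phi$ on morphism objects has to correspond, under these equivalences, to the map induced componentwise by $\phi$ (and by $\phi_{X,B}$ on the $\Fun^\lax(\bD^1,-)$ factor). We also need the source and target projections of $\Fun^\lax(\bD^1,-)$ to match those used in the pullbacks. We would address this by noting that the equivalence of \cref{homs} is natural in the cospan, via its universal property, and is therefore compatible with the map of cospans induced by $\phi$. After that, the argument is pasting of pullback squares.
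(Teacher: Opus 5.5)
Your proposal is correct and follows essentially the paper's route: it identifies both morphism $\infty$-categories via \cref{homs} as iterated pullbacks $\Mor_\mC(X,A)\times_{\Mor_\mC(X,B)}\Fun^\lax(\bD^1,\Mor_\mC(X,B))\times_{\Mor_\mC(X,B)}\Mor_\mC(Y,B)$ (and the analogue over $\mD$), simplifies the target, and uses the pullback square expressing that $f$ is $\phi$-cocartesian to replace $\Mor_\mC(Y,B)$, which exhibits the map as a base change of $\kappa_{\phi_{X,B}}$. The differences are cosmetic: the paper first splits off the pullback along $\Mor_\mC(X,A)\to\Mor_\mC(X,B)$ and applies cocartesianness afterwards, and your remark on naturality of the identification in $\phi$ makes explicit a point the paper leaves implicit.
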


\begin{proof}
By \cref{homs} the functor $\kappa_\phi $
identifies with the following functor:
$$\Mor_\mC(X,A) \times_{\Mor_\mC(X,B)} \Fun^{\lax,\cocart}(\bD^1,\Mor_\mC(X,B)) \times_{\Mor_\mC(X,B)} \Mor_\mC(Y,B) \to $$$$ \Mor_\mC(X,A) \times_{\Mor_\mD(\phi(X),\phi(A))} \Mor_\mD(\phi(X),\phi(A)) \times_{\Mor_\mD(\phi(X),\phi(B))} \Fun^\lax(\bD^1,\Mor_\mD(\phi(X),\phi(B))) $$$$\times_{\Mor_\mD(\phi(X),\phi(B))} \Mor_\mD(\phi(Y),\phi(B)) \simeq $$
$$\Mor_\mC(X,A) \times_{\Mor_\mD(\phi(X),\phi(B))} \Fun^\lax(\bD^1,\Mor_\mD(\phi(X),\phi(B))) \times_{\Mor_\mD(\phi(X),\phi(B))} \Mor_\mD(\phi(Y),\phi(B)). $$
The latter is the pullback along the functor
$ \Mor_\mC(X,A) \to \Mor_\mC(X,B) $ of the induced functor
$$ \theta: \Fun^{\lax,\cocart}(\bD^1,\Mor_\mC(X,B)) \times_{\Mor_\mC(X,B)} \Mor_\mC(Y,B) \to $$$$ \Mor_\mC(X,B ) \times_{\Mor_\mD(\phi(X),\phi(B))} \Fun^\lax(\bD^1,\Mor_\mD(\phi(X),\phi(B))) \times_{\Mor_\mD(\phi(X),\phi(B))} \Mor_\mD(\phi(Y),\phi(B)). $$

Since $f: X \to Y $ is $\phi$-cocartesian, the following induced functor
is an equivalence: $$\Mor_\mC(Y,B) \to \Mor_\mD(\phi(Y),\phi(B)) \times_{\Mor_\mD(\phi(X),\phi(B))} \Mor_\mC(X,B).$$
Hence $\theta$ is the pullback along the functor
$\Mor_\mD(\phi(Y),\phi(B)) \to \Mor_\mD(\phi(X),\phi(B)) $ of the functor
$$ \kappa_{\phi_{X,B}}: \Fun^{\lax, \cocart}(\bD^1,\Mor_\mC(X,B)) \to \Mor_\mC(X,B ) \times_{\Mor_\mD(\phi(X),\phi(B))} \Fun^\lax(\bD^1,\Mor_\mD(\phi(X),\phi(B))). $$
\end{proof}

For the next notation observe that for every $n \geq 0$ the $\infty$-category $\bD^1 \boxtimes \bD^n $ has a unique non-invertible $n+1$-morphism.

\begin{proposition}\label{laxcocar}
Let $\phi: \mC \to \mD$ be a functor. The following induced functor is an inclusion: $$\Fun^{\lax, \cocart}(\bD^1,\mC) \to \Fun^\lax(\bD^1,\mD) \times_{\Fun^\lax(\{0\},\mD)} \mC.$$ 
    
\end{proposition}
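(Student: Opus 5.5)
We want to show that $\kappa_\phi:\Fun^{\lax,\cocart}(\bD^1,\mC)\to\Fun^\lax(\bD^1,\mD)\times_{\Fun^\lax(\{0\},\mD)}\mC$ is a monomorphism in $\infty\Cat$. We use the characterization following \cref{monochar}: a functor is an inclusion if and only if it is an embedding on underlying spaces and it induces inclusions on all morphism $\infty$-categories. The strategy is an induction on categorical dimension. First we reduce to the case that $\mC,\mD$ are $n$-categories. Both source and target are compatible with the filtration by $\iota_n$: $\Fun^\lax(\bD^1,-)$ preserves filtered colimits by compactness of the oriented cubes, and the subcategory of cocartesian arrows is defined cellwise. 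Since monomorphisms are stable under filtered colimits in $\infty\Cat$ (filtered colimits commute with the finite limit $X\to X\times_Y X$), we may assume $\mC,\mD$ are $n$-categories and induct on $n$. In the case $n=0$ everything is a space, and $\kappa_\phi$ is an equivalence.

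For the objects, an object of the source is a $\phi$-cocartesian $1$-morphism $f:X\to Y$. Its image is the pair $(X,\phi(f))$. By the $1$-categorical universal property (\cref{enrcharo} and \cref{enrfibchar}), the space of $\phi$-cocartesian lifts of a given $\phi(X)\to T$ with source $X$ is empty or contractible. Indeed, the full subcategory $\mC^{\bD^1,\cocart}\to\mC\times_{\mD^{\{0\}}}\mD^{\bD^1}$ is an embedding by the corollary after \cref{enrcharo}, applied with $\mV=\infty\Cat$ under the cartesian product. Hence $\kappa_\phi$ is an embedding on underlying spaces. The lax functor category has the same objects as $\Fun(\bD^1,\mC)$, so this applies verbatim.

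For morphism $\infty$-categories, fix cocartesian $f:X\to Y$ and $g:A\to B$. By \cref{recur}, the functor induced by $\kappa_\phi$ on $\Mor(f,g)$ is a pullback of $\kappa_{\phi_{X,B}}$ along the maps $\Mor_\mC(X,A)\to\Mor_\mC(X,B)$ and $\Mor_\mD(\phi(Y),\phi(B))\to\Mor_\mD(\phi(X),\phi(B))$. This restricts to the cocartesian subcategories, because $\Mor_{\Fun^{\lax,\cocart}(\bD^1,\mC)}(f,g)$ is exactly the preimage of $\Fun^{\lax,\cocart}(\bD^1,\Mor_\mC(X,B))$ for $\phi_{X,B}$ under the identification of \cref{homs2}; this is how $\mE^1$ is built in \cref{inheri}. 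Here $\Mor_\mC(X,B)$ is an $(n-1)$-category, so by the induction hypothesis $\kappa_{\phi_{X,B}}$ restricted to $\Fun^{\lax,\cocart}$ is an inclusion. Inclusions are stable under pullback, so the induced functor on morphism objects is an inclusion, and $\kappa_\phi$ is an inclusion.

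The main obstacle is the bookkeeping in the last step. We must check that the cocartesian-ness condition on $(n+1)$-cells of $\Fun^{\lax,\cocart}(\bD^1,\mC)$ matches, under \cref{homs2}, the condition defining $\Fun^{\lax,\cocart}(\bD^1,\Mor_\mC(X,B))$ relative to $\phi_{X,B}$. This amounts to saying that an $(m+1)$-cell of $\mC$ obtained by whiskering with $g$ and $f$ is $\phi$-cocartesian if and only if the corresponding $m$-cell of $\Mor_\mC(X,B)$ is $\phi_{X,B}$-cocartesian. The definition of cocartesian higher cells gives one implication via whiskering. For the other, I would use that $f$ is itself cocartesian together with the pasting statements in \cref{elemen}.
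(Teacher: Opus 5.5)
Your route is the paper's: reduce to $n$-categories by a filtered colimit argument, get the embedding on objects from \cref{enrfibchar}, and treat morphism objects with \cref{recur} and induction. The reduction step, however, is a genuine gap, and the paper's proof makes the same move.

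To invoke the $n$-categorical case you need $\Fun^{\lax,\cocart}(\bD^1,\mC)\simeq\colim_n\Fun^{\lax,\cocart}(\bD^1,\iota_n\mC)$, with cocartesianness taken for $\iota_n\phi$. But cocartesianness is not compatible with truncation. Since $\iota_{n-1}$ preserves pullbacks, $\phi$-cocartesian cells are $\iota_n\phi$-cocartesian, but not conversely. For instance, let $\mC$ be the $1$-category with objects $X,Y,Z$, $\Mor_\mC(X,Y)=\{f\}$, $\Mor_\mC(Y,Z)=\{a,b\}$, $\Mor_\mC(X,Z)=\ast$. Let $\phi$ include it into the $2$-category $\mD$ obtained by adjoining a $2$-cell $a\Rightarrow b$. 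For the test object $Z$, the comparison functor for $f$ is $\{a,b\}\to\bD^1$, an equivalence on cores but not of categories. So $f$ is $\iota_1\phi$-cocartesian but not $\phi$-cocartesian, although $\iota_n\mC=\mC$ for $n\geq 1$ and $\iota_n\mD=\mD$ for $n\geq 2$. Hence there are no transition maps, and "defined cellwise" does not save this.

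The repair is to drop the reduction and prove by induction on $k$ that $\iota_k(\kappa_\phi)$ is a monomorphism for all functors $\phi$ simultaneously. The case $k=0$ is your argument on objects. For the step, $\iota_{k+1}$ acts on morphism objects by $\iota_k$, which preserves pullbacks and monomorphisms, so \cref{recur} reduces it to the hypothesis for $\phi_{X,B}$, whatever its dimension. Finally, $\kappa_\phi$ is a monomorphism once all $\iota_k(\kappa_\phi)$ are. This holds because $\Map(W,-)\simeq\Map(W,\iota_k(-))$ for every $k$-category $W$, and the disks generate $\infty\Cat$ under colimits.

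The identification in your third paragraph is also wrong, and the converse you plan to prove at the end is false. A cell of $\Mor_\mC(X,B)$ comes from a $\phi$-cocartesian cell of $\mC$ iff all its whiskerings along $X'\to X$ and $B\to B'$ are cocartesian for the corresponding $\phi_{X',B'}$. Whiskering need not preserve cocartesian cells unless $\phi$ is a fibration. This fails already for $f=\id_X$ and $g=\id_B$, so the cocartesianness of $f$ cannot help. For example, take $\phi_{X,B}$ to be an isomorphism $\bD^1\cong\bD^1$. Let postcomposition with some $h:B\to B'$, where $\phi(h)$ is an identity, send the arrow to $0\to 2$ in $\Mor_\mC(X,B')=[2]$, lying over the functor $[2]\to\bD^1$ collapsing $\{1,2\}$. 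Then $0\to 2$ is not cocartesian (test against the object $1$ of $[2]$).

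You only need one direction. By definition of $\phi$-cocartesian cells, $\Mor_{\Fun^{\lax,\cocart}(\bD^1,\mC)}(f,g)$ is \emph{contained} in the preimage $P$ of $\Fun^{\lax,\cocart}(\bD^1,\Mor_\mC(X,B))$. So the functor on morphism objects is an inclusion into $P$ followed by a pullback of the inclusion from the induction hypothesis, hence an inclusion. The paper's \cref{recur} glosses over the same distinction.
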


\begin{proof}
Since the oriented cubes are compact objects in $\infty\Cat$ that generate $\infty\Cat$ under small colimits, the category $\infty\Cat$ is compactly generated and the functor
$\Fun^\lax(\bD^1,-): \infty\Cat \to \infty\Cat $ preserves small filtered colimits. Compact generation implies that pullbacks commute with small filtered colimits in $\infty\Cat$.
Thus the induced functor of the statement factors as $$\Fun^{\lax, \cocart}(\bD^1,\mC) \simeq \colim_{n\geq 0}\Fun^{\lax, \cocart}(\bD^1,\iota_n(\mC)) \to $$$$ \colim_{n\geq 0} (\Fun^\lax(\bD^1,\iota_n(\mD)) \times_{\Fun^\lax(\{0\},\iota_n(\mD))} \iota_n(\mC)) \simeq 
\Fun^\lax(\bD^1,\mD) \times_{\Fun^\lax(\{0\},\mD)} \mC.$$ 

Since inclusions are stable under filtered colimits, 
we can reduce to show that for every $n \geq 0$
the statement holds for any functor $\phi: \mC \to \mD$ between
$n$-categories. We prove the latter by induction on $n \geq 0.$
For $n =0$ the functor of the statement identifies with the identity of $\mC.$ 
Let $\phi: \mC \to \mD$ be a functor between $n$-categories.
The functor of the statement induces on underlying spaces the induced map
$$\iota_0(\Fun^{\cocart}(\bD^1,\mC)) \to \iota_0(\Fun(\bD^1,\mD)) \times_{\iota_0(\Fun(\{0\},\mD))} \iota_0(\mC),$$
which agrees with the map on underlying spaces induced by the functor
$$\Fun^{\cocart}(\bD^1,\mC) \to \Fun(\bD^1,\mD) \times_{\Fun(\{0\},\mD)} \mC.$$
By \cref{enrfibchar} the latter functor is fully faithful
so that its induced map on underlying spaces is an embedding.
So it suffices to see that for every $\phi$-cocartesian morphisms 
$f: X \to Y, g: A \to B$ the following induced functor on morphism $\infty$-categories is an inclusion:
$$\Mor_{\Fun^{\lax, \cocart}(\bD^1,\mC)}(f,g) \to \Mor_\mC(X,A) \times_{\Mor_\mD(\phi(X),\phi(A))} \Mor_{\Fun^\lax(\bD^1,\mD)}(\phi(f), \phi(g)).$$
By \cref{recur} the latter functor is the pullback along the functors
$ \Mor_\mC(X,A) \to \Mor_\mC(X,B) $ and
$\Mor_\mD(\phi(Y),\phi(B)) \to \Mor_\mD(\phi(X),\phi(B)) $ of the induced functor
$$ \Fun^{\lax, \cocart}(\bD^1,\Mor_\mC(X,B)) \to \Mor_\mC(X,B ) \times_{\Mor_\mD(\phi(X),\phi(B))} \Fun^\lax(\bD^1,\Mor_\mD(\phi(X),\phi(B))).$$
The latter is an inclusion by induction hypothesis and the fact that
$\Mor_\mC(X,B) \to \Mor_\mD(\phi(X),\phi(B))$ is a functor between $n-1$-categories.
\end{proof}

\begin{definition}
Let $n \geq 0.$ We define by induction on $n \geq 0$ when a functor
$\phi:\mC \to \mD$ is $n$-full.

\begin{enumerate}[\normalfont(1)]\setlength{\itemsep}{-2pt}
\item A functor $\phi:\mC \to \mD$ is 0-full if it is essentially surjective.

\item A functor $\phi:\mC \to \mD$ is $n$-full if it is 0-full and for every $X,Y\in \mC$
the induced functor $\Mor_{\mC}(X,Y) \to \Mor_\mD(\phi(X),\phi(Y))$
is $n-1$-full.

\item A functor is full if it is $n$-full for every $n \geq 0.$

\end{enumerate}  

\end{definition}

\begin{lemma}Let $n \geq 0.$ A functor $\phi:\mC \to \mD$ is $n$-full if and only if for every $0 \leq m \leq n$ every commutative square
$$\begin{xy}
\xymatrix{
\partial\bD^m \ar[d]^{} \ar[r]
& \mC \ar[d]^\phi
\\ 
\bD^m \ar[r]& \mD}
\end{xy}$$
has a filler.
    
\end{lemma}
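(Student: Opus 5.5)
We prove the equivalence by induction on $n$, unwinding both sides in parallel so that each filler problem for $\phi$ in dimension $m$ becomes a filler problem in dimension $m-1$ for some $\phi_{X,Y}$.

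\textbf{Case $n=0$.} Here $m=0$ and $\partial\bD^0=\emptyset$. A filler of the square with $\emptyset$ in the upper left is a lift of an object of $\mD$ along $\phi$ up to equivalence. So the filler condition for $m=0$ says exactly that $\phi$ is essentially surjective, which is $0$-fullness.

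\textbf{Inductive step.} Assume the lemma holds for $n-1$ and all functors. By definition, $\phi$ is $n$-full if and only if it is $0$-full and every $\phi_{X,Y}:\Mor_\mC(X,Y)\to\Mor_\mD(\phi(X),\phi(Y))$ is $(n-1)$-full. By the induction hypothesis, the second condition is equivalent to: for all $0\le k\le n-1$, every square with left side $\partial\bD^k\to\bD^k$ and right side $\phi_{X,Y}$ has a filler.

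We match these squares with the squares for $\phi$ in dimension $m=k+1\ge 1$. By \cref{suspi}, $S$ is left adjoint to $\Mor$, and by definition $\partial\bD^{k+1}=S(\partial\bD^k)$ and $\bD^{k+1}=S(\bD^k)$. Hence a functor $\partial\bD^{m}\to\mC$ is the same as a pair of objects $X,Y\in\mC$ (the images of $0,1$) together with a functor $\partial\bD^{k}\to\Mor_\mC(X,Y)$. Likewise, a functor $\bD^m\to\mD$ with endpoints $\phi(X),\phi(Y)$ is a functor $\bD^k\to\Mor_\mD(\phi(X),\phi(Y))$.

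The point requiring care is that a functor $S(A)\to\mC$ is a bipointed map only after fixing where $0$ and $1$ go. In our square the endpoints are already determined by the restriction to $\partial\bD^m$, because for $m\ge1$ the inclusion $\partial\bD^m\to\bD^m$ is bijective on objects. Therefore the adjunction $S\dashv\Mor$ on $\infty\Cat_{\partial\bD^1/}$ applies. It identifies the space of fillers of the square for $(\phi,m)$ with the space of fillers of the corresponding square for $(\phi_{X,Y},m-1)$. In particular one has a filler if and only if the other does, compatibly with the identification $\Mor_{\mC}(X,Y)\to\Mor_\mD(\phi(X),\phi(Y))$ being $\phi_{X,Y}$.

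Combining the $m=0$ case with this correspondence for $1\le m\le n$ gives the lemma. The only real check is the identification of filler spaces under $S\dashv\Mor$ with the endpoints fixed, which follows from the bijectivity on objects just noted.
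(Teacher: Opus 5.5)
Your proposal is correct and follows essentially the same route as the paper. Both argue by induction on $n$: the $m=0$ square is essential surjectivity, and the adjunction $S\dashv\Mor$ on $\infty\Cat_{\partial\bD^1/}$ (with endpoints fixed, since $\partial\bD^m\to\bD^m$ is bijective on objects for $m\ge 1$) matches fillers for $\phi$ in dimension $m\ge 1$ with fillers for $\phi_{X,Y}$ in dimension $m-1$; you simply spell out this fiberwise identification, which the paper leaves implicit.
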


\begin{proof}

This follows immediately by induction on $n \geq 0$ from the fact that
every commutative square as in the statement for $1 \leq m \leq n+1$
admits a filler if and only if for every $X,Y \in \mC$ and $0 \leq m \leq n$ every commutative square
$$\begin{xy}
\xymatrix{
\partial\bD^m \ar[d]^{} \ar[r]
& \Mor_\mC(X,Y) \ar[d]^{\phi_{X,Y}}
\\ 
\bD^m \ar[r]& \Mor_\mD(\phi(X),\phi(Y))}
\end{xy}$$
has a filler.
\end{proof}

\begin{lemma}\label{eqil} 
A functor $\phi :\mC \to \mD$ is an equivalence if and only if
it is a full inclusion.   
\end{lemma}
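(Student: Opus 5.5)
The forward direction is formal: an equivalence is a monomorphism in $\infty\Cat$, so it is an inclusion, and it is $n$-full for every $n$ because it is essentially surjective and induces equivalences on all morphism $\infty$-categories, which are in particular essentially surjective. For the converse, let $\phi$ be a full inclusion. By the characterization of inclusions stated after \cref{monochar}, $\iota_0(\phi)$ is an embedding of spaces. Since $\phi$ is $0$-full it is also surjective on $\pi_0$. An embedding that is surjective on components is an equivalence of spaces. For every $X,Y\in\mC$ the functor $\phi_{X,Y}$ is again an inclusion (same corollary) and full, since fullness of $\phi$ means every $\phi_{X,Y}$ is $n$-full for all $n$.

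I would therefore argue by induction on categorical dimension. To make this possible, I first reduce to $n$-categories using the filtration $\mC\simeq\colim_n\iota_n(\mC)$. For this step I check that $\iota_n$ preserves inclusions and fullness. Fullness is preserved by the lifting characterization of the preceding lemma: squares $\partial\bD^m\to\iota_n\mC$ over $\bD^m\to\iota_n\mD$ with $m\le n$ lift to $\mC$. The filler then lands in $\iota_n\mC$, because $\bD^m$ is an $m$-category with $m\le n$, so any map from it factors through $\iota_n$. Inclusions are preserved because $\iota_n$ is a right adjoint and so preserves monomorphisms. Since $\iota_n(\phi)$ being an equivalence for all $n$ implies that $\phi\simeq\colim_n\iota_n(\phi)$ is an equivalence, it suffices to treat functors between $n$-categories.

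The induction on $n$ then runs as follows. For $n=0$, $\phi$ is a map of spaces that is an embedding and $\pi_0$-surjective, hence an equivalence. Now suppose $\phi$ is a full inclusion between $n$-categories. Each $\phi_{X,Y}$ is a full inclusion between $(n-1)$-categories, hence an equivalence by induction. So $\phi$ is fully faithful, and since $\iota_0(\phi)$ is an equivalence it is essentially surjective. It remains to see that a functor inducing an equivalence on object spaces and on all morphism $\infty$-categories is an equivalence. This holds in ${\infty\Cat}\mathrm{-}\Cat$ because the morphism-object functors together with $\iota_0$ jointly detect equivalences of enriched categories. I would justify this through \cref{theta}: such a functor induces equivalences on $\Theta$-nerves, since by the Segal condition these are built from object spaces and morphism nerves.

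I expect the main obstacle to be the reduction step: verifying cleanly that fullness in the lifting sense passes to $\iota_n$, and that inclusions stay inclusions there. Once that is settled, the induction itself is routine.
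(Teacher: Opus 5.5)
Your argument is correct, and its core matches the paper's proof. Both detect equivalences on the $\iota_n$'s, both start from the fact that a $\pi_0$-surjective embedding of spaces is an equivalence, and both pass to morphism $\infty$-categories. Both also use, implicitly or explicitly, that an equivalence on object spaces plus equivalences on all $\Mor$'s is an equivalence.

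The difference is organizational. You first reduce to functors between $n$-categories, which forces you to check that $\iota_n$ preserves full inclusions. The paper instead proves by induction on $n$ that $\iota_n(\phi)$ is an equivalence for every full inclusion $\phi$ of arbitrary $\infty$-categories, using $\Mor_{\iota_{n+1}(\mC)}(X,Y)\simeq\iota_n(\Mor_\mC(X,Y))$. Since $\phi_{X,Y}$ is itself a full inclusion, the induction hypothesis applies to it directly, so the step you flagged as the main obstacle never arises.

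If you keep your route, note that your lifting argument only treats squares with $m\le n$. As written it therefore shows that $\iota_n(\phi)$ is an $n$-full inclusion, not a full one. This is enough, because your induction over $n$-categories only ever uses $n$-fullness: at the bottom you need just a $\pi_0$-surjective embedding of spaces. You should then state the inductive claim as ``every $n$-full inclusion between $n$-categories is an equivalence''. Alternatively, derive fullness of $\iota_n(\phi)$ from the recursive definition of fullness, using that its bottom layer is an equivalence of spaces.
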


\begin{proof}
Every equivalence is a full inclusion. We prove the converse.
A functor $\phi:\mC \to \mD$ is an equivalence if and only if for every
$n \geq 0$ the induced functor $\iota_n(\phi): \iota_n(\mC) \to \iota_n(\mD) $ is an equivalence.
We prove by induction on $n \geq 0 $ that for every full inclusion
$\phi: \mC \to \mD$ the functor 
$\iota_n(\phi): \iota_n(\mC) \to \iota_n(\mD) $ is an equivalence.
For every full inclusion
$\phi: \mC \to \mD$ the induced map 
$\iota_0(\phi): \iota_0(\mC) \to \iota_0(\mD) $ is an essentially surjective embedding and so an equivalence.
We assume the statement for $n$ holds.
Let
$\phi: \mC \to \mD$ be a full inclusion.
Since $\iota_0(\phi)$ is an equivalence, to see that the functor
$$\iota_{n+1}(\phi): \iota_{n+1}(\mC) \to \iota_{n+1}(\mD) $$ is an equivalence, it suffices to show that for every $X,Y \in \mC$
the induced functor $$ \Mor_{\iota_{n+1}(\mC)}(X,Y) \to \Mor_{\iota_{n+1}(\mD)}(\phi(X),\phi(Y)) $$ is an equivalence.
The latter identifies with the functor
$$ \iota_n(\Mor_{\mC}(X,Y)) \to \iota_n(\Mor_{\mD}(\phi(X),\phi(Y))),$$
which is an equivalence by induction hypothesis since the induced functor $$\Mor_{\mC}(X,Y) \to \Mor_{\mD}(\phi(X),\phi(Y)) $$
is a full inclusion.
\end{proof}

\begin{proposition}\label{cocfull}
Let $n \geq 0.$
A functor $\phi: \mC \to \mD$ is a $n+1$-anticocartesian fibration
if and only if the following induced functor is $n$-full:
$$\kappa_\phi: \Fun^{\lax, \cocart}(\bD^1,\mC) \to \Fun^\lax(\bD^1,\mD) \times_{\Fun^\lax(\{0\},\mD)} \mC.$$ 
\end{proposition}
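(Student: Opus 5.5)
We argue by induction on $n \geq 0$, simultaneously for all functors $\phi$, in the same recursive pattern as \cref{laxcocar}. The recursion has two layers, one for objects and one for morphism $\infty$-categories. On objects, $0$-fullness of $\kappa_\phi$ says the following. Given $X \in \mC$ and a morphism $\alpha: \phi(X) \to T$ in $\mD$, i.e.\ an object $(X,\alpha)$ of the target, there is an object of $\Fun^{\lax,\cocart}(\bD^1,\mC)$ over it. Such an object is a $\phi$-cocartesian $1$-morphism $X \to Y$ with $\phi(X\to Y)\simeq \alpha$, since the unique non-invertible $1$-cell of $\bD^1 \boxtimes \bD^0$ is the arrow itself. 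So $\kappa_\phi$ is $0$-full exactly when $\phi$ is a $1$-anticocartesian fibration, which settles the case $n=0$.

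For the induction step we use \cref{recur}. Take objects $f: X\to Y$ and $g: A\to B$ of $\Fun^{\lax,\cocart}(\bD^1,\mC)$. The functor induced by $\kappa_\phi$ on morphism $\infty$-categories is then a pullback of
\[
\kappa_{\phi_{X,B}}: \Fun^{\lax,\cocart}(\bD^1,\Mor_\mC(X,B)) \to \Mor_\mC(X,B) \times_{\Mor_\mD(\phi X,\phi B)} \Fun^\lax(\bD^1,\Mor_\mD(\phi X,\phi B)).
\]
The pullback is taken along $\Mor_\mC(X,A)\to \Mor_\mC(X,B)$ and $\Mor_\mD(\phi Y,\phi B)\to\Mor_\mD(\phi X,\phi B)$.

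We need to check two things about this pullback. First, note that fullness conditions are lifting conditions against $\partial\bD^m\subset\bD^m$ by the preceding lemma, so they are stable under pullback. Hence $(n-1)$-fullness of every $\kappa_{\phi_{X,B}}$ implies $(n-1)$-fullness of the induced functors on morphism $\infty$-categories. Second, we need a converse: $(n-1)$-fullness of the pulled-back functors for all $f,g$ should give $(n-1)$-fullness of $\kappa_{\phi_{X,B}}$ for all $X,B$. For this we choose the data cleverly. Take $f=\id_X$, which is $\phi$-cocartesian, so that $Y=X$ and the second pullback leg is the identity. Take $g=\id_B$, so that $A=B$ and the first leg is the identity. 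With these choices the pullback is $\kappa_{\phi_{X,B}}$ itself.

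Combining these, $\kappa_\phi$ is $n$-full if and only if two conditions hold. The first is that $\phi$ is a $1$-anticocartesian fibration. The second is that for all $X,B$ the functor $\kappa_{\phi_{X,B}}$ is $(n-1)$-full, together with the pulled-back versions for arbitrary cocartesian $f,g$. By the induction hypothesis, $(n-1)$-fullness of $\kappa_{\phi_{X,B}}$ means that $\phi_{X,B}$ is an $n$-anticocartesian fibration.

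It remains to compare with \cref{cocarto}, which characterizes $(n+1)$-anticocartesian fibrations as $1$-anticocartesian fibrations whose hom-functors are $n$-anticocartesian fibrations and whose cocartesian cells are preserved by the functors $\Mor_\mC(X,B)\to\Mor_\mC(X',B')$ induced by whiskering. The main obstacle is exactly this whiskering-compatibility condition, which is not visible from the case $f=g=\id$. We recover it from fullness of the pulled-back functor for a general $g: A\to B$, taking $f=\id_X$. A lift through that pullback is a cell of $\Mor_\mC(X,A)$ whose image in $\Fun^{\lax,\cocart}(\bD^1,\Mor_\mC(X,B))$ is cocartesian after postcomposing with $g$. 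Comparing with the canonical cocartesian lift in $\Mor_\mC(X,A)$, which exists by induction, shows that postcomposition with $g$ preserves cocartesian cells. Precomposition is handled symmetrically by varying the cocartesian $f$.

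Conversely, if $\phi$ is an $(n+1)$-anticocartesian fibration, then by \cref{recur} fullness of the pulled-back functors follows from that of $\kappa_{\phi_{X,B}}$, because the pullback of a full functor is full. This gives both directions and completes the induction. The care required is in matching the recursive definition of "cocartesian cell of $\Fun^{\lax,\cocart}$" with whiskering, and this is exactly what \cref{inheri} controls.
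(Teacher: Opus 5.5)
Your induction is the same as the paper's proof: you identify $0$-fullness of $\kappa_\phi$ with $1$-anticocartesianness, apply \cref{recur}, use that fullness is stable under base change, and take $f=\id_X$, $g=\id_B$ to recover $\kappa_{\phi_{X,B}}$ itself. The paper quotes the hom-wise characterization of $(n+1)$-anticocartesian fibrations without singling out the whiskering clause of \cref{cocarto}. You are right that this clause needs an argument, but the one you give does not work.

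For $f=\id_X$ and $g\colon A\to B$, the functor on morphism $\infty$-categories is the base change of $\kappa_{\phi_{X,B}}$ along the map induced by $g\circ(-)\colon\Mor_\mC(X,A)\to\Mor_\mC(X,B)$. It is the identity on the $\Mor_\mC(X,A)$-factor. The cocartesian condition on a lift of a $k$-cell $\pi$ concerns the top cell of a diagram $\bD^1\boxtimes\bD^k\to\Mor_\mC(X,B)$ whose restriction to $\{0\}\boxtimes\bD^k$ is $g\circ\pi$; it does not concern $g\circ\pi$ itself. No cell of $\Mor_\mC(X,A)$ is ever required to be cocartesian, and fullness of this pullback already follows from fullness of $\kappa_{\phi_{X,B}}$. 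So it cannot detect whether $g\circ(-)$ sends $\phi_{X,A}$-cocartesian cells to $\phi_{X,B}$-cocartesian ones, and the lifting problem contains no cocartesian cell of $\Mor_\mC(X,A)$ to compare with. The same objection applies to precomposition. Also, \cref{inheri} only gives closure of the distinguished cells under composition, i.e.\ the existence of the subcategory $\Fun^{\lax,\cocart}(\bD^1,\mC)$.

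What actually supplies whiskering compatibility is the definition of $\Fun^{\lax,\cocart}(\bD^1,\mC)$. Its top cells must be $\phi$-cocartesian as cells of $\mC$, which by definition means all their whiskerings into the various $\Mor_\mC(X',Y')$ are cocartesian. Hence, when \cref{recur} is applied to $\Fun^{\lax,\cocart}(\bD^1,\mC)$, the condition on cells of $\Mor_\mC(X,B)$ is membership in the collection $\mE_{X,B}$ of cells that are $\phi$-cocartesian in $\mC$. This agrees with the collection of $\phi_{X,B}$-cocartesian cells only once whiskering compatibility is known, which is what you are trying to prove.

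The clean repair is to run your induction for pairs $(\psi,\mE)$, where $\mE$ is a collection of $\psi$-cocartesian cells closed under composition. The statement to prove is that the analogue $\kappa^{\mE}_\psi$ of $\kappa_\psi$ built from cells in $\mE$ is $n$-full if and only if every square against a source inclusion $\bD^{k-1}\subset\bD^k$ with $1\le k\le n+1$ has a filler in $\mE$. Your base-change and identity argument goes through verbatim, because $\mE_{X,B}$ is again such a collection for $\phi_{X,B}$. When $\mE$ is the collection of $\phi$-cocartesian cells, the right-hand side is literally the definition of an $(n+1)$-anticocartesian fibration, so \cref{cocarto} is never needed.

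Alternatively, once you know that the lifts produced by fullness are $\phi$-cocartesian in $\mC$, uniqueness of cocartesian lifts shows that every $\phi_{X,A}$-cocartesian cell is equivalent to such a lift. This yields the whiskering clause.
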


\begin{proof}
We prove the statement by induction on $n \geq 0.$
For $n=0$ the statement is tautological.
We assume the statement holds for $n.$
A functor $\phi:\mC \to \mD$ is a $n+1$-anticocartesian fibration if and only if it is a 1-anticocartesian fibration and for every $X,Y \in \mC$
the induced functor $$\phi_{X,Y}: \Mor_\mC(X,Y) \to \Mor_\mD(\phi(X),\phi(Y))$$ is an $n$-anticocartesian fibration.
So by induction hypothesis a functor $\phi:\mC \to \mD$ is a $n+1$-anticocartesian fibration if and only if $\kappa_\phi$ is essentially surjective and for any $X,Y \in \mC$ the functor $\kappa_{\phi_{X,Y}}$ is $n$-full.
Since $n$-full functors are stable under base change,
\cref{recur} implies that for every $X,Y \in \mC$
the functor $\kappa_{\phi_{X,Y}}$ is $n$-full if and only if
the functor $\kappa_\phi$ induces on morphism $\infty$-categories $n$-full functors.  
\end{proof}

\begin{theorem}\label{precocart}
A functor $\phi: \mC \to \mD$ is an anticocartesian fibration if and only if 
the induced functor $$\Fun^{\lax, \cocart}(\bD^1,\mC) \to \Fun^\lax(\bD^1,\mD) \times_{\Fun^\lax(\{0\},\mD)} \mC $$ 
is an equivalence.
    
\end{theorem}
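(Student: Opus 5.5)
The plan is to combine \cref{laxcocar} with \cref{cocfull} and \cref{eqil}. Write
$$\kappa_\phi: \Fun^{\lax, \cocart}(\bD^1,\mC) \to \Fun^\lax(\bD^1,\mD) \times_{\Fun^\lax(\{0\},\mD)} \mC.$$
By \cref{laxcocar}, $\kappa_\phi$ is always an inclusion, whatever $\phi$ is. By \cref{eqil}, a functor is an equivalence if and only if it is a full inclusion. So $\kappa_\phi$ is an equivalence if and only if it is full, that is, $n$-full for every $n \geq 0$.

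By \cref{cocfull}, $\kappa_\phi$ is $n$-full if and only if $\phi$ is an $(n+1)$-anticocartesian fibration. Hence $\kappa_\phi$ is full if and only if $\phi$ is an $(n+1)$-anticocartesian fibration for every $n \geq 0$.

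It remains to see that this last condition is the same as $\phi$ being an $\infty$-anticocartesian fibration, i.e.\ an anticocartesian fibration. This holds because the definition of an $n$-anticocartesian fibration only asks for lifting conditions in dimensions $1 \leq k \leq n$. Asking for all finite $n$ therefore asks for all $k \geq 1$, which is exactly the $\infty$ case.

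Both directions follow from this chain of equivalences. If $\phi$ is anticocartesian, then $\kappa_\phi$ is full for all $n$, and being an inclusion it is an equivalence by \cref{eqil}. Conversely, if $\kappa_\phi$ is an equivalence, it is $n$-full for every $n$, so \cref{cocfull} gives that $\phi$ is an anticocartesian fibration.

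No step is a real obstacle here, since the substantive work is already done in \cref{recur}, \cref{laxcocar} and \cref{cocfull}. The only points to check are bookkeeping: that the notion of ``full'' used in \cref{eqil} is exactly the conjunction of $n$-fullness over all $n$, which holds by definition, and that \cref{cocfull} is applied with matching indices, $n$-full corresponding to $(n+1)$-anticocartesian.
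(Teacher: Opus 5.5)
Your proposal is correct and follows essentially the same route as the paper: \cref{laxcocar} shows that $\kappa_\phi$ is always an inclusion, \cref{eqil} reduces being an equivalence to fullness, and \cref{cocfull} identifies $n$-fullness for all $n \geq 0$ with $\phi$ being $(n+1)$-anticocartesian for all $n$, which is exactly being anticocartesian. Your index bookkeeping ($n$-full matching $(n+1)$-anticocartesian) is the only point the paper leaves implicit, and you handle it correctly.
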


\begin{proof}
By \cref{laxcocar} the functor of (2) is an inclusion.
By Lemma \cref{eqil} a functor is an equivalence if and only if it is a full inclusion.
So it suffices to see that the functor of (2) is full if and only if $\phi$ is a cocartesian fibration.
This follows from \cref{cocfull}.
\end{proof}

\begin{corollary}\label{precocart2}
A functor $\phi: \mC \to \mD$ is a cocartesian fibration if and only if the induced functor $$\Fun^{\oplax, \cocart}(\bD^1,\mC) \to \Fun^\oplax(\bD^1,\mD) \times_{\Fun^\oplax(\{0\},\mD)} \mC $$ 
is an equivalence.

\end{corollary}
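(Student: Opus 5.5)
We deduce \cref{precocart2} from \cref{precocart} by applying a duality that turns cocartesian fibrations into anticocartesian fibrations and oplax functor categories into lax ones. By definition, $\phi$ is a cocartesian fibration if and only if $\phi^\co:\mC^\co\to\mD^\co$ is an anticocartesian fibration. By \cref{precocart}, the latter holds if and only if
$$\Fun^{\lax,\cocart}(\bD^1,\mC^\co)\to\Fun^\lax(\bD^1,\mD^\co)\times_{\Fun^\lax(\{0\},\mD^\co)}\mC^\co$$
is an equivalence, where the superscript $\cocart$ refers to $\phi^\co$-cocartesian cells.

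Next we apply the involution $(-)^\co$ to this functor, which preserves and reflects equivalences. We use the equivalence $\Fun^\oplax(\mB,\mE)^\co\simeq\Fun^\lax(\mB^\co,\mE^\co)$ of \cref{grayhoms}, together with $(\bD^1)^\co\simeq\bD^1$ and $\{0\}^\co=\{0\}$; the involution $(-)^\co$ fixes $1$-cells and so preserves the source inclusion. This identifies $\Fun^\lax(\bD^1,\mE^\co)^\co$ with $\Fun^\oplax(\bD^1,\mE)$, naturally in $\mE$. Hence the $\co$-dual of the displayed functor is identified with
$$\Fun^{\oplax,\cocart}(\bD^1,\mC)\to\Fun^\oplax(\bD^1,\mD)\times_{\Fun^\oplax(\{0\},\mD)}\mC.$$
Here we use that $(-)^\co$ preserves pullbacks, being an equivalence of $\infty\Cat$, and that the identification is natural, so the evaluation maps at $\{0\}$ correspond.

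The main obstacle is checking that the subcategory $\Fun^{\lax,\cocart}(\bD^1,\mC^\co)$ corresponds under $(-)^\co$ to the subcategory of $\Fun^\oplax(\bD^1,\mC)$ that the statement denotes $\Fun^{\oplax,\cocart}$. This is the subcategory whose $n$-cells $\bD^n\boxtimes\bD^1\to\mC$, in the oplax convention, have $\phi$-cocartesian top $(n+1)$-cell. For this we use that the monoidal involution $(-)^\co:(\infty\Cat,\boxtimes)\simeq(\infty\Cat,\boxtimes)^\rev$ of \cref{dua} sends $\bD^1\boxtimes\bD^n$ to $(\bD^n)^\co\boxtimes\bD^1$. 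It also sends the unique non-invertible $(n+1)$-cell to the unique non-invertible $(n+1)$-cell, since it preserves dimension and invertibility. Finally, a cell of $\mC^\co$ is $\phi^\co$-cocartesian exactly when the corresponding cell of $\mC$ is $\phi$-cocartesian in the sense defining $\phi$-cocartesian fibrations, which holds by definition of $n$-cocartesian fibration via $\co$. Subcategories are determined by their cells (\cref{subchar}), so the two subcategories agree. This completes the deduction.
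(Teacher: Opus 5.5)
Your proposal is correct and matches the paper: the paper states this corollary without proof directly after \cref{precocart}, as its $(-)^{\co}$-dual, and your argument supplies exactly that deduction, using that $\phi$ is a cocartesian fibration precisely when $\phi^{\co}$ is an anticocartesian fibration together with $\Fun^{\oplax}(\mB,\mE)^{\co}\simeq\Fun^{\lax}(\mB^{\co},\mE^{\co})$ from \cref{grayhoms}. You read the cocartesianness of top cells via transport along $(-)^{\co}$, which is the right reading; concretely, for $1$-cells the $2$-cell of the oplax square must have $\phi$-cartesian rather than $\phi$-cocartesian whiskerings in the morphism $\infty$-categories, and with that reading the two subcategories agree immediately.
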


\subsection{Cartesian oriented simplices}

In the following we define by induction on $n \geq 0$ when an oriented $n$-simplex is cocartesian.

\begin{definition}Let $n \geq 1$ and $\phi: \mC \to \mD$ a functor.

\begin{enumerate}[\normalfont(1)]\setlength{\itemsep}{-2pt}
\item By convention any oriented 0-simplex of $\mC$ is $\phi$-cocartesian.

\item An oriented 1-simplex is $\phi$-cocartesian if it is a 
$\phi$-cocartesian morphism.

\item An oriented $n$-simplex $\alpha$ in $\mC$ is $\phi$-cocartesian if for every morphism $\alpha(n) \to Y$ in $\mC$ the image of the corresponding oriented $n-1$-simplex $\bDelta^{n-1}\to \mC_{//^\oplax \alpha(n)} $ 
in $ \mC_{//^\oplax Y}$ is cocartesian with respect to the functor
$ \mC_{//^\oplax Y} \to \mD_{//^\oplax Y}$
and in $ \mC$ is $\phi$-cocartesian.
\end{enumerate}

\end{definition}

\begin{definition}Let $n \geq 1$ and $\phi: \mC \to \mD$ a functor.

\begin{enumerate}[\normalfont(1)]\setlength{\itemsep}{-2pt}
\item By convention any antioriented 0-simplex of $\mC$ is $\phi$-cocartesian.
\item An antioriented 1-simplex is $\phi$-cocartesian if it is a 
$\phi$-cocartesian morphism.
\item An antioriented $n$-simplex $\alpha$ in $\mC$ is $\phi$-cocartesian if for every morphism $Y \to \alpha(0) $ the image of the corresponding antioriented $n-1$-simplex $(\bDelta^{n-1})^\co \to \mC_{\alpha(0)//^\oplax} $
in $\mC_{Y//^\oplax }$ is cocartesian with respect to the functor
$ \mC_{Y //^\oplax}  \to \mD_{Y //^\oplax}$
and in $ \mC$ is $\phi$-cocartesian.

\end{enumerate}

\end{definition}

\begin{theorem}\label{T1}
Let $1 \leq k \leq \infty.$
\begin{enumerate}[\normalfont(1)]\setlength{\itemsep}{-2pt}
\item A functor $\phi:\mC \to \mD$ is a $k$-anticocartesian fibration if and only if the following condition holds:
for every $1 \leq n \leq k$ every commutative square
\begin{equation}\label{filler0}
\begin{xy}
\xymatrix{
\bDelta^0 \star \bDelta^{n-2} \ar[d]_{\{0\}\star \bDelta^{n-2}} \ar[r]
& \mC \ar[d]^\phi
\\ 
\bDelta^1 \star \bDelta^{n-2} \ar[r] & \mD
}
\end{xy}\end{equation}
admits a filler by a $\phi$-cocartesian oriented $n$-simplex and every commutative square
\begin{equation}\label{filler1.2}
\begin{xy}
\xymatrix{(\bDelta^{n-2})^\co \bar{\star} (\bDelta^0)^\co \ar[d]_{(\bDelta^{n-2})^\co \bar{\star} \{0\}} \ar[r]
& \mC \ar[d]^\phi
\\ 
(\bDelta^{n-2})^\co \bar{\star} (\bDelta^1)^\co \ar[r] & \mD
}
\end{xy}\end{equation}
admits a filler by a $\phi$-cocartesian antioriented $n$-simplex.

\vspace{1mm}

\item A commutative square 
\begin{equation}\label{sqq}
\begin{xy}
\xymatrix{
\mC \ar[d]^\phi \ar[r]
& \mB \ar[d]^\psi
\\ 
\mD \ar[r] & \mE,
}
\end{xy}\end{equation}
in which $\phi$ and $\psi$ are $k$-anticocartesian fibrations, is a morphism of $k$-anticocartesian fibrations if and only if it preserves cocartesian oriented and antioriented simplices of dimension smaller or equal $k.$
\end{enumerate}

\end{theorem}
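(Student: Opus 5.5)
We argue by induction on $n$, comparing the simplex-lifting conditions of dimension $n$ with the disk-lifting conditions of dimension $n$. The induction uses two inputs: the slice construction and \cref{slicecocar}, and the inductive characterization \cref{cocarto}.

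The basic observation concerns the shape of the data. An oriented $n$-simplex $\bDelta^1\star\bDelta^{n-2}\to\mC$ whose face $\{0\}\star\bDelta^{n-2}$ is specified is, by the join/oplax-slice adjunction, the same as an oriented $(n-2)$-simplex in $\mC_{//^\oplax \alpha(n)}$ together with a $1$-simplex-over-it datum. The cases are as follows.
\begin{itemize}
\item For $n=1$, both conditions in (1) reduce to the existence of $\phi$-cocartesian lifts of $1$-morphisms with given source, i.e.\ $1$-anticocartesian fibrations.
\item For $n\geq 2$, a filler of \eqref{filler0} in $\mC$ over $\mD$ corresponds to a filler of the analogous square of dimension $n-1$ for the induced functor $\mC_{//^\oplax Y}\to \mD_{//^\oplax \phi(Y)}$ (with $Y$ the last vertex). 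Cocartesianness of the oriented simplex is defined precisely so that it matches cocartesianness of this $(n-1)$-simplex in the slices, plus cocartesianness in $\mC$.
\item Dually, the antioriented squares \eqref{filler1.2} translate into the same statement for the oplax coslices $\mC_{Y//^\oplax}\to\mD_{\phi(Y)//^\oplax}$.
\end{itemize}

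For the forward direction, suppose $\phi$ is a $k$-anticocartesian fibration. Then \cref{slicecocar}(2) shows that the slice and coslice functors are $(k-1)$-anticocartesian fibrations, and that whiskering along $\alpha(n)\to Y$ (respectively $Y\to\alpha(0)$) gives maps of such. The inductive hypothesis, applied to both parts (1) and (2) of the theorem for $k-1$, then provides the cocartesian simplex fillers. We still need that the chosen filler is $\phi$-cocartesian in $\mC$ and not merely in the slice. For this we use \cref{elemen}: the slice projection to $\mC$ carries cells cocartesian over the slice of $\mD$ to $\phi$-cocartesian cells, because of the fibre-product description of morphisms in slices (\cref{homs}).

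For the converse, we recover the disk-lifting conditions from the simplex conditions. An $n$-disk $\bD^n$ is a quotient of $\bDelta^n$ obtained by collapsing the faces other than the top cell. More concretely, $\bD^n\simeq \bD^1\star\bDelta^{n-2}$ after degenerating, and the disk in $\Mor$ corresponds to a degenerate oriented simplex. So a lifting square for $\bD^{n-1}\to\bD^n$ with the source inclusion can be precomposed to a square of type \eqref{filler0} or \eqref{filler1.2}; which type depends on the parity and the orientation that the source inclusion singles out. A cocartesian simplex filler then restricts to a cocartesian $n$-cell. Applying \cref{cocarto}, it remains to check that whiskering preserves cocartesian cells on morphism categories. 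We obtain this from the simplex fillers of the form $\bDelta^1\star\bDelta^{n-2}$ in which the extra vertices record the whiskering morphisms $X'\to X$ and $Y\to Y'$. Their cocartesianness in slices encodes exactly the preservation statement of \cref{cocarto}(2).

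Part (2) of the theorem follows the same induction. A square of fibrations preserves cocartesian disks in all dimensions $\leq k$ if and only if the induced squares on slices and coslices preserve cocartesian simplices of dimension $\leq k-1$; we use \cref{slicecocar} and the characterization of cocartesian $1$-morphisms. The main obstacle is the bookkeeping in the comparison between disk cells and (anti)oriented simplices, namely identifying which degenerations of $\bDelta^n$ yield $\bD^n$ with its source face. The fact that both the oriented and the antioriented families are needed reflects the alternation of $\op$ and $\co$ in \cref{oppo}, so this identification must be done carefully.
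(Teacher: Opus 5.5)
Your forward direction is essentially the paper's argument: translate the fillers of \eqref{filler0} and \eqref{filler1.2} into fillers one dimension lower for $\mC_{//^\oplax X}\to\mD_{//^\oplax \phi(X)}$ and $\mC_{Y//^\oplax}\to\mD_{\phi(Y)//^\oplax}$, then apply \cref{slicecocar} and the induction hypothesis for both parts. One small correction: the $\phi$-cocartesianness of the image in $\mC$ comes from the description of cocartesian cells in oriented pullbacks in \cref{laxfib}, not from \cref{elemen}.

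The converse has a genuine gap. Its whole content is the sentence that a cocartesian simplex filler of a collapsed problem ``restricts to a cocartesian $n$-cell'', and this does not follow from the definitions. Cocartesianness of an oriented simplex is defined through the slices $\mC_{//^\oplax Y}$ over its last vertex. It therefore only records that the cell stays cocartesian after postcomposition with maps out of its target. Antioriented simplices, via coslices, only see precomposition. A $\phi$-cocartesian $n$-cell, by contrast, must be $\phi_{X,Y}$-cocartesian after whiskering on both sides at once. For a general functor neither one-sided condition gives this, which is exactly why \cref{cocarto} lists the whiskering condition separately. In addition, the filler only lies over degenerate cells of $\mD$; it factors through $\bD^n$ only for a collapse chosen so that the collapsed cells are forced to be cocartesian over identities, hence invertible, and you never specify such a collapse. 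Rescuing your route would mean comparing the oriented and antioriented fillers via uniqueness of cocartesian lifts in the morphism $\infty$-categories, inductively in $n$. That is precisely the ``bookkeeping'' you defer. The parity heuristic is also off: both families are needed in every dimension $n\geq 2$, one for each whiskering side.

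The disk comparison is unnecessary. Your slice translation is an equivalence. So, with the induction hypothesis (parts (1) and (2) for $k-1$), the simplex condition for $2\le n\le k$ is equivalent to the following. Every slice functor $\mC_{//^\oplax X}\to\mD_{//^\oplax \phi(X)}$ and coslice functor $\mC_{Y//^\oplax}\to\mD_{\phi(Y)//^\oplax}$ is a $(k-1)$-anticocartesian fibration, and the functors $\mC_{//^\oplax X}\to\mC_{//^\oplax X'}$ and $\mC_{Y//^\oplax}\to\mC_{Y'//^\oplax}$ induced by morphisms are maps of such. Passing to fibers over objects of $\mC$ shows that each $\phi_{Y,X}$ is a $(k-1)$-anticocartesian fibration and that the whiskering functors on morphism $\infty$-categories are maps of such. \cref{cocarto} then gives that $\phi$ is $k$-anticocartesian; this is the paper's proof. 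Part (2) goes the same way: preserving cocartesian simplices yields, by the induction hypothesis, maps of $(k-1)$-anticocartesian fibrations on slices, hence on morphism $\infty$-categories.
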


\begin{proof}
We prove the statement by induction on $ k \geq 1.$
For $\bk=1$ the statement is tautological and we prove the statement for $k >1$.

For any $k > 1 $, a 1-cocartesian fibration $\phi: \mC \to \mD$ 
satisfies the condition of the statement for $1 \leq n \leq k$ if and only if it satisfies the condition of the statement for every $1 < n \leq k$. The latter is equivalent to the following condition:
for every $X \in \mC$ every commutative square
\begin{equation}
\begin{xy}
\xymatrix{
\bDelta^0 \star \bDelta^{n-3} \ar[d]_{\{0\}\star \bDelta^{n-3}} \ar[r]
& \mC_{//^\oplax X} \ar[d]
\\ 
\bDelta^1 \star \bDelta^{n-3} \ar[r] & \mD_{//^\oplax \phi(X)}
}
\end{xy}\end{equation}
admits a filler by a $\phi$-cocartesian oriented $n-1$-simplex such that for every morphism $X \to X'$ in $\mC$
the functor $\mC_{//^\oplax X } \to \mC_{//^\oplax X'}$ sends the filler to a cocartesian oriented simplex, and for every $Y \in \mC$ every commutative square
\begin{equation}
\begin{xy}
\xymatrix{
(\bDelta^{n-3})^\co \overline{\star} (\bDelta^0)^\co \ar[d]_{(\bDelta^{n-3})^\co \overline{\star} \{0\}} \ar[r]
& \mC_{Y//^\oplax } \ar[d]
\\ 
(\bDelta^{n-3})^\co \overline{\star} (\bDelta^1)^\co \ar[r] & \mD_{\phi(Y)//^\oplax }
}
\end{xy}\end{equation}
admits a filler by a $\phi$-cocartesian antioriented $n-1$-simplex such that for every morphism $Y' \to Y$ in $\mC$
the functor $\mC_{Y //^\oplax  } \to \mC_{Y' //^\oplax }$ sends the filler to a cocartesian antioriented simplex.

By induction on $k \geq 1$ the latter condition is equivalent to the following one, which we refer to as (*): 
for every $X,Y \in \mC$ the functors $$\mC_{//^\oplax X} \to \mD_{//^\oplax \phi(X)}, \ \mC_{Y//^\oplax } \to \mD_{\phi(Y)//^\oplax } $$ are $k-1$-anticocartesian fibrations such that for all morphisms $X \to X', Y' \to Y $ the functors
$$\mC_{//^\oplax X } \to \mC_{//^\oplax X'}, \ \mC_{Y //^\oplax } \to \mC_{Y' //^\oplax } $$ are maps of $k-1$-anticocartesian fibrations.
Hence it suffices to show that for every $k \geq 1$ a 1-cocartesian fibration $\phi:\mC \to \mD$ is a $k$-anticocartesian fibration if and only if it satisfies condition (*).
The `only if' direction follows from \cref{slicecocar}. We prove the other direction.
Let $\phi:\mC \to \mD$ be a 1-cocartesian fibration that satisfies condition (*).
Then, in particular, for every $X,Y \in \mC$ the functor 
$$\Mor_\mC(Y,X) \to \Mor_\mD(\phi(Y), \phi(X)) $$ is a $k-1$-anticocartesian fibration and for every morphisms $X \to X', Y' \to Y $ in $\mC$ the induced functors
$$\Mor_\mC(Y,X) \to \Mor_\mC(Y,X'), \ \Mor_\mC(Y,X) \to \Mor_\mC(Y',X) $$ 
are maps of $k-1$-anticocartesian fibrations.
Using \cref{cocarto1}, we conclude that $\phi$ is a $k$-anticocartesian fibration.

It remains to see (2).
The `only if' direction of (2) follows immediately from \cref{slicecocart}
We prove the other direction.

A commutative square (\ref{sqq}) where $\phi$ and $\psi$ are $k$-anticocartesian fibrations, which preserves cocartesian oriented and antioriented simplices of dimension smaller or equal $k$, is a morphism of 1-cocartesian fibrations, and yields for every $X \in \mC $ lying over $Y \in \mB$ a commutative square \begin{equation}
\begin{xy}
\xymatrix{
\mC_{//^\oplax X} \ar[d] \ar[r]
& \mB_{//^\oplax Y} \ar[d]
\\ 
\mD_{//^\oplax \phi(X)} \ar[r] & \mE_{// ^\oplax \psi(Y)},
}
\end{xy}\end{equation}
which preserves cocartesian oriented and antioriented simplices of dimension smaller or equal $k-1.$
So by induction hypothesis the latter square is a morphism of 
$k-1$-anticocartesian fibrations.
Hence the latter square induces for every $Z \in \mC $ lying over $T \in \mB$ a commutative square 
\begin{equation}
\begin{xy}
\xymatrix{
\Mor_\mC(Z,X) \ar[d] \ar[r]
& \Mor_\mB(T,Y) \ar[d]
\\ 
\Mor_\mD(\phi(Z), \phi(X)) \ar[r] & \Mor_\mE(\psi(T),\psi(Y)),
}
\end{xy}\end{equation}
which is a map of $k-1$-anticocartesian fibrations.
It follows that the commutative square (\ref{sqq}) is a morphism of $k$-anticocartesian fibrations.
\end{proof}

\subsection{The free fibration}

\begin{definition}
Let $\phi: \mC \to \mD $ be a functor.
The enveloping cocartesian fibration or free cocartesian fibration
is the cocartesian fibration $$\Env(\mC):=\Fun^\oplax(\bD^1,\mD) \times_{\Fun^\oplax(\{0\},\mD)} \mC \to \Fun^\oplax(\{1\},\mD)\simeq \mD.$$

\end{definition}

\begin{remark}

The unique functor $\bD^1 \to \bD^0$ induces an inclusion
$ \mD \simeq \Fun^\oplax(\bD^0,\mD) \to \Fun^\oplax(\bD^1,\mD)$.
The latter gives rise to an inclusion
$$ \mC \simeq \mD \times_{\mD} \mC \to \Fun^\oplax(\bD^1,\mD) \times_{\Fun^\oplax(\{0\},\mD)} \mC = \Env_\mD(\mC) $$
over $\Fun^\oplax(\{1\},\mD).$

\end{remark}

\begin{remark}
For every $X \in \mD$ the fiber $\{X\} \times_\mD \Env(\mC)$
is canonically equivalent to $$ \mC \times_{\mD} \Fun^\oplax(\bD^1,\mD) \times_\mD \{X\} \simeq \mC \underset{\mD}{\vec{\times}} \{X\} .$$

\end{remark}

\begin{proposition}\label{envo}

\begin{enumerate}[\normalfont(1)]\setlength{\itemsep}{-2pt}

\item Let $\phi: \mC \to \mD$ be a functor and $\mE \to \mD $ a cocartesian fibration.
The inclusion $\iota: \mC \times_\mD \mE \subset \mC \underset{\mD}{\vec{\times}} \mE$ over $\mC \times \mE$ admits a left inverse
$\rho: \mC \underset{\mD}{\vec{\times}} \mE \to \mC \times_\mD \mE$ over $\mC \times \mE$ that is a map of cocartesian fibrations over $\mE.$

\item 
There is a commutative square
\[
\xymatrix{
\mC \underset{\mD}{\vec{\times}} \mE \ar[rr]^{} \ar[d]^{\rho} && \mC' \underset{\mD'}{\vec{\times}} \mE' \ar[d]^{\rho} \\
\mC \times_\mD \mE \ar[rr] && \mC' \times_{\mD'} \mE'.}
\]

\item For every functor $\mC \to \mD $ the following composition is the identity: 
$$ \mC \underset{\mD}{\vec{\times}} \mD \simeq (\mC \times_\mD \mD) \underset{\mD}{\vec{\times}} \mD \to (\mC \underset{\mD}{\vec{\times}} \mD) \underset{\mD}{\vec{\times}} \mD \to (\mC \underset{\mD}{\vec{\times}} \mD) \times_\mD \mD \simeq  \mC \underset{\mD}{\vec{\times}} \mD. $$
\end{enumerate}
    
\end{proposition}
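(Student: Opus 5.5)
Statement (1) asks for a left inverse $\rho$ of $\iota$ over $\mC\times\mE$ that is a map of cocartesian fibrations over $\mE$. I plan to use the description $\mC \underset{\mD}{\vec{\times}} \mE \simeq \mC \times_{\mD} \Fun^\oplax(\bD^1,\mD) \times_{\mD} \mE$ from \cref{adesc}. The idea is that an object $(C, E, \phi(C)\to \psi(E))$, with $\psi:\mE\to\mD$ the cocartesian fibration, should be sent to $(C, \alpha_!E')$, where $\alpha_!$ denotes cocartesian transport along the arrow. To make this precise, I would compare $\mC \underset{\mD}{\vec{\times}} \mE \to \mE$ with the pullback $\mC\times_{\mD}\Fun^\oplax(\bD^1,\mD)\to \mD^{\{1\}}$. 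Concretely, I use that $\psi$ is a cocartesian fibration together with \cref{precocart2}. This gives an equivalence
$$\Fun^{\oplax,\cocart}(\bD^1,\mE)\simeq \Fun^\oplax(\bD^1,\mD)\times_{\Fun^\oplax(\{0\},\mD)}\mE.$$
It identifies arrows in $\mD$ starting at $\psi(E)$ with $\psi$-cocartesian arrows in $\mE$ starting at $E$, including all higher cells.

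\textbf{Care with direction.} In the oriented pullback $\mC\underset{\mD}{\vec{\times}}\mE$ the arrow goes $\phi(C)\to\psi(E)$, so the given datum has target $E$. We therefore need transport in the opposite sense, and the naive $\alpha_!$ above is wrong. I would instead read the statement in the form dictated by the bifibration structure of \cref{targetFib}: the projection $\mC \underset{\mD}{\vec{\times}} \mE\to\mE$ is cocartesian, and $\mC\times_\mD\mE\to\mE$ is cocartesian as a pullback of $\phi$ only if $\phi$ is. Since only $\mE\to\mD$ is assumed cocartesian, the correct formula must use transport in $\mE$. So I apply \cref{precocart2} to $\psi$ after replacing the arrow by the canonical one. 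Namely, compose $\mC \underset{\mD}{\vec{\times}} \mE \to \Fun^\oplax(\bD^1,\mD)\times_{\mD}\mE$ with the equivalence above (read with source/target swapped via the $\mD^{\{0\}}$ versus $\mD^{\{1\}}$ factors as needed), and then evaluate the resulting cocartesian arrow at its endpoint. Doing this for every cell gives $\rho$ as a composite of already-constructed functors: the projection, the inverse of the \cref{precocart2} equivalence, and evaluation. Each of these lies over $\mC\times\mE$ appropriately.

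Next I would check two things. First, $\rho\circ\iota\simeq\id$: on the image of $\iota$ the arrow is an identity, whose cocartesian lift is an identity by the uniqueness built into \cref{precocart2}. Second, $\rho$ is a map of cocartesian fibrations over $\mE$. For this I use the explicit description of cocartesian 1-morphisms of the oriented pullback from the proof of \cref{targetfi}: they are those whose $\mC$-component is an equivalence. Such morphisms are sent by $\rho$ to morphisms with invertible $\mC$-component, and these are cocartesian for the pullback $\mC\times_\mD\mE\to\mE$. The higher cocartesian cells are handled the same way after passing to morphism $\infty$-categories via \cref{homs}, using induction as in \cref{targetfi}. This higher-dimensional compatibility is the main obstacle. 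I would reduce it to $n$-categories by the filtered-colimit argument used repeatedly in the paper, and then induct on $n$.

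(2) follows from naturality of every ingredient: \cref{precocart2} is natural in maps of cocartesian fibrations, and evaluation is natural. (3) is then the special case $\mE=\mD$, $\mC\to\mD$ replaced by $\mC\underset{\mD}{\vec{\times}}\mD$, combined with $\rho\circ\iota\simeq\id$ from (1) and the naturality in (2).
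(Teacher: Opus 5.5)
\textbf{Gap in (1): the functor $\rho$ is never constructed.} You keep the hypothesis that $\psi\colon\mE\to\mD$ is cocartesian and try to transport in $\mE$. But the datum $\alpha\colon\phi(C)\to\psi(E)$ \emph{ends} at $\psi(E)$, while \cref{precocart2} for $\psi$ only lifts arrows of $\mD$ with prescribed \emph{source} in $\mE$. Reading it ``with source and target swapped'' gives the statement for cartesian fibrations, which $\psi$ need not be. No functor is produced, and none can exist under this reading. Take $\mD=\mE=\bD^1$ (identity) and $\mC=\{c\}$ with $\phi(c)=0$. Then $\mC\underset{\mD}{\vec{\times}}\mE\simeq\bD^1$ maps isomorphically to $\mE$, while $\mC\times_\mD\mE$ is a point over $0$. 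So no left inverse lies over $\mE$, and $\mC\times_\mD\mE\to\mE$ is not a cocartesian fibration.

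You observed this last point yourself. The right conclusion is that the statement as printed puts the hypothesis on the wrong factor. The paper's proof assumes $\phi$ is cocartesian, and so does the application in \cref{envelo}, where (1) is used for a cocartesian fibration $\mB\to\mD$ in the first slot with $\mE=\mD$. With $\phi$ cocartesian, \cref{precocart2} for $\phi$ gives $\mC\underset{\mD}{\vec{\times}}\mE\simeq\Fun^{\oplax,\cocart}(\bD^1,\mC)\times_{\mD}\mE$. Then $\rho$ is evaluation at the target, $(C,\alpha,E)\mapsto(\alpha_!C,E)$, which is your first ``naive'' idea applied to the $\mC$-component. This $\rho$ lies over $\mE$, not over $\mC\times\mE$. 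It satisfies $\rho\circ\iota\simeq\id$ because $\iota$ corresponds to constant arrows.

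Your 1-dimensional check has to be redone for this $\rho$. A cocartesian morphism over $\mE$ has invertible $\mC$-component \emph{and} invertible 2-cell. Such a morphism is sent to one whose $\mC$-component is $\phi$-cocartesian by left cancellation, not invertible. That is why its image is cocartesian for $\mC\times_\mD\mE\to\mE$.

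\textbf{Gap in (3): it does not follow from $\rho\circ\iota\simeq\id$ plus naturality.} Write objects of $\Env(\mC)$ as $(C,\alpha\colon\phi(C)\to D)$. The first arrow in (3) is the oriented pullback functor $(-)\underset{\mD}{\vec{\times}}\mD$ applied to $\iota_\mC$, i.e.\ $\Env(\iota_\mC)$. It sends $(C,\alpha)$ to $((C,\id_{\phi(C)}),\alpha)$. By contrast, (1) for $\Env(\mC)\to\mD$ gives $\rho\circ\iota_{\Env(\mC)}\simeq\id$ for the other inclusion, $(C,\alpha)\mapsto((C,\alpha),\id_D)$.

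Naturality (2) along $\iota_\mC$ cannot relate the two, because $\rho_\mC$ is undefined when $\mC\to\mD$ is an arbitrary functor. A uniqueness argument through the universal property of $\Env$ would be circular, since \cref{envelo} is proved from (3). What you actually need is that cocartesian transport in $\Env(\mC)\to\mD$ is postcomposition.

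The paper proves this directly, in three steps:
\begin{enumerate}
\item It defines $\xi\colon\Env(\mC)\to\Fun^{\oplax,\cocart}(\bD^1,\Env(\mC))$, sending $(C,\alpha)$ to the morphism $(C,\id_{\phi(C)})\to(C,\alpha)$ given by $\id_C$ and the evident square.
\item It checks that $\ev_1\circ\xi=\id$.
\item By a Yoneda argument it shows that $\xi$ corresponds to $\Env(\iota_\mC)$ under the equivalence of \cref{precocart2}. The argument tests against $\Fun^\lax(\mB,-)$ and uses $\Fun^\lax(\mB,\Env(\mC))\simeq\Env(\Fun^\lax(\mB,\mC))$.
\end{enumerate}
Hence $\rho_{\Env(\mC)}\circ\Env(\iota_\mC)\simeq\ev_1\circ\xi=\id$.
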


\begin{proof}

(1): By \Cref{precocart} the following induced functor is an equivalence: $$\Fun^{\oplax, \cocart}(\bD^1,\mC) \to \mC \times_{\Fun^\lax(\{0\},\mD)} \Fun^\oplax(\bD^1,\mD). $$ 

Hence the following induced functor is an equivalence: $$\Fun^{\oplax, \cocart}(\bD^1,\mC) \times_{\Fun^\lax(\{1\},\mD)} \mE \to \mC \underset{\mD}{\vec{\times}} \mE = \mC \times_{\Fun^\lax(\{0\},\mD)} \Fun^\oplax(\bD^1,\mD) \times_{\Fun^\lax(\{1\},\mD)} \mE.$$

Let $\rho$ be the composition
$$ \mC \underset{\mD}{\vec{\times}} \mE \simeq \Fun^{\oplax, \cocart}(\bD^1,\mC) \times_{\Fun^\lax(\{1\},\mD)} \mE \to \mC \times_{\mD} \mE, $$
where the last functor evaluates at the target.

The composition $$\mC \times_{\mD} \mE \to \Fun^{\oplax, \cocart}(\bD^1,\mC) \times_{\Fun^\lax(\{1\},\mD)} \mE \simeq \mC \times_{\Fun^\lax(\{0\},\mD)} \Fun^\oplax(\bD^1,\mD) \times_{\Fun^\lax(\{1\},\mD)} \mE $$
induced by the diagonal functor $\mC \to \Fun^{\oplax, \cocart}(\bD^1,\mC)$, is the canonical inclusion
$$\mC \times_{\mD} \mE \subset \mC \times_{\Fun^\lax(\{0\},\mD)} \Fun^\oplax(\bD^1,\mD) \times_{\Fun^\lax(\{1\},\mD)} \mE.$$

Hence $$\mC \times_{\mD} \mE \subset \mC \underset{\mD}{\vec{\times}} \mE \simeq \Fun^{\oplax, \cocart}(\bD^1,\mC) \times_{\Fun^\lax(\{1\},\mD)} \mE $$ 
is the canonical inclusion so that
$$\rho \circ \iota: \mC \times_{\mD} \mE \subset \mC \underset{\mD}{\vec{\times}} \mE \simeq \Fun^{\oplax, \cocart}(\bD^1,\mC) \times_{\Fun^\lax(\{1\},\mD)} \mE \to \mC \times_{\mD} \mE $$ 
is the identity.

(2) follows from the construction of $\rho.$

We prove next that $\rho$ is a map of cocartesian fibrations over $\mE.$
By naturality of $\rho$ by (2) and since oriented pullbacks commute with small filtered colimits and every $\infty$-category $\mC$
is the sequential colimit of $\iota_0(\mC) \to \iota_1(\mC) \to ...,$
we can assume that $\mA,\mB,\mC$ are $n$-categories for some $n \geq 0.$
We prove by induction on $n \geq 0$ that $\rho$ is a map of cocartesian fibrations for every $n$-categories $\mA,\mB,\mC.$
The functor $\rho$ induces on morphism $\infty$-categories 

(3): The functor in (3) factors as 
$$ \Env(\mC) \xrightarrow{\Env(\iota)} \Env(\Env(\mC)) \simeq \Fun^{\oplax, \cocart}(\bD^1,\Env(\mC)) \xrightarrow{\ev_1} \Env(\mC). $$

For every $\infty$-category $\mB$ there is a canonical equivalence
$$ \Fun^\lax(\mB, \Env(\mC)) \simeq \Env(\Fun^\lax(\mB,\mC)) $$ over 
$\Fun^\lax(\mB,\mC)$.
Similarly, there is a canonical equivalence
$$ \Fun^\lax(\mB, \Fun^{\oplax}(\bD^1,\Env(\mC))) \simeq
\Fun^{\oplax}(\bD^1, \Fun^\lax(\mB, \Env(\mC))) \simeq 
\Fun^{\oplax}(\bD^1, \Env(\Fun^\lax(\mB,\mC))).$$

Let $\mE:=\Fun^\lax(\mB,\mC)$ and $ \mF:=\Fun^\lax(\mB,\mD) $
and $\psi: \mE \to \mF$ the functor induced by $\phi: \mC \to \mD.$ 
Let $\xi: \Env(\mC) \to \Fun^{\oplax}(\bD^1,\Env(\mC)) $
be the functor represented by the induced map of sets  
$$ \sigma: \pi_0(\iota_0(\Fun^\lax(\mB, \Env(\mC)))) \simeq \pi_0(\iota_0(\Env(\mE))) \to $$$$ \pi_0(\iota_0(\Fun^\lax(\mB, \Fun^{\oplax}(\bD^1,\Env(\mC))))) \simeq 
\pi_0(\iota_0(\Fun^{\oplax}(\bD^1, \Env(\mE)))) $$
that sends $(X, \psi(X) \to Y) $ to the morphism in 
$\Env(\mE)$ from $(X, \id: \psi(X) \to \psi(X)) $ to $(X, \psi(X) \to Y) $ given by the identity of $X$ and the commutative square in $\mF:$
\[
\xymatrix{
\psi(X) \ar[r]^\id \ar[d]^{\id} & \psi(X) \ar[d] \\
\psi(X) \ar[r] & Y.}
\]

Then $\Env(\mC) \xrightarrow{\xi} \Fun^{\oplax}(\bD^1,\Env(\mC)) \xrightarrow{\ev_1} \Env(\mC)$
is the identity.

We prove next that the composition
$$ \Env(\mC) \xrightarrow{\xi} \Fun^{\oplax}(\bD^1,\Env(\mC)) \xrightarrow{} \Env(\Env(\mC)) = \Fun^\oplax(\bD^1,\mD) \times_{\Fun^\lax(\{0\},\mD)} \Env(\mC) $$
is the functor $\Env(\iota).$
By the Yoneda-lemma it suffices to see that for every $\infty$-category $\mB$ the composition
$$ \pi_0(\iota_0(\Fun^\lax(\mB, \Env(\mC)))) \xrightarrow{\pi_0(\iota_0(\Fun^\lax(\mB, \xi)))}  
\pi_0(\iota_0(\Fun^\lax(\mB, \Fun^{\oplax}(\bD^1,\Env(\mC)))))$$$$
\xrightarrow{} \pi_0(\iota_0(\Fun^\lax(\mB,\Env(\Env(\mC))))) $$
is the map $ \pi_0(\iota_0(\Fun^\lax(\mB, \Env(\iota)))).$
The first map identifies with the map
$$\pi_0(\iota_0(\Env(\mE))) \xrightarrow{\sigma}
\pi_0(\iota_0(\Fun^{\oplax}(\bD^1, \Env(\mE)))) 
\to \pi_0(\iota_0(\Env(\Env(\mE))))$$
and the second map identifies with the map
$$\pi_0(\iota_0(\Env(\iota))): \pi_0(\iota_0(\Env(\mE))) \to \pi_0(\iota_0(\Env(\Env(\mE)))).$$
Both latter maps agree by definition of $\sigma.$
By construction the functor $\xi:\Env(\mC) \to \Fun^{\oplax}(\bD^1,\Env(\mC)) $ 
lands in the full subcategory $\Fun^{\oplax, \cocart}(\bD^1,\Env(\mC))$.
By what we have proven, the composition
$$ \Env(\mC) \xrightarrow{\xi} \Fun^{\oplax}(\bD^1,\Env(\mC)) \simeq \Env(\Env(\mC)) $$
is the functor $\Env(\iota).$
Hence the functor $$ \rho_{\Env(\mC)} \circ \Env(\iota) :\Env(\mC) \xrightarrow{\Env(\iota)} \Env(\Env(\mC)) \simeq \Fun^{\oplax}(\bD^1,\Env(\mC)) \xrightarrow{\ev_1} \Env(\mC) $$
is the functor $ \Env(\mC) \xrightarrow{\xi} \Fun^{\oplax}(\bD^1,\Env(\mC)) \xrightarrow{\ev_1} \Env(\mC), $
which we have proven to be the identity.
\end{proof}

\begin{notation}

Let $\mB \to \mD, \mC \to \mD$ be functors.
\begin{enumerate}[\normalfont(1)]\setlength{\itemsep}{-2pt}

\item Let $\Fun_\mD^\cocart(\mB,\mC) \subset \Fun_\mD(\mB,\mC) $
be the full subcategory of maps of cocartesian fibrations over $\mD.$

\item Let $\Fun_\mD^\cart(\mB,\mC) \subset \Fun_\mD(\mB,\mC) $
be the full subcategory of maps of cartesian fibrations over $\mD.$

\end{enumerate}

\end{notation}

\begin{theorem}\label{envelo}

Let $\phi: \mC \to \mD $ be a functor and
$\rho: \mB \to \mD $ a cocartesian fibration.
The induced functor
$$ \Fun^\cocart_{\Fun^\oplax(\{1\},\mD)}(\Env(\mC),\mB) \to \Fun_\mD(\mC,\mB) $$
is an equivalence.

\end{theorem}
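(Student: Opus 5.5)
We prove this by exhibiting an explicit inverse built from the unit $\iota\colon \mC \to \Env(\mC)$ and a counit-type retraction. The model is the enriched argument of \cref{enevolt}(2), with $\Fun^\oplax(\bD^1,\mD)$ in place of $\mD^{\bD^1}$.

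The functor of the statement is restriction along the inclusion $\iota\colon \mC \simeq \mD\times_\mD \mC \to \Env(\mC)$ of the remark above. For the inverse, let $F\colon \mC \to \mB$ be a functor over $\mD$. Since $\rho$ is a cocartesian fibration, \cref{precocart2} identifies $\Fun^{\oplax,\cocart}(\bD^1,\mB)$ with $\Fun^\oplax(\bD^1,\mD)\times_{\Fun^\oplax(\{0\},\mD)}\mB$. Base change along $F$ then gives a functor
\[
\Env(\mC)=\Fun^\oplax(\bD^1,\mD)\times_{\Fun^\oplax(\{0\},\mD)}\mC \to \Fun^{\oplax,\cocart}(\bD^1,\mB)\xrightarrow{\ev_1}\mB .
\]
Equivalently, it is the composite $\Env(\mC)\to\mC\underset{\mD}{\vec{\times}}\mB$ with the retraction of \cref{envo}(1) onto $\mC\times_\mD\mB$, followed by the projection to $\mB$. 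Call the resulting functor $\bar F$.

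Next we check that $\bar F$ lies over $\Fun^\oplax(\{1\},\mD)$, that $\bar F\circ\iota\simeq F$, and that $\bar F$ is a map of cocartesian fibrations. The first two hold by construction: on $\iota(\mC)$ the relevant cocartesian lift is an identity. For the third, the cocartesian cells of $\Env(\mC)\to\mD$ are described by \cref{targetfi}/\cref{targetFib}: they are the cells with invertible $\mC$-component, together with their higher analogues. The functor $\ev_1$ restricted to $\Fun^{\oplax,\cocart}(\bD^1,\mB)$ sends these to $\rho$-cocartesian cells, because of how cocartesian cells of $\Fun^\oplax(\bD^1,-)$ are built (\cref{indufibr} and its oplax dual) and because $\rho$-cocartesian cells are closed under composition (\cref{cocartex}). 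I would verify this by reducing to $n$-categories via $\iota_n$ and filtered colimits, and then inducting on morphism $\infty$-categories using \cref{recur}, exactly as in \cref{laxcocar}.

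It remains to show that the two composites are the identity. One direction, $\overline{G\circ\iota}\simeq G$ for a map $G$ of cocartesian fibrations, is the crux. The plan follows \cref{enevolt}. Every object $(X,\phi(X)\to Y)$ of $\Env(\mC)$ receives a canonical morphism from $\iota(X)$, given by the diagonal square. This morphism is cocartesian over $\mD$, by the description of cocartesian $1$-morphisms in the proof of \cref{targetfi}. So any map of cocartesian fibrations $G$ is determined, at the level of objects and $1$-morphisms, by $G\circ\iota$ together with cocartesian transport, and this recovers $\overline{G\circ\iota}$.

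To make this coherent I would argue via full faithfulness. Restriction along $\iota$ admits $\bar{(-)}$ as a left adjoint whose unit is an equivalence, which is the content of \cref{envo}(3) functorialized. It therefore suffices to show that restriction along $\iota$ is conservative on $\Fun^\cocart_\mD(\Env(\mC),\mB)$. For this I apply \cref{equivcocar} fiberwise to natural transformations, viewed as maps of cocartesian fibrations over $\mD$ via \cref{cocartexp}/\cref{enrcocartexp}. A transformation between maps of cocartesian fibrations that is invertible on $\iota(\mC)$ is invertible at every object, since every object is a cocartesian pushforward of one in $\iota(\mC)$. Passing from invertibility at objects to invertibility of the whole natural transformation, including all its higher cells, is the main obstacle. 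I would handle it by the inductive reduction on $\iota_n$ used throughout Section 3.
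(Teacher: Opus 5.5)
Your inverse is the paper's. It is $\overline{F}=\xi_\mB\circ\Env(F)$, where $\xi_\mB\colon\Env(\mB)\to\mB\times_\mD\mD\simeq\mB$ is the evaluation-at-target retraction of \cref{envo}(1), and $\overline{F}\circ\iota\simeq F$ is the retraction property. Your ``equivalently'' sentence should go through $\Env(F)\colon\Env(\mC)\to\Env(\mB)$, since there is no canonical map $\Env(\mC)\to\mC\vec{\times}_\mD\mB$.

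The gap is the claim that restriction along $\iota$ has $\overline{(-)}$ as a left adjoint with invertible unit. Nothing in your argument constructs this adjunction, and it is not ``\cref{envo}(3) functorialized''. The unit $F\to\overline{F}\circ\iota$ is just the retraction property. An adjunction additionally needs a counit $\overline{G\circ\iota}\to G$, natural in $G\in\Fun^{\cocart}_\mD(\Env(\mC),\mB)$ and satisfying the triangle identities. Producing that counit is exactly the direction you called the crux, so the argument is circular at this point. Conservativity of $\iota^*$ only finishes the proof once the adjunction exists, because a conservative functor that merely admits a section need not be an equivalence. (Your cocartesian-transport argument for conservativity is fine on objects, and that is not where the difficulty lies.)

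The missing ingredient is the naturality of the retraction, \cref{envo}(2). For a map $H\colon\Env(\mC)\to\mB$ of cocartesian fibrations over $\mD$ one has $\xi_\mB\circ\Env(H)\simeq H\circ\xi_{\Env(\mC)}$. Combined with \cref{envo}(3), this gives
\[
\overline{H\circ\iota}=\xi_\mB\circ\Env(H)\circ\Env(\iota)\simeq H\circ\xi_{\Env(\mC)}\circ\Env(\iota)\simeq H
\]
directly, with no adjunction or conservativity argument needed.

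The coherence problem that pushed you towards the adjunction is handled in the paper by a Yoneda reduction. Set $\mB':=\mD\times_{\Fun(\mA,\mD)}\Fun(\mA,\mB)$, which is again a cocartesian fibration over $\mD$ by \cref{cocartexp}. Then $\Map_{\infty\Cat}(\mA,\Fun^{\cocart}_\mD(\Env(\mC),\mB))$ identifies with $\iota_0\Fun^{\cocart}_\mD(\Env(\mC),\mB')$, and likewise for $\Fun_\mD(\mC,\mB)$. So it suffices to show that restriction along $\iota$ is a bijection on equivalence classes for every cocartesian fibration $\mB\to\mD$. The identity $\overline{F}\circ\iota\simeq F$ gives surjectivity, and $\overline{H\circ\iota}\simeq H$ gives injectivity.
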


\begin{proof}

For every $\infty$-category $\mA$ the induced map
$$ \Map_{\infty\Cat}(\mA,\Fun^{\cocart}_\mD(\Env(\mC),\mB)) \to \Map_{\infty\Cat}(\mA, \Fun_\mD(\mC,\mB))$$ identifies with the map
$$ \iota_0(\Fun^{\cocart}_\mD(\Env(\mC), \mD \times_{\Fun(\mA,\mD)}\Fun(\mA,\mB))) \to \iota_0(\Fun_\mD(\mC,\mD \times_{\Fun(\mA,\mD)}\Fun(\mA,\mB))).$$
By \cref{cocartexp} the functor $\mD \times_{\Fun(\mA,\mD)}\Fun(\mA,\mB) \to \mD$ is a cocartesian fibration.
So by the Yoneda-lemma it suffices to show that 
the functor $ \Fun^\cocart_\mD(\Env(\mC),\mB) \to \Fun_\mD(\mC,\mB) $ induces a bijection on equivalence classes.
We first prove surjectivity.
Let $F: \mC \to \mB$ be a functor over $\mD.$
Since $\rho: \mB \to \mD$ is a cocartesian fibration,
by \cref{envo} there is a map $\xi_\mB: \Env(\mB) \to \mB$ of cocartesian fibrations over $\mD$ such that the composition
$\mB \subset \Env(\mB) \xrightarrow{\xi_\mB} \mB$ is the identity.
The composition $\Env(\mC) \xrightarrow{\Env(F)} \Env(\mB) \xrightarrow{\xi_\mB} \mB$ of maps of cocartesian fibrations over $\mD$
restricts on $\mC$ to the composition 
$\mC \xrightarrow{F} \mB \subset \Env(\mB) \xrightarrow{\xi_\mB} \mB$, which is $F.$

We continue with proving injectivity.
Let $H: \Env(\mC) \to \mB$ be a map of cocartesian fibrations over $\mD.$
The composition $\Env(\mC) \xrightarrow{\Env(H_{\mid \mC})} \Env(\mB) \to \mB $ 
factors as $$\Env(\mC) \xrightarrow{\Env(\iota)} \Env(\Env(\mC)) \xrightarrow{\Env(H} \Env(\mB) \xrightarrow{\xi_\mB} \mB.$$
Since $H$ is a map of cocartesian fibrations over $\mD$, by \cref{envo} the latter functor over $\mD$ factors as 
$\Env(\mC) \xrightarrow{\Env(\iota)} \Env(\Env(\mC)) \xrightarrow{\xi_{\Env(\mC)}} \Env(\mC) \xrightarrow{H} \mB $.
Hence the statement follows from \cref{envo}.
\end{proof}

\begin{corollary}\label{envelo2}
Let $\phi: \mC \to \mD $ be a functor and
$\rho: \mB \to \mD $ a cartesian fibration.
The induced functor
$$ \Fun^\cart_{\Fun^\oplax(\{0\},\mD)}(\Env(\mC^\coop)^\coop,\mB) \to \Fun_\mD(\mC,\mB) $$
is an equivalence.
\end{corollary}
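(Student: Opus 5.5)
We obtain \cref{envelo2} from \cref{envelo} by applying the involution $(-)^\coop$, which reverses cells in all dimensions. The plan has three steps.

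First, we record how $(-)^\coop = (-)^\co\circ(-)^\op$ acts on fibrations. By the definitions, $\rho:\mB\to\mD$ is a cartesian fibration precisely when $\rho^\coop:\mB^\coop\to\mD^\coop$ is an anticocartesian fibration after the appropriate identification. Equivalently, we pass through the table in the introduction: $p$ cartesian corresponds to $p^\coop$ cocartesian. Concretely, $\phi$ is cartesian iff $\phi^{\op}$ is anticocartesian, iff $(\phi^{\op})^{\co}=\phi^\coop$ is cocartesian, since the defining conditions are stated through $(-)^\co$ and $(-)^\op$ applied to anticocartesian fibrations and these involutions commute. In the same way, a commutative square is a map of cartesian fibrations iff its image under $(-)^\coop$ is a map of cocartesian fibrations; this is literally clause (4) of the definition of maps combined with clause (2).

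Second, we apply \cref{envelo} to the functor $\phi^\coop:\mC^\coop\to\mD^\coop$ and the cocartesian fibration $\rho^\coop:\mB^\coop\to\mD^\coop$. This gives an equivalence
$$\Fun^\cocart_{\mD^\coop}(\Env(\mC^\coop),\mB^\coop)\to\Fun_{\mD^\coop}(\mC^\coop,\mB^\coop).$$
Here $\Env(\mC^\coop)\to\Fun^\oplax(\{1\},\mD^\coop)\simeq\mD^\coop$ is the enveloping cocartesian fibration.

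Third, we transport this equivalence back along $(-)^\coop$. Since $(-)^\coop$ is an automorphism of $\infty\Cat$ compatible with pullbacks and with the internal hom in the form $\Fun(\mA^\coop,\mB^\coop)\simeq\Fun(\mA,\mB)^{\sigma}$ for the relevant involution $\sigma$, it induces an equivalence $\Fun_{\mD^\coop}(\mA^\coop,\mB^\coop)\simeq\Fun_\mD(\mA,\mB)^{\sigma}$. Here $\sigma$ is an involution of $\infty\Cat$; by \cref{grayhoms} together with cartesianness of the products involved, it is $(-)^\coop$ itself, or at worst some involution. Equivalences are preserved by any involution, so after undoing $\sigma$ the restriction functor $\Fun_\mD(\Env(\mC^\coop)^\coop,\mB)\to\Fun_\mD(\mC,\mB)$ is an equivalence. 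By the first step, it restricts on the full subcategory of maps of cartesian fibrations to exactly the image of $\Fun^\cocart$, which gives the claim. Two further checks are needed here. The structure map $\Env(\mC^\coop)^\coop\to\mD$ is $\Fun^\oplax(\{1\},\mD^\coop)^\coop\simeq\mD$, which the statement labels $\Fun^\oplax(\{0\},\mD)$, reflecting that $\Fun^\oplax(\bD^1,\mD^\coop)^\coop\simeq\Fun^{\lax}(\bD^1,\mD)$ via \cref{grayhoms} with source and target swapped. The inclusion $\mC\subset\Env(\mC^\coop)^\coop$ is $(-)^\coop$ of the canonical inclusion, and we verify this as well.

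The main obstacle is the bookkeeping in the third step. We must check that the identification $\Fun_{\mD^\coop}(\mA^\coop,\mB^\coop)\simeq\Fun_\mD(\mA,\mB)$, up to an involution, respects the full subcategories of (co)cartesian maps, so that the fibre over a fixed structure map corresponds correctly. Since equivalence of $\infty$-categories is invariant under any involution, it suffices to note that $(-)^\coop$ is a functor-level equivalence on the slices $\infty\Cat_{/\mD}\simeq\infty\Cat_{/\mD^\coop}$. Such an equivalence preserves mapping objects up to the involution, and by the first step it matches the two subcategories of (co)cartesian maps.
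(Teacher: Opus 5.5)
Your proposal is correct and matches the paper's approach: the paper states this as an immediate corollary of \cref{envelo}, obtained by applying that theorem to $\phi^\coop$ and the cocartesian fibration $\rho^\coop$ and transporting the equivalence back along the involution $(-)^\coop$. One slip, in a side remark your argument does not use: $(-)^\coop$ is a composite of two anti-monoidal involutions and is therefore monoidal for $\boxtimes$, so $\Fun^\oplax(\bD^1,\mD^\coop)^\coop\simeq\Fun^\oplax((\bD^1)^\coop,\mD)$, which is $\Fun^\oplax(\bD^1,\mD)$ with the endpoints swapped, not $\Fun^\lax(\bD^1,\mD)$.
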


\begin{definition}Let $\mC$ be a small $\infty$-category.
Let $$ \infty\scat^\cocart_{/\mD}\subset \infty\scat_{/\mD} $$
be the subcategory of cocartesian fibrations over $\mC$ and maps of cocartesian fibrations over $\mD$.
\end{definition}

\begin{corollary}\label{huip}
The inclusion of $\infty$-categories $$\infty\scat^\cocart_{/\mD}\subset \infty\scat_{/\mD}$$
admits a left adjoint that sends $\mC \to \mD$ 
to $\Env(\mC) \to \mD$.
\end{corollary}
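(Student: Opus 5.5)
The plan is to deduce \cref{huip} from \cref{envelo} together with the construction of $\Env$ as a functor. The first step is to check that $\mC\mapsto\Env(\mC)=\Fun^\oplax(\bD^1,\mD)\times_{\Fun^\oplax(\{0\},\mD)}\mC$ is functorial in $\mC\in\infty\scat_{/\mD}$. We also need it to land in $\infty\scat^\cocart_{/\mD}$.

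A map $F:\mC\to\mC'$ over $\mD$ induces a functor $\Env(\mC)\to\Env(\mC')$ by base change of the fixed functor $\Fun^\oplax(\bD^1,\mD)\to\Fun^\oplax(\{0\},\mD)$. This functor commutes with evaluation at the target. Note that $\Env(\mC)\to\mD$ is the pullback of the bifibration $\Fun^\oplax(\bD^1,\mD)\to\mD\times\mD$ of \cref{targetfi2} along $\mC\times\mD\to\mD\times\mD$. Hence it is a cocartesian fibration over the second factor. Moreover, by the description in \cref{targetfi} together with \cref{elemen}, a cell of $\Env(\mC)$ is cocartesian precisely when its image in $\Fun^\oplax(\bD^1,\mD)$ is cocartesian over $\mD$.

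Since the induced map $\Env(\mC)\to\Env(\mC')$ is the identity on the $\Fun^\oplax(\bD^1,\mD)$-component, it preserves cocartesian cells in every dimension. It is therefore a map of cocartesian fibrations, and $\Env$ defines a functor $\infty\scat_{/\mD}\to\infty\scat^\cocart_{/\mD}$.

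Next I would set up the unit. The inclusion $\iota_\mC:\mC\to\Env(\mC)$ of the remark following the definition of $\Env$ is natural in $\mC$, because it is induced by the diagonal $\mD\to\Fun^\oplax(\bD^1,\mD)$, which does not depend on $\mC$. I then need that for every cocartesian fibration $\mB\to\mD$, restriction along $\iota_\mC$ induces an equivalence
\[
\Map_{\infty\scat^\cocart_{/\mD}}(\Env(\mC),\mB)\to\Map_{\infty\scat_{/\mD}}(\mC,\mB).
\]
These mapping spaces are the maximal subspaces of $\Fun^\cocart_\mD(\Env(\mC),\mB)$ and $\Fun_\mD(\mC,\mB)$ respectively, because $\infty\scat^\cocart_{/\mD}$ is the subcategory consisting of maps of cocartesian fibrations. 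Applying $\iota_0$ to the equivalence of \cref{envelo} therefore gives the desired equivalence. By the pointwise criterion for adjoints, the inclusion has a left adjoint that sends $\mC\to\mD$ to $\Env(\mC)\to\mD$, with unit $\iota$.

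The main obstacle is the bookkeeping in the first step, namely showing that the comparison map preserves cocartesian cells in all dimensions and not merely $1$-cells. I would handle this directly from the characterization of cocartesian cells in a pullback in \cref{elemen}. The cocartesian cells for $\Fun^\oplax(\bD^1,\mD)\to\mD$ are detected in that factor, and this factor is unchanged by the map. If needed, I would fall back on \cref{precocart2} or \cref{T1}(2) to confirm the higher-cell preservation.
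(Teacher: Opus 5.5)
Your core argument matches the paper's: \cref{huip} is recorded there as an immediate consequence of \cref{envelo}, with restriction along $\mC\subset\Env(\mC)$ exhibiting $\Env(\mC)\to\mD$ as the reflection into cocartesian fibrations. Two points need correcting.

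First, you apply $\iota_0$, so you only get equivalences of mapping spaces and hence an adjunction between the \emph{underlying categories}. The statement, however, is about the $\infty$-categories $\infty\scat^\cocart_{/\mD}\subset\infty\scat_{/\mD}$. Their morphism $\infty$-categories are $\Fun^\cocart_\mD(-,-)$ and $\Fun_\mD(-,-)$, so the required left adjoint is an $(\infty\Cat,\times)$-enriched one. By the representability criterion recalled just before \cref{adj}, you need restriction along $\mC\subset\Env(\mC)$ to be an equivalence of $\infty$-categories $\Fun^\cocart_\mD(\Env(\mC),\mB)\to\Fun_\mD(\mC,\mB)$ for every cocartesian fibration $\mB\to\mD$. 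That is exactly what \cref{envelo} says. Its proof uses the cotensors $\mD\times_{\Fun(\mA,\mD)}\Fun(\mA,\mB)$ and \cref{cocartexp} precisely to obtain this, rather than just a bijection on equivalence classes. So simply do not pass to $\iota_0$. The underlying adjunction you do obtain is what \cref{leftadjointtheorem} later feeds into \cref{adj}, but it is weaker than the corollary as stated.

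Second, your functoriality step is unnecessary. The pointwise criterion produces the left adjoint and its functoriality from the objectwise universal property. Naturality in $\mB$ is automatic, since the comparison is precomposition with a fixed functor. Moreover, the characterization of cocartesian cells you use in that step is false as stated. A cell of $\Env(\mC)$ is cocartesian over $\mD$ if and only if its image in $\Fun^\oplax(\bD^1,\mD)$ is cocartesian over the target \emph{and} its component in $\mC$ is invertible. For $\mD=\ast$ one has $\Fun^\oplax(\bD^1,\ast)\simeq\ast$ and $\Env(\mC)\simeq\mC$. Then every cell has cocartesian image, but only the invertible cells of $\mC$ are cocartesian over $\ast$. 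With the corrected description, $\Env(F)$ still preserves cocartesian cells, so nothing breaks. Still, the step you flagged as the main obstacle is both misstated and not needed.
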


\subsection{The universal fibration}

\begin{proposition}

Evaluation at the target $\co\mathcal{C}\mathit{art} \to \infty\scat$
is a cartesian fibration.
  
\end{proposition}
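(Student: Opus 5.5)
Evaluation at the target $\co\mathcal{C}\mathit{art} \to \infty\scat$ is the composite of the inclusion $\co\mathcal{C}\mathit{art} \subset \Fun(\bD^1,\infty\scat)$ with $\ev_1$. I will produce cartesian lifts by pullback. Given a cocartesian fibration $\phi: \mC \to \mD$ and a functor $F: \mD' \to \mD$, take the pullback square
\[
\xymatrix{
\mD'\times_\mD \mC \ar[r] \ar[d]^{\phi'} & \mC \ar[d]^\phi \\
\mD' \ar[r]^F & \mD.
}
\]
By \cref{elemen}, $\phi'$ is a cocartesian fibration, and the square is a map of cocartesian fibrations. Indeed, an $n$-morphism of the pullback is $\phi'$-cocartesian exactly when its image in $\mC$ is $\phi$-cocartesian: cocartesian cells of the pullback can be chosen as pullbacks of cocartesian lifts in $\mC$, and cocartesianness is detected in $\mC$ by the pasting argument of \cref{elemen}, using that morphism objects commute with pullbacks. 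So this square is a morphism of $\co\mathcal{C}\mathit{art}$ lying over $F$, and I claim it is $\ev_1$-cartesian.

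To check the claim, fix an object $\psi: \mB \to \mE$ of $\co\mathcal{C}\mathit{art}$. I must show that the square
\[
\xymatrix{
\Map_{\co\mathcal{C}\mathit{art}}(\psi,\phi') \ar[r] \ar[d] & \Map_{\co\mathcal{C}\mathit{art}}(\psi,\phi) \ar[d] \\
\Map(\mE,\mD') \ar[r] & \Map(\mE,\mD)
}
\]
is a pullback. In $\Fun(\bD^1,\infty\scat)$ the corresponding square is a pullback, because the pullback square above is cartesian for $\ev_1$ in the arrow category of a category with pullbacks (\cref{enrtargetfibr}(2) with $\mV=\mS$). Since $\co\mathcal{C}\mathit{art}$ is a subcategory, its mapping spaces embed, and what remains is a condition on components. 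A commutative square from $\psi$ to $\phi'$ over some $G: \mE \to \mD'$ must be a map of cocartesian fibrations if and only if its composite with the pullback square is one. In other words, a functor $\mB \to \mD'\times_\mD\mC$ preserves cocartesian cells if and only if its composite to $\mC$ does, and this is exactly the detection statement above.

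The main obstacle is making the detection statement precise in all dimensions. For $1$-morphisms it is pasting of pullbacks. For higher cells I will use the inductive characterisation in \cref{cocarto}: the morphism $\infty$-categories of $\mD'\times_\mD\mC$ are pullbacks $\Mor_{\mD'}\times_{\Mor_\mD}\Mor_\mC$, so induction on dimension reduces to the same statement for the maps $\phi_{X,Y}$ and their pullbacks. Since $\co\mathcal{C}\mathit{art}$ is only a $1$-category, the mapping-space statement suffices and no enriched refinement is needed here.
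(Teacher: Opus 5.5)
There is a genuine gap, and it sits in your last sentence: $\co\mathcal{C}\mathit{art}$ is not a $1$-category in this paper.

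It is a subcategory of $\Fun(\bD^1,\infty\scat)$, where $\infty\scat$ is the $\infty$-category of small $\infty$-categories with $\Mor_{\infty\scat}(X,Y)=\Fun(X,Y)$. So for $\psi:\mB\to\mE$ and $\phi:\mC\to\mD$ there is a morphism $\infty$-category $\Mor_{\co\mathcal{C}\mathit{art}}(\psi,\phi)\subset\Fun(\mB,\mC)\times_{\Fun(\mB,\mD)}\Fun(\mE,\mD)$. Accordingly, ``cartesian fibration'' means the all-dimensional notion: the $\op$-dual is an anticocartesian fibration. Even the $1$-dimensional part of that notion asks for pullbacks of morphism $\infty$-categories, not of mapping spaces. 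What you have shown is only that the underlying functor of $(\infty,1)$-categories is a cartesian fibration.

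A symptom of what is missing: apart from the detection step, your argument is just \cref{enrtargetfibr}(2). That applies equally to $\ev_1:\Fun(\bD^1,\infty\scat)\to\infty\scat$, yet this functor is only a $1$-cartesian fibration. The reason is that $\Fun(\mB,\mC)\times_{\Fun(\mB,\mD)}\Fun(\mE,\mD)\to\Fun(\mE,\mD)$ need not have cocartesian lifts of natural transformations $G\Rightarrow G'$ when $\phi$ is arbitrary. In your proof, cocartesianness of the objects is used only to stay inside the subcategory, whereas it is actually needed to produce the higher cartesian cells.

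By the dual of \cref{cocarto}, you still have to show two things for all objects $\psi,\phi$ of $\co\mathcal{C}\mathit{art}$:
\begin{enumerate}
\item the functor $\Mor_{\co\mathcal{C}\mathit{art}}(\psi,\phi)\to\Fun(\mE,\mD)$ is a cocartesian fibration (note the alternation);
\item pre- and postcomposition with morphisms of $\co\mathcal{C}\mathit{art}$ preserve its cocartesian cells.
\end{enumerate}

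The paper's proof adds exactly this step, via \cref{cocartexp}, concluding that the target functor is enriched in cocartesian fibrations. Concretely, $\Fun(\mB,\mC)\to\Fun(\mB,\mD)$ is a cocartesian fibration with objectwise cocartesian cells, hence so is its pullback along $\Fun(\mE,\mD)\to\Fun(\mB,\mD)$. One must then check that cocartesian transport of a map of cocartesian fibrations $F$ along $\alpha:G\Rightarrow G'$ is again such a map, so that the lifts stay in $\Mor_{\co\mathcal{C}\mathit{art}}(\psi,\phi)$. On $1$-cells this is the cancellation property: if $g\circ h$ and $h$ are $\phi$-cocartesian, so is $g$.

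Your pullback lifts together with the detection statement do match the paper's first step (``by the definition of morphisms in $\co\mathcal{C}\mathit{art}$''). Two adjustments are needed there:
\begin{itemize}
\item run \cref{enrtargetfibr}(2) with $\mV=\infty\Cat$ under the cartesian product, not with $\mV=\mS$;
\item apply detection also to $n$-cells of the morphism $\infty$-categories, i.e.\ to maps from $\psi$ into $\Fun(\bD^n,\mD'\times_\mD\mC)\to\Fun(\bD^n,\mD')$, using that $\Fun(\bD^n,-)$ preserves pullbacks.
\end{itemize}
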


\begin{proof}

By \cref{enrtargetfibr} evaluation at the target $\Fun(\bD^1,\infty\scat) \to \infty\scat$
is a 1-cartesian fibration.
By the definition of morphisms in $\co\mathcal{C}\mathit{art}$ this implies that
also the restriction $\co\mathcal{C}\mathit{art} \to \infty\scat$ is a 1-cartesian fibration.
\cref{cocartexp} implies that $\co\mathcal{C}\mathit{art} \to \infty\scat$ is enriched in $\co\Cart$
and so a cartesian fibration. 
\end{proof}

\begin{corollary}

Evaluation at the target $\mathcal{C}\mathit{art} \to \infty\scat$
is an anticartesian fibration.
  
\end{corollary}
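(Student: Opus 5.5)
The plan is to run the argument of the preceding proposition again, with cocartesian fibrations replaced by cartesian fibrations, and then to keep careful track of which variance appears on the morphism $\infty$-categories. The most economical route would be to transport the preceding proposition along the involution $(-)^{\coop}$. By the definitions, a functor $\phi$ is a cartesian fibration if and only if $\phi^{\op}$ is anticocartesian, if and only if $\phi^{\coop}$ is a cocartesian fibration. The same holds for maps of such fibrations, since the definitions of maps are also phrased via these involutions. So $(-)^{\coop}$ restricts to an equivalence $\mathcal{C}\mathit{art}\simeq\co\mathcal{C}\mathit{art}$ over the involution $(-)^{\coop}$ of $\infty\Cat$.

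Because $(-)^{\coop}$ is compatible with the cartesian internal hom, namely $\Fun(X,Y)^{\coop}\simeq\Fun(X^{\coop},Y^{\coop})$, this equivalence does not preserve the $\infty$-categorical structure of $\infty\scat$. Instead it identifies $\infty\scat$ with the $\infty$-category obtained by applying $(-)^{\coop}$ to every morphism $\infty$-category. By \cref{oppo}, that is a dualization fixing $1$-morphisms and reversing the higher cells in a prescribed pattern. Applying this dualization to the cartesian fibration $\co\mathcal{C}\mathit{art}\to\infty\scat$ of the preceding proposition, and rereading the result through the table of dual fibrations, should produce exactly the anticartesian variance claimed for $\mathcal{C}\mathit{art}\to\infty\scat$.

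Since this bookkeeping of involutions is exactly where sign errors creep in, I would verify it independently by a direct proof along the following lines.

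First, by \cref{enrtargetfibr} the target evaluation $\Fun(\bD^1,\infty\scat)\to\infty\scat$ is a 1-cartesian fibration: the cartesian lifts are given by pullback. Cartesian fibrations are stable under pullback, and pullback squares of cartesian fibrations are maps of cartesian fibrations, since cartesian cells in a pullback are detected on the projection (\cref{elemen}). Hence the restriction $\mathcal{C}\mathit{art}\to\infty\scat$ is still a 1-cartesian fibration, and 1-cartesian is the same as 1-anticartesian. Second, I would apply the inductive characterization \cref{cocarto}, transported along $(-)^{\coop}$ to the anticartesian case, to reduce to the behaviour on morphism $\infty$-categories. For cartesian fibrations $X\to S$ and $Y\to T$, the morphism $\infty$-category in $\mathcal{C}\mathit{art}$ is the pullback
\[
\Fun(S,T)\times_{\Fun(X,T)}\Fun^{\cart}(X,Y),
\]
lying over $\Fun(S,T)$. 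The cartesian analogue of \cref{cocartexp} shows that $\Fun(X,Y)\to\Fun(X,T)$ is a cartesian fibration with objectwise cartesian cells, and that the subcategory $\Fun^{\cart}(X,Y)$ is closed under those cells. Base change then gives the required fibration property of the functor on morphism $\infty$-categories, with the variance dictated by the anticartesian version of \cref{cocarto}. Finally, functoriality in pre- and postcomposition preserves objectwise cartesian cells, which settles the compatibility condition in \cref{cocarto}.

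The main obstacle is precisely this variance: checking that the morphism functors are fibrations of the type required by the anticartesian (rather than cartesian) form of \cref{cocarto}, and likewise at every higher level of the induction. I would settle this first on the low-dimensional case of $2$-cells, matching it against the table of dualities in the introduction, and then let the induction of \cref{cocarto} propagate it.
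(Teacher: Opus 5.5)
Your first route is the paper's own argument. The paper records the equivalence $(-)^{\coop}_!(\co\mathcal{C}\mathit{art})\simeq\mathcal{C}\mathit{art}$ over $(-)^{\coop}\colon(-)^{\coop}_!(\infty\scat)\simeq\infty\scat$ and transports the preceding proposition. The step you leave as ``should produce exactly the anticartesian variance'' is the entire content, and with the paper's definitions it does not come out that way.

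The dualization $(-)^{\coop}_!$ keeps $1$-cells and reverses every cell of dimension $\geq 2$. A cartesian fibration is $1$-cartesian with cocartesian morphism functors (\cref{cocarto} transported along $(-)^{\op}$). Its image under $(-)^{\coop}_!$ is therefore still $1$-cartesian, since cartesianness of $1$-cells is a pullback condition on morphism $\infty$-categories. Its morphism functors, however, become $(\text{cocartesian})^{\coop}$, i.e.\ \emph{cartesian}. By contrast, transport \cref{cocarto} along $(-)^{\co\op}$, using $\Mor_{\mC^{\co\op}}(X,Y)\simeq\Mor_\mC(Y,X)^{\co\op}$ from \cref{oppo}. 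This shows that $\phi$ is anticartesian ($\phi^{\co\op}$ anticocartesian) if and only if it is $1$-cartesian with \emph{anticartesian} morphism functors. The two cell patterns agree only in dimensions $1$ and $2$, which is why the discrepancy is invisible for fibrations of $1$-categories.

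Your direct route computes the same thing and so cannot close the gap. You correctly find that the morphism functors of $\mathcal{C}\mathit{art}\to\infty\scat$ are base changes of $\Fun^{\cart}(X,Y)\to\Fun(X,T)$, hence cartesian, and nothing upgrades them to anticartesian. For $X=S=\ast$, the morphism functor $\Mor_{\mathcal{C}\mathit{art}}(\ast\to\ast,\,Y\to T)\to\Fun(\ast,T)$ is $Y\to T$ itself. The statement would therefore force every cartesian fibration to be anticartesian, which already fails for $2$-categories. Here is a counterexample:
\begin{itemize}
\item Work over $\bD^2$, with $\Mor_{\bD^2}(0,1)=\{s\to t\}$.
\item Let $Y$ have objects $p,q$ over $0$ and $z$ over $1$.
\item Its $1$-cells are $e\colon p\to q$ over $\id_0$, $\sigma\colon p\to z$ over $s$, and $\tau\colon p\to z$ and $\tau'\colon q\to z$ over $t$, with $\tau'e=\tau$.
\item It has one non-identity $2$-cell $\sigma\Rightarrow\tau$.
\end{itemize}
Then $\sigma$ and $\tau'$ are cartesian lifts of $s$ and $t$ at $z$, and all morphism functors are cocartesian, so $Y\to\bD^2$ is cartesian. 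But $\Mor_Y(q,z)=\{\tau'\}\to\{s\to t\}$ is not cartesian, so $Y\to\bD^2$ is not anticartesian.

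Read with the paper's definition of anticartesian, the conclusion therefore cannot be proved. What your transport (and the paper's one-line proof) actually establishes is that $\mathcal{C}\mathit{art}\to\infty\scat$ is $1$-cartesian and enriched in cartesian fibrations. That is exactly how it is used in the proof of \cref{univ2}.

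If you want a target evaluation that is anticartesian in the paper's sense, use $\overline{\mathcal{C}\mathit{art}}$ instead. The involution $(-)^{\op}$ gives $(-)^{\op}_!(\co\mathcal{C}\mathit{art})\simeq\overline{\mathcal{C}\mathit{art}}$ over $(-)^{\op}_!(\infty\scat)\simeq\infty\scat$. Since $((-)^{\op})_!=(-)^{\co}$ (\cref{ruik}), this turns the cartesian fibration $\co\mathcal{C}\mathit{art}\to\infty\scat$ into an anticartesian one.
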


\begin{proof}

The canonical equivalence of $\infty$-categories $$ (-)^{\coop} : (-)^{\coop}_!(\infty\scat) \simeq \infty\scat$$
gives rise to an equivalence
$$ (-)^{\coop}_!(\co\mathcal{C}\mathit{art}) \simeq \mathcal{C}\mathit{art}$$
over $ (-)^{\coop} : (-)^{\coop}_!(\infty\scat) \simeq \infty\scat.$
\end{proof}

For the following notation we use \cref{slicecocart}.
\begin{notation}

Let $$\mathcal{R}\mathit{ep}\mathcal{C}\mathit{art} \subset \mathcal{C}\mathit{art}$$ be the full subcategory of cartesian fibrations of the form
$\mC_{//^\oplax X} \to \mC $ for some small $\infty$-category $\mC.$

\end{notation}

\begin{proposition}\label{univ2}
Evaluation at the target $\mathcal{R}\mathit{ep}\mathcal{C}\mathit{art} \to \infty\scat$
is a cocartesian fibration.
The unique map $$ \infty\scat_{*//^\oplax} \to \mathcal{R}\mathit{ep}\mathcal{C}\mathit{art} $$
of cocartesian fibrations over $ \infty\scat$ sending $* $ to $*$ is an equivalence.

\end{proposition}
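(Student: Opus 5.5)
The plan is to deduce both assertions from the universal property of the free cocartesian fibration (\cref{envelo}, \cref{envelo2}), together with the local criterion for equivalences of fibrations (\cref{equivcocar}). I proceed in three steps.

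\emph{Step 1: morphism $\infty$-categories in $\mathcal{R}\mathit{ep}\mathcal{C}\mathit{art}$.} Let $X\in\mC$. By \cref{envelo2} and the explicit formula for $\Env$, the cartesian fibration $\mC_{//^\oplax X}\to\mC$ is the free cartesian fibration on $\{X\}\to\mC$. Indeed,
\[
\Env(\{X\}^{\coop})^{\coop}\simeq \mC\underset{\mC}{\vec{\times}}\{X\}\simeq \mC_{//^\oplax X},
\]
using \cref{0desc} and the fact that the oplax slice is the oriented fiber. Now take a second object $(\mE_{//^\oplax Y}\to\mE)$ and a functor $G:\mC\to\mE$. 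Maps of cartesian fibrations $\mC_{//^\oplax X}\to\mE_{//^\oplax Y}$ lying over $G$ are the same as maps of cartesian fibrations over $\mC$ into the pullback $\mC\times_\mE\mE_{//^\oplax Y}$. Cartesian fibrations are stable under pullback, so this pullback is again a cartesian fibration. \cref{envelo2} then identifies this $\infty$-category of maps with the fiber of $\mC\times_\mE\mE_{//^\oplax Y}$ over $X$, which is $\Mor_\mE(G(X),Y)$. Letting $G$ vary, I expect
\[
\Mor_{\mathcal{R}\mathit{ep}\mathcal{C}\mathit{art}}\big((\mC,X),(\mE,Y)\big)\simeq \Fun(\mC,\mE)\underset{\mE}{\vec{\times}}\{Y\},
\]
where the map $\Fun(\mC,\mE)\to\mE$ is evaluation at $X$. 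This matches the morphism $\infty$-categories of $\infty\scat_{*//^\oplax}$ computed via \cref{homs}.

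\emph{Step 2: cocartesian lifts.} Given $(\mC,X)$ and a functor $F:\mC\to\mD$, I take as candidate lift the square $\mC_{//^\oplax X}\to\mD_{//^\oplax F(X)}$ over $F$. By Step 1, the cocartesian condition reduces to the following map being an equivalence:
\[
\mathrm{Fun}(\mD,\mE)\underset{\mE}{\vec{\times}}\{Y\}\;\longrightarrow\;\mathrm{Fun}(\mD,\mE)\times_{\mathrm{Fun}(\mC,\mE)}\Big(\mathrm{Fun}(\mC,\mE)\underset{\mE}{\vec{\times}}\{Y\}\Big).
\]
Here the first factor is evaluated at $F(X)$, and the second at $X$. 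This map is an equivalence by pasting of oriented pullbacks (\cref{pasting}), because evaluation at $F(X)$ factors as restriction along $F$ followed by evaluation at $X$. So evaluation at the target is a cocartesian fibration.

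\emph{Step 3: the comparison functor.} The object $(*_{//^\oplax *}\to *)$ lies over $*$. By \cref{envelo}, applied to $\{*\}\to\infty\scat$, whose enveloping cocartesian fibration is $\infty\scat_{*//^\oplax}$ by the remark identifying its fibers as oriented fibers, this object yields a unique map of cocartesian fibrations $\infty\scat_{*//^\oplax}\to\mathcal{R}\mathit{ep}\mathcal{C}\mathit{art}$ over $\infty\scat$. This settles existence and uniqueness of the map. By \cref{equivcocar}, it is an equivalence once it is a fiberwise equivalence.

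Over $\mC$, the fiber of the source is $\mC$. The fiber of the target is the full subcategory of $\infty\scat^\cart_{/\mC}$ spanned by the slices $\mC_{//^\oplax X}$. The induced functor sends $X$ to $\mC_{//^\oplax X}$, since the cocartesian transport of $*$ along $X:*\to\mC$ is exactly the lift built in Step 2. It is essentially surjective by definition of $\mathcal{R}\mathit{ep}\mathcal{C}\mathit{art}$, and fully faithful by Step 1 with $G=\id$, which is the Yoneda lemma for free cartesian fibrations.

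\emph{Main obstacle.} I expect the difficulty to lie in Step 1. There the identification must hold at the level of morphism $\infty$-categories, naturally in $G$, and not merely on spaces, and the $\infty$-category of maps of cartesian fibrations over a varying base must be handled. I would address this by applying \cref{envelo2} after base change along $\Fun(\mA,-)$, as in the proof of \cref{envelo}, using \cref{cocartexp} to ensure that the relevant cotensors remain fibrations. This reduces the $\infty$-categorical statement to a statement about mapping spaces.
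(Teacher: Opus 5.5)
\textbf{Gap: the higher-dimensional part of ``cocartesian fibration'' is never checked.} The problem is at the end of Step~2, where you go from ``$1$-cocartesian lifts exist'' to ``evaluation at the target is a cocartesian fibration''. In this paper a cocartesian fibration is an $\infty$-dimensional notion. By \cref{cocarto} applied to the $(-)^{\co}$-conjugate, it means the $1$-cocartesian lifting property \emph{together with} two further conditions. First, every induced functor
\[
\Mor_{\mathcal{R}\mathit{ep}\mathcal{C}\mathit{art}}\big((\mC,X),(\mE,Y)\big)\longrightarrow \Fun(\mC,\mE)
\]
must be a cartesian fibration. Second, pre- and postcomposition with morphisms of $\mathcal{R}\mathit{ep}\mathcal{C}\mathit{art}$ must be maps of cartesian fibrations. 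You never verify either. This matters for two reasons. It is part of the first assertion of the statement. Step~3 also depends on it: \cref{envelo} (through \cref{envo} and \cref{precocart}) and \cref{equivcocar} (through the induction on morphism $\infty$-categories in \cref{fiberwiseeq}) only apply to cocartesian fibrations in the full sense. Indeed, a fiberwise equivalence between mere $1$-cocartesian fibrations need not be an equivalence.

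\textbf{How the paper fills it, and how you could.} The paper handles this first. By \cref{cocartexp}, for cartesian fibrations $P\to S$ and $Q\to T$ the functor $\Fun(P,Q)\to\Fun(P,T)$ is a cartesian fibration with objectwise cartesian morphisms. Hence $\mathcal{C}\mathit{art}\to\infty\scat$, and so its full subcategory $\mathcal{R}\mathit{ep}\mathcal{C}\mathit{art}$, is enriched in $\Cart$, which reduces everything to the $1$-cocartesian condition. The paper then checks that condition fiberwise over $\Fun(\mD,\mE)$, using \cref{fiberwiseeq} (which again needs the enrichment) and \cref{envelo2}, and it tests against arbitrary cartesian fibrations $\mF\to\mE$. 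It does not first compute the morphism $\infty$-categories globally, as your Step~1 does. You could instead close the gap with \cref{targetFib}. Granting Step~1, the projection $\Fun(\mC,\mE)\underset{\mE}{\vec{\times}}\{Y\}\to\Fun(\mC,\mE)$ is a cartesian fibration, and part~(2) of \cref{targetFib} shows the transition maps preserve cartesian morphisms. For this, however, your Step~1 identification must be compatible with composition in $\mathcal{R}\mathit{ep}\mathcal{C}\mathit{art}$, not merely natural in $G$, and you need to establish that. Apart from this, your Step~3 agrees with the paper's argument.
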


\begin{proof}

As a consequence of \ref{cocartexp} evaluation at the target $\mathcal{C}\mathit{art} \to \infty\scat$
is enriched in $\Cart. $
Hence also the restriction $\mathcal{R}\mathit{ep}\mathcal{C}\mathit{art} \to \infty\scat$ is enriched in $\Cart. $ So it remains to see that evaluation at the target $\mathcal{R}\mathit{ep}\mathcal{C}\mathit{art} \to \infty\scat$ is a 1-cocartesian fibration.
Let $\phi: \mC \to \mD$ be a functor and $X \in \mC$.
We obtain a map $\mC_{//^\oplax X} \to \mC \times_\mD \mD_{//^\oplax \phi(X)} \to $ of cartesian fibrations over $\mC$.
We need to see that for every cartesian fibration $\mF \to \mE$
the following commutative square is a pullback square:
$$
\begin{xy}
\xymatrix{
\Mor_{\mathcal{C}\mathit{art}}(\mD_{//^\oplax \phi(X)} \to \mD,\mF \to \mE) \ar[d] \ar[r]
& \Mor_{\mathcal{C}\mathit{art}}(\mC_{//^\oplax X} \to \mC,\mF \to \mE) \ar[d]
\\ 
\Fun(\mD,\mE) \ar[r] & \Fun(\mC,\mE)
}
\end{xy}$$

By what we have shown, the latter square is a morphism of cartesian fibrations. So by \cref{fiberwiseeq} this square is a pullback square if and only if it induces on the fiber over every functor $\mD \to \mE$ an equivalence.
It induces on the fiber over every functor $\rho: \mD \to \mE$ the induced functor 
$$ \Fun^\cart_\mD(\mD_{//^\oplax \phi(X)},\mD \times_\mE \mF) \simeq 
\{ \phi(X) \} \times_\mD \mD \times_\mE \mF \to \Fun^\cart_\mC(\mC_{//^\oplax X},\mC \times_\mE \mF) \simeq \{X\} \times_\mC \mC \times_\mE \mF, $$
which identifies with the identity of $\{ \rho(\phi(X)) \} \times_\mE \mF $.
So we have proven that evaluation at the target $\mathcal{R}\mathit{ep}\mathcal{C}\mathit{art} \to \infty\scat$ is a cocartesian fibration.
This implies by \cref{envelo} that there is a unique map $\infty\scat_{*//^\oplax}\to \mathcal{R}\mathit{ep}\mathcal{C}\mathit{art}$ of cocartesian fibrations
over $\infty\scat$ sending $* $ to $*$.
This map is an equivalence since it induces on the fiber over every small $\infty$-category $\mC$ the canonical equivalence
$$ \{\mC\} \times_{\infty\scat} \infty\scat_{*//^\oplax } \simeq \Fun(*,\mC) \simeq \mC \simeq \infty\scat^\cart_\mC = \{\mC\} \times_{\infty\scat} \mathcal{R}\mathit{ep}\mathcal{C}\mathit{art}, $$
where the last equivalence is induced by the canonical embedding
$ \mC \to \infty\scat^\cart_\mC, X \mapsto \mC_{//^\oplax X}.$
\end{proof}

Next we define the Grothendieck construction.
For the next definition we use \cref{envelo2}:

\begin{definition}

The Grothendieck construction is the unique map
$$\int: \infty\widehat{\scat}_{//^\oplax \infty\scat} \to \co\widehat{\mathcal{C}\mathit{art}}$$
of cartesian fibrations over
$\infty\widehat{\scat}$ sending $\infty\scat$ to the universal cocartesian fibration $ \infty\scat_{*//^\oplax} \to \infty\scat.$

The map $\int: \infty\widehat{\scat}_{//^\oplax \infty\scat} \to \co\widehat{\mathcal{C}\mathit{art}}$ 
restricts to a map 
\begin{equation}\label{smallGroCon}
\int: \infty\scat \times_{\infty\widehat{\scat}} \infty\widehat{\scat}_{//^\oplax \infty\scat} \to \co\mathcal{C}\mathit{art}
\end{equation} of cartesian fibrations over $\infty\scat$,
which we also call the Grothendieck construction.

\end{definition}

\begin{remark}
The map $$\int: \infty\widehat{\scat}_{//^\oplax \infty\scat} \to \co\widehat{\mathcal{C}\mathit{art}}$$ of cartesian fibrations over
$\infty\widehat{\scat}$ induces on the fiber over every $\mC \in \infty\widehat{\scat}$ the functor
$$\int_\mC: \Fun(\mC, \infty\scat) \to \infty\widehat{\scat}^\cocart_{/\mC}, $$
which takes the pullback along the universal cocartesian fibration.
The map 
\begin{equation}
\int: \infty\scat \times_{\infty\widehat{\scat}} \infty\widehat{\scat}_{//^\oplax \infty\scat} \to \co\mathcal{C}\mathit{art}
\end{equation} of cartesian fibrations over $\infty\scat$ induces on the fiber over every $\mC \in \infty\scat$ the functor
$$\int_\mC : \Fun(\mC, \infty\scat) \to \infty\scat^\cocart_{/\mC}$$
that takes the pullback along the universal cocartesian fibration.

\end{remark}

\begin{definition}
Let $\mC \in \infty\scat$.
\begin{enumerate}[\normalfont(1)]\setlength{\itemsep}{-2pt}
\item The Grothendieck construction for cartesian fibrations is the composition
$$\int_\mC: \Fun(\mC^\circ, \infty\scat)^\cop \simeq \Fun(\mC, \infty\scat^\circ)^{\co\op} \xrightarrow{(-)^{\co\op}_!} \Fun(\mC, \infty\scat^{\co\op})^{\co\op} \simeq $$$$ \Fun(\mC^{\co\op}, \infty\scat) \xrightarrow{\int_{\mC^{\co\op}}} {\infty\scat}^\cocart_{/\mC^{\co\op}} \xrightarrow{(-)^{\co\op}} {\infty\scat}^\cart_{/\mC}$$
\item The Grothendieck construction for anticocartesian fibrations is the composition $$\int_\mC: \Fun(\mC^{\co\op}, \infty\scat)^\co \simeq \Fun(\mC, \infty\scat^{\co\op})^\op \xrightarrow{(-)^{\op}_!} \Fun(\mC, \infty\scat^\op)^\op \simeq $$$$ \Fun(\mC^\op, \infty\scat) \xrightarrow{\int_{\mC^\op}} {\infty\scat}^\cocart_{/\mC^\op} \xrightarrow{(-)^{\op}} {\infty\scat}^{\overline{\co\cart}}_{/\mC}$$
\item The Grothendieck construction for anticartesian fibrations is the composition $$\int_\mC: (\Fun(\mC^\cop, \infty\scat)^\op)^\circ \simeq \Fun(\mC, \infty\scat^\cop)^\co \xrightarrow{(-)^{\co}_!} \Fun(\mC, \infty\scat^\co)^\co \simeq $$$$ \Fun(\mC^\co, \infty\scat) \xrightarrow{\int_{\mC^\co}} {\infty\scat}^\cocart_{/\mC^\co} \xrightarrow{(-)^{\co}} {\infty\scat}^{\overline{\cart}}_{/\mC}.$$
\end{enumerate}
\end{definition}

\begin{lemma}\label{Grorep}
Let $X$ be an $\infty$-category and $Z \in X.$
The following commutative square is a pullback square:
$$
\begin{xy}
\xymatrix{
X_{Z //^\oplax} \ar[d] \ar[r]
& \infty\Cat_{*//^\oplax } \ar[d]
\\ 
X  \ar[r]^{\Mor_\X(Z,-)} & \infty\Cat
}
\end{xy}$$

\end{lemma}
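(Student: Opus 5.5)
The plan is to compare both sides as cocartesian fibrations over $X$ and reduce to a fiberwise check. By \cref{slicecocart}(2), $X_{Z//^\oplax}\to X$ is a cocartesian fibration. The same corollary, applied to $\infty\Cat$ with $Z=*$, shows that $\infty\Cat_{*//^\oplax}\to\infty\Cat$ is a cocartesian fibration. Its pullback $P:=X\times_{\infty\Cat}\infty\Cat_{*//^\oplax}$ along $\Mor_X(Z,-)$ is then a cocartesian fibration over $X$ as well.

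The canonical square is then a functor $\Theta:X_{Z//^\oplax}\to P$ over $X$. I would produce it from the universal property of the oplax coslice. Since $X_{Z//^\oplax}$ is right adjoint to $\{Z\}\bar\star(-)$, it suffices to construct the composite $\{Z\}\bar\star X_{Z//^\oplax}\to X\to\infty\Cat$. The functor $\Mor_X(Z,-)$ sends the cone point $Z$ to $\Mor_X(Z,Z)$, and we then compose with the identity $*\to\Mor_X(Z,Z)$ to land in $\infty\Cat_{*//^\oplax}$. Concretely, an object $(Y,f:Z\to Y)$ is sent to $(\Mor_X(Z,Y),f)$.

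The main step is to check that $\Theta$ is a map of cocartesian fibrations over $X$, that is, that it preserves cocartesian cells. Using the description of cocartesian 1-morphisms from \cref{targetfi} and \cref{targetFib}, the cocartesian lifts in $X_{Z//^\oplax}$ are given by postcomposition $g\circ f$. In $\infty\Cat_{*//^\oplax}$ they are given by applying the functor $g_*=\Mor_X(Z,g)$ to the point. These agree under $\Theta$. For higher cells the plan is to argue inductively via \cref{cocarto}: $\Theta$ induces on morphism $\infty$-categories the analogous comparison for $\Mor_X(Y,Y')\to\Fun(\Mor_X(Z,Y),\Mor_X(Z,Y'))$, using \cref{homso}. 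One could alternatively invoke \cref{T1}(2) and check preservation of cocartesian (anti)oriented simplices. I expect this higher-dimensional compatibility, namely the coherence of whiskering with the functoriality of $\Mor_X(Z,-)$, to be the main obstacle. I would first try reducing via filtered colimits to $n$-categories, as in \cref{laxcocar}, and then run the induction.

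Once $\Theta$ is known to be a map of cocartesian fibrations, \cref{equivcocar} reduces the claim to fibers over each $Y\in X$. The fiber of $X_{Z//^\oplax}$ over $Y$ is $\{Z\}\vec\times_X\{Y\}\simeq\Mor_X(Z,Y)$, by the oriented pullback description of \cref{0desc} together with \cref{targetfi2}. The fiber of $\infty\Cat_{*//^\oplax}$ over $\Mor_X(Z,Y)$ is $\Fun(*,\Mor_X(Z,Y))\simeq\Mor_X(Z,Y)$. Under these identifications $\Theta$ is the identity, so it is a fiberwise equivalence. Hence $\Theta$ is an equivalence and the square is a pullback.
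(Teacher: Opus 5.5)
Your outline matches the paper's proof: the square is a map of cocartesian fibrations over $X$, on the fiber over $Y$ it is the identity $\Mor_X(Z,Y)\simeq\Fun(\bD^0,\Mor_X(Z,Y))$, and \cref{fiberwiseeq} (equivalently \cref{equivcocar}) finishes. The step you flag as the main obstacle needs no new induction. View both coslices as oriented pullbacks $\{Z\}\vec{\times}_X X$ and $\{*\}\vec{\times}_{\infty\Cat}\infty\Cat$. Then your $\Theta$ is the map induced by two oriented squares: one with $2$-cell $\id_Z\colon *\to\Mor_X(Z,Z)$, and one commuting strictly via $\Mor_X(Z,-)$. So \cref{targetfi}(2), in its oriented form \cref{targetFib}(2), already gives preservation of cocartesian cells in every dimension, which is exactly what the paper cites.
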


\begin{proof}
The square is a map of cocartesian fibrations by \cref{targetfi} and induces on the fiber over every $Y \in X$ the canonical equivalence
$ \Mor_X(Z,Y) \simeq \Mor_{\infty\Cat}(\bD^0, \Mor_\X(Z,Y)).$
Hence the claim follows from \cref{fiberwiseeq}.
\end{proof}

\begin{corollary}\label{Grorep2}

Let $X$ be an $\infty$-category and $Z \in X.$
The Grothendieck construction for cartesian fibrations sends 
$$ \Mor_\X(-,Z): X^\circ \to \infty\Cat $$ 
to the cartesian fibration $$ X_{//^\oplax Z} \to X. $$

\end{corollary}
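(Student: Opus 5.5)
The plan is to unwind the definition of the Grothendieck construction for cartesian fibrations and reduce to \cref{Grorep}, applied to the $\infty$-category $X^{\co\op}$ and the object $Z$. Recall that $\int_X$ for cartesian fibrations is the composite of three steps. First, transport a functor $X^\circ \to \infty\scat$ to a functor $X^{\co\op} \to \infty\scat$ by postcomposing with $(-)^{\co\op}_!$. Second, apply the cocartesian Grothendieck construction $\int_{X^{\co\op}}$, which by the remark following its definition is pullback along the universal cocartesian fibration $\infty\scat_{*//^\oplax} \to \infty\scat$. Third, apply $(-)^{\co\op}$ to the resulting cocartesian fibration over $X^{\co\op}$.

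\emph{Step 1: identify the transported functor.} I will show that the image of $\Mor_X(-,Z)$ under the first step is the corepresentable functor
\[
\Mor_{X^{\co\op}}(Z,-) \colon X^{\co\op} \to \infty\scat .
\]
On objects this uses \cref{oppo}. Applying its two squares successively gives an equivalence $\Mor_{X^{\co\op}}(Z,Y) \simeq \Mor_X(Y,Z)^{\co\op}$: the $\op$ swaps source and target and applies $\co$ to the morphism $\infty$-category, while the $\co$ applies $\op$ to it. This is exactly the value of $(-)^{\co\op}_! \circ \Mor_X(-,Z)$ at $Y$. Naturality in $Y$ should follow because both functors are obtained from the enriched Yoneda functor $X^\circ \to \infty\scat$ by the same involution on enriched categories. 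I expect this step to be the main obstacle: the pointwise identification is easy, but producing the equivalence of functors requires checking that the involution $(-)^{\co\op}$ of ${\infty\Cat}\mathrm{-}\Cat \simeq \infty\Cat$ (\cref{fix}) is compatible with the representable-functor construction. I would first try to deduce this from the naturality of the equivalences in \cref{oppo} as squares of functors out of $\infty\Cat_{\partial\bD^1/}$.

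\emph{Step 2: apply \cref{Grorep}.} By Step 1 and the description of $\int_{X^{\co\op}}$ as pullback along the universal cocartesian fibration, the output of the second step is the pullback of $\infty\Cat_{*//^\oplax} \to \infty\Cat$ along $\Mor_{X^{\co\op}}(Z,-)$. By \cref{Grorep}, applied to $X^{\co\op}$ and $Z$, this pullback is $(X^{\co\op})_{Z//^\oplax} \to X^{\co\op}$.

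\emph{Step 3: undo the involution.} It remains to identify $\bigl((X^{\co\op})_{Z//^\oplax}\bigr)^{\co\op}$ with $X_{//^\oplax Z}$ over $X$. I will compute $(X_{//^\oplax Z})^{\co\op}$ in two stages. The lemma $(Y_{//^\oplax F})^\op \simeq (Y^\op)_{F^\op//^\lax}$ gives
\[
(X_{//^\oplax Z})^\op \simeq (X^\op)_{Z//^\lax}.
\]
The definition $Y_{F//^\oplax} := (Y^\co_{F^\co//^\lax})^\co$ rewrites as $(W_{F//^\lax})^\co \simeq (W^\co)_{F//^\oplax}$, which applied with $W = X^\op$ yields
\[
\bigl((X^\op)_{Z//^\lax}\bigr)^\co \simeq (X^{\co\op})_{Z//^\oplax}.
\]
Both identifications are compatible with the projections to the base. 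Applying the involution $(-)^{\co\op}$ once more then gives the desired equivalence $\bigl((X^{\co\op})_{Z//^\oplax}\bigr)^{\co\op} \simeq X_{//^\oplax Z}$ over $X$. This is a cartesian fibration by \cref{slicecocart}, as it must be, and completes the proof.
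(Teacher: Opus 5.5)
Your proposal is correct and follows the same route as the paper: identify the transported functor with $\Mor_{X^{\co\op}}(Z,-)$, apply \cref{Grorep} to $X^{\co\op}$ and $Z$, and then undo the involution using $X_{//^\oplax Z}\simeq\bigl((X^{\co\op})_{Z//^\oplax}\bigr)^{\co\op}$. The paper states both the functor identification of your Step 1 and the slice identification of your Step 3 without justification; your Step 3 supplies the missing derivation of the latter from the lemma on $(Y_{//^\oplax F})^\op$ and the definition of the oplax coslice.
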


\begin{proof}

The Grothendieck construction for cartesian fibrations sends 
$ \Mor_\X(-,Z): X^\circ \to \infty\Cat $ 
to the image under $(-)^\coop$ of the cocartesian fibration $Y \to X^\coop$ associated to the functor $$\Mor_{\X^\coop}(Z,-) \simeq (-)^\coop \circ \Mor_\X(-,Z) : X^\coop \to \infty\Cat.$$ 

By \cref{Grorep} the Grothendieck construction for cocartesian fibrations sends 
$ \Mor_\X(Z,-): X \to \infty\Cat $
to the cocartesian fibration $ X_{Z //^\oplax} \to X.$ 
Hence the cocartesian fibration $Y \to X^\coop$ is $ (X^\coop)_{Z //^\oplax} \to X^\coop.$ 
Hence the cartesian fibration $Y^\coop \to X$ is $X_{//^\oplax Z} \simeq ((X^\coop)_{Z //^\oplax})^\coop \to X.$ 
\end{proof}

\section{\mbox{Fibrations of oriented categories}}

\subsection{Oriented categories of fibrations}

\begin{lemma}\label{comple}

Let $A \to B$ be an embedding of $\Theta$-Segal spaces and let $B$ be complete. Then $A$ is complete.
    
\end{lemma}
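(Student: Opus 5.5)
The plan is to argue directly from the definition of completeness for $\Theta$-Segal spaces. The paper has not spelled this definition out in the visible text, so I take it in the form given in \cite{GepnerHeine2026}: a $\Theta$-Segal space $F$ is complete if, for every $\theta\in\Theta$ of the relevant shape (in particular the disks and their suspensions), the degeneracy map from $F(\theta')$ to the space of $\theta$-cells that are equivalences is an equivalence. Equivalently, $F$ is local with respect to the maps $N(\theta\times_{\bD^n}\mathbb{E}\text{-type walking equivalence})\to N(\theta')$. Concretely, I will use the following reformulation. $F$ is complete exactly when, for every $n\ge 1$ and every way of inserting a walking equivalence into a disk in dimension $n$, the map from $F$ of the collapsed shape to $F$ of the expanded shape is an equivalence. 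Since the collapse maps are epimorphic in the relevant sense, the map in the other direction is a monomorphism, and completeness is the statement that the restriction map from $F(\text{walking-}n\text{-equivalence shape})$ to $F(\bD^{n-1}\text{-shape})$, via the degeneracy, is an equivalence.

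The key step is an embedding square. Let $j\colon A\to B$ be the embedding, meaning a monomorphism of presheaves: each component $A(\theta)\to B(\theta)$ is an embedding of spaces. Write $E$ for an expanded shape and $D$ for its collapse. This gives a commutative square
\[
\begin{xy}
\xymatrix{
A(D)\ar[r]\ar[d] & A(E)\ar[d]\\
B(D)\ar[r]^{\simeq} & B(E)
}
\end{xy}
\]
in which the bottom map is an equivalence by completeness of $B$ and both vertical maps are embeddings. It follows at once that $A(D)\to A(E)$ is an embedding. It remains to show essential surjectivity, that is, that every point $x\in A(E)$ lies in $A(D)$. The image of $x$ in $B(E)\simeq B(D)$ comes from a degenerate cell $y\in B(D)$, and I must show $y\in A(D)$. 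Here I will use that $D$ is a retract of $E$: there is a section $s\colon D\to E$ of the collapse $c\colon E\to D$, for instance by including one endpoint of the walking equivalence. Then $y=c^*(s^*y)$ and $s^*(j(x))=j(s^*x)$. Hence $y=c^*s^*j(x)=j(c^*s^*x)$ lies in the image of $A(D)$, because $c^*\circ s^*$ on $B$ agrees with the degeneracy composite applied to $j(x)$, and this composite is natural in the presheaf.

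The main point to check is that $y$ really equals $c^*s^*j(x)$. This holds because $c^*\colon B(D)\to B(E)$ is an equivalence with inverse $s^*$. Indeed $s^*c^*=\mathrm{id}$ since $cs=\mathrm{id}$, and an equivalence has its left inverse as its inverse. Therefore $j(x)=c^*s^*j(x)=j(c^*s^*x)$, and since $j$ is an embedding, $x=c^*s^*x$ lies in the image of $A(D)\to A(E)$. So the embedding is also surjective on components and is an equivalence. I expect the only real obstacle to be pinning down the precise form of the completeness condition, namely that it is stated as locality along a retraction $E\to D$ with a section, possibly after passing to the colimit over the nerve of the walking equivalence. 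If it is instead phrased with a colimit of representables, the argument goes through in the same way, because $A\to B$ is then a monomorphism of presheaves and mapping spaces out of any presheaf into $A$ embed into those into $B$.
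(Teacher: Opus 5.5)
Your argument is correct and is essentially the paper's. The paper takes completeness to mean locality along the suspended collapse maps $S^n(\N(J)) \to S^n(*)$. It uses the section $* \to \N(J)$ to obtain a left inverse $\xi$ of the invertible restriction map for $B$, and concludes that every point $T$ of $\Map(S^n(\N(J)),A)$ comes from $\xi(T)$, which lies in $A$ by naturality; this is exactly your $c^*s^*$ argument combined with the embedding property.

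Your opening description of completeness is vaguer than it needs to be. However, the argument only uses locality along a collapse map with a section, which is precisely the structure the paper exploits.
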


\begin{proof}

Let $J$ be the groupoid with two objects and one isomorphism and $\N(J) = \Map_{\infty\Cat^\strict}((-)_{|\Theta},J)$ the strict nerve of $J$.
Then for every $n \geq 0$ the $n$-fold suspension of the map of $\Theta$-spaces
$\N(J) \to *$ induces an equivalence $$ \Map_{\mP(\Theta)}(S^n(*), B) \to \Map_{\mP(\Theta)}(S^n(\N(J)), B),$$
which restricts to an embedding 
$$ \Map_{\mP(\Theta)}(S^n(*), A) \to \Map_{\mP(\Theta)}(S^n(\N(J)), A).$$
We prove that the latter map is also essentially surjective. There is a map $* \to \N(\J)$ of $\Theta$-spaces since $\mJ$ is non-empty. Thus the map $$ \Map_{\mP(\Theta)}(S^n(*), B) \to \Map_{\mP(\Theta)}(S^n(\N(J)), B)$$ admits a left inverse $\xi.$

Every object $T$ of $$\Map_{\mP(\Theta)}(S^n(\N(J)), A) \subset \Map_{\mP(\Theta)}(S^n(\N(J)), B)$$ is the image of an object $Z$ in $\Map_{\mP(\Theta)}(S^n(*), B)$ by completeness of $B.$ So $Z \simeq \xi(T)$. Since $T \in \Map_{\mP(\Theta)}(S^n(\N(J)), A),$
also $Z \simeq \xi(T) \in \Map_{\mP(\Theta)}(S^n(*), A).$
\end{proof}

\begin{notation}

Let $\alpha: X \to S, \beta: Y \to T$ be functors.
Let $\Fun^\oplax(X\to S,Y \to T)$ be the pullback 
$$ \Fun^\oplax(X,Y) \times_{\Fun^\oplax(X,T)} \Fun^\oplax(S,T).$$

\end{notation}

\begin{notation}
Let $\alpha: X \to S, \beta: Y \to S$ be functors.
Let $\Fun^\oplax_S(X,Y)$ be the fiber of the induced functor
$$ \Fun^\oplax(X,Y) \to \Fun^\oplax(X,S)$$ over $\alpha,$
which is the fiber of $\Fun^\oplax(X\to S,Y \to S) \to \Fun^\oplax(S,S)$
over the identity.
    
\end{notation}

\begin{proposition}\label{repren}
Let $X \to S, Y \to T$ be cocartesian fibrations.
The $\Theta$-space 
$$\theta \mapsto \Map_{\coCart}(X \to S , \Fun^\lax(\theta, Y) \to \Fun^\lax(\theta, T)) $$ is a complete Segal $\Theta$-space and so represented by some $\infty$-category
$\Fun^{\oplax, \cocart}(X \to S,Y \to T)$.

The $\infty$-category $$\Fun^{\oplax, \cocart}(X \to S,Y \to T)$$ is the subcategory of $ \Fun^{\oplax}(X \to S,Y \to T)$ whose $n$-morphisms for $n \geq 0$ are the $n$-morphisms of $\Fun^{\oplax}(X \to S,Y \to T)$ corresponding to a map from $X \to S$ to $\Fun^\lax(\theta, Y) \to \Fun^\lax(\theta, T) $ of cocartesian fibrations.

\end{proposition}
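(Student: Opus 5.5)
The plan is to realize the $\Theta$-space in question as a sub-$\Theta$-space of a $\Theta$-space already known to be represented, and then apply \cref{comple}. Consider the $\infty$-category $\Fun^\oplax(X \to S, Y \to T)$. By adjunction between $(-)\boxtimes X$ and $\Fun^\oplax(X,-)$, and between $\theta\boxtimes(-)$ and $\Fun^\lax(\theta,-)$, its nerve evaluated at $\theta\in\Theta$ is
\[
\Map(\theta, \Fun^\oplax(X\to S,Y\to T)) \simeq \Map_{\Fun(\bD^1,\infty\Cat)}(X\to S, \Fun^\lax(\theta,Y)\to\Fun^\lax(\theta,T)).
\]
This is natural in $\theta$. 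By \cref{theta} this $\Theta$-space is a complete Segal $\Theta$-space. The given $\Theta$-space $\theta\mapsto \Map_{\coCart}(\dots)$ comes with a natural map to it. By \cref{indulax}, each $\Fun^\lax(\theta,Y)\to\Fun^\lax(\theta,T)$ is a cocartesian fibration, so the target objects lie in $\coCart$. Since $\coCart\subset\Fun(\bD^1,\infty\Cat)$ is a subcategory inclusion, the map on mapping spaces is an embedding. Hence our $\Theta$-space embeds into the nerve of $\Fun^\oplax(X\to S,Y\to T)$.

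Next, check the Segal condition. The Segal maps for $\Theta$ are pushouts $\theta\simeq \bD^{i_0}\coprod_{\bD^{j_1}}\cdots$. Since $\Fun^\lax(-,Y)$ sends colimits to limits, $\Fun^\lax(\theta,Y)\to\Fun^\lax(\theta,T)$ is the corresponding iterated pullback in $\Fun(\bD^1,\infty\Cat)$ of the fibrations attached to the disks. The key point is that $\coCart$ is closed under these limits in $\Fun(\bD^1,\infty\Cat)$ and detects maps into them. Precisely, a map into a limit of cocartesian fibrations along maps of cocartesian fibrations is a map of cocartesian fibrations iff each component is. This follows from the description of cocartesian cells in limits, as used in the proof of \cref{indufibr}. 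The restriction functors $\Fun^\lax(\bD^i,Y)\to\Fun^\lax(\bD^j,Y)$ are maps of cocartesian fibrations by the objectwise description of cocartesian cells in \cref{indufibr} and its lax dual \cref{indulax}. So the Segal map of our $\Theta$-space is the restriction of an equivalence to a union of components on both sides that correspond. Hence it is an equivalence, and our $\Theta$-space is Segal. Completeness then follows from \cref{comple}, since it embeds into a complete one. So it is represented by an $\infty$-category $\Fun^{\oplax,\cocart}(X\to S,Y\to T)$, and the embedding of nerves yields a monomorphism into $\Fun^\oplax(X\to S,Y\to T)$. This is a subcategory inclusion, and by construction its $n$-cells (take $\theta=\bD^n$) are exactly those corresponding to maps of cocartesian fibrations, which gives the second assertion.

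The main obstacle is the Segal step. One must verify that being a map of cocartesian fibrations into $\Fun^\lax(\theta,Y)\to\Fun^\lax(\theta,T)$ can be tested after restriction along the disks $\bD^{i_\ell}\to\theta$. I would derive this from \cref{indufibr} and its dual, which say that a cell of $\Fun^\lax(\theta,Y)$ is cocartesian iff its associated cells for each cell of $\theta$ are cocartesian. Since every cell of $\theta$ lies in one of the disks $\bD^{i_\ell}$, the condition is local.
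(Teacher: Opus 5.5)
Your proposal is correct and follows essentially the same route as the paper. The Segal condition comes from two facts: $Z \mapsto (\Fun^\lax(Z,Y)\to\Fun^\lax(Z,T))$ sends colimits in $\infty\Cat$ to limits in $\Fun(\bD^1,\infty\Cat)$, and $\coCart$ is closed under such limits and detects maps into them. Completeness and the identification as a subcategory then follow by embedding into the nerve of $\Fun^\oplax(X\to S,Y\to T)$ and applying \cref{comple}.

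The only difference is one of phrasing. You restrict the ambient Segal equivalence to components, whereas the paper states limit-preservation of $Z\mapsto\Map_{\coCart}(X\to S,\Fun^\lax(Z,Y)\to\Fun^\lax(Z,T))$. One small point in your last paragraph: a cell of $\theta$ need not lie in a single disk, since composites can span several disks. This is harmless, because cocartesian cells are closed under composition.
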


\begin{proof}

The functor $ \infty\Cat^\op \to \Fun(\bD^1 ,\infty\Cat), Z \mapsto \Fun^\lax(Z,Y) \to \Fun^\lax(Z,T) $
preserves small limits and by \cref{indulax} induces a functor 
$\infty\Cat^\op \to \co\Cart,$
which also preserves small limits since the inclusion $\coCart \subset \Fun(\bD^1 ,\infty\Cat)$ preserves small limits. 
Thus the functor
$$\infty\Cat^\op \to \mS, \ Z \mapsto \Map_{\coCart}(X \to S, \Fun^\lax(Z, Y) \to \Fun^\lax(Z, T)) $$ preserves small limits. 
In particular, the $\Theta$-space 
$$\theta \mapsto \Map_{\coCart}(X \to S , \Fun^\lax(\theta, Y) \to \Fun^\lax(\theta, T)) $$ is a Segal $\Theta$-space.
There is a canonical embedding of $\Theta$-spaces
$$\theta \mapsto \Map_{\coCart}(X \to S , \Fun^\lax(\theta, Y) \to \Fun^\lax(\theta, T)) \subset \Map_{\Fun(\bD^1,\infty\Cat)}(X \to S, \Fun^\lax(\theta, Y) \to \Fun^\lax(\theta, T))$$
into the complete $\Theta$-Segal space represented by
$\Fun^{\oplax}(X\to S,Y\to T)$. 
\cref{comple} implies that also the presheaf $$\theta \mapsto \Map_{\coCart}(X \to S , \Fun^\lax(\theta, Y) \to \Fun^\lax(\theta, T))$$ is a complete $\Theta$-Segal space represented by a subcategory $$\Fun^{\oplax, \cocart}(X \to S,Y \to T) \subset \Fun^{\oplax}(X \to S,Y \to T)$$ with the desired properties.
\end{proof}

\begin{lemma}\label{cotensol}

Let $X \to S, Y \to T$ be cocartesian fibrations and $K$ an $\infty$-category. The canonical equivalence
$$ \Fun^\oplax(X \to S, \Fun^\lax(K,Y) \to  \Fun^\lax(K,T)) \simeq \Fun^\lax(K,\Fun^\oplax(X \to S,Y \to T)) $$
restricts to an equivalence
$$ \Fun^{\oplax, \cocart}(X \to S, \Fun^\lax(K,Y) \to \Fun^\lax(K,T)) \simeq \Fun^\lax(K,\Fun^{\oplax, \cocart}(X \to S,Y \to T)). $$
    
\end{lemma}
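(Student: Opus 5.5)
The plan is to compare the two sides through their $\Theta$-space representations, as in \cref{repren}. Both sides are subcategories of the two sides of the known equivalence
$$ \Fun^\oplax(X \to S, \Fun^\lax(K,Y) \to \Fun^\lax(K,T)) \simeq \Fun^\lax(K,\Fun^\oplax(X \to S,Y \to T)). $$
For the right hand side, note that $\Fun^\lax(K,-)$ preserves subcategory inclusions, since it is a right adjoint and so preserves monomorphisms. Hence $\Fun^\lax(K,\Fun^{\oplax,\cocart}(X \to S,Y \to T))$ is a subcategory of $\Fun^\lax(K,\Fun^\oplax(X \to S,Y \to T))$. Because inclusions are detected on maps out of the dense subcategory $\Theta$ (or out of the cubes), it suffices to show the following: for every $\theta \in \Theta$, the two embeddings of mapping spaces out of $\theta$ into the common ambient space have the same image.

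Next I will compute both images. A map $\theta \to \Fun^{\oplax,\cocart}(X \to S, \Fun^\lax(K,Y) \to \Fun^\lax(K,T))$ is, by \cref{repren}, a map of cocartesian fibrations
$$ (X \to S) \to \bigl(\Fun^\lax(\theta,\Fun^\lax(K,Y)) \to \Fun^\lax(\theta,\Fun^\lax(K,T))\bigr). $$
Here \cref{indulax} is used twice to see that the target is a cocartesian fibration. A map $\theta \to \Fun^\lax(K,\Fun^{\oplax,\cocart}(X\to S,Y\to T))$ is a map $K \boxtimes \theta \to \Fun^{\oplax,\cocart}(X\to S, Y\to T)$, where the adjunction orientation is chosen so that $\Fun^\lax(\theta,\Fun^\lax(K,-)) \simeq \Fun^\lax(K \boxtimes \theta,-)$. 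By \cref{repren}, applied after writing $K\boxtimes\theta$ as a colimit of objects of $\Theta$ and using that the relevant $\Theta$-space sends colimits to limits (as in the proof of \cref{repren}), this amounts to a map of cocartesian fibrations
$$ (X\to S) \to \bigl(\Fun^\lax(K\boxtimes\theta, Y) \to \Fun^\lax(K\boxtimes\theta,T)\bigr). $$
The two targets agree under the canonical equivalence $\Fun^\lax(\theta,\Fun^\lax(K,Y)) \simeq \Fun^\lax(K\boxtimes\theta,Y)$, compatibly over $T$. So the two subspaces coincide, provided the equivalence also matches the cocartesian morphisms of all dimensions in these two fibrations.

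That matching is the main obstacle. The fibration $\Fun^\lax(K\boxtimes\theta,Y)\to\Fun^\lax(K\boxtimes\theta,T)$ is not a priori given cell by cell like the iterated one. I would handle it with the explicit description of cocartesian cells in \cref{indufibr} (dualized through \cref{indulax}): a cell is cocartesian exactly when each associated cell of $Y$, indexed by a cell of the source category, is cocartesian. On the iterated side these indexing cells are pairs consisting of a cell of $\theta$ and a cell of $K$, composed through $\theta\boxtimes K$. On the other side they are the cells of $K\boxtimes\theta$. Reducing via cubical density to atomic cells, as in the proof of \cref{indufibr}, atomic cells of a Gray product are exactly Gray products of atomic cells. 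The two conditions then coincide, because cocartesian cells are closed under composition (\cref{cocartex}).

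Since the identification is natural in $\theta$, the equivalence restricts to an equivalence of the represented subcategories.
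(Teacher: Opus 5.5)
Your argument is essentially the paper's proof. Both identify the points of either side with
$\Map_{\coCart}(X\to S,\Fun^\lax(K\boxtimes\theta,Y)\to\Fun^\lax(K\boxtimes\theta,T))$, using the representing $\Theta$-spaces of \cref{repren} extended by density to $K\boxtimes\theta$ together with $\Fun^\lax(\theta,\Fun^\lax(K,-))\simeq\Fun^\lax(K\boxtimes\theta,-)$; you compare subobjects on $\theta$-points, while the paper runs the same chain for arbitrary $L$ and concludes by Yoneda.

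The step you call the main obstacle is not one. That adjunction equivalence is an equivalence of arrows in $\Fun(\bD^1,\infty\Cat)$, and cocartesian cells are defined intrinsically by pullback conditions on morphism $\infty$-categories. So it automatically matches cocartesian cells in every dimension and induces an equivalence on $\Map_{\coCart}(X\to S,-)$. You should delete the detour through \cref{indufibr} and atomic cells; it is unnecessary, and as written it is not justified for general $K$ and $\theta$.
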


\begin{proof}

Let $L $ be an $\infty$-category.
By density of $\Theta$ in $\infty\Cat$ of \cref{theta}, 
there are a canonical equivalences
\begin{align*}
&\Map_{\infty\Cat}(L, \Fun^{\oplax, \cocart}(X \to S, Y \to T))\\
\simeq &\Map_{\infty\Cat}(\colim_{\theta \to L, \theta \in \Theta},  \Fun^{\oplax, \cocart}(X \to S, Y \to T))\\
\simeq &\lim_{\theta \to L, \theta \in \Theta}\Map_{\infty\Cat}(\theta, \Fun^{\oplax, \cocart}(X \to S, Y \to T))\\
\simeq &\lim_{\theta \to L, \theta \in \Theta}\Map_{\coCart}(X \to S, \Fun^\lax(\theta,Y) \to \Fun^\lax(\theta,T))\\
\simeq &\Map_{\coCart}(X \to S, \Fun^\lax(\colim_{\theta \to L, \theta \in \Theta},Y) \to \Fun^\lax(\colim_{\theta \to L, \theta \in \Theta},T))\\
\simeq &\Map_{\coCart}(X \to S, \Fun^\lax(L,Y) \to \Fun^\lax(L,T)).
\end{align*}
The equivalence of the statement is represented by the following canonical equivalence
\begin{align*}
&\iota_0(\Fun^\lax(L, \Fun^{\oplax, \cocart}(X \to S, \Fun^\lax(K,Y) \to \Fun^\lax(K,T))))\\
\simeq &\iota_0(\Fun^{\oplax, \cocart}(X \to S, \Fun^\lax(L,\Fun^\lax(K,Y)) \to \Fun^\lax(L, \Fun^\lax(K,T))  ))\\
\simeq  &\iota_0(\Fun^{\oplax, \cocart}(X \to S, \Fun^\lax(K \boxtimes L, Y) \to \Fun^\lax(K \boxtimes L, T) ))\\
\simeq &\iota_0(\Fun^\lax(K \boxtimes L,\Fun^{\oplax, \cocart}(X \to S,Y \to T)))\\
\simeq &\iota_0(\Fun^\lax(L,\Fun^\lax(K,\Fun^{\oplax, \cocart}(X \to S,Y\to T)))).
\end{align*}
\end{proof}

\begin{lemma}\label{simplo}

Let $X,Y$ be $\infty$-categories and $Z' \to Z$ an inclusion.
A functor $X \boxtimes Y \to Z$ lands in $Z'$
if and only if for every $n,m \geq 0$ and functors $\bD^n \to X, \bD^m \to Y$ the composition $\bD^{n+m} \to \bD^n \boxtimes \bD^m \to X \boxtimes Y \to Z $
lands in $Z',$ where the functor $\bD^{n+m} \to \bD^n \boxtimes \bD^m$ takes the unique non-degenerate $n+m$-morphism.
    
\end{lemma}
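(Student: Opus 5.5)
The `only if' direction is immediate: each composite $\bD^{n+m}\to\bD^n\boxtimes\bD^m\to X\boxtimes Y\to Z$ factors through the given functor $X\boxtimes Y\to Z$, which by assumption factors through $Z'$. For the converse, I will reduce to the case where $X$ and $Y$ are disks, in three steps.

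\textbf{Step 1: maps into an inclusion are detected on cells.} Since $Z'\to Z$ is a monomorphism in $\infty\Cat$, for every $W$ the map $\Map(W,Z')\to\Map(W,Z)$ is an embedding. So a functor $W\to Z$ lands in $Z'$ exactly when it lies in the essential image of this embedding. I claim this can be checked on cells: $W\to Z$ lands in $Z'$ if and only if for every $k\geq 0$ and every $\bD^k\to W$ the composite $\bD^k\to Z$ lands in $Z'$. To see this, use \cref{theta}. The disks generate $\infty\Cat$ under colimits, more precisely $W$ is the colimit of $\theta\in\Theta$ over $\theta\to W$. Each $\theta\in\Theta$ is an iterated pushout of disks along disks. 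A functor out of a colimit factors through the subobject $Z'$ as soon as each component does, because the space of factorizations is a limit of $(-1)$-truncated spaces, all of which are contractible. So it suffices to treat $\theta\to Z$, and then, using the pushout presentation of $\theta$, to treat disks.

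\textbf{Step 2: reduction to disks.} The Gray tensor product preserves colimits in each variable. Writing $X\simeq\colim\bD^n$ and $Y\simeq\colim\bD^m$ over their cells, via the $\Theta$ presentation followed by disks, gives $X\boxtimes Y\simeq\colim\bD^n\boxtimes\bD^m$. By the colimit argument of Step 1, it therefore suffices to show that each composite $\bD^n\boxtimes\bD^m\to X\boxtimes Y\to Z$ lands in $Z'$.

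\textbf{Step 3: the disk case, which I expect to be the main obstacle.} Here we must show that a functor $F\colon\bD^n\boxtimes\bD^m\to Z$ lands in $Z'$ provided its restriction along the top cell $\bD^{n+m}\to\bD^n\boxtimes\bD^m$ does. The difficulty is the lower-dimensional cells of $\bD^n\boxtimes\bD^m$ that are not faces of the top cell. I would handle these by induction on $n+m$, invoking the hypothesis for all pairs of cells, not only the top ones.

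Every $k$-cell of $\bD^n\boxtimes\bD^m$ is either a composite of cells of the form $\bD^{i}\boxtimes\bD^{j}\to\bD^n\boxtimes\bD^m$ induced by faces $\bD^i\to\bD^n$ and $\bD^j\to\bD^m$, or a composite of such cells with identities. This generation statement comes from the cubical/Gray description (the same "atomic" decomposition used in the proof of \cref{indufibr}). The cells $\bD^i\to\bD^n\to X$ and $\bD^j\to\bD^m\to Y$ are again cells of $X$ and $Y$. Hence the hypothesis, applied to those lower-dimensional pairs, puts every top cell of $\bD^i\boxtimes\bD^j$ into $Z'$.

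Finally, a subcategory is closed under composition and identities; this is the characterization in \cref{subchar}. Since every cell of $\bD^n\boxtimes\bD^m$ is generated from the top cells of the $\bD^i\boxtimes\bD^j$, all cells of $\bD^n\boxtimes\bD^m$ are sent into $Z'$. Therefore $F$ lands in $Z'$ by Step 1.

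The delicate point is making precise the claim that cells of $\bD^n\boxtimes\bD^m$ are generated under composition by the images of the top cells $\bD^{i+j}\to\bD^i\boxtimes\bD^j$. I would extract this from the description of $\bD^n\boxtimes\bD^m$ as a strict $\omega$-category, i.e. from the Steiner/loop-free basis description in \cite{GepnerHeine2026}.
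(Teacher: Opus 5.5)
Your proof is correct and is essentially the paper's argument. The paper makes the same density reduction ($\Theta$, then pushouts of disks), but phrases it through the adjoint functor $X\to\Fun^\oplax(Y,Z)$ and the fact that $\Fun^\oplax(Y,-)$ preserves monomorphisms. It stops once every $\bD^m\boxtimes\bD^n\to Z$ is known to land in $Z'$, leaving implicit the passage to top cells that your Step 3 supplies via generation of $\bD^n\boxtimes\bD^m$ by the images of the top cells of the $\bD^i\boxtimes\bD^j$ over faces.

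One wording fix: the first sentence of Step 3 is false as stated. Take $Z=\bD^1\boxtimes\bD^1$, $Z'$ the full subcategory on $(0,0)$ and $(1,1)$, and $F=\id$: the top $2$-cell lands in $Z'$ but $F$ does not. So, as you then actually do, the hypothesis must be applied to all pairs of faces, not only to the top pair.
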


\begin{proof}

The inclusion $Z' \to Z$  induces an inclusion $\Fun^\oplax(Y, Z') \to \Fun^\oplax(Y, Z)$ since right adjoints preserve monomorphisms.
A functor $X \boxtimes Y \to Z$ lands in $Z'$
if and only the corresponding functor $X \to \Fun^\oplax(Y, Z)$
lands in $\Fun^\oplax(Y, Z')$.
The canonical equivalence $ \colim_{\theta \to Y} \theta \simeq Y$
induces equivalences
$$ \Fun^\oplax(Y, Z) \simeq \lim_{\theta \to Y} \Fun^\oplax(\theta, Z),$$
$$ \Fun^\oplax(Y, Z') \simeq \lim_{\theta \to Y} \Fun^\oplax(\theta, Z').$$
A functor $X \to \Fun^\oplax(Y, Z)$
lands in $\Fun^\oplax(Y, Z')$ if and only if for every $\theta \in \Theta$ and functor $\theta \to Y$ the induced functor $X \to \Fun^\oplax(Y, Z) \to \Fun^\oplax(\theta, Z)$
lands in $\Fun^\oplax(\theta, Z'). $

The canonical equivalence $\theta \simeq \colim_{i \in I} \bD^{n_i}$
induces equivalences
$$ \Fun^\oplax(\theta, Z) \simeq \lim_{i \in I} \Fun^\oplax(\bD^{n_i}, Z),$$
$$\Fun^\oplax(\theta, Z') \simeq \lim_{i \in I} \Fun^\oplax(\bD^{n_i}, Z').$$
A functor $X \to \Fun^\oplax(\theta, Z)$
lands in $\Fun^\oplax(\theta, Z')$ if and only if for every $i \in I$ the induced functor $X \to \Fun^\oplax(\theta, Z) \to \Fun^\oplax(\bD^{n_i}, Z)$
lands in $\Fun^\oplax(\bD^{n_i}, Z'). $
Hence a functor $X \boxtimes Y \to Z$ lands in $Z'$ if and only if for every functors $\bD^m \to X, \theta \to Y$
the induced functor $\bD^m \to X \to \Fun^\oplax(\theta, Z) \to \Fun^\oplax(\bD^{n_i}, Z)$
lands in $\Fun^\oplax(\bD^{n_i}, Z').$ 
\end{proof}

\begin{proposition}\label{comprest}

For every cocartesian fibrations $X \to S, Y \to T, Z \to R $
the canonical functor 
$$\Fun^{\oplax,\cocart}(Y \to T, Z \to R) \boxtimes \Fun^{\oplax, \cocart}(X \to S, Y \to T) \subset $$$$ \Fun^\oplax(Y \to T, Z \to R) \boxtimes \Fun^\oplax(X\to S, Y\to T) \to \Fun^\oplax(X \to S ,Z \to R) $$
lands in $$\Fun^{\oplax,\cocart}(X \to S, Z\to R).$$

\end{proposition}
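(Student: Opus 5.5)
We reduce the claim to a cell-wise test using \cref{simplo}. By \cref{repren}, $\Fun^{\oplax,\cocart}(X\to S,Z\to R)\subset\Fun^\oplax(X\to S,Z\to R)$ is an inclusion. So by \cref{simplo} it suffices to check the following. For every $n,m\geq 0$ and functors $\bD^n\to\Fun^{\oplax,\cocart}(Y\to T,Z\to R)$ and $\bD^m\to\Fun^{\oplax,\cocart}(X\to S,Y\to T)$, the composite
\[
\bD^{n+m}\to\bD^n\boxtimes\bD^m\to\Fun^\oplax(X\to S,Z\to R)
\]
lands in the cocartesian subcategory.

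Next we translate each datum into a map of cocartesian fibrations via the representing property in \cref{repren}, together with \cref{cotensol}.
\begin{itemize}
\item The $n$-cell is a map of cocartesian fibrations $G\colon(Y\to T)\to(\Fun^\lax(\bD^n,Z)\to\Fun^\lax(\bD^n,R))$.
\item The $m$-cell is a map $F\colon(X\to S)\to(\Fun^\lax(\bD^m,Y)\to\Fun^\lax(\bD^m,T))$.
\item The Gray composite is then $\Fun^\lax(\bD^m,G)\circ F$, a map $(X\to S)\to(\Fun^\lax(\bD^m\boxtimes\bD^n,Z)\to\Fun^\lax(\bD^m\boxtimes\bD^n,R))$ (up to the ordering convention of the Gray factors). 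Restricting along $\bD^{n+m}\to\bD^n\boxtimes\bD^m$ gives the required $(n+m)$-cell.
\end{itemize}

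The cocartesian subcategories are defined by mapping into $\Fun^\lax(\theta,-)$ for varying $\theta$. So it suffices to show that $\Fun^\lax(\bD^m\boxtimes\bD^n,Z)\to\Fun^\lax(\bD^m\boxtimes\bD^n,R)$ receives $\Fun^\lax(\bD^m,G)\circ F$ as a map of cocartesian fibrations, and then to restrict along $\bD^{n+m}\to\bD^m\boxtimes\bD^n$. The restriction is itself a map of cocartesian fibrations by \cref{indulax}, since restriction along any functor preserves the objectwise description of cocartesian cells.

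We check the two factors separately.
\begin{itemize}
\item $F$ is a map of cocartesian fibrations by hypothesis.
\item $\Fun^\lax(\bD^m,G)$ is a map of cocartesian fibrations by the functoriality part of \cref{indulax}, i.e.\ the dual of \cref{indufibr}. That result describes the cocartesian cells of $\Fun^\lax(\bD^m,-)$ as those whose associated cells over every cell of $\bD^m$ are cocartesian, and $G$ preserves these.
\item Composites of maps of cocartesian fibrations are maps of cocartesian fibrations, because cocartesian cells are preserved under composition of functors.
\end{itemize}

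The main obstacle is the identification of the Gray composite in $\Fun^\oplax(-,-)$ with $\Fun^\lax(\bD^m,G)\circ F$ under the adjunction equivalences $\Fun^\lax(K\boxtimes L,-)\simeq\Fun^\lax(L,\Fun^\lax(K,-))$. One must also verify that this identification is compatible with the subcategories in \cref{cotensol}. We would prove this by writing composition as the counit of the closed Gray structure and applying \cref{cotensol} twice. The precise order of the Gray factors is tracked using the conventions fixed for $\Fun^\oplax$ versus $\Fun^\lax$.
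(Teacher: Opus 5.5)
Your proposal is correct and follows the paper's proof essentially step for step: reduce via \cref{simplo} to the top cells $\bD^{n+m}\to\bD^n\boxtimes\bD^m$, translate the two cells into maps of cocartesian fibrations, and observe that $X\to\Fun^\lax(\bD^m,Y)\to\Fun^\lax(\bD^m,\Fun^\lax(\bD^n,Z))\simeq\Fun^\lax(\bD^n\boxtimes\bD^m,Z)\to\Fun^\lax(\bD^{n+m},Z)$ is a composite of maps of cocartesian fibrations; the paper likewise treats the identification of the Gray composite as immediate from the adjunctions, so \cref{cotensol} is not actually needed. There are two small slips, neither of which affects the argument: since $\Fun^\lax(K,-)$ is right adjoint to $K\boxtimes(-)$, the correct order is $\bD^n\boxtimes\bD^m$; and the cocartesian cells of $\Fun^\lax(K,Z)\to\Fun^\lax(K,R)$ are described cellwise over all cells of $K$ by the dual of \cref{indufibr}, not objectwise, though restriction along $\bD^{n+m}\to\bD^n\boxtimes\bD^m$ still visibly preserves that description.
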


\begin{proof}

By \cref{simplo} it suffices to show that for every $n,m \geq 0$
and functors $$\bD^n \to \Fun^{\oplax,\cocart}(Y\to T,Z \to R), \bD^m \to \Fun^{\oplax,\cocart}(X \to S,Y \to T)$$
the induced functor
$$\rho: \bD^n \boxtimes \bD^m \to \Fun^{\oplax,\cocart}(Y \to T, Z \to R) \boxtimes \Fun^{\oplax,\cocart}(X \to S, Y \to T) \to \Fun^{\oplax}(X \to S, Z \to R) $$
lands in $\Fun^{\oplax,\cocart}(X \to S, Z \to R).$

The functor $\bD^n \to \Fun^{\oplax,\cocart}(Y \to T, Z \to R)$ corresponds to a map of cocartesian fibrations from $Y \to T $ to $ \Fun^\lax(\bD^n,Z) \to \Fun^\lax(\bD^n,R) $,
and the functor $\bD^m \to \Fun^{\oplax,\cocart}(X \to S,Y \to T)$ corresponds to a map of cocartesian fibrations from $X \to S $ to $ \Fun^\lax(\bD^m,Y) \to \Fun^\lax(\bD^m,T) $

The functor $\rho$ corresponds to the map
$$\gamma: X \to \Fun^\lax(\bD^m,Y) \to\Fun^\lax(\bD^m,\Fun^\lax(\bD^n,Z)) \simeq \Fun^\lax(\bD^n \boxtimes \bD^m, Z) \to \Fun^\lax(\bD^{n+m},Z) $$
in $\Fun(\bD^1, \infty\Cat)$
induced by the functor $\bD^{n+m} \to \bD^n \boxtimes \bD^m$ taking the unique non-degenerate $n+m$-morphism.

We have to see that the functor $\rho$ lands in $\Fun^{\oplax, \cocart}(X \to S, Z\to R).$ This holds by definition if and only if 
$\gamma$ is a map of cocartesian fibrations.
The first map in the composition of $\gamma$ is a map of cocartesian fibrations since the functor $\bD^m \to \Fun^{\oplax}(X \to S,Y \to T)$ lands in $\Fun^{\oplax,\cocart}(X \to S,Y \to T)$.
The second map in the composition of $\gamma$ is a map of cocartesian fibrations since the functor $\bD^n \to \Fun^{\oplax}(Y \to T, Z\to R)$ lands in $\Fun^{\oplax,\cocart}(Y \to T, Z \to R)$
and so the map $Y \to \Fun^\lax(\bD^n,Z) $ and so also the map $$\Fun^\lax(\bD^m,Y) \to \Fun^\lax(\bD^m,\Fun^\lax(\bD^n,Z)) \simeq \Fun^\lax(\bD^n \boxtimes \bD^m, Z) $$ are maps of cocartesian fibrations.

The last functor in the composition of $\gamma$ is a map of cocartesian fibrations by the characterization of cocartesian morphisms of \cref{indufibr}.
\end{proof}

We obtain the following:

\begin{theorem}\label{orienfib}

There is antioriented subcategory 
$$ \mathfrak{coCart} \subset \Fun(\bD^1, \infty\mathfrak{Cat})$$
such that for every pair of cocartesian fibrations $X \to S$ and $Y \to T$
there is a canonical equivalence
$$ \L\Mor_{\mathfrak{coCart}}(X \to S, Y\to T) \simeq \Fun^{\oplax,\cocart}(X \to S, Y\to T).$$

\end{theorem}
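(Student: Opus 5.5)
The plan is to realize $\mathfrak{coCart}$ as an enriched subcategory of $\Fun(\bD^1,\infty\mathfrak{Cat})$ via \cref{enrsub}, once we know what the ambient antioriented (left $(\infty\Cat,\boxtimes)$-enriched) structure on $\Fun(\bD^1,\infty\mathfrak{Cat})$ is. The first step is to identify its left morphism objects. The arrow category is built from the bioriented category $\infty\mathfrak{Cat}$, regarded as left enriched, by taking enriched functors out of $\bD^1$. By \cref{psinho} it is again left enriched, and for $X \to S$, $Y \to T$ we should have
\[
\L\Mor(X\to S, Y\to T)\simeq \Fun^\oplax(X,Y)\times_{\Fun^\oplax(X,T)}\Fun^\oplax(S,T)=\Fun^\oplax(X\to S,Y\to T).
\]
This is the end formula for enriched natural transformations over $\bD^1$, which reduces to the displayed pullback. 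The composition is the one induced by the Gray-composition in $\infty\mathfrak{Cat}$, namely
\[
\Fun^\oplax(Y\to T,Z\to R)\boxtimes\Fun^\oplax(X\to S,Y\to T)\to\Fun^\oplax(X\to S,Z\to R).
\]
For the right orientation conventions I would check this against the adjunction $(-)\boxtimes \mC \dashv \Fun^\oplax(\mC,-)$; the argument does not depend on which of the two equivalent pullback presentations one picks.

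The second step is to apply \cref{enrsub}, taking the objects to be the cocartesian fibrations and the monomorphisms to be $\mE_{X,Y}:=\Fun^{\oplax,\cocart}(X\to S,Y\to T)\subset \Fun^\oplax(X\to S,Y\to T)$. These are subcategory inclusions, hence monomorphisms in $\infty\Cat$, by \cref{repren}. For the space of objects I would take the embedding of the subspace of cocartesian fibrations into $\iota_0\Fun(\bD^1,\infty\Cat)$; this is a union of components, since being a cocartesian fibration is invariant under equivalence.

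Two conditions of \cref{enrsub} then need checking.
\begin{enumerate}
\item The unit $\bD^0\to\Fun^\oplax(X\to S,X\to S)$ must factor through $\mE_{X,X}$. This amounts to saying that the identity is a map of cocartesian fibrations $X\to S \Rightarrow \Fun^\lax(\bD^0,X)\to\Fun^\lax(\bD^0,S)$, which is immediate.
\item The composition must restrict to $\mE_{Y,Z}\boxtimes\mE_{X,Y}\to\mE_{X,Z}$. This is exactly \cref{comprest}.
\end{enumerate}
Once both are verified, \cref{enrsub} produces an antioriented category $\mathfrak{coCart}$ with a monomorphism into $\Fun(\bD^1,\infty\mathfrak{Cat})$ and with left morphism objects $\Fun^{\oplax,\cocart}(X\to S, Y\to T)$, as claimed.

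The main obstacle is the first step, not the second. The second step is essentially formal given \cref{repren} and \cref{comprest}, whose real content is the description of cocartesian cells in lax functor categories from \cref{indufibr}. The first step requires pinning down the enriched arrow category and its left morphism objects, including the correct orientation, so that the identification with $\Fun^\oplax(X\to S,Y\to T)$ matches the composition map used in \cref{comprest}. I would do this via the universal property in \cref{psinho}. Concretely, I would identify $\bD^0$-points and, more generally, $K$-points of $\L\Mor$ using \cref{cotensol}-style adjunctions, so that $\Map(K,\L\Mor(X\to S,Y\to T))$ is computed as maps $X\to S\Rightarrow \Fun^\lax(K,Y)\to\Fun^\lax(K,T)$ in $\Fun(\bD^1,\infty\Cat)$.

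If the comparison turns out delicate, there is a fallback that avoids identifying the ambient arrow category altogether. One defines the enriched category directly from the $\Theta$-Segal data: \cref{cotensol} exhibits $\mathfrak{coCart}$ as cotensored over $\infty\Cat$, and \cref{comprest} provides the composition.
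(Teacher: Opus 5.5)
Your proposal is correct and matches the paper's proof, which is the one-line deduction from \cref{enrsub} applied to the subcategory inclusions $\Fun^{\oplax,\cocart}(X\to S,Y\to T)\subset\Fun^{\oplax}(X\to S,Y\to T)$ of \cref{repren}, with the composition condition supplied by \cref{comprest}. The paper leaves implicit the identification $\L\Mor(X\to S,Y\to T)\simeq\Fun^{\oplax}(X\to S,Y\to T)$ in the ambient arrow category, the trivial unit condition, and the restriction to cocartesian fibrations as objects, so your extra care there only spells out what the paper takes for granted.
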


\begin{proof}
This follows from \cref{enrsub} and \cref{comprest}.
\end{proof}

\begin{notation}

Let $S$ be an $\infty$-category.
Let $\infty\fcat^\cocart_{/S} \subset \infty\fcat_{/S}$ be the fiber over $S$ of the antioriented functor
$$\mathfrak{coCart} \to \infty\fcat $$ evaluating at the target.
    
\end{notation}

\begin{notation}

Let $S$ be an $\infty$-category.
The monoidal equivalence $(-)^\coop: (\infty\Cat, \boxtimes) \to (\infty\Cat, \boxtimes) $
induces an equivalence of bioriented categories
$$ \infty\fcat \simeq \coop_!(\infty\fcat) = \infty\fcat^\cop.$$

Let $$\mathfrak{Cart} \subset \Fun(\bD^1, \infty\mathfrak{Cat})$$
be the antoriented subcategory corresponding to
$$\mathfrak{coCart}^\cop \subset \Fun(\bD^1, \infty\mathfrak{Cat})^\cop $$
under the antioriented equivalence
$$ \Fun(\bD^1, \infty\mathfrak{Cat}) \simeq \Fun(\bD^1, \infty\mathfrak{Cat}^\cop) \simeq \Fun(\bD^1, \infty\mathfrak{Cat})^\cop.$$
    
\end{notation}

\begin{remark}

For every pair of cartesian fibrations $X \to S$ and $Y \to T$
there is a canonical equivalence
$$ \Fun^{\oplax,\cart}(X \to S, Y\to T):= \L\Mor_{\mathfrak{Cart}}(X \to S, Y \to T) \simeq \Fun^{\oplax,\cocart}(X^\coop \to S^\coop, Y^\coop\to T^\coop)^\coop.$$

\end{remark}

\begin{notation}
Let $X \to S$ be a cocartesian fibration
and $K$ an $\infty$-category.
Let $X^K_\lax \to S $ be the pullback of the induced functor
$$ \Fun^\lax(K,X) \to \Fun^\lax(K,S) $$
along the diagonal functor $S \to \Fun^\lax(K,S). $

\end{notation}

\cref{indulax} gives the following:

\begin{corollary}\label{restcoca}

Let $X \to S$ be a cocartesian fibration
and $K$ an $\infty$-category.
The functor $X^K_\lax \to S $ is a cocartesian fibration.
   
\end{corollary}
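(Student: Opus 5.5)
The plan is to deduce \cref{restcoca} directly from \cref{indulax}, using that cocartesian fibrations are stable under base change. By definition $X^K_\lax \to S$ is the pullback of $\Fun^\lax(K,X) \to \Fun^\lax(K,S)$ along the diagonal $S \to \Fun^\lax(K,S)$. So two facts suffice: the functor being pulled back is a cocartesian fibration, and cocartesian fibrations are stable under pullback.

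For the first fact, apply \cref{indulax}(1) to the cocartesian fibration $X \to S$ with $\mB = K$. This shows directly that $\Fun^\lax(K,X) \to \Fun^\lax(K,S)$ is a cocartesian fibration.

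For the second fact, stability under base change holds for $n$-anticocartesian fibrations, as recorded in the remark after their definition. That remark rests on the last part of \cref{elemen}: a cell of a pullback whose image is cocartesian for the original functor is cocartesian for the projection. Since $\phi$ is a cocartesian fibration exactly when $\phi^\co$ is an anticocartesian fibration, and $(-)^\co$ is an involution of $\infty\Cat$ that preserves pullbacks, stability transfers to cocartesian fibrations.

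Concretely, for a lifting problem against $X^K_\lax \to S$, I would push it forward to $\Fun^\lax(K,X)\to\Fun^\lax(K,S)$, take a cocartesian filler there, and pull the filler back to $X^K_\lax$ using the universal property of the pullback. The filler stays cocartesian in $X^K_\lax$ by \cref{elemen}. The only point requiring care is that this stability is stated for anticocartesian fibrations; I expect no real obstacle beyond transporting it along $(-)^\co$.
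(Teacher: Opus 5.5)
Your proposal is correct and takes the same approach as the paper: it deduces the corollary directly from \cref{indulax}(1) together with stability of cocartesian fibrations under base change along the diagonal $S \to \Fun^\lax(K,S)$. Your transport of base-change stability from anticocartesian fibrations along the involution $(-)^\co$ simply makes explicit a step the paper leaves implicit.
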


\begin{remark}For every functors $X \to S, Y \to S$ there is a canonical equivalence
$$ \Fun^\oplax_S(X,Y^K_\lax) \simeq \Fun^\lax(K,\Fun^\oplax_S(X,Y)).$$

Hence  $Y^K_\lax \to S$ is the left cotensor of $\K$ and $Y \to S$ in the antioriented category $\infty\fcat_{/S}.$

\end{remark}

\begin{notation}\label{univeqr}
    
For every cocartesian fibrations $X \to S, Y \to S$ 
we set $$\Fun^{\oplax, \cocart}_S(X,Y) := \LMor_{\infty\fcat^\cocart_{/S}}(\X,\Y).$$

\end{notation}

\begin{remark}

The $\infty$-category $\Fun^{\oplax, \cocart}_S(X,Y)$ is the subcategory of $\Fun^\oplax_\S(\X,\Y)$ such that for every $\infty$-category $K$  the canonical equivalence
$$\Map_{\infty\Cat}(K,\Fun^{\oplax}_S(X,Y)) \simeq \Map_{\infty\Cat_{/\rS}}(X,Y^K_\lax)$$
restricts to an equivalence
$$\Map_{\infty\Cat}(K,\Fun^{\oplax, \cocart}_S(X,Y)) \simeq \Map_{\infty\Cat^\cocart_{/\rS}}(X,Y^K_\lax).$$

The latter equivalence implies that the canonical equivalence
$$ \Fun^\oplax_S(X,Y^K_\lax) \simeq \Fun^\lax(K,\Fun^\oplax_S(X,Y))$$
restricts to an equivalence 
$$ \Fun^{\oplax, \cocart}_S(X,Y^K_\lax) \simeq \Fun^\lax(K,\Fun^{\oplax, \cocart}_S(X,Y)).$$

Hence  $Y^K_\lax \to S$ is the left cotensor of $\K$ and $Y \to S$ in the antioriented category $\infty\fcat^\cocart_{/S}.$
    
\end{remark}

% \begin{remark}\label{transfou}

% For every $\infty$-categories $L, K$ and functor $Y \to S$ there is a canonical equivalence over $S:$
% $$ (Y^K_\lax)^L_\lax \simeq Y^{K \boxtimes L} $$
% represented by the following canonical equivalence
% $$ \Fun^\oplax_S(X,(Y^K_\lax)^L_\lax) \simeq \Fun^\lax(L,\Fun^\oplax_S(X,Y^K_\lax)) \simeq $$$$ \Fun^\lax(L,\Fun^\lax(K,\Fun^\oplax_S(X,Y)))$$
% $$ \simeq \Fun^\lax(K \boxtimes L,\Fun^\oplax_S(X,Y)) \simeq  \Fun^\oplax_S(X,Y^{K \boxtimes L}_\lax),$$
% where $X \to S$ is any functor.
    
% \end{remark}

\begin{theorem}\label{leftadjointtheorem}
Let $S$ be an $\infty$-category.
The inclusion of antioriented categories $$\infty\fcat^\cocart_{/S}\subset \infty\fcat_{/S}$$
admits a left adjoint that sends $X \to S$ 
to $\Env(X) \to S$.
\end{theorem}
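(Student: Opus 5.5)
We will upgrade the underlying adjunction of \cref{huip} to an adjunction of antioriented categories, using the criterion of \cref{adj2}(3): an antioriented functor admits a left adjoint exactly when it preserves left cotensors and its underlying functor admits a left adjoint. So the proof splits into two steps.

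\textbf{Step 1: the underlying adjunction.} The underlying functor of $\infty\fcat^\cocart_{/S}\subset \infty\fcat_{/S}$ is the subcategory inclusion $\infty\scat^\cocart_{/S}\subset \infty\scat_{/S}$. By \cref{huip}, which rests on \cref{envelo}, this inclusion has a left adjoint sending $X\to S$ to $\Env(X)\to S$, with unit given by the canonical inclusion $X\subset \Env(X)$ over $S$.

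\textbf{Step 2: preservation of left cotensors.} For a cocartesian fibration $Y\to S$ and an $\infty$-category $K$, the remarks following \cref{univeqr} identify $Y^K_\lax\to S$ as the left cotensor of $K$ and $Y$ in both $\infty\fcat_{/S}$ and $\infty\fcat^\cocart_{/S}$. By \cref{restcoca}, $Y^K_\lax\to S$ is a cocartesian fibration. Since the inclusion is the identity on objects, it carries cotensors to cotensors. Concretely, the comparison map is the identity of $Y^K_\lax$, and the restriction equivalence
$$\Fun^{\oplax,\cocart}_S(X,Y^K_\lax)\simeq \Fun^\lax(K,\Fun^{\oplax,\cocart}_S(X,Y))$$
is compatible with the corresponding equivalence for $\Fun^\oplax_S$ under the inclusion.

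Applying \cref{adj2}(3) then gives an antioriented left adjoint, whose underlying functor is $\Env$.

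\textbf{Where the real content lies.} I expect the main subtlety to be whether \cref{adj2}(3) applies verbatim to the fiber $\infty\fcat^\cocart_{/S}$, and in particular whether the unit enriches. The enriched statement is equivalent to requiring that, for every $K$, restriction along $X\subset\Env(X)$ induces an equivalence
$$\Fun^{\oplax,\cocart}_S(\Env(X),Y)\to \Fun^\oplax_S(X,Y).$$
On maximal subspaces, after replacing $Y$ by $Y^K_\lax$, this is exactly
$$\Map_{\infty\Cat^\cocart_{/S}}(\Env(X),Y^K_\lax)\simeq \Map_{\infty\Cat_{/S}}(X,Y^K_\lax),$$
which is \cref{envelo} applied to the cocartesian fibration $Y^K_\lax\to S$. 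The density argument of \cref{cotensol} then identifies $K$-points on both sides, so the map is an equivalence of $\infty$-categories. I would present this direct argument as a backup in case the cotensor criterion needs justification in the fibered setting.
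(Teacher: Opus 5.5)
Your proposal is correct and is essentially the paper's proof. The paper likewise combines the underlying adjunction of \cref{huip} with the enriched criterion \cref{adj}(2), of which \cref{adj2}(3) is the antioriented restatement, and checks preservation of left cotensors via \cref{restcoca} and the identification, around \cref{univeqr}, of $Y^K_\lax\to S$ as the left cotensor in both $\infty\fcat_{/S}$ and $\infty\fcat^\cocart_{/S}$. One phrase is inaccurate: the inclusion is not the ``identity on objects'', only injective on objects; what matters, as you then say correctly, is that both cotensors are the same object $Y^K_\lax$. Your backup argument via \cref{envelo} applied to $Y^K_\lax\to S$ is just an unwinding of that criterion.
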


\begin{proof}

This follows from \cref{huip} and \cref{adj} (2) because the antioriented inclusion $$\infty\fcat^\cocart_{/S}\subset \infty\fcat_{/S}$$ preserves left cotensors by \cref{restcoca} and \cref{univeqr}.  
\end{proof}

\subsection{Fibrations of oriented categories}

\begin{definition}
An oriented functor $\phi: \mC \to \mD$ is a cartesian fibration
if it is a $(\infty\Cat, \boxtimes^\rev)$-enriched cartesian fibration
in the sense of \cref{enrfibr} and, for every $X,Y \in \mC$
the induced functor
$$\Mor_\mC(X,Y) \to \Mor_\mD(\phi(X),\phi(Y)) $$ is a cocartesian fibration and for every $X \in \mC$ and morphism $Y \to Z $ in $\mC$ the induced commutative square
$$\begin{xy}
\xymatrix{
\Mor_\mC(X,Y) \ar[d]^{} \ar[r]
& \Mor_\mC(X,Z) \ar[d]
\\ 
\Mor_\mD(\phi(X),\phi(Y)) \ar[r] & \Mor_\mD(\phi(X),\phi(Z))}
\end{xy}$$
is a map of cocartesian fibrations. 
    
\end{definition}

\begin{definition}
Let $\phi: \mC \to \mD, \phi': \mC' \to \mD'$ be oriented cartesian fibrations.
A map of oriented cartesian fibrations $\phi \to \phi'$
is a commutative square
$$\begin{xy}
\xymatrix{
\mC \ar[d]^{\phi} \ar[r]
& \mC' \ar[d]^{\phi'}
\\ 
\mD \ar[r] & \mD'}
\end{xy}$$
that is a morphism of $(\infty\Cat, \boxtimes^\rev)$-enriched fibrations
in the sense of \cref{enrfibr} and such that for every $X,Y \in \mC$
lying over $X', Y' \in \mC' $ the induced commutative square
$$\begin{xy}
\xymatrix{
\Mor_\mC(X,Y) \ar[d]^{} \ar[r]
& \Mor_{\mC'}(X',Y') \ar[d]
\\ 
\Mor_\mD(\phi(X),\phi(Y)) \ar[r] & \Mor_{\mD'}(\phi'(X'),\phi'(Y'))}
\end{xy}$$
is a map of cocartesian fibrations. 
    
\end{definition}

% \begin{remark}Let $K$ be a category and let $X,Y$ be $\infty$-categories.

% By ... oriented fibrations over $S(\K)$ whose fiber over $0$ is $X$ and whose fiber over 1 is $Y$ are classified by functors $K \to \boxtimes\Fun(X,Y).$
    
% \end{remark}

\begin{definition}We have the following dual notions:

\begin{itemize}

\item An antioriented functor $\phi: \mC \to \mD$ is a cocartesian fibration
if the oriented functor $\phi^\coop: \mC^\coop \to \mD^\coop$ is a cartesian fibration.

\item An antioriented functor $\phi: \mC \to \mD$ is an anticartesian fibration
if the oriented functor $\phi^\co: \mC^\co \to \mD^\co$ is a cartesian fibration.

\item An oriented functor $\phi: \mC \to \mD$ is an anticocartesian fibration
if the oriented functor $\phi^\op: \mC^\op \to \mD^\op$ is a cartesian fibration.

\end{itemize}
    
\end{definition}

Next we consider examples of cartesian fibrations of oriented categories.

\begin{notation}
Let $$ \mathfrak{COCART} \subset \Fun(\bD^1, \fcat)$$ be the full oriented subcategory of cocartesian fibrations.
    
\end{notation}

\begin{proposition}\label{orientfibru}
The oriented functor $$\mathfrak{COCART} \to \infty\fcat $$
evaluating at the target is a cartesian fibration.

\end{proposition}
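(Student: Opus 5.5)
We check the three conditions in the definition of an oriented cartesian fibration for $\ev_1: \mathfrak{COCART} \to \infty\fcat$. Since $\mathfrak{COCART}$ is a full oriented subcategory of $\Fun(\bD^1,\infty\fcat)$, its right morphism $\infty$-categories are those of the arrow category. For cocartesian fibrations $X \to S$ and $Y \to T$ these are
\[
\Fun^\oplax(X \to S, Y \to T) = \Fun^\oplax(X,Y) \times_{\Fun^\oplax(X,T)} \Fun^\oplax(S,T).
\]
Evaluation at the target sends this to $\Fun^\oplax(S,T)$.

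\emph{Step 1: enriched cartesian lifts.} Given a cocartesian fibration $Y \to T$ and a functor $\alpha: S \to T$, take the pullback $S \times_T Y \to S$. It is a cocartesian fibration by stability under base change, and the canonical square gives a morphism in $\mathfrak{COCART}$ over $\alpha$. To see it is cartesian in the $(\infty\Cat,\boxtimes^\rev)$-enriched sense, we need that for every cocartesian fibration $X \to R$ the square
\[
\begin{array}{ccc}
\Fun^\oplax(X\to R, S\times_T Y \to S) & \to & \Fun^\oplax(X \to R, Y \to T)\\
\downarrow & & \downarrow\\
\Fun^\oplax(R,S) & \to & \Fun^\oplax(R,T)
\end{array}
\]
is a pullback. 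This is formal: $\Fun^\oplax(X,-)$ is a right adjoint, so it preserves the pullback $S \times_T Y$, and unwinding the pullback definition of $\Fun^\oplax(X\to R,-)$ gives exactly the claimed square. This is the oriented analogue of \cref{enrtargetfibr}(2).

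\emph{Step 2: the morphism functors are cocartesian fibrations.} The functor
\[
\Fun^\oplax(X,Y)\times_{\Fun^\oplax(X,T)}\Fun^\oplax(S,T) \to \Fun^\oplax(S,T)
\]
is the base change, along precomposition $\Fun^\oplax(S,T) \to \Fun^\oplax(X,T)$, of $\Fun^\oplax(X,Y) \to \Fun^\oplax(X,T)$. By \cref{indufibr} and its variant \cref{induoplax}, applied to the cocartesian fibration $Y \to T$, the latter is a fibration of the appropriate variance, with cocartesian cells detected pointwise. Base change preserves this.

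The variance needs care. The oriented definition asks for cocartesian fibrations on morphism categories, while \cref{indufibr} is stated for anticocartesian $\phi$. I would first rewrite $Y\to T$ via the $(-)^\co$ or $(-)^\op$ involutions using \cref{grayhoms}, so that \cref{indufibr} applies to $\Fun^\oplax$ and produces a cocartesian (not anticocartesian) fibration. If this fails, I would instead use \cref{indulax} after passing through \cref{grayhoms}.

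\emph{Step 3: compatibility with postcomposition.} For a morphism $(Y\to T) \to (Z \to R)$ in $\mathfrak{COCART}$, postcomposition must be a map of cocartesian fibrations. It is induced by the pair $Y\to Z$, $T \to R$, and this pair is only a commutative square; it is not a map of cocartesian fibrations, since the category is full. However, \cref{indufibr} says cocartesian cells in $\Fun^\oplax(X,Y)$ over $\Fun^\oplax(X,T)$ are those whose associated cells are $Y\to T$-cocartesian. In the pulled-back category, cocartesian cells lie over cells of $\Fun^\oplax(S,T)$, and I expect them to have invertible $X\to Y$ component in the sense of \cref{enrtargetfibr}(1).

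The main obstacle is showing that these cocartesian cells are preserved by postcomposition with an arbitrary square, without assuming it preserves cocartesian cells. The approach is to identify the cocartesian lifts explicitly as obtained by transport along the cocartesian lifts of $Y\to T$ of whiskered cells, using \cref{laxfib}, and then check preservation dimension by dimension via \cref{cocarto}. If direct identification fails, I would reduce via \cref{thetalocal} and \cref{fiberwiseeq} to fibers over cells of $\Fun^\oplax(S,T)$.
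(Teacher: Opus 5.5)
\paragraph{Steps 1 and 2.} These agree with the paper's proof.
\begin{itemize}
\item The cartesian lift of $\alpha\colon S\to T$ at $Y\to T$ is the pullback $S\times_T Y\to S$. This is \cref{enrtargetfibr}(2) for $\infty\fcat$, restricted to $\mathfrak{COCART}$ because cocartesian fibrations are stable under base change.
\item The functor on morphism $\infty$-categories is a base change of $\Fun^\lax(X,Y)\to\Fun^\lax(X,T)$.
\end{itemize}
On the variance: the right morphism $\infty$-categories of the oriented category $\infty\fcat$ are $\Fun^\lax(X,-)$, the right adjoint to $X\boxtimes(-)$, as in the paper's proof. So \cref{indulax}(1) applies directly to $Y\to T$. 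The results you cite for $\Fun^\oplax$ (\cref{indufibr}, \cref{induoplax}) need anticocartesian or anticartesian input and do not give what you want.

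\paragraph{The gap is Step 3.} You set out to prove that postcomposition with an arbitrary commutative square $(Y\to T)\to(Z\to R)$ is a map of cocartesian fibrations. This is false.
\begin{itemize}
\item Take the source object to be $\id\colon\ast\to\ast$. Then $\R\Mor_{\mathfrak{COCART}}(\ast\to\ast,\,Y\to T)\simeq Y$ over $\Fun^\lax(\ast,T)\simeq T$.
\item Postcomposition is then the given square $Y\to Z$ over $T\to R$ itself. So your claim would force every commutative square between cocartesian fibrations to preserve cocartesian cells.
\item Counterexample: the diagonal $\bD^1\to\bD^1\times\bD^1$, viewed as a square from $\id_{\bD^1}$ to the first projection $\bD^1\times\bD^1\to\bD^1$. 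It sends $0\to 1$ to $(0,0)\to(1,1)$. That morphism is not cocartesian, since its second component is not invertible.
\end{itemize}

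Your expectation that the cocartesian cells have invertible $X\to Y$ component is also wrong. By your own Step 2, they are the cells whose image in $\Fun^\lax(X,Y)$ has components cocartesian for $Y\to T$. Postcomposition with $Y\to Z$ preserves these when $Y\to Z$ preserves cocartesian cells, but not in general.

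\paragraph{How the paper handles it.} The paper's proof checks the third condition only for morphisms $(Y\to T)\to(Z\to R)$ that are maps of cocartesian fibrations.
\begin{itemize}
\item For such a morphism, the square $\Fun^\lax(X,Y)\to\Fun^\lax(X,Z)$ over $\Fun^\lax(X,T)\to\Fun^\lax(X,R)$ is a map of cocartesian fibrations, by the pointwise description of cocartesian cells in \cref{indulax}.
\item The required square is obtained from it by base change along precomposition with $X\to S$.
\end{itemize}
So the statement can only hold with the morphisms of $\mathfrak{COCART}$ taken to be maps of cocartesian fibrations. The word ``full'' notwithstanding, this is what the paper's argument actually uses. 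Replace your dimension-by-dimension plan for arbitrary squares with this short argument.
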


\begin{proof}

By \cref{enrtargetfibr} the oriented functor $$ \Fun(\bD^1,\infty\fcat) \to \infty\fcat $$
evaluating at the target is a cartesian fibration of right $(\infty\Cat, \boxtimes)$-enriched categories, where the fiber transport assigns the pullback.
Since cocartesian fibrations are stable under pullback, the
oriented functor $$\mathfrak{COCART} \to \infty\fcat $$
evaluating at the target is also a cocartesian fibration of right $(\infty\Cat, \boxtimes)$-enriched categories.

The latter oriented functor induces on morphism $\infty$-categories between any cocartesian fibrations $X \to S$ and $Y \to T$ the functor 
$$ \Fun^\lax(X, Y) \times_{\Fun^\lax(X, T)} \Fun^\lax(S, T) \to \Fun^\lax(S,T), $$
which is an cocartesian fibration by \cref{indulax} since $Y \to T$ is a cocartesian fibration.

For every functor cocartesian fibrations $X \to S, Y \to T, Z \to R$ and
map of cocartesian fibrations $(Y \to T) \to (Z \to R) $ the induced 
commutative square
$$\begin{xy}
\xymatrix{
\R\Mor_{\mathfrak{COCART}}(X \to S, Y \to \T) \ar[d]^{} \ar[r]
& \R\Mor_{\mathfrak{COCART}}(X \to S, Z \to R) \ar[d]
\\ 
\Fun^\lax(S, T) \ar[r] & \Fun^\lax(S, R),}
\end{xy}$$
which identifies with the commutative square
$$\begin{xy}
\xymatrix{
 \Fun^\lax(X, Y) \times_{\Fun^\lax(X, T)} \Fun^\lax(S, T) \ar[d]^{} \ar[r]
& \Fun^\lax(X, Z) \times_{\Fun^\lax(X, R)} \Fun^\lax(S, R) \ar[d]
\\ 
\Fun^\lax(S, T) \ar[r] & \Fun^\lax(S, R).}
\end{xy}$$

The latter commutative square is a map of cocartesian fibrations
since the commutative square 
$$\begin{xy}
\xymatrix{
\Fun^\lax(X, Y) \ar[d]^{} \ar[r]
& \Fun^\lax(X, Z) \ar[d]
\\ 
\Fun^\lax(X, T) \ar[r] & \Fun^\lax(X, R).}
\end{xy}$$
is a map of cocartesian fibrations by \cref{indulax}.
\end{proof}

\begin{proposition}\label{orientfibru0}
The functor $$\mathcal{COCART} \to \infty\scat $$
evaluating at the target is a cartesian fibration.

\end{proposition}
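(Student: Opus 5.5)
The plan is to deduce this from the oriented statement \cref{orientfibru} by passing to underlying categories. Alternatively, and more concretely, I would rerun the enriched argument with $\infty\scat$ in place of $\infty\fcat$. The $\infty$-category $\mathcal{COCART}$ should be understood as the full subcategory of $\Fun(\bD^1,\infty\scat)$ on the cocartesian fibrations, enriched cartesianly. Its morphism $\infty$-categories are therefore
$$\Fun(X,Y)\times_{\Fun(X,T)}\Fun(S,T)$$
rather than their lax analogues.

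First I would treat the 1-categorical part. By \cref{enrtargetfibr}(2), applied to the $\infty\Cat$-enriched (cartesian) category $\infty\scat$, which admits pullbacks, evaluation at the target $\Fun(\bD^1,\infty\scat)\to\infty\scat$ is a cartesian fibration of enriched categories. Its cartesian lifts are given by pullback squares. Cocartesian fibrations are stable under base change, so these lifts stay inside the full subcategory $\mathcal{COCART}$. Cartesianness is a condition on morphism objects, and the subcategory is full, so the restricted functor $\mathcal{COCART}\to\infty\scat$ still admits enriched cartesian lifts of 1-morphisms with specified target. This makes it a 1-cartesian fibration.

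Next I would handle the higher cells using the inductive characterization \cref{cocarto}, suitably dualized, as in condition (3) of \cref{T1}. I must show that for all cocartesian fibrations $X\to S$, $Y\to T$ the functor on morphism $\infty$-categories
$$\Fun(X,Y)\times_{\Fun(X,T)}\Fun(S,T)\to\Fun(S,T)$$
is a cocartesian fibration. It is the base change of $\Fun(X,Y)\to\Fun(X,T)$, which is a cocartesian fibration by \cref{cocartexp} because $Y\to T$ is one. I must also show that postcomposition with a map $(Y\to T)\to(Z\to R)$ and precomposition with a map $(X'\to S')\to(X\to S)$ induce maps of cocartesian fibrations. For postcomposition this follows from \cref{cocartexp}, since cocartesian morphisms are objectwise and a map of cocartesian fibrations preserves cocartesian morphisms in every dimension. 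For precomposition it holds because restriction along $X'\to X$ evidently preserves objectwise cocartesian cells. Pullbacks of maps of cocartesian fibrations remain such, since $\coCart\subset\Fun(\bD^1,\infty\Cat)$ is closed under limits.

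The main obstacle is the dimension shift. In \cref{cocarto} the morphism functors of an anticocartesian fibration must again be anticocartesian. For a cartesian fibration, by contrast, they must be cocartesian, as in the definition of oriented cartesian fibration. I would handle this by applying $(-)^{\coop}$ together with \cref{oppo}: the functor $\phi^{\coop}$ should be anticocartesian on 1-cells, its morphism functors are the $\op$-duals of the cocartesian fibrations above, and these are anticocartesian. Checking that the dualities match exactly is the step I expect to need the most care. Once it is done, \cref{cocarto} assembles the pieces into the claim.
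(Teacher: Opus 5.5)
Your argument is the paper's own. The proof of \cref{orientfibru0} is a rerun of \cref{orientfibru} with $\Fun$ in place of $\Fun^\lax$, and it has the same three ingredients you use:
\begin{itemize}
\item cartesian lifts given by pullbacks via \cref{enrtargetfibr};
\item morphism functors $\Fun(X,Y)\times_{\Fun(X,T)}\Fun(S,T)\to\Fun(S,T)$ obtained as base changes of $\Fun(X,Y)\to\Fun(X,T)$;
\item compatibility deduced from the objectwise description of cocartesian cells in \cref{cocartexp}.
\end{itemize}
You also check precomposition. The $\infty$-categorical definition requires this, and the paper leaves it implicit. Your first idea, deducing the claim from \cref{orientfibru} by passing to underlying categories, would not work directly, because there the morphism $\infty$-categories are built from $\Fun^\lax$. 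The rerun is the right choice.

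The step that is wrong as written is the final dualization, and in fact there is no obstacle there. You should apply $(-)^\op$, not $(-)^{\coop}$. By definition, a functor $\phi:\mC\to\mD$ is cartesian iff $\phi^\op$ is anticocartesian. By contrast, $\phi^{\coop}$ being anticocartesian means that $\phi$ is \emph{anticartesian}.

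By \cref{oppo}, $\Mor_{\mC^{\coop}}(A,B)\simeq\Mor_\mC(B,A)^{\coop}$, so the morphism functors of $\phi^{\coop}$ are the $(\phi_{B,A})^{\coop}$. When $\phi_{B,A}$ is cocartesian, these are cartesian, not anticocartesian. Feeding them into \cref{cocarto} as you propose would amount to showing that every $\phi_{B,A}$ is anticartesian. That is false here: the morphism functor from $*\to *$ to a cocartesian fibration $Y\to T$ is just $Y\to T$ itself, which is generally not even $1$-cartesian. Likewise, $\op$-duals of cocartesian fibrations are anticocartesian only when the fibration is also cartesian, so the claim in your last paragraph is incorrect.

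With $(-)^\op$, everything is definitional:
\begin{itemize}
\item $\phi^\op$ is $1$-cocartesian iff $\phi$ is $1$-cartesian.
\item \cref{oppo} gives $\Mor_{\mC^\op}(A,B)\simeq\Mor_\mC(B,A)^\co$, so the morphism functors of $\phi^\op$ are $(\phi_{B,A})^\co$. These are anticocartesian exactly because $\phi_{B,A}$ is cocartesian.
\item The compatibility clause of \cref{cocarto} for $\phi^\op$ becomes precisely your pre- and postcomposition conditions.
\end{itemize}
The same $\co$-transport is what lets you apply \cref{cocartexp} to the cocartesian fibration $Y\to T$, since that result is stated for anticocartesian fibrations. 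It uses $\Fun(\mB,\mC)^\co\simeq\Fun(\mB^\co,\mC^\co)$.
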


\begin{proof}

The proof is similar to the one of \cref{orientfibru}, where we use
\cref{enrcocartexp} instead of \cref{indulax}.   
\end{proof}

\subsection{A Grothendieck construction for oriented fibrations}

\begin{notation}
Let $\infty\fcat_{*//} $ be the full subcategory of $\infty\fcat_{/\bD^1}$ consisting of the cartesian fibrations $\mM \to [1]$ whose fiber over 1 is the final $\infty$-category.
\end{notation}

\begin{remark}
The bioriented functors $\infty\fcat_{/\bD^1} \to \infty\fcat$ taking the fibers over $0$ and $1$ give rise to bioriented functors $\infty\fcat_{*//} \to \infty\fcat$.
We view $\infty\fcat_{*//}$ as a bioriented category over $\infty\fcat$ by taking the fiber over 0.
Objects of $\infty\fcat_{*//}$ are cartesian fibrations $\mM \to [1]$,
which are classified by functors
$\bD^0 \to \mC$, where $\mC$ is the fiber over 0.
The latter correspond to pairs $(\mC,X),$ where $\mC $ is a small $\infty$-category and $X \in \mC.$  
\end{remark}

\begin{notation}
Let $\alpha: \mA \to \mC, \beta: \mB \to \mC$ be functors.
Let $$\Fun^\oplax_\mC(\mA,\mB):= \{\alpha\}\times_{\Fun^\oplax(\mA,\mC)} \Fun^\oplax(\mA,\mB).$$

\end{notation}

\begin{remark}\label{remaros}
Let $K$ be an $\infty$-category and $\alpha: \mA \to \mC, \beta: \mB \to \mC$ be functors. Let $p_\mA: K \boxtimes \mA \to \mA$ be the projection.
There is a canonical equivalence
$$ \Fun^\oplax(K, \Fun^\oplax_\mC(\mA,\mB)) \simeq \{\underline{\alpha}\}\times_{\Fun^\oplax(K,\Fun^\oplax(\mA,\mC))} \Fun^\oplax(K,\Fun^\oplax(\mA,\mB)) \simeq $$$$\{\alpha \circ p_\mA \}\times_{\Fun^\oplax(K \boxtimes \mA,\mC))} \Fun^\oplax(K \boxtimes \mA,\mB)) \simeq \Fun^\oplax_\mC(K \boxtimes \mA,\mB).$$
There is a canonical equivalence
$$ \Fun^\lax(K, \Fun^\oplax_\mC(\mA,\mB)) \simeq \{\underline{\alpha}\}\times_{\Fun^\lax(K,\Fun^\oplax(\mA,\mC))} \Fun^\lax(K,\Fun^\oplax(\mA,\mB)) \simeq $$$$
\{\underline{\delta \circ \alpha}\}\times_{\Fun^\oplax(\mA,\Fun^\lax(K, \mC))} \Fun^\oplax(\mA,\Fun^\lax(K,\mB)) \simeq \Fun_\mC^\oplax(\mA,\mC \times_{\Fun^\lax(K, \mC)} \Fun^\lax(K, \mB)).$$

Let $\sigma : \mC \to \mD, \gamma: \mE \to \mD$ be functors.
There is a canonical equivalence
$$ \Fun^\oplax_\mD(\mA,\mE) \simeq \{\sigma \circ \alpha\}\times_{\Fun^\oplax(\mA,\mD)} \Fun^\oplax(\mA,\mE) \simeq \{\alpha\}\times_{\Fun^\oplax(\mA,\mC)} \Fun^\oplax(\mA,\mC \times_\mD \mE)$$$$ \simeq \Fun^\oplax_\mC(\mA,\mC \times_\mD \mE).$$
    
\end{remark}

\begin{lemma}\label{sectio}
Let $\mM \to [1]$ be a cartesian fibration.
There is a canonical equivalence
$$\Fun^\oplax_{\bD^1}(\bD^1, \mM) \simeq \mM_1 \times_{\Fun^\oplax(\{1 \},\mM_0)} \Fun^\oplax(\bD^1,\mM_0).$$

\end{lemma}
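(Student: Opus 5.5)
We would deduce the equivalence from the universal property of the oplax functor category together with the straightening of the cartesian fibration $\mM \to \bD^1$ into a functor. A cartesian fibration $\mM \to \bD^1$ with fibers $\mM_0, \mM_1$ is determined by its transport functor $F: \mM_1 \to \mM_0$. The idea is to identify $\mM$ with a cograph-type construction, which is an oriented pullback. Namely, we claim
$$\mM \simeq (\bD^1 \times \mM_1)\underset{\bD^1\times \mM_0}{\times} \ldots,$$
or more concretely that $\mM$ is the pushout $\mM_0 \coprod_{\{0\}\times \mM_1} (\bD^1 \boxtimes \mM_1) \coprod_{\{1\}\times\mM_1} \mM_1$ formed via $F$ on the $0$-end. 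This is the oriented pushout of $\mM_0 \xleftarrow{F} \mM_1 \xrightarrow{\id} \mM_1$ from \cref{0desc}, which lies over $\bD^1$ via the evident map.

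To justify this identification, we would produce a comparison functor from this oriented pushout to $\mM$ over $\bD^1$. It is the identity on both fibers, and on $\bD^1\boxtimes\mM_1$ it is given by the cartesian lifts of $0\to 1$, assembled using the characterization \cref{precocart2} (dually, for cartesian fibrations) of $\mM\to \bD^1$ via $\Fun^{\oplax,\cart}(\bD^1,\mM)$. We then check that this comparison is an equivalence by comparing morphism $\infty$-categories. Between objects of the same fiber the morphism $\infty$-categories agree. From $\mM_0$ to $\mM_1$ both sides have morphism $\infty$-category $\emptyset$. From $x\in\mM_0$ to $y\in \mM_1$, cartesianness gives $\Mor_\mM(x,y)\simeq \Mor_{\mM_0}(x,F(y))$, while in the oriented pushout the morphism $\infty$-category is computed by the same formula, by \cref{homso}-type descriptions. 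The variance convention here needs care: for the oplax orientation the $0$-end receives arrows. We expect this identification to be the main obstacle, especially verifying that the higher cells match, i.e.\ that the composition structure in $\bD^1\boxtimes\mM_1$ is compatible. If direct comparison is hard, we would instead reduce to fibers via \cref{equivcocar}.

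Granting $\mM \simeq \mM_0 \vec{+}_{\mM_1} \mM_1$ over $\bD^1$, the computation is formal. A section of $\mM\to\bD^1$ in $\Fun^\oplax$ is a map out of $\bD^1$ over $\bD^1$. Using that $\bD^1\to\bD^1$ is the identity, we decompose $\bD^1$ over $\bD^1$ as the pushout $\{0\}\cup\bD^1\cup\{1\}$, and take fibers via \cref{remaros} (the last equivalence, for basechange of $\Fun^\oplax_\mC$). This yields the fiber product of the values at $1$, a point of $\mM_1$, and at $0$, a point of $\mM_0$, together with an oplax-natural arrow between $F$ of the former and the latter. That is exactly $\mM_1 \times_{\Fun^\oplax(\{1\},\mM_0)} \Fun^\oplax(\bD^1,\mM_0)$. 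Here $\mM_1\to \mM_0$ is $F$ and the projection $\Fun^\oplax(\bD^1,\mM_0)\to\mM_0$ evaluates at $1$.

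An alternative route, which avoids the pushout model, is to test both sides against maps from $K$ using \cref{remaros} and the universal property of cartesian morphisms in $\mM$. Under this approach, $\Fun^\oplax_{\bD^1}(\bD^1,\mM)$ corresponds via the oriented-pullback description of \cref{adesc} to a pair $(m_0,m_1)$ with a morphism $m_0\to m_1$ in $\mM$. Factoring that morphism through the cartesian lift $F(m_1)\to m_1$ yields a morphism $m_0\to F(m_1)$ in $\mM_0$, naturally in $K$. The main work is again the higher-cell compatibility, which we would handle by induction using \cref{homs2}.
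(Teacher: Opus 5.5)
Your main route rests on an identification that is false. The pushout $\mM_0\coprod_{\{0\}\times\mM_1}(\bD^1\boxtimes\mM_1)\coprod_{\{1\}\times\mM_1}\mM_1$, i.e.\ the oriented pushout of \cref{0desc}, is built from the Gray cylinder. It is a lax collage, not the cartesian fibration classified by $F$.

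Take the projection $\bD^1\times\bD^1\to\bD^1$ onto the first factor, so $\mM_0=\mM_1=\bD^1$ and $F=\id$. Your pushout is then the oriented square $\bD^1\boxtimes\bD^1$, whose morphism $\infty$-category from $(0,0)$ to $(1,1)$ is $\bD^1$. In $\mM$ it is $\Mor_{\bD^1}(0,F(1))\simeq\ast$. So the claim that the oriented pushout has morphism $\infty$-categories $\Mor_{\mM_0}(x,F(y))$ fails; note also that \cref{homso} computes morphisms in oriented pullbacks, not pushouts.

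A pushout model would have to use the cartesian cylinder $\bD^1\times\mM_1$, and that would itself need proof. Even then, a pushout presentation only describes functors out of $\mM$. The $K$-points of $\Fun^\oplax_{\bD^1}(\bD^1,\mM)$ are functors $K\boxtimes\bD^1\to\mM$ over $\bD^1$, i.e.\ functors into $\mM$. Decomposing the source $\bD^1$ as $\{0\}\cup\bD^1\cup\{1\}$ does not address this, so the step you call formal is not.

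Your alternative route points toward the paper's proof, but it omits the idea that makes the higher cells automatic. The paper first records the object-level correspondence: a section of $\mM\to\bD^1$ is an object $y\in\mM_1$ together with a morphism $x\to F(y)$ in $\mM_0$. Then, for a test $\infty$-category $\mB$, \cref{remaros} identifies $\Fun^\lax(\mB,-)$ of the two sides with $\Fun^\oplax_{\bD^1}(\bD^1,\mN)$ and with $\mN_1\times_{\Fun^\oplax(\{1\},\mN_0)}\Fun^\oplax(\bD^1,\mN_0)$. Here $\mN:=\bD^1\times_{\Fun^\lax(\mB,\bD^1)}\Fun^\lax(\mB,\mM)$ has fibers $\mN_i=\Fun^\lax(\mB,\mM_i)$. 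Since $\mN\to\bD^1$ is again a cartesian fibration with pointwise transport (\cref{laxfib}, in the form of \cref{indulax}), the object-level correspondence for $\mN$ gives the correspondence on $\mB$-points, and Yoneda concludes.

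Your step ``factor $m_0\to m_1$ through the cartesian lift $F(m_1)\to m_1$, naturally in $K$'' is exactly this. It requires the pointwise cartesian lift to be cartesian in $\Fun^\lax(K,\mM)$ over $\Fun^\lax(K,\bD^1)$. That is a genuine statement about lax functor $\infty$-categories, supplied by \cref{indulax}, not by the universal property of cartesian morphisms in $\mM$ alone. Once you have it, there is no separate higher-cell compatibility to check, and no induction via \cref{homs2} is needed.

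Alternatively, your observation $\Fun^\oplax_{\bD^1}(\bD^1,\mM)\simeq\mM_0\vec{\times}_{\mM}\mM_1$ can be finished directly. Let $\iota_i:\mM_i\to\mM$ be the fiber inclusions. The transformation $\iota_0F\Rightarrow\iota_1$ given by cartesian lifts induces, by \cref{targetFib}, a map of bifibrations $\mM_0\vec{\times}_{\mM_0}\mM_1\to\mM_0\vec{\times}_{\mM}\mM_1$ over $\mM_0\times\mM_1$. Over $(x,y)$ it is the equivalence $\Mor_{\mM_0}(x,F(y))\simeq\Mor_\mM(x,y)$. Applying the dual forms of \cref{equivcocar} over $\mM_0$ and then over $\mM_1$ shows it is an equivalence.
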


\begin{proof}

Let $G: \mM_1 \to \mM_0$ be the functor classified by the cocartesian fibration $\mM \to \bD^1.$
We first observe that there is a canonical bijection of equivalence classes between the $\infty$-categories 
$\Fun^\oplax_{\bD^1}(\bD^1, \mM) $ and $\mM_1 \times_{\Fun^\oplax(\{1 \},\mM_0)} \Fun^\oplax(\bD^1,\mM_0):$
an object of $\Fun^\oplax_{\bD^1}(\bD^1, \mM)$
is a section of $\mM \to \bD^1$ which corresponds to an object $X$ in $\mM_1$ and a morphism $Y \to G(X) $ in $\mM_0.$
The latter is precisely an object of $\mM_1 \times_{\Fun^\oplax(\{1 \},\mM_0)} \Fun^\oplax(\bD^1,\mM_0).$

To prove the result it suffices by the Yoneda-lemma to see that for every $\infty$-category $\mB$ there is a bijection of equivalence classes between the $\infty$-categories 
$ \Fun^\lax(\mB, \Fun^\oplax_{\bD^1}(\bD^1, \mM)) $ and $$\Fun^\lax(\mB, \mM_1 \times_{\Fun^\oplax(\{1 \},\mM_0)} \Fun^\oplax(\bD^1,\mM_0)).$$
Let $\mN:= \bD^1 \times_{\Fun^\lax(\mB, \bD^1)} \Fun^\lax(\mB, \mM) $. By \cref{remaros} there are canonical equivalences
$$ \Fun^\lax(\mB, \Fun^\oplax_{\bD^1}(\bD^1, \mM)) \simeq \Fun_{\bD^1}^\oplax(\bD^1, \mN),$$
$$ \Fun^\lax(\mB, \mM_1 \times_{\Fun^\oplax(\{1 \},\mM_0)} \Fun^\oplax(\bD^1,\mM_0)) \simeq $$$$ \Fun^\lax(\mB, \mM_1) \times_{\Fun^\oplax(\{1 \},\Fun^\lax(\mB,\mM_0))} \Fun^\oplax(\bD^1,\Fun^\lax(\mB,\mM_0)) \simeq $$$$ \mN_1 \times_{\Fun^\oplax(\{1 \}, \mN_0)} \Fun^\oplax(\bD^1,\mN_0).$$
By \cref{laxfib} the functor $\mN \to [1]$ is a cartesian fibration.
This proves the result by the first part of the proof.
\end{proof}

\begin{proposition}\label{homuniv}
Let $(\mC,X), (\mD,Y) \in \infty\fcat_{*//}$.
There is a canonical equivalence
$$ \L\Mor_{\infty\fcat_{*//}}((\mC,X), (\mD,Y)) \simeq \Fun^\oplax(\mC,\mD) \times_{\Fun^\oplax(\{0\},\mD)} \Fun^\oplax(\bD^1,\mD) \times_{\Fun^\oplax(\{1\},\mD)} \{Y\}.$$
\end{proposition}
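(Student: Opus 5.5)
The plan is to compute the left morphism $\infty$-category as a fiber of the left morphism $\infty$-category in the antioriented category $\infty\fcat_{/\bD^1}$, and then to apply \cref{sectio}.

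First, I will identify the left morphism object in $\infty\fcat_{/\bD^1}$ between $\mM\to\bD^1$ and $\mN\to\bD^1$ as $\Fun^\oplax_{\bD^1}(\mM,\mN)$. Since $\infty\fcat_{*//}$ is a full subcategory, this identification carries over verbatim. Here $\mM$ is the cartesian fibration over $\bD^1$ classifying $X:\bD^0\to\mC$ (fiber $\mC$ over $0$, fiber $*$ over $1$), and $\mN$ is the one classifying $Y$.

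Next, I will use that the cartesian fibration $\mM\to\bD^1$ classifying $X$ is the cograph of $X$, namely the pushout $\mC\coprod_{\{0\}\times *}(\bD^1\boxtimes *)$ along the object $X$. I take this as the defining colimit description of $\mM$; it is the oplax analogue of the usual cograph. Since $\Fun^\oplax_{\bD^1}(-,\mN)$ sends colimits over $\bD^1$ to limits, I obtain
$$\Fun^\oplax_{\bD^1}(\mM,\mN)\simeq \Fun^\oplax(\mC,\mN_0)\times_{\Fun^\oplax(\{X\},\mN_0)}\Fun^\oplax_{\bD^1}(\bD^1,\mN).$$
This uses \cref{remaros} to rewrite functors over $\bD^1$ out of $\mC\to\{0\}$ as functors into the fiber $\mN_0=\mD$.

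Then I apply \cref{sectio} to the cartesian fibration $\mN\to\bD^1$:
$$\Fun^\oplax_{\bD^1}(\bD^1,\mN)\simeq \mN_1\times_{\Fun^\oplax(\{1\},\mN_0)}\Fun^\oplax(\bD^1,\mN_0).$$
With $\mN_1=*$ mapping to $Y$ and $\mN_0=\mD$, this becomes $\Fun^\oplax(\bD^1,\mD)\times_{\Fun^\oplax(\{1\},\mD)}\{Y\}$. Substituting this into the previous display, and noting that the map $\Fun^\oplax(\mC,\mD)\to\Fun^\oplax(\{0\},\mD)$ is evaluation at $X$, gives the asserted formula.

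The main obstacle is the second step. One must justify that the cartesian fibration over $\bD^1$ with fiber $*$ over $1$ really is this pushout, with the \emph{oplax} Gray orientation, and that the identification is natural enough to pass through $\Fun^\oplax_{\bD^1}$. I would first check the description on objects and on $\Mor$ between the two fibers, using \cref{fiberwiseeq}: the canonical comparison map is a map of cartesian fibrations over $\bD^1$ that is an equivalence on both fibers. If the orientation conventions obstruct this, the fallback is a Yoneda argument. In that argument I would test against $\Fun^\lax(\mB,-)$ exactly as in the proof of \cref{sectio}, reducing to an identification on equivalence classes.
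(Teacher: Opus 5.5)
Your proposal is correct and follows the paper's proof essentially step for step. The paper also identifies $\mM$ with the pushout $\mC\coprod_{\{0\}}\bD^1$, comparing fibers over $0$ and $1$; it then turns $\Fun^\oplax_{\bD^1}(-,\mN)$ of this pushout into a pullback and finishes with \cref{remaros} and \cref{sectio}. Since the fiber over $1$ is a point, $\bD^1\boxtimes *\simeq\bD^1$, so the orientation issue you flag does not arise and the Yoneda fallback is unnecessary.
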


\begin{proof}

The canonical map
$\mC \coprod_{\{0\}} \bD^1 \to \mM $  of cocartesian fibrations over over $\{0\} \coprod_{\{0\}} \bD^1$, where $\mC = \mM_0,$ induces on the fiber over 0 the identity of $\mC$ and on the fiber over 1 the identity of $*$ and so is an equivalence.

There is a canonical equivalence
$$
\L\Mor_{\infty\fcat_{*//}}((\mC,X), (\mD,Y)) \simeq \Fun^\oplax_{\bD^1}(\mC \coprod_{\{0\}} \bD^1, (\mD,Y))
\simeq $$$$
\Fun^\oplax_{\bD^1}(\mC \times \{0\}, (\mD,Y))
\underset{\Fun^\oplax_{\bD^1}(\{0\}, (\mD,Y))}{\times} \Fun^\oplax_{\bD^1}(\bD^1, (\mD,Y)) \simeq $$$$ \Fun^\oplax(\mC, \mD)
\times_{\Fun^\oplax(\{0\},\mD)} \Fun^\oplax_{\bD^1}(\bD^1, (\mD,Y)) \simeq $$$$ \Fun^\oplax(\mC,\mD) \times_{\Fun^\oplax(\{0\},\mD)} \Fun^\oplax(\bD^1,\mD) \times_{\Fun^\oplax(\{1\},\mD)} \{Y\},
$$
where the third equivalence is by \cref{remaros} and the last equivalence is by \cref{sectio}.
\end{proof}

\begin{theorem}
The bioriented functor $\infty\fcat_{*//} \to \infty\fcat$ given by the target projection is a cocartesian fibration whose fiber over every small $\infty$-category $\mC$ is $\mC.$
Moreover, the pullback $\infty\scat \times_{\infty\fcat} \infty\fcat_{*//} \to \infty\scat$
is the universal cocartesian fibration $ \infty\scat_{*//^\oplax} \to \infty\scat.$

\end{theorem}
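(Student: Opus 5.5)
The plan is to verify the enriched definition of a cocartesian fibration for the bioriented target functor $\pi:\infty\fcat_{*//}\to\infty\fcat$, with $\pi(\mC,X)=\mC$. There are three things to check. First, $\pi$ must be a cocartesian fibration of the underlying enriched categories in the sense of \cref{enrfibr}. Second, on morphism $\infty$-categories $\pi$ must induce fibrations of the appropriate variance. Third, these must be compatible with pre- and postcomposition. After that I will identify the fibers and the restriction to $\infty\scat$.

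I will organize all three verifications around \cref{homuniv}. It identifies the morphism $\infty$-category from $(\mC,X)$ to $(\mD,Y)$ with
\[
\Fun^\oplax(\mC,\mD)\times_{\Fun^\oplax(\{0\},\mD)}\Fun^\oplax(\bD^1,\mD)\times_{\Fun^\oplax(\{1\},\mD)}\{Y\},
\]
where the first factor is evaluated at $X$. The map to $\Fun^\oplax(\mC,\mD)$ is therefore the base change along $\ev_X$ of the source projection $\mD_{//^\oplax Y}\to\mD$. By \cref{slicecocart} this projection is a cartesian fibration. Cartesian fibrations are stable under pullback, so the map on morphism $\infty$-categories is a cartesian fibration, which is the variance required in the antioriented half of the definition.

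Compatibility with composition follows from two facts. Precomposition with a functor $\mC'\to\mC$ changes only the base of the pullback. Postcomposition with $G:\mD\to\mD'$ together with a morphism $G(Y)\to Y'$ induces $\mD_{//^\oplax Y}\to\mD'_{//^\oplax Y'}$, and this is a map of cartesian fibrations by the functoriality part of \cref{targetFib}. The same argument in the other variable covers the bioriented structure.

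Next I will construct the cocartesian lifts. Given $(\mC,X)$ and a functor $F:\mC\to\mD$, the candidate lift is the morphism $(\mC,X)\to(\mD,F(X))$ given by $F$ and the identity of $F(X)$. To show it is $\pi$-cocartesian, I need the square comparing $\L\Mor((\mD,F(X)),(\mE,Z))\to\L\Mor((\mC,X),(\mE,Z))$ with $\Fun^\oplax(\mD,\mE)\to\Fun^\oplax(\mC,\mE)$ to be a pullback. By the formula above, both vertical maps are pullbacks of the single fibration $\mE_{//^\oplax Z}\to\mE$, along $\ev_{F(X)}$ and $\ev_X$ respectively. Since $\ev_{F(X)}$ factors as $\ev_X\circ(-\circ F)$, pasting of pullbacks gives the claim. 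Existence of the required cotensors and tensors is handled by \cref{adj2}.

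For the fibers, the fiber over $\mC$ consists of pairs $(\mC,X)$, with morphism $\infty$-categories $\{\id\}\times\Fun^\oplax(\bD^1,\mC)\times\{Y\}$ over the point $X$. That is $\Mor_\mC(X,Y)$ by \cref{targetfi2}, so the fiber is $\mC$. For the last claim, restricting along $\infty\scat\to\infty\fcat$ gives a cocartesian fibration over $\infty\scat$ whose fiber over $\ast$ contains $(\ast,\ast)$. By \cref{envelo} and \cref{univ2} there is a unique map from $\infty\scat_{*//^\oplax}$ sending $\ast$ to $\ast$. This map is fiberwise the identity of $\mC$, so it is an equivalence by \cref{equivcocar}.

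I expect the main obstacle to be making \cref{homuniv} natural enough that the pasting argument and the compatibility with composition are genuinely coherent rather than checked only objectwise. My fallback is to rerun the proof of \cref{homuniv} with a varying test $\infty$-category, via the lax cotensor formulas of \cref{remaros}, so that all identifications hold as natural equivalences of representable functors.
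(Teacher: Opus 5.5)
You handle the antioriented fibration itself essentially as the paper does. The lift $(F,\id_{F(X)})$ is cocartesian by pasting pullbacks; the paper gets the same square from $\mM\simeq\mM_0\coprod_{\{0\}}\bD^1$ and restriction of $\Fun_{\bD^1}(-,\mO)$, which gives the naturality you worried about for free. The morphism functors are base changes of $\mD_{//^\oplax Y}\to\mD$, and postcomposition is handled by \cref{targetFib}.

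Your precomposition claim, however, is false. A morphism $(\mC',X')\to(\mC,X)$ is a pair $(F,\kappa)$ with $\kappa\colon F(X')\to X$, not just a functor. Whiskering a $1$-morphism $(\eta,\sigma)\colon(G,\beta)\to(G',\beta')$ of $\L\Mor((\mC,X),(\mD,Y))$ with $(F,\kappa)$ gives the $2$-cell obtained by pasting $\sigma G(\kappa)$ with $\beta'\eta_\kappa$. Here $\eta_\kappa$ is the oplax naturality cell of $\eta$ at $\kappa$, which is not invertible in general (take $Y=G'(X)$, $\beta'=\id$). So precomposition does not preserve cartesian morphisms; it is a mere base change only when $\kappa$ is an identity. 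The paper checks only postcomposition at the oriented level, so this claim should be dropped. \cref{adj2} plays no role either.

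The genuine gap is the final step. The assertion that restricting along $\infty\scat\to\infty\fcat$ ``gives a cocartesian fibration over $\infty\scat$'' is not automatic. A priori $\infty\scat\times_{\infty\fcat}\infty\fcat_{*//}$ is only an antioriented category. But \cref{envelo} and \cref{equivcocar} concern cocartesian fibrations of $\infty$-categories, and by (the dual of) \cref{cocarto} these require the morphism functors to be compatible with both pre- and postcomposition. Your false precomposition claim is presumably what made the restriction look automatic.

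The paper supplies two arguments that your proposal lacks:
\begin{enumerate}
\item After restriction, the morphism functors become $\Fun(\mC,\mE)\times_{\mE}\mE_{//^\oplax Z}\to\Fun(\mC,\mE)$ with \emph{strict} natural transformations. The cells $\eta_\kappa$ are then invertible, so precomposition with $(F,\kappa)$ is a map of cartesian fibrations.
\item The composition functor on these morphism $\infty$-categories factors through $\boxtimes\to\times$. By \cref{interchange}, the antioriented pullback is therefore an honest $\infty$-category.
\end{enumerate}
Only after both checks can you invoke \cref{envelo} to get the comparison map from $\infty\scat_{*//^\oplax}$ and conclude fiberwise via \cref{homuniv}, as you intended.
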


\begin{proof}
We start with showing that the bioriented functor $\infty\fcat_{*//} \to \infty\fcat$ is a cocartesian fibration.
Let $\mM \to \bD^1$ be a cartesian fibration whose fiber over 1 is the final $\infty$-category classifying a functor $* \xrightarrow{X} \mM_0$
and $\phi: \mM_0 \to \mC$ a functor.
The canonical map
$$ \mM_0 \coprod_{\{0\}} \bD^1 \to \mM$$ of cartesian fibrations over $\{0\} \coprod_{\{0\}} \bD^1$ induces on each fiber an equivalence and so is an equivalence.
Let $$ \mN := \mC \coprod_{\{0\}} \bD^1 \to \{0\} \coprod_{\{0\}} \bD^1.$$
Then $\mN \to \bD^1$ is a cartesian fibration and the functor $\mM_0 \to \mC$ gives rise to a map
$$\mM \simeq \mM_0 \coprod_{\{0\}} \bD^1 \to \mC \coprod_{\{0\}} \bD^1 \simeq \mN$$ of cartesian fibrations over $\bD^1.$
For every cartesian fibration $\mO \to \bD^1$
classifying a functor $ * \xrightarrow{Y} \mO_0. $
the following commutative square is a pullback square:
$$
\begin{xy}
\xymatrix{
\Fun_{\bD^1}(\mC \coprod_{\{0\}} \bD^1,\mO) \ar[d] \ar[r]
& \Fun(\mC,\mO_0) \simeq  \Fun_{\bD^1}(\mC \times\{0\},\mO)\ar[d]
\\ 
\Fun_{\bD^1}(\mM_0 \coprod_{\{0\}} \bD^1,\mO)  \ar[r] & \Fun(\mM_0,\mO_0) \simeq  \Fun_{\bD^1}(\mM_0 \times\{0\},\mO).
}
\end{xy}$$
This proves that the bioriented functor $\infty\fcat_{*//} \to \infty\fcat$ is a 1-cocartesian fibration.
The latter induces on morphism $\infty$-categories between $\mM \to \bD^1, \mO\to \bD^1$ a functor
$$ \L\Mor_{\infty\fcat_{*//}}((\mM_0,X),(\mO_0,Y)) \to \Fun^\oplax(\mM_0,\mO_0) ,$$ which by \cref{homuniv} identifies with the pullback $$\Fun^\oplax(\mM_0,\mO_0) \times_{\Fun^\oplax(\{0\},\mO_0)} \Fun^\oplax(\bD^1,\mO_0) \times_{\Fun^\oplax(\{1\},\mO_0)} \{Y\} \to \Fun^\oplax(\mM_0,\mO_0)$$
of the cartesian fibration
$$  \Fun^\oplax(\bD^1,\mO_0) \times_{\Fun^\oplax(\{1\},\mO_0)} \{Y\} \to \Fun^\oplax(\{0\},\mO_0). $$

So it suffices to see that for every functor $\psi: \mO \to \mU $ over $\bD^1$
between cartesian fibrations over $\bD^1$ classifying functors $* \xrightarrow{Y} \mO_0, * \xrightarrow{Z} \mU_0 $ the induced functor
$$\L\Mor_{\infty\fcat_{*//}}((\mM_0,X),(\mO_0,Y)) \to \L\Mor_{\infty\fcat_{*//}}((\mM_0,X),(\mU_0,Z))
$$ preserves cartesian morphisms. 
This functor identifies with a functor
$$\Fun^\oplax(\mM_0,\mO_0) \times_{\Fun^\oplax(\{0\},\mO_0)} \Fun^\oplax(\bD^1,\mO_0) \times_{\Fun^\oplax(\{1\},\mO_0)} \{Y\} \to $$$$ \Fun^\oplax(\mM_0,\mU_0) \times_{\Fun^\oplax(\{0\},\mU_0)} \Fun^\oplax(\bD^1,\mU_0) \times_{\Fun^\oplax(\{1\},\mU_0)} \{Z\}, $$
which is the pullback of the functor
$$ \Fun^\oplax(\bD^1,\mO_0) \underset{\Fun^\oplax(\{1\},\mO_0)}{\times} \{Y\} \to \Fun^\oplax(\{0\},\mO_0) \times_{\Fun^\oplax(\{0\},\mU_0)} \Fun^\oplax(\bD^1,\mU_0) \times_{\Fun^\oplax(\{1\},\mU_0)} \{Z\} $$
induced by the functor $\psi_0: \mO_0 \to \mU_0 $ and the canonical morphism $ \psi(Y) \to Z$ in 
$\mU_0$ and by \cref{targetFib} preserves cartesian morphisms. 
This proves that the bioriented functor $\infty\fcat_{*//} \to \infty\fcat$ is a cocartesian fibration.

Next we observe that the pullback $ \infty\scat \times_{\infty\fcat} \infty\fcat_{*//} \to \infty\scat$ is a cocartesian fibration of $\infty$-categories: for every morphism $(\mC,X) \to (\mD,Y)$ in
$\infty\scat \times_{\infty\fcat} \infty\fcat_{*//} $ 
corresponding to a functor $F:\mC \to \mD$ and a morphism
$\kappa: F(X) \to Y$ in $\mD$, and every $(\mE,\Z) \in \infty\scat \times_{\infty\fcat} \infty\fcat_{*//} $
the induced functor
$$\LMor_{\infty\scat \times_{\infty\fcat} \infty\fcat_{*//}}((\mD,Y),(\mE,Z)) \to \LMor_{\infty\scat \times_{\infty\fcat} \infty\fcat_{*//}}((\mC,X),(\mE,Z)) $$ 
identifies with the functor
$$ \Fun(\mD,\mE) \times_{\Fun^\oplax(\{0\},\mE)} \mE_{//^\oplax Z} \to \Fun(\mC,\mE) \times_{\Fun^\oplax(\{0\},\mE)} \mE_{//^\oplax Z} $$
induced by the morphism $\kappa: F(X) \to Y$ in $\mD,$ which is a map of cartesian fibrations.

% The latter functor is a map of cartesian fibrations:
% the map $\kappa: F(X) \to Y$ in $\mD$ gives rise to a natural transformation from $\Fun(\mD,\mE) \to \Fun(\mC,\mE) \to \Fun^\oplax(\{0\},\mE)$ to $\Fun(\mD,\mE) \to \Fun^\oplax(\{0\},\mE)$ corresponding to a functor $\bD^1 \times \Fun(\mD,\mE) \to \mE$.
% The pullback $\bD^1 \times \Fun(\mD,\mE) \times_{\mE} \mE_{//^\oplax Z} \to \bD^1 \times \Fun(\mD,\mE)$
% is a cartesian fibration and so a map of cartesian fibrations over $\bD^1,$ which classifies the map 

Moreover for every $(\mC,X), (\mD,Y), (\mE,\Z)$ in
$\infty\scat \times_{\infty\fcat} \infty\fcat_{*//} $
the composition functor
$$ \LMor_{\infty\scat \times_{\infty\fcat} \infty\fcat_{*//}}((\mD,Y),(\mE,Z)) \boxtimes \LMor_{\infty\scat \times_{\infty\fcat} \infty\fcat_{*//}}((\mC,X),(\mD,Y)) \to $$$$ \LMor_{\infty\scat \times_{\infty\fcat} \infty\fcat_{*//}}((\mC,X),(\mE,Z)) $$
is the canonical functor
$$ (\Fun(\mD,\mE) \underset{\Fun^\oplax(\{0\},\mE)}{\times} \mE_{//^\oplax Z}) \boxtimes (\Fun(\mC,\mD) \times_{\Fun^\oplax(\{0\},\mD)} \mD_{//^\oplax Y}) \to \Fun(\mC,\mE) \underset{\Fun^\oplax(\{0\},\mE)}{\times} \mE_{//^\oplax Z}, $$
which canonically factors as
$$ (\Fun(\mD,\mE) \underset{\Fun^\oplax(\{0\},\mE)}{\times} \mE_{//^\oplax Z}) \boxtimes (\Fun(\mC,\mD) \underset{\Fun^\oplax(\{0\},\mD)}{\times} \mD_{//^\oplax Y}) \to $$$$ (\Fun(\mD,\mE) \underset{\Fun^\oplax(\{0\},\mE)}{\times}\mE_{//^\oplax Z}) \times (\Fun(\mC,\mD) \underset{\Fun^\oplax(\{0\},\mD)}{\times} \mD_{//^\oplax Y}) \to \Fun(\mC,\mE) \underset{\Fun^\oplax(\{0\},\mE)}{\times} \mE_{//^\oplax Z}.$$

\cite[Theorem 4.2.7.]{oriented} implies that the oriented category $ \infty\scat \times_{\infty\fcat} \infty\fcat_{*//} $ is an $\infty$-category.

By \cref{envelo} there is a unique map $\rho: \infty\scat_{*//^\oplax} \to \infty\scat \times_{\infty\fcat} \infty\fcat_{*//} $ of cocartesian fibrations
over $\infty\scat$ that sends the identity of $\bD^0$ to
itself. The latter sends an $\infty$-category $\mC$ equipped with an object $X \in \mC$ to the cartesian fibration 
$$ \mC \coprod_{\{0\} } \bD^1 \to \{0\} \coprod_{\{0\}} \bD^1 $$
and so is essentially surjective.
The functor $\rho$ induces on the fiber over every small $\infty$-category $\mC$ a functor 
$$\{\mC\} \times_{\infty\scat} \infty\scat_{*//^\oplax} \simeq \mC \to \{\mC\} \times_{\infty\fcat} \infty\fcat_{*//},$$
which by \cref{homuniv} induces on morphism objects between $X,Y \in \mC$
the canonical equivalence $$ \L\Mor_\mC(X,Y) \simeq \{X\} \times_{\Fun^\oplax(\{0\},\mC)} \Fun^\oplax(\bD^1,\mC) \times_{\Fun^\oplax(\{1\},\mC)} \{Y\}.$$
This implies that $\rho$ is an equivalence.    
\end{proof}

\begin{theorem}\label{T7}

The Grothendieck construction for oriented cocartesian fibrations is the unique map 
$$\int: {\boxtimes\widehat{\Cat}}_{//^\oplax\infty\fcat} \to {\boxtimes\co\widehat{\Cart}}$$
of cartesian fibrations over $\boxtimes\widehat{\Cat}$ sending $\infty\fcat$ to the universal oriented cocartesian fibration $$ \infty\fcat_{*//^\oplax} \to \infty\fcat.$$
The map $\int: {\boxtimes\widehat{\Cat}}_{//^\oplax\infty\fcat} \to {\boxtimes\co\widehat{\Cart}}$ of cartesian fibrations over
$\boxtimes\widehat{\Cat}$
restricts to a map 
\begin{equation}\label{GroCon2}
\int: {\boxtimes\Cat}_{//^\oplax\infty\fcat} \to {\boxtimes\coCart}
\end{equation} of cartesian fibrations over $\boxtimes\Cat$.

\end{theorem}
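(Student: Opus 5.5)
The plan is to copy the definition of the $\infty$-categorical Grothendieck construction (the definition preceding \cref{Grorep}) in the antioriented setting. There are three steps. First, show that evaluation at the target ${\boxtimes\co\widehat{\Cart}} \to \boxtimes\widehat{\Cat}$ is a cartesian fibration whose fiber transport is pullback. Second, show that the oplax slice ${\boxtimes\widehat{\Cat}}_{//^\oplax\infty\fcat} \to \boxtimes\widehat{\Cat}$ is, among cartesian fibrations over $\boxtimes\widehat{\Cat}$, the one freely generated by the object $\infty\fcat$. Third, use this universal property to obtain existence and uniqueness of $\int$ from the object $\infty\fcat_{*//^\oplax}\to\infty\fcat$ of ${\boxtimes\co\widehat{\Cart}}$ lying over $\infty\fcat$.

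For the first step I would argue as in \cref{orientfibru0} and \cref{orientfibru}. By \cref{enrtargetfibr}(2), evaluation at the target on arrows in the 2-category $\boxtimes\widehat{\Cat}$ is a 1-cartesian fibration with pullbacks as cartesian lifts. Oriented cocartesian fibrations (the duals of the oriented cartesian fibrations of the preceding subsection) are stable under pullback, because all the defining conditions are conditions on morphism $\infty$-categories and pullbacks are computed morphism-wise. Hence the restriction to ${\boxtimes\co\widehat{\Cart}}$ is still a 1-cartesian fibration. On morphism $\infty$-categories the functor is the projection of the category of maps of cocartesian fibrations to functors between the bases. Its higher cartesianness should follow from the enriched analogue of \cref{cocartexp} and \cref{enrcocartexp}: functor categories into a cocartesian fibration are again cocartesian fibrations, with objectwise cocartesian morphisms.

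For the second step, note that ${\boxtimes\widehat{\Cat}}_{//^\oplax\infty\fcat}\to\boxtimes\widehat{\Cat}$ is a cartesian fibration by \cref{slicecocart}(1), applied in the large 2-category. By the dual of \cref{envelo}, namely \cref{envelo2}, it is the free cartesian fibration on the point $\{\infty\fcat\}$: it is $\Env(\{\infty\fcat\}^\coop)^\coop$, the oriented pullback $*\vec{\times}_{\boxtimes\widehat{\Cat}}\boxtimes\widehat{\Cat}$. Consequently, maps of cartesian fibrations out of it into ${\boxtimes\co\widehat{\Cart}}$ are equivalent to objects of the fiber over $\infty\fcat$, which gives both existence and uniqueness of $\int$. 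To be a legitimate choice of object, $\infty\fcat_{*//^\oplax}\to\infty\fcat$ must be an oriented cocartesian fibration; this is the theorem just proved, with $\infty\fcat_{*//}$ playing the role of $\infty\fcat_{*//^\oplax}$. Finally, the restriction to small objects follows by pulling back both cartesian fibrations along $\boxtimes\Cat\subset\boxtimes\widehat{\Cat}$. For this, one must check that pulling back the universal fibration along an antioriented functor $\mC\to\infty\fcat$ with $\mC$ small yields a small oriented cocartesian fibration, which holds because the fibers are small $\infty$-categories. One must also check that such pullbacks land in ${\boxtimes\coCart}$, which follows because fiber transports are pullbacks.

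The main obstacle is the first step. The 1-categorical argument of \cref{enrtargetfibr} only gives 1-cartesianness, and the paper has not yet established that ${\boxtimes\co\widehat{\Cart}}$ has the expected enriched morphism objects. I would first define those morphism objects as full subcategories of the morphism objects of $\Fun(\bD^1,\boxtimes\widehat{\Cat})$, using \cref{enrsub}. I would then verify that whiskering along the pullback squares preserves cocartesian morphisms, mirroring the verification in \cref{orientfibru}.
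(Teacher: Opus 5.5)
Your proposal is correct and follows the route the paper itself takes. \cref{T7} is stated there without a separate proof, as the antioriented analogue of the definition of the $\infty$-categorical Grothendieck construction, and it rests on the same three ingredients you use: target evaluation being a cartesian fibration (as in \cref{orientfibru} and \cref{orientfibru0}), \cref{envelo2} identifying the oplax slice as the free cartesian fibration on the object $\infty\fcat$, and the preceding theorem that $\infty\fcat_{*//}\to\infty\fcat$ is a cocartesian fibration.

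One small slip: with the convention $A\vec{\times}_C B\simeq A\times_{C^{\{0\}}}C^{\bD^1}\times_{C^{\{1\}}}B$ of \cref{0desc}, the slice $\Env(\{\infty\fcat\}^\coop)^\coop$ is ${\boxtimes\widehat{\Cat}}\,\vec{\times}_{{\boxtimes\widehat{\Cat}}}\,\{\infty\fcat\}$, whereas $\{\infty\fcat\}\,\vec{\times}_{{\boxtimes\widehat{\Cat}}}\,{\boxtimes\widehat{\Cat}}$ is the coslice. Your argument only uses the $\Env$ description, so nothing else is affected.
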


\begin{remark}

The map (\ref{GroCon2}) of cartesian fibrations over $\boxtimes\Cat$
induces on the fiber over every $\mC \in \boxtimes\Cat$ the functor
$$\int_\mC : {\boxtimes\Fun}(\mC, \infty\fcat) \to \boxtimes\Cat^\cocart_{/\mC}$$
that takes the pullback along the universal oriented cocartesian fibration.
    
\end{remark}

\cref{Grorep} motivates the following construction of oplax slice antioriented categories, which in the classical setting were constructed by Ara-Guetta \cite{ARA2026110762}:

\begin{definition}
Let $\mC$ be an antioriented category  and $X \in \mC$.
Let $$\mC_{X //^\oplax} := \int \L\Mor_\mC(X,-)\to \mC $$ be the pullback of the antioriented functor $\L\Mor_\mC(X,-): \mC \to \infty\fcat$
along $\infty\fcat_{*//} \to \infty\fcat$.  
\end{definition}

\begin{remark}
The latter definition extends the notation $\infty\fcat_{*//}.$
\end{remark}

\begin{corollary}
Let $\mC$ be an antioriented category  and $X \in \mC$.
Let $\alpha: X \to Y$ and $\beta: X \to Z $ be morphisms in $\mC$.
There is a canonical equivalence
$$ \L\Mor_{\mC_{X//^\oplax}}(Y, Z) \simeq $$$$ \L\Mor_\mC(Y,Z) \underset{\Fun^\oplax(\{0\},\L\Mor_\mC(X,Z))}{\times} \Fun^\oplax(\bD^1,\L\Mor_\mC(X,Z)) \underset{\Fun^\oplax(\{1\},\L\Mor_\mC(X,Z))}{\times} \{\beta\}.$$
\end{corollary}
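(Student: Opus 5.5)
The plan is to unwind the definition of $\mC_{X//^\oplax}$ as the pullback $\mC\times_{\infty\fcat}\infty\fcat_{*//}$ along the antioriented functor $\L\Mor_\mC(X,-)$, and then compute morphism $\infty$-categories in this pullback from the known morphism $\infty$-categories of $\infty\fcat_{*//}$ given by \cref{homuniv}. Morphism objects in a pullback of antioriented categories are computed as pullbacks of morphism objects, since the forgetful functor to objects together with morphism $\infty$-categories preserves limits of enriched categories. So for objects $(Y,\alpha)$ and $(Z,\beta)$ of $\mC_{X//^\oplax}$ the first step is the equivalence
$$\L\Mor_{\mC_{X//^\oplax}}(Y,Z)\simeq \L\Mor_\mC(Y,Z)\times_{\Fun^\oplax(\L\Mor_\mC(X,Y),\L\Mor_\mC(X,Z))}\L\Mor_{\infty\fcat_{*//}}\big((\L\Mor_\mC(X,Y),\alpha),(\L\Mor_\mC(X,Z),\beta)\big).$$
Here the map $\L\Mor_\mC(Y,Z)\to \Fun^\oplax(\L\Mor_\mC(X,Y),\L\Mor_\mC(X,Z))$ is the action of $\L\Mor_\mC(X,-)$, i.e.\ the adjoint of composition $\L\Mor_\mC(Y,Z)\boxtimes\L\Mor_\mC(X,Y)\to\L\Mor_\mC(X,Z)$.

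Next we substitute \cref{homuniv}. It identifies the second factor with
$$\Fun^\oplax(\L\Mor_\mC(X,Y),\L\Mor_\mC(X,Z))\times_{\Fun^\oplax(\{0\},\L\Mor_\mC(X,Z))}\Fun^\oplax(\bD^1,\L\Mor_\mC(X,Z))\times_{\Fun^\oplax(\{1\},\L\Mor_\mC(X,Z))}\{\beta\},$$
where the first projection is to $\Fun^\oplax(\L\Mor_\mC(X,Y),\L\Mor_\mC(X,Z))$. The map to $\Fun^\oplax(\{0\},\L\Mor_\mC(X,Z))$ evaluates a functor at the distinguished object $\alpha$. Cancelling the common factor $\Fun^\oplax(\L\Mor_\mC(X,Y),\L\Mor_\mC(X,Z))$ in the iterated pullback gives $\L\Mor_\mC(Y,Z)\times_{\Fun^\oplax(\{0\},\L\Mor_\mC(X,Z))}\Fun^\oplax(\bD^1,\L\Mor_\mC(X,Z))\times\{\beta\}$. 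In this expression the map $\L\Mor_\mC(Y,Z)\to \L\Mor_\mC(X,Z)$ is precomposition with $\alpha$, which is the claimed formula.

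The main obstacle is justifying the first step and the identification of structure maps. One must check that the evaluation at the basepoint in \cref{homuniv} corresponds, after the enriched pullback, exactly to composition with $\alpha$. For this I would use that the functor $\infty\fcat_{*//}\to\infty\fcat$ remembers the basepoint $X\in\mM_0$ as the image of $1$ under the cartesian fibration $\mM\to\bD^1$. I would also use the naturality of \cref{homuniv} in the source, which comes from its proof via the decomposition $\mC\coprod_{\{0\}}\bD^1$ and \cref{sectio}. The enriched pullback computation of morphism objects is standard from the description of limits in ${_\mV\Cat}$ (morphism objects of a pullback along a functor of enriched categories are pullbacks in $\mV$). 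I expect no further difficulty there.
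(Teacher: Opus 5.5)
Your proposal is correct and is essentially the paper's proof. The paper also writes $\L\Mor_{\mC_{X//^\oplax}}(Y,Z)$ as the pullback of $\L\Mor_\mC(Y,Z)$ and $\L\Mor_{\infty\fcat_{*//}}((\L\Mor_\mC(X,Y),\alpha),(\L\Mor_\mC(X,Z),\beta))$ over $\Fun^\oplax(\L\Mor_\mC(X,Y),\L\Mor_\mC(X,Z))$, substitutes \cref{homuniv}, and cancels the common factor; your remark identifying the resulting map $\L\Mor_\mC(Y,Z)\to\L\Mor_\mC(X,Z)$ with precomposition by $\alpha$ spells out a point the paper leaves implicit.
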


\begin{proof}
By \cref{homuniv} there is a canonical equivalence
$$ \L\Mor_{\mC_{X//^\oplax}}(Y, Z) \simeq $$
$$\L\Mor_\mC(Y,Z) \times_{\L\Mor_{\infty\fcat}(\L\Mor_\mC(X,Y),\L\Mor_\mC(X,Z))}
\L\Mor_{\infty\fcat_{*//}}((\L\Mor_\mC(X,Y),\alpha),(\L\Mor_\mC(X,Z),\beta)) $$
$$ \simeq \L\Mor_\mC(Y,Z) \times_{\Fun^\oplax(\L\Mor_\mC(X,Y),\L\Mor_\mC(X,Z))}
\L\Mor_{\infty\fcat_{*//}}((\L\Mor_\mC(X,Y),\alpha),(\L\Mor_\mC(X,Z),\beta)) $$
$$\simeq \L\Mor_\mC(Y,Z) \times_{\Fun^\oplax(\L\Mor_\mC(X,Y),\L\Mor_\mC(X,Z))}
\Fun^\oplax(\L\Mor_\mC(X,Y),\L\Mor_\mC(X,Z)) $$$$ \times_{\Fun^\oplax(\{0\},\L\Mor_\mC(X,Z))} \Fun^\oplax(\bD^1,\L\Mor_\mC(X,Z)) \times_{\Fun^\oplax(\{1\},\L\Mor_\mC(X,Z))} \{\beta\}.$$
\end{proof}

\bibliographystyle{plain}
\bibliography{mainbib}

\end{document}